\newcounter{vol2}
\newcommand {\rr}{{\mathbb{R}}}
\newcommand {\cc}{{\mathbb{C}}}
\newcommand {\hh}{{\mathbb{H}}}
\newcommand {\ff}{{\mathbb{F}}}
\newcommand {\zz}{{\mathbb{Z}}}
\newcommand {\nn}{{\mathbb{N}}}
\newcommand {\dd}{{\mathbb{D}}}
\newcommand {\g}{{\mathfrak{g}}}
\newcommand {\la}{{\mathfrak{a}}}
\newcommand {\lp}{{\mathfrak{p}}}
\newcommand {\lk}{{\mathfrak{k}}}
\newcommand {\lnn}{{\mathfrak{n}}}
\newcommand {\lh}{{\mathfrak{h}}}
\newcommand {\cl}{{\textnormal{cl}}}
\renewcommand{\Im}{\textnormal{Im}}
\renewcommand{\phi}{\varphi}
\DeclareMathOperator{\Ind}{Ind}
\DeclareMathOperator{\Ad}{Ad}
\DeclareMathOperator{\ad}{ad}
\DeclareMathOperator{\diverg}{div}
\DeclareMathOperator{\grad}{grad}
\DeclareMathOperator{\id}{id}
\DeclareMathOperator{\supp}{supp}
\DeclareMathOperator{\sign}{sign}
\DeclareMathOperator{\vol}{vol}
\def\dbar{{\ \mathchar'26\mkern-12mu d}}
\theoremstyle{plain}
    \newtheorem{thm}{Theorem}[section]
    \newtheorem{lem}[thm]{Lemma}
    \newtheorem{conj}[thm]{Conjecture}
    \newtheorem{prop}[thm]{Proposition}
    \newtheorem{cor}[thm]{Corollary}
\theoremstyle{definition}
    \newtheorem{defn}[thm]{Definition}
    \newtheorem{rem}[thm]{Remark}
    \newtheorem{problem}[thm]{Problem}
    \newtheorem{ex}[thm]{Example}
\numberwithin{equation}{section}
\begin{document}

\begin{titlepage}
\subject{
\begin{large}Universit\"{a}t Paderborn\\
Fakult\"{a}t f\"{u}r Elektrotechnik, Informatik und Mathematik \\
\vspace{5mm}Universit\'{e} Paul Verlaine Metz \\
\'{E}cole Doctorale, IAEM Lorraine\end{large}\\[18mm]}
\title{Patterson-Sullivan distributions\\for symmetric spaces\\of the noncompact type\\[20mm]}
\publishers{
\begin{large}
\selectlanguage{ngerman}\vspace{4mm}
27. Mai 2010 
\selectlanguage{english}
\end{large}\\
[10mm]
}
\author{Dissertation\\ Th\`{e}se de doctorat\\
[12mm]
\begin{large}vorgelegt von\end{large}\\
Michael Schr\"{o}der\\[0cm]}
\date{}
\end{titlepage}


\maketitle

\newpage
\pagebreak
\thispagestyle{empty} 
\section*{Abstract}
There is a curious relation between two kinds of phase space distributions associated to Laplace-eigenfunctions $\phi_{\lambda_k}$ on a compact hyperbolic manifold $Y$.

Given a pseudodifferential operator quantization $Op: C^{\infty}(S^*Y)\rightarrow B(L^2(Y))$, that is an assignment of bounded operators to smooth zero order symbols $a$ on the unit (co-)tangent bundle $S^*Y$, the functionals $\rho_{\lambda_j,\lambda_k}(A)=\langle A\phi_{\lambda_j},\phi_{\lambda_k}\rangle_{L^2(Y)}$ on the space of zero-order pseudodifferential operators give rise to Wigner distributions $W_{\lambda_j,\lambda_k}(a)=\rho_{\lambda_j,\lambda_k}(Op(a))$ on $S^*Y$, which are the key objects in quantum ergodicity. One studies the oscillation and concentration properties of the eigenfunctions through the so-called large energy limits of the distributions $W_{\lambda_j,\lambda_k}$, that is one investigates their behaviour when the eigenvalues tend to infinity.

If $Y$ is a symmetric space of the noncompact type, the Laplace operator is replaced by the corresponding algebra of translation invariant differential operators. Given moderate eigenfunctions $\phi$ and $\psi$, their distributional boudary values in the sense of Helgason give rise to the Patterson-Sullivan distribution $PS_{\phi,\psi}$ on $S^*Y$.

In the case of compact hyperbolic surfaces $Y=\Gamma\backslash\mathbb{H}$ it was observed by N. Anantharaman an S. Zelditch that there is an exact and an asymptotic relation between these phase space distributions.

We generalize parts of a special non-Euclidean calculus of pseudodifferential operators, which was invented by S. Zelditch for hyperbolic surfaces, to symmetric spaces $X=G/K$ of the noncompact type and their compact quotients $Y=\Gamma\backslash G/K$. We sometimes restrict our results to the case of rank one symmetric spcaes. The non-Euclidean setting extends the defintion of Patterson-Sullivan distributions in a natural way to arbitrary symmetric spaces of the noncompact type. Generalizing the exact formula given by Zelditch and Anantharaman, we find an explicit intertwining operator mapping Patterson-Sullivan distributions into Wigner distributions. We study the important invariance and equivariance properties of these distributions. Finally, we describe asymptotic properties of these distributions.

\newpage
\pagebreak
\thispagestyle{empty} 
\selectlanguage{ngerman}
\section*{Zusammenfassung}
Es gibt eine interessante Beziehung zwischen zwei Familien von Distributionen, welche zu Eigenfunktionen $\phi_{\lambda_k}$ des Laplace-Operators einer kompakten hyperbolischen Mannigfaltigkeit $Y$ assoziiert werden:

Gegeben eine Pseudodifferentialoperatoren-Quantisierung, d. h. eine Vorschrift $Op:C^{\infty}(S^*Y)\rightarrow B(L^2 (Y))$, die Symbolen $a$ der Ordnung $0$ auf dem Kosphärenbündel $L^2$-beschränkte Operatoren auf $Y$ zuweist, so erhält man aus den Funktionalen $\rho_{\lambda_j,\lambda_k}(A)=\langle A\phi_{\lambda_j},\phi_{\lambda_k}\rangle_{L^2}$ auf den Raum der Pseudodifferentialoperatoren nullter Ordnung die Wigner-Distributionen $W_{\lambda_j,\lambda_k}(a)=\rho_{\lambda_j,\lambda_k}(Op(a))$ auf dem Kosphärenb\"{u}ndel $S^*Y$. Diese sind die Schlüsselobjekte der Quanten-Ergodizität: Man studiert die Schwingungs- und Konzentrationseigenschaften der Eigenfunktionen, indem man das Hochfrequenzverhalten der Distributionen $W_{\lambda_j,\lambda_k}$ untersucht, d.h. wenn die Eigenwerte gegen unendlich streben.

Falls $Y$ ein symmetrischer Raum nichtkompakten Typs ist, so wird der Laplace-Operator durch die gesamte Algebra der invarianten Differentialoperatoren ersetzt. Gegeben moderate Eigenfunktionen $\phi$ und $\psi$ auf $Y$, so liefern ihre Helgason-Randwerte sogenannte Patterson-Sullivan Distributionen $PS_{\phi,\psi}$ auf $S^*Y$.

Im Falle kompakter hyperbolischer Flächen $Y=\Gamma\backslash\mathbb{H}$ beobachteten N. Anantharaman und S. Zelditch eine exakte und eine asymptotische Beziehung zwischen diesen Distributionen.

Wir verallgemeinern Teile eines speziellen nicht-euklidischen Kalküls von Pseudodifferentialoperatoren, welcher zuerst von S. Zelditch für hyperbolische Flächen eingeführt wurde, auf symmetrische Räume $X=G/K$ nichtkompakten Typs und ihre kompakten Quotienten $Y=\Gamma\backslash G/K$. Wir werden uns bei einigen Resultaten auf den Fall von Räumen vom Rang eins beschränken. Das nicht-euklidische Setting erweitert die Definitionen der Patterson-Sullivan Distributionen auf natürliche Weise auf symmetrische Räume nichtkompakten Typs. Wir verallgemeinern die exakte Beziehung zwischen diesen und den Wigner-Distributionen und studieren die wichtigen Eigenschaften der Patterson-Sullivan Distributionen. Schließlich beschreiben wir asymptotische Verbindungen zwischen verschiedenen Arten von Distributionen.

\newpage
\pagebreak
\thispagestyle{empty} 
\selectlanguage{english}
\section*{Acknowledgements}
First of all I wish to express my sincere thanks to my supervisor Joachim Hilgert. I thank him for leaving to me this interesting and challenging topic, and for giving me the chance to be a part of the IRTG, as which I was able to look beyond the scenes of mathematics. I thank him for his enduring help, his patience, and for making me a member of his friendly working group. I express my thanks to Angela Pasquale for her help and for being a kind and helpful advisor, and for a very pleasant stay in Metz. Many thanks go to my longstanding mentor\selectlanguage{ngerman} Sönke Hansen\selectlanguage{english} for always having sympathetic ear and an open door for me. Special thanks go to Martin Olbrich for his expertise and experience in very difficult questions. In particular, I want to thank Nalini Anantharaman and Steve Zelditch for helpful discussions on further issues, problems, and research projects. I thank my colleagues from the IRTG and the Department of Mathematics of the University of Paderborn and last but not least, I thank my friends in Paderborn for their insight and for their support.

\newpage
\pagebreak
\thispagestyle{empty} 

\mbox{}
\newpage
\pagebreak
\thispagestyle{empty} 

\tableofcontents
\newpage
\pagebreak
\thispagestyle{empty} 

\section{Introduction}

Quantum ergodicity is a subfield of mathematics combining dynamical systems and microlocal analysis to investigate the global topography of eigenfunctions of the Laplace-Beltrami operator on Riemannian manifolds.

We begin by describing how the questions of quantum ergodicity are integrated in the greater picture of science. Then we give a brief summary of the basic definitions which are important in quantum ergodicity, and we list a couple of simple properties of the objects we want to investigate. It is important to collect these things in this introduction to motivate the concrete results of this work. We use definitions from the overview articles \cite{Z2005}, \cite{Z2009a} and \cite{Z2009b}, and we also follow the descriptions in \cite{Z87}, \cite{BO} and \cite{SV}.

\subsection*{Background}
Let $(M,g)$ denote a (compact) Riemannian manifold with metric $g$. We denote by $\Delta=\Delta_g$ the corresponding positive Laplace-Beltrami operator
\begin{eqnarray*}
\Delta = - \frac{1}{\sqrt{|\det g_{ij}|}} \sum_{i,j=1}^{n} \frac{\partial}{\partial x_i} \left( g^{ij} {\sqrt{|\det g_{ij}|}} \frac{\partial}{\partial x_j}\right),
\end{eqnarray*}
where $g_{ij}=g(\frac{\partial}{\partial x_i},\frac{\partial}{\partial x_j})$ and where $g^{ij}$ is the inverse matrix to $g_{ij}$. The starting point is the eigenvalue problem
\begin{eqnarray}\label{starting point}
\Delta \phi_{\lambda} = \lambda^2 \phi_{\lambda}, \,\,\,\,\,\,\, \lambda\in\rr.
\end{eqnarray}
In the compact case, the spectrum of $\Delta$ is discrete and we arrange the eigenvalues in non-decreasing order $\lambda_0\leq\lambda_1\leq\lambda_2\leq\ldots\rightarrow\infty$. We denote by $\phi_{\lambda_j}$ an orthonormal basis of real-valued eigenfunctions with respect to the inner product $\langle \phi_{\lambda_j},\phi_{\lambda_k}\rangle = \int_M \phi_{\lambda_j}(x)\phi_{\lambda_k}(x) \, dx$, where $dx$ denotes the volume density. The eigenvalue problem on $M$ is dual under the Fourier transform to the wave equation. We denote the eigenvalues by $\lambda^2$, which saves us from writing a few square root signs. We will later (in the other chapters) often consider the usual defintion of the Laplace-Beltrami operator, that is we will consider $-\Delta$ instead of $\Delta$.

Eigenfunctions of Laplace operators arise in physics as modes of periodic vibration of drums and membranes. They can also represent stationary states of a free quantum particle on a Riemannian manifold. More generally, eigenfunctions of Schr\"{o}dinger operators represent stationary energy states of atoms and molecules in quantum mechanics.

In mathematics, studies of eigenfunctions tend to fall into two categories:
\begin{itemize}
\item The analysis of ground states, i.e. $\phi_0$ or $\phi_1$. An eigenfunction is always the ground state Dirichlet eigenfunction in any of its nodal domains. Other questions in the spectral theory of the Laplacian concern estimates for the least positive eigenvalue (for example, see \cite{U}).
\item The analysis of high frequency limits (semi-classical limits) of eigenfunctions, i.e. the limit as the eigenvalue tends to infinity.
\end{itemize}
Our emphasis is on the high frequency behavior of eigenfunctions. Studies of high frequency behavior eigenfunctions also fall into two categories:
\begin{itemize}
\item Local results, which often hold for any solution of \eqref{starting point} on a (small) ball $B_r(x)$, irrespective of whether the eigenfunction extends to a global eigenfunction.
\item Global results for eigenfunctions that extend to $M$. A typical global assumption is that the eigenfunctions are also eigenfunctions of the wave group.
\end{itemize}

We are interested in global properties of eigenfunctions. These generally reflect the relation of the wave group and geodesic flow, particularly the long time behavior of waves and geodesics on the manifold.

The general approach to understand the global behavior of eigenfunctions is to do a phase space analysis, where the phase space is the co-tangent bundle $T^*M$ or an energy surface $S^*M$. We often identify $T^*M$ and $TM$ using the metric. For example, one often wishes to construct highly localized eigenfunctions or approximate eigenfunctions (quasi-modes) of $\Delta$ or to prove that they do not exist. To obtain global phase space results relating the behavior of eigenfunctions to the behavior of geodesics, it is necessary to use microlocal analysis, i.e. the calculus of pseudo-differential operators. Microlocal analysis is a mathematically precise formulation of the semi-classical limit in quantum mechanics. Pseudo-differential operators are quantizations $Op(a)$ of functions on the phase space $T^*M$: The classical pseudodifferential operators $Op(a)$ on $\rr^n$ are defined by the action on exponentials:
\begin{eqnarray*}
Op(a) e^{i\langle x,\xi\rangle} = a(x,\xi) e^{i\langle x,\xi\rangle}.\index{$Op(a)$, psudodifferential operator}
\end{eqnarray*}
The symbol $a(x,\xi)$ has order $m\in\rr$ if $\sup_K(1+|\xi|)^{j-m}|D_x^{\alpha}D_{\xi}^{\beta}a(x,\xi)|<\infty$ for all compact sets $K$ and all $\alpha,\beta,j$. Symbol classes can also be defined locally and the definition of pseudodifferential operators can be extended to manifolds. A symbol is called polyhomogeneous if it admits a classical asymptotic expansion
\begin{eqnarray*}
a(x,\xi) \sim \sum_{j=0}^{\infty} a_{m-j}(x,\xi),
\end{eqnarray*}
where the $a_{l}$ are homogeneous in $|\xi|\geq 1$ of order $l$. We call the leading term $\sigma_{Op(a)}:=a_m$\index{$\sigma_{Op(a)}$, principal symbol of a pseudodifferential operator $Op(a)$} the principal symbol of $Op(a)$. By $\Psi^m$\index{$\Psi^m(M)$, space of classical pseudodifferential operators on $M$ of order $m$} we denote the space of classical pseudodifferential operators on $M$ of order $m$. We have the exact sequence of algebras $0\rightarrow\Psi^{-1}\rightarrow\Psi^0 \overset{\sigma}{\rightarrow} C^{\infty}(SM)\rightarrow 0$, where $\sigma$ is the principal symbol map. A right-inverse of $\sigma$ mapping homogeneous symbols of order $0$ into $L^2$-bounded operators is called quantization or operator convention. Functions on $S^*M$ are also called observables.

The cotangent bundle is equipped with the symplectic form $\sum_i dx_i \wedge d\xi_i$. The metric defines the Hamiltonian vector field
\begin{eqnarray*}
H(x,\xi) = |\xi|_g = \sqrt{\sum_{i,j=1}^{n} g^{ij}(x) \xi_i \xi_j}
\end{eqnarray*}
on $T^*M$. The classical evolution is given by the geodesic flow of $(M, g)$, i.e. the Hamiltonian flow $g^t$ of $H$ on $T^*M$: By definition, $g^t(x,\xi) = (x_t,\xi_t)$, where $(x_t,\xi_t)$ is the terminal tangent vector at time $t$ of the unit speed geodesic starting at $x$ in the direction $\xi$. The Liouville measure $\mu_L$\index{$\mu_L$, Liouville measure} on $S^*M$ is by definition the measure $d\mu_L = \frac{dxd\xi}{dH}$ induced by the Hamitonian and the symplectic volume element $dx\,d\xi$ on $T^*M$. The geodesic flow preserves the Liouville measure. We can thus define a unitary operator $V^t$ on $L^2(S*M, d\mu_L)$ by
\begin{eqnarray*}
V^t(a) := a \circ g^t.
\end{eqnarray*}
The operator $V^ t$\index{$V^t$, translation operator associated to the geodesic flow} is called the translation operator associated to the geodesic flow. The geodesic flow is called ergodic, if $V^t$ has no invariant $L^2$-functions besides the constants. Equivalently, the geodesic flow is called ergodic, if any invariant set $E\subseteq S^*M$ has either zero measure or full measure.

The quantization of the Hamiltonian is the square root $\sqrt{\Delta}$ of the positive Laplacian. Quantum evolution is given by the wave group
\begin{eqnarray*}
U^t = e^{it\sqrt{\Delta}}.\index{$U^t$, wave group}
\end{eqnarray*}
It is generated by the pseudodifferential operator $\sqrt{\Delta}$ as defined by the spectral theorem: It has the same eigenfunctions as $\Delta$, but to the eigenvalues $\lambda$.

Evolution of observables is known in physics as the 'Heisenberg picture'. It is defined by
\begin{eqnarray*}
\alpha_t(A) = U^t A U^{-t}, \,\,\,\,\,\,\, A\in\Psi^m.
\end{eqnarray*}
Egorov's theorem yields a correspondence to the classical evolution $V^t(a)=a\circ g^t$. It says that $\alpha_t$ is an order preserving automorphism on the space of pseudodifferential operators, that is $\alpha_t(A)\in\Psi^m$ for all $A\in\Psi^m$ and that
\begin{eqnarray*}
\sigma_{\alpha_t(A)}(x,\xi) = \sigma_A(g^t(x,\xi)) = V^t(\sigma_A).
\end{eqnarray*}
This formula is almost universally taken to be the definition of quantization of a flow or map in the physics literature.

In quantum ergodicity, one studies the concentration and oscillation properties of eigenfunctions through the linear functionals
\begin{eqnarray*}
\rho_{\lambda_j}(A) = \langle A\phi_{\lambda_j},\phi_{\lambda_j}\rangle
\end{eqnarray*}
on the space of zeroth order pseudo-differential operators $A$. The possible limits of the family $\left\{\rho_{\lambda_j}\right\}$ are called quantum limits\index{quantum limit} or microlocal defect measures\index{microlocal defect measure}. The diagonal elements $\rho_{\lambda_j}(A)$ are interpreted in quantum mechanics as the expected value of the observable $Op(a)$ in the energy state $\lambda_j$. The off-diagonal matrix elements
\begin{eqnarray*}
\rho_{\lambda_j,\lambda_k}(A) = \langle A\phi_{\lambda_j},\phi_{\lambda_k}\rangle
\end{eqnarray*}
are interpreted as transition amplitudes between states. We fix a quantization $a\rightarrow Op(a)$. The matrix elements are then also called Wigner distributions:
\begin{eqnarray*}
W_{\lambda_j,\lambda_k}(a)=\rho_{\lambda_j,\lambda_k}(Op(a)).\index{$W_{\lambda_j}$, Wigner distribution}
\end{eqnarray*}
We first observe that $\rho_{\lambda_j,\lambda_k}(I)=\delta_{j,k}$ (Kronecker-Delta), since the eigenfunctions are orthonormal in $L^2(M)$. In the diagonal case, the functionals $\rho_{\lambda_k}$ are positive in the sense that for any operator $A$ we have $\rho_{\lambda_k}(A^*A)\geq0$. This can be seen by moving $A^*$ to the right side in the $L^2$-inner product. Writing out $\rho_{\lambda_j,\lambda_k}(U^tAU^{-t})$ and moving $U^{t}$ to the right side we find
\begin{eqnarray*}
\rho_{\lambda_j,\lambda_k}(U^tAU^{-t}) = e^{it(\lambda_k-\lambda_j)}\rho_{\lambda_j,\lambda_k}(A).
\end{eqnarray*}
These properties are summarized by saying that $\rho_{\lambda_j}$ is an invariant state\index{invariant state} on (the closure in the operator norm of) the algebra $\Psi^0$.

Let $Q$ denote the set of possible quantum limits. Any orthonormal basis such as $\left\{\phi_{\lambda_k}\right\}$ tends to $0$ weakly in $L^2$. Hence $\left\{K\phi_{\lambda_k}\right\}$ tends to $0$ weakly in $L^2$ for each compact operator $K$. Then, the diagonal elements $\rho_{\lambda_j}(K)$ tend to $0$ for all compact $K$. Given two pseudodifferential operators on $M$ with the same principal symbol of order zero, their difference is an operator of negative order and thus compact. It follows that $Q$ is independent of the choice of quantization.

Using standard estimates on pseudodifferential operators one shows (\cite{Z2009a}, $\S6$) that any weak limit is continuous on $C(S^*M)$. It is a positive functional since each $\rho_{\lambda_k}$ is and hence any limit is a probability measure.

By the invariance of the $\rho_{\lambda_k}$ under the automorphisms $\alpha_t$ on $\Psi^0$ and by Egorov's theorem we find that any limit of $\rho_{\lambda_k}(A)$ is a limit of $\rho_{\lambda_k}(Op(\sigma_A\circ g^t))$, and hence the limit is invariant under the geodesic flow $g^t$.

It follows from $\rho_{\lambda_j,\lambda_k}(U^tAU^{-t}) = e^{it(\lambda_k-\lambda_j)}\rho_{\lambda_j,\lambda_k}(A)$ that the off-diagonal matrix elements can only have a limit for subsequences $\left\{\lambda_{j_n}\right\}$ and $\left\{\lambda_{k_n}\right\}$ of eigenvalue-parameters such that the spectral gap $|\lambda_{j_n}-\lambda_{k_n}|$ tends to a limit $\tau\in\rr$. In that case, each limit $\mu$ is an eigenmeasure for the geodesic flow:
\begin{eqnarray*}
\mu(a\circ g^t) = e^{it\tau}\mu(a).
\end{eqnarray*}
A measure is called invariant under time-reversal, if it is invariant under the anti-symplectic involution $(x,\xi)\rightarrow(x,-\xi)$ on $T^*M$. Since the eigenfunctions are (by our assumption) real-valued and hence complex-conjugation invariant, it follows that any quantum limit is invariant under time-reversal.

From the mathematical point of view, one would like to know the behavior of the diagonal matrix elements and the off diagonal matrix elements, when the eigenvalue tends to infinity. One of the principal problems is:
\begin{problem}\label{quantum limits problem}
Determine the set $Q$ of quantum limits.
\end{problem}
As a motivating example, suppose that for a subsequence $k_j$ the functionals $\rho_{k_j}$ tend to the Liouville measure $\mu_L$. Let $E\subseteq M$ denote a measureable set whose boundary has measure zero. Testing against multiplication operators (with symbols given by smoothed versions of the characteristic function of $E$) yields (\cite{Z2009b}, p. 19)
\begin{eqnarray*}
\frac{1}{\vol(M)} \int_E |\phi_{k_j}(x)|^2 \, dx \rightarrow \frac{\vol(E)}{\vol(M)}.
\end{eqnarray*}
We interprete $|\phi_{k_j}(x)|^2 \, dx$ as the probability density of finding a particle of energy $\lambda_{k_j}^2$ in $E$. Then this sequence of probabilities tends to uniform measure and the eigenfunctions become uniformly distributed on $M$. However, the assumption $\rho_{k_j}\rightarrow\mu_L$ is much stronger, since then
\begin{eqnarray*}
\langle Op(1_E)\phi_{k_j},\phi_{k_j}\rangle \rightarrow \frac{\mu_L(\pi^{-1}(E))}{\mu_L(S^*M)},
\end{eqnarray*}
where $\pi: S^*M \rightarrow M$ is the natural projection. The Laplacian of $(M,g)$ is said to be QUE (quantum uniquely ergodic) if the only quantum limit for any orthonormal basis of eigenfunctions is the Liouville measure. The following conjecture was first stated by Rudnick-Sarnak (\cite{RS}):
\begin{conj}
Let $(M,g)$ be a negatively curved manifold. Then $\Delta$ is QUE.
\end{conj}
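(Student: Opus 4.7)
The plan would be to attack the Rudnick--Sarnak conjecture in two stages: first establish the strictly weaker \emph{quantum ergodicity} statement (that a density-one subsequence of eigenfunctions has Wigner distributions converging to the Liouville measure), and then confront the question of whether the exceptional subsequences (``scars'') can actually occur. For the first stage I would combine Egorov's theorem, as recalled in the excerpt, with the mean ergodic theorem. Since any negatively curved compact manifold has Anosov --- in particular ergodic --- geodesic flow, the Birkhoff averages $\frac{1}{T}\int_0^T V^t(a)\,dt$ converge in $L^2(S^*M,d\mu_L)$ to $\frac{1}{\mu_L(S^*M)}\int a\,d\mu_L$. Combined with the invariance identity $\rho_{\lambda_k}(\alpha_t(A))=\rho_{\lambda_k}(A)$ and Egorov's theorem, this yields, modulo terms of order $O(\lambda_k^{-1})$,
\begin{eqnarray*}
\rho_{\lambda_k}(A) \;=\; \frac{1}{T}\int_0^T \rho_{\lambda_k}\bigl(Op(V^t \sigma_A)\bigr)\, dt + O(\lambda_k^{-1}).
\end{eqnarray*}
A Hilbert--Schmidt variance estimate for the right-hand side (essentially Zelditch's argument) together with Chebyshev gives that for each fixed $A$ there is a density-one subsequence along which $\rho_{\lambda_k}(A) \to \frac{1}{\mu_L(S^*M)}\int \sigma_A\, d\mu_L$; a standard diagonal extraction in a countable dense subset of $\Psi^0$ promotes this to one density-one subsequence that works for every $A$.

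The hard part is ruling out the remaining, a priori density-zero, sequences of exceptional eigenfunctions. Any quantum limit $\mu\in Q$ is, as noted in the excerpt, a Borel probability measure on $S^*M$ that is invariant under both $g^t$ and the time-reversal involution. In negative curvature the simplex of flow-invariant measures is vast: it contains the arc-length measure on every closed geodesic, the equilibrium states of H\"{o}lder potentials, and all their convex combinations. The Rudnick--Sarnak conjecture asserts that $Q=\{\mu_L\}$, and this is the true obstacle --- indeed, it is an important open problem rather than an exercise.

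To make genuine progress I would try to push two complementary lines of attack. The first, pioneered by N.~Anantharaman, is to exploit the exponential instability of $g^t$ to establish a positive lower bound on the Kolmogorov--Sinai entropy $h_{\mu}(g^t)$ of every $\mu\in Q$, by propagating microlocalized wave packets under $U^t$ up to the Ehrenfest time of order $\log\lambda$ and invoking a hyperbolic dispersive estimate to beat the subexponential growth of covering numbers. Such entropy lower bounds already rule out, for instance, quantum limits supported on finite unions of closed geodesics. The second, available only for arithmetic quotients $Y=\Gamma\backslash\hh$, is to impose joint Hecke eigenfunction hypotheses and invoke Lindenstrauss's measure-classification theorem for positive-entropy measures invariant under a diagonal action.

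In the general negatively curved setting neither approach is known to close the gap between ``positive (or even large) entropy'' and ``equal to $\mu_L$'', and I do not expect a plan of this length to dispose of the conjecture. The realistic deliverable is: (i)~a proof of quantum ergodicity, (ii)~the statement that every $\mu\in Q$ is $g^t$- and time-reversal-invariant, and (iii)~an explicit entropy lower bound on $h_{\mu}(g^t)$, leaving open the existence of partially scarred singular continuous quantum limits. This honest limitation is the reason the statement above is labelled \textbf{conj} rather than \textbf{thm}.
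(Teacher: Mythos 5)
The statement you were asked about is the Rudnick--Sarnak quantum unique ergodicity conjecture, which the paper states as a \emph{conjecture} and does not prove --- it is a well-known open problem, cited there only as motivation for studying quantum limits. Your proposal correctly recognizes this: you do not claim a proof, and the partial results you sketch (quantum ergodicity for a density-one subsequence via Egorov's theorem, the mean ergodic theorem and a variance estimate; the flow- and time-reversal-invariance of any quantum limit; Anantharaman-type entropy lower bounds; Lindenstrauss's measure classification in the arithmetic Hecke setting) are an accurate description of the known landscape and are consistent with the properties of quantum limits recalled in the paper's introduction. Since the paper offers no proof either, there is nothing to compare beyond noting that your honest conclusion --- that the gap between positive entropy and the Liouville measure remains open in the general negatively curved case --- is exactly why the statement carries the label \textbf{conj} in the source.
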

Off-diagonal matrix elements are also important as transition amplitudes between states. As described above, a sequence of such matrix elements cannot have a weak limit unless the spectral gap tends to a limit $\tau$. We denote the corresponding set of limits by $Q_{\tau}$. Then we can also formulate:
\begin{problem}
Determine the set $Q_{\tau}$ of off-diagonal quantum limits.
\end{problem}
For examples of possible quantum limits we refer to the overview articles \cite{Z2009a} and \cite{Z2009b}, which also describe recent developments of mathematical quantum chaos such as mixing properties of eigenfunctions, boundary quantum ergodicity, converse quantum ergodicity, and other problems.

\newpage
\pagebreak
\thispagestyle{empty}

\subsection*{Outline and statement of results}
Let $X=G/K$\index{$X=G/K$, symmetric space} denote a symmetric space of the noncompact type, where $G$\index{$G$, semisimple Lie group} is a connected semisimple Lie group with finite center and $K$\index{$K$, maximal compact subgroup of $G$} a maximal compact subgroup of $G$. In Section \ref{Preliminaries} we recall basic definitions concerning symmetric spaces and we give detailed descriptions of their geometry. Our setting is as follows: Let $G=KAN$ be an Iwasawa decomposition of $G$ and let $M$ denote the centralizer of $A$ in $K$. The geodesic boundary of $X$ can be identified with the flag manifold $B:=K/M$. Let $o:=K\in G/K$ denote the \emph{origin} of the symmetric space $X$. We fix a cocompact and torsion free discrete subgroup $\Gamma$ of $G$. Let $\Delta$, resp. $\Delta_{\Gamma}$, denote the Laplace operator of $X$, resp. $X_{\Gamma}$.

In \cite{Z86}, S. Zelditch introduced a natural pseudodifferential operator convention for $G/K$, when $G=PSU(1,1)$, $K=PSO(2)$. In Section \ref{Pseudodifferential analysis on symmetric spaces} we generalize this calculus to symmetric spaces of the noncompact type. We sometimes restrict our results to rank one spaces. The interesting aspect of this calculus is its $G$-equivariance: Let $SX$ denote the unit tangent bundle of $X=G/K$. If $a\in C^{\infty}(SX)$ is $\Gamma$-invariant under the natural action of $G$ on $SX$, then it yields a pseudodifferential operator on the quotient $X_{\Gamma}:=\Gamma\backslash G/K$\index{$X_{\Gamma}$, compact hyperbolic manifold}. We can hence use the $G$-equivariant non-Euclidean pseudodifferential calculus to define Wigner distributions on the quotient $X_{\Gamma}=\Gamma\backslash G/K$. 

If $Y$ is a manifold, $u$ a distribution or hyperfunction on $Y$ and $\phi$ a test function, then we denote the pairing $\langle \varphi, u\rangle_Y$ by $\int_Y  \varphi(y)u(dy)$. The starting point of all following observations is Helgason's representation theorem for joint eigenfunctions of the algebra $\mathbb{D}(G/K)$ of translation invariant differential operators: Given a joint eigenfunction $\phi\in\mathcal{E}_{\lambda}(X)$ (see Section \ref{Section Helgason boundary values}), then there is a linear functional $T_{\phi}$ on the space of analytic functions on $B$ such that $\phi$ is given by the Poisson-Helgason-transform $\phi(z)=\langle e^{(i\lambda+\rho)\langle z,b\rangle}, T\rangle_B=\int_B e^{(i\lambda+\rho)\langle z,b\rangle} T(db)$. Here, the function $e_{\lambda,b}:=e^{(i\lambda+\rho)\langle z,b\rangle}$ denotes a generalized Poisson kernel (see Section \ref{Invariant differential operators}).

In Section \ref{Section Helgason boundary values} we describe the theory of Helgason boundary values. In particular, we describe their regularity as a function of the spectral parameter $\lambda\in\la$, where $\la$ is the Lie algebra of $A$.

Wigner distributions tend to measures with certain invariance properties. The question arises whether there exist distributions constructed from eigenfunctions which are related to the Wigner-distributions and which already possess these invariance properties. For hyperbolic surfaces, such distributions were constucted by N. Anantharaman and S. Zeldirch in \cite{AZ}. These distributions were termed Patterson-Sullivan distributions by analogy with their construction of boundary measures associated to ground states on infinite volume hyperbolic manifolds (\cite{Sul}): The Patterson-Sullivan distribution associated to a real eigenfunction $\phi_{ir}$ corresponding to the eigenvalue $1/4+r^2$ and with associated boundary values $T_{ir}$ is the distribution on $B^{(2)}$ (the space consisting of distinct boundary points $b,b'\in B$) defined by
\begin{eqnarray}
ps_{ir}(db,db') := \frac{T_{ir}(db) T_{ir}(db')} {|b-b'|^{1+2ir}}.
\end{eqnarray}
The interesting aspect of quotients $X_{\Gamma}$ lies in the study of $\Gamma$-invariant eigenfunctions on the original symmetric space: If the eigenfunction is $\Gamma$-invariant, then the corresponding Patterson-Sullivan distribution is $\Gamma$-invariant and invariant under time reversal. To obtain a geodesic flow invariant distribution $PS_{ir}$ on $SX$, Anantharaman and Zelditch tensor with $dt$. They also define normalized Patterson-Sullivan distributions by dividing by the integral against $1$. The result is a geodesic flow invariant distribution $\widehat{PS}_{ir}$ constructed as a quadratic expression in the eigenfunctions. Anantharaman and Zelditch then proved that there is an explicit intertwining operator $L_{ir}$ mapping Patterson-Sullivan distributions into Wigner distributions.

We explain how to generalize these definitions to symmetric spaces of the noncompact type: Following \cite{E96} we say that two distinct boundary points $b,b'\in B$ can be joint at infinity if there is a geodesic in $X$ with forward endpoint $b$ and backwards endpoint $b'$. We describe in Section \ref{Preliminaries} the open dense subset $B^{(2)}$ of distinct boundary points that can be joint at infinity. It turns out that this space is invariant under the action of $G$ on $B$ and identifies with the homogeneous space $G/MA$. We introduce functions $d_{\lambda}$ on $B^{(2)}$ and a geodesic Radon transform $\mathcal{R}:C_c^{\infty}(SX)\rightarrow C_c^{\infty}(B^{(2)})$ such that the expression
\begin{eqnarray}\label{expression}
\langle a,PS_{\lambda}\rangle_{SX} := \int_{B^{(2)}} \, d_{\lambda}(b,b') \, \mathcal{R}(a)(b,b') \, T_{\lambda}(db)\, T_{\lambda}(db')
\end{eqnarray}
defines a $\Gamma$-invariant distribution on $SX$, and this is the generalized \emph{Patterson-Sullivan distribution} associated to the eigenfunction $\phi\in\mathcal{E}_{\lambda}$ (Sec. \ref{Invariant differential operators}). The $PS_{\lambda}$ are invariant under the geodesic flow and under time reversal. The weight functions $d_{\lambda}$ will be called \emph{intermediate values} because they satisfy a certain equivariance property, which generalizes a so-called intermediate values formula for hyperbolic surfaces (Sec. \ref{Patterson-Sullivan distributions}).

As was pointed out in the introduction of \cite{AZ} it is of interest to also have analogous definitions for off-diagonal matrix entries. We will in fact also consider these off-diagonal elements and off-diagonal Patterson-Sullivan distributions: In Section \ref{Patterson-Sullivan distributions} we use off-diagonal intermediate values $d_{\lambda,\mu}$ on $B^{(2)}$. Given joint eigenfunctions $\phi$ and $\psi$ we then introduce general off-diagonal Patteson-Sullivan distributions on $SX$.

The point is that all Patterson-Sullivan distributions we consider are $\Gamma$-invariant. We show how this lets the definitions descend to quotients $X_{\Gamma}$. In order to generalize the above mentioned results for hyperbolic surfaces, we will find an explicit intertwining operator that maps off-diagonal Patterson-Sullivan distributions into non-Euclidean Wigner distributions.

\newpage
\pagebreak
\thispagestyle{empty} 

\section{Preliminaries}\label{Preliminaries}

A Riemannian manifold $X$ is a called a \emph{homogeneous space} if its group of Riemannian isometries acts transitively on $X$. We consider a point $x$ of a connected Riemannian manifold $X$. Let $U$ denote a symmetric neighborhood of $M$ in the tangent space of $x$ such that the exponential map is well-defined on $U$ and a diffeomorphism onto its image $V$. The symmetry $u\mapsto -u$ of $U$ induces a map $s_x$ on $V$, which we call the \emph{local geodesic symmetry centered at $x$}\index{geodesic symmetry}. We say that $X$ is a \emph{Riemannian locally symmetric} space if for any $x$ in $X$ the corresponding local symmetry at $x$ is a local isometry of $X$. We say that $X$ is \emph{globally symmetric} for any $x$ this isometry may be extended uniquely to $X$. A complete simply connected locally symmetric space is globally symmetric. In this sense, globally symmetric spaces are complete spaces which possess a very large group of isometries. In particular, their group of isometries acts transitively. We recall material from \cite{He01} and \cite{E96} for some background.

A globally symmetric space $X$ is the Cartesian Riemannian product of three globally symmetric spaces $X = \rr^n \times D \times T$ (\emph{de Rham decomposition}), where $D$ has nonpositive curvature, where $T$ has nonnegative curvature, and where $D$ and $T$ may not be written as a product of $\rr$ with another Riemannian manifold. We say that $D$ is of noncompact type and $T$ is of compact type. We will be interested in symmetric spaces of the noncompact type.

The structure of Riemannian symmetric spaces is intrinsically linked with the theory of Lie groups: Let $G$ denote the isometry group of the connected Riemannian manifold $X$. For a compact subset $C$ of $X$ and an open subset $U$ of $X$ put $W(C,U):=\left\{g\in G:g\cdot C\subset U\right\}$. The compact open topology is defined as the smallest topology on $G$ for which all the sets $W(C,U)$ are open. For this topology, $G$ is Hausdorff, separable, locally compact and second countable. If $X$ is globally symmetric, $G$ can be proved to carry a structure of Lie group compatible with this topology. Let $G_0$ denote the identity component of $G$, select a point $p\in X$ and denote by $K$ the subbgroup of $G_0$ which stabilizes $p$. Then $K$ is a maximal compact subgroup of $G_0$ and $G_0/K$ is isometric to $X$. On the other hand, given a connected Lie group $G_0$ and a closed subgroup $K$ of $G_0$, we call $(G_0,K)$ a Riemannian symmetric pair if the group $\Ad_{G_0}(K)$ is compact and if there exists an involutive smooth automorphism $\sigma$ of $G_0$, which is not the identity, such that $(K_{\sigma})_0\subset K \subset K_{\sigma}$, where $K_{\sigma}$ is the set of fixed point of $\sigma$ in $G_0$ and where $(K_{\sigma})_0$ is its identity component. Then there is a Riemannian metric on $G_0/K$ such that $G_0/K$ is a Riemannian symmetric space. We now explain these constructions for symmetric spaces of the noncompact type.

Call a Lie algebra \emph{semisimple} if it is a direct sum of simple (non-abelian) Lie algebras without proper ideals. A (connected) Lie group is said to be semisimple if its Lie algebra is semisimple, that is it has no non-trivial abelian connected normal closed subgroup. We denote the Lie algebra of $G$ by $\mathfrak{g}$ and let $Tr$ denote the \emph{trace}\index{trace} of a vector space endomorphism. We consider the symmetric bilinear form $B(X,Y)=Tr(\ad X \ad Y)$ on $\mathfrak{g}\times\mathfrak{g}$ and call $B(\cdot,\cdot)$\index{$B(\cdot,\cdot)$, Killing form} the \emph{Killing form}\index{Killing form} of $\mathfrak{g}$. A Lie algebra $\mathfrak{g}$ over a field of characteristic $0$ is \emph{semisimple}\index{semisimple} if and only if its Killing form $B$ of $\mathfrak{g}$ is non-degenerate. 

Let $X$ be a symmetric space of the noncompact type. If $p$ is any point of $X$, its stabilizer is a maximal compact subgroup of $G_0$. If $K$ is a maximal compact subgroup of $G_0$, then there is a unique point $p$ in $X$ such that $K$ is the stabilizer of $p$. Any two maximal compact subgroups of $G_0$ are conjugate by an element of $G_0$. If $\mathfrak{k}$ is the Lie algebra of $K$, the Killing form of $\mathfrak{g}$ is strictly negative definite on $\mathfrak{k}$. The group $G_0$ acts transitively on $X$. It is a semisimple Lie group with finite center. Fix a point $p\in X$ and let $K$ denote its stabilizer in $G_0$. Consider the coset space $G_0/K$ and the diffeomorphism $\phi: G_0/K \rightarrow X, \,\, \phi(gK) = g(p)$ for $g\in G_0$. Denote by $\langle\hspace{1mm},\hspace{1mm}\rangle$\index{$\langle\hspace{1mm},\hspace{1mm}\rangle$, metric} the metric on $G_0/K$ obtained by pulling back the metric of $X$ by $\phi$. Then $\phi$ is an isometry and the metric $\langle\hspace{1mm},\hspace{1mm}\rangle$ is left $G_0$-invariant, that is left translations on $G_0/K$ by elements of $G_0$ are isometries of the metric space $(G_0/K,\langle\hspace{1mm},\hspace{1mm}\rangle)$. Hence each globally symmetric space of the noncompact type can be written in the form $G_0/K$ as above. These observations are summarized by
\begin{thm}[E. Cartan]
The Riemannian globally symmetric spaces of the noncompact type are the spaces of the form $G/K$ equipped with a $G$-invariant metric, where $G$ is a connected semisimple Lie group with finite center and $K$ a maximal compact subgroup of $G$.
\end{thm}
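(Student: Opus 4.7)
The proof splits into the two directions packaged in the statement, and most of the ingredients are already assembled in the preceding discussion; the plan is to organize them carefully.

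For the forward direction, I would start with a globally symmetric space $X$ of the noncompact type. First I would equip the full isometry group $G$ with the compact–open topology; as noted, this makes $G$ a Hausdorff, separable, locally compact, second countable topological group. The key structural input, which I would cite rather than reprove (it is the Myers–Steenrod theorem together with van Dantzig–van der Waerden), is that for a connected Riemannian manifold this topology upgrades $G$ to a Lie group acting smoothly and properly. Passing to the identity component $G_0$, I would pick any base point $p \in X$ and let $K$ be its stabilizer. Transitivity of $G_0$ on $X$ follows from the symmetric-space hypothesis (local geodesic symmetries generate enough isometries, and completeness lets one extend them globally; this is where the global symmetry is essential). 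Then $\phi\colon G_0/K \to X$, $gK \mapsto g\cdot p$, is a $G_0$-equivariant diffeomorphism, and pulling back the metric of $X$ gives a left $G_0$-invariant Riemannian structure on $G_0/K$ that turns $\phi$ into an isometry.

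Next I would show that $K$ is compact and in fact maximal compact in $G_0$. Compactness follows because $K$ is a closed subgroup of $G_0$ that fixes $p$ and preserves the metric, hence embeds into the orthogonal group of $T_p X$. Maximality requires nonpositive sectional curvature, which is part of the noncompact-type assumption: any compact subgroup $K'\supset K$ would have a fixed point by the Cartan fixed-point theorem on a complete simply connected manifold of nonpositive curvature, and this fixed point must coincide with $p$, forcing $K'=K$. Semisimplicity and finiteness of the center of $G_0$ I would deduce by constructing the Cartan involution $\sigma$ from the geodesic symmetry $s_p$: its differential at $e$ gives the decomposition $\g=\lk\oplus\lp$, and the Killing form restricted to $\lp$ is positive definite (noncompact type) while negative definite on $\lk$, so $B$ is nondegenerate on $\g$; this yields semisimplicity. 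Finiteness of the center follows because the center of $G_0$ is contained in $K$ (it acts trivially on $X$ after quotienting), hence compact, and a compact subgroup of the center of a semisimple Lie group is finite.

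For the converse, given $(G,K)$ with $G$ connected semisimple with finite center and $K$ maximal compact, I would use the Cartan decomposition $\g=\lk\oplus\lp$ coming from the Cartan involution $\theta$ and put on $\lp \cong T_{eK}(G/K)$ the positive definite form obtained from $B|_{\lp\times\lp}$. Translating by $G$ yields a $G$-invariant Riemannian metric, and the geodesic symmetry at $eK$ is induced by $\theta$, hence extends to a global isometry; left translation by elements of $G$ then provides global geodesic symmetries at every point, so $G/K$ is globally symmetric. That it is of the noncompact type amounts to $B|_{\lp}$ being positive definite, which is the defining property when $G$ is noncompact semisimple with $K$ maximal compact.

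The main obstacle, and the one I would treat carefully rather than invoke as a black box, is the maximality of $K$ and simultaneously the identification of the de Rham factor in the decomposition $X = \mathbb{R}^n \times D \times T$: one must exclude a Euclidean factor and a compact-type factor, and it is precisely the strict negativity of $B$ on $\lk$ together with strict positivity on $\lp$ that enforces this. Verifying that the Lie algebra produced by the isometry action has no abelian or compact simple ideals is the technical core; everything else is bookkeeping.
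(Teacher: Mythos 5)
Your argument is correct and is essentially the classical route the paper relies on: the paper offers no proof of this theorem at all, only a summary of facts quoted from Eberlein and Helgason (the isometry group as a Lie group in the compact--open topology, transitivity of $G_0$, the point stabilizer $K$ as a maximal compact subgroup, semisimplicity and finite center of $G_0$, and the $G_0$-equivariant isometry $G_0/K \cong X$), and your proposal simply supplies the standard justifications for precisely these facts together with the converse via the Cartan decomposition. The one step you leave thin --- why the Cartan fixed point of a larger compact group $K'\supset K$ must coincide with $p$, which requires that the isotropy representation of $K$ has no nonzero fixed vectors because a Euclidean de Rham factor is excluded --- is the point you yourself flag as the technical core, and the paper takes it on citation as well.
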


\subsection{Symmetric spaces and real semisimple Lie groups}\label{Decomosition of real semisimple Lie groups}
\begin{defn}
A \emph{Riemannian symmetric space of the noncompact type} is a homogeneous space $X=G/K$\index{$X=G/K$, symmetric space of the noncompact type}, where $G$\index{$G$, real semisimple Lie group with finite center} is a real connected semisimple Lie group with finite center and $K$\index{$K$, maximal compact subgroup of $G$} is a maximal compact subgroup of $G$.
\end{defn}

Let $G$ denote a connected Lie group with Lie algebra $\g$ and let $H$ be a closed subgroup of $G$ with Lie algebra $\lh$. By $G/H$ we denote the quotient space consisting of left cosets $gH$, $g\in G$. Let $\pi:G\rightarrow G/H$ denote the natural projection. Choose a complementary subspace $\mathfrak{m}$ of $\g$ such that $\mathfrak{g}=\mathfrak{h}\oplus\mathfrak{m}$. Let $X_1,...,X_r$ and $X_{r+1},...,X_n$ be bases of $\mathfrak{m}$ and $\mathfrak{h}$, respectively. If $g\in G$, the mapping
\begin{eqnarray}\label{G/H coordinates}
(x_1,...,x_r)\mapsto \pi (g\exp (x_1X_1 + ... + x_rX_r))
\end{eqnarray}
is a diffeomorphism of a neighborhood of $0\in\mathfrak{m}$ onto a neighborhood of the point $\pi(g)=gH\in G/H$. The inverse of \eqref{G/H coordinates} is a local coordinate system near $gH$, turning a neighborhood of each $\pi(g)$ and hence $G/H$ into a manifold. 

The Lie algebra $\mathfrak{g}$ is naturally identified with the tangent space $T_e G$ of $G$ at the identity $e\in G$. We list basic results and definitions about semisimple Lie groups. Details can be found in standard sources (\cite{He01}).

For each $X\in\mathfrak{g}\cong T_e G$ there is a unique homomorphism $\gamma_X : (\rr,+)\rightarrow G$ such that $\gamma_X'(0)=X$. The image of $\gamma_X$ is called a \emph{one parameter subgroup of} $G$. The mapping $\exp:\mathfrak{g}\rightarrow G, X\mapsto\exp(X):=\gamma_X(1)$ is called the \emph{exponential map}\index{exponential map} of $G$. We have $e^{tX}=\gamma_X(t)$ for all $t\in\rr$.

Each $g\in G$ defines an inner automorphism $C_g:G\rightarrow G$ by $C_g(h)=ghg^{-1}$ of the group $G$. Taking the derivative we define a Lie algebra automorphism
\begin{eqnarray*}
\Ad(g)=dC_g:\mathfrak{g}\rightarrow \mathfrak{g}.
\end{eqnarray*}
The map $\Ad:G\rightarrow\textnormal{Aut}(\g)$ is called the \emph{adjoint representation} of $G$\index{$\Ad$, adjoint representation}\index{$\ad$, adjoint representation}. We will often denote the corresponding group action of $G$ on $\g$ by $g\cdot X$ ($X\in\g$). For $X\in \mathfrak{g}$ we define a linear transformation
\begin{eqnarray*}
\ad X:\mathfrak{g}\rightarrow\mathfrak{g}, \,\,\,\,  (\ad X)(Y)=[X,Y],
\end{eqnarray*}
where $[\hspace{1mm},\hspace{1mm}]$ denotes the Lie bracket of vector fields on $G$.

If $\sigma$ is an automorphism of $\mathfrak{g}$ then $\ad(\sigma X)=\sigma\circ\ad X\circ\sigma^{-1}$ so by $Tr(AB)=Tr(BA)$ we have $B(\sigma X,\sigma Y)=B(X,Y)$ and $B([X,Y],Z) + B(Y,[X,Z]) = 0.$ If $\mathfrak{a}$ is an ideal in $\mathfrak{g}$, then the Killing form of $\mathfrak{a}$ coincides with the restriction of $B$ to $\mathfrak{a}\times\mathfrak{a}$.

The space $G/H$ is called \emph{reductive}\index{reductive}, if $\mathfrak{m}$ as above can be chosen such that
\begin{eqnarray}\label{reductive}
\mathfrak{g}=\mathfrak{h}\oplus\mathfrak{m},\hspace{4mm}\Ad_G(h)\mathfrak{m}\subset\mathfrak{m} \hspace{4mm}(h\in H).
\end{eqnarray}
If $\Ad(H)$ is compact, then $G/H$ is reductive: In fact, $\g$ will then admit a positive definite quadratic form invariant under $\Ad_G(H)$ and $\mathfrak{m}$ can be chosen to be the orthogonal complement (w.r.t. this quadratic form) of $\mathfrak{h}$ in $\mathfrak{g}$ (\cite{He00}, p. 284).

\subsubsection{Tangent spaces and Cartan decomposition}
For the descriptions of the geometric structure of a symmetric space $X=G/K$ in terms of algebraic data given by the semisimple Lie group $G$ we orient ourselves on \cite{E96}.

We write $o:=K\in G/K$ and call $o$ the \emph{origin} of the symmetric space $X=G/K$. Define an involution $\sigma: G\rightarrow G$ by $\sigma(g) = s \circ g \circ s$ (\cite{E96}, p. 71), where $s$ denotes the geodesic symmetry at $o$. The differential of $\sigma$ at $e$ is $\theta=d\sigma:\mathfrak{g}\rightarrow\mathfrak{g}$, which is also characterized by the equation $\sigma(e^{tX})=e^{t\theta(X)}$ for all $X\in\mathfrak{g}$. Since $\theta^2=\id_{\g}$ we obtain the \emph{Cartan decomposition}\index{Cartan decomposition}
\begin{eqnarray*}
\mathfrak{g} = \mathfrak{k} \oplus \mathfrak{p},
\end{eqnarray*}
where $\mathfrak{k} = \left\{ X\in\mathfrak{g} : \theta X = X\right\}$ and $\mathfrak{p} = \left\{ X\in\mathfrak{g} : \theta X = -X\right\}$ \index{$\mathfrak{p}$, orthogonal complement of $\mathfrak{k}$} are the eigenspaces corresponding to the eigenvalues $+1$ and $-1$. The Lie algebra automorphism $\theta$ preserves Lie brackets, so we have
\begin{eqnarray}\label{K invariant}
\left[\mathfrak{k},\mathfrak{k}\right]\subseteq\mathfrak{k}, \hspace{2mm} \left[\mathfrak{k},\mathfrak{p}\right]\subseteq\mathfrak{p}, \hspace{2mm} \left[\mathfrak{p},\mathfrak{p}\right]\subseteq\mathfrak{k}.
\end{eqnarray}

We consider the map $\pi: G\rightarrow X$ given by $g\rightarrow g\cdot o$. Taking the differential we obtain a linear map $d\pi : \mathfrak{g} \rightarrow \g$, whose kernel is precisely $\mathfrak{k}$. Moreover, $\mathfrak{k}$ is the Lie algebra of the maximal compact subgroup $K=\left\{g\in G: g\cdot o=o\right\}$ of $G$. The restriction $d\pi: \mathfrak{p}\rightarrow \g$ is a monomorphism and we use it to identify $T_o X=\mathfrak{p}$. Although we restricted the above  constructions to the particular point $o$, these results can be obtained at for each $p\in X$.

It also follows that $\Ad(K)$ leaves $\mathfrak{p}$ invariant. Moreover, the elements of $\Ad(K)$ are orthogonal transformations on $\mathfrak{p}$ with respect to the restriction to $\mathfrak{p}$ of the Killing form $B$ of $\mathfrak{g}$. The spaces $\mathfrak{p}$ and $\mathfrak{k}$ are orthogonal with respect to the Killing form $B$ of $\mathfrak{g}$.

For $X,Y\in\mathfrak{g}$ we set $B_{\theta}(X,Y)=-B(\theta X, Y)$\index{$B_{\theta}$, positive definie bilinear form on $\g$}. Then $B_{\theta}$ is a positive definite bilinear form on $\mathfrak{g}$. We can therefore call $|X|^2=B_{\theta}(X,X)$ the \emph{norm on $\mathfrak{g}$ induced by the Killing form}\index{norm induced by the Killing form}. The restriction of $B_{\theta}$ to $\mathfrak{p}$ equals the Killing form of $\mathfrak{g}$ restricted to $\mathfrak{p}$.

\subsubsection{Rank of Symmetric Spaces}
A totally geodesic submanifold of a globally symmetric space $X$ is necessarily itself a globally symmetric space. If $X$ is of the noncompact type, totally geodesic submanifolds have nonpositive curvature and thus don't have compact type factors (\cite{E96}, Ch. 2). We say that $X$ has \emph{rank} $k$\index{rank of a symmetric space} if it contains a flat totally geodesic submanifold of dimension $k$ and if every other flat totally geodesic submanifold has dimension $\leq k$. As $X$ contains geodesics, its rank is $\geq 1$. A symmetric space has rank one if and only if it has negative sectional curvature, that is its sectional curvature (as a function on the Grassmanian bundle of tangent $2$-planes of $X$) is everywhere negative.

As usual let $X=G/K$ and $\mathfrak{g}=\mathfrak{k}+\mathfrak{p}$ be a Cartan decomposition. The rank of $X$ may also be defined by the dimension of a maximal abelian subspace $\mathfrak{a}$\index{$\mathfrak{a}$, abelian Lie algebra} of $\mathfrak{p}$. It does neither depend on the choice of $\mathfrak{a}$ nor $\mathfrak{p}$ (\cite{E96}, p. 76). These definitions of rank are equivalent (\cite{E96}, 1.12.12 and (2.15.4)).

\subsubsection{Root space decompositions}
Fix a maximal abelian subspace $\mathfrak{a}$ of $\mathfrak{p}$. Let $\la^*$ be the real dual space of $\la$ and let $\la^*_{\cc}$ be its complexification. The operators $\ad X$ and $\ad Y$ commute in $\textnormal{End}(\mathfrak{g})$ for all $X,Y\in\mathfrak{a}$. Let $\mathfrak{a}^*$ be the real dual of $\mathfrak{a}$ and let $\alpha\in\mathfrak{a}^*$. Then define
\begin{eqnarray*}
\mathfrak{g}_{\alpha} = \left\{ X\in\mathfrak{g}: (\ad H)(X)=\alpha(H)X \textnormal{ for all } H\in\mathfrak{a}\right\}.
\end{eqnarray*}
\index{$\mathfrak{g}_{\alpha}$, root subspace} An element $0\neq\alpha\in\mathfrak{a}^*$ is called a restricted \emph{root}\index{root} if $\mathfrak{g}_{\alpha}\neq\left\{0\right\}$. It also follows that $\ad(\mathfrak{a})$ is a commuting family of linear transformations of $\mathfrak{g}$. We denote the set of roots determined by $\mathfrak{a}$ by $\Sigma$. Then $\Sigma\subset\mathfrak{g}^*$ is a nonempty finite set. We have the $B_{\theta}$-orthogonal direct sum decomposition
\begin{eqnarray*}
\mathfrak{g} = \mathfrak{g}_{0} + \sum_{\alpha\in\Sigma}\mathfrak{g}_{\alpha}
\end{eqnarray*}
(\cite{He01}, p. 263 or \cite{E96}, p. 78). This is called the \emph{root space decomposition}\index{root space decomposition} of $\mathfrak{g}$ determined by $\mathfrak{a}$. For $X\in\mathfrak{g}$ let $Z(X)$\index{$Z(X)$, centralizer of $X$} denote the centralizer of $X$ in $\mathfrak{g}$. An element $X\in\mathfrak{p}$ is called \emph{regular}\index{regular} if $Z(X)\cap\mathfrak{p}$ is a maximal abelian subspace of $\mathfrak{p}$. Otherwise $X$ is called \emph{singular}. An element $X\neq0$ is regular if and only if $\alpha(X)\neq0$ for every nonzero root $\alpha\in\Sigma$ that occurs in the root space decomposition of $\mathfrak{g}$ determined by $\mathfrak{a}$. Let $\mathfrak{a}'=R(\mathfrak{a})$ denote the set of regular elements. This set is the complement in $\mathfrak{a}$ of the union of the finite collection of hyperplanes
\begin{eqnarray*}
\mathfrak{a}_{\alpha}=\left\{X\in\mathfrak{a}:\alpha(X)=0\right\}, \,\,\,\,\, \alpha\in\Sigma.
\end{eqnarray*}
We write
\begin{eqnarray*}
H \sim H' \,\,\, \Longleftrightarrow \,\,\, \alpha(H)\alpha(H')>0 \,\, \forall \, \alpha\in\Sigma.
\end{eqnarray*}
This $\sim$ defines an equivalence relation in $R(\mathfrak{a})$. The corresponding equivalence classes are called \emph{Weyl chambers}\index{Weyl chambers}. We fix a Weyl chamber $\mathfrak{a}^+$ and call it the \emph{positive Weyl chamber}\index{positive Weyl chamber}. We call a root $\alpha$ \emph{positive}\index{positive root} and write $\alpha>0$ or $\alpha\in\Sigma^+$ if $\alpha$ has positive values on $\mathfrak{a}^+$. A root $\alpha$ is called \emph{simple}\index{simple root} if it is not the sum of two positive roots. Then $\mathfrak{a}^+$\index{$\mathfrak{a}^+$, positive Weyl chamber} is given by
\begin{eqnarray*}
\mathfrak{a}^+ = \left\{   H\in\mathfrak{a}: \alpha_1(H),...,\alpha_l(H)>0   \right\},
\end{eqnarray*}
where $\left\{\alpha_1,...,\alpha_l\right\}$ is the set of simple roots. The set of simple roots is a basis of $\mathfrak{a}^*$ (\cite{E96}, p. 81). Let the real dual space $\mathfrak{a}^*$ be ordered lexicographically with respect to this basis (\cite{He01}, p 173).

\subsubsection{The Weyl group}\label{The Weyl group}
Let $\langle\hspace{1mm},\hspace{1mm}\rangle$\index{$\langle\hspace{1mm},\hspace{1mm}\rangle$, Killing form} denote the Killing form. The Riesz representation theorem states that for each $\alpha\in\Sigma$ there is a unique \emph{root vector}\index{root vector} $H_{\alpha}\in\mathfrak{a}$\index{$H_{\alpha}\in\mathfrak{a}$, root vector} such that $\alpha(H)=\langle H,H_{\alpha}\rangle$ for all $H\in\mathfrak{a}$. Given a root $\alpha$, we consider the \emph{reflection}\index{$S_{\alpha}$, reflection in the hyperplane $\mathfrak{a}_{\alpha}$} $S_{\alpha}$ in the hyperplane $\mathfrak{a}_{\alpha}$ of $\mathfrak{a}$ that is orthogonal to $H_{\alpha}$ (the kernel of $\alpha$). This reflection is concretely given by the Householder transformation
\begin{eqnarray*}
S_{\alpha}(H) = H - 2\frac{\langle H_{\alpha}, H\rangle}{\langle H_{\alpha}, H_{\alpha}\rangle}H_{\alpha}.
\end{eqnarray*}
The $S_{\alpha}$ permute the root vectors (\cite{E96}, p. 81).

The \emph{Weyl group}\index{Weyl group} $W=W(\g,\la)$ is defined as the group $W$ of isometries of $\mathfrak{a}$ generated by the $S_{\alpha}$. We write\index{$Z_K(\la)$, centralizer of $\la$ in $K$}
\begin{eqnarray*}
Z_K(\mathfrak{a})=\left\{k\in K: \Ad(k)(H)=H \hspace{2mm}\forall H\in\mathfrak{a}\right\}
\end{eqnarray*}
for the centralizer of $\mathfrak{a}$ in $K$ and
\begin{eqnarray*}
N_K(\mathfrak{a})=\left\{k\in K: \Ad(k)\mathfrak{a}=\mathfrak{a}\right\}
\end{eqnarray*}
for the normalizer \index{$N_K(\la)$, normalizer of $\la$ in $K$} $\la$ in $K$. Then $N_K(\la)$ normalizes $Z_K(\la)$. Both groups are compact and have the same Lie algebra, namely
\begin{eqnarray*}
\mathfrak{m}= \mathfrak{g}_0 \cap \mathfrak{k} = \left\{ X\in\mathfrak{k}: [X,H]=0 \textnormal{ for all } H\in\mathfrak{a}\right\}.
\end{eqnarray*}
The restriction of the exponential map of $G$ to $\la$ is an analytic diffeomorphism onto the abelian subgroup $A:=\exp(\la)$. The inverse diffeomorphism is denoted by $\log$\index{$\log$, inverse of the exponential map}. We can also set $N_K(A)$ and $Z_K(A)$ for the normalizer and the centralizer of $A$ in $K$, respectively. The Weyl group $W$ is isomorphic to the finite group $N_K(A)/Z_K(A)$ (\cite{E96}, p. 82). Write
\begin{eqnarray*}
M:=\left\{k\in K: ka=ak \hspace{2mm}\forall a\in A\right\}
\end{eqnarray*}
for the centralizer\index{$M$, centralizer of $A$ in $K$} of $A$ in $K$ and
\begin{eqnarray*}
M':=\left\{k\in K: kAk^{-1}=A\right\}
\end{eqnarray*}
for the normalizer\index{$M'$, normalizer of $A$ in $K$} of $A$ in $K$. Then $W\cong M'/M\cong N(A)/Z(A)$, where $N(A)$ and $Z(A)$ denote the normalizer and the centralizer of $A$ in $G$, respectively. We always consider $W$ to be the group $W=M'/M$. The Weyl group is acting simply transitively on the collection of Weyl chambers of $\mathfrak{a}$ (\cite{E96}, p. 83). Its action extends to $\la^*$ by duality, to $A$ via the exponential map, and to the complexifications of $\la$ and $\la^*$ by complex linearity. The Weyl group permutes the root vectors and it permutes the root spaces.\index{$W$, Weyl group}

\subsubsection{Decomposition theorems}\label{decomposition theorems}
Let $G$ be a semisimple Lie group and carry over the algebraic data $\mathfrak{g}$, $\theta$, $\mathfrak{k}$, $K$, $\mathfrak{p}$, $\mathfrak{a}$, $A$, $\mathfrak{a}^+$ from the preceding paragraphs. Write $A^+:=\exp\mathfrak{a}^+$ and let $\overline{A^+}$ denote the closure of $A^+$ in $G$. The \emph{real rank}\index{real rank} of $G$ is the dimension $\dim\mathfrak{a}$ (it is independent on the choice of $\la\subset\lp$). We need the following decomposition theorems for $G$ (\cite{He01}, Ch. IX).

\begin{thm}[\emph{Cartan decomposition}]
Each $g\in G$ can be written $g=k_1ak_2$, where $k_1,k_2\in K$. The element $a=a^+(g)\in\overline{A^+}$ is uniquely determined by $g$. Thus $G=K\overline{A^+}K$.
\end{thm}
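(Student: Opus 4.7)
The plan is to establish this in two stages: a polar decomposition $G = K\exp(\lp)$, followed by a reduction of the $\lp$-part to $\overline{\la^+}$ via the $\Ad(K)$-action. First I would analyze the map $\Phi: K \times \lp \to G$, $(k,X) \mapsto k\exp(X)$. Its differential at $(e,0)$ is the identity $\lk \oplus \lp \to \g$ by the Cartan decomposition, and left-translation in $K$ together with the $\Ad(K)$-invariance of $\lp$ shows that $d\Phi$ is nonsingular everywhere. Surjectivity of $\Phi$ is the main obstacle at this stage: a standard route is to fix a faithful finite-dimensional representation $\pi: G \to \Gl(V)$ compatible with $\theta$ (so $\pi(K)$ consists of orthogonal matrices and $\pi(\exp(\lp))$ of positive symmetric matrices) and reduce to the classical matrix polar decomposition, using the finiteness of the center of $G$ to transport the result back from $\pi(G)$.

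Given $X \in \lp$, I would next use the conjugacy of maximal abelian subspaces of $\lp$ to produce some $H \in \la$ with $X = \Ad(k)H$. Concretely, fix a regular $H_0 \in \la^+$ and minimize the continuous function $k \mapsto B_\theta(\Ad(k)X, H_0)$ over the compact group $K$; differentiating at a minimizer $k_0$ and using the bracket relations \eqref{K invariant} forces $[\Ad(k_0)X, H_0] = 0$, so $\Ad(k_0)X$ lies in the centralizer of $H_0$ in $\lp$, which by regularity of $H_0$ is exactly $\la$. By the simple transitivity of $W$ on Weyl chambers recalled in Subsection \ref{The Weyl group} together with the identification $W = N_K(A)/Z_K(A)$, I may further arrange $H \in \overline{\la^+}$ by replacing $k$ with $kn^{-1}$ for a suitable $n \in N_K(A)$, since conjugation by $n$ permutes Weyl chambers.

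Combining these steps and using $\exp(\Ad(k)H) = k\exp(H)k^{-1}$ yields $g = k_0 \exp(X) = k_0 k \exp(H) k^{-1} = k_1 a k_2$ with $a = \exp(H) \in \overline{A^+}$. For uniqueness of $a$ I would apply the Cartan involution $\sigma$: since $\sigma$ fixes $K$ pointwise and acts as inversion on $A \subset \exp(\lp)$, if $g = k_1 a k_2 = k_1' a' k_2'$ with $a, a' \in \overline{A^+}$ then
\[
g\,\sigma(g)^{-1} \;=\; k_1 a^2 k_1^{-1} \;=\; k_1'\, (a')^2\, (k_1')^{-1}.
\]
Taking $\log$ into $\lp$ produces two elements $2H, 2H' \in \overline{\la^+}$ that are $\Ad(K)$-conjugate inside $\lp$. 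The standard fact that any two $\Ad(K)$-conjugate elements of $\la$ are already $W$-conjugate, combined with $\overline{\la^+}$ being a fundamental domain for the $W$-action on $\la$, then forces $H = H'$ and hence $a = a'$. The hardest conceptual step in the whole argument is the surjectivity of the polar decomposition, since it rests on a genuinely global statement about $G$ rather than on infinitesimal data coming from the Cartan decomposition of $\g$ alone.
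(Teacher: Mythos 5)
Your argument is correct, and it is essentially the classical proof from the source the paper cites: the paper itself states this theorem without proof (referring to \cite{He01}, Ch.\ IX), and your route — polar decomposition $G=K\exp(\lp)$, conjugation of the $\lp$-component into $\overline{\la^+}$ by minimizing $k\mapsto B_\theta(\Ad(k)X,H_0)$ for a regular $H_0$, and uniqueness via $g\,\sigma(g)^{-1}=k_1a^2k_1^{-1}$ together with the fact that $\Ad(K)$-conjugate elements of $\la$ are $W$-conjugate and that $\overline{\la^+}$ is a fundamental domain for $W$ — is exactly the standard one. Two small remarks. First, the polar decomposition is already available in the paper as Theorem \ref{K and S}, so you could simply invoke it; if you do want to reprove it, note that "left-translation plus $\Ad(K)$-invariance of $\lp$" only reduces the regularity of $d\Phi$ to points $(e,X)$, and the actual content there is that $d\exp_X$ on $\lp$ is nonsingular because $\ad X$ is symmetric with respect to $B_\theta$, hence has real spectrum, on which $(1-e^{-t})/t$ does not vanish. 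Second, your symbols clash (the $k_0$ from the minimization step versus the $k_0$ from the polar decomposition, and the unquantified $k$ in the final display); also the injectivity of $\exp|_{\lp}$, i.e.\ again Theorem \ref{K and S}, is what licenses "taking $\log$" in the uniqueness step and should be cited there explicitly. Neither point affects the validity of the argument.
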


Recall that we denote the set of positive roots by $\Sigma^+$\index{$\Sigma^+$, set of positive roots}. Let
\begin{eqnarray*}
\mathfrak{n}=\sum_{\lambda\in\Sigma^+}\mathfrak{g}_{\lambda}.
\end{eqnarray*}
\index{$\mathfrak{n}$, nilpotent Lie algebra} Then $\lnn$ is a subalgebra of $\g$. Let $N$ be the corresponding connected subgroup of $G$. Then $\mathfrak{n}$ and $N$ are nilpotent (\cite{He01}, Ch. VI, Thm. 3.4, Ch. IX, Lemma 1.6) and $\mathfrak{a}+\mathfrak{n}$ is a solvable Lie algebra. Each element $a\in A$ normalizes $N$, that is we have $aN=Na$ for all $a\in A$. In particular, $AN=NA$ is a subgroup of $G$.

\begin{thm}[Iwasawa decomposition]
We have $\mathfrak{g}=\mathfrak{k}+\mathfrak{a}+\mathfrak{n}$ (direct vector space sum) and $G=KAN$. The mapping $(k,a,n)\rightarrow kan$ is a diffeomorphism of $K\times A\times N$ onto $G$.
\end{thm}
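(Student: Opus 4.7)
The plan has two layers: first establish the decomposition at the Lie algebra level, and then lift it to a global diffeomorphism of groups.

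\textbf{Step 1: Vector space decomposition $\mathfrak{g} = \mathfrak{k} \oplus \mathfrak{a} \oplus \mathfrak{n}$.} I would start from the restricted root space decomposition $\mathfrak{g} = \mathfrak{g}_0 + \sum_{\alpha \in \Sigma} \mathfrak{g}_\alpha$ already recalled in the text, together with the identity $\mathfrak{g}_0 = \mathfrak{m} \oplus \mathfrak{a}$ where $\mathfrak{m} \subset \mathfrak{k}$. Since the Cartan involution $\theta$ reverses the sign of roots, it maps $\mathfrak{g}_\alpha$ isomorphically onto $\mathfrak{g}_{-\alpha}$. For any $\alpha \in \Sigma^+$ and $Y \in \mathfrak{g}_{-\alpha}$, the elementary splitting $Y = (Y+\theta Y) + (-\theta Y)$ exhibits $Y$ as the sum of a $\theta$-fixed element (hence in $\mathfrak{k}$) and an element of $\mathfrak{g}_\alpha \subset \mathfrak{n}$. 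Together with $\mathfrak{g}_0 = \mathfrak{m} \oplus \mathfrak{a} \subset \mathfrak{k} + \mathfrak{a}$ and $\mathfrak{g}_\alpha \subset \mathfrak{n}$ for $\alpha > 0$, this yields $\mathfrak{g} = \mathfrak{k} + \mathfrak{a} + \mathfrak{n}$. Directness: if $X + H + N = 0$ with $X \in \mathfrak{k}$, $H \in \mathfrak{a}$, $N \in \mathfrak{n}$, apply $\theta$ and subtract to obtain $2H + (N - \theta N) = 0$; since $N \in \sum_{\alpha > 0} \mathfrak{g}_\alpha$, $\theta N \in \sum_{\alpha > 0} \mathfrak{g}_{-\alpha}$, and $\mathfrak{a} \subset \mathfrak{g}_0$ live in distinct root spaces, the root space decomposition forces $H = 0$ and $N = 0$, whence $X = 0$.

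\textbf{Step 2: The multiplication map is a local diffeomorphism everywhere.} Let $\Phi: K \times A \times N \to G$, $(k,a,n) \mapsto kan$. At $(e,e,e)$ the differential $d\Phi$ is just the sum map $\mathfrak{k} \oplus \mathfrak{a} \oplus \mathfrak{n} \to \mathfrak{g}$, which is a linear isomorphism by Step 1. At a general point $(k,a,n)$, left translation by $k$ in the first slot, together with conjugation identities (using that $A$ normalizes $N$ and that $aN = Na$), reduce the rank computation to the case of the identity. So $d\Phi$ is everywhere an isomorphism and $\Phi$ is a local diffeomorphism.

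\textbf{Step 3: Injectivity.} I would reduce to showing $K \cap AN = \{e\}$: if $k_1 a_1 n_1 = k_2 a_2 n_2$, then $k_2^{-1} k_1 = a_2 n_2 n_1^{-1} a_1^{-1} \in K \cap AN$. For the intersection, note that $AN$ is a connected solvable Lie group with Lie algebra $\mathfrak{a} + \mathfrak{n}$, and since $[\mathfrak{a},\mathfrak{n}] \subseteq \mathfrak{n}$ with $\mathfrak{n}$ nilpotent, the exponential map $\mathfrak{a} \oplus \mathfrak{n} \to AN$, $(H,N) \mapsto \exp H \exp N$, is a diffeomorphism; in particular $AN$ is diffeomorphic to a Euclidean space and contains no nontrivial compact subgroup. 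Since $K \cap AN$ is closed in the compact group $K$ hence compact, and its Lie algebra $\mathfrak{k} \cap (\mathfrak{a} + \mathfrak{n}) = 0$ by the directness in Step 1, the intersection is a finite group contained in a Euclidean manifold, hence trivial.

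\textbf{Step 4: Surjectivity.} Once injectivity and the local diffeomorphism property are in place, $\Phi(K \times A \times N) = KAN$ is open in $G$. To close the argument I would show $KAN$ is also closed, then invoke connectedness of $G$. Closedness can be obtained by taking a faithful finite-dimensional representation $\pi$ of $G$ in which $\pi(K)$ consists of orthogonal matrices (possible because $K$ is compact) and $\pi(A)$, $\pi(N)$ consist of diagonal (positive) and unipotent upper-triangular matrices in suitable coordinates. Then the Iwasawa decomposition translates to Gram-Schmidt/QR-orthogonalization for an element of $\pi(G)$, which exhibits the factors continuously as functions of $g \in \pi(G)$; in particular $KAN$ is the preimage of a closed subset and is itself closed. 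The main obstacle in the whole argument is precisely this global surjectivity step: the algebraic identity $\mathfrak{g} = \mathfrak{k} \oplus \mathfrak{a} \oplus \mathfrak{n}$ is formal, but passing from a local to a global statement requires either the QR-type argument or an equivalent use of a representation to ensure that the would-be inverse of $\Phi$ is defined on all of $G$.
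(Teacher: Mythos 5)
The paper itself gives no proof of this theorem --- it is quoted as a classical result from Helgason --- so your proposal is measured against the standard argument. Steps 1 and 2 are sound: the splitting $Y=(Y+\theta Y)+(-\theta Y)$ for $Y\in\mathfrak{g}_{-\alpha}$ together with $\mathfrak{g}_0=\mathfrak{m}\oplus\mathfrak{a}$ is exactly the classical Lie-algebra-level proof, and regularity of $(k,a,n)\mapsto kan$ at every point does follow from the directness plus the fact that $\mathrm{Ad}(an)$ preserves $\mathfrak{a}+\mathfrak{n}$. The genuine gap is in Step 4: you assume $G$ admits a \emph{faithful} finite-dimensional representation in which $K$, $A$, $N$ become orthogonal, positive diagonal, and unipotent upper triangular. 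The theorem is stated for an arbitrary connected semisimple $G$ with finite center, and such groups need not be linear (the metaplectic group is a connected semisimple group with finite center and no faithful finite-dimensional representation), so the QR argument as written does not cover the class of groups in question. The standard repair is to run your matrix argument for $\mathrm{Ad}(G)\subseteq GL(\mathfrak{g})$ --- where $\mathrm{Ad}(K)$ is orthogonal with respect to $B_\theta$ and, in a basis of restricted root vectors ordered by a regular element of $\mathfrak{a}$, $\mathrm{Ad}(A)$ is positive diagonal and $\mathrm{Ad}(N)$ unipotent upper triangular --- and then lift the decomposition through the finite covering $G\rightarrow \mathrm{Ad}(G)$ using $Z(G)\subseteq K$; that lifting step is precisely what is missing. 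Moreover, even granting a faithful representation, continuity and uniqueness of QR factors only place the factors of $\pi(g)$ in the orthogonal, diagonal and unipotent subgroups of $GL(n,\rr)$, not automatically in $\pi(K)$, $\pi(A)$, $\pi(N)$; to get closedness of $KAN$ you still need, e.g., that $\pi(AN)$ is closed, or more simply a convergent subsequence of the $K$-factors (compactness of $K$) together with closedness of $AN$ in $G$ --- a point you assert rather than prove.

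A secondary gap is in Step 3: ``a finite group contained in a Euclidean manifold, hence trivial'' is not an argument, since $\rr^n$ contains plenty of finite subsets, and the true statement (a group homeomorphic to $\rr^n$ has no nontrivial compact subgroup) is itself not elementary. For $AN$ there is a short direct argument you should substitute: the projection $AN\rightarrow A$ along the normal subgroup $N$ is a homomorphism onto $A\cong\rr^{\ell}$, which is torsion free, so any element of finite order lies in $N$; and $\exp:\mathfrak{n}\rightarrow N$ is a bijection, so $N$ is torsion free as well; hence $K\cap AN$, which is compact with trivial Lie algebra by Step 1 and therefore finite, must be trivial. Finally, note that injectivity of $(k,a,n)\mapsto kan$ needs, beyond $K\cap AN=\{e\}$, the uniqueness of the $A$- and $N$-factors inside $AN$; this is covered by your claim that $(H,Y)\mapsto\exp H\exp Y$ is a diffeomorphism onto $AN$, but it should be invoked explicitly.
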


We fix some notation: If $g\in G$, we will always write
\begin{eqnarray*}
g=k(g)\exp H(g) \,  n(g),
\end{eqnarray*}
where $k(g)\in K$, $H(g)\in\mathfrak{a}$ and $n(g)\in N$. The corresponding projections onto the $K$, $\la$ and $N$ are called \emph{Iwasawa projections}\index{Iwasawa projections}\index{$H(g)$, Iwasawa projection $KAN\rightarrow\la$}. We can also decompose\index{$A(g)$, Iwasawa projection $NAK\rightarrow\la$}
\begin{eqnarray*}
g=n(g)\exp A(g) k(g)
\end{eqnarray*}
corresponding to $G=NAK$, where $A(g)\in\mathfrak{a}$. Clearly $A(g)=-H(g^{-1})$.

\begin{rem}
Each point $p\in X$ gives rise to another Cartan involution and another Cartan decomposition. Let $\theta_p$ be the Cartan involution and $\mathfrak{g}=\mathfrak{k}_p+\mathfrak{p}_p$ be the Cartan decomposition determined by $p\in X$. If $q\in X$ determines $\theta_q$ and $\mathfrak{g}=\mathfrak{k}_q+\mathfrak{p}_q$, then $\mathfrak{k}_q=g\cdot\mathfrak{k}_p$ and $\mathfrak{p}_q=g\cdot\mathfrak{p}_p$ whenever $g\cdot p=q$ (\cite{E96}, \S 2.3, \S 2.8, \cite{He01}, Ch. III, Thm. 7.2). It follows that all Cartan decompositions of $\mathfrak{g}$ are conjugate in $G$. By \cite{He01}, Ch. V, Lem. 6.3 (or \cite{E96}, \S 2.8), any two maximal abelian subspaces $\mathfrak{a}_1$ and $\mathfrak{a}_2$ of $\mathfrak{p}_p$ are conjugate by an element $k\in K$. Since the Weyl group acts simply transitively on the Weyl chambers (\cite{He01}, Ch. VII, Theorem 2.12) we deduce that for another choice $\mathfrak{a}_1$ resp. $A_1$ the corresponding Iwasawa decomposition components $AN$ and $A_1 N_1$ are conjugate by an element of $K$. It follows that all Iwasawa decompositions are conjugate in $G$.
\end{rem}

Note that $\Ad(m)$ ($m\in M$) leaves $\mathfrak{a}$ pointwise fixed, so it maps a root space $\mathfrak{\alpha}$ into itself. Hence $M$ normalizes $N$, so $MN=NM$ is a group. Then $P=MAN$ is a closed subgroup of $G$. For $s\in W=M'/M$ we fix a representative $m_s\in M'$.

\begin{thm}[Bruhat decomposition]
Let $G$ be any noncompact semisimple Lie group. Then $G$ decomposes into double cosets of $P=MAN$, that is
\begin{eqnarray*}
G = \bigcup_{s\in W}Pm_sP \hspace{3mm}(\textnormal{disjoint union}).
\end{eqnarray*}
\end{thm}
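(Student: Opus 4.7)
The plan is to analyze $P$-orbits on the flag manifold $G/P$, reducing the theorem to parametrizing those orbits via the $A$-fixed points. By the Iwasawa decomposition $G=KAN$ and the inclusion $AN\subset P$, we have $G=KP$, and since the isotropy of $eP$ in $K$ is $K\cap P=M$, the map $K/M\to G/P$, $kM\mapsto kP$, is a $K$-equivariant diffeomorphism. Hence $P\backslash G/P$ is in natural bijection with the set of $P$-orbits on $G/P\cong K/M$, and the theorem reduces to showing that these orbits are precisely $\{P\cdot m_sP : s\in W\}$ and are pairwise disjoint.

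First I would locate the $A$-fixed points of this $P$-action. A coset $gP$ is $A$-fixed iff $g^{-1}Ag\subseteq P=MAN$. Since $A$ is a maximal $\rr$-split abelian subgroup, the standard conjugacy of maximal $\rr$-split tori inside the parabolic $P$ gives $p\in P$ with $g^{-1}Ag=p^{-1}Ap$, so $gp^{-1}\in N_G(A)$. Using $N_G(A)=M'\cdot Z_G(A)$ with $Z_G(A)=MA\subset P$, the $A$-fixed cosets are exactly $\{m_sP : s\in W\}$, a finite set of cardinality $|W|$. For the covering statement I would exploit the opposite horospherical $N^-=\theta(N)$: the direct sum decomposition $\g=\lnn^-\oplus(\mathfrak{m}+\la+\lnn)$ shows that the orbit map $N^-\to G/P$, $n\mapsto nP$, is an open immersion at the identity, and a dimension count yields that $N^-\cdot eP$ is the open dense ``big Bruhat cell''. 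Translating by Weyl representatives and decomposing inductively on Weyl length, every $P$-orbit on $G/P$ meets one of the fixed points $m_sP$.

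For disjointness, suppose $m_sP$ and $m_tP$ lie in the same $P$-orbit. The tangent space at an $A$-fixed point $m_sP$ decomposes under the isotropy $A$-action into weight spaces indexed by the roots $\alpha\in\Sigma$ with $s^{-1}\alpha<0$; the corresponding weight set must match that at $m_tP$, forcing $s=t$ by the simple transitivity of $W$ on Weyl chambers. The main obstacle is the covering step: one must show that the Weyl translates of the big Bruhat cell, or equivalently the cells $(N\cap m_sN^-m_s^{-1})\cdot m_sP$, exhaust the compact flag manifold $G/P$. This is the genuine geometric content of the theorem and is typically handled by induction on Weyl length, exploiting the root space decomposition of $\g$ together with the structure of $N$ and $N^-$; the remaining algebraic steps are essentially bookkeeping via $N_G(A)$ and the Levi decomposition of $P$.
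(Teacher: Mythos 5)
The paper does not prove this theorem; it quotes the Cartan, Iwasawa, and Bruhat decompositions as known structural facts, citing Helgason \cite{He01}, Ch.~IX at the start of the subsection. So there is no in-paper proof to compare against, and I evaluate your sketch on its own terms.

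Your reduction to $P$-orbits on $G/P$ via $K/M\cong G/P$, your identification of the $A$-fixed cosets as $\{m_sP : s\in W\}$ using the conjugacy of maximal $\mathbb{R}$-split tori inside $P$ and $N_G(A)=M'Z_G(A)=M'MA$, and your computation that $\overline{N}\cdot eP$ is the open big cell are all sound. The difficulty is twofold. First, as you yourself flag, the covering statement is the real content, and your outline defers it entirely (``typically handled by induction on Weyl length''). You have set up the right framework, but you have not carried out the induction; without it you have shown only that the finite set of $A$-fixed cosets is $\{m_sP\}_{s\in W}$ and that one cell is open, not that the cells exhaust $G/P$.

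Second, and this one you do not flag: the disjointness argument as written has a gap. You say that if $m_sP$ and $m_tP$ lie in the same $P$-orbit then ``the corresponding weight set must match,'' but the element $p\in P$ with $p\,m_sP=m_tP$ need not commute with $A$, so the differential $dL_p\colon T_{m_sP}(G/P)\to T_{m_tP}(G/P)$ is not a priori $A$-equivariant, and the isotropy weight sets $\{\alpha: s^{-1}\alpha<0\}$ and $\{\alpha: t^{-1}\alpha<0\}$ need not agree merely because the two fixed points sit in one orbit. To make the weight idea work you would need an extra step, for instance showing that each $N$-orbit $Nm_sP$ (which equals the $P$-orbit $Pm_sP/P$, since $MA$ fixes $m_sP$) contains a unique $A$-fixed point -- e.g.\ by realising the cells as unstable manifolds of the gradient flow attached to a regular $H\in\mathfrak{a}^+$, or by the algebraic computation that $nm_sP$ is $A$-fixed forces $n\in N\cap m_sNm_s^{-1}$ and then $n m_s P = m_s P$. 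Alternatively one can bypass the fixed-point bookkeeping and argue directly that $m_t\in Nm_sAMN$ forces $\Ad(m_s)H_0$ and $\Ad(m_t)H_0$ to agree for a regular $H_0\in\mathfrak{a}^+$, whence $s=t$ by simple transitivity of $W$. Either way, the one-line weight-matching step needs to be replaced by an actual argument.
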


We can also write $S=\exp{\lp}$. Then (cf. \cite{He01}, Ch. VI)
\begin{thm}\label{K and S}
$G=K\cdot S=S\cdot K$. The indicated decomposition of an element of $G$ is unique. The mapping $(X,k)\mapsto (\exp X)k$ is a diffeomorphism of $\mathfrak{p}\times K$ onto $G$. Write $\pi:G\rightarrow G/K$. Then the mapping $\pi\circ\exp$ is a diffeomorphism of $\mathfrak{p}$ onto the globally symmetric space $X=G/K$.
\end{thm}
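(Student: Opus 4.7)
The plan is to transport the problem via the adjoint representation $\Ad : G \to GL(\g)$, whose kernel is the finite center of $G$, and appeal to the classical polar decomposition in $GL(\g)$. The central observation is that with respect to the positive definite inner product $B_\theta$ on $\g$, the Cartan decomposition provides exactly the symmetries demanded by the polar picture: for $k\in K$ the operator $\Ad(k)$ is $B_\theta$-orthogonal (because $B_\theta$ is $\Ad(K)$-invariant, equivalently because $\theta(k)=k$), while for $X\in\lp$ the operator $\ad(X)$ is $B_\theta$-symmetric (this is precisely what $B_\theta$ was built for), so $\Ad(\exp X)=\exp(\ad X)$ is positive self-adjoint.

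First I would establish that $\exp\colon \lp\to S:=\exp(\lp)$ is a diffeomorphism. For $X\in\lp$ one has $d\exp_X = dL_{\exp X}\circ\frac{1-e^{-\ad X}}{\ad X}$; since $\ad X$ has real eigenvalues by $B_\theta$-symmetry, and the entire function $(1-e^{-z})/z$ has no real zeros, $d\exp_X$ is invertible. For injectivity, if $\exp X=\exp Y$ with $X,Y\in\lp$, then $\exp(\ad X)=\exp(\ad Y)$ are positive self-adjoint operators on $(\g,B_\theta)$, so by uniqueness of the operator logarithm of a positive operator $\ad X=\ad Y$, and then $X=Y$ since $\ad$ is injective on the semisimple Lie algebra $\g$. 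Together with the inverse function theorem, these two facts yield the claimed diffeomorphism.

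Next, for the global decomposition $G=S\cdot K$: given $g\in G$, apply polar decomposition in $GL(\g)$ to $\Ad(g) = u\cdot P$ with $u$ orthogonal and $P$ positive self-adjoint with respect to $B_\theta$. A direct computation using $\theta\circ\Ad(g)\circ\theta^{-1}=\Ad(\theta(g))$ shows that the $B_\theta$-transpose of $\Ad(g)$ equals $\Ad(\theta(g^{-1}))$, whence $P^2 = \Ad(g)^t\Ad(g) = \Ad(\theta(g)^{-1}g)$. The element $h:=\theta(g)^{-1}g$ satisfies $\theta(h)=h^{-1}$, and the key lemma of the argument states that any such $h\in G$ is uniquely of the form $\exp(2X)$ with $X\in\lp$. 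Taking $s:=\exp X\in S$, one obtains $\Ad(s)^2=P^2$, so $\Ad(s^{-1}g)$ is orthogonal; combining this with $\theta(s)=s^{-1}$ forces $k:=s^{-1}g\in K$, and uniqueness of the decomposition falls out of the uniqueness of the polar factors together with injectivity of $\exp|_\lp$. This yields $G=S\cdot K$ with unique factorization; the companion decomposition $G=K\cdot S$ follows by applying $g\mapsto g^{-1}$ and using $\theta(\exp X)=\exp(-X)$.

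The main obstacle is the lemma that an element $h\in G$ with $\theta(h)=h^{-1}$ lies in $\exp(2\lp)$; equivalently, that any positive self-adjoint $P\in\Ad(G)\subset GL(\g)$ has its logarithm inside $\ad(\lp)$. This is where the closedness of $\Ad(G)$ in $GL(\g)$ (a consequence of $G$ having finite center) is essential, together with a spectral analysis of $P$ in the root space decomposition. Once this is secured, the smoothness of the map $(X,k)\mapsto(\exp X)k$ and of its inverse is inherited from the smoothness of the polar decomposition on $GL(\g)$ and of $\exp|_\lp$. Finally, the statement that $\pi\circ\exp\colon\lp\to G/K$ is a diffeomorphism is immediate: the factorization $G\cong\lp\times K$ intertwines the right $K$-action with right translation in the second factor, so passing to the quotient identifies $\pi\circ\exp$ with the projection onto the first factor followed by a diffeomorphism.
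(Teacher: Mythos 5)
The paper states this theorem without proof, citing Helgason (Ch.\ VI of \cite{He01}); the standard proof there is precisely the route you take---transport via $\Ad$ to $GL(\g)$ and invoke the matrix polar decomposition. Your first part (that $\exp|_\lp$ is an injective immersion, via the formula for $d\exp_X$ and the real spectrum of $\ad X$) is correct. But the polar-decomposition step contains two concrete errors that a reader would stumble on.

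First, the factorization is done in the wrong order. From $\Ad(g)=uP$ with $u$ orthogonal and $P^2=\Ad(g)^t\Ad(g)$, the element you form, $\Ad(s^{-1}g)=P^{-1}\Ad(g)=P^{-1}uP$, is \emph{not} orthogonal in general: $(P^{-1}uP)^t(P^{-1}uP)=Pu^tP^{-2}uP$ equals $I$ only when $u$ commutes with $P^2$, i.e.\ when $\Ad(g)$ is normal. What is orthogonal is $\Ad(gs^{-1})=uPP^{-1}=u$, so your computation actually yields $g=(gs^{-1})s\in K\cdot S$, establishing $G=K\cdot S$ rather than $G=S\cdot K$. This is harmless in the end (each factorization implies the other via $g\mapsto g^{-1}$, as you note), but as written the claim ``$\Ad(s^{-1}g)$ is orthogonal'' is false.

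Second, the key lemma is false as stated, and the claimed reformulation is not an equivalence. Any nontrivial $2$-torsion element $z$ of the finite center $Z(G)\subset K$ (e.g.\ $-I\in SL_2(\rr)$) satisfies $\theta(z)=z=z^{-1}$ with $\Ad(z)=\id$ positive self-adjoint, yet $z\notin\exp(\lp)$ because $\exp|_\lp$ is injective and $\exp(\lp)\cap K=\{e\}$. Moreover, $\theta(h)=h^{-1}$ only makes $\Ad(h)$ $B_\theta$-self-adjoint, not positive, so the ``equivalently'' is not correct. What is true and sufficient is that the \emph{specific} $h=\theta(g)^{-1}g$ lies in $\exp(2\lp)$, but establishing it cannot be separated from the global decomposition: one takes the unique $X\in\lp$ with $\ad(2X)=\log\Ad(h)$ (using that the derivation $\log\Ad(h)$ is inner by semisimplicity and lies in $\ad\lp$ by $B_\theta$-symmetry), obtains $h=z\exp(2X)$ with $z\in Z(G)$ and $z^2=e$, and then the vanishing of $z$ is deduced only \emph{after} one shows $g(\exp X)^{-1}\in K$---which is where the closedness of $\Ad(G)$ in $GL(\g)$ and the maximality of $K$ are genuinely used, via $\Ad^{-1}\bigl(\Ad(G)\cap O(\g,B_\theta)\bigr)=K$. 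So you cannot first prove the key lemma and then derive the decomposition; the two assertions are proved simultaneously, and the ``lemma'' as a standalone claim about all $h$ with $\theta(h)=h^{-1}$ should be abandoned.
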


\begin{defn}\label{rho}
For $\alpha\in\Sigma^+$ we call $m_{\alpha}=\dim\mathfrak{g}_{\alpha}$ the \emph{multiplicity}\index{$m_{\alpha}$, multiplicity of a simple root $\alpha$} of $\alpha$. Once for all we define the parameter $\rho\in\mathfrak{a}^*$ by\index{$\rho=\frac{1}{2}\sum_{\alpha\in\Sigma^+}m_{\alpha}\alpha$, parameter}
\begin{eqnarray*}
\rho=\frac{1}{2}\sum_{\alpha\in\Sigma^+}m_{\alpha}\alpha.
\end{eqnarray*}
\end{defn}

We finish this subsection with a few remarks on the nilpotent subgroup $N$. Let $\cdot$ denote the adjoint action of $G$ on $\g$.
\begin{rem}\label{N on A}
\begin{itemize}
\item[(1)] Let $H\in\mathfrak{a}'$ (regular). The mapping $n\mapsto n\cdot H-H$ defines a diffeomorphism of $N$ onto $\mathfrak{n}$ (\cite{He01}, p. 403).
\item[(2)] Assume $H\in\mathfrak{a}'$ (i.e. $H$ is regular) such that $\alpha(H)>0$ for all $\alpha\in\Sigma^+$. Then (\cite{He01}, p. 278)
\begin{eqnarray*}
N=\left\{g\in G: \lim_{t\rightarrow\infty}\exp(-tH)g\exp tH=e\right\}.
\end{eqnarray*}
\item[(3)] For $X\in\lp$, let $Z_N(X)$ denote the cantralizer of $X$ in $N$\index{centralizer of $A$ in $N$}\index{$Z_N(X)$, centralizer of $X$ in $N$} and let $Z_{\mathfrak{n}}(X)=\left\{X\in\mathfrak{n}:[X,X]=0\right\}$ denote the centralizer of $X$ in $\mathfrak{n}$\index{centralizer of $X$ in $\mathfrak{n}$}. Let $H\in\la$. Then $Z_N(H)=\exp(Z_{\mathfrak{n}}(H))$. Each $X\in\mathfrak{n}$ is of the form $X=\sum_{\alpha\in\Sigma^+}X_{\alpha}$, where $X_{\alpha}\in\mathfrak{g}_{\alpha}$. By definition we thus have $[H,X]=\sum_{\alpha\in\Sigma^+}\alpha(H) X_{\alpha}$.
Now assume $[H,X]=0$. Then $\alpha(H) X_{\alpha}=0$ for all $\alpha$. Then $X=0$, hence $Z_{\mathfrak{n}}(H)=\left\{0\right\}$ and $Z_N(H) = \left\{ e \right\}$. In general, for $X\in\lp$ we have $Z_H(X)=\left\{e\right\}$ if and only if $X$ is regular.
\end{itemize}
\end{rem}

\subsubsection{Measure theoretic preliminaries}\label{Measure theoretic preliminaries}
We establish some conventions about the normalization of invariant measures on the groups and homogeneous spaces we work with. We follow the standard source \cite{He94}, Ch. II.

If $Y$ is any manifold we denote by $C(Y)$ the space of real- or complex-valued continuous functions on $Y$. By $C_c(Y)$ we denote the subspace of $C(Y)$ consisting of functions with compact support.

The Killing form induces Euclidean measures on $A$, its Lie algebra $\mathfrak{a}$ and the dual space $\mathfrak{a}^*$. If $l=\dim(A)$, we multiply these measures by the factor $(2\pi)^{-l/2}$ and thereby obtain invariant measures $da, dH$ and $d\lambda$ on $A,\mathfrak{a}$ and on $\mathfrak{a}^*$. This normalization has the advantadge that the Euclidean Fourier transform on $A$ is inverted without a multiplicative constant. We normalize the Haar measures $dk$ and $dm$ on the compact groups $K$ and $M$ such that the total measure is $1$.

In general, if $U$ is a Lie group and $P$ a closed subgroup, with left invariant measures $du$ and $dp$, the $U$-invariant measure $du_P=d(uP)$ on $U/P$ (when it exists) will be normalized by
\begin{eqnarray}\label{integral quotient}
\int_U f(u)du=\int_{U/P}\left(\int_P f(up)dp\right)du_P.
\end{eqnarray}
This measure exists if $U$ is unimodular and $P$ is a compact subgroup of $U$ (\cite{He00}, Ch. I, Thm. 1.9). In particular, we have a $K$-invariant measure $dk_M=d(kM)$ on $K/M$ of total measure $1$. We also use the notation
\begin{eqnarray}\label{invariant measures}
dx=dg_K=d(gK), \hspace{3mm} d\xi=dg_{MN}=d(gMN)
\end{eqnarray}
for the invariant measures on $X=G/K$ and $\Xi=G/MN$. By uniqueness, $dx$ is a constant multiple of the measure on $X$ induced by the Riemannian structure on $X$ given by the Killing form $B$.

The involutive automorphism $\theta$ of $\mathfrak{g}$ induces a unique (\cite{He01}, Ch. IV, Prop. 3.5) analytic involutive automorphism, also denoted by $\theta$, of $G$ whose differential at $e\in G$ is the original $\theta$. (\cite{He01}, Ch. VI, Thm. 1.1). It thus makes sense to define $\overline{N}=\theta N$. The mapping $(\overline{n},m,a,n)\mapsto \overline{n}man$ is a bijection of $\overline{N}\times M\times A\times N$ onto the open submanifold $\overline{N}MAN$ of $G$, whose complement is a null-set for the Haar measure of $G$ (\cite{He01}, Ch. IX, \S 1). In the Iwasawa decomposition notation, the mapping $\overline{N}\rightarrow K/M$, $\overline{n}\mapsto k(\overline{n})M$, is a diffeomorphism of $\overline{N}$ onto an open subset of $K/M$ whose complement is a null set for the invariant measure $d(kM)$ on $K/M$.

The Haar measures $dn$ and $d\overline{n}$ on the nilpotent groups $N$ and $\overline{N}$ can be normalized (\cite{He00}, Ch. IV, \S 6) such that
\begin{eqnarray*}
\theta(dn)=d\overline{n}, \,\,\,\,\,\,\,\, \int_{\overline{N}}e^{-2\rho(H(\overline{n}))}d\overline{n}=1.
\end{eqnarray*}

By loc. cit., Ch. I, \S 5, we can then normalize the Haar measure on $G$ such that for all $f\in C_c(G)$
\begin{eqnarray}\label{integral formula G}
\int_G f(g)dg &=& \int_{KAN}f(kan)e^{2\rho(\log a)}dkdadn \\
&=& \int_{NAK}f(nak)e^{-2\rho(\log a)}dndadk.
\end{eqnarray}

Recall that each $m\in M$ leaves $\mathfrak{a}$ pointwise fixed, so $m$ maps a root space into and onto itself. Hence $n\mapsto mnm^{-1}$ is an automorphism of $N$ mapping $dn$ into a multiple of $dn$. Since $M$ is compact, $dn$ is preserved. It follows that the product measure $dm dn$ is a bi-invariant measure on $MN=NM$. Let $m^*\in M'$ denote any representative of the the Weyl group element mapping the positive Weyl chamber $\mathfrak{a}^+$ onto $-\mathfrak{a}^+$. Then the mapping $n\mapsto (m^*)^{-1}nm^*$ is a diffeomorphism between $N$ and $\overline{N}=\theta(N)$ (\cite{He94}, p. 102).

We will also need the following integral formulas (\cite{He00}, Ch. I).
\begin{lem}
\begin{itemize}
\item[(1)] Let $f\in C_c(AN)$ and $a\in A$. Then
\begin{eqnarray}\label{integral AN NA}
\int_N f(na) \, dn = e^{2\rho(\log(a))}\int_N f(an) \, dn.
\end{eqnarray}
\item[(2)] Let $f\in C_c(G)$. Then
\begin{eqnarray}\label{integral formula G and ANK}
\int_G f(g) \, dg = \int_{KNA}f(kna) \, dk \, dn \, da = \int_{ANK}f(ank) \, da \, dn \, dk.
\end{eqnarray}
\item[(3)] Let $f\in C_c(X)$. Then
\begin{eqnarray}\label{integral formula AN and G/K}
\int_X f(x) dx = \int_{AN}f(an\cdot o) \, da \, dn.
\end{eqnarray}
\end{itemize}
\end{lem}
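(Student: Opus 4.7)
The three identities are standard consequences of the Iwasawa decomposition $G=KAN$ and the normalizations fixed in the preceding paragraphs. The plan is to prove (1) directly from the $A$-action on $\lnn$ via $\Ad$, to deduce (2) by combining (1) with the integration formulas \eqref{integral formula G} already recorded, and to obtain (3) from (2) together with the definition \eqref{integral quotient} of the invariant measure on $X=G/K$.

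For (1), the key point is the Jacobian of the inner automorphism $c_a\colon N\to N$, $c_a(n)=ana^{-1}$, with respect to the Haar measure $dn$. Its differential at the identity is $\Ad(a)|_{\lnn}$, and with $a=\exp H$ the operator $\Ad(a)$ acts on the root space $\g_\alpha$ as the scalar $e^{\alpha(H)}$; taking the product over $\alpha\in\Sigma^+$ with multiplicities $m_\alpha$ yields $\det(\Ad(a)|_{\lnn})=e^{2\rho(H)}=e^{2\rho(\log a)}$ by Definition~\ref{rho}. Since $dn$ is Haar on the nilpotent group $N$, this gives the change-of-variables identity
\begin{eqnarray*}
\int_N F(n)\,dn &=& e^{2\rho(\log a)}\int_N F(ana^{-1})\,dn
\end{eqnarray*}
for all $F\in C_c(N)$. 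Specializing to $F(n)=f(na)$ and using $ana^{-1}\cdot a=an$ gives (1).

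For the first equality in (2), apply (1) (with $k$ and $a$ frozen) to the function $h\mapsto f(kh)$ on $AN$, obtaining $\int_N f(kna)\,dn=e^{2\rho(\log a)}\int_N f(kan)\,dn$. Inserting this into \eqref{integral formula G} cancels the factor $e^{2\rho(\log a)}$, and Fubini rearranges the iterated integral into $\int_{KNA}f(kna)\,dk\,dn\,da$. The second equality of (2) is analogous: start from the $NAK$-form of \eqref{integral formula G}, apply (1) to $h\mapsto f(hk)$ to exchange $\int_N f(nak)\,dn$ for $e^{2\rho(\log a)}\int_N f(ank)\,dn$, after which the weight $e^{-2\rho(\log a)}$ disappears.

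For (3), the averaging map $C_c(G)\to C_c(X)$ sending $h$ to $\bar h$ with $\bar h(gK):=\int_K h(gk)\,dk$ is surjective (standard partition-of-unity argument using compactness of $K$). Given $f\in C_c(X)$ choose $h\in C_c(G)$ with $\bar h=f$. By the definition \eqref{integral quotient} of $dx$,
\begin{eqnarray*}
\int_X f(x)\,dx &=& \int_X \bar h(x)\,dx \;=\; \int_G h(g)\,dg.
\end{eqnarray*}
On the other hand the $ANK$-form of (2) yields $\int_G h(g)\,dg=\int_{ANK}h(ank)\,da\,dn\,dk=\int_{AN}\bar h(an\cdot o)\,da\,dn=\int_{AN}f(an\cdot o)\,da\,dn$, where we used $k\cdot o=o$ so that $ank\cdot o=an\cdot o$. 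Comparing the two expressions for $\int_G h\,dg$ gives (3). The only real technical content is the Jacobian computation in~(1); the remainder is bookkeeping with the measure normalizations fixed at the start of the subsection.
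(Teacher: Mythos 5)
Your proof is correct; the paper itself gives no argument for this lemma but simply cites \cite{He00}, Ch.\ I, and your derivation is exactly the standard one behind that reference: the Jacobian $\det(\Ad(a)|_{\lnn})=e^{2\rho(\log a)}$ of conjugation by $a$ on $N$ gives (1), which combined with the already recorded $KAN$/$NAK$ formulas \eqref{integral formula G} yields (2), and averaging over the normalized $dk$ together with the normalization \eqref{integral quotient} of $dx$ gives (3). So there is nothing to compare beyond noting that your route coincides with the cited source's.
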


\subsubsection{Special functions and the Plancherel density}\label{Special functions and the Plancherel density}
Recall that we denote by $\Sigma^+$ the system of positive roots. The set of all (restricted) roots is the disjoint union of $\Sigma^+$ and $-\Sigma^+$. We write $\Sigma^-:=-\Sigma^+$. A root $\alpha\in\Sigma$ is called \emph{indivisible} if $\alpha/2\notin\Sigma$. For the sets of indivisible, respectively positive indivisible roots, we write $\Sigma_0$ and $\Sigma^+_0$, respectively. We can then define
\begin{eqnarray}
\Sigma^+_0:=\Sigma^+\cap\Sigma_0 \,\,\,\,\, \textnormal{ and } \,\,\,\,\, \Sigma^-_0:=\Sigma^-\cap\Sigma_0.
\end{eqnarray}

Also recall that the Cartan-Killing form $B(\cdot,\cdot)$ is positive definite on $\lp\times\lp$, so $\langle X,Y \rangle:= B(X,Y)$ defines a Euclidean structure in $\lp$ and in $\la\subset\lp$. Given $\gamma\in\la^*$, there is a unique $H_{\gamma}\in\la$\index{$H_{\gamma}$, representation in $\la$ of $\gamma\in\la^*$} such that $\gamma(H)=\langle H_{\gamma},H\rangle$ for all $H\in\la$. We can thus extend $\langle\cdot,\cdot\rangle$ to $\la^*$ by duality, that is we set $\langle\lambda,\mu\rangle=\langle H_{\lambda},H_{\mu}\rangle$ for $\lambda,\mu\in\la^*$. Finally we denote the $\cc$-bilinear extension of $\langle\cdot,\cdot\rangle$ to $\la^*_{\cc}$ by the same symbol. Given $\alpha\in\Sigma$ and $\lambda\in\la^*_{\cc}$ we write\index{$\lambda_{\alpha}$, normalization of $\lambda\in\la^*_{\cc}$}
\begin{eqnarray}
\lambda_{\alpha} := \frac{\langle\lambda,\alpha\rangle}{\langle\alpha,\alpha\rangle}.
\end{eqnarray}
Let $\Gamma$ denote the classical $\Gamma$-function. Here and in the following we adopt the convention that $m_{2\alpha}=0$ if $2\alpha$ is not a root. Harish-Chandra's $c$-function is the meromorphic function on $\la^*_{\cc}$ given by the Gindikin-Karpelevich product formula
\begin{eqnarray}\label{c-function}
c(\lambda) = c_0 \prod_{\alpha\in\Sigma_0^+} c_{\alpha}(\lambda)
\end{eqnarray}
where
\begin{eqnarray}
c_{\alpha}(\lambda) = \frac{2^{-i\lambda_{\alpha}}\Gamma(i\lambda_{\alpha})}{\Gamma(\frac{i\lambda_{\alpha}}{2}+\frac{m_{\alpha}}{4}+\frac{1}{2}) \, \Gamma(\frac{i\lambda_{\alpha}}{2}+\frac{m_{\alpha}}{4}+\frac{m_{2\alpha}}{2})},
\end{eqnarray}
and where the constant $c_0$ is defined by $c(-i\rho)=1$. Note that the function
\begin{eqnarray}
|c(\lambda)|^{-2}=c(\lambda)c(-\lambda)=c(s\lambda)c(-s\lambda) \,\,\, \forall \, s\in W
\end{eqnarray}
is Weyl group invariant (\cite{He00}, p. 451). The singularities of the \emph{Plancherel density}
\begin{eqnarray}
\frac{1}{c(\lambda)c(-\lambda)} = \frac{1}{c_0^2}\prod_{\alpha\in\Sigma^+_0}\frac{1}{c_{\alpha}(\lambda)c_{\alpha}(-\lambda)}
\end{eqnarray}
can be explicitly written down. We recall some formulas given in \cite{HP}. Note that if both $\alpha$ and $2\alpha$ are roots, then $m_{\alpha}$ is even and $m_{2\alpha}$ is odd (\cite{He01}, p. 530). For $\alpha\in\Sigma^+_0$, the singularities of
\begin{eqnarray}
\frac{1}{c_{\alpha}(\lambda)c_{\alpha}(-\lambda)}
\end{eqnarray}
are described by distinguishing the following four cases:
\begin{itemize}
\item[(a)] $m_{\alpha}$ even, $m_{2\alpha}=0$,
\item[(b)] $m_{\alpha}$ odd, $m_{2\alpha}=0$,
\item[(c)] $m_{\alpha}/2$ even, $m_{2\alpha}$ odd,
\item[(d)] $m_{\alpha}/2$ odd, $m_{2\alpha}$ odd.
\end{itemize}
It follows from simple identities for the $\Gamma$-function that
\begin{eqnarray}
\frac{1}{c_{\alpha}(\lambda)c_{\alpha}(-\lambda)} = C_{\alpha} \lambda_{\alpha} p_{\alpha}(\lambda) q_{\alpha}(\lambda),
\end{eqnarray}
where $C_{\alpha}$ is a positive constant depending on $\alpha$ and on the multiplicities, where $p_{\alpha}$ is a polynomial, and where $q_{\alpha}$ is a function. We make the convention that a product taken over the empty set is equal to one. Then the explicit expressions for $p_{\alpha}$ and $q_{\alpha}$ in the four cases listed above are (\cite{HP}, p. 501)
\begin{itemize}
\item[(a)] $p_{\alpha}(\lambda) = \lambda_{\alpha} \prod_{k=1}^{m_{\alpha}/2-1}(\lambda_{\alpha}^2 + k^2)$, \\
$q_{\alpha}(\lambda)=1$,
\item[(b)] $p_{\alpha}(\lambda) = \prod_{k=0}^{(m_{\alpha}-3)/2}[\lambda_{\alpha}^2 + (k+\frac{1}{2})^2]$, \\
$q_{\alpha}(\lambda)=\tanh(\pi\lambda_{\alpha})$,
\item[(c)] $p_{\alpha}(\lambda) = \prod_{k=0}^{m_{\alpha}/4-1}[(\lambda_{\alpha}/2)^2 + (k+\frac{1}{2})^2] \cdot \prod_{l=0}^{m_{\alpha}/4+(m_{2\alpha}-1)/2-1}[(\lambda_{\alpha}/2)^2 + (l+\frac{1}{2})^2]$, \\
$q_{\alpha}(\lambda)=\tanh(\pi\lambda_{\alpha}/2)$,
\item[(d)] $p_{\alpha}(\lambda) = \prod_{k=0}^{(m_{\alpha}-2)/4}[(\lambda_{\alpha}/2)^2 + k^2] \cdot \prod_{l=1}^{(m_{\alpha}+2m_{2\alpha})/4-1}[(\lambda_{\alpha}/2)^2 + l^2]$, \\
$q_{\alpha}(\lambda)=\coth(\pi\lambda_{\alpha}/2)$,
\end{itemize}
Note that in each of the above cases the degree of the polynomial $\lambda_{\alpha} p_{\alpha}(\lambda)$ equals $m_{\alpha}$, and hence the dimension of the root subspace $\g_{\alpha}$. Given $\lambda\in\la^*_+$ we sometimes write $\lambda\rightarrow\infty$ and mean that $\lambda(H)\rightarrow\infty$ for all $H\in\la_+$. Recall that $\tanh\sim 1$ and $\coth\sim 1$ to all orders. Hence if asymptotics $\lambda\rightarrow\infty$ are involved, we can replace the factor $q_{\alpha}(\lambda)$ by $1$, and the Plancherel density is asymptotically a polynomial of degree $\dim(N)$.

For any (restricted) root $\alpha$ we can also write $\alpha_0:=\alpha/\langle\alpha,\alpha\rangle$. We will later need \emph{Harish-Chandra's} $e$\emph{-functions}\index{$e_s(\lambda)$, Harish-chandra's $e$-functions} (\cite{He94}, p. 163)
\begin{eqnarray}\label{e function}
e_s(\lambda) = \prod_{\alpha\in\Sigma_s^+} \Gamma\left(\frac{m_{\alpha}}{4}+\frac{1}{2}+\frac{\langle i\lambda,\alpha_0\rangle}{2}\right) \Gamma\left(\frac{m_{\alpha}}{4}+\frac{m_{2\alpha}}{2}+\frac{\langle i\lambda,\alpha_0\rangle}{2}\right),
\end{eqnarray}
where $s\in W$ and $\Sigma_s^+=\Sigma_0^+\cap s^{-1}\Sigma_0^-$\index{$\Sigma_s^+=\Sigma_0^+\cap s^{-1}\Sigma_0^-$, partial positive roots}.

\subsection{Geodesics, horocycles, and the boundary at infinity}\label{geodesics}
Let $X$ be a symmetric space of the noncompact type, hence $X=G/K$, where $G$ is a noncompact connected semisimple Lie group with finite center and where $K$ is a maximal compact subgroup of $G$. We carry over the notations from the previous section.The \emph{origin}\index{origin} of $X$ is the identity coset $o:=K\in G/K$. A basic remark which follows from Theorem \ref{K and S} is that the geodesics through the origin are (\cite{E96}, p. 74) the curves
\begin{eqnarray}
\gamma_X : t\mapsto e^{tX}\cdot o, \,\,\,\,\,\,\, (X\in\mathfrak{p})
\end{eqnarray}
As $X$ is a simply connected manifold of nonpositive sectional curvature, for each points $p\neq q$ in $X$ there exists a unique unit speed geodesic $\gamma_{p,q}:\rr\rightarrow X$ with $\gamma_{p,q}(0)=p$ and $\gamma_{p,q}(a)=q$, where $d(p,q)=a$, and where $d$ denotes the distance function on $X$ (loc. cit, p. 20).

\begin{defn}
Two unit speed geodesics $\gamma$ and $\sigma$ of $X$ are \emph{asymptotes} or \emph{asymptotically equivalent}\index{asymptotically equivalent geodesics} if there exists $C\geq0$ such that the $d(\gamma(t),\sigma(t))\leq C$ for all $t\geq0$. Two unit vectors $v,w\in SX$ are said to be \emph{asymptotes} or \emph{asymptotically equivalent} if the corresponding geodesics $\gamma_v$ resp. $\gamma_w$ with initial velocity $v$ and $w$ have this property.
\end{defn}

The asymptote relation is an equivalence relation on the unit speed geodesics of $X$ and on the unit vectors of $SX$.

\begin{defn}
A \emph{point at infinity}\index{point at infinity} for $X$ is an equivalence class of asymptotic geodesics of $X$ (\cite{E96}, p. 27). The set of all points at infinity for $X$ is denoted by $X(\infty)$\index{$X(\infty)$, set of points at infinity}. The equivalence class represented by a geodesic $\gamma$ is denoted by $\gamma(\infty)$\index{$\gamma(\infty)$, equivalence class of a geodesic} and the equivalence class represented by the oppositely oriented geodesic $\gamma^{-1}: t\mapsto\gamma(-t)$ is denoted by $\gamma(-\infty)$\index{$\gamma(-\infty)$, equivalence class of a geodesic}.
\end{defn}

If $\gamma$ is any geodesic of the complete, simply connected space $X$ with nonpositive curvature, then for each $p\in X$ there exists a unique geodesic $\sigma$ of $X$ such that $\sigma(0)=p$ and $\sigma$ is asymptotic to $\gamma$ (\cite{E96}, p. 28).

\begin{defn}
We say that points $x\neq y$ in $X(\infty)$ can be \emph{joined by a geodesic}\index{joined by a geodesic} of $X$ if there exists a geodesic $\gamma$ of $X$ with $\gamma(\infty)=x$ and $\gamma(-\infty)=y$. The geodesic $\gamma$ is said to \emph{join} $x$ and $y$.
\end{defn}

Throughout this work we will mainly be interested in points at infinity that can be joined by a geodesic. We first recall a basis result (\cite{EO}, Proposition 4.4):

\begin{thm}\label{joining}
Let $X$ have rank one. The sectional curvature of $X$ is strictly negative. Any two distinct points $x,y\in X(\infty)$ can be joined by a geodesic of $X$.
\end{thm}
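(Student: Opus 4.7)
The plan is to prove the two assertions in sequence: first that the rank-one hypothesis forces strictly negative sectional curvature, and then that strict negative curvature guarantees that any two distinct boundary points can be joined by a geodesic.

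For the curvature claim, I would work at the origin $o$, where $T_o X \cong \lp$. Using the Cartan bracket relations \eqref{K invariant} together with the standard formula $R(U,V)W = -[[U,V],W]$ for the Riemann curvature of a symmetric space of the noncompact type, a short computation with the $\ad$-invariance of $B$ shows that the sectional curvature of the $2$-plane spanned by orthonormal $U,V \in \lp$ equals $B([U,V],[U,V])$. Since $[U,V]\in\lk$ and the Killing form is negative definite on $\lk$, this quantity is $\leq 0$, with equality if and only if $[U,V]=0$. Now suppose, for contradiction, that $[U,V]=0$ for some linearly independent $U,V\in\lp$. Then the span of $U,V$ is a two-dimensional abelian subspace of $\lp$, which is contained in a maximal abelian subspace of dimension at least two, contradicting the rank-one assumption. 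Hence the sectional curvature at $o$ is strictly negative on every $2$-plane, and by $G$-homogeneity of $X$ it is strictly negative everywhere.

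For the visibility claim, I would invoke the corresponding property of Hadamard manifolds whose sectional curvature is uniformly bounded above by some negative constant. The uniform upper bound follows from the previous step together with the compactness of the unit sphere in $T_o X$ under the $K$-action and the $G$-homogeneity of $X$. Fix $p\in X$ and let $\gamma_x,\gamma_y$ be the unique unit-speed geodesics from $p$ with $\gamma_x(\infty)=x$ and $\gamma_y(\infty)=y$. For each $n$, connect $\gamma_x(n)$ and $\gamma_y(n)$ by the unique minimizing segment $\sigma_n$. The central step is to verify that the foot $q_n$ of the perpendicular from $p$ onto the image of $\sigma_n$ stays in a bounded ball of $X$: otherwise the angle at $p$ subtended by $\sigma_n$ would tend to zero, which together with a Toponogov-type triangle comparison under the strictly negative curvature bound would force $x=y$, a contradiction. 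Once this boundedness is secured, parametrize each $\sigma_n$ so that $q_n$ occurs at parameter zero; an Arzel\`a-Ascoli extraction produces a subsequential limit $\sigma$, which is a complete unit-speed geodesic of $X$, and continuity in the cone topology on $X\cup X(\infty)$ yields $\sigma(\infty)=x$ and $\sigma(-\infty)=y$.

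The main obstacle is the boundedness of the perpendicular feet $q_n$, which is essentially the content of the Eberlein-O'Neill visibility axiom. The strictly negative upper bound on curvature enters here quantitatively through the comparison: two unit vectors at $p$ whose geodesics eventually come close must subtend an angle bounded away from zero. Without this strict bound the argument collapses, as is already visible in higher rank, where flat totally geodesic planes contain distinct boundary points that cannot be joined by a single geodesic of $X$. It is precisely the rank-one hypothesis, by way of strict negative curvature, that rules out this obstruction.
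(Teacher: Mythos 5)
Your argument is correct in substance, but it follows a genuinely different route from the paper. For the curvature statement you argue algebraically at the origin: with $R(U,V)W=-[[U,V],W]$ and the invariance of $B$, the sectional curvature of the plane spanned by orthonormal $U,V\in\lp$ is $B([U,V],[U,V])\leq 0$, vanishing only if $[U,V]=0$, which would produce a two-dimensional abelian subspace of $\lp$ and contradict rank one; this is a clean and complete argument (only note that the uniform bound comes from compactness of the Grassmannian of $2$-planes at $o$ rather than of the unit sphere, plus homogeneity). For the joining statement you then reprove the Eberlein--O'Neill visibility result by comparison geometry: pinched negative curvature, minimizing segments $\sigma_n$ between $\gamma_x(n)$ and $\gamma_y(n)$, boundedness of the feet of perpendiculars via the angle-versus-distance comparison, Arzel\`a--Ascoli, and cone-topology continuity. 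The paper instead recalls the statement from \cite{EO} and later gives a purely group-theoretic proof (Lemma \ref{space of geodesics} and Theorem \ref{rank one connections}): the Bruhat decomposition $G=P\cup PwP$ shows that $P=G_{b_\infty}$ acts transitively on $B\setminus\{b_\infty\}$, hence $G$ acts transitively on $B^{(2)}\cong G/MA$, and translating the model geodesic $t\mapsto a_t\cdot o$, which joins $b_\infty$ and $b_{-\infty}$, joins any pair of distinct boundary points; this route needs no curvature estimates and yields at the same time uniqueness of the joining geodesic up to parameter translation. Your approach buys generality (it works for any Hadamard manifold with curvature bounded above by a negative constant, not just symmetric spaces), but its key quantitative step -- that a segment staying far from $p$ subtends a small angle at $p$ -- is only invoked, not proved, whereas the paper's structure-theoretic argument is self-contained given the decomposition theorems already established.
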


To motivate this setting, we will now describe the geometry of a rank one space in detail. The group theoretical aspects can then be generalized to higher rank spaces.

\subsubsection{The boundary at infinity}\label{boundary}
Let $X=G/K$ have rank one. We call $B=X(\infty)$ the \emph{boundary at infinity}\index{$B$, boundary at infinity}. For $X\in \mathfrak{p}$ let $\gamma_X=e^{tX}\cdot o$\index{$\gamma_X$, geodesic} denote the geodesic through the origin $o\in X$ with inital direction $X$. We introduce an action of $G$ on $B$. For $b=\lim_{t\rightarrow \infty}\gamma_{X}(t)$ and $g\in G$, define
\begin{eqnarray*}
g\cdot b := g\cdot\lim_{t\rightarrow \infty}\gamma_{X}(t)=\lim_{t\rightarrow \infty}\gamma_{g\cdot X}(t)\in B.
\end{eqnarray*}
(Here, $g\cdot X$ denotes the adjoint action.) Since $G/K$ has rank one, we define once and for all $H$ to be the unique unit vector (w.r.t. the norm induced by the Killing form) in the positive Weyl chamber $\mathfrak{a}^+$. We write $S(\mathfrak{p})$ for the unit sphere of $\mathfrak{p}$. Let $b_{\infty}\in B$\index{$b_{\infty}$, boundary point} denote the boundary point $\lim_{t\rightarrow\infty}\gamma_H(t)$. Let $b_{-\infty}\in B$\index{$b_{-\infty}$, boundary point} denote the boundary point $\lim_{t\rightarrow -\infty}\gamma_H(t)=\lim_{t\rightarrow \infty}\gamma_{-H}(t)$.

The only orthogonal transformations of the one-dimensional space $\mathfrak{a}$ are $\pm\id$. It follows that (in the rank one case) the Weyl group has exactly two elements. Let $w\in M'$ denote any representative of the nontrivial Weyl group element. The adjoint action of $w$ on $\mathfrak{a}$ is $-id$, so $w\cdot H=-H$. It follows that $w\cdot b_{\infty}=b_{-\infty}$ and vice versa.

For $b\in B$ there exists $X\in S(\mathfrak{p})$ such that $b=\gamma_X(\infty)$ for $\gamma_{X}(t)=e^{tX}\cdot o$. Since $K$ acts transitively on $S(\mathfrak{p})$, there is $k\in K$ such that $k\cdot H=X$. Hence $k\cdot b=b_{\infty}$. In particular $K$ acts transitively on $B$. The stabilizer of $b_{\infty}$ is by definition the stabilzer $M$ of $H$. The action of $K$ on $B$ is continuous (\cite{E96}, Ch. 3) and since it is transitive, $B$ is compact. Hence under the mapping\index{$A_H$, homeomorphism}
\begin{eqnarray}\label{identification}
A_H: K/M\rightarrow B, \hspace{1mm} kM\mapsto\lim_{t\rightarrow\infty}\gamma_{k\cdot H}(t),
\end{eqnarray}
$B$ is in a natural way homeomorphic to the compact space $K/M$. We make $B$ a smooth manifold by giving it the differentiable structure that makes $A_H$ a diffeomorphism (\cite{E96}, Ch. 3.8). The natural Lie topology of $K/M$ agrees with the compact open topology of the homeomorphism group of $B$, so $B=K/M$ as homogeneous spaces.

\subsubsection{The real flag manifold}\label{The real flag manifold}
We drop the rank one assumption and let $X=G/K$ be a general symmetric space of the noncompact type. Each $g\in G$ can be written $g=k(g)a(g)n(g)$ corresponding to $G=KAN$. We introduce the map
\begin{eqnarray}\label{G on K}
G\times K\rightarrow K, \hspace{3mm} (g,k)\mapsto g\cdot k := T_g(k) := k(gk)
\end{eqnarray}
Then $T_g$\index{$T_g$, group action} is a group action of $G$ on $K$. In particular, $T_g$ is inverted by $T_{g^{-1}}$ and defines a diffeomorphism of $K$ onto itself. This can easily be verified using the Iwasawa decomposition. For $g\in G$, $k\in K$ and $m\in M$ we clearly have $k(gkm)=k(gk)m$, since $m$ normalizes $N$ and centralizes $A$. Hence $k\mapsto k(gk)$ is right-$M$-equivariant, so \eqref{G on K} descends to an action of $G$ on the quotient $K/M$. We write $\overline{T}_g: K/M \rightarrow K/M, \,\, kM\mapsto k(gk)M$ for this action\index{$\overline{T}_g$, group action}.

Let $man\in P=MAN$. Then $man\cdot M = k(man)M = M$. Thus $MAN$ is the centralizer in $G$ of $M\in K/M$. The group $G$ acts naturally (by left-translations) on $G/P$. The mapping $\phi: K/M\rightarrow G/P, \,\, kM\mapsto kP$, is a bijection of $K/M$ onto $G/P$ which is regular at the origin, hence everywhere, so it is a diffeomorphism (\cite{He01}, p. 407). The identification $\phi: K/M\rightarrow G/P$ intertwines the actions of $G$ on $K/M$ and the natural group action of $G$ on $G/P$:
\begin{eqnarray*}
\phi(g\cdot kM) = \phi(k(gk)M) = k(gk)MAN = gkMAN = g\cdot\phi(kM).
\end{eqnarray*}
The spaces $K/M$ and $G/P$ are thus equivalent from this group theoretical point of view. We will write $B:=K/M=G/P$. We also recall the following useful lemma (\cite{He01}, p. 407):
\begin{lem}
The mapping $\overline{n}\mapsto k(\overline{n})M$ is a diffeomorphism of $\overline{N}$ onto an open submanifold of $K/M$ whose complement consists of finitely many disjoint manifolds of lower dimension.
\end{lem}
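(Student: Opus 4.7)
The plan is to factor the map through the diffeomorphism $\phi:K/M\to G/P$ recalled just before the lemma and reduce everything to the Bruhat decomposition. Under $\phi$, the map $\overline{n}\mapsto k(\overline{n})M$ corresponds to the canonical map
\begin{equation*}
\psi:\overline{N}\hookrightarrow G\longrightarrow G/P,\qquad \overline{n}\mapsto \overline{n}P,
\end{equation*}
because $\overline{n}=k(\overline{n})\exp H(\overline{n})n(\overline{n})$ with $\exp H(\overline{n})n(\overline{n})\in AN\subset P$. Thus it suffices to show that $\psi$ is a diffeomorphism of $\overline{N}$ onto the open subset $\overline{N}P/P$ of $G/P$ and that the complement is a finite disjoint union of submanifolds of strictly lower dimension.

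First I would verify smoothness and injectivity. Smoothness of $\psi$ is automatic as a composition of the inclusion $\overline{N}\hookrightarrow G$ with the smooth projection $G\to G/P$. For injectivity, note that the big-cell statement in Section \ref{Measure theoretic preliminaries} provides a diffeomorphism $\overline{N}\times M\times A\times N\to\overline{N}MAN$, which forces $\overline{N}\cap P=\overline{N}\cap MAN=\{e\}$. Hence if $\overline{n}_1P=\overline{n}_2P$ then $\overline{n}_2^{-1}\overline{n}_1\in\overline{N}\cap P=\{e\}$.

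Next I would promote the bijection $\psi:\overline{N}\to\overline{N}P/P$ to a diffeomorphism. The same diffeomorphism $\overline{N}\times P\to\overline{N}P$ (obtained by grouping $MAN=P$) shows that $\overline{N}P$ is an open submanifold of $G$, so its image under the open submersion $G\to G/P$ is open in $G/P$, and the inverse of $\psi$ is the smooth map induced on the quotient by the first-factor projection $\overline{N}P\to\overline{N}$. A dimension count $\dim\overline{N}=\dim N=\dim G-\dim P=\dim G/P=\dim K/M$ is consistent with $\psi$ being a diffeomorphism onto its open image.

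Finally, the complement. By the Bruhat decomposition, $G=\bigsqcup_{s\in W}Pm_sP$, with the big cell $Pm_{s_0}P$ (for $s_0$ the longest Weyl element, chosen so that $m_{s_0}Nm_{s_0}^{-1}=\overline{N}$) equal to $\overline{N}P$. Hence
\begin{equation*}
G/P\setminus\psi(\overline{N})\;=\;\bigsqcup_{s\in W,\,s\neq s_0}(Pm_sP)/P.
\end{equation*}
For each $s\in W$ the Schubert cell $(Pm_sP)/P$ is a smooth manifold diffeomorphic to $N_s\cdot m_sP/P$, where $N_s$ is the connected subgroup of $N$ with Lie algebra $\sum_{\alpha\in\Sigma_0^+\cap s^{-1}\Sigma_0^-}\mathfrak{g}_\alpha$, and its dimension equals the sum of the root multiplicities over this subset of $\Sigma_0^+$. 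Only the longest element $s_0$ realises the full sum $\dim N$, so every other cell has strictly smaller dimension; since $W$ is finite, the decomposition displayed above is the desired finite disjoint union of lower-dimensional submanifolds.

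The bookkeeping for the Schubert cells, i.e.\ identifying $(Pm_sP)/P$ with the orbit of a unipotent subgroup and computing its dimension via $\Sigma_0^+\cap s^{-1}\Sigma_0^-$, is the main obstacle; everything else is formal consequence of the Iwasawa and Bruhat decompositions already stated in Section \ref{decomposition theorems}.
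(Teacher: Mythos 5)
Your overall strategy is sound and is in fact the standard route (the paper itself gives no argument but simply cites \cite{He01}, p.~407, where the proof also goes through $G/P$ and the Bruhat decomposition): transporting the map to $\overline{n}\mapsto\overline{n}P$, getting injectivity from $\overline{N}\cap P=\{e\}$ via the big-cell bijection, and getting openness plus a smooth inverse from the diffeomorphism $\overline{N}\times P\to\overline{N}P$ are all correct. The genuine slip is the identification of the image with the big Bruhat cell: it is \emph{not} true that $\overline{N}P=Pm_{s_0}P$. Indeed $e\in\overline{N}P$ but $e\notin Pm_{s_0}P$ (that would force $m_{s_0}\in P$, impossible since $P\cap K=M$ and $s_0\neq e$); concretely, for $G=SL(2,\rr)$ the complement of $\overline{N}\cdot eP$ in $G/P$ is the single point $\{wP\}$, whereas the complement of the big cell $Pm_{s_0}P/P=N\cdot wP$ is $\{eP\}$. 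Consequently your displayed formula $G/P\setminus\psi(\overline{N})=\bigcup_{s\neq s_0}(Pm_sP)/P$ is false as written.

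The repair is short, and you should make it explicit. Since $Pm_{s_0}P=MANm_{s_0}P=Nm_{s_0}P$ and $Nm_{s_0}=m_{s_0}^{-1}\overline{N}m_{s_0}^{2}$ with $m_{s_0}^{2}\in M\subset P$, one gets $\overline{N}P=m_{s_0}\,Pm_{s_0}P$, so the complement of $\psi(\overline{N})$ is the left translate by $m_{s_0}$ of $\bigcup_{s\neq s_0}(Pm_sP)/P$; left translation being a diffeomorphism of $G/P$, this is still a finite disjoint union of submanifolds of strictly lower dimension, and your cell-dimension count finishes the proof. (Equivalently, invoke the Bruhat decomposition relative to the opposite parabolic $\overline{P}=MA\overline{N}$, i.e. $G=\bigcup_{s}\overline{P}m_sP$, whose open cell is exactly $\overline{N}P$; this avoids the translation altogether.) One further small point: when you compute cell dimensions by summing multiplicities over indivisible roots $\alpha\in\Sigma_0^{+}$ with $s\alpha<0$, you must count $m_{\alpha}+m_{2\alpha}$, not just $m_{\alpha}$; this does not affect the conclusion that only the longest element attains $\dim N$.
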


\begin{rem}
A \emph{Hadamard manifold} is a simply connected complete Riemannian manifold of nonpositive curvature and arbitrary dimension. We say that a Hadamard manifold $X$ satisfies the \emph{visibility axiom}\index{visibility axiom}, if any two points of the geodesic boundary (\cite{EO}) can be joined by a geodesic $X$. A Hadamard manifold may or may not satisfy the visibility axiom. The extreme cases are as follows:
\begin{itemize}
\item[(a)] The sectional curvature is zero. Then asymptoticity of geodesics coincides with ordinary parallelism, hence the visibility axiom is not satisfied.
\item[(b)] The sectional curvature is negative and bounded away from zero. In this case the behaviour of geodesic rays is qualitatively the same as in hyperbolic geometry, the visibility axiom is satisfied, and the geodesic joining two given boundary points is unique (\cite{EO}, Cor. 5.2).
\end{itemize}
A special class of Hadamard manifolds consists of Riemannian symmetric spaces of the noncompact type. If the symmetric space has rank one, then its sectional curvature is bounded between two negative constants (and thus the space falls into category (b) from above), so the visibility axiom is satisfied. On the other hand, higher rank spaces are characterized by the existence of totally geodesic flat subspaces, in which the visibility axiom fails, and hence it fails the ambient space as well (\cite{Hof}).

The description of the geodesic boundary of a higher rank space $X=G/K$ differs from the rank one case. For details we refer to \cite{E96}. If $X\cup X(\infty)$ is given the so-called \emph{cone topology} (loc. cit., p. 28), then isometries and geodesic symmetries of $X$ extend to the boundary $X(\infty)$ (loc. cit. p. 30).
\end{rem}

\begin{rem}
Given a boundary point $x\in X(\infty)$, let $G_x\subset G$ denote its stabilizer. Then $G_x$ acts transitively on $X=G/K$ (loc. cit., p.101). Suppose that another point $y$ at infinity can be joined with $x$ by a geodesic. Then the set of points to which $x$ can be joined is the orbit $G_x(y)$ (loc. cit., p. 151). If $X$ has rank one, then $G_x$ acts transitively on $X(\infty)\setminus\left\{x\right\}$. This fails whenever the rank of $X$ is $\geq 2$.
\end{rem}

Irrespective from the geometric point of view, many group theoretical aspects generalize to the higher rank case. We take the preceding remark as a motivation.

\begin{defn}
A subgroup of $P^*$ of $G$ is \emph{parabolic}\index{parabolic} if there exists a point $b\in B$ such that $P^*=G_b=\left\{ g\in G: gb=b\right\}$ is the stabilizer of $b$ in $G$.
\end{defn}

\begin{rem}
\begin{itemize}
\item[(1)] Our definition of a parabolic subgroup follows \cite{E96} and does only consider the \emph{minimal parabolic subgroups} of $G$.
\item[(2)] Unlike the subgroups of $G$ that fix a point in $X$, the parabolic subgroups are noncompact.
\item[(3)] The parabolic subgroup fixing $b=M\ni K/M$ is $P=MAN$ ($M\in K/M$ corresponds to $P\in G/P$).
\item[(4)] Let $b=hP\in G/P$ ($h\in G$). Then $g\cdot b = b \Leftrightarrow g\in hPh^{-1}$, so all parabolic subgroups of $G$ are conjugate to each other.
\item[(5)] $AN$ acts transitively on $X$, so the same holds for $P=MAN$. It follows that all parabolic subgroups act transitively on $X$.
\end{itemize}
\end{rem}

\subsubsection{The rank one case}
Let $X=G/K$ have rank one. The Weyl group $W=M'/M$ has exactly two elements. Let $w\in M'$ denote any representative of the nontrivial Weyl group element. As before, let $H$ denote the unit vector in $\la^+$. We also write\index{$a_t$, one-dimensional parameterization of $A$}
\begin{eqnarray}
a_t:=\exp(tH)\in A.
\end{eqnarray}
We consider the geodesic $t\mapsto a_t\cdot o$. Its forward limit point is $b_{\infty}$ and it identifies with $M\in K/M$ (that is $P\in G/P$). Its backward limit point $b_{-\infty}$ identifies with $wM\in K/M$ (that is $wP\in G/P$).

Since $wM\neq M$ in $K/M$, the geodesic $t\mapsto a_t\cdot o$ is the unique (up to parameter translation and time reversal) geodesic of $X$ that joins the boundary points $M\in K/M$ and $wM\in K/M$ at infinity (\cite{Q}).

We consider the homogeneous space $G/M$. The group $M$ is the stabilizer in $K$ of the unit vector tangent at $o$ to the geodesic $t\mapsto a_t\cdot o$. As $K$ acts transitively on the set of unit vectors in $T_o X\cong \lp$, the unit tangent bundle of $X$ identifies $G$-equivariantly with $G/M$ and the geodesic flow reads as the
action of $A$ by right translations on $G/M$.

\begin{lem}
Let $b\in B$. Then $G_b$ acts transitively on $B\setminus\left\{b\right\}$. In particular, $P$ acts transitively on $B\setminus\left\{b_{\infty}\right\}$.
\end{lem}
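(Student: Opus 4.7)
The plan is to exploit the Bruhat decomposition, which in the rank one case has the particularly simple form $G = P \sqcup PwP$ since $W = \{e,w\}$ has only two elements.

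First I would reduce to a single base point. Because $K$ (and hence $G$) acts transitively on $B$, there exists $g\in G$ with $g\cdot b_\infty = b$, and then $G_b = gG_{b_\infty}g^{-1} = gPg^{-1}$. The translation $y\mapsto g^{-1}\cdot y$ carries $B\setminus\{b\}$ onto $B\setminus\{b_\infty\}$ and intertwines the action of $G_b$ on the former with the action of $P$ on the latter. It therefore suffices to prove the second (``in particular'') assertion, namely that $P$ acts transitively on $B\setminus\{b_\infty\}$.

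Next I would transport the problem to $G/P$ via the diffeomorphism $B = K/M \to G/P$, $kM\mapsto kP$ (recorded above in the subsection on the real flag manifold), under which $b_\infty$ corresponds to the coset $eP$ and $b_{-\infty}$ corresponds to $wP$. In this model the action of $G$ on $B$ is the natural left translation on $G/P$. The Bruhat decomposition (stated in the excerpt) gives
\begin{eqnarray*}
G \;=\; P \;\sqcup\; PwP,
\end{eqnarray*}
so passing to $G/P$ yields the orbit decomposition
\begin{eqnarray*}
G/P \;=\; \{eP\} \;\sqcup\; PwP/P \;=\; \{b_\infty\} \;\sqcup\; P\cdot(wP) \;=\; \{b_\infty\}\;\sqcup\; P\cdot b_{-\infty}.
\end{eqnarray*}
Indeed, for any $g = p_1 w p_2 \in PwP$ we have $gP = p_1 w P = p_1\cdot(wP)$, so every element of $PwP/P$ lies in the $P$-orbit of $b_{-\infty}$. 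This shows $B\setminus\{b_\infty\} = P\cdot b_{-\infty}$, i.e.\ the $P$-action on $B\setminus\{b_\infty\}$ is transitive.

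Combining this with the reduction step, $G_b = gPg^{-1}$ acts transitively on $B\setminus\{b\}$ for any $b\in B$. The only point that really needs care is checking that the identification $B = G/P$ intertwines the $G$-actions, but this was already established in the subsection on the real flag manifold, so there is no serious obstacle beyond invoking Bruhat. The whole argument is essentially just a one-line application of the rank-one Bruhat decomposition, packaged with the standard homogeneity reduction.
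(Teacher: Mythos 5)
Your proof is correct and follows essentially the same route as the paper: reduce to the base point $b_{\infty}$ by conjugacy of the stabilizers, then apply the rank-one Bruhat decomposition $G = P \cup PwP$ to see that every point of $B\setminus\{b_{\infty}\}$ lies in the $P$-orbit of $b_{-\infty}$. Phrasing it as an orbit decomposition of $G/P$ rather than picking a single $b$ and factoring $g=p_1wp_2$ is only a cosmetic difference.
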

\begin{proof}
Since all parabolic subgroups are conjugate, it suffices to prove the assertion for $G_{b_{\infty}}=P$. Recall the Bruhat decomposition
\begin{eqnarray*}
G = P \cup PwP \,\,\,\,\,\,\,\, \textnormal{(disjoint union)},
\end{eqnarray*}
Let $b\in B\setminus\left\{b_{\infty}\right\}$ and select $g\in G$ such that $b=g\cdot b_{\infty}$. Note that $p\cdot b_{\infty}=b_{\infty}$ for each $p\in P$. Thus $g=p_1wp_2$ ($p_1,p_2\in P$). Hence $b=p_1wP=p_1\cdot b_{-\infty}$, which shows that $b\in P\cdot b_{-\infty}$, as desired.
\end{proof}

\begin{defn}
Let $\Delta=\left\{(b,b)\in B\times B\right\}$ denote the diagonal of $B\times B$. Let $B^{(2)}:=(B\times B)\setminus\Delta$ denote the set of distinct boundary points.
\end{defn}
We study the space of geodesics and the geodesic connections in the rank one case and describe the map that assigns to a geodesic its forward and backward limit points. We consider the diagonal action of $G$ on $B^{(2)}$ given by
\begin{eqnarray}\label{G on B^{(2)}}
G \times B^{(2)} \rightarrow B^{(2)}, \hspace{3mm} g\cdot (b_1,b_2) = (g\cdot b_1, g\cdot b_2).
\end{eqnarray}
Note that $g\cdot b_1 = g\cdot b_2$ implies $b_1 = b_2$, so \eqref{G on B^{(2)}} is well-defined.

\begin{lem}\label{space of geodesics}
$G$ acts transitively on $B^{(2)}$. The stabilizer of $(b_{\infty},b_{-\infty})$ is $MA$. In particular, $B^{(2)}=G/MA$ as a homogeneous space.
\end{lem}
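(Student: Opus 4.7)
My plan is to establish the two assertions in sequence, then deduce the homogeneous space description.

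\textbf{Step 1 (the inclusion $MA \subseteq \mathrm{Stab}_G(b_\infty,b_{-\infty})$).} I would first record that $M$ fixes $H$ (by its very definition as the centralizer of $A$ in $K$), hence fixes the geodesic $\gamma_H$ and both of its limit points $b_\infty,b_{-\infty}$. For $A$, observe that $a_s \cdot \gamma_H(t) = \gamma_H(t+s)$, so $A$ translates $\gamma_H$ along itself and fixes both endpoints. Therefore every element of $MA$ lies in the stabilizer.

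\textbf{Step 2 (transitivity).} Given $(b_1,b_2)\in B^{(2)}$, the transitivity of the $G$-action on $B$ (already noted at the end of Subsection \ref{boundary}) yields $g_1\in G$ with $g_1\cdot b_\infty = b_1$. Setting $b':=g_1\cdot b_{-\infty}$, we have $b'\neq b_1$ because $g_1$ is a bijection of $B$ and $b_\infty\neq b_{-\infty}$. By the previous lemma, $G_{b_1}$ acts transitively on $B\setminus\{b_1\}$, so we can find $h\in G_{b_1}$ with $h\cdot b' = b_2$. Then $hg_1\cdot(b_\infty,b_{-\infty})=(b_1,b_2)$, establishing transitivity of the diagonal action.

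\textbf{Step 3 (the reverse inclusion $\mathrm{Stab}_G(b_\infty,b_{-\infty})\subseteq MA$).} Identifying $B=G/P$, we have $b_\infty = eP$ and $b_{-\infty}=wP$, so the stabilizer equals $P\cap wPw^{-1}$. Since $M$ is normal in $M'$ (as $M'$ normalizes $A$ and $M$ is the centralizer of $A$ in $K$), since $w\in M'$ normalizes $A$, and since $\Ad(w)$ interchanges $\Sigma^+$ and $\Sigma^-$ and thus sends $\lnn$ to $\overline{\lnn}$, we get $wPw^{-1}=MA\overline{N}$. Now take any $g\in MAN\cap MA\overline{N}$ and write $g=man=m'a'\overline{n}'$; then
\begin{equation*}
(m'a')^{-1}(ma)n = \overline{n}' \in \overline{N}\cap MAN.
\end{equation*}
Since the multiplication map $(\overline{n},m,a,n)\mapsto \overline{n}man$ is a diffeomorphism of $\overline{N}\times M\times A\times N$ onto the open Bruhat cell $\overline{N}MAN\subset G$ (recorded in Subsection \ref{Measure theoretic preliminaries}), the uniqueness of this decomposition forces $\overline{N}\cap MAN=\{e\}$. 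Hence $\overline{n}'=e$ and $g=m'a'\in MA$.

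\textbf{Step 4 (homogeneous space identification).} Combining Steps 1--3, the orbit map $G\to B^{(2)}$, $g\mapsto g\cdot(b_\infty,b_{-\infty})$, is surjective with fiber $MA$, and descends to the claimed $G$-equivariant bijection $G/MA\cong B^{(2)}$; smoothness and its inverse are standard (the action is smooth on the manifold $B^{(2)}$). The delicate point of the argument is Step 3, and specifically the triviality of $\overline{N}\cap MAN$; the remaining pieces are either essentially definitional or immediate from results cited earlier in the preliminaries.
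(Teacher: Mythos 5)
Your proof is correct and follows essentially the same route as the paper: transitivity is obtained by moving one point to $b_{\infty}$ (via transitivity on $B$) and then using the transitivity of the stabilizer of that point on the complement, and the stabilizer is computed as $P\cap wPw^{-1}=MA$ by showing the unipotent part must be trivial. The only cosmetic difference is that the paper writes the stabilizing element as $g=man$ and kills $n$ via $N\cap\theta N=\{e\}$ from the root-space decomposition, whereas you invoke the uniqueness of the $\overline{N}MAN$ (Bruhat cell) decomposition to get $\overline{N}\cap MAN=\{e\}$ — the same fact in slightly different clothing.
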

\begin{proof}
Let $b_1 \neq b_2$ be points in $B$. Since $K$ acts transitive on $B$, we find $k\in K$ such that $k\cdot b_1 = b_{\infty}$. Since $P$ acts transitively on $B\setminus\left\{b_{\infty}\right\}$, we find $p\in P$ such that $p\cdot k\cdot b_2=b_{-\infty}$. Let $g=pk$. Then
$g\cdot (b_1,b_2) = (b_{\infty},b_{-\infty})$, so the group action is transitive.

It remains to show that $g\cdot(b_{\infty},b_{-\infty})=(b_{\infty},b_{-\infty}) \Leftrightarrow g\in MA$. Clearly an element $ma\in MA$ fixes both $M\in K/M$ and $wM\in K/M$, since $M'$ normalizes both $A$ and $M$.

Conversely assume that $g\cdot(b_{\infty},b_{-\infty})=(b_{\infty},b_{-\infty})$. Then $g\cdot b_{\infty}=b_{\infty}$, hence $g = man \in MAN$. It suffices to prove that $n=e$. By the assumption we have $n\in G_{b_{\infty}}\cap G_{b_{-\infty}} = MAN \cap wMANw^{-1}\subset\theta(N)$. Hence $n\in N\cap\theta N=\left\{e\right\}$. (Recall that $\mathfrak{g}$ is the direct vector space sum of the root-subspaces $\mathfrak{g}_{\alpha}$.)
\end{proof}

\begin{rem}
\begin{itemize}
\item[(1)] The unit tangent bundle $SX\cong G/M$ identifies with the set of pointed oriented complete geodesics of $X$.
\item[(2)] $B^{(2)} \cong G/MA$ is the set of oriented geodesics up to parameter translation. We can also write $SX\cong B^{(2)} \times \rr$.
\item[(3)] One could also prove Lemma \ref{space of geodesics} by using that the flats $nA\cdot o$ and $A\cdot o$ (\cite{He94}) coincide if and only if $n=e$.
\item[(4)] Lemma \ref{space of geodesics} is false for $G/K$ of rank $\geq 2$. This follows from the Bruhat decomposition, too. We will later see which subspace of $B\times B$ identifies with the homogeneous space $G/MA$.
\end{itemize}
\end{rem}

We can now give group-theoretical proof of Theorem \ref{joining}. See also \cite{Q}.
\begin{thm}\label{rank one connections}
Each geodesic $\sigma$ of $G/K$ has two distinct limit points in $B$. For $(b_1,b_2)\in B^{(2)}$ there exists up to parameter translation a unique geodesic $\sigma$ with limit points $b_1$ and $b_2$. For $(x,b)\in X\times B$ there is a unique geodesic through $x$ with limit point $b$.
\end{thm}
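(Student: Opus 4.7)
The plan is to reduce all three assertions to the distinguished geodesic $\gamma_0(t)=a_t\cdot o$ via the group-theoretic identifications already at our disposal: $SX\cong G/M$ with geodesic flow implemented by right multiplication by $A$; the transitive action $G\curvearrowright B^{(2)}$ with stabilizer $MA$ (Lemma \ref{space of geodesics}); the transitive action $K\curvearrowright S(\lp)$ with stabilizer $M$; and the diffeomorphism $A_H:K/M\to B$, $kM\mapsto k\cdot b_\infty$ from \eqref{identification}. Throughout, the essential structural fact will be that $M$ centralizes $A$ and fixes $o$, so every element of $MA$ acts on $\gamma_0$ by a parameter translation.

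For the first statement, any unit-speed geodesic of $X$ is of the form $\sigma(t)=ga_t\cdot o$ for some $g\in G$, since the unit tangent vector $\sigma'(0)\in SX\cong G/M$ can be written $gM$ and the geodesic flow is right translation by $A$. Its forward and backward limit points are $g\cdot b_\infty$ and $g\cdot b_{-\infty}$, and these are distinct because $b_\infty\neq b_{-\infty}$ (equivalently $wM\neq M$ in $K/M$) while $g$ acts on $B$ as a homeomorphism. For the second statement, given $(b_1,b_2)\in B^{(2)}$, Lemma \ref{space of geodesics} provides $g\in G$ with $g\cdot(b_\infty,b_{-\infty})=(b_1,b_2)$, so $\sigma(t):=ga_t\cdot o$ is a geodesic joining $b_1,b_2$. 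Conversely, if $\sigma_i(t)=g_ia_t\cdot o$ ($i=1,2$) share the limit pair $(b_1,b_2)$, then $g_2^{-1}g_1$ fixes $(b_\infty,b_{-\infty})$, so by Lemma \ref{space of geodesics} we can write $g_2^{-1}g_1=ma_s\in MA$; then using $m\cdot o=o$ and $ma_t=a_tm$ we compute
\begin{eqnarray*}
\sigma_1(t)=g_2 ma_s a_t\cdot o = g_2 m a_{t+s}\cdot o = g_2 a_{t+s}\cdot o = \sigma_2(t+s),
\end{eqnarray*}
which is exactly the required parameter translation.

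For the third statement, fix $(x,b)\in X\times B$ and choose $g\in G$ with $g\cdot o=x$; this is possible because $G$ acts transitively on $X$. A unit-speed geodesic $\sigma$ with $\sigma(0)=x$ and $\sigma(\infty)=b$ has the form $\sigma(t)=gk a_t\cdot o$ for some $k\in K$, corresponding under $g^{-1}$ to a geodesic through $o$ pointing to $g^{-1}\cdot b$; its forward limit is $gk\cdot b_\infty$, so we require $k\cdot b_\infty=g^{-1}\cdot b$. Since $A_H:K/M\to B$ is a bijection, there is a unique coset $kM\in K/M$ with this property, and different representatives $k,k'=km$ ($m\in M$) give the same geodesic because $m$ centralizes $A$ and fixes $o$, so $gkm\cdot a_t\cdot o = gk\cdot a_t\cdot o$. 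This yields existence and uniqueness of $\sigma$, completing the proof. The only nontrivial algebraic input is the identity $ma_t\cdot o=a_t\cdot o$ for $m\in M$, used in both the uniqueness arguments; this is immediate from $M\subset K$ and $[M,A]=\{e\}$, but it is the crux that turns the transitivity statements into the claimed uniqueness up to parameter translation.
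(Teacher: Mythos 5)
Your proof is correct and follows essentially the same reduction as the paper's: all three claims are traced back to the model geodesic $t\mapsto a_t\cdot o$ using the transitive actions of $G$ on $X$, of $K$ on $B$, and of $G$ on $B^{(2)}$ (Lemma \ref{space of geodesics}), with the stabilizer computations supplying uniqueness. You are somewhat more explicit than the paper — in particular you actually verify that the $MA$-ambiguity amounts to a parameter translation via $m a_{t+s}\cdot o = a_{t+s}\cdot o$, and you reduce the third assertion to the origin by translating $b$ with $g^{-1}$ rather than invoking transitivity of $G_b$ on $X$ — but these are presentational differences, not a different route.
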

\begin{proof}
The first point is true for the geodesic $t\mapsto a_t\cdot o$ and therefore for a general geodesic as $G$ acts transitively on the set of geodesics, since it acts transitively on $X$ and $K$ acts transitively on the unit sphere of $T_o X$. The third point is true for $x=o$ as $K$ acts transitively on $B$ and therefore for any $x$, as for every $b\in B$ its stabilizer $G_b$ in $G$ acts transitively on $X$. The second point is true for $b_{\infty}$ and $b_{-\infty}$, hence by transitivity of $G$ on $B^{(2)}$ for all pairs of limit points.
\end{proof}

\begin{rem}
$B\setminus\left\{b_{\infty}\right\} \cong N$ as homogeneous spaces. In fact, the action of $N$ on $B\setminus \left\{b_{\infty}\right\}$ is already transitive, since the action of $P=MAN$ is and $MA$ fixes $b_{-\infty}$. It follows from \ref{N on A} that the stabilizer in $N$ of $b_{-\infty}$ is $\left\{e\right\}$.
\end{rem}

\subsubsection{The general case}\label{The general case}
We drop the rank one assumption and let $X=G/K$ be a general symmetric space of the noncompact type. Consider the diagonal action of $G$ on $G/K \times G/P$ given by
\begin{eqnarray}\label{G on X and B}
\gamma\cdot(gK,hP):=(\gamma gK,\gamma hP),\hspace{2mm}\gamma,g,h\in G.
\end{eqnarray}
Note that in the customary sense, $P=MAN$ is still a \emph{minimal parabolic subgroup} of $G$ (we do not describe this concept here). The action \eqref{G on X and B} yields the useful identification $G/M\cong X\times B$ (as homogeneous spaces). To describe this identification, we use simple Iwasawa decomposition arguments. First, let $b_0$ denote the identity coset of $K/M\cong G/P$. For $\gamma\in G$ we observe $\gamma\cdot(o,b_0)=(o,b_0) \Leftrightarrow \gamma\in K\cap P=M$. It follows that $M\subset G$ is the stabilizer of $(o,b_0)\in X\times B$. For a proof of $X\times B\cong G/M$ it remains to show that the diagonal action of $G$ on $X\times B$ is transitive. We say that cosets $\gamma P\in G/P$ and $hK\in G/K$ are \emph{incident}\index{incident}, if as subsets of $G$ they are not disjoint.
\begin{lem}\label{incident}
Let $g\in G$. Then $gK\in G/K$ and $P\in G/P$ are incident. Let $h\in G$. Then $gK$ and $hP\in G/P$ are incident.
\end{lem}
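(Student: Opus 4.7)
The plan is to reduce everything to the Iwasawa decomposition. Recall from the preceding subsection that $G = KAN$ (and equivalently $G = NAK$), and that $P = MAN$ contains $AN$. In particular $G = PK$, since any element of $G$ is already of the form (element of $AN$)$\cdot$(element of $K$), which is (element of $P$)$\cdot$(element of $K$). Two subsets $S_1, S_2 \subset G$ are incident precisely when $S_1 \cap S_2 \ne \emptyset$, so this is all one needs.

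First I would handle the special case $h = e$. Given $g \in G$, apply the $NAK$-decomposition to write $g = n\,a\,k$ with $n \in N$, $a \in A$, $k \in K$. Then
\begin{eqnarray*}
g\,k^{-1} \;=\; n\,a \;\in\; AN \;\subset\; P,
\end{eqnarray*}
while at the same time $g\,k^{-1} \in gK$. Hence $g\,k^{-1} \in gK \cap P$, which establishes incidence of $gK$ and $P$.

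For the general assertion, I would reduce it to the case above by left-translation. Apply the first part to the element $h^{-1}g \in G$: there exist $k \in K$ and $p \in P$ with $h^{-1}g\,k^{-1} = p$, equivalently $g\,k^{-1} = h\,p$. The left-hand side belongs to $gK$, the right-hand side to $hP$, so $g\,k^{-1} \in gK \cap hP$, as required.

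There is no real obstacle here: the statement is essentially a reformulation of the Iwasawa decomposition $G = PK$, and the only observation needed beyond that is the translation trick that turns incidence of $gK$ and $hP$ into incidence of $h^{-1}gK$ and $P$. One could equivalently phrase the whole argument by noting that $G = PK$ says exactly that $G/K$ is covered by the single $P$-orbit of the base point $o$, which is the geometric content of the lemma.
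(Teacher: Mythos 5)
Your proof is correct and follows essentially the same route as the paper: write $g=nak$ via the Iwasawa decomposition to get an element of $gK\cap P$, then reduce the general case to this one by applying it to $h^{-1}g$ and translating back by $h$. Nothing further is needed.
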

\begin{proof}
Write $g=nak$. Then $gK=naK\subset G$ contains $p=na\in MAN=P$. For general $hP\in G/P$ select $p\in h^{-1}gK \cap P$. Then $p=h^{-1} gk$ for some $k\in K$, so $hP \ni hp = gk \in gK$.
\end{proof}
\begin{cor}
$G$ acts transitively on $G/K\times G/P$.
\end{cor}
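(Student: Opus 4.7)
The plan is to deduce the corollary directly from Lemma \ref{incident} by showing that every pair $(gK, hP) \in G/K \times G/P$ lies in the $G$-orbit of the base point $(o, b_0) = (eK, eP)$.

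First I would note that under the diagonal action \eqref{G on X and B}, sending $\gamma \in G$ to $\gamma \cdot (o, b_0) = (\gamma K, \gamma P)$ realizes the orbit map at the base point. Transitivity therefore amounts to the assertion that for any $g, h \in G$ one can find a single element $\gamma \in G$ with $\gamma K = gK$ and $\gamma P = hP$ simultaneously.

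This is exactly the content of incidence in Lemma \ref{incident}: the cosets $gK \subset G$ and $hP \subset G$ have nonempty intersection, so any $\gamma \in gK \cap hP$ does the job. From $\gamma \in gK$ we obtain $\gamma K = gK$, and from $\gamma \in hP$ we obtain $\gamma P = hP$, whence $\gamma \cdot (o, b_0) = (gK, hP)$.

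There is essentially no obstacle: the lemma just proved provides the required common element, and the diagonal action interprets this common element as a group element moving the base point to the prescribed pair. The only point worth highlighting is the consistency of notation, namely that $o = K \in G/K$ and $b_0 = P \in G/P$ are the identity cosets, so that the orbit map at $(o, b_0)$ is precisely $\gamma \mapsto (\gamma K, \gamma P)$.
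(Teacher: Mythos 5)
Your proof is correct and follows essentially the same route as the paper: both invoke Lemma \ref{incident} to pick a common element $\gamma \in gK \cap hP$ (the paper writes it as $gk = hp$) and then observe that this element carries the base point $(o,b_0)$ to $(g\cdot o, h\cdot b_0)$.
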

\begin{proof}
First, given $(gK,hP)\in G/K\times G/P$, we apply Lemma \ref{incident} and write $gk=hp$, where $k\in K$ and $p\in P$. Then $gk\cdot(o,b_0)=(gk\cdot o,gk\cdot b_0)=(g\cdot o, h\cdot b_0)$.
\end{proof}
\begin{cor}\label{KAN on X and B}
Each element $(gK,kM)\in G/K\times K/M$ can be written in the form $(kanK,kM)$. If $(z,b)\in X\times B$, then there is $g\in G$ such that $g\cdot(o,b_0)=(z,b)$. The element $g\in G$ is uniquely determined modulo $M$.
\end{cor}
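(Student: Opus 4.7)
The plan is to prove the three assertions one after the other, using only the Iwasawa decomposition $G=KAN=NAK$, the facts $AN=NA$ and $MAN=MNA=AMN$ (since $M$ centralizes $A$ and normalizes $N$), and the stabilizer computation $G_{(o,b_0)}=K\cap P=M$ established in the paragraph preceding the corollary.

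For the first claim, fix representatives $g\in G$ and $k\in K$ of the two cosets. Apply the Iwasawa decomposition $G=NAK$ to the element $k^{-1}g$ to obtain $k^{-1}g=n'a'k'$ with $n'\in N$, $a'\in A$, $k'\in K$. Then $gK=kn'a'k'K=kn'a'K$. Since $N$ is normalized by $A$, we may rewrite $n'a'=a'(a'^{-1}n'a')=:a n$ with $a=a'\in A$ and $n\in N$. Hence $gK=kan K$, proving $(gK,kM)=(kanK,kM)$. I should also note that this is well-defined modulo the choice of representative $k$: if $k$ is replaced by $km$ for some $m\in M$, then $km\cdot a\cdot n=k\cdot a\cdot (mnm^{-1})\cdot m$, and $mnm^{-1}\in N$ because $M$ normalizes $N$, so the form $(kanK,kM)$ is preserved (with a different $n$).

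For the second claim, write $z=gK$ and $b=kM$ with $k\in K$, and apply the first claim to obtain $a\in A$, $n\in N$ with $z=kanK$. Set $g:=kan$. Then
\begin{eqnarray*}
g\cdot o=kan\cdot K=kanK=z.
\end{eqnarray*}
Moreover, $kan$ is already in Iwasawa form, so its Iwasawa $K$-projection is $k(kan)=k$, and therefore
\begin{eqnarray*}
g\cdot b_0=\overline{T}_{kan}(M)=k(kan)M=kM=b.
\end{eqnarray*}
Thus $g\cdot(o,b_0)=(z,b)$.

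For the uniqueness, suppose $g,g'\in G$ both satisfy $g\cdot(o,b_0)=g'\cdot(o,b_0)=(z,b)$. Then $g^{-1}g'$ fixes $(o,b_0)\in X\times B$, i.e. $g^{-1}g'\in G_{(o,b_0)}$. But the paragraph preceding the statement established $G_{(o,b_0)}=K\cap P=M$, hence $g'\in gM$. I do not expect any real obstacle here; the whole argument is a bookkeeping exercise in the Iwasawa decomposition combined with the stabilizer computation already available in the text, the only subtlety being to check that the $M$-ambiguity in the representative $k$ of $kM$ is absorbed using the normalization properties of $M$.
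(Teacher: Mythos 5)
Your proof is correct and uses essentially the same ingredients as the paper: the Iwasawa decomposition (applied to $k^{-1}g$, which is just a translated version of the paper's ``incident cosets'' computation $g=nak$) to get existence of the form $(kanK,kM)$ and of $g$ with $g\cdot(o,b_0)=(z,b)$, and the previously established stabilizer identity $G_{(o,b_0)}=K\cap P=M$ for uniqueness modulo $M$. The extra remark on independence of the representative of $kM$ is harmless but not needed for the statement.
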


If $H\in\la$, the geodesic $t\mapsto\exp(tH)\cdot o$ in $X$ is said to be \emph{regular} if the vector $H$ is regular. A general geodesic $\gamma$ in $X$ is said to be \emph{regular} if its stabilizer $\left\{g\in G: g\cdot\gamma=\gamma\right\}$ in $G$ has minimum dimension (\cite{He94}, p.82). A \emph{flat} in $X$ is a totally geodesic submanifold of $X$ whose curvature tensor vanishes identically. The \emph{maximal flats} in $X$ are of the form $gA\cdot o$ ($g\in G$) (\cite{He01}, Ch. V, \S6).

Recall the Bruhat decomposition
\begin{eqnarray*}
G = \bigcup_{s\in W}Pm_sP \hspace{3mm}(\textnormal{disjoint union}),
\end{eqnarray*}
where for $s\in W$ (Weyl group) we picked a representative $m_s\in M'$. Exactly one of the above sets $Pm_sP$ is open and dense in $G$, namely $PwP$, where $w$ is the longest Weyl group element. The other summands have lower dimension. Recall $\overline{N}=wNw^{-1}$ (conjugation by $w$ is not necessarily $\theta_{|N}$, the restriction of $\theta$ to $N$). It follows that the manifold $\overline{N}MAN$ is open and dense in $G$. Thus the space of flats can be naturally identified with $G/MA$, or a dense open subset of $G/P \times G/\overline{P}$, where $\overline{P}:=MA\overline{N}$, via the $G$-equivariant map
\begin{eqnarray*}
G/MA \ni gMA \mapsto (gP,gwPw^{-1}) \in G/P \times G/\overline{P}.
\end{eqnarray*}
We also consider the $G$-equivariant map
\begin{eqnarray*}
G/MA \ni gMA \mapsto (gP,gwP) \in G/P \times G/P = B\times B.
\end{eqnarray*}
It follows from the Bruhat decomposition that its image is an open and dense subset of $G/P\times G/P = B\times B$, namely $\left\{(gP,hP)\in G/P\times G/P: h^{-1}g\in PwP\right\}$. This open and dense subset of $B\times B$ is the $G$-orbit of $(P,wP)$ in $B\times B$. We will from now on write $B^{(2)}:\cong G/MA$ for this $G$-orbit. If $X$ has rank one, then $B^{(2)}=(B\times B)\setminus\Delta$, where $\Delta$ denotes the diagonal of $B\times B$.

\subsubsection{The space of horocycles}
\begin{defn}
A \emph{horocycle}\index{horocycle} $\xi$ in $X$ is any orbit $\xi=N'\cdot x$, where $x\in X$ and $N'=g^{-1}Ng$ is a subgroup of $G$ conjugate to $N$. In particular, we define $\xi_0$ to be the horocycle $N \cdot o$.
\end{defn}
The choice of Iwasawa-decomposition is immaterial since all such decompositions are conjugate (\cite{E96}, p.105). We note that  each horocycle is a closed submanifold of $X$. The group $G$ acts transitively on the set of horocycles. The subgroup of $G$ which maps the horocycle $\xi_0$ into itself equals $MN$ (\cite{He94}, Ch. II, \S1).

The set of horocycles in $X$ with the differentiable structure of $G/MN$ is called the \emph{dual space}\index{dual space} of $X$ and will be denoted by $\Xi$. We write $\Xi=G/MN$. Then each $\xi\in\Xi$ can be written in the form $\xi=gMN$, where $g\in G$. Decompose $g=kan$ corresponding to the Iwasawa decomposition. Then $\xi=kanMN=kaMN$, since $M$ normalizes $N$. Let $h$ be another representative of $\xi$, that is $hMN=gMN$, so $h=gmn'$, since $M$ normalizes $N$. Then $\xi=hMN=kanmn'MN=kaMN=kmaMN$. It follows that each horocycle $\xi\in \Xi$ can be written in the form $kaMN$, where $kM\in K/M$ and $a\in A$ are unique.

\begin{defn}
If $\xi=kaMN$ is any horocycle, then $b=kM\in B=K/M$ is said to be \emph{normal}\index{normal} to $\xi$.
\end{defn}

\begin{lem}
Each horocycle $\xi=gNg^{-1}\cdot x$ ($g\in G, x\in X$) can be written in the form $\xi=ka\cdot\xi_0$, where $kM\in K/M$ and $a\in A$ are unique.
\end{lem}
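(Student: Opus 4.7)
My plan is to reduce the lemma to the coset representation established in the preceding paragraph by passing between the two descriptions of a horocycle: on the one hand as an $N'$-orbit $N'\cdot x$ with $N'$ conjugate to $N$, and on the other as a coset in $\Xi = G/MN$. Under the $G$-equivariant identification $gMN \leftrightarrow gN\cdot o$, the basepoint $\xi_0 = N\cdot o$ corresponds to $eMN$, and the natural action $\gamma\cdot(N'\cdot x) = (\gamma N'\gamma^{-1})\cdot(\gamma x)$ on horocycles matches left translation on $G/MN$; in particular $ka\cdot\xi_0$ corresponds to the coset $kaMN$ featured in the previous paragraph. Once this correspondence is checked, both existence and uniqueness of $kM \in K/M$ and $a \in A$ will follow from the coset statement.

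For existence, I would first rewrite $\xi = gNg^{-1}\cdot x = gN\cdot(g^{-1}x)$ (both sides equal $\{gng^{-1}x : n \in N\}$). Using the $NAK$-decomposition, I write $g^{-1}x = \tilde{n}\tilde{a}\cdot o$ for some $\tilde{n}\in N$, $\tilde{a}\in A$. Since $N\tilde{n} = N$ (as $N$ is a subgroup) and $N\tilde{a} = \tilde{a}N$ (as $A$ normalizes $N$), this yields $\xi = gN\tilde{a}\cdot o = g\tilde{a}N\cdot o$. Finally, applying the $KAN$-decomposition to $g\tilde{a} = k\alpha n'$ and using $n'N = N$ produces $\xi = k\alpha N\cdot o = k\alpha\cdot\xi_0$, which is the required form.

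For uniqueness, I would suppose $ka\cdot\xi_0 = k'a'\cdot\xi_0$. Via the identification $\Xi \cong G/MN$, this is equivalent to $(k'a')^{-1}(ka) \in MN$, the stabilizer of $eMN$. Writing $(k'a')^{-1}(ka) = mn$ with $m \in M$, $n \in N$, and using that $M = Z_K(A)$ centralizes $A$, one obtains $ka = (k'm)a'n$; comparing this $KAN$-factorization with the trivial one $ka = k\cdot a\cdot e$, uniqueness of the Iwasawa decomposition forces $k = k'm$, $a = a'$ and $n = e$, whence $a = a'$ and $kM = k'M$. No real obstacle is expected: everything amounts to Iwasawa bookkeeping combined with the coset statement already available, the only point requiring attention being the compatibility of the two natural $G$-actions on horocycles under the identification $\Xi \cong G/MN$.
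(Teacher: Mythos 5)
Your proposal is correct and follows essentially the same route as the paper: existence by writing $g^{-1}x$ in $NA\cdot o$ form and pushing $N$ through via the Iwasawa decomposition and the fact that $A$ normalizes $N$, uniqueness from $MN$ being the stabilizer of $\xi_0$. Your uniqueness step merely spells out, via uniqueness of the $KAN$ factorization, what the paper dispatches in one line by citing that stabilizer fact.
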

\begin{proof}
Write $g=kan$ and $g^{-1}\cdot x = \tilde{n}\tilde{a} \cdot o$ corresponding to the Iwasawa decomposition. Since $A$ normalizes $N$ we obtain
$\xi = gNg^{-1}\cdot x = kaN\tilde{n}\tilde{a}K = ka_1NK = ka_1\cdot \xi_0$. The uniqueness follows from the fact that $MN$ is the stabilizer of the horocycle $\xi_0$.
\end{proof}

\begin{defn}
Let $\xi = kaMN \in \Xi$ be any horocycle. We call $\log(a)$ the \emph{composite distance}\index{composite distance} from $o$ to $\xi$. In general, for $x=g_1 K\in X$ and $\xi=g_2 MN \in \Xi$ we call $\left\langle x,\xi \right\rangle = H(g_1^{-1}g_2)$ the \emph{composite distance}\index{$\left\langle x,\xi \right\rangle$, composite distance from a point $x$ to a horocycle $\xi$} from $x$ to $\xi$.
\end{defn}

Recall that $H:G\rightarrow\mathfrak{a}$ is left-$K$-invariant and right-$MN$-invariant, so $\left\langle x,\xi \right\rangle$ is well-defined and invariant under the natural diagonal action of $G$ on the product space $X\times \Xi\cong G/K\times G/MN$. We also state the following uniqueness result (\cite{He94}, p. 81).

\begin{lem}\label{lemma xi}
Given $x\in X$, $b\in B$, there exists a unique horocycle passing through $x$ with normal $b$. For $x=gK \in G/K$ and $b=kM \in K/M$,
\begin{eqnarray}\label{xi}
\xi = \xi(x,b)=k\exp(-H(g^{-1}k))\xi_0
\end{eqnarray}
is the unique horocycle in question.
\end{lem}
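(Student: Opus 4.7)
My plan is to reduce the statement to the normal form for horocycles established just before the lemma: every $\xi \in \Xi$ can be written uniquely as $\xi = k_1 a_1 \cdot \xi_0$ with $k_1 M \in K/M$ and $a_1 \in A$, and the point $k_1 M$ is by definition the normal of $\xi$. Thus the condition ``$\xi$ has normal $b=kM$'' forces $k_1 M = kM$, so without loss of generality $k_1 = k$ and the only remaining unknown is $a_1 \in A$. The problem therefore reduces to showing that the condition $x \in k a_1 N \cdot o$ determines $a_1$ uniquely and computes to $a_1 = \exp(-H(g^{-1}k))$.

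To compute $a_1$ I would translate the incidence condition $gK = x \in k a_1 N \cdot o$ to $k^{-1} g \in a_1 N K$. The $NAK$-Iwasawa decomposition $G = NAK$ (valid since the paper records $g = n(g)\exp A(g) k(g)$ with $A(g) = -H(g^{-1})$) writes the element $k^{-1} g$ uniquely in the form $n' \exp(A(k^{-1}g)) k'$ with $n' \in N$, $k' \in K$. Comparing forces $a_1 = \exp(A(k^{-1}g)) = \exp(-H((k^{-1}g)^{-1})) = \exp(-H(g^{-1}k))$. Uniqueness of $a_1$ is exactly uniqueness in the $NAK$ decomposition, so both existence and uniqueness fall out of the same Iwasawa computation.

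For the verification step I would go the other way: set $a_1 := \exp(-H(g^{-1}k))$ and check that $x = gK$ really lies in the horocycle $k a_1 \cdot \xi_0 = k a_1 N K$. Writing $g^{-1} k = k'' \exp(H(g^{-1}k)) n''$ in the $KAN$-decomposition gives $k^{-1} g = (n'')^{-1} \exp(-H(g^{-1}k)) (k'')^{-1} = (n'')^{-1} a_1 (k'')^{-1}$, hence $g = k (n'')^{-1} a_1 (k'')^{-1}$. Since $A$ normalizes $N$, $(n'')^{-1} a_1 = a_1 \tilde n$ for some $\tilde n \in N$, so $gK = k a_1 \tilde n K \subset k a_1 N \cdot o = \xi(x,b)$, as desired.

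I do not expect any serious obstacle: the entire argument is bookkeeping with the two Iwasawa decompositions $G=KAN$ and $G=NAK$ together with the normal form for horocycles, all of which are already at our disposal. The only minor care needed is in matching conventions between the $KAN$-projection $H$ used in the definition of $\langle x,\xi\rangle$ and the $NAK$-projection that naturally appears when one writes $x = k a_1 n \cdot o$; this is handled by the identity $A(h) = -H(h^{-1})$ recorded in Subsection~\ref{decomposition theorems}.
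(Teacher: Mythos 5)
Your argument is correct, and it is essentially the route the paper itself takes: reduce to the normal form $\xi=k_1a_1\cdot\xi_0$ (with $k_1M$ and $a_1$ unique) so that the normal condition pins down $k_1M=kM$, and then identify $a_1$ by matching the incidence condition $k^{-1}g\in Na_1K$ against the unique $NAK$-Iwasawa decomposition, using $A(k^{-1}g)=-H(g^{-1}k)$. The paper's own verification is the same Iwasawa bookkeeping, written as a chain of equivalences ending in a statement true by the Iwasawa decomposition, so there is nothing to add.
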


\subsubsection{Horocycles brackets and the Iwasawa-projection}\label{Horocycles brackets and the Iwasawa-projection}
For $x\in X$ and $b\in B$ let $\xi(x,b)$ denote the unique horocycle passing through the point $x\in X$ with normal $b\in B=K/M$. We denote by
$\left\langle x,\xi \right\rangle\in\la$ the \emph{composite distance} from the origin $o$ to the horocycle $\xi(x,b)$. This vector-valued inner product has a simple expression in terms of the Iwasawa decomposition $G=KAN=NAK$. Therefore recall the projections $H:KAN\rightarrow\la$ and $A:G=NAK\rightarrow\la$. In view of \eqref{xi} we define $A:X\times B \rightarrow \mathfrak{a}$ via
\begin{eqnarray*}
(x,b)\mapsto A(x,b)= \left\langle x,b\right\rangle = \left\langle gK,kM\right\rangle := A(k^{-1}g) = -H(g^{-1}k).
\end{eqnarray*}
We will mostly use the notation $\langle\,,\,\rangle$ for this inner product and call it the \emph{horocycle bracket}\index{horocycle bracket}. Sometimes, when this horocycle bracket is needed in one equation with the Killing form, we use the notation $A(x,b)$, which is also used in \cite{He94}. We clearly have
\begin{lem}
$\left\langle \cdot ,\cdot \right\rangle$ is invariant under the diagonal action of $K$ on $X\times B$.
\end{lem}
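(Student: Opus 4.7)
The plan is to unwind the definition and let the diagonal $K$-action cancel algebraically. Recall that
\[
\langle gK, k'M\rangle \;=\; A(k'^{-1}g),
\]
where $A\colon G\to\la$ is the Iwasawa projection coming from $G=NAK$. Given $k\in K$ and $(gK,k'M)\in X\times B$, the diagonal action produces $(kgK,\,kk'M)$, and applying the defining formula yields
\[
\langle kgK,\,kk'M\rangle \;=\; A\bigl((kk')^{-1}(kg)\bigr) \;=\; A(k'^{-1}k^{-1}kg) \;=\; A(k'^{-1}g) \;=\; \langle gK,k'M\rangle,
\]
which is the desired identity.

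Two small sanity checks are worth noting before calling the computation complete. First, the formula $\langle gK,k'M\rangle=A(k'^{-1}g)$ should indeed descend to a function on $X\times B$: this uses right $K$-invariance of $A$ (for replacing $g$ by $gk_1$) together with the fact that $M$ centralizes $A$ and normalizes $N$ (for replacing $k'$ by $k'm$, $m\in M$); both facts have already been recorded in Section \ref{Decomosition of real semisimple Lie groups}. Second, the Weyl-group/chamber conventions play no role here, so no subtle point about choices intervenes.

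A conceptual (and equivalent) route, which I would mention as a remark rather than carry out, goes through Lemma \ref{lemma xi}: $A(k'^{-1}g)$ is the composite distance from the origin $o$ to the unique horocycle $\xi(x,b)$ with normal $b$ passing through $x$. Since $K$ fixes $o$ and acts by isometries on $X$ (hence sends horocycles to horocycles and preserves normals), uniqueness of $\xi(x,b)$ forces $k\cdot\xi(x,b)=\xi(kx,kb)$, and the composite distance from $o$ is preserved. No real obstacle is present; the only thing one must be attentive to is not to confuse the arguments of $A$ (one is tempted to write $A(kg)=A(g)$, which is false in general, whereas the correct cancellation happens because the $k$'s on the left of $g$ and $k'$ occur in the combination $k'^{-1}k^{-1}k\cdot k=k'^{-1}$).
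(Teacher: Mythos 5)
Your computation is correct and is exactly the immediate verification the paper has in mind — it states the lemma with ``We clearly have'' and no written proof, since the diagonal $K$-action reduces to $\langle kgK,kk'M\rangle=-H\bigl((kg)^{-1}kk'\bigr)=-H(g^{-1}k')$ by cancellation inside the Iwasawa projection, which is what you did. Your well-definedness check and the horocycle remark are fine but not needed beyond that.
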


Recall that $g\in G$ acts on $K$ by $g\cdot k=k(gk)$, where $k:G=KAN\rightarrow K$ denotes the Iwasawa projection. By the right-$M$-equivariance of this projection the action descends to an action of $G$ on $K/M$.

\begin{lem}\label{invariance0}
Let $g_1,g_2\in G$, $k\in K$. Then $H(g_1 g_2 k)=H(g_1 k(g_2 k)) + H(g_2 k)$.
\end{lem}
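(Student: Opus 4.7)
The plan is to verify the cocycle identity by directly unwinding the two nested Iwasawa decompositions of $g_1 g_2 k$, first decomposing $g_2 k$ and then decomposing the resulting element times $g_1$.

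First I would write the Iwasawa decomposition of the innermost element:
\begin{eqnarray*}
g_2 k = k(g_2 k) \exp(H(g_2 k)) \, n(g_2 k).
\end{eqnarray*}
Multiplying by $g_1$ on the left yields
\begin{eqnarray*}
g_1 g_2 k = \bigl[g_1 \, k(g_2 k)\bigr] \, \exp(H(g_2 k)) \, n(g_2 k).
\end{eqnarray*}
Since $k(g_2 k) \in K$, I can apply the Iwasawa decomposition again to the bracketed element:
\begin{eqnarray*}
g_1 \, k(g_2 k) = k\bigl(g_1 \, k(g_2 k)\bigr) \, \exp\bigl(H(g_1 \, k(g_2 k))\bigr) \, n\bigl(g_1 \, k(g_2 k)\bigr).
\end{eqnarray*}

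Next I would combine these and commute the $A$-factor past the intermediate $N$-factor. The key structural fact used throughout the preliminaries is that $A$ normalizes $N$, so for any $n \in N$ and $a \in A$ one has $n a = a (a^{-1} n a)$ with $a^{-1} n a \in N$. Applying this to move $\exp(H(g_2 k))$ to the left of $n(g_1 k(g_2 k))$ converts the latter into some element $n' \in N$, giving
\begin{eqnarray*}
g_1 g_2 k = k\bigl(g_1 \, k(g_2 k)\bigr) \, \exp\bigl(H(g_1 \, k(g_2 k)) + H(g_2 k)\bigr) \, n' \, n(g_2 k),
\end{eqnarray*}
where I used additivity of $\exp$ restricted to the abelian subgroup $A$. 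The product $n' \, n(g_2 k)$ lies in $N$.

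Finally, I would invoke the uniqueness of the Iwasawa decomposition $G = KAN$: the expression above is a decomposition of $g_1 g_2 k$ into a $K$-part, an $A$-part, and an $N$-part, so comparing $\mathfrak{a}$-components yields
\begin{eqnarray*}
H(g_1 g_2 k) = H\bigl(g_1 \, k(g_2 k)\bigr) + H(g_2 k),
\end{eqnarray*}
as desired. No step presents any real obstacle; the only thing to keep track of is the order of the factors when commuting $\exp(H(g_2 k))$ past the intermediate $N$-element, which is handled by the normalization relation $AN = NA$.
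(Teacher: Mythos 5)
Your proof is correct and follows essentially the same route as the paper: decompose $g_2k$, then $g_1 k(g_2k)$, and use the fact that $A$ normalizes $N$ together with uniqueness of the Iwasawa decomposition to read off the $\mathfrak{a}$-component. The paper writes this more compactly by first noting that $H$ is left-$K$-invariant and right-$N$-invariant (so $H(g_1g_2k)=H(a'n'\tilde a)$), but the argument is the same.
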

\begin{proof}
Decompose $g_2k=\tilde{k}\tilde{a}\tilde{n}$ and $g_1\tilde{k}=k'a'n'$. Then $$H(g_1g_2k) = H(k'a'n'\tilde{a}\tilde{n}) = H(a'n'\tilde{a}).$$ Since $A$ normalizes $N$ this equals $\log(a') + \log(\tilde{a})$.
\end{proof}

\begin{lem}
Let $x=hK\in G/K$, $b=kM\in K/M$, $g\in G$. Then
\begin{eqnarray}\label{equivariance}
\left\langle g\cdot x,g\cdot b \right\rangle = \left\langle x,b \right\rangle + \left\langle g\cdot o,g\cdot b \right\rangle.
\end{eqnarray}
\end{lem}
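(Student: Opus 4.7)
The plan is to unwind the definition of the horocycle bracket and then apply Lemma \ref{invariance0} after an appropriate substitution, with a single small computation verifying one $k$-projection.

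First, I would translate everything into the language of the Iwasawa projection $H : KAN \to \la$, using the defining identity $\langle gK, kM \rangle = A(k^{-1}g) = -H(g^{-1}k)$. With $x = hK$, $b = kM$ and $g \in G$, the action of $g$ is $g\cdot x = ghK$ and $g\cdot b = k(gk)M$. Writing $k' := k(gk)$, the claim \eqref{equivariance} becomes
\begin{eqnarray*}
-H\bigl((gh)^{-1}k'\bigr) = -H(h^{-1}k) - H(g^{-1}k'),
\end{eqnarray*}
that is, after multiplying by $-1$ and rearranging,
\begin{eqnarray*}
H(h^{-1}g^{-1}k') = H(h^{-1}k) + H(g^{-1}k').
\end{eqnarray*}

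Second, I would apply Lemma \ref{invariance0} with $g_1 = h^{-1}$, $g_2 = g^{-1}$, and with the distinguished $K$-element there replaced by $k' = k(gk)$. This yields
\begin{eqnarray*}
H(h^{-1}g^{-1}k') = H\bigl(h^{-1}\cdot k(g^{-1}k')\bigr) + H(g^{-1}k').
\end{eqnarray*}
Comparing with the identity I want, everything reduces to verifying the single equality
\begin{eqnarray*}
k(g^{-1}k(gk)) = k.
\end{eqnarray*}

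Finally, I would verify this last equality directly from the Iwasawa decomposition of $gk$. Writing $gk = k'\exp(H(gk))\,n(gk)$ with $k' = k(gk)$, one multiplies by $g^{-1}$ on the left to obtain
\begin{eqnarray*}
g^{-1}k' = k\cdot n(gk)^{-1}\exp(-H(gk)) = k\exp(-H(gk))\cdot n'',
\end{eqnarray*}
for some $n'' \in N$, where the second equality uses that $A$ normalizes $N$. This is the $KAN$-decomposition of $g^{-1}k'$, and its $K$-component is precisely $k$. This completes the proof. I do not anticipate a serious obstacle; the only place care is needed is keeping track of the roles of $k$ vs. $k(gk)$ when invoking Lemma \ref{invariance0}, since applying the lemma with the wrong $K$-element gives an identity that does not directly match the shape of \eqref{equivariance}.
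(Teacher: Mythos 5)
Your proof is correct and follows essentially the same route as the paper: both unwind the horocycle bracket into Iwasawa projections and reduce the identity to Lemma \ref{invariance0}. The only difference is bookkeeping: the paper instantiates that lemma with $g_1=h^{-1}g^{-1}$, $g_2=g$ (so the main reduction is immediate, and $\langle g\cdot o,g\cdot b\rangle=H(gk)$ comes from the same lemma with $h=e$), whereas your choice $g_1=h^{-1}$, $g_2=g^{-1}$ shifts the work onto the identity $k(g^{-1}k(gk))=k$, which you verify correctly via the explicit $KAN$-decomposition $g^{-1}k(gk)=k\exp(-H(gk))\,n''$ with $n''\in N$.
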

\begin{proof}
By definition, $\left\langle g\cdot x,g\cdot b \right\rangle = -H(h^{-1}g^{-1}k(gk))$. Then by Lemma \ref{invariance0} with $g_1=h^{-1}g^{-1}$ and $g_2=g$ this equals
\begin{eqnarray*}
-H(h^{-1}g^{-1}gk) + H(gk) = -H(h^{-1}k) + H(gk).
\end{eqnarray*}
Also $\langle g\cdot o, g\cdot b\rangle = -H(k) + H(gk) = H(gk)$ as above for $h=e$. Hence
\begin{eqnarray*}
\left\langle g\cdot x,g\cdot b \right\rangle - \left\langle g\cdot o,g\cdot b \right\rangle &=& [-H(h^{-1}k) + H(gk)] - [-H(k) + H(gk)],
\end{eqnarray*}
and the right hand side equals $-H(h^{-1}k) = \langle hK, kM\rangle = \langle x,b\rangle$.
\end{proof}

\begin{cor}\label{cor need}
$\left\langle g^{-1}\cdot o, b \right\rangle = - \left\langle g\cdot o, g\cdot b \right\rangle$.
\end{cor}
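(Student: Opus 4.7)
The plan is to derive the corollary as a direct specialization of the equivariance identity \eqref{equivariance} just proved in the preceding lemma, combined with the elementary fact that $\langle o, b'\rangle = 0$ for every $b' \in B$.

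First I would observe that by the very definition of the horocycle bracket, for $b' = kM \in K/M$ we have
\begin{equation*}
\langle o, b'\rangle = -H(e^{-1}k) = -H(k) = 0,
\end{equation*}
since $k \in K$ is already the $K$-component in its Iwasawa decomposition, so its $\mathfrak{a}$-component vanishes. In particular $\langle o, g\cdot b\rangle = 0$ for every $g \in G$ and $b \in B$.

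Next I would specialize the equivariance formula \eqref{equivariance}, which reads
\begin{equation*}
\langle g\cdot x, g\cdot b\rangle = \langle x, b\rangle + \langle g\cdot o, g\cdot b\rangle,
\end{equation*}
to the point $x := g^{-1}\cdot o \in X$, so that $g\cdot x = o$. The left-hand side then becomes $\langle o, g\cdot b\rangle$, which is zero by the previous step. Rearranging the resulting identity
\begin{equation*}
0 = \langle g^{-1}\cdot o, b\rangle + \langle g\cdot o, g\cdot b\rangle
\end{equation*}
yields the claim.

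There is no real obstacle here; the argument is a one-line substitution into the equivariance formula, with the only subtlety being the verification that $\langle o, \cdot\rangle \equiv 0$, which is immediate from the normalization conventions for the Iwasawa projection.
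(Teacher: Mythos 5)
Your proof is correct and is essentially identical to the paper's: both specialize the equivariance formula \eqref{equivariance} at $x=g^{-1}\cdot o$ and use that $\langle o, g\cdot b\rangle = 0$ (the paper justifies this geometrically via horocycles through the origin, you via $H(k)=0$ for $k\in K$, which amounts to the same normalization). No gaps.
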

\begin{proof} $0=\left\langle o, g\cdot b \right\rangle = \left\langle g\cdot g^{-1}\cdot o, g\cdot b \right\rangle = \left\langle g^{-1}\cdot o, b \right\rangle + \left\langle g\cdot o, g\cdot b \right\rangle$, since the distance to the origin of a horocycle passing through the origin is $0$.
\end{proof}

We go on using the Iwasawa decomposition and easily derive
\begin{lem}\label{formulae for bracket}
\begin{itemize}
\item[(i)] $\langle g^{-1}z,M\rangle = \langle z,g\cdot M\rangle - \langle g\cdot o, g\cdot M\rangle$,
\item[(ii)] $\langle g^{-1}z,g^{-1}b\rangle = \langle z,b\rangle - \langle g\cdot o,b\rangle$.
\end{itemize}
\end{lem}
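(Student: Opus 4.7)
The plan is to reduce both identities to the equivariance formula \eqref{equivariance}, namely
\[
\langle g\cdot x,\, g\cdot b\rangle = \langle x,b\rangle + \langle g\cdot o,\, g\cdot b\rangle,
\]
which was already established for all $g\in G$, $x\in X$, $b\in B$.

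For part (ii), I would substitute $x := g^{-1}z$ and replace $b$ by $g^{-1}b$ in the equivariance formula. Since $g\cdot(g^{-1}z) = z$ and $g\cdot(g^{-1}b)= b$, the left-hand side becomes $\langle z,b\rangle$ while the right-hand side becomes $\langle g^{-1}z,\, g^{-1}b\rangle + \langle g\cdot o,\, b\rangle$. Solving for the first term gives the claimed identity. (Recall that the Iwasawa action $g\mapsto T_g$ on $K$ descends to an action on $K/M=B$ by Section \ref{The real flag manifold}, so $g^{-1}\cdot b$ makes sense.)

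For part (i), observe that it is simply the special case of (ii) obtained by choosing the boundary point $b := g\cdot M$, so that $g^{-1}b = M$. The substitution is immediate and no further work is required.

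The main point, therefore, is not the calculation itself but noticing that the symmetric statement \eqref{equivariance} already encodes both cocycle-type identities once one allows the roles of $b$ and $g^{-1}b$ to be interchanged. No obstacle is expected; the proof is essentially a rewriting of \eqref{equivariance} and Corollary \ref{cor need}.
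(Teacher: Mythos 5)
Your proof is correct. The substitution $x\mapsto g^{-1}z$, $b\mapsto g^{-1}b$ in \eqref{equivariance} immediately yields (ii), and specializing (ii) at $b=g\cdot M$ gives (i); this matches the paper's intent, which simply announces the lemma as an easy consequence of the Iwasawa-decomposition machinery already used to prove \eqref{equivariance}.
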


\begin{lem}\label{Iwasawa and bracket}
Let $g\in G$. Then $\langle g\cdot o,g\cdot M\rangle = H(g)$.
\end{lem}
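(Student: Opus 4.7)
The plan is to reduce to a statement about the Iwasawa projection by applying Corollary \ref{cor need}, which gives
\[
\langle g\cdot o,g\cdot M\rangle \;=\; -\langle g^{-1}\cdot o,M\rangle,
\]
where on the right $M$ is regarded as the origin $eM\in K/M$. So it will suffice to show that $\langle g^{-1}\cdot o,M\rangle = -H(g)$.

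First I would unfold the definition of the horocycle bracket at the origin $eM\in K/M$. By definition, with $x=g^{-1}K$ and $k=e$,
\[
\langle g^{-1}\cdot o,M\rangle \;=\; \langle g^{-1}K,\,eM\rangle \;=\; A(e^{-1}g^{-1}) \;=\; A(g^{-1}),
\]
and by the identity $A(h)=-H(h^{-1})$ recorded right after the $NAK$ decomposition in Subsubsection \ref{decomposition theorems} (``Clearly $A(g)=-H(g^{-1})$''), this equals $-H(g)$.

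Combining the two steps yields $\langle g\cdot o,g\cdot M\rangle = -(-H(g)) = H(g)$, as desired.

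If one preferred a self-contained computation avoiding Corollary \ref{cor need}, the alternative is the following direct verification. Let $g=k(g)\exp H(g)\,n(g)$ be the $KAN$ decomposition. By definition,
\[
\langle g\cdot o,g\cdot M\rangle \;=\; \langle gK,\,k(g)M\rangle \;=\; -H\bigl(g^{-1}k(g)\bigr).
\]
Now
\[
g^{-1}k(g) \;=\; n(g)^{-1}\exp(-H(g))\,k(g)^{-1}k(g) \;=\; n(g)^{-1}\exp(-H(g)),
\]
and since $A$ normalizes $N$, conjugation produces an element $n''\in N$ with $n(g)^{-1}\exp(-H(g)) = \exp(-H(g))\,n''$. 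This is already the $KAN$ decomposition with trivial $K$-factor, so $H(g^{-1}k(g))=-H(g)$, giving the claim. Neither route involves any real obstacle; the only subtlety to keep in mind is the sign convention linking the $KAN$ projection $H$ with the $NAK$ projection $A$ via $A(g)=-H(g^{-1})$, which is precisely what makes the two identifications consistent.
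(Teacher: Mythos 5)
Your proof is correct, and your second, self-contained computation is precisely the paper's own argument: write $g = k(g)\exp H(g)\,n(g)$, observe $g^{-1}k(g) = n(g)^{-1}\exp(-H(g)) = \exp(-H(g))\,\tilde n$ with $\tilde n\in N$ since $A$ normalizes $N$, and read off $-H(g^{-1}k(g)) = H(g)$. Your primary route via Corollary \ref{cor need} together with $A(h)=-H(h^{-1})$ is an equally valid and non-circular repackaging of the same bookkeeping, since that corollary is proved earlier and independently of this lemma.
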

\begin{proof}
Write $g=kan$ corresponding to the Iwasawa decomposition. Then $g^{-1}k=n^{-1}a^{-1}=a^{-1}\tilde{n}$, so $\langle kan\cdot o,kan\cdot M\rangle = -H(g^{-1}k) = \log(a) = H(g)$.
\end{proof}
Note that one could also prove \eqref{equivariance} using Lemma \ref{Iwasawa and bracket}. We will need some more component computations for later reference. Under $X\times B\cong G/M$, each $(z,b)\in X\times B$ can be written $(g\cdot o,g\cdot M)$. Then $\langle z,b\rangle=H(g)$ follows from Lemma \eqref{Iwasawa and bracket}. We go on using the Iwasawa decomposition and easily derive

\begin{cor}\label{corollary bracket}
Given $z,w\in X$, $b,b'\in B$, let $(z,b)\in X\times B$ correspond to $gM\in G/M$ and let $(w,b')=(hK,h\cdot M)\in X\times B$ correspond to $hM\in G/M$, respectiveley. Then
\begin{itemize}
\item[(1)] $\langle z,b\rangle = H(g)$,
\item[(2)] $\langle z,b'\rangle = -H(g^{-1}k(h)) = -H(g^{-1}h) + H(h)$,
\item[(3)] $\langle w,b\rangle = -H(h^{-1}k(g)) = -H(h^{-1}g) + H(g)$,
\item[(4)] $\langle w,b'\rangle = H(h)$.
\end{itemize}
\end{cor}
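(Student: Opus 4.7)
The plan is to reduce all four identities to two basic tools already established: the equality $\langle g\cdot o,g\cdot M\rangle = H(g)$ from Lemma \ref{Iwasawa and bracket}, and the additivity $H(g_1 g_2 k) = H(g_1 k(g_2 k)) + H(g_2 k)$ from Lemma \ref{invariance0}. The statements (1) and (4) are immediate consequences of Lemma \ref{Iwasawa and bracket}, since by assumption $(z,b)=(g\cdot o, g\cdot M)$ and $(w,b')=(h\cdot o, h\cdot M)$.

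For (2), I would unfold the defining formula $\langle g_1 K,k M\rangle = -H(g_1^{-1}k)$ with $g_1=g$ and $k=k(h)\in K$ (noting that $b'=h\cdot M = k(h)M$ by the Iwasawa decomposition of $h$). This gives
\begin{equation*}
\langle z,b'\rangle = \langle g\cdot o, k(h)M\rangle = -H(g^{-1}k(h)),
\end{equation*}
which is the first half of (2). For the second half, write $h = k(h)\exp H(h)\, n(h)$ so that $g^{-1}h = g^{-1}k(h)\exp H(h)\, n(h)$. Since $A$ normalizes $N$, the Iwasawa $\la$-component of a product of the form $x\exp H(h)\, n(h)$ is just $H(x)+H(h)$, yielding $H(g^{-1}h) = H(g^{-1}k(h)) + H(h)$, hence $-H(g^{-1}k(h)) = -H(g^{-1}h) + H(h)$, as desired. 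Alternatively one can cite Lemma \ref{invariance0} directly with $g_1=g^{-1}$, $g_2=h$, $k=e$.

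Statement (3) is obtained from (2) by swapping the roles of $(g,z,b)$ and $(h,w,b')$: indeed, $\langle w,b\rangle = \langle h\cdot o, g\cdot M\rangle$, which is the same object as in (2) with the roles of $g$ and $h$ interchanged, so the same argument produces $-H(h^{-1}k(g)) = -H(h^{-1}g) + H(g)$.

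The only potential obstacle is bookkeeping with the Iwasawa components; there is no deep idea needed beyond the two cited lemmas and the fact that $A$ normalizes $N$. I expect the verification to be routine once the identifications $b = k(g)M$ and $b' = k(h)M$ are written out, and the main point is simply to recognize that all four brackets are different instances of the same formula evaluated on combinations of $g$, $h$, and their Iwasawa $K$-parts.
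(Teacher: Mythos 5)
Your proof is correct and matches the paper's intent: the paper itself gives no explicit proof beyond the remark ``We go on using the Iwasawa decomposition and easily derive,'' and your argument---reducing (1) and (4) to Lemma \ref{Iwasawa and bracket}, identifying $b=k(g)M$ and $b'=k(h)M$ from the Iwasawa $K$-parts, and using either the normalization of $N$ by $A$ or Lemma \ref{invariance0} with $k=e$ to split $H(g^{-1}h)$---is precisely the routine Iwasawa bookkeeping the paper had in mind.
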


\subsection{Invariant differential operators}\label{Invariant differential operators}
We recall the theory of invariant differential operators to put results concerning the Laplacian of a symmetric space into a general context. We will need to recall relations between invariant differential operators and invariant polynomials for the Weyl group. We recall the definition of the Laplace-Beltrami operator and give the explicit and important formula \eqref{character of the Laplacian} for the so-called complete symbol of this invariant differential operator. The material is taken mostly from \cite{He00}.

If $V$ is an open subset of $\rr^n$ we let $\mathcal{E}(V)=C^{\infty}(V)$\index{$\mathcal{E}(V)$, smooth functions on an open set $V$} denote the set of smooth functions on $V$ and $\mathcal{D}(V)$\index{$\mathcal{D}(V)$, compactly supported smooth functions on $V$ an open set $V$} denote the space of functions in $\mathcal{E}(V)$ with compact support contained in $V$. Let $\partial_j$ denote partial differentiation with respect to $x_j$, where $x=(x_1,\ldots,x_n)\in\rr^n$. If $\alpha=(\alpha_1,...,\alpha_n)\in\nn_0^n$, put
\begin{eqnarray}
D^{\alpha} &=& \partial_1^{\alpha_1}\cdots\partial_n^{\alpha_n}, \hspace{20mm} x^{\alpha}=x_1^{\alpha_1}\cdots x_n^{\alpha_n},\\
|\alpha| &=& \alpha_1 + \cdots + \alpha_n, \hspace{15mm} \alpha! = \alpha_1!\cdots\alpha_n!.
\end{eqnarray}
If $S$ is any subset of the open set $V$ and $m\in\nn_0$ we put
\begin{eqnarray}\label{seminorms}
\|f\|^S_m = \sum_{|\alpha|\leq m}\sup_{x\in S}|D^{\alpha}f(x)|.
\end{eqnarray}
A \emph{differential operator}\index{differential operator on $\rr^n$} on $V$ is a linear mapping $D:\mathcal{D}(V)\rightarrow\mathcal{D}(V)$ such that for each relatively compact open set $U\subset V$ such that $\overline{U}\subset V$ (closure in $\rr^n$), there exists a finite famliy of functions $a_{\alpha}\in\mathcal{E}(U)$, $\alpha\in\nn_0^n$, such that
\begin{eqnarray}
D\phi = \sum_{\alpha}a_{\alpha}D^{\alpha}\phi, \,\,\,\,\, \phi\in\mathcal{D}(U).
\end{eqnarray}
Differential operators decrease supports\index{support}:
\begin{eqnarray}\label{decrease supports}
\supp(D\phi)\subset\supp(\phi).
\end{eqnarray}
Conversely, Peetre's theorem states that any linear mapping $D:\mathcal{D}(V)\rightarrow\mathcal{D}(V)$ decreasing supports is a differential operator (\cite{He00}, p. 236).\\

Let $M$ be a manifold. A \emph{differential operator}\index{differential operator on a manifold} $D$ on $M$ is a linear mapping of $C_c^{\infty}(M)$ into itself which decreases supports:
\begin{eqnarray*}
\supp(Df) \subset \supp(f), \hspace{4mm} f\in C_c^{\infty}(M).
\end{eqnarray*}
The definition of a differential operator extends naturally to $C^{\infty}(M)$ if one puts $(Df)(x)=(D\phi)(x)$, where $\phi\in C_c^{\infty}$ equals $f\in C^{\infty}$ in a neighborhood of $x\in M$.

To describe the function and distribution spaces we work with, we follow \cite{He00}, Ch. II. Let $M$ satisfy the second axiom of countability\index{second countable manifold}, that is the topology of $M$ admits a countable base for the open sets. If $(U,\phi)$ is a local coordinate system on $M$, the mapping
\begin{eqnarray*}
D^{\phi} : F \mapsto (D(F\circ\phi))\circ\phi^{-1}, \hspace{4mm} F\in C_c^{\infty}(\phi(U)),
\end{eqnarray*}
is support-decreasing. It follows that for each open relatively compact set $W$ such that $\overline{W}\subset U$ there are finitely many $a_\alpha\in C^{\infty}(W)$ such that
\begin{eqnarray*}
Df = \sum_{\alpha}a_{\alpha}(D^{\alpha}(f\circ\phi^{-1}))\circ\phi, \hspace{4mm} f\in C_c^{\infty}(W).
\end{eqnarray*}
Just as for open sets in $\rr^n$ the definition of differential operators extends to $C^{\infty}(M)$. We write 
\begin{eqnarray*}
\mathcal{D}(M)=C_c^{\infty}(M) \,\,\,\,\, \textnormal{ and } \,\,\,\,\, \mathcal{E}(M)=C^{\infty}(M).
\end{eqnarray*}\index{$\mathcal{D}(M)=C_c^{\infty}(M)$, space of compactly supported smooth function on $M$}\index{$\mathcal{E}(M)=C^{\infty}(M)$, space of smooth functions on $M$}If $K$ is a compact subset of $M$, we denote by $\mathcal{D}_K(M)$\index{$\mathcal{D}_K(M)$, space of functions supported with compact support in $K$} the subset of functions in $\mathcal{D}(M)$ with support in $K$.

For an open set $V$ of $\rr^n$ the spaces $\mathcal{E}(V)$ are topologized by the seminorms $\left\|f\right\|^C_m$, as $C$ runs through the compact subsets of $V$ and $k$ runs through $\nn_0$. If $(U, \phi)$ runs through all local coordinate systems on $M$, this gives a topology of $\mathcal{E}(U)$ with the property that a sequence $f_n$ in $\mathcal{E}(U)$ converges to $0$ if and only if for each differential operator $D$ on $U$, the sequence $Df_n\rightarrow0$ uniformly on each compact subset of $U$. It follows that the topology of $\mathcal{E}(U)$ is independent of the coordinate system.

The space $\mathcal{E}(M)$ is provided with the weakest topology for which the restrictions $f\mapsto f_{|U}$, when $(U,\phi)$ runs through the local coordinate systems of $M$, are continuous. By the countability assumption, we may restrict the $(U,\phi)$ to a countable family of charts $(U_j,\phi_j)$. It follows that $\mathcal{E}(M)$ is a Fr\'{e}chet space and again the topology is described by uniform convergence (of all derivatives) on compact subsets. Since $M$ is the union of an increasing sequence of compact subsets, this implies that $\mathcal{D}(M)$ is dense in $\mathcal{E}(M)$.

When $K$ is a compact subset of $M$, the space $\mathcal{D}_K$ is given the topology induced by $\mathcal{E}(M)$. As a closed subspace of $\mathcal{E}(M)$ it is a Frechet space.

A linear functional $T$ on $\mathcal{D}(M)$ is called a \emph{distribution}\index{distribution} if for any compact subset $K\subset M$ the restriction of $T$ on $\mathcal{D}_K(M)$ is continuous. The set of distributions is denoted by $\mathcal{D}'(M)$. We often write $\int_M f(m)dT(m)$ instead of $T(f)$.

The space $\mathcal{D}(M)$ is given the \emph{inductive limit topology}\index{inductive limit topology} of the spaces $\mathcal{D}_K(M)$ by taking as a fundamental system of neighborhoods of $0$ the convex sets $W$ such that for each compact subset $K\subset M$ the space set $W\cap\mathcal{D}_K(M)$ is a neighborhood of $0$ in $\mathcal{D}_K(M)$. It follows that $\mathcal{D}'(M)$ is the dual space of $\mathcal{D}(M)$.

A distribution $T$ is said to \emph{vanish} on an open set $V\subset M$ if $T(f)=0$ for all $f\in\mathcal{D}(V)$. The \emph{support}\index{support of a distribution} of $T$ is the complement of the largest open subset of $M$ on which $T$ vanishes. Let $\mathcal{E}'(M)$ denote the set of distributions with compact support. The restriction of a functional from $\mathcal{E}(M)$ to $\mathcal{D}(M)$ identifies the dual of $\mathcal{E}(M)$ with $\mathcal{E}'(M)$ (cf. \cite{He00}, p. 240).

If $N$ is another manifold and $\phi$ is a diffeomorphism of $M$ and $N$ and if $f\in\mathcal{D}(N)$, $g\in\mathcal{E}(N)$, $T\in\mathcal{D}'(M)$, $D\in E(M)$, we write
\begin{eqnarray*}
g^{\phi^{-1}}=g\circ\phi,\hspace{4mm}T^{\phi}=T(f^{\phi^{-1}}),\hspace{4mm}D^{\phi}(g)=(D(g^{\phi^{-1}}))^{\phi}.
\end{eqnarray*}
If $\phi$ is a diffeomorphism of $M$ onto itself, then $D$ is said to be \emph{invariant under}\index{invariant under a diffeomorphism}\index{invariant differential operator} $\phi$, if $D^{\phi}=D$, that is
\begin{eqnarray*}
Dg = (D(g\circ\phi))\circ\phi^{-1}\textnormal{ for all } g\in\mathcal{E}(M).
\end{eqnarray*}

Given a measure $\mu$ on $M$, the space $\mathcal{E}(M)$ is imbedded in $\mathcal{D}'(M)$ associating with $f\in\mathcal{D}(M)$ the distribution
\begin{eqnarray}\label{imbedding of functions}
f \mapsto I_f:= \left( g\mapsto \int_M f \, g \, d\mu\right)
\end{eqnarray}
on $M$. We call this the \emph{canonical imbedding}\index{imbedding of functions into distributions} of functions into distributions.

\subsubsection{The Laplace-Beltrami operator}\label{The Laplacian}
Let $M$ be a pseudo-Riemannian manifold with pseudo-Riemannian structure $g$ and let $\phi:q\mapsto(x_1(q),...,x_n(q))$ be a coordinate system valid on an open subset $U\subset M$. As customary we define the functions $g_{ij}$, $g^{ij}$ and $\overline{g}$ on $U$ by
\begin{eqnarray}
g_{ij} = g(\frac{\partial}{\partial x_i},\frac{\partial}{\partial x_j}), \,\,\,\, \sum_j g_{ij} g^{jk} = \delta_{ik}, \,\,\,\, \overline{g}=|\det(g_{ij})|.
\end{eqnarray}
In this section we write $\left\langle\hspace{1mm}|\hspace{1mm}\right\rangle$ in place of $g$ and extend it $\cc$-bilinearly to complex vector fields. Each $f\in C^{\infty}(M)$ gives rise to the vector field $\grad f$ (gradient of $f$)\index{gradient} defined by
\begin{eqnarray}
\left\langle\grad f|X\right\rangle = Xf
\end{eqnarray}
for each vector field $X$.

On the other hand, if $X$ is a vector field on $M$, the \emph{divergence}\index{divergence} of $X$ is the function on $M$ which on $U$ is given by
\begin{eqnarray}
\diverg(X) = \frac{1}{\sqrt{\overline{g}}}\sum_i \partial_i(\sqrt{\overline{g}}X_i),
\end{eqnarray}
if $X=\sum_i X_i (\partial/{\partial x_i})$ on $U$. Then $\diverg(X)$ is well-defined (\cite{He00}, p.243) and independent of the coordinate system.

The \emph{Laplace-Beltrami operator}\index{Laplace-Beltrami operator} on $M$ is defined by
\begin{eqnarray}
Lf = \diverg \grad f, \,\,\,\,\,\,\,\,\, f\in\mathcal{E}(M).
\end{eqnarray}
In terms of local coordinates one has (loc. cit., p.245)
\begin{eqnarray}
Lf = \frac{1}{\sqrt{\overline{g}}}\sum_k \partial_k\left(\sum_i g^{ik}\sqrt{\overline{g}}\partial_i f\right),
\end{eqnarray}
so $L$ is a differential operator on $M$. The Laplace-Beltrami operator $L$ of a pseudo-Riemannian manifold $M$ is symmetric:
\begin{eqnarray}
\int_M u(x)(Lv)(x)dx = \int_M (Lu)(x)v(x)dx, \,\,\,\,\,\, u\in \mathcal{D}(M), \,\,\, v\in\mathcal{E}(M),
\end{eqnarray}
where $dx$ is the Riemannian measure on $M$. If $\Phi$ is a diffeomorphism of $M$, then $\Phi$ leaves the Laplace-Beltrami operator invariant if and only if it is an isometry.

Let $M$ be an $m$-dimensional Riemannian manifold and $p$ a point in $M$. Given normal coordinates $(x_1,\ldots,x_m)$ around $p$ such that $(\partial/{\partial x_i)_p}$ ($1\leq j\leq m$) is an orthonormal basis of the tangent space at $p$, then the Laplace-Beltrami operator $L$ of $M$ is given at $p$ by (\cite{He01}, p. 330)
\begin{eqnarray}
(Lf)(p) = \sum_i \frac{\partial^2 f}{\partial x_i^2} (p), \,\,\,\,\,\, f\in\mathcal{E}(M).
\end{eqnarray}

Suppose that $M$ is a compact Riemannian manifold of dimension $m\geq 2$. Let $d$ denote the \emph{distance function}\index{distance function} on $M$ and write
\begin{eqnarray}
( f_1|f_2 ) = \int_M f_1(x) \, \overline{f_2(x)} \, dx, \,\,\,\,\,\,\,\,\,\,\, f_1, f_2 \in L^2(M),
\end{eqnarray}
for the customary $L^2$-product of $M$. Given $\lambda\in\cc$, define the \emph{eigenspace}\index{eigenspace} $\mathcal{E}_{\lambda}$ by
\begin{eqnarray}
\mathcal{E}_{\lambda} = \left\{ u\in\mathcal{E}(M): Lu=\lambda u \right\}
\end{eqnarray}
and $\Lambda$ the \emph{spectrum}\index{spectrum}
\begin{eqnarray}
\Lambda = \left\{ \lambda\in\cc: \mathcal{E}_{\lambda}\neq0 \right\}.
\end{eqnarray}
Then (\cite{War}, Chapter 6)
\begin{itemize}
\item[(a)] $\Lambda$ is a discrete subset of $\cc$ and $\lambda\leq0$ for each $\lambda\in\Lambda$.
\item[(b)] Each eigenspace $\mathcal{E}_{\lambda}$ is finite-dimensional: $\dim\mathcal{E}_{\lambda}<\infty$ for each $\lambda$.
\item[(c)] In accordance with (a) and (b), let $\phi_0,\phi_1,\phi_2,\ldots$ be an orthonormal system in $L^2(M)$ such that each $\mathcal{E}_{\lambda}$ is spanned by some of the $\phi_i$. Then, if $f\in L^2(M)$,
\begin{eqnarray}
f = \sum_0^{\infty} \langle f,\phi_n\rangle\phi_n,
\end{eqnarray}
where the sum converges in $L^2(M)$.
\item[(d)] If $f\in\mathcal{E}(M)$, the expansion in (c) converges absolutely and uniformly.
\end{itemize}

\subsubsection{Harish-Chandra's isomorphism and radial parts}\label{Harish-Chandra's isomorphism}
Suppose $H$ is a closed subgroup of $G$ with Lie algebra $\lh$. Let $\dd(G/H)$\index{$\dd(G/H)$, algebra of translation-invariant differential operators on $G/H$} be the algebra of differential operators on $G/H$ which are invariant under the translations $\tau(g): xH \mapsto gxH$ ($g\in G$) of $G/H$ onto itself\index{$\tau(g)$, translation on $G/H$}. We write $\dd(G)$\index{$\dd(G)$, algebra of translation-invariant differential operators on $G$} instead of $\dd(G/\left\{e\right\})$. For $g\in G$, let $\rho_g$\index{$\rho_g$, right-translation by $g$ in $G$} denote the \emph{right-translation} by $g$ in $G$. Then define
\begin{eqnarray}
\dd_H(G) = \left\{ D\in \dd(G): D^{\rho_h}=D \textnormal{ for all } h\in H  \right\}.
\end{eqnarray}
Write $\pi: G\rightarrow G/H$. If $f$ is a function on $G/H$, we put $\widetilde{f}=f\circ\pi$, so that $\widetilde{f}$ is a function on $G$. Given $u\in\dd_K(G)$ and $f\in\mathcal{E}(G/K)$, let $D_u\in\dd(G/K)$ denote the operator defined by $(D_u f)^{\sim} = u\widetilde{f}$. Then we have (\cite{He00}, p. 285):

\begin{thm}\label{mu}
The mapping $\mu: u\mapsto D_u$ is a homomorphism of $\dd_K(G)$ onto $\dd(G/K)$. The kernel of $\mu$ is $\dd_K(G)\cap\dd(G)\lk$.
\end{thm}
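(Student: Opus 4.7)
I would prove the statement in three parts: well-definedness together with the homomorphism property, the kernel computation, and surjectivity via the PBW symmetrisation.

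First, well-definedness and the homomorphism property. For $u\in\dd_K(G)$ and $f\in\mathcal{E}(G/K)$, the lift $\widetilde{f}=f\circ\pi$ is right-$K$-invariant, so by right-$K$-invariance of $u$ the function $u\widetilde{f}$ is right-$K$-invariant as well and therefore descends to a unique $D_uf\in\mathcal{E}(G/K)$. The map $f\mapsto D_uf$ decreases supports, because $u$ does and $\pi$ is a submersion, so Peetre's theorem makes $D_u$ a differential operator on $G/K$. Left $G$-invariance of $u$ upgrades this to $\tau(g)$-invariance of $D_u$, hence $D_u\in\dd(G/K)$. The chain $(D_{uv}f)^{\sim}=(uv)\widetilde{f}=u(v\widetilde{f})=u(D_vf)^{\sim}=(D_uD_vf)^{\sim}$ then gives $D_{uv}=D_uD_v$.

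For the kernel I would use the Poincar\'e--Birkhoff--Witt theorem adapted to the Cartan splitting $\g=\lp\oplus\lk$. Fix bases $Y_1,\ldots,Y_r$ of $\lp$ and $Y_{r+1},\ldots,Y_n$ of $\lk$. Every $u\in\dd(G)$ decomposes uniquely as $u=u_0+u_1$, with $u_0$ a polynomial in $Y_1,\ldots,Y_r$ only and $u_1\in\dd(G)\lk$. Any $X\in\lk$ annihilates right-$K$-invariant functions, since $\widetilde{f}(g\exp tX)=\widetilde{f}(g)$, so $\dd(G)\lk\subset\ker\mu$ and in particular $\dd_K(G)\cap\dd(G)\lk\subset\ker\mu$. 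Conversely, if $u\in\dd_K(G)$ satisfies $\mu(u)=0$, then $D_{u_0}=0$. In the normal coordinates $(x_1,\ldots,x_r)\mapsto\exp(\sum_jx_jY_j)K$ around $o$, the value $(Y^{\alpha}\widetilde{f})(e)$ coincides with $(\partial^\alpha f)(o)$ modulo derivatives of strictly lower order, so an induction on $|\alpha|$ applied to test functions $f$ with prescribed Taylor polynomial at $o$ forces every coefficient of $u_0$ to vanish. Then $u=u_1\in\dd(G)\lk$, and combined with $u\in\dd_K(G)$ this yields the claimed characterisation.

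For surjectivity I would combine the PBW symmetrisation $\beta\colon S(\g)\to\dd(G)$ with induction on order. Given $D\in\dd(G/K)$ of order $m$, its principal symbol at $o$ is an element $p\in S^m(T_oX)\cong S^m(\lp)$; the $G$-invariance of $D$ together with $\Ad(K)\lp\subset\lp$ coming from \eqref{K invariant} forces $p\in S^m(\lp)^K$. Since $\beta$ is $\Ad(K)$-equivariant, $\beta(p)\in\dd_K(G)$. By construction $\mu(\beta(p))$ and $D$ share their principal symbol at $o$, and then at every point by $G$-equivariance, so $D-\mu(\beta(p))$ is a translation-invariant operator on $G/K$ of order strictly less than $m$, which by the inductive hypothesis lies in the image of $\mu$; hence so does $D$.

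The main obstacle is the matching of principal symbols in the last step: establishing that $\mu(\beta(p))$ really does have principal symbol $p$ at $o$ requires tracking how an ordered PBW monomial in $Y_1,\ldots,Y_r$ relates to the corresponding iterated partial derivative in normal coordinates on $G/K$. This is the content of the same Taylor-expansion calculation that drives the kernel argument, so both nontrivial steps rest on one common computation; carrying it out carefully --- including the lower-order error terms generated by the non-commutativity of the left-invariant vector fields $Y_j$ --- is where most of the technical effort is concentrated.
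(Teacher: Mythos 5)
Your proof is correct, and it is essentially the classical argument: the paper itself gives no proof of this theorem but simply quotes it from Helgason (\cite{He00}, p.~285), where the same three ingredients appear — the splitting $\dd(G)=\lambda(S(\lp))\oplus\dd(G)\lk$ coming from PBW and the Cartan decomposition for the kernel, the Taylor-expansion comparison in the coordinates $x\mapsto\exp(\sum_j x_jY_j)K$, and $\Ad(K)$-equivariant symmetrization of the principal symbol with induction on the order for surjectivity. (Your phrase ``$D_{u_0}=0$'' is a harmless abuse, since $u_0$ need not lie in $\dd_K(G)$; what you use, namely $u_0\widetilde f\equiv 0$ for all $f$, is exactly what the decomposition gives.)
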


Recall the Iwasawa decomposition $G=KAN$. Let $\dd(A)$\index{$\dd(A)$, algebra of translation-invariant differential operators on $A$} denote the algebra of translation-invariant differential operators (with constant coefficients) on $A$ and let $\dd_W(A)\subset\dd(A)$\index{$\dd_W(A)$, Weyl group invariant differential operators on $A$} denote the subalgebra consisting of $W$-invariant differential operators on $A$. If $D\in\dd(G)$, there is (\cite{He00}, p. 302) a unique element $D_{\mathfrak{a}}\in \dd(A)$\index{$D_{\mathfrak{a}}$, projection of $D\in\dd(G)$ onto $\dd(A)$} such that
\begin{eqnarray}\label{Da radial part}
D-D_{\mathfrak{a}} \in \mathfrak{n}\hspace{0.1mm}\dd(G) + \dd(G)\mathfrak{k}.
\end{eqnarray}
If $\nu$ is a linear function on $\la$ we denote by $e^{\nu}: A\rightarrow\cc$ the function $a\mapsto e^{\nu(\log(a))}$. Let $\circ$ denote the composition of differential operators. The mapping 
\begin{eqnarray*}
\gamma: D\mapsto e^{-\rho} D_{\mathfrak{a}}\circ e^{\rho}
\end{eqnarray*}
is a surjective homomorphism of $\dd_K(G)$ onto $\dd_W(A)$ with kernel $\dd_K(G)\cap\dd(G)\lk$ (\cite{He00}, 304). The next theorem (\cite{He00}, 306) involves \emph{Harish-Chandra's isomorphism}\index{$\Gamma$, Harish-Chandra's isomorphism} $\Gamma$:
\begin{thm}\label{Gamma isomorphism}
Let $\mu$ denote the isomorphism from Theorem \ref{mu}. Consider the diagram
\[
\begin{xy}
  \xymatrix{
  &    \dd_K(G) \ar[ld]_{\mu} \ar[rd]^{\gamma}   \\ 
  \dd(G/K) \ar[rr]^{\Gamma}    &       &  \dd_W(A).
  }
\end{xy}
\]
Then $\gamma$ factors through $\mu$ to yield an isomorphism $\Gamma$ of algebras
\begin{eqnarray}\label{canonical isomorphism}
\Gamma: \dd(G/K)\rightarrow \dd_W(A),
\end{eqnarray}
given by $\Gamma(\mu(D))=\gamma(D)$ for $D\in D_K(G)$.
\end{thm}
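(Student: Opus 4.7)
The proof is essentially a diagram chase. The key observation is that both homomorphisms $\mu$ and $\gamma$ out of $\dd_K(G)$ have already been stated to have the same kernel, namely $\dd_K(G)\cap\dd(G)\lk$. This means the theorem will follow from a standard application of the first isomorphism theorem.

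The plan is as follows. First, I would invoke Theorem \ref{mu}, which tells us that $\mu$ is surjective onto $\dd(G/K)$ and has kernel $\ker\mu=\dd_K(G)\cap\dd(G)\lk$. Passing to the quotient, this gives an algebra isomorphism
\begin{eqnarray*}
\bar\mu:\dd_K(G)/\bigl(\dd_K(G)\cap\dd(G)\lk\bigr)\longrightarrow \dd(G/K).
\end{eqnarray*}
Next, I would use the result recalled immediately before the statement, namely that $\gamma:D\mapsto e^{-\rho}D_{\la}\circ e^{\rho}$ is a surjective homomorphism onto $\dd_W(A)$ with the \emph{same} kernel $\dd_K(G)\cap\dd(G)\lk$. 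Hence $\gamma$ also factors through this quotient to yield an algebra isomorphism
\begin{eqnarray*}
\bar\gamma:\dd_K(G)/\bigl(\dd_K(G)\cap\dd(G)\lk\bigr)\longrightarrow \dd_W(A).
\end{eqnarray*}

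Having identified both $\dd(G/K)$ and $\dd_W(A)$ with the same quotient algebra, I would define
\begin{eqnarray*}
\Gamma:=\bar\gamma\circ\bar\mu^{-1}:\dd(G/K)\longrightarrow \dd_W(A).
\end{eqnarray*}
By construction $\Gamma$ is an isomorphism of algebras, and unwinding the definitions shows that if $D\in\dd_K(G)$ and $\mu(D)\in\dd(G/K)$ is its image, then $\Gamma(\mu(D))=\bar\gamma([D])=\gamma(D)$, which is exactly the compatibility with the triangular diagram stated in the theorem. Well-definedness of $\Gamma$ on $\mu(D)$ is immediate: if $\mu(D)=\mu(D')$ then $D-D'\in\ker\mu=\ker\gamma$, so $\gamma(D)=\gamma(D')$.

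There is no real obstacle here, since the nontrivial content—surjectivity of $\mu$ and $\gamma$ and the computation of their kernels—has already been quoted from \cite{He00}. The only thing to check, which I would mention explicitly, is that $\Gamma$ respects the multiplicative structure; but this is automatic from the fact that $\bar\mu$ and $\bar\gamma$ are algebra isomorphisms of the common quotient $\dd_K(G)/(\dd_K(G)\cap\dd(G)\lk)$. Thus the universal property of quotient algebras delivers the theorem at once.
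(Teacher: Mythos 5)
Your argument is correct: given that $\mu$ and $\gamma$ are both quoted as surjective algebra homomorphisms out of $\dd_K(G)$ with the common kernel $\dd_K(G)\cap\dd(G)\lk$, the factorization through the quotient and the resulting isomorphism $\Gamma$ with $\Gamma(\mu(D))=\gamma(D)$ follow exactly as you describe. The paper itself offers no proof, citing \cite{He00} instead, and your first-isomorphism-theorem argument is precisely the standard one underlying that reference, so there is nothing to add.
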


When the nilpotent subgroup $N$ of $G$ acts on the symmetric space $G/K$, the orbits are transversal (in the sense of \cite{He00}, Ch. II, \S3 (29)) to the submanifold $A\cdot o$ (\cite{He00}, p. 266). Thus if $D$ is a differential operator on $X$, it follows from \cite{He01}, Ch. II, Theorem 3.6 that there is a uniquely determined differential operator $\Delta_N(D)$\index{$\Delta_N$, radial part} on $A\cdot o$ such that for each $N$-invariant function on $G/K$
\begin{eqnarray}\label{Radial part 1}
(Df)(a\cdot o) = (\Delta_N(D)f_{|A\cdot o})(a\cdot o),
\end{eqnarray}
where $f_{|A\cdot o}$ denotes the restriction of $f$ to $A\cdot o$. The operator $\Delta_N(D)$ is called the \emph{radial part} of $D$\index{radial part of a differential operator}. The isomorphism \eqref{canonical isomorphism} is then given by (\cite{He00}, p. 306)
\begin{eqnarray}\label{Radial Part 2}
\Gamma(D) = e^{-\rho}\Delta_N(D)\circ e^{\rho}.
\end{eqnarray}
In particular (loc. cit.), for the Laplacian $L_X$ on $X=G/K$ we have
\begin{eqnarray}\label{symbol Laplace 1}
\Gamma(L_X) = L_A - \langle\rho,\rho\rangle,
\end{eqnarray}
where $L_A$\index{$L_A$, Laplace operator on $A\cdot o$} denotes the Laplace operator of the submanifold $A\cdot o$ of $G/K$.

\subsubsection{Joint eigenfunctions and joint eigenspaces}\label{Joint eigenfunctions and joint eigenspaces}
If $V$ is a finite-dimensional vector space over $\rr$, the \emph{symmetric algebra}\index{symmetric algebra} $S(V)$ over $V$ is defined as the algebra of complex-valued polynomial functions on the dual space $V^*$ (\cite{He00}, p. 280). If $X_1,...,X_n$ is a basis of $V$, then $S(V)$ can be identified with the commutative algebra of polynomials
\begin{eqnarray}
\sum_{k}a_{k_1...k_n}X_1^{k_1}...X_n^{k_n}.
\end{eqnarray}
Let $U$ be any Lie group with Lie algebra $\mathfrak{u}$. Consider the exponential mapping $\exp:\mathfrak{u}\rightarrow U$, which maps a line $\rr X$ through $0$ in $\mathfrak{u}$ onto the one-parameter subgroup\index{one-parameter subgroup} $t\mapsto \exp(tX)$ of $U$. As usual, if $X\in\mathfrak{u}$, let $\widetilde{X}\in\dd(U)$ (\cite{He00}, p.280) denote the vector field of $U$ given by
\begin{eqnarray}
(\widetilde{X}f)(g) = X(f\circ \lambda_g) = \left(\frac{d}{dt}f(g\exp tX)\right)_{t=0}, \hspace{6mm} f\in\mathcal{E}(G).
\end{eqnarray}
The relation between $S(\mathfrak{u})$ and $\dd(U)$ is as follows (\cite{He01}, p. 281):
\begin{thm}\label{symmetrization}
There exists a unique linear bijection $\lambda: S(\mathfrak{u})\rightarrow\dd(U)$\index{$\lambda$, symmetrization map} such that $\lambda(X^m)=\widetilde{X}^m$ for all $X\in \mathfrak{u}$ and all $m\in\nn_0$.
\end{thm}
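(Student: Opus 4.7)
The plan is to prove uniqueness via a polarization argument, construct $\lambda$ explicitly by symmetrizing monomials, and deduce bijectivity from a filtration/symbol argument (a Poincar\'e--Birkhoff--Witt type statement).

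For \emph{uniqueness}, I would observe that any linear map defined on $S(\mathfrak{u})$ is determined by its values on a basis of symmetric monomials $X_{i_1}\cdots X_{i_m}$. These are recoverable from pure powers through the polarization identity
\[
X_{i_1}\cdots X_{i_m} = \frac{1}{m!}\left.\frac{\partial^m}{\partial t_1\cdots\partial t_m}(t_1 X_{i_1}+\cdots+t_m X_{i_m})^m\right|_{t=0},
\]
since, expanded in $S(\mathfrak{u})$, the coefficient of $t_1\cdots t_m$ in $(t_1 X_{i_1}+\cdots+t_m X_{i_m})^m$ equals $m!\,X_{i_1}\cdots X_{i_m}$. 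Hence $\lambda(X^m)=\widetilde{X}^m$ together with linearity pins $\lambda$ down completely.

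For \emph{existence}, I fix a basis $X_1,\ldots,X_n$ of $\mathfrak{u}$ and define $\lambda$ on monomials by
\[
\lambda(X_{i_1}\cdots X_{i_m}) = \frac{1}{m!}\sum_{\sigma\in S_m}\widetilde{X}_{i_{\sigma(1)}}\cdots\widetilde{X}_{i_{\sigma(m)}},
\]
extended linearly. Since the right-hand side is symmetric in the indices, this descends to a well-defined linear map on $S(\mathfrak{u})$. Expanding $\widetilde{X}^m$ for $X=\sum c_i X_i$ in exactly the same multinomial fashion as above and matching coefficients gives $\lambda(X^m)=\widetilde{X}^m$, as required.

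For \emph{bijectivity}, both sides carry natural filtrations: $S(\mathfrak{u})=\bigoplus_m S^m(\mathfrak{u})$ by polynomial degree, and $\dd(U)$ by the order of the differential operator. The map $\lambda$ preserves these filtrations since every $\widetilde{X}_i$ has order one, and on the associated graded level it is the identity: commutators $[\widetilde{X}_i,\widetilde{X}_j]$ drop the order by one, so any two orderings of a product of $m$ fields have the same principal symbol. Hence $\mathrm{gr}(\lambda)$ is the identity of $S(\mathfrak{u})$, from which the bijectivity of $\lambda$ in each filtered piece follows. The main obstacle will be verifying the identification $\mathrm{gr}(\dd(U))\cong S(\mathfrak{u})$ cleanly: I would handle this by passing to canonical coordinates of the second kind around $e\in U$ in which $\widetilde{X}_i|_e=\partial_{x_i}|_e$, so that the principal symbol of $\widetilde{X}_{i_1}\cdots\widetilde{X}_{i_m}$ at $e$ is $\xi_{i_1}\cdots\xi_{i_m}$; this simultaneously yields surjectivity onto $\dd(U)$ and the dimension matching $\dim\dd^{(m)}(U) = \dim\bigoplus_{k\leq m}S^k(\mathfrak{u})$ that forces injectivity.
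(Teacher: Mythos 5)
Your proof is correct, but note that the paper itself gives no argument for this theorem: it is quoted verbatim from \cite{He01}, p.~281, so the only comparison available is with Helgason's own proof. Helgason proceeds more directly: he fixes a basis $X_1,\ldots,X_n$ of $\mathfrak{u}$, uses canonical coordinates of the \emph{first} kind $(x_1,\ldots,x_n)\mapsto \exp(x_1X_1+\cdots+x_nX_n)$, and for $P\in S(\mathfrak{u})$ defines the candidate operator at once by $(D_P f)(g)=\left\{P(\partial_1,\ldots,\partial_n)\,f\bigl(g\exp(x_1X_1+\cdots+x_nX_n)\bigr)\right\}_{x=0}$; left-invariance, the identity $D_{X^m}=\widetilde{X}^m$, and bijectivity are then read off from this single formula, with the polarization identity supplying uniqueness exactly as in your first step. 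You instead build $\lambda$ as the symmetrization average over $S_m$ and obtain bijectivity from a filtration argument on orders of operators, identifying the associated graded map with the identity of $S(\mathfrak{u})$ via principal symbols at $e$. Both routes are legitimate: Helgason's is shorter and gives the operator by an explicit closed formula without any discussion of filtrations, while yours makes the symmetrization formula $\lambda(X_{i_1}\cdots X_{i_m})=\frac{1}{m!}\sum_\sigma \widetilde{X}_{i_{\sigma(1)}}\cdots\widetilde{X}_{i_{\sigma(m)}}$ explicit (which is often what one actually uses later) and isolates the Poincar\'e--Birkhoff--Witt--type content. One small point of logical bookkeeping in your last step: injectivity already follows from the symbol computation (a nonzero top-degree part of $P$ gives a nonzero principal symbol of $\lambda(P)$ at $e$), whereas the dimension count $\dim\dd^{(m)}(U)\leq\dim\bigoplus_{k\leq m}S^k(\mathfrak{u})$ — which rests on the fact that a left-invariant operator is determined by its value at $e$ — is what yields surjectivity; stated in that order the argument closes cleanly.
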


Theorem \ref{symmetrization} states that the algebra $\dd(U)$ of translation invariant differential operators on $U$ is generated by the $\widetilde{X}$ ($X\in\mathfrak{u}$). The mapping $\lambda$ is called \emph{symmetrization}\index{symmetrization} and identifies the commutative algebras $S(\la)$ and $\dd(A)$. Further, it identifies the set $S(\la)^W$ of $W$-invariants in $S(\la)$ with the set $\dd_W(A)$ of $W$-invariant differential operators on $A\cdot o$ with constant coefficients.

Given a homomorphism $\chi: D(G/K)\rightarrow \cc$ we introduce the \emph{joint eigenspace}\index{joint eigenspace}
\begin{eqnarray*}
\mathcal{E}_{\chi}(X) = \left\{f\in\mathcal{E}(G/K): Df=\chi(D)f \textnormal{ for all } D\in\dd(G/K)  \right\}.
\end{eqnarray*}

We know from \eqref{canonical isomorphism} that $\dd(G/K)\cong S(\la)^W$. Since $\dd(A)$ is a commutative polynomial ring, each $\nu\in\la^*_{\cc}$ extends uniquely to a homomorphism of $\dd(A)$ into $\cc$, denoted by $D\mapsto D(\nu)$. We then have (\cite{He01}, Chapter III, Lemma 3.11):
\begin{lem}\label{important lemma}
The homomorphisms of $S(\la)^W$ into $\cc$ are precisely
\begin{eqnarray*}
\chi_\mu: P\mapsto P(\mu),
\end{eqnarray*}
\index{$\chi_\mu$, homomorphism}where $\mu$ is an element of $\mathfrak{a}_{\cc}^*$,
\end{lem}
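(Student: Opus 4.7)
The plan is to split the assertion into the (trivial) direct implication and the (substantive) converse.

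First, for any $\mu\in\la^*_{\cc}$ the prescription $\chi_\mu(P):=P(\mu)$ is manifestly a $\cc$-algebra homomorphism $S(\la)^W\rightarrow\cc$, since $S(\la)$ is identified with the algebra of polynomial functions on $\la^*_{\cc}$ and evaluation at a fixed point is a homomorphism. So $\{\chi_\mu:\mu\in\la^*_{\cc}\}$ is contained in the set of characters.

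For the converse, let $\chi:S(\la)^W\rightarrow\cc$ be a homomorphism. I would aim to extend $\chi$ to a homomorphism $\widetilde\chi:S(\la)\rightarrow\cc$; once this is achieved, evaluating $\widetilde\chi$ on a basis $X_1,\dots,X_r$ of $\la$ produces complex numbers $\mu_1,\dots,\mu_r$, which define a linear functional $\mu\in\la^*_{\cc}$ by $\mu(X_i)=\mu_i$, and then by multiplicativity $\widetilde\chi=\chi_\mu$, hence $\chi=\chi_\mu$ on $S(\la)^W$.

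To produce $\widetilde\chi$, the decisive input is that $S(\la)$ is integral over $S(\la)^W$. This is seen by the following explicit construction: for any $P\in S(\la)$, the polynomial
\begin{equation*}
f_P(T):=\prod_{s\in W}\bigl(T-s\cdot P\bigr)\in S(\la)[T]
\end{equation*}
has coefficients that are elementary symmetric functions of the Weyl-orbit $\{s\cdot P:s\in W\}$, hence $W$-invariant, hence lying in $S(\la)^W$; and by construction $f_P(P)=0$. Thus every element of $S(\la)$ is integral over $S(\la)^W$. I would then invoke the lying-over part of the going-up theorem for integral extensions: the maximal ideal $\ker\chi\subset S(\la)^W$ is dominated by some maximal ideal $\mathfrak{M}\subset S(\la)$, and the resulting field extension $S(\la)^W/\ker\chi\hookrightarrow S(\la)/\mathfrak{M}$ is integral; since the base is $\cc$, so is the top. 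The quotient map $S(\la)\rightarrow S(\la)/\mathfrak{M}=\cc$ is the required $\widetilde\chi$, and by construction it restricts to $\chi$ on $S(\la)^W$. Because $S(\la)$ is a polynomial ring on a basis of $\la$, Hilbert's Nullstellensatz identifies $\widetilde\chi$ with evaluation at a unique point $\mu\in\la^*_{\cc}$, completing the proof.

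The main obstacle is the existence of the extension $\widetilde\chi$. Once the integrality of $S(\la)$ over $S(\la)^W$ is established via the orbit polynomial $f_P$, the rest is a routine application of standard commutative algebra (going-up and the Nullstellensatz). Note that $\mu$ is not unique: the explicit construction shows that replacing $\mu$ by any element of its Weyl orbit $W\cdot\mu$ yields the same character, which is consistent with the $W$-invariance already built into $S(\la)^W$.
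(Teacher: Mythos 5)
Your proof is correct. The paper does not prove this lemma at all — it simply quotes it from Helgason (Chapter III, Lemma 3.11 of the cited reference) — and your argument (the trivial direction, then integrality of $S(\la)$ over $S(\la)^W$ via the orbit polynomial $\prod_{s\in W}(T-s\cdot P)$, lying over to extend the character, and identification of characters of the polynomial ring $S(\la)$ with evaluations at points of $\la^*_{\cc}$) is essentially the standard proof given there, so there is nothing to compare beyond noting the agreement.
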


It follows that the characters of $\dd(G/K)$ (and hence the joint eigenspaces) are parameterized by the orbits of $W$ in $\la^*_{\cc}$: Given $\lambda\in\mathfrak{a}^*_{\cc}$ we define
\begin{eqnarray}\label{convenient}
\mathcal{E}_{\lambda}(X) = \left\{f\in\mathcal{E}(X): Df=\Gamma(D)(i\lambda)f \textnormal{ for all } D\in\dd(X)  \right\}.
\end{eqnarray}
Lemma \ref{important lemma} implies that each $\mathcal{E}_{\chi}(X)$ is given by a suitably chosen $\mathcal{E}_{\lambda}(X)$.

\begin{defn}
Let $\lambda\in\mathfrak{a}^*_{\cc}$ and $b\in B$. We define
\begin{eqnarray}\label{non-Euclidean plane waves}
e_{\lambda,b}: X\rightarrow\cc, \hspace{2mm} z\mapsto e^{(i\lambda+\rho)\langle z,b\rangle}.
\end{eqnarray}
The exponential functions $e_{\lambda,b}$ are called \emph{non-Euclidean plane waves}\index{$e_{\lambda,b}$, non-Euclidean plane waves}.
\end{defn}

Recall our notation $b_{\infty}=eM\in K/M$. Let $\lambda\in\la^*_{\cc}$. The function
\begin{eqnarray}
e_{\lambda,b_{\infty}}: G/K\rightarrow\cc, \,\,\,\,\,\,\, gK \mapsto e^{-(i\lambda+\rho)H(g^{-1})}
\end{eqnarray}
is $N$-invariant and its restriction to $A\cdot o$ is given by ${e_{\lambda,b_{\infty}}}_{|A\cdot o}(a\cdot o) = e^{(i\lambda+\rho)(\log a)}$. By \eqref{Radial part 1} and \eqref{Radial Part 2}, if $D\in\dd(G/K)$,
\begin{eqnarray*}
(D e_{\lambda,b_{\infty}})_{|A\cdot o} &=& \Delta_N(D)(e_{\lambda,b_{\infty}})_{|A\cdot o} \\
&=& (e^{\rho}\Gamma(D)\circ e^{-\rho})(e_{\lambda,b_o})_{|A\cdot o} \\
&=& \Gamma(D)(i\lambda)(e_{\lambda,b_o})_{|A\cdot o}.
\end{eqnarray*}
Hence $D e_{\lambda,b_{\infty}} = \Gamma(D)(i\lambda) e_{\lambda,b_{\infty}}$, since both sides are $N$-invariant. In general, when $b=kM\in K/M$ is arbitrary, then $e_{\lambda,b}(x) = e_{\lambda,b_{\infty}}(k^{-1}\cdot x)$, so the $K$-invariance of $D$ implies
\begin{eqnarray}\label{ilambda}
D e_{\lambda,b} = \Gamma(D)(i\lambda) e_{\lambda,b}
\end{eqnarray}
for all $\lambda\in\la^*_{\cc}$, $b\in B$ and $D\in\dd(G/K)$. It follows that each $e_{\lambda,b}$ is a joint eigenfunction and belongs to $\mathcal{E}_{\lambda}(X)$. Moreover, \eqref{ilambda} explains why one takes $i\lambda$ instead of $\lambda$ in the definition \eqref{convenient} of the $\mathcal{E}_{\lambda}(X)$. Finally, \eqref{symbol Laplace 1} implies
\begin{eqnarray}\label{character of the Laplacian}
L_X \, e_{\lambda,b} = \Gamma(L_X)(i\lambda) \, e_{\lambda,b} = - (\langle\lambda,\lambda\rangle + \langle\rho,\rho\rangle) \, e_{\lambda,b}.
\end{eqnarray}
This explicit formula for the eigenvalues of the Laplacian is of particular importance and will be applied a couple of times in the following sections.

\begin{rem}\label{polynomials}
A Riemannian manifold $X$ with distance function $d$ is called \emph{two-point homogeneous}\index{two-point homogeneous space} if whenever $d(p,q)=d(p',q')$, then there is an isometry $g$ of $X$ such that $g(p)=p'$ and $g(q)=q'$. A Riemannian symmetric space of the noncompact type is two-point homogeneous if and only if its real rank is one. If $X\cong G/K$ is a two-point homogeneous space, then $\dd(G/K)$ is generated by the Laplacian, that is the algebra of invariant differential operators consists of the polynomials in the Laplace-Beltrami operator (\cite{He01}, p. 288).
\end{rem}

\subsection{The classical examples}
It is always useful to have concrete examples in mind. The classification of globally symmetric spaces of noncompact type is the
same as the classification of semisimple Lie groups. As often in Lie group theory, the classification contains a finite number of infinite lists (as the one of special linear groups $\textnormal{SL}_n(\rr)$ for $n\geq 2$), the so-called classical groups, and a finite set of ``exceptional'' examples.

\subsubsection{Hyperbolic spaces and their realizations}
For rank one symmetric spaces, there are three lists of classical spaces: Real, complex, and quaternionic hyperbolic spaces. There is only one exceptional one, the Cayley hyperbolic plane. For the latter we refer to the standard literature on exceptional Lie groups and Lie algebras, for example \cite{D78}. In this Section we describe the realizations of the classical hyperbolic spaces. We follow \cite{DH97}.

Let $\ff\in\left\{\rr,\cc,\hh\right\}$ denote the field of real numbers, complex numbers, or the quaternions. On $\ff^{n+1}$, regarded as a right-vector space over $\ff$, we consider the Hermitian form
\begin{eqnarray*}
[x,y] = \overline{y_0}x_0 - \overline{y_1}x_1 - \cdots - \overline{y_n}x_n.
\end{eqnarray*}

Let $G=U(1,b;\ff)$ be the group of $(n+1)\times(n+1)$ matrices with coefficients in $\ff$ which preserve this Hermitian form. The group $G$ acts on the projective space $P_n(\ff)$ and the stabilizer of the line generated by the vactor $(1,0,\ldots,0)$ is the group $K=U(1;\ff)\times U(n;\ff)$, which is compact. We call $X=G/K$ a hyperbolic space. $X$ is a Riemannian symmetric space of the noncompact type of rank one. By $\pi$ we denote the natural projection map
\begin{eqnarray*}
\pi:\ff^{n+1}\setminus\left\{o\right\}\rightarrow P_n(\ff).
\end{eqnarray*}
The hyperbolic space $X$ is then the image under $\pi$ of the open set
\begin{eqnarray*}
\left\{x\in\ff^{n+1}:[x,x]>0\right\}.
\end{eqnarray*}
On $\ff^n$ we have the inner product $(x,y)=\sum_j \overline{y_j}x_j$ with norm $\left\|x\right\|=(x,x)^{1/2}$. Let $B(\ff^n)$ denote the unit ball in $\ff^n$. Then the space $X$ can also be realized as the unit ball in $\ff^n$. In fact, the map
\begin{eqnarray*}
\left\{x\in\ff^{n+1}:[x,x]>0\right\}\rightarrow\ff^n
\end{eqnarray*}
given by $x\mapsto y$, where $y_p=x_px_0^{-1}$, defines, after going to the quotient space, a real analytic bijection of $X$ onto $B(\ff^n)$ and $G$ acts transitively by fractional linear transformations (\cite{DH97}).

Let $d$ denote the dimension of $\ff$ over $\rr$, so $d=1,2$ or $4$ respectively. On $\left\{x\in\ff^n:[x,x]>0\right\}$ we consider the Riemannian metric
\begin{eqnarray*}
ds^2 = - \frac{[dx,dx]}{[x,x]}.
\end{eqnarray*}
This metric is invariant under $x\mapsto x\lambda$ ($\lambda\in\ff\setminus\left\{0\right\})$ and thus defines a Riemannian metric on $X$, which is invariant under $G$, of signature $(dn,0)$.

We can now describe the group theoretical deompositions of $G$. Let $J$ be the $(n+1)\times(n+1)$ diagonal-matrix
\begin{eqnarray*}
J &:=& \begin{pmatrix} -1 &  & \\  & 1 & & \\ & & \ddots \\ & & & 1 & \end{pmatrix}.
\end{eqnarray*}
it will turn out that this matrix is a representative in $M'$ for longest Weyl group element $w\in W$. For any $(n+1)\times(n+1)$-matrix $X$ with coefficients in $\ff$ we set $X^*:= J \overline{X}^{\textnormal{tr}}J$. The Lie algebra $\g$ of $G$ consists of matrices $X$ which satisfy $X+X^*=0$. These are the matrices of the form
\begin{eqnarray*}
X &=& \begin{pmatrix} Z_1 & Z_2 \\ \overline{Z_2}^{\textnormal{tr}} & Z_3 \end{pmatrix},
\end{eqnarray*}
where $Z_1$ and $Z_3$ are anti-Hermitian and $Z_2$ is arbitrary. The involutive automorphism $\theta$ of $\g$ is given by
\begin{eqnarray*}
\theta(X) := JXJ.
\end{eqnarray*}
This $\theta$ is the Cartan involution with the usual decomposition $\g=\lk + \lp$ into eigenspaces to the eigenvalues $+1$ and $-1$. The space $\lk$ is the Lie algebra of the subgroup $K=U(1;\ff)\times U(n;\ff)$.

Let $L$ be the element
\begin{eqnarray*}
L &:=& \begin{pmatrix} 0 & 0 & 1 \\  0 & 0_{n-1} & 0 \\ 1 & 0 & 0 \end{pmatrix} \in \g.
\end{eqnarray*}
Then $L\in\lp$ and $\la:=\rr L$ is a maximal abelian subspace of $\lp$. The centralizer of $L$ in $\lk$ is
\begin{eqnarray*}
\mathfrak{m} &:=& \left\{ \begin{pmatrix} u & 0 & 0 \\  0 & v & 0 \\ 0 & 0 & u \end{pmatrix} : u\in\ff, u+\overline{u}=0, v\in\mathfrak{u}(n-1;\ff) \right\},
\end{eqnarray*}
where $\mathfrak{u}(n-1;\ff)$ denotes the Lie algebra of $U(n-1,\ff)$. Let $\alpha:=1$ The nonzero eigenvalues of $L$ are $\pm\alpha$ if $\ff=\rr$ and $\pm\alpha, \pm\alpha$ if $\ff=\cc$ or $\ff=\hh$. The root-space $\g_{\alpha}$ consists of the matrices
\begin{eqnarray*}
X &=& \begin{pmatrix} 0 & z^* & 0 \\  z & 0_{n-1} & -z \\ 0 & z^* & 0 \end{pmatrix},
\end{eqnarray*}
where $z$ is an $(n-1)\times 1$-matrix with coefficients in $\ff$, and where $z^*=-\overline{z}^{\textnormal{tr}}$. We have $m_{\alpha}:=\dim(\g_{\alpha})=d(n-1)$. The space $\g_{2\alpha}$ consists of matrices of the form
\begin{eqnarray*}
X &=& \begin{pmatrix} w & 0 & -w \\  0 & 0_{n-1} & 0 \\ w & 0 & -w \end{pmatrix},
\end{eqnarray*}
where $w\in\ff$ with $w+\overline{w}=0$. Then $m_{2\alpha}:=\dim(\g_{2\alpha})=d-1$. We have
\begin{eqnarray*}
\g = \g_{-2\alpha} + \g_{-\alpha} + \la + \mathfrak{m} + \g_{\alpha} + \g_{2\alpha}.
\end{eqnarray*}
The subgroup $A=\exp(\la)$ of $G$ is given by the matrices
\begin{eqnarray*}
a_t &:=& \begin{pmatrix} \cosh t & 0 & \sinh t \\  0 & \id_{n-1} & 0 \\ \sinh t & 0 & \cosh t \end{pmatrix},
\end{eqnarray*}
where $t\in\rr$. The centralizer of $A$ in $K$ is the subgroup $M$ of matrices
\begin{eqnarray*}
\begin{pmatrix} u & 0 & 0 \\  0 & v & 0 \\ 0 & 0 & u \end{pmatrix},
\end{eqnarray*}
where $u\in\ff$, $|u|=1$, $v\in U(n-1;\ff)$. The Lie algebra of $M$ is $\mathfrak{m}$. The subspace $\lnn = \g_{\alpha} + \g_{2\alpha}$ is a nilpotent subalgebra of $\g$ and the Lie algebra of the analytic subgroup $N$ of $G$ given by the matrices
\begin{eqnarray*}
n(w,z) := \begin{pmatrix} 1+w-\frac{1}{2}[z,z] & z^* & -w+\frac{1}{2}[z,z] \\  z & \id_{n-1} & -z \\ w-\frac{1}{2}[z,z] & z^* & 1-w+\frac{1}{2}[z,z] \end{pmatrix},
\end{eqnarray*}
where $w\in\ff$, $w+\overline{w}=0$, where $z$ is an $(n-1)\times 1$-matrix with coefficients in $\ff$ and with $z^*=-z^{\textnormal{tr}}$. If
\begin{eqnarray*}
z = \begin{pmatrix} z_2 \\ \vdots \\ z_n  \end{pmatrix}, \,\,\,\, z' = \begin{pmatrix} z'_2 \\ \vdots \\ z'_n  \end{pmatrix},
\end{eqnarray*}
then $[z,z']=-\overline{z'_2}z_2 - \cdots - \overline{z'_n}z_n$. The composition law in $N$ is
\begin{eqnarray*}
n(w,z) \cdot n(w',z') = n(w+w'+\Im[z,z'],z+z').
\end{eqnarray*}
In particular, since $[z,z]$ is real, the inverse of $n(w,z)$ is $n(-w,-z)$. The subgroup $A$ normalizes $N$:
\begin{eqnarray*}
a_t n(w,z) a_{-t} = n(e^{2t}w,e^{t}z).
\end{eqnarray*}
The parameter $\rho$ is given by $\rho=\frac{1}{2}(m_{\alpha}+m_{2\alpha})$. The Iwasawa decomposition reads $G=KAN=NAK$. Each $g\in G$ can be written $g=k\exp H(g) n$, where $k\in K$, $n\in N$, and $H(g)\in\la$.  Let $|\cdot|$ denote the norm in $\ff$.
\begin{lem}
Let $g=(g_{i,j})$ with $i,j=0,1,...,n$ be an element in $G=U(1,n;\ff)$. Then $H(g)=tL$, and $t=\ln|g_{0,0}+g_{0,n}|$.
\end{lem}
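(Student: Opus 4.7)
The strategy is to evaluate $g$ on a cleverly chosen vector $v\in\ff^{n+1}$ that behaves well under all three Iwasawa factors: it should be fixed by $N$, be an eigenvector of $A$ with eigenvalue $e^t$, and have its $0$-th coordinate preserved in modulus by $K$. Once such a $v$ is identified, writing the Iwasawa decomposition $g=k\exp(tL)\,n$ and comparing the $0$-th component of $gv$ (which by matrix multiplication is directly readable from the entries of $g$) with that of $e^t kv$ will deliver the claim.

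The first ingredient is the $A$-eigenstructure. From the explicit matrix form of $a_t$ one has $a_t e_0=\cosh t\,e_0+\sinh t\,e_n$ and $a_t e_n=\sinh t\,e_0+\cosh t\,e_n$, so the vector $v:=e_0+e_n$ is a simple eigenvector: $a_t v=e^t v$. The second ingredient is $N$-invariance of $v$. From the explicit parameterisation of $n(w,z)$, its first column is $(1+w-\tfrac{1}{2}[z,z],\,z,\,w-\tfrac{1}{2}[z,z])^{\textnormal{tr}}$ and its last column is $(-w+\tfrac{1}{2}[z,z],\,-z,\,1-w+\tfrac{1}{2}[z,z])^{\textnormal{tr}}$; their sum is $(1,0,\dots,0,1)^{\textnormal{tr}}=v$, so $n(w,z)v=v$ for every $w,z$, i.e.\ $N$ fixes $v$. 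The third ingredient concerns $K$: as the stabiliser in $G$ of the line $\ff e_0$, the group $K=U(1;\ff)\times U(n;\ff)$ consists of block-diagonal matrices $\operatorname{diag}(u,V)$ with $u\in U(1;\ff)$ and $V\in U(n;\ff)$, so the $0$-th coordinate of $kv$ is just $u$ and has modulus $|u|=1$.

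Combining these three facts, the identity is one line. Write $g=k\exp(H(g))\,n$ with $H(g)=tL\in\la=\rr L$ and apply both sides to $v$:
\begin{eqnarray*}
gv \;=\; k\,a_t\,n\,v \;=\; e^t\,kv,
\end{eqnarray*}
whose $0$-th coordinate is $e^t u$ with $|u|=1$. On the other hand, the $0$-th coordinate of $gv$ computed directly from matrix multiplication equals $g_{0,0}\cdot 1+g_{0,n}\cdot 1=g_{0,0}+g_{0,n}$. Taking absolute values yields $|g_{0,0}+g_{0,n}|=e^t$, hence $t=\ln|g_{0,0}+g_{0,n}|$.

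The only non-mechanical step in the plan is the selection of the witness vector $v=e_0+e_n$; once one notes that $a_t$ diagonalises on the plane $\operatorname{span}(e_0,e_n)$ with eigenvalues $e^{\pm t}$ and that the line through $e_0+e_n$ is isotropic for $[\cdot,\cdot]$ and lies in the $N$-fixed set, everything else reduces to matching the first component of the resulting equation. I do not foresee any genuine obstacle.
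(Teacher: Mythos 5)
Your proof is correct and is essentially the paper's argument in different packaging: the quantity $g_{0,0}+g_{0,n}$ is the $0$-th coordinate of $g(e_0+e_n)$, and your three facts (that $N$ fixes $e_0+e_n$, that $a_t$ has it as an $e^t$-eigenvector, and that $K$ is block diagonal with unimodular $(0,0)$-entry) are exactly the paper's observations that $f(g)=\ln|g_{0,0}+g_{0,n}|$ is right-$N$-invariant, equals $t$ on $A$ since $\cosh t+\sinh t=e^t$, and is left-$K$-invariant. Both arguments then conclude in the same way from the Iwasawa decomposition $g=k\exp(tL)\,n$.
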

\begin{proof}
Set $f(g):=\ln|g_{0,0}+g_{0,n}|$. Let $b_t:=\exp(tL) \in A$. Then
\begin{eqnarray*}
\ln|(b_t)_{00}+ (b_t)_{0n}| &=& \log|\cosh t + \sinh t| \\
&=& \ln(e^t) \\
&=& t.
\end{eqnarray*}
Hence $H(g)=f(g)$. Moreover, $f(g)$ is left-$K$-invariant, since $k\in K=U(1;\ff)\times U(n;\ff))$ is unitary, and right $N$-invariant (this follows from the explicit expression of $n(w,z))$. Hence $f(g)=H(g)$ for all $g\in G$.
\end{proof}

An explicit computation shows the following: If $g=n(w,z)$, then
\begin{eqnarray*}
|(gw)_{00}+ (gw)_{0n}|^2 &=& |1-2w+[z,z]|^2 \\
&=& (1+[z,z]^2)^2+4w^2,
\end{eqnarray*}
since $[z,z]$ is real and $w$ is purely imaginary. This formula is even in $z$ and $w$, so considering $n(-w,-z)=n(w,z)^{-1}$ instead of $n(z,w)$ gives the same result.

\begin{cor}
Let $X=G/K$ be a hyperbolic space. Then $H(nw)=H(n^{-1}w)$, where $n\in N$, $G=KAN$, and where $w\in W$ denotes the longest Weyl group element.
\end{cor}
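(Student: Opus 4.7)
The proof is essentially contained in the computation displayed immediately before the corollary; what remains is to package it as an equality of Iwasawa projections. The plan is to combine the explicit formula $H(g) = \ln|g_{0,0}+g_{0,n}|\cdot L$ from the preceding lemma with the explicit matrix realization of $n \in N$ and of the representative $J = \textnormal{diag}(-1,1,\ldots,1) \in M'$ of the longest Weyl element.

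First I would unwrap the right multiplication by $w$: since $J$ only flips the sign of the first column, $(gJ)_{0,0} = -g_{0,0}$ and $(gJ)_{0,n} = g_{0,n}$ for any $g$. Writing $n = n(u,z)$ (using $u$ for the purely imaginary parameter, to avoid the notational clash with the Weyl element $w$), and substituting the entries $n_{0,0} = 1+u-\tfrac{1}{2}[z,z]$ and $n_{0,n} = -u+\tfrac{1}{2}[z,z]$, one obtains
\[
(nw)_{0,0} + (nw)_{0,n} \;=\; -n_{0,0}+n_{0,n} \;=\; -1 - 2u + [z,z].
\]
Since $[z,z] \in \rr$ and $u$ is purely imaginary, the squared modulus simplifies to $(1-[z,z])^2 + 4|u|^2$, an expression depending only on $[z,z]$ and on $|u|^2$.

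Second, I would invoke the composition law $n(u,z)\cdot n(-u,-z) = n(0,0) = e$ (which follows from $n(u,z)\cdot n(u',z') = n(u+u'+\Im[z,z'],\,z+z')$ together with $\Im[z,-z]=0$), yielding $n^{-1} = n(-u,-z)$. Because $(1-[z,z])^2 + 4|u|^2$ is manifestly invariant under $(u,z)\mapsto(-u,-z)$, the same computation applied to $n^{-1}w$ produces the same absolute value, and consequently the lemma above gives $H(nw) = H(n^{-1}w)$.

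There is no real obstacle: the computation is a short direct substitution. The only thing to watch for is the notational overload on the letter $w$ (simultaneously the nilpotent parameter, an element of $\ff$, and the longest Weyl element); renaming the parameter locally in the write-up keeps the argument transparent.
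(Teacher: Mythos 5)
Your proof is correct and follows essentially the same route as the paper: it combines the lemma $H(g)=\ln|g_{0,0}+g_{0,n}|$ with the explicit entries of $n(w,z)$ and the representative $J$ of the longest Weyl element, observes that the resulting modulus $(1-[z,z])^2+4|u|^2$ is even in the parameters, and concludes via $n(u,z)^{-1}=n(-u,-z)$. Your write-up merely makes explicit the column-sign effect of right multiplication by $J$ (and states the squared modulus in a cleaner form than the paper's display), but the argument is the same.
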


\begin{rem}
The formula $H(nw)=H(n^{-1}w)$ can alternatively be shown as follows: The set of positive roots $\sum^+$ consists of $\alpha$ and possibly $2\alpha$. Recall that $m_{\alpha}$ and $m_{2\alpha}$ denote the multiplicities of these roots. We write $B(\cdot,\cdot)$ for the Killing form and put $|Z|^2=-B(Z,\theta Z)$ for $Z\in\g$. If $\overline{n}\in\overline{N}$ we write $\overline{n}=\exp(X+Y)$, where $X\in\g_{-\alpha}$ and $Y\in\g_{-2\alpha}$. Set
\begin{eqnarray*}
c^{-1}:=4(m_{\alpha}+4m_{2\alpha}).
\end{eqnarray*}
Then by \cite{He94}, p. 180, we have
\begin{eqnarray*}
e^{\rho H(\overline{n})}
= [(1+c|X|^2)^2 + 4c|Y|^2]^{\frac{1}{4}\left(m_{\alpha}+2m_{2\alpha}\right)}.
\end{eqnarray*}
We always have $wNw^{-1}=\overline{N}$, although conjugation with $w$ does not have to coincide with the involution $\theta$ in all cases (it is true for the classical hyperbolic spaces). It follows that the inverse of $\overline{n}=\exp(X+Y)$ is given by $\overline{n}=\exp(-X-Y)$. In particular, the formula for $e^{\rho H(\overline{n})}$ is even in $X$ and $Y$, so $H(\overline{n})=H(\overline{n}^{-1})$ for all $\overline{n}\in\overline{N}$. This implies $H(nw)=H(n^{-1}w)$ for all $n\in N$, since $H(\cdot)$ is left-$K$-invariant.
\end{rem}

\subsubsection{The special linear groups}\label{The special linear groups}
The groups $G=\textnormal{SL}_n(\rr)$ are generic examples for higher rank spaces. In particular, if $K=\textnormal{SO}_n(\rr)$, then $G/K$ is a Riemannian symmetric space of the noncompact type of rank $n-1$. We will briefly recall the Iwasawa-decomposition components of this group and give a counterexample for the formula $H(nw)=H(n^{-1}w)$ we already analyzed in the case of rank one spaces. The interest of the function $n\mapsto H(nw)$ arises in the fact that it is the phase function of several integrals, such as the Harish-Chandra's $c$-function, and another family of operators we will consider in Section \ref{Patterson-Sullivan distributions}.

Let $G=\textnormal{SL}_n(\rr)$. The subgroup $A$ arising in the Iwasawa decomposition consists of the $n\times n$-diagonal matrices
\begin{eqnarray*}
a := \begin{pmatrix} a_1 &  &  \\   & \ddots &  \\  &  & a_n \end{pmatrix},
\end{eqnarray*}
where $a_1\cdots a_n=1$ and $a_j>0$ for all $1\leq j\leq n$. The nilpotent subgroups $N$ and $\overline{N}$ are given by upper, respectively lower, triangular matrices with $1's$ in the main diagonal. The subgroup $M'$ of $K$ is generated by the subgroup $M$ and by the diagonal-matrices
\begin{eqnarray*}
s_i := \begin{pmatrix} 1 & & & & & & & \\ & \ddots & & & & & & \\ & & 1 & & & & \\ & & & s & & & & \\ & & & & 1 & & \\ & & & & & \ddots & & \\ & & & & & & 1 \end{pmatrix},
\end{eqnarray*}
where the matrix
\begin{eqnarray*}
s := \begin{pmatrix} 0 & 1 \\ -1 & 0 \end{pmatrix}
\end{eqnarray*}
is placed in the $i-th$ and $(i+1)-th$ rows. The Weyl group $W$ (imbedded into the subgroup $M'$) is generated by the matrices $s_i$. The action of $W$ on $A$ is defined by the formula $w'\cdot a := w'aw'^{-1}$ ($w'\in W$, $a\in A$). The group $W$ coincides with the symmetric group $S_n$ and therefore has $n!$ elements. The matrix $w$ with all zero entries, except for the entries $(w)_{k,n-k+1}=\pm 1$, is the longest element in $W$. It permutes the entries $a_k$ and $a_{n-k+1}$ ($k=1,2,\ldots,n$) of the matrices $a=\textnormal{diag}(a_1, a_2, \ldots, a_n \in A$. Moreover, we have $\overline{N}=wNw^{-1}$.

Let $G=\textnormal{SL}_3(\rr)$. We will now find an $n\in N$ such that $H(nw)\neq H(n^{-1}w)$. An element $a\in A$ has the form
\begin{eqnarray*}
a = \begin{pmatrix} e^{s} & 0 & 0 \\  0 & e^{t} & 0 \\ 0 & 0 & e^{-s-t} \end{pmatrix},
\end{eqnarray*}
where $s,t\in\rr$. The longest Weyl group element $w\in W$ is
\begin{eqnarray*}
w = \begin{pmatrix} 0 & 0 & 1 \\  0 & -1 & 0 \\ 1 & 0 & 0 \end{pmatrix}.
\end{eqnarray*}
We fix an element $n\in N$. Then there are $d,e,f\in\rr$ such that
\begin{eqnarray*}
n = \begin{pmatrix} 1 & d & e \\  0 & 1 & f \\ 0 & 0 & 1 \end{pmatrix}.
\end{eqnarray*}
Multiplying out we find
\begin{eqnarray*}
nw = \begin{pmatrix} e & -d & 1 \\  f & -1 & 0 \\ 1 & 0 & 0 \end{pmatrix} \,\,\, \textnormal{ and } \,\,\, n^{-1}w = \begin{pmatrix} df-e & d & 1 \\  -f & -1 & 0 \\ 1 & 0 & 0 \end{pmatrix}.
\end{eqnarray*}
Now suppose that $nw=\tilde{k}\tilde{a}\tilde{n}$ is written corresponding to the Iwasawa decomposition, where $\tilde{a}=a(s,t)$ as above. Then $\tilde{k}=nw\tilde{n}^{-1}\tilde{a}^{-1}\in\textnormal{SO}_3(\rr)$ yields
\begin{eqnarray}\label{s eq}
e^2 + f^2 + 1 = e^{2s}.
\end{eqnarray}
Similarly, if $n^{-1}w$ is Iwasawa decomposed with $A$-part $a(s',t')$, then
\begin{eqnarray}\label{s' eq}
(df-e)^{2} + f^2 + 1 = e^{2s'}.
\end{eqnarray}
If we now assume that $H(nw)=H(n^{-1}w)$ then in particular $s=s'$. The equations \eqref{s eq} and \eqref{s' eq} have solutions for suitable chosen $d,e,f$ and $s$, but surely not for all choices. For example, the equations contradict if $d=e=f=1$, which shows that $H(nw)=H(n^{-1}w)$ is not a general property in SL$_3(\rr)$. The method used here can be extended to all special linear groups SL$_n(\rr)$ for $n\geq 3$. We always have $H(nw)=H(n^{-1}w)$ in the group SL$_2(\rr)$ (see Section \ref{An integral formula}).

\newpage
\pagebreak
\thispagestyle{empty} 

\section{Component computations}\label{Component Computations}

For later reference, we outhouse long and technical computations.

\subsection{Some integral formulas}
If $U$ is a Lie group with closed subgroup $V$ and with a left-invariant positive measure on $V$ we put
\begin{eqnarray}\label{tilde}
\widetilde{F}(uV) := \int_V F(uv) \, dv, \hspace{5mm} F\in C_c(U).
\end{eqnarray}
Note that this factorization $\widetilde{F}$ is not the same as the lift $F\circ\pi$ from the preceding sections. The mapping $F\mapsto\widetilde{F}$ is a linear and surjective mapping of $C_c(U)$ onto $C_c(U/V)$ (\cite{He01}, p. 91). In what follows, we will often use the following integral formula due to Harish-Chandra (\cite{He01}, p. 197).
\begin{lem}\label{integral formula 1}
Let $g\in G$. Then
\begin{eqnarray}\label{often used}
\int_K f(k(g^{-1}k)) dk = \int_K f(k)e^{-2\rho H(gk)} dk, \hspace{3mm}f\in C(K).
\end{eqnarray}
\end{lem}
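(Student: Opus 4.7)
The plan is to deduce the formula from the Iwasawa decomposition $G=KAN$ and the integration formula \eqref{integral formula G}, exploiting left-invariance of the Haar measure. I would start by testing against a separated product: take $h\in C_c(G)$ of the form $h(kan) = f(k)\,\varphi(\log a)\,\psi(n)$ with arbitrary $\varphi\in C_c(\la)$, $\psi\in C_c(N)$, so that \eqref{integral formula G} factorises
$$\int_G h\,dg = \Big(\int_K f(k)\,dk\Big)\Big(\int_A \varphi(\log a)\,e^{2\rho\log a}\,da\Big)\Big(\int_N \psi(n)\,dn\Big).$$

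Next, by left-invariance, $\int_G h\,dg = \int_G h(g^{-1}x)\,dx$. Iwasawa-decomposing $g^{-1}k = k(g^{-1}k)\exp H(g^{-1}k)\,n(g^{-1}k)$ and pushing the $A$-factor to the left (using that $A$ normalises $N$) produces the Iwasawa form
$$g^{-1}kan = k(g^{-1}k)\cdot\exp\!\big(H(g^{-1}k)+\log a\big)\cdot\big(a^{-1}n(g^{-1}k)a\big)\cdot n.$$
Substituting $\tilde a := a\exp H(g^{-1}k)$ in $A$ (translation invariance of $da$) and $\tilde n := (a^{-1}n(g^{-1}k)a)\,n$ in $N$ (left invariance of $dn$), for fixed $k$, is a measure-preserving change of variables, and the modular weight $e^{2\rho\log a}$ picks up precisely the factor $e^{-2\rho H(g^{-1}k)}$. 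Cancelling the arbitrary $\varphi$- and $\psi$-integrals leaves the preliminary identity
$$\int_K f(k)\,dk = \int_K f(k(g^{-1}k))\,e^{-2\rho H(g^{-1}k)}\,dk. \qquad (\star)$$

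To reach the stated asymmetric form, I would replace $g$ by $g^{-1}$ in $(\star)$ and then apply the resulting identity to the function $F(k):=f(k(g^{-1}k))$. Recall from \eqref{G on K} that $T_g(k) = k(gk)$ defines a diffeomorphism of $K$; a short Iwasawa computation shows that $g\cdot k(g^{-1}k)\in k\cdot AN$, so $T_g\circ T_{g^{-1}} = \id_K$. Hence $F(T_g(k)) = f(k)$, and $(\star)$ with $g\leftrightarrow g^{-1}$ collapses to
$$\int_K f(k(g^{-1}k))\,dk = \int_K f(k)\,e^{-2\rho H(gk)}\,dk,$$
which is Harish-Chandra's formula. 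The only delicate step is the bookkeeping with Iwasawa components in the joint $A$- and $N$-substitutions, which must combine to yield exactly the factor $e^{-2\rho H(g^{-1}k)}$; the left/right asymmetry between $g$ and $g^{-1}$ in the final statement is then simply the Jacobian of the diffeomorphism $T_g$, extracted at the very end.
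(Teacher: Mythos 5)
Your proposal is mathematically correct. The paper itself does not prove this lemma---it simply cites Helgason (\cite{He01}, p.~197)---so there is no in-paper proof to compare against; your derivation from the $G=KAN$ integral formula \eqref{integral formula G} via left-invariance of Haar measure on $G$ is the standard argument and is sound. In particular, the key identity $(\star)$, namely
$\int_K f(k)\,dk = \int_K f(k(g^{-1}k))\,e^{-2\rho H(g^{-1}k)}\,dk$,
follows cleanly from the joint $A\times N$ change of variables at fixed $k$ (Jacobian $1$, since the block-triangular differential has unit determinants on the diagonal, with translation invariance of $da$ and left invariance of $dn$), and the passage from $(\star)$ to the asymmetric form in the statement is correctly handled by replacing $g\mapsto g^{-1}$ and then applying the result to $F(k)=f(k(g^{-1}k))$, using that $T_g$ is a group action so $T_g\circ T_{g^{-1}}=\id_K$. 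The only cosmetic remark: it would be slightly cleaner to perform the $N$-substitution first at fixed $(k,a)$ and then the $A$-substitution at fixed $k$, rather than describing them as a single simultaneous change of variables, though the block-triangular structure you allude to does justify doing both at once.
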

Hence $(T_g)^*(dk)=e^{-2\rho(H(gk))}dk$, where $(T_g)^*(dk)$ denotes the pull-back measure corresponding to the $G$-action on $K$. We write $\frac{d k(gk)}{dk}=e^{-2\rho(H(gk))}$ to express the Jacobian $|\det d T_g(k)|$. We will need a similar formula for the quotient $K/M$. Therefore first observe that
\begin{eqnarray*}
\int_M (F\circ T_g^{-1})(km)dm &=& \widetilde{F}(k(g^{-1}k)M) \\
&=& \widetilde{F}(\overline{T}_g^{-1}(kM)) \\
&=& \widetilde{F}\circ\overline{T}_g^{-1}(kM).
\end{eqnarray*}
Hence
\begin{eqnarray}\label{simple computation}
(F\circ T_g^{-1})^{\sim}(kM) = \widetilde{F}\circ \overline{T}_g^{-1}(kM).
\end{eqnarray}
Recall that the Iwasawa projection $g\mapsto H(g)$ is $M$-bi-invariant. It follows that the Jacobian $e^{-2\rho H(gk)}$ of the action of $g$ on $K$ is a function on $K/M$.

\begin{cor}\label{Jacobian}
The Jacobian of $\overline{T}_g: K/M\rightarrow K/M, kM\mapsto k(gk)M,$ is $|\det d\overline{T}_g(kM)|=e^{-2\rho H(gk)}$.
\end{cor}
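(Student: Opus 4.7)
The strategy is to descend the integral formula \eqref{often used} on $K$ to the quotient $K/M$ by averaging out the $M$-action, and then read off the Jacobian of $\overline{T}_g$ from the resulting identity.

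First I would observe that the weight $e^{-2\rho H(gk)}$ is genuinely a function on $K/M$. This is because the Iwasawa projection $g \mapsto H(g)$ is $M$-bi-invariant: for $m \in M$ we have $gkm = gk \cdot m$, and writing $gk = k(gk)\exp H(gk)\, n(gk)$, the factor $m$ on the right can be absorbed into $M \subset K$, $AN$-part unchanged, so $H(gkm) = H(gk)$. Thus $kM \mapsto e^{-2\rho H(gk)}$ is well-defined on $K/M$.

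Next I would apply the Harish-Chandra formula \eqref{often used} to a function of the form $F = \phi \circ \pi$, where $\phi \in C(K/M)$ and $\pi: K \to K/M$ is the projection, so that $F$ is right-$M$-invariant. The left-hand side becomes $\int_K \phi(k(g^{-1}k)M)\,dk = \int_K \phi(\overline{T}_{g^{-1}}(kM))\,dk$, and by \eqref{integral quotient} with $dm$ normalized to total mass one this equals $\int_{K/M} \phi(\overline{T}_{g^{-1}}(kM))\,dk_M$. For the right-hand side, both $F$ and the weight $e^{-2\rho H(gk)}$ are right-$M$-invariant, so the same factorization yields $\int_{K/M} \phi(kM)\, e^{-2\rho H(gk)}\, dk_M$. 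Combining, we obtain
\begin{eqnarray*}
\int_{K/M} \phi(\overline{T}_{g^{-1}}(kM))\,dk_M \;=\; \int_{K/M} \phi(kM)\, e^{-2\rho H(gk)}\, dk_M
\end{eqnarray*}
for every $\phi \in C(K/M)$.

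Finally, making the change of variables $kM = \overline{T}_g(k'M)$ on the left (recall that $\overline{T}_g$ is a diffeomorphism of $K/M$ by the remarks following \eqref{G on K}), the left-hand side becomes $\int_{K/M} \phi(k'M)\,|\det d\overline{T}_g(k'M)|\, dk'_M$. Since $\phi$ was arbitrary, comparing integrands gives $|\det d\overline{T}_g(kM)| = e^{-2\rho H(gk)}$, which is the claim. There is no real obstacle here; the only subtlety is checking the $M$-invariance of the weight so that the Harish-Chandra identity on $K$ factors cleanly through $\pi$, and this is immediate from the $M$-bi-invariance of $H$.
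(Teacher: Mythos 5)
Your proof is correct and follows essentially the same route as the paper: both arguments descend Harish-Chandra's formula (Lemma \ref{integral formula 1}) from $K$ to $K/M$ using the $M$-bi-invariance of $H$, the normalization $\int_M dm = 1$, and the factorization \eqref{integral quotient}. The only cosmetic difference is that you apply the formula to the pullback $\phi\circ\pi$, whereas the paper lifts a general $F\in C(K)$ with $f=\widetilde{F}$ and descends back via \eqref{simple computation}; the ingredients and the resulting identity on $K/M$ are identical.
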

\begin{proof}
We need to show that for each $f\in C(K/M)$
\begin{eqnarray}
\int_{K/M} (f\circ \overline{T}_g^{-1})(kM) dkM = \int_{K/M} f(kM) e^{-2\rho H(gk)} d(kM) .
\end{eqnarray}
Select $F\in C(K)$ such that $f=\widetilde{F}$. Then by \ref{integral quotient} and the $M$-equivariance of $T_g$
\begin{eqnarray*}
\int_{K/M} |\det dT_g(k)| f(kM) d(kM) &=& \int_{K/M} |\det dT_g(k)| \widetilde{F}(kM)d(kM) \\
&=& \int_{K/M} |\det dT_g(k)| \left(\int_M F(km)dm \right) d(kM).
\end{eqnarray*}
(Recall $\int_M dm = 1$.) Then the last expression equals
\begin{eqnarray*}
\int_K F(k)|\det dT_g(k)| dk &=& \int_K (F\circ {T_g}^{-1})(k)dk \\
&=& \int_{K/M} \left(\int_M F \circ {T_g}^{-1}(km)dm\right) d(kM),
\end{eqnarray*}
and by \eqref{simple computation} the last term equals $\int_{K/M} (\widetilde{F}\circ \overline{T}_g^{-1})(kM) d(kM)$, as desired.
\end{proof}

\begin{rem}
The measure $dp=dmdadn$ (in the notation of \ref{Measure theoretic preliminaries}) is a left-invariant measure on $P=MAN$. Let $db$ denote the normalized $K$-invariant measure on $K/M=G/P$. Using \eqref{integral formula G} we get (\cite{He94}, p. 512) for $f\in C_c(G)$
\begin{eqnarray}\label{integral formula 2}
\int_G f(g) e^{-2\rho(H(g))} dg = \int_{G/P} db(gP) \int_P f(gp) dp.
\end{eqnarray}
\end{rem}

Corollary \ref{Jacobian} states that $\frac{d k(gk)M}{d(kM)} = e^{-2\rho(H(gk))}$. Given $b=kM$ we use \ref{cor need} to find
\begin{eqnarray}\label{dgb / db}
\frac{d(g\cdot b)}{db} &=& e^{-2\rho(H(gk))} = e^{+2\rho(\langle g^{-1}K,kM\rangle)} \\
&=& e^{-2\rho(\langle gK,g\cdot kM \rangle)} = e^{-2\rho(\langle g\cdot o, g\cdot b \rangle)}.
\end{eqnarray}
It follows for $f\in C(B)$ that
\begin{eqnarray}
\int_B f(g\cdot b) e^{2\rho(\langle g\cdot o, g\cdot b \rangle)} d(gb) = \int_B f(b) db = \int_{K} f(kM) \, dk.
\end{eqnarray}

\begin{rem}
Let $C_c(G)^M$ denote the right-$M$-invariant functions in $C_c(G)$. Then $C_c(G/M)\cong C_c(G)^M$ via \eqref{tilde}, so $M$-invariant functions on $G$ are functions on $G/M$ and vice versa. Under $G/K \times K/M \cong G/M$ a function $g\mapsto f(gM)$ on $G/M$ becomes a function $(gK,g\cdot B)\mapsto f(gM)$ on $X\times B$.
\end{rem}

\begin{lem}
Let $g\in G$ and $z\in X$. Then
\begin{eqnarray}
\int_K e^{-2\rho H(gk)} \, dk = 1 \,\,\,\,\, \textnormal{ and } \,\,\,\,\, \int_B e^{2\rho\langle z,b\rangle} \, db = 1.
\end{eqnarray}
\end{lem}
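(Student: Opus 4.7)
The plan is to deduce both identities from the already-proven Harish-Chandra integral formula in Lemma \ref{integral formula 1}, specialized to a constant function and combined with the definition of the horocycle bracket.

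For the first identity, I would simply apply Lemma \ref{integral formula 1} with $f \equiv 1 \in C(K)$. The left-hand side of \eqref{often used} then collapses to $\int_K 1 \, dk$, which equals $1$ by our normalization convention for the Haar measure on the compact group $K$. The right-hand side reads $\int_K e^{-2\rho H(gk)} \, dk$, giving the desired equality without any further work.

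For the second identity, the key observation is that the integrand is essentially a pullback of the integrand from the first identity. Write $z = gK$ and unfold the horocycle bracket: by the definition in Section \ref{Horocycles brackets and the Iwasawa-projection} we have $\langle z, b\rangle = \langle gK, kM\rangle = -H(g^{-1}k)$ for $b = kM$. Hence
\begin{eqnarray*}
\int_B e^{2\rho\langle z,b\rangle} \, db = \int_{K/M} e^{-2\rho H(g^{-1}k)} \, d(kM).
\end{eqnarray*}
The Iwasawa projection $H: G \to \la$ is right-$MN$-invariant, so in particular the function $k \mapsto e^{-2\rho H(g^{-1}k)}$ descends from $K$ to $K/M$. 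Using the normalization \eqref{integral quotient} together with $\int_M dm = 1$, this integral equals $\int_K e^{-2\rho H(g^{-1}k)} \, dk$, and the first identity applied with $g$ replaced by $g^{-1}$ gives the value $1$.

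Neither step presents any real obstacle; the only thing to verify carefully is that the integrand in the $B$-integral is genuinely a well-defined function on the quotient $B = K/M$, which is immediate from right-$M$-invariance of $H$. The whole argument is essentially a bookkeeping exercise that consolidates normalizations fixed in Section \ref{Measure theoretic preliminaries} with the change-of-variable formula on $K$ from Lemma \ref{integral formula 1}.
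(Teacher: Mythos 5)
Your proposal is correct and follows the same route as the paper: apply Harish-Chandra's formula \eqref{often used} with $f\equiv 1$ to get the first identity, then unfold $\langle z,b\rangle=-H(g^{-1}k)$ and reduce the $B$-integral to the $K$-integral. The only difference is that you spell out the descent from $K$ to $K/M$ via right-$M$-invariance and the normalization \eqref{integral quotient}, which the paper leaves implicit.
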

\begin{proof}
Apply Harish-Chandra's formula \eqref{often used} to $f(k)=1$:
\begin{eqnarray*}
1 &=& \int_K f(k)\, dk = \int_K f(k(g^{-1}k)\, dk = \int_K f(k)\cdot e^{-2\rho H(gk)}\, dk.
\end{eqnarray*}
Given $z=g\cdot o$, where $g\in G$, we then find
\begin{eqnarray*}
\int_B e^{2\rho\langle z,b\rangle} \, db = \int_K e^{-2\rho H(g^{-1}k)} \, dk = 1,
\end{eqnarray*}
as desired.
\end{proof}
Recall the formulas $\frac{d(g\cdot b)}{db} = e^{-2\rho(\langle g\cdot o, g\cdot b \rangle)}$ and $\langle g\cdot z,g\cdot b\rangle=\langle z,b\rangle + \langle g\cdot o,g\cdot b\rangle$. Let $f\in C_c(X\times B)$. The $G$-invariance of $dz$ then yields
\begin{eqnarray*}
\int_{X\times B} f(z,b) e^{2\rho\langle z,b\rangle} \, dz\, db &=& \int_{X\times B} f(g\cdot z,g\cdot b) e^{2\rho\langle g\cdot z,g\cdot b\rangle e^{-2\rho\langle g\cdot o,g\cdot b\rangle}} \, dz\, db \\
&=& \int_{X\times B} f(g\cdot z,g\cdot b) e^{2\rho\langle z,b\rangle} \, dz\, db.
\end{eqnarray*}
This proves:
\begin{prop}
$e^{2\rho\langle z,b\rangle}\,dz\,db$ is a $G$-invariant measure on $X\times B$.
\end{prop}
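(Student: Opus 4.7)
My plan is to verify $G$-invariance by a direct change of variables, using the two preparatory ingredients already assembled in the excerpt: the $G$-invariance of the measure $dz$ on $X=G/K$, and the Jacobian formula $\frac{d(g\cdot b)}{db} = e^{-2\rho\langle g\cdot o, g\cdot b\rangle}$ for the action on $B=K/M$ derived in Corollary \ref{Jacobian} together with Corollary \ref{cor need}. The crucial algebraic input is the cocycle-type identity $\langle g\cdot z, g\cdot b\rangle = \langle z, b\rangle + \langle g\cdot o, g\cdot b\rangle$ from \eqref{equivariance}.

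The plan is as follows. Fix $g\in G$ and $f\in C_c(X\times B)$. I would write
\begin{eqnarray*}
\int_{X\times B} f(g\cdot z, g\cdot b)\, e^{2\rho\langle z,b\rangle}\, dz\, db
\end{eqnarray*}
and substitute $(z,b)\mapsto(g^{-1}\cdot z, g^{-1}\cdot b)$. The $z$-factor contributes Jacobian $1$ since $dz$ is $G$-invariant, while the $b$-factor contributes the Jacobian $e^{-2\rho\langle g\cdot o, g\cdot b\rangle}$ from Corollary \ref{Jacobian}. Using the equivariance identity \eqref{equivariance} to rewrite $\langle g^{-1}\cdot z, g^{-1}\cdot b\rangle = \langle z,b\rangle - \langle g\cdot o, g\cdot b\rangle$ (equivalently Lemma \ref{formulae for bracket}(ii)), the exponential factor in the integrand becomes $e^{2\rho\langle z,b\rangle}\, e^{-2\rho\langle g\cdot o, g\cdot b\rangle}$, and after multiplying by the Jacobian the two correction terms compensate one another exactly.

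In fact, the computation displayed immediately above the statement of the proposition is essentially this calculation in the opposite direction: one begins with $\int f(z,b)e^{2\rho\langle z,b\rangle}dz\,db$, substitutes $(z,b)\mapsto(g\cdot z,g\cdot b)$, and cancels the Jacobian factor $e^{-2\rho\langle g\cdot o,g\cdot b\rangle}$ against the extra term $e^{2\rho\langle g\cdot o,g\cdot b\rangle}$ arising from \eqref{equivariance}. The result is that the integral equals $\int f(g\cdot z,g\cdot b)e^{2\rho\langle z,b\rangle}dz\,db$ for every $g\in G$, which is precisely the statement of $G$-invariance.

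There is no real obstacle here: all the work has been done in the preceding lemmas. The only thing one needs to be careful about is bookkeeping of the direction of the change of variables so that the two compensating factors really cancel rather than double. Since $f\in C_c(X\times B)$ was arbitrary, the equality of the integrals for all such $f$ yields invariance of the measure $e^{2\rho\langle z,b\rangle}dz\,db$ on $X\times B$.
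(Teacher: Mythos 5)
Your proposal is correct and is essentially the paper's own argument: the proof is exactly the change of variables $(z,b)\mapsto(g\cdot z,g\cdot b)$ combined with the $G$-invariance of $dz$, the Jacobian $\tfrac{d(g\cdot b)}{db}=e^{-2\rho\langle g\cdot o,g\cdot b\rangle}$ from Corollary \ref{Jacobian}, and the cocycle identity \eqref{equivariance}, with the two exponential corrections cancelling — precisely the displayed computation preceding the proposition, which you yourself identify. The only caution is the bookkeeping you already flag: in your $g^{-1}$-direction the compensating terms are $e^{\mp 2\rho\langle g\cdot o,\,b\rangle}$ (via Lemma \ref{formulae for bracket}(ii) and Corollary \ref{cor need}) rather than $e^{\mp 2\rho\langle g\cdot o,\,g\cdot b\rangle}$, but the cancellation is unaffected.
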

Hence by uniqueness, under the inverse of the $G$-equivariant diffeomorphism $G/M\rightarrow X\times B$, $gM\mapsto (g\cdot o,g\cdot M)$, the measure $e^{2\rho\langle z,b\rangle}\,dz\,db$ is mapped into a scalar multiple of $d(gM)$, the $G$-invariant measure on $G/M$. To compute this scalar $c$, select $f(z)\in C_c^{\infty}(X)$ such that $\int_X f(z)dz=1$. Lift $f(g):=f(g\cdot o)$ to a $K$-invariant function on $G$. Then $\int_G f(g)dg=1$. Also lift $f(z,b):=f(z)$ to a function on $X\times B$, which is independent of $b$. Then $f(g)=f(g\cdot o,g\cdot M)$, so
\begin{eqnarray}\label{last integral 2}
c &=& c \int_G f(g\cdot o,g\cdot M) \, dg \nonumber \\
&=& \int_{X\times B} f(z,b)\, e^{2\rho\langle z,b\rangle}\, dz\, db \nonumber \\
&=& \int_X f(z) \int_B e^{2\rho\langle z,b\rangle}\, db\, dz.
\end{eqnarray}
But $\int_B e^{2\rho\langle z,b\rangle}\, db = 1$ and hence \eqref{last integral 2} equals $\int_X f(z) \, dz = 1$. Thus $c=1$.

\begin{cor}
Let $f\in C_c(X\times B)$. Then
\begin{eqnarray}\label{m-invariant integral}
\int_{G/M} f(g\cdot o,g\cdot M) \, dg = \int_{X\times B} f(z,b) e^{2\rho\langle x,b\rangle} \, dz \, db.
\end{eqnarray}
\end{cor}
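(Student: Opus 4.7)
The proof is essentially contained in the computation that precedes the statement, so my plan is simply to organize that argument around a uniqueness principle for invariant measures.

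The strategy is to work with the $G$-equivariant diffeomorphism $\Phi: G/M \to X \times B$, $gM \mapsto (g\cdot o, g\cdot M)$, and exploit uniqueness of the $G$-invariant measure on $G/M$ (available because $G$ is unimodular and $M$ is compact, by the normalization principle \eqref{integral quotient}). First I would note that the preceding proposition already establishes that $e^{2\rho\langle z,b\rangle}\, dz\, db$ is a $G$-invariant measure on $X\times B$; this rests on combining the Jacobian formula $d(g\cdot b)/db = e^{-2\rho\langle g\cdot o, g\cdot b\rangle}$ from \eqref{dgb / db} with the cocycle identity $\langle g\cdot z,g\cdot b\rangle = \langle z,b\rangle + \langle g\cdot o,g\cdot b\rangle$ from \eqref{equivariance}, together with the $G$-invariance of $dz$. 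Pulling this measure back along $\Phi$ yields a $G$-invariant measure on $G/M$, hence equals $c\cdot d(gM)$ for some positive constant $c$ by uniqueness.

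The remaining task is to compute $c$, which is exactly the calculation carried out in \eqref{last integral 2}. Choose $f_0\in C_c^\infty(X)$ with $\int_X f_0(z)\,dz = 1$, lift it to the $K$-invariant function $\widetilde{f_0}(g) = f_0(g\cdot o)$ on $G$, and then extend it to a $b$-independent function $f(z,b) := f_0(z)$ on $X\times B$, so that $\widetilde{f_0}(g) = f(g\cdot o, g\cdot M)$. Testing against $f$, the left-hand side of \eqref{m-invariant integral} equals $c\cdot \int_G \widetilde{f_0}(g)\,dg = c$ by our normalization \eqref{integral quotient} of $dg_K$, while the right-hand side equals
\[
\int_X f_0(z) \left(\int_B e^{2\rho\langle z,b\rangle}\, db\right) dz = \int_X f_0(z)\,dz = 1,
\]
using the identity $\int_B e^{2\rho\langle z,b\rangle}\,db = 1$ already proved from Harish-Chandra's formula \eqref{often used}. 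Hence $c=1$, which yields the claim.

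There is no real obstacle here; the only point requiring care is matching normalizations. In particular, one must verify that with the measures $dz = d(gK)$ on $X$, $db = d(kM)$ on $B$, and $d(gM)$ on $G/M$ all normalized by the formula \eqref{integral quotient} relative to the same Haar measure $dg$ on $G$, the pullback identification of the invariant measure on $G/M$ gives precisely $c=1$ rather than some volume-of-$M$ factor — but since $\int_M dm = 1$ in our conventions, this is automatic.
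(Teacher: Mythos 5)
Your proposal is correct and follows essentially the same route as the paper: the paper likewise invokes uniqueness of the $G$-invariant measure on $G/M$ after the preceding proposition establishes invariance of $e^{2\rho\langle z,b\rangle}\,dz\,db$, and then computes the proportionality constant $c=1$ via exactly the test-function computation of \eqref{last integral 2}, using $\int_B e^{2\rho\langle z,b\rangle}\,db=1$. Your added remark on matching the normalizations \eqref{integral quotient} (with $\int_M dm=1$) just makes explicit what the paper leaves implicit.
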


Given $(z,b)\in X\times B$ we can find $g\in G$ such that $(z,b)=g\cdot (o,M)$. Then $\langle g\cdot o,g\cdot M\rangle=H(g)$. If we replace $f(g\cdot o,g\cdot M)$ by $f(g\cdot o,g\cdot M)e^{-2\rho H(g)}$ it follows from \eqref{m-invariant integral} that
\begin{eqnarray}\label{haar measure corollary}
\int_{X\times B} f(z,b) \, dz \, db = \int_{G/M} f(g\cdot o,g\cdot M) e^{-2\rho H(g)} \, d(gM).
\end{eqnarray}

One can directly prove \eqref{haar measure corollary} by using the $G$-invariance of $dz$ and the integral formulas \eqref{integral formula G} and \eqref{integral formula AN and G/K}:
\begin{eqnarray*}
\int_{X\times B} f(z,b)\, db,\ dz &=& \int_{K}\int_X f(k\cdot z,kM) \, dz \, dk \\
&=& \int_{KAN} f(kan\cdot o, kan\cdot M) \, dk \, da \, dn \\
&=& \int_{G/M} f(g\cdot o, g\cdot M) e^{-2\rho H(g)} \, d(gM).
\end{eqnarray*}

\subsection{Derivatives corresponding to the Iwasawa decomposition}\label{Iwasawa derivatives}
We begin this subsection by recalling some material from \cite{DKV} concerning derivatives of the Iwasawa projection. We will later apply these derivatives to functions defined by the Iwasawa decomposition.

Let $g,h\in G$. We write $h^{g}=ghg^{-1}$\index{$h^{g}$, conjugation in $G$}. Let $U(\mathfrak{g})$ be the universal enveloping algebra of the complexification of $\mathfrak{g}$. The adjoint representation of $G$ on $\mathfrak{g}$ extends to a representation of $G$ on $U(\mathfrak{g})$ by automorphisms. We write $u^g=Ad(g)u$, if $u\in U(\g)$\index{$u^g$, action of $g\in G$ on $u\in U(\g)$}. Then we have
\begin{eqnarray*}
u^{gh} = (u^h)^g, \hspace{3mm} (uv)^g = u^g v^g, \hspace{3mm} (g,h\in G, u,v \in U(\mathfrak{g})).
\end{eqnarray*}
We shall view elements of $U(\mathfrak{g})$ as left invariant differential operators acting on functions on $G$. To explain this interpretation, we now specify how an element $u=X_1\cdots X_r$ ($X_j\in\mathfrak{g}$), acts as a differential operator. Let $f:G\rightarrow\cc$ be a function on $G$ and define
\begin{eqnarray*}
\partial(u) f(g) := f(g; u) := \frac{\partial^r}{\partial t_1\cdots\partial t_r}_{|t_1=...=t_r=0}f(g \exp t_1 X_1 \cdots \exp t_r X_r).
\end{eqnarray*}
If $u\in U(\g)$ is a complex number $c\in\cc$, then $f(g;c)=cf(g)$.

The Iwasawa decomposition $\mathfrak{g}=\mathfrak{k}\oplus\mathfrak{a}\oplus\mathfrak{n}$ gives rise to the decomposition
\begin{eqnarray*}
U(\mathfrak{g}) = (\mathfrak{k}U(\mathfrak{g}) + U(\mathfrak{g})\mathfrak{n}) \oplus U(\mathfrak{a}).
\end{eqnarray*}
Therefore it makes sense to speak of the projection\index{$E_{\mathfrak{a}}$, projection}
\begin{eqnarray*}
E_{\mathfrak{a}}:U(\mathfrak{g})\rightarrow U(\mathfrak{a}).
\end{eqnarray*}
It is clear that this projection preserves the degree filtrations on both sides. Let\index{$\varepsilon$, homomorphism}
\begin{eqnarray*}
\varepsilon: U(\mathfrak{g}) \rightarrow \cc
\end{eqnarray*}
be a homomorphism that sends all elements of $\mathfrak{g}$ to $0$. We call $\varepsilon(u)$ the \emph{constant term}\index{constant term of $u\in U(\mathfrak{g})$} of $u\in U(\mathfrak{g})$. If $u$ has zero constant term, then the same is true for $E_{\mathfrak{a}}(u)$. Since $\mathfrak{a}$ is abelian, $U(\mathfrak{a})$ is canonically isomorphic to the symmetric algebra (see Subsection \ref{Joint eigenfunctions and joint eigenspaces}) over $\mathfrak{a}$. Thus, on $U(\mathfrak{a})$ the degree filtration arises from a grading. So in $U(\mathfrak{a})$ we may speak of the homogeneous components of an element.

We can now give the main calculation on the derivatives of the Iwasawa projection
\begin{eqnarray*}
H:G\rightarrow\mathfrak{a}, \,\,\,\,\,\,\,\,\,\,\, kan\mapsto\log(a).
\end{eqnarray*}
We will later use these formulas several times in applications of the method of stationary phase.

\begin{lem}
Let $g\in G$, $b\in U(\mathfrak{g})$. Then we have the formula
\begin{eqnarray*}
H(g;b) = \varepsilon(b)H(g) + \left( E_{\mathfrak{a}}(b^{t(g)})  \right)_1,
\end{eqnarray*}
where the subscript $1$ means the homogeneous component of degree $1$, and $t(g)=a(g)n(g)$ is the ``triangular part`` \index{triangular part of the Iwasawa decomposition} of the $KAN$ Iwasawa decomposition of $g\in G$.
\end{lem}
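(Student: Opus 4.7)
The plan is to prove the identity in two stages: first reduce to the case $g=e$ via a conjugation trick, then use the $K$-left and $N$-right invariance of $H$ to cut $b$ down to its $\la$-component, where the calculation becomes explicit.

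For the reduction, I would write $g=k(g)t(g)$ and use $t(g)\exp(tX)t(g)^{-1}=\exp(tX^{t(g)})$ (the homomorphism property of $\exp$ and $\Ad$) to pull $t(g)$ past each exponential factor, obtaining
\begin{equation*}
g\,\exp(t_1X_1)\cdots\exp(t_rX_r) \;=\; k(g)\,\exp(t_1X_1^{t(g)})\cdots\exp(t_rX_r^{t(g)})\,t(g).
\end{equation*}
Left-$K$-invariance of $H$ combined with the fact that $A$ normalizes $N$ (which yields the cocycle identity $H(h\,t(g))=H(h)+H(g)$ for all $h\in G$, by writing $h=k'a'n'$ and absorbing $n'a(g)=a(g)\tilde n$ with $\tilde n\in N$) then gives
\begin{equation*}
H\bigl(g\,\exp(t_1X_1)\cdots\exp(t_rX_r)\bigr) = H\bigl(\exp(t_1X_1^{t(g)})\cdots\exp(t_rX_r^{t(g)})\bigr) + H(g).
\end{equation*}
Differentiating the $t_i$ at $0$ kills the constant $H(g)$ as soon as $r\ge 1$, producing $H(g;b)=H(e;b^{t(g)})$ for monomials of positive degree; the constant-term case $\partial(c)H(g)=cH(g)$ matches the $\varepsilon(b)H(g)$ summand directly. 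The claim thus reduces to showing $H(e;u)=(E_{\la}(u))_1$ for all $u\in U(\g)$.

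For this reduced claim I would invoke the decomposition $U(\g)=U(\la)\oplus(\lk\,U(\g)+U(\g)\,\lnn)$ underlying $E_{\la}$. Right-$N$-invariance of $H$ gives $\partial(Z)H\equiv 0$ for $Z\in\lnn$ and hence $\partial(u'Z)H\equiv 0$ for arbitrary $u'\in U(\g)$, which kills the $U(\g)\lnn$ summand. For the $\lk\,U(\g)$ summand, left-$K$-invariance says that $H(\exp(tY)\exp(s_1X_1)\cdots\exp(s_rX_r))$ is independent of $t$ for $Y\in\lk$; differentiating in $t$ first therefore yields zero, so $\partial(Yu')H(e)=0$. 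Thus $H(e;u)$ depends on $u$ only through $E_{\la}(u)$.

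Finally, on $U(\la)\cong S(\la)$ the computation is immediate: for a monomial $H_1\cdots H_r$ with $H_i\in\la$ the abelianness of $\la$ collapses the product of exponentials into a single exponential, and $H(\exp(t_1H_1+\cdots+t_rH_r))=t_1H_1+\cdots+t_rH_r$. The $r$-fold mixed partial at $0$ equals $H_1$ if $r=1$ and vanishes otherwise, which is precisely the degree-$1$ homogeneous component of $H_1\cdots H_r$ in $S(\la)$. By linearity $H(e;u)=(u)_1=(E_{\la}(u))_1$ for $u\in U(\la)$, and together with the previous step the formula is established. The main obstacle will be purely book-keeping: iterating the conjugation-by-$t(g)$ identity correctly across a product of exponentials, and tracking that $\varepsilon(b^{t(g)})=\varepsilon(b)$ so that the constant-term correction lines up on both sides.
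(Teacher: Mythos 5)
Your proposal is correct and follows essentially the same route as the paper's proof (itself taken from Duistermaat--Kolk--Varadarajan): the cocycle identity $H(gh)=H(h^{t(g)})+H(g)$ obtained from left-$K$-invariance, conjugation by the triangular part and the fact that $A$ normalizes $N$; the vanishing of $H(1;\cdot)$ on $\mathfrak{k}U(\mathfrak{g})+U(\mathfrak{g})\mathfrak{n}$; and the explicit computation $H(1;c)=c_1$ on $U(\mathfrak{a})$ using $H(\exp X_1\cdots\exp X_r)=X_1+\cdots+X_r$. You merely spell out more explicitly the invariance arguments and the bookkeeping for constant terms, which the paper leaves implicit.
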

\begin{proof}
We copy the proof given in \cite{DKV}, p. 337 to fix some notation. Since $H$ is left-invariant under $K$ and right-invariant under $N$ we have
\begin{eqnarray*}
H(1;u)=0 \hspace{2mm} \forall u \in \mathfrak{k}U(\mathfrak{g}) + U(\mathfrak{g})\mathfrak{n}.
\end{eqnarray*}
Let $g,h\in G$ and Iwasawa decompose $g=kan$. Then
\begin{eqnarray}\label{clear}
H(gh) = H(kanh) = H(anh) = H(h^{t(g)}t(g)) = H(h^{t(g)}a(g)),
\end{eqnarray}
where $a(g)=a$. The right hand side of \eqref{clear} equals $H(h^{t(g)}) + H(a(g)) = H(h^{t(g)}) + H(g)$ and hence
\begin{eqnarray}
H(g;b) = H(1;b^{t(g)}) = \varepsilon(b)H(g) + H(1;E_{\mathfrak{a}}(b^{t(g)})).
\end{eqnarray}
But as $H(\exp X_1 \cdots \exp X_r)=X_1 + \cdots + X_r$ for $X_j\in\mathfrak{a}$ it follows that for any $c\in U(\mathfrak{a})$ we have $H(1;c)=c_1$.
\end{proof}

\begin{lem}
Let $\langle\cdot,\cdot\rangle$ denote the Killing form of $\mathfrak{g}$ and let $H\in \mathfrak{a}$. Let $\phi$ denote the function
\begin{eqnarray*}
\phi: G\rightarrow\rr, \,\,\,\,\,\,\,\,\,\,\, g \mapsto \langle H(g), H \rangle.
\end{eqnarray*}
Let $g\in G$ and $X\in\mathfrak{g}$. Then
\begin{eqnarray}\label{derivative}
\phi(g;X) = \langle X^{t(g)},H\rangle = \langle X, H^{n(g)^{-1}}\rangle.
\end{eqnarray}
\end{lem}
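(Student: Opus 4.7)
The plan is to derive both equalities directly from the preceding lemma, using only two facts beyond it: that the Killing form is $\Ad(G)$-invariant, and that the Iwasawa decomposition $\g=\lk\oplus\la\oplus\lnn$ is $B$-orthogonal when restricted to the relevant pieces against $\la$.

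First I would apply the preceding lemma to the element $b=X\in\g\subset U(\g)$. Since $X$ is of degree one with vanishing constant term, $\varepsilon(X)=0$, so the lemma gives $H(g;X)=\bigl(E_{\la}(X^{t(g)})\bigr)_1$. But $X^{t(g)}=\Ad(t(g))X\in\g$ is itself homogeneous of degree one, so the projection $E_{\la}$ simply extracts the $\la$-component of $X^{t(g)}$ under the vector space decomposition $\g=\lk\oplus\la\oplus\lnn$, and the subscript $1$ is automatic. Write $X^{t(g)}=X_{\lk}+X_{\la}+X_{\lnn}$ accordingly; then $H(g;X)=X_{\la}$.

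Next I would verify $\langle X^{t(g)},H\rangle=\langle X_{\la},H\rangle$ for $H\in\la$. This reduces to checking that $\lk$ and $\lnn$ are $B$-orthogonal to $\la$: for $\lk$ this is the standard fact that $B$ is negative definite on $\lk$ and positive definite on $\lp\supset\la$, with $\lk\perp\lp$; for $\lnn=\sum_{\alpha\in\Sigma^+}\g_\alpha$ it follows from the root space decomposition property $B(\g_\alpha,\g_\beta)=0$ unless $\alpha+\beta=0$, applied with $\beta=0$ and $\g_0\supset\la$. Combining these, $\phi(g;X)=\langle H(g;X),H\rangle=\langle X_{\la},H\rangle=\langle X^{t(g)},H\rangle$, which is the first equality.

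For the second equality, I would use $\Ad(G)$-invariance of the Killing form, $\langle\Ad(u)Y,Z\rangle=\langle Y,\Ad(u^{-1})Z\rangle$, to compute
\begin{eqnarray*}
\langle X^{t(g)},H\rangle = \langle\Ad(a(g)n(g))X,H\rangle = \langle X,\Ad(n(g)^{-1}a(g)^{-1})H\rangle.
\end{eqnarray*}
Since $a(g)\in A$ is of the form $\exp(H_0)$ with $H_0\in\la$ and $\la$ is abelian, $\Ad(a(g)^{-1})H=H$, leaving $\langle X,\Ad(n(g)^{-1})H\rangle=\langle X,H^{n(g)^{-1}}\rangle$, as claimed. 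The main (and really only) obstacle is being careful about the order $t(g)=a(g)n(g)$ versus the conjugation conventions; the computation itself is routine once the preceding lemma is in hand.
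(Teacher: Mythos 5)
Your argument is correct and follows essentially the same route as the paper: apply the preceding formula $H(g;X)=\bigl(E_{\la}(X^{t(g)})\bigr)_1$ to the degree-one element $X$, pass to $\langle X^{t(g)},H\rangle$ via the orthogonality of $\la$ with $\lk\oplus\lnn$, and obtain $\langle X,H^{n(g)^{-1}}\rangle$ from the $\Ad$-invariance of the Killing form together with $\Ad(a(g)^{-1})H=H$. You merely spell out the orthogonality and invariance steps that the paper leaves implicit, so no substantive difference.
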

\begin{proof}
$X\in\mathfrak{g}$ has constant term $0$, so $H(g;X)=E_{\mathfrak{a}}(X^{t(g)})$. The linear functional $\lambda(Y)=\langle Y,H\rangle$ ($Y\in\mathfrak{a}$) has derivative $\lambda(Y)$ and from the chain rule we obtain for $\phi = \lambda \circ H$ that
\begin{eqnarray*}
\phi(g;X) = \langle E_{\mathfrak{a}}(X^{t(g)}), H \rangle = \langle X^{t(g)}, H \rangle,
\end{eqnarray*}
since $\mathfrak{a}$ is orthogonal to $\mathfrak{k}\oplus\mathfrak{n}$ with respect to the Killing form, while $H^{t(g)^{-1}} = H^{n(g)^{-1}a(g)^{-1}} = H^{n(g)^{-1}}$, since $a\in A$ fixes $H$, since $\mathfrak{a}$ is abelian.
\end{proof}

Given any Lie group $G$, we denote by $L_g$\index{$L_g$, left translation} the \emph{left translation} by a group element $g\in G$. The tangent vector to the curve $t\mapsto g\exp{tX}$ at $g$ is $dL_g(X)$. Suppose $\mathfrak{g}$ is a direct sum $\mathfrak{g}=\mathfrak{u}\oplus\mathfrak{v}$, where $\mathfrak{u}$ and $\mathfrak{v}$ are subalgebras of $\mathfrak{g}$ (not necessarily ideals). Let $U$ and $V$ be the analytic subgroups of $G$ with Lie algebras $\mathfrak{u}$ and $\mathfrak{v}$. Let $\alpha: U\times V\rightarrow G$ denote the mapping $(u,v)\mapsto uv$. We identify $U$ and $V$ with the subgroups $(U,e)$ and $(e,V)$ of the product group $U\times V$ and we also identify the tangent space $T_{(u,v)}(U\times V)$ with the direct sum $T_u U + T_v V$ ($u\in U, v\in V$). Let $g\cdot X$ ($g\in G, X\in\g$) denote the adjoint action. Let $Y\in \mathfrak{u}, Z\in \mathfrak{v}$. We then have
\begin{eqnarray*}
\alpha(u\exp{tY,v}) = uv\exp(t \hspace{1mm} v^{-1}\cdot Y), \hspace{2mm} t\in\rr
\end{eqnarray*}
and
\begin{eqnarray*}
\alpha(u,v\exp{tZ}) = uv\exp{tZ}, \hspace{2mm} t\in\rr.
\end{eqnarray*}
It follows that the differential of $\alpha$ at $(u,v)\in U\times V$ is given by
\begin{eqnarray}\label{follows from here 4}
d\alpha_{(u,v)}(dL_uY,dL_vZ)=dL_{uv}(v^{-1}\cdot Y+Z).
\end{eqnarray}
Identifying $T_u U=\mathfrak{u}$ and $T_v V=\mathfrak{v}$ we will from now on denote the differential $d\alpha=\alpha'$ of the product map $\alpha$ by
\begin{eqnarray}
\alpha'(u,v)(X,Y) = v^{-1}\cdot X + Y,
\end{eqnarray}
where $u\in U, \, v\in V, \, X\in\mathfrak{u}, \, Y\in\mathfrak{v}$.
\begin{cor}
The mapping $\alpha$ from above is everywhere regular.
\end{cor}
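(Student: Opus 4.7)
The plan is to show the linear map
\[
\alpha'(u,v) : \mathfrak{u}\oplus\mathfrak{v} \longrightarrow \mathfrak{g}, \qquad (X,Y)\longmapsto v^{-1}\!\cdot X + Y
\]
is a bijection for every $(u,v)\in U\times V$. Since the domain and the target both have dimension $\dim\mathfrak{u}+\dim\mathfrak{v}=\dim\mathfrak{g}$ (the decomposition $\mathfrak{g}=\mathfrak{u}\oplus\mathfrak{v}$ being a direct sum of vector spaces), it is enough to verify injectivity. Note that $u$ does not appear in the formula for $\alpha'(u,v)$ — this reflects the left-$U$-equivariance $\alpha(u'u,v)=u'\alpha(u,v)$ — so the question really depends only on $v\in V$.

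Suppose $v^{-1}\!\cdot X + Y = 0$ with $X\in\mathfrak{u}$, $Y\in\mathfrak{v}$. Apply $\Ad(v)$ to obtain $X = -\,v\!\cdot Y$. The key step is now the observation that $\Ad(V)$ preserves $\mathfrak{v}$: indeed $V$ is the connected analytic subgroup of $G$ with Lie algebra $\mathfrak{v}$, so it is generated by one-parameter subgroups $\exp(tZ)$ with $Z\in\mathfrak{v}$, and since $\mathfrak{v}$ is a subalgebra of $\mathfrak{g}$ one has
\[
\Ad(\exp tZ)\,W = e^{t\,\ad Z}W \in \mathfrak{v} \qquad (Z,W\in\mathfrak{v}).
\]
Consequently $v\!\cdot Y\in\mathfrak{v}$, which forces $X\in\mathfrak{u}\cap\mathfrak{v}=\{0\}$ by the direct sum assumption. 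Then $Y = -v^{-1}\!\cdot X = 0$ as well, so $\alpha'(u,v)$ is injective.

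Since injectivity is equivalent to bijectivity for an endomorphism-type map between spaces of equal finite dimension, $\alpha'(u,v)$ is a linear isomorphism at every point $(u,v)\in U\times V$, and the inverse function theorem then makes $\alpha$ a local diffeomorphism everywhere. I do not anticipate a genuine obstacle here; the only content beyond bookkeeping is the invariance $\Ad(V)\mathfrak{v}\subseteq\mathfrak{v}$, which is standard and already implicit in the statement that $V$ is the analytic subgroup attached to the subalgebra $\mathfrak{v}$.
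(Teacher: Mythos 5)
Your proof is correct and coincides with the paper's own argument: the paper's one-line proof likewise reduces regularity to injectivity of $\alpha'(u,v)$, writing $h^{-1}\cdot Y+Z=0\Leftrightarrow Y=-h\cdot Z\in\mathfrak{u}\cap\mathfrak{v}=\{0\}$, with the invariance $\Ad(V)\mathfrak{v}\subseteq\mathfrak{v}$ left implicit where you spell it out. Your explicit dimension count and the remark that $u$ plays no role are harmless additions, not a different route.
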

\begin{proof}
$h^{-1}\cdot Y + Z = 0 \Leftrightarrow Y = -h\cdot Z \in \mathfrak{u}\cap\mathfrak{v}=\left\{0\right\} \Leftrightarrow Y = Z = 0$.
\end{proof}

Assume that $G$ is a semisimple Lie group with Iwasawa decomposition $G=NAK$. Then $NA$ is a group, since $A$ normalizes $N$. We consider the following mappings:
\begin{itemize}
\item[(i)] $\sigma_1: N\times A\rightarrow NA$, $(n,a)\mapsto na$,
\item[(ii)] $\sigma_2: NA\times K\rightarrow NAK=G$, $(na,k)\mapsto nak$,
\item[(iii)] $\sigma_3: N\times A\times K \rightarrow NAK=G$, $(n,a,k)\mapsto nak$,
\item[(iv)] $\sigma_4: A\times N\rightarrow AN$, $(a,n)\mapsto an$,
\item[(v)] $\sigma_5: A\times N\times K\rightarrow AN \times K$, $(an,k)\mapsto ank$,
\item[(vi)] $\sigma_6: A\times N\times K\rightarrow ANK=G$, $(a,n,k)\mapsto ank$.
\end{itemize}

Then $\sigma_3 = \sigma_2 \circ (\sigma_1\times\id_K)$. It follows from the chain rule that
\begin{eqnarray*}
\sigma_3'(n,a,k):\mathfrak{n}\times\mathfrak{a}\times\mathfrak{k}\rightarrow\mathfrak{g}
\end{eqnarray*}
is given by
\begin{eqnarray*}
\sigma_3'(n,a,k)(X,Y,Z)=\Ad(k^{-1}) (\Ad(a^{-1})X+Y) +Z,
\end{eqnarray*}
where $X\in\lnn, Y\in\la, Z\in\lk$. Then
\begin{eqnarray}\label{replace}
\sigma_3'(n,a,k)(X,Y,Z)=k^{-1}a^{-1}\cdot X+ k^{-1}\cdot Y + Z.
\end{eqnarray}

Similarly, we obtain
\begin{eqnarray}\label{this yields}
\sigma_6'(a,n,k)(X,Y,Z)=k^{-1}n^{-1}\cdot X+ k^{-1}\cdot Y + Z,
\end{eqnarray}
for $(a,n,k)\in A\times N\times K$ and $(X,Y,Z)\in\la\times\lnn\times\lk$.

Fix $H\in\mathfrak{a}$, $H\neq0$ and let $\langle\cdot,\cdot\rangle$ denote the Killing form. We introduce the $C^{\infty}$-functions
\begin{itemize}
\item[(i)] $\phi_1: N\times A\times K\rightarrow\rr, \hspace{3mm} \phi_1(n,a,k)=\langle H(nak),H\rangle$,
\item[(ii)] $\phi_2: A\times N\times K \rightarrow\rr, \hspace{3mm} \phi_3(a,n,k)=\langle H(ank),H\rangle$.
\end{itemize}

We factorize $\phi_1$ in the following way: As above, let $\sigma_3:N\times A\times K\rightarrow G$ denote the map $(n,a,k)\mapsto nak$ and let $\lambda_0$ denote the linear functional $X\mapsto\langle X,H\rangle$ on $\mathfrak{a}$. Then $\phi_1=\lambda_0 \circ H \circ \sigma_3$. For the differential of $\phi_1$ we obtain from the chain rule
\begin{eqnarray*}
\phi_1'(n,a,k) = \lambda_0'(H(\sigma_3(n,a,k))) \circ H'(\sigma_3(n,a,k)) \circ \sigma_3'(n,a,k).
\end{eqnarray*}
Now replace $X$ in \eqref{derivative} by $k^{-1}a^{-1}\cdot X + k^{-1}\cdot Y + Z$ from \eqref{replace}. Then $\phi_1'(n,a,k)$ is a map
\begin{eqnarray*}
\phi_1'(n,a,k): T_{(n,a,k)}(N\times A\times K) = \mathfrak{n}\times\mathfrak{a}\times\mathfrak{k} \rightarrow T_{nak}G = \mathfrak{g} \rightarrow \mathfrak{a} \rightarrow \rr
\end{eqnarray*}
given by
\begin{eqnarray*}
(X,Y,Z) \mapsto \langle k^{-1}a^{-1}\cdot X + k^{-1}\cdot Y + Z, H^{n(nak)^{-1}}\rangle.
\end{eqnarray*}
We can now write $nak=\tilde{k}\tilde{a}\tilde{n}$ corresponding to the Iwasawa decomposition. Then
\begin{eqnarray}\label{phase function example}
\phi_1'(n,a,k)(X,Y,Z) &=& \langle k^{-1}a^{-1}\cdot X + k^{-1}\cdot Y + Z, H^{n(nak)^{-1}}\rangle \\
&=& \langle \tilde{n}\cdot k^{-1}\cdot a^{-1}\cdot X, H \rangle + \langle \tilde{n}\cdot k^{-1}\cdot Y,H \rangle + \langle \tilde{n}\cdot Z,H\rangle \nonumber.
\end{eqnarray}
For the derivatives of $\phi_2$, write $ank=\tilde{k}\tilde{a}\tilde{n}$. Then \eqref{this yields} yields
\begin{eqnarray}\label{phase function example 2}
\phi_2'(a,n,k)(X,Y,Z) \hspace{-1mm} = \hspace{-1mm} \langle \tilde{n}\cdot k^{-1}\cdot n^{-1}\cdot X, H \rangle \hspace{-1mm} + \hspace{-1mm} \langle \tilde{n}\cdot k^{-1}\cdot Y,H \rangle \hspace{-1mm} + \hspace{-1mm} \langle \tilde{n}\cdot Z,H\rangle.
\end{eqnarray}

\subsection{Critical sets and Hessian forms}\label{Critical sets and Hessian forms}
Let $\langle\cdot,\cdot\rangle$ denote the Killing form and let $H\in\la_+$ with $\|H\|=1$. We investigate the critical set of the phase function $\psi: \mathfrak{a}^*_+ \times N\times A \times K \rightarrow \rr$,
\begin{eqnarray}\label{psi11}
(\mu,n,a,k) \mapsto \mu(\log(a)) - \langle H(nak),H\rangle,
\end{eqnarray}
arising in Section \ref{Pseudodifferential analysis on symmetric spaces} for an oscillatory integral named $Ua$. We analyze the critical set of $\psi$ and write it down explicitly in the case when $X=G/K$ has rank one. Viewed as a function on $\mathfrak{a}^*_+ \times N\times A \times K/M$, the critical set will then consist of one single point. We then prove the non-degeneracy of the Hessian form of $\psi$ at this critical point.

Note that in order to determine the critical set of $\psi$, we have to solve
\begin{eqnarray}\label{have to solve}
d\psi(\mu,n,a,k)=0.
\end{eqnarray}
Written out, \eqref{have to solve} is equivalent to the equations
\begin{itemize}\label{coordinates}
\item[(a)] $\frac{\partial\psi}{\partial \mu}(\mu,n,a,k) = 0$,
\item[(b)] $\frac{\partial\psi}{\partial s}_{|s=0}(\mu,n\exp sX,a,k) = 0 \, \textnormal{ for all } X\in\mathfrak{n}$,
\item[(c)] $\frac{\partial\psi}{\partial t}_{|t=0}(\mu,n,a\exp tY,k) = 0 \, \textnormal{ for all } Y\in\mathfrak{a}$,
\item[(d)] $\frac{\partial\psi}{\partial \theta}_{|\theta=0}(\mu,n,a,k\exp\theta Z) = 0 \, \textnormal{ for all } Z\in\mathfrak{k}$.
\end{itemize}

\begin{lem}\label{p and n}
Let $\lnn^{\bot}$\index{$\lnn^{\bot}$, orthogonal complement of $\lnn$} denote the orthogonal complement (w.r.t. the Killing form) of $\lnn$ in $\g$. Then $\lnn^{\bot}\cap\lp=\la$.
\end{lem}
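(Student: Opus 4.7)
The plan is to prove the two inclusions separately, using the root space decomposition of $\g$ relative to $\la$ and the positivity of $B_\theta$.

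For the inclusion $\la \subseteq \lnn^{\bot}\cap\lp$: clearly $\la \subseteq \lp$, so it suffices to check orthogonality to $\lnn$. Since $\lnn = \sum_{\alpha \in \Sigma^+} \g_\alpha$ and the root space decomposition satisfies $B(\g_\alpha, \g_\beta) = 0$ whenever $\alpha + \beta \neq 0$, we get $B(\la, \g_\alpha) = B(\g_0, \g_\alpha) = 0$ for each $\alpha \in \Sigma^+$, hence $\la \subseteq \lnn^\bot$.

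For the reverse inclusion, let $X \in \lnn^\bot \cap \lp$. The plan is to first expand $X$ according to the root space decomposition, then use $\theta X = -X$ to constrain the expansion, and finally exploit $B$-orthogonality against $\lnn$ to kill the root-space components. Writing $\g_0 = \lm \oplus \la$ with $\lm = \g_0 \cap \lk$, I decompose
\begin{eqnarray*}
X = M_0 + H + \sum_{\alpha \in \Sigma^+} X_\alpha + \sum_{\alpha \in \Sigma^+} Y_{-\alpha},
\end{eqnarray*}
with $M_0 \in \lm$, $H \in \la$, $X_\alpha \in \g_\alpha$, $Y_{-\alpha} \in \g_{-\alpha}$. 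Applying $\theta$ (which fixes $\lm$, negates $\la$, and sends $\g_\alpha$ to $\g_{-\alpha}$) and equating $\theta X = -X$ componentwise in each root space forces $M_0 = 0$ and $Y_{-\alpha} = -\theta X_\alpha$ for each $\alpha \in \Sigma^+$, so
\begin{eqnarray*}
X = H + \sum_{\alpha \in \Sigma^+} (X_\alpha - \theta X_\alpha).
\end{eqnarray*}

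Now I impose $X \in \lnn^\bot$. For any $\beta \in \Sigma^+$ and $Z \in \g_\beta$, the root-space orthogonality relations reduce $B(X, Z)$ to the single contribution from the piece lying in $\g_{-\beta}$, namely
\begin{eqnarray*}
B(X, Z) = B(-\theta X_\beta, Z) = B_\theta(X_\beta, Z).
\end{eqnarray*}
Since this must vanish for every $Z \in \g_\beta$ and $B_\theta$ is positive definite on $\g$ (in particular nondegenerate when restricted to $\g_\beta \times \g_\beta$), I conclude $X_\beta = 0$ for every $\beta \in \Sigma^+$. Hence $X = H \in \la$, which finishes the proof.

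There is no serious obstacle here; the only point requiring care is keeping track of which components of $X$ survive after imposing the Cartan involution constraint, so that the pairing against $\g_\beta$ isolates exactly the $\g_{-\beta}$-component and lets the positive definiteness of $B_\theta$ finish the argument.
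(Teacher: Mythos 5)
Your proof is correct, but it takes a genuinely different route from the paper's. For the nontrivial inclusion the paper writes $Z\in\lnn^{\bot}\cap\lp$ as $Z=Z_{\la}+Z_{\mathfrak{q}}$, where $\mathfrak{q}$ is the orthocomplement of $\la$ in $\lp$, and then notes that $Z_{\mathfrak{q}}$ is orthogonal to $\lnn$ (by hypothesis together with $\la\bot\lnn$), to $\la$ (by construction), and to $\lk$ (since $\lp\bot\lk$ for the Killing form); since $\g=\lk\oplus\la\oplus\lnn$ as a vector space by the Iwasawa decomposition, $Z_{\mathfrak{q}}$ is orthogonal to all of $\g$ and vanishes by nondegeneracy of $B$. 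You instead expand $X$ over the root space decomposition, use the constraint $\theta X=-X$ to force the explicit shape $X=H+\sum_{\alpha\in\Sigma^+}\left(X_{\alpha}-\theta X_{\alpha}\right)$, and then pair against each $\g_{\beta}$, where the relation $B(\g_{\gamma},\g_{\beta})=0$ for $\gamma+\beta\neq0$ isolates the $\g_{-\beta}$-component and the positive definiteness of $B_{\theta}$ forces $X_{\beta}=0$; all the individual steps (the $\theta$-bookkeeping $\g_0=\mathfrak{m}\oplus\la$, $\theta\g_{\alpha}=\g_{-\alpha}$, and the identity $B(-\theta X_{\beta},Z)=B_{\theta}(X_{\beta},Z)$) check out. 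Both arguments are complete: the paper's is shorter and needs only the Iwasawa decomposition plus nondegeneracy of the Killing form, while yours is more explicit and produces as a by-product the standard root-vector description of elements of $\lp$ — essentially the same bookkeeping the paper later uses in its Hessian-form computations, where elements of $\mathfrak{m}^{\bot}$ are written as $\sum_{\alpha>0}(X_{\alpha}+\theta X_{\alpha})$.
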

\begin{proof}
Let $Z\in\lnn^{\bot}\cap\lp$. Write $Z=Z_{\la}+Z_{\mathfrak{q}}$ corresponding to the orthogonal decomposition $\g=\lk+\la+\mathfrak{q}$. For $Y\in\lnn$ we then have
\begin{eqnarray}
0=\langle Z,Y\rangle=\langle Z_{\la}+Z_{\mathfrak{q}},Y\rangle = \langle Z_{\mathfrak{q}}, Y\rangle,
\end{eqnarray}
since $\la\bot\lnn$. It follows that $Z_{\mathfrak{q}}\bot\g$, so $Z_{\mathfrak{q}}=0$, so $Z\in\la$. Conversely, if $Z\in\la$, then $Z\in\lp$ and $Z\bot\lnn$.
\end{proof}

\begin{lem}\label{as in lemma}
Let $X=G/K$ have rank one. If $\mu\neq 1$ or $kM\neq M$, then the phase function $\psi$ given in \eqref{psi11} has no critical points in $\left\{\mu\right\}\times A\times N\times\left\{k\right\}$.
\end{lem}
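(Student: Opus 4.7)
The plan is to use all four critical-point equations (a)--(d) written out above. The key observation is that (a) and (d) together pin down $a=e$ and the Iwasawa $N$-component $\tilde n$ of $nak$, after which the remaining conditions (b) and (c) become rigid conditions on $(k,\mu)$ alone.

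Condition (a) immediately gives $\log a=0$, hence $a=e$, since $\langle H(nak),H\rangle$ does not depend on $\mu$. For condition (d) I apply formula \eqref{phase function example} with $(X,Y)=(0,0)$: the derivative of $\phi_1$ in direction $Z\in\lk$ at $(n,a,k)$ is $\langle Z,\Ad(\tilde n^{-1})H\rangle$. Writing $\Ad(\tilde n^{-1})H = H+v$ with $v\in\lnn$ (which follows from Remark~\ref{N on A}(1), or equivalently from the observation that $(\ad X)^k H\in\lnn$ for $X\in\lnn$ and $k\geq 1$) and using $H\perp\lk$ for the Killing form, condition (d) becomes $\langle Z,v\rangle = 0$ for every $Z\in\lk$. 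Decomposing $v = \sum_{\alpha\in\Sigma^+} v_\alpha$ and noting that, for each positive root $\alpha$, the elements $X+\theta X\in\lk\cap(\g_\alpha+\g_{-\alpha})$ have $\g_{-\alpha}$-component $\theta X$ that ranges over all of $\g_{-\alpha}$ as $X$ ranges over $\g_\alpha$, the non-degeneracy of the Killing form on $\g_\alpha\times\g_{-\alpha}$ forces each $v_\alpha$ to vanish. Hence $v=0$ and $\tilde n = e$.

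With $a=e$ and $\tilde n=e$, conditions (b) and (c) simplify via \eqref{phase function example} to $\langle X,\Ad(k)H\rangle = 0$ for all $X\in\lnn$ and $\mu(Y) = \langle Y,\Ad(k)H\rangle$ for all $Y\in\la$. The first says $\Ad(k)H\in\lnn^\perp$; combined with $\Ad(k)H\in\lp$ and Lemma~\ref{p and n}, this forces $\Ad(k)H\in\la$. In rank one, $\la=\rr H$ and $\Ad(K)$ acts by Killing-isometries on $\lp$, so $\Ad(k)H = \pm H$, i.e.\ either $kM = M$ or $kM = wM$. The second condition then yields $\mu(H) = \langle H,\Ad(k)H\rangle = \pm 1$; the value $-1$ is incompatible with $\mu\in\la^*_+$, and $+1$ corresponds to $\mu=1$ in the normalization $\langle H,H\rangle=1$. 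In either case this contradicts the hypothesis $\mu\neq 1$ or $kM\neq M$, proving the claim.

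The main technical hurdle will be the root-space argument in the analysis of (d): one must combine the identity $\Ad(\tilde n^{-1})H - H\in\lnn$ with the non-degeneracy of the Killing form on each pair $\g_\alpha\times\g_{-\alpha}$ to conclude $v=0$. Once that is in hand, the rank-one geometry of $K$ acting on the Killing-unit sphere of $\lp$ (whose $\la$-intersection is exactly $\{\pm H\}$) does the rest.
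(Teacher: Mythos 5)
There is a genuine gap, and it lies in which derivatives you are allowed to set to zero. Lemma \ref{as in lemma} is a statement about the \emph{restriction} of $\psi$ to the slice $\left\{\mu\right\}\times A\times N\times\left\{k\right\}$, with $\mu$ and $k$ frozen; this is exactly how the paper's own proof reads it (``Suppose that $(\mu,n,a,k)$ is a critical point for $A\times N$''). Consequently only the vanishing of the $A$- and $N$-derivatives, i.e.\ conditions (b) and (c), may be assumed. Your first two steps use precisely the derivatives that are not assumed to vanish: you get $a=e$ from the $\mu$-derivative (condition (a)) and $\tilde n=e$ from the $\lk$-derivative (condition (d)), and the rest of your argument — the simplified conditions $\langle X,\Ad(k)H\rangle=0$ and $\mu(Y)=\langle Y,\Ad(k)H\rangle$ — depends on having already pinned $a=e$ and $\tilde n=e$. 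What you prove is therefore the weaker statement that a critical point of $\psi$ \emph{in all four variables} forces $\mu=1$ and $kM=M$ (essentially the content of Proposition \ref{critical points calculus}), not the fiberwise non-criticality asserted by the lemma, which is what one needs to kill the $(a,n)$-integral by non-stationary phase for fixed $(\mu,k)\neq(1,M)$.

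The argument that works from (b) and (c) alone is the paper's: write $nak=\tilde k\tilde a\tilde n$ and use \eqref{phase function example} to rewrite the two conditions as $\langle n\cdot X,\tilde k\cdot H\rangle=0$ for all $X\in\lnn$ and $\mu=\langle n\cdot H,\tilde k\cdot H\rangle$. Since $\Ad(n)\lnn=\lnn$, the first gives $\tilde k\cdot H\perp\lnn$, and as $\tilde k\cdot H\in\lp$, Lemma \ref{p and n} yields $\tilde k\cdot H\in\la$, hence $\tilde k\in M'$ — note it is the Iwasawa $K$-part $\tilde k$ of $nak$, not $k$ itself, that gets controlled at this stage. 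Then the second condition gives $0<\mu=\pm\langle n\cdot H,H\rangle=\pm1$ (because $n\cdot H-H\in\lnn\perp\la$), so $\mu=1$ and $\tilde k\in M$; finally the uniqueness of the Iwasawa decomposition applied to $nak=\tilde k\tilde a\tilde n$ with $\tilde k=m\in M$ forces $k\in M$, i.e.\ $kM=M$, contradicting the hypothesis. Your root-space computation for condition (d) is correct in itself (it is an alternative to Lemma \ref{H N K}), but it is not available here; as written, the proposal does not prove the lemma.
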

\begin{proof}
Suppose that $(\mu,n,a,k)$ is a critical point for $A\times N$. Write $nak=\tilde{k}\tilde{a}\tilde{n}$ corresponding to the Iwasawa decomposition. We rewrite the $A$-derivative given in \eqref{phase function example} as follows:
\begin{eqnarray*}
\phi_1'(n,a,k)(0,H,0) &=& \langle \tilde{n}k^{-1} \cdot H, H \rangle \\
&=& \langle \tilde{k}\tilde{a}\tilde{n}k^{-1} \cdot H,  \tilde{k} \cdot H \rangle \\
&=& \langle nakk^{-1} \cdot H, \tilde{k} \cdot H \rangle \\
&=& \langle  n \cdot H, \tilde{k} \cdot H \rangle.
\end{eqnarray*}
Similarly we find for $X\in\lnn$
\begin{eqnarray*}
\phi_1'(n,a,k)(X,0,0)=\langle  n \cdot X, \tilde{k} \cdot H \rangle.
\end{eqnarray*}
The assumption that $(\mu,n,a,k)$ is critical is then equivalent to the conditions
\begin{itemize}
\item[(a')] $\langle n\cdot H,\tilde{k}\cdot H\rangle = \mu$,
\item[(b')] $\langle n\cdot X,\tilde{k}\cdot H \rangle = 0 \,\,\, \forall\, X\in\lnn$.
\end{itemize}
It follows from (b') that $\tilde{k}\cdot H\bot\lnn$. But since also $\tilde{k}\cdot H\in\lp$, Lemma \ref{p and n} yields $\tilde{k}\cdot H\in\la$. Hence $\tilde{k}\in M'$. (In higher rank, the same argument applies for $H$ regular.) Now equation (a') yields
\begin{eqnarray*}
0 < \mu = \langle n\cdot H,\tilde{k}\cdot H\rangle = \pm \langle n\cdot H,H\rangle = \pm 1,
\end{eqnarray*}
since $n\cdot H - H \in \lnn$. It follows that $\mu=1$ and that $\tilde{k}=m\in M$. Finally, $nak=m\tilde{a}\tilde{n}$ yields (by uniqueness of the Iwasawa decomposition) that $k\in M$, and the lemma is proven.
\end{proof}

\subsubsection{Critical points}
For $H\in\la$, let $Z_N(H)$ denote the centralizer of $H$ in $N$\index{$Z_N(H)$, the centralizer of $H$ in $N$}. Recall that $\g=\lk+\lp$, where $\mathfrak{p}$ denotes the orthogonal complement (with respect to the Killing form) of $\mathfrak{k}$ in $\mathfrak{g}$.
\begin{lem}\label{H N K}
Let $H\in\la$, $n\in N$. Then
\begin{eqnarray*}
n\cdot H \in \lp \Longleftrightarrow n\in Z_N(H).
\end{eqnarray*}
\end{lem}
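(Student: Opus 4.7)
The reverse direction is trivial: if $n\in Z_N(H)$, then $n\cdot H = H\in\la\subseteq\lp$. So the work is entirely in the forward direction, which I would prove by reducing it to the identity $\lnn\cap\lp=\{0\}$.

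The first step is to show that $n\cdot H - H \in \lnn$ for every $n\in N$ and every $H\in\la$. Writing $n=\exp X$ with $X\in\lnn$, this follows from
\begin{eqnarray*}
n\cdot H - H \;=\; \sum_{k\geq 1}\frac{1}{k!}(\ad X)^k H,
\end{eqnarray*}
together with the observation that $[X,H]\in\lnn$ (because if $X=\sum_{\alpha\in\Sigma^+}X_\alpha$ with $X_\alpha\in\g_\alpha$, then $[X,H]=-\sum_{\alpha\in\Sigma^+}\alpha(H)X_\alpha\in\lnn$) and $\ad(\lnn)\lnn\subseteq\lnn$ (since $\lnn$ is a subalgebra). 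So each summand lies in $\lnn$. (This is also essentially Remark \ref{N on A}(1), minus the bijectivity that requires regularity of $H$.)

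The second step is the claim $\lnn\cap\lp=\{0\}$. The Cartan involution $\theta$ satisfies $\theta|_{\lp}=-\id$ and $\theta(\g_\alpha)=\g_{-\alpha}$, hence $\theta(\lnn)=\overline{\lnn}$. If $Y\in\lnn\cap\lp$, then $-Y=\theta Y\in\overline{\lnn}$, so $Y\in\lnn\cap\overline{\lnn}=\{0\}$ because the root space decomposition $\g=\g_0\oplus\sum_{\alpha\in\Sigma}\g_\alpha$ is direct.

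Putting these together: if $n\cdot H\in\lp$, then $n\cdot H - H\in\lp$ (since $H\in\la\subseteq\lp$), while simultaneously $n\cdot H - H\in\lnn$ by step one. Thus $n\cdot H - H\in\lnn\cap\lp=\{0\}$, giving $n\cdot H=H$, i.e. $n\in Z_N(H)$. The only subtlety I would want to double-check is the direction of signs/conventions (the assertion $n\cdot H-H\in\lnn$ rather than $\overline{\lnn}$), but this follows from the convention $\lnn=\sum_{\alpha\in\Sigma^+}\g_\alpha$ already fixed in the excerpt; no obstruction is expected.
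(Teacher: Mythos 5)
Your proof is correct, and its skeleton is the same as the paper's: both arguments rest on the two facts that $n\cdot H - H\in\lnn$ for all $n\in N$, $H\in\la$, and that $\lnn\cap\lp=\{0\}$. The differences are in how you reach and justify these. The paper first characterizes membership in $\lp$ by Killing-form orthogonality to $\lk$ (using $\lp\perp\lk$ and $\langle Z,H\rangle=0$ for $Z\in\lk$) and only then concludes $n\cdot H-H\in\lp\cap\lnn$; you bypass that detour by simply noting that $\lp$ is a subspace containing $H$, so $n\cdot H\in\lp$ already forces $n\cdot H-H\in\lp$ — a small but genuine streamlining. For the key identity $\lnn\cap\lp=\{0\}$, the paper invokes that elements of $\lp$ are $\ad$-semisimple while elements of $\lnn$ are $\ad$-nilpotent, whereas you argue via the Cartan involution: $\theta=-\id$ on $\lp$ and $\theta(\lnn)=\theta\bigl(\sum_{\alpha>0}\g_{\alpha}\bigr)=\sum_{\alpha>0}\g_{-\alpha}$, so any $Y\in\lnn\cap\lp$ lies in $\lnn\cap\theta(\lnn)=\{0\}$ by directness of the root space decomposition. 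Both justifications are standard; yours is slightly more self-contained, relying only on material already set up in the preliminaries, while the paper's is shorter once the semisimple/nilpotent dichotomy is granted. Your verification that $n\cdot H-H\in\lnn$ (via $[X,H]=-\sum_{\alpha>0}\alpha(H)X_{\alpha}$ and $\lnn$ being a subalgebra) is also correct, so there is no gap.
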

\begin{proof}
$n\cdot H\in\mathfrak{p}$ is satisfied if and only if $\langle Z, n\cdot H\rangle = 0 \,\,\forall\,Z\in\lk$. But $\mathfrak{a}\subseteq\mathfrak{p}$ yields $\langle Z,H\rangle = 0 \hspace{2mm} \forall Z\in\mathfrak{k}$ and since $n\in N$ we obtain $n\cdot H\in H+\mathfrak{n}$. Thus
\begin{eqnarray*}
0=\langle Z,n\cdot H\rangle \hspace{2mm} \forall Z\in\mathfrak{k} &\Longleftrightarrow& 0=\langle Z,n\cdot H - H\rangle \,\,\,\, \forall Z\in\mathfrak{k} \\
&\Longleftrightarrow& n\cdot H - H \in \mathfrak{p}\cap\mathfrak{n}.
\end{eqnarray*}
We may now use $\mathfrak{p}\cap\mathfrak{n} = \left\{0\right\}$, which follows from the fact that the elements of $\mathfrak{p}$ are semisimple, while the elements of $\mathfrak{n}$ are nilpotent. Thus $n\cdot H=H$, as desired.
\end{proof}

Assume that $(\mu,n,a,k)$ is a critical point in all variables. It follows from (a) that $\log(a) = 0$, that is
\begin{eqnarray}
a=e.
\end{eqnarray}
We use the notation of \eqref{phase function example} and Iwasawa decompose 
\begin{eqnarray}
nak=\tilde{k}\tilde{a}\tilde{n}.
\end{eqnarray}
Condition (d) yields
\begin{eqnarray}\label{condition}
0 = -\frac{\partial\psi}{\partial \theta}_{|\theta=0}(\mu,n,a,k\exp\theta Z) = \langle \tilde{n}\cdot Z,H\rangle = \langle Z,H^{\tilde{n}^{-1}}\rangle \,\,\,\,\,\,\,\,\, \forall Z\in\mathfrak{k}.
\end{eqnarray}
It follows from Lemma \ref{H N K} that $\tilde{n}\in N_H = Z_N(H)$, the centralizer of $H$ in $N$.

\begin{rem}
\begin{itemize}
\item[(1)] It is sufficient for \eqref{condition} to be satisfied only for all $Z\in\mathfrak{m}^{\bot}$, the orthogonal complement of $\mathfrak{m}$ in $\lk$. This can be seen as follows: If
$X = \tilde{n}\cdot H-H = X_{\mathfrak{p}} + X_{\mathfrak{m}} \in (\mathfrak{p}+\mathfrak{m})\cap\lnn$, then $2X_{\mathfrak{m}}=X+\theta(X)\in\mathfrak{m}\cap\mathfrak{m}^{\bot}=\left\{0\right\}$, so $X\in\mathfrak{p}\cap\lnn=\left\{0\right\}$, so $\tilde{n}\in N_H = Z_N(H)$. 
\item[(2)] However, given $\tilde{n}\in N$ and $Z\in\mathfrak{m}$, set $X=\tilde{n}^{-1}\cdot H - H\in\lnn$ and $Y=X+\theta(X)\in\mathfrak{m}^{\bot}$. Since $\mathfrak{m}\bot\la$ we have $2 \langle\tilde{n}\cdot Z,H\rangle = \langle Z,\tilde{n}^{-1}\cdot H - H\rangle + \langle Z,\tilde{n}^{-1}\cdot H - H\rangle = \langle Z,X\rangle + \langle Z,\theta(X)\rangle = \langle Z,Y\rangle = 0$, so \eqref{condition} holds for all $Z\in\mathfrak{m}$ and $\tilde{n}\in N$.
\end{itemize}
\end{rem}

Next, recall $a=e$ and also note that $\tilde{n}\cdot H=H$ is equivalent to $\tilde{n}^{-1}\cdot H=H$. We may then plug \eqref{phase function example} into equation (b) above and obtain the condition
\begin{eqnarray*}
0 = -\frac{\partial\psi}{\partial s}_{|s=0}(\mu,n\exp sX,a,k) = \langle \tilde{n}\cdot k^{-1}\cdot a^{-1}\cdot X,H \rangle = \langle X,k\cdot H \rangle \,\,\,\,\,\, \forall X\in\mathfrak{n}.
\end{eqnarray*}
It is immediate from Lemma \ref{p and n} that
\begin{lem}\label{follows from here 2}
Let $0\neq H\in\la$, $k\in K$. The following assertions are equivalent:
\begin{itemize}
\item[(i)] $\langle X,k\cdot H\rangle = 0 \,\,\,\,\,\, \forall X\in\mathfrak{n}$,
\item[(ii)] $k\cdot H \in \mathfrak{a}$.
\end{itemize}
\end{lem}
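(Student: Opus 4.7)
The plan is to obtain this lemma as an immediate corollary of Lemma~\ref{p and n}. The central observation is that $k \cdot H$ always lies in $\lp$: indeed, $H \in \la \subset \lp$, and from the bracket relation $[\lk,\lp]\subseteq\lp$ recalled in \eqref{K invariant} one sees that $\Ad(K)$ preserves $\lp$. So the argument reduces to recognising condition (i) as an orthogonality condition against $\lnn$ and then intersecting with $\lp$.

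Concretely, I would proceed as follows. Condition (i) says exactly that $k\cdot H \in \lnn^{\bot}$ with respect to the Killing form. Combined with $k\cdot H \in \lp$, this places $k\cdot H$ in $\lp \cap \lnn^{\bot}$, and Lemma~\ref{p and n} identifies this intersection as $\la$, which is precisely (ii). For the converse direction, if $k\cdot H \in \la$, then since $\la$ is orthogonal under the Killing form to every restricted root space $\g_{\alpha}$ with $\alpha \in \Sigma^{+}$ (because $\ad H' \circ \ad X_{\alpha}$ shifts the $\ad$-eigenspace decomposition and hence has trace zero for $H' \in \la$, $X_{\alpha} \in \g_{\alpha}$), one has $k\cdot H \in \lnn^{\bot}$, which is condition (i).

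I do not foresee any genuine obstacle: the lemma is stated as a direct corollary of Lemma~\ref{p and n} and the author himself flags the implication as immediate. The only ingredients beyond Lemma~\ref{p and n} are the $\Ad(K)$-invariance of $\lp$ and the standard orthogonality $\la \perp \lnn$, both of which are part of the root space decomposition machinery already established in Section~\ref{Preliminaries}. The hypothesis $H\neq 0$ is not actually needed for either implication; it will be relevant only in the application to the critical point analysis following the lemma, where one wants to conclude $k \in M'$ from $k\cdot H \in \la$.
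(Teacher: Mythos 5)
Your proof is correct and follows exactly the route the paper intends: the paper declares the lemma "immediate from Lemma \ref{p and n}", and your argument simply spells out that reduction ($k\cdot H\in\lp$ by $\Ad(K)$-invariance of $\lp$, condition (i) places it in $\lnn^{\bot}$, hence in $\lnn^{\bot}\cap\lp=\la$; the converse from $\la\perp\lnn$). Your side remark that $H\neq 0$ is not needed here but only in the subsequent refinement to $k\in M'$ is also accurate.
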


\subsubsection{Regular elements}
Let from now on $H\in\la_+$ be regular. Let $\lambda_H\in\la^*_{+}$ denote the linear functional on $\la$ given by $\lambda_0(X)=\langle X,H\rangle$ for $X\in\mathfrak{a}$ (Killing form). Then $\lambda_0\in\la^*_+$, the dual positive Weyl chamber. Also $H=H_{\lambda_0}$ in the notation of the Riesz representation (Section \ref{The Weyl group}). As before, we study the critical set of
\begin{eqnarray*}
\psi: \mathfrak{a}^*_+ \times N\times A \times K \rightarrow \rr, \hspace{3mm} (\mu,n,a,k) \mapsto \mu(\log(a)) - \langle H(nak),H_{\lambda_0}\rangle,
\end{eqnarray*}
Let $(\mu,n,a,k)$ be a critical point of $\psi$. We already know $a=e$. Lemma \ref{follows from here 2} states $k\cdot H\in\mathfrak{a}$. For regular elements we have the following refinement:
\begin{lem}\label{follows from here 3}
Let $0\neq H\in\la$, $k\in K$. The following assertions are equivalent:
\begin{itemize}
\item[(i)] $\langle X,k\cdot H\rangle = 0 \,\,\,\,\,\, \forall X\in\mathfrak{n}$,
\item[(ii)] $k=m'\in M'$, where $M'$ is the normalizer of $A$ in $K$.
\end{itemize}
\end{lem}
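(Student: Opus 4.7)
The plan is to leverage Lemma \ref{follows from here 2}, which already tells us that (i) is equivalent to $k\cdot H\in\la$. Thus the real content of this refinement is the upgrade from $k\cdot H\in\la$ (for a possibly non-regular element) to $k\in M'$, and the hypothesis that $H$ is regular is precisely what will make this upgrade work.

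The easy direction is (ii)$\Rightarrow$(i): if $k=m'\in M'$, then by definition $\Ad(m')\la=\la$, so $k\cdot H\in\la$ and Lemma \ref{follows from here 2} yields (i).

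For the converse, I would argue as follows. Assume (i); by Lemma \ref{follows from here 2}, $k\cdot H\in\la$. I will use that $H$ is regular to identify
\[
Z(H)\cap\lp = \la,
\]
where $Z(H)$ denotes the centralizer of $H$ in $\g$. This is a quick consequence of the root space decomposition: $X=X_0+\sum_{\alpha\in\Sigma}X_\alpha$ satisfies $[H,X]=\sum_\alpha \alpha(H)X_\alpha=0$, and since $\alpha(H)\neq 0$ for every root $\alpha$, we get $Z(H)=\g_0=\mathfrak{m}+\la$; intersecting with $\lp$ kills the $\mathfrak{m}$-part and leaves $\la$. Applying $\Ad(k)$ and using $\Ad(k)\lp=\lp$ (since $k\in K$), this yields
\[
\Ad(k)\la \;=\; \Ad(k)\bigl(Z(H)\cap\lp\bigr) \;=\; Z(k\cdot H)\cap\lp.
\]
On the other hand, because $\la$ is abelian and $k\cdot H\in\la$ by assumption, every element of $\la$ commutes with $k\cdot H$, so $\la\subseteq Z(k\cdot H)\cap\lp=\Ad(k)\la$. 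Since both spaces have the same dimension, this inclusion is an equality $\Ad(k)\la=\la$, i.e.\ $k\in N_K(\la)=M'$.

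The plan is thus essentially a dimension-count after the centralizer identification, and the only genuine step is the regularity-based computation $Z(H)\cap\lp=\la$; everything else is formal. The mild pitfall to avoid is confusing the centralizer in $\g$ with the centralizer in $\lp$ (or in $K$), and forgetting that the equivariance $\Ad(k)Z(H)=Z(\Ad(k)H)$ together with $\Ad(k)\lp=\lp$ is what lets the two identifications be transported along $k$.
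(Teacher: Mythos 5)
Your argument is correct, and it in fact supplies a proof where the paper offers none: Lemma \ref{follows from here 3} is stated without proof, the only hint at the intended reasoning being the parenthetical remark in the proof of Lemma \ref{as in lemma} that in higher rank ``the same argument applies for $H$ regular''. Your route --- reduce to $k\cdot H\in\la$ via Lemma \ref{follows from here 2}, use regularity and the root space decomposition to identify $Z(H)\cap\lp=\la$, transport this identity along $\Ad(k)$ (which is a Lie algebra automorphism preserving $\lp$, hence carries $Z(H)\cap\lp$ to $Z(k\cdot H)\cap\lp$), and then conclude $\Ad(k)\la=\la$ from the inclusion $\la\subseteq Z(k\cdot H)\cap\lp=\Ad(k)\la$ together with a dimension count --- is precisely the justification the paper leaves implicit, and every ingredient is available from Section \ref{Preliminaries} ($\Ad(K)\lp\subseteq\lp$, $\mathfrak{m}=\g_0\cap\lk$, maximality of $\la$ in $\lp$, and $N_K(\la)=M'$).

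Two minor points. First, as literally stated the lemma omits the word ``regular'', and you are right that the implication (i)$\Rightarrow$(ii) genuinely requires it: for singular $0\neq H$ the centralizer $Z_K(H)$ has Lie algebra strictly larger than $\mathfrak{m}$, so it contains elements $k\notin M'$ with $k\cdot H=H\in\la$, which satisfy (i); the hypothesis is supplied by the surrounding text (``For regular elements we have the following refinement''), so your reading is the intended one. Second, when you say that intersecting $\mathfrak{m}+\la$ with $\lp$ ``kills the $\mathfrak{m}$-part'', this rests on $\lk\cap\lp=\left\{0\right\}$ (or, alternatively, one gets $\g_0\cap\lp=\la$ directly from the maximality of $\la$ in $\lp$, since every element of $\g_0$ commutes with $\la$); either way the step is sound, but it deserves the one-line justification.
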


Since $H$ is regular, it follows that $k=m'\in M'$.

Next, we Iwasawa decompose $nak=\tilde{k}\tilde{a}\tilde{n}$. Then by the above observations we have $\tilde{n}\in  N_H$. But since $H$ is regular, Remark \ref{N on A} implies $\tilde{n}=e$. Using $a=e$ and $k=m'$ we then observe
\begin{eqnarray}
nm'=nak=\tilde{k}\tilde{a}\tilde{n}=\tilde{k}\tilde{a},
\end{eqnarray}
which implies (by uniqueness of the Iwasawa decomposition)
\begin{eqnarray}
n = \tilde{k}\tilde{a}(m')^{-1} = \tilde{k}{m'}^{-1}\tilde{\tilde{a}} \in N\cap KA = \left\{e\right\},
\end{eqnarray}
so $n=e$, $\tilde{k}=m'$ and $\tilde{a}=e$.

Condition (c) above and \eqref{phase function example} yield
\begin{eqnarray}\label{refer condition c}
0 = \frac{\partial\psi}{\partial t}_{|t=0}(\mu,n,a\exp tY,k) = \mu(Y) - \langle \tilde{n}\cdot k^{-1}\cdot Y,H \rangle.
\end{eqnarray}
Evaluating this at the critical point $(\mu,n,a,k)$, where $\tilde{n}=e$ and $k=m'$, we get
\begin{eqnarray}
\mu(Y) = \langle Y,m'\cdot H \rangle \,\,\, \forall Y\in\mathfrak{a}.
\end{eqnarray}
Recall that $H\in\la_+$ induces the linear function $\lambda_0(Y)=\langle Y,H\rangle$ ($Y\in\la$) on $\la$. It follows that $\mu\in\la^*_+$ is in the $W$-orbit of $\lambda_0\in\la^*_+$. Hence $k=m'=m\in M$ and $\mu=\lambda_0$. We summarize this as follows:

\begin{prop}\label{critical points calculus}
Let $H\in\la_+$ be regular. Write $\lambda_0(Y)=\langle Y,H\rangle$ ($Y\in\la$). The critical points $(\mu,n,a,k)$ of
\begin{eqnarray*}
\psi: \mathfrak{a}^*_+ \times N\times A \times K \rightarrow \rr, \hspace{3mm} (\mu,n,a,k) \mapsto \mu(\log(a)) - \langle H(nak),H_{\lambda_0}\rangle,
\end{eqnarray*}
are precisely
\begin{eqnarray}
(\mu,n,a,k) = (\lambda_0,e,e,m), \,\,\,\,\,\, m\in M.
\end{eqnarray}
On the quotient $\mathfrak{a}^*_+\times N\times A\times K/M$, the phase function $\psi$ has exactly one critical point, namely $(\mu,n,a,k) = (\lambda_0,e,e,M)$.
\end{prop}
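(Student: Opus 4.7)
The plan is to systematically exploit the four critical point conditions (a)--(d) listed right before the proposition, using the derivative formula \eqref{phase function example} together with the auxiliary lemmas already established (Lemma \ref{p and n}, Lemma \ref{H N K}, Lemma \ref{follows from here 3}, and Remark \ref{N on A}). Since the computation mirrors the argument already sketched, the task is to collect the pieces and handle the regularity of $H$ carefully.

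First I would dispose of condition (a): differentiating $\psi$ in $\mu$ gives $\log(a) = 0$, hence $a = e$. Next, write $nak = \tilde k \tilde a \tilde n$ in Iwasawa form. Condition (d) in the direction of $Z\in\lk$, via \eqref{phase function example}, reads $\langle Z, H^{\tilde n^{-1}}\rangle = 0$ for all $Z\in\lk$; equivalently $\tilde n^{-1}\cdot H - H \in \lp$. Lemma \ref{H N K} then forces $\tilde n \in Z_N(H)$, and since $H\in\la^+$ is regular, Remark \ref{N on A}(3) gives $\tilde n = e$.

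Second, I would use condition (b): for $X\in\lnn$, the expression in \eqref{phase function example} evaluated at $(n,e,k)$ becomes $\langle X, k\cdot H\rangle$. Vanishing for all $X\in\lnn$ means $k\cdot H\in\lnn^\perp\cap\lp=\la$ by Lemma \ref{p and n}, so $k$ lies in $M'$ by Lemma \ref{follows from here 3}. Write $k = m'$. Together with $a=e$ and $\tilde n = e$, the identity $n m' = nak = \tilde k \tilde a$ combined with uniqueness of the Iwasawa decomposition (after commuting $m'$ past $\tilde a$ using that $M'$ normalizes $A$) forces $n \in N\cap KA = \{e\}$, so $n = e$, $\tilde a = e$, and $\tilde k = m'$.

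Finally, condition (c) as written in \eqref{refer condition c} evaluated at the critical point collapses (since $\tilde n = e$ and $k = m'$) to $\mu(Y) = \langle Y, m'\cdot H\rangle$ for all $Y\in\la$, i.e.\ $\mu$ is the element of $\la^*$ dual to $m'\cdot H$. Since $\mu\in\la^*_+$ and $\lambda_0\in\la^*_+$, and the Weyl group $W = M'/M$ acts simply transitively on the Weyl chambers of $\la$, the class $m'M$ must be trivial; hence $m' = m\in M$ and $\mu = \lambda_0$. Conversely, every tuple $(\lambda_0,e,e,m)$ with $m\in M$ trivially satisfies (a)--(d), which yields the stated description of the critical set; passing to $K/M$ collapses the $M$-family to a single critical point. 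The only step that requires real attention is the regularity hypothesis: it is used twice (to pass from $\tilde n \in Z_N(H)$ to $\tilde n = e$, and implicitly in Lemma \ref{follows from here 3} to conclude $k\in M'$ rather than merely $k\cdot H\in\la$), so I would be careful to flag these two places rather than let them pass unnoticed.
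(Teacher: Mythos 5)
Your proof is correct and follows essentially the same route as the paper: conditions (a)--(d) combined with \eqref{phase function example}, Lemma \ref{H N K}, Lemma \ref{p and n}, Lemma \ref{follows from here 3}, Remark \ref{N on A} and Iwasawa uniqueness, with the Weyl-group argument forcing $m'\in M$ and $\mu=\lambda_0$. The only differences are cosmetic (you treat condition (d) before (b), and you make explicit the two places where regularity of $H$ is used, which the paper also relies on), so no changes are needed.
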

\begin{proof}
We have seen that each critical point has this form. In the $K$-variable, $\psi$ is $M$-invariant, since $H:KAN\rightarrow\la$ is invariant. The proposition follows.
\end{proof}

\subsubsection{The Hessian form}\label{The Hessian form}
Let $X$ have rank one. Then $\mathfrak{a}=\rr H$, where $H\in\la_{+}$ is the unique vector such that $\|H\|=1$ (the norm on $\la$ induced by the Killing form). Let $\lambda_0\in\la^*_{+}$ denote the linear functional on $\la$ given by $\lambda_0(X)=\langle X,H\rangle$ for $X\in\mathfrak{a}$ (Killing form). Then $\mathfrak{a}^* = \rr \lambda_0$. Then $\lambda_0\in\la^*_+$, the dual positive Weyl chamber. Also $H=H_{\lambda_0}$.

We compute the Hessian form of $\psi$ for the rank one case. First, we note that the second order derivatives of $\psi$ are clear if they contain at least one derivative in direction $\mu$. We will now also compute the Hessian matrix of the $C^{\infty}$-function
\begin{eqnarray}
\phi_1: N\times A\times K\rightarrow\rr, \hspace{3mm} \phi_1(n,a,k)=\langle H(nak),H\rangle
\end{eqnarray}
at the points $(e,e,m)\in N\times A\times K$ and conclude that as a function on the quotient $\mathfrak{a}^*_+\times N\times\rr\times K/M$ the phase function $\psi$ has a non-degenerate Hessian form at the critical point $(\lambda_0,e,e,M)$. Note that under $\la\cong\rr$ we identify $\lambda_0$ with $1\in\rr$.

Recall that $H:KAN\rightarrow\la$ is a smooth mapping and hence $\phi_1$ is smooth as well. Derivatives of $\phi_1$ are given by
\begin{eqnarray}
\phi_1'(n,a,k)(X,Y,Z) \hspace{-1mm} = \hspace{-1mm} \langle \tilde{n}\cdot k^{-1}\cdot a^{-1}\cdot X, H \rangle \hspace{-1mm} + \hspace{-1mm} \langle \tilde{n}\cdot k^{-1}\cdot Y,H \rangle \hspace{-1mm} + \hspace{-1mm} \langle \tilde{n}\cdot Z,H\rangle,
\end{eqnarray}
where $(X,Y,Z)\in\mathfrak{n}\times\mathfrak{a}\times\mathfrak{k}$ and where $nak=\tilde{k}\tilde{a}\tilde{n}$ is written corresponding to the Iwasawa decomposition.

The Hessian form is bilinear, hence we must prove its non-degenerateness only with respect to a certain basis, which we will later construct. We will now successively fill up the following $3\times 3$-matrix of question marks, where each row and each column corresponds to the Lie algebra direction in which we differentiate:
\begin{eqnarray}
\begin{bmatrix} \hspace{1mm} & \mu & n & a_t & k \\ \mu & 0 & 0 & 1 & 0 \\ n & 0 & ? & ? & ? \\ a_t & 1 & ? & ? & ? \\ k & 0 & ? & ? & ? \end{bmatrix}
\end{eqnarray}

Because of symmetry we only have to consider the following $6$ cases:\\
(1) $X,X'\in\mathfrak{n}$. Then
\begin{eqnarray*}
XX'(\langle H(nak),H\rangle)|_{n=e,a=e,k=m} \\
&\hspace{-90mm}=& \hspace{-45mm} \frac{d}{dt}|_{t=0}\langle n(n\exp(tX)ak)k^{-1}a^{-1}\cdot X',H\rangle|_{n=e,a=e,k=m} \\
&\hspace{-90mm}=& \hspace{-45mm} \frac{d}{dt}|_{t=0} \hspace{1mm} 0 = 0,
\end{eqnarray*}
since at the critical points we have $n=e$, $k=m\in M$ and $a=e$, so the left vector in the Killing form is an element of $\mathfrak{n}$ for each $t$ in a neighborhood of $0\in\rr$ and $\mathfrak{n}\,\bot\,\mathfrak{a}$ with respect to the Killing form.\\
(2) $X\in\mathfrak{n}$, $Y\in\mathfrak{a}$. Then
\begin{eqnarray*}
\frac{d}{dt}|_{t=0}\langle n(na\exp(tY)k)k^{-1}a^{-1}\cdot X,H\rangle|_{n=e,a=e,k=m} = 0
\end{eqnarray*}
as above.\\
(3) $Y,Y'\in\mathfrak{a}$. Then
\begin{eqnarray*}
\frac{d}{dt}|_{t=0}\langle n(na\exp(tY')k)\cdot Y,H  \rangle|_{n=e,a=e,k=m} = \frac{d}{dt}|_{t=0}\langle Y,H\rangle = 0.
\end{eqnarray*}
(4) $Y\in\mathfrak{a}$, $Z\in\mathfrak{k}$. Then
\begin{eqnarray*}
\frac{d}{dt}|_{t=0}\langle n(na\exp(tY)k)\cdot Z,H\rangle|_{n=e,a=e,k=m} = 0,
\end{eqnarray*}
since the left vector in the Killing form is an element of $\mathfrak{k}$ and $\mathfrak{k}\bot\mathfrak{a}$ with respect to the Killing form (recall that $M$ and $A$ commute elementwise).\\
(5) $Z,Z'\in\mathfrak{k}$. Then
\begin{eqnarray*}
\frac{d}{dt}|_{t=0}\langle n(nak\exp(tZ'))\cdot Z,H\rangle|_{n=e,a=e,k=m} = 0,
\end{eqnarray*}
since $\mathfrak{k}\bot\mathfrak{a}$.\\
(6) $X\in\mathfrak{n}$, $Z\in\mathfrak{k}$. Then
\begin{eqnarray}\label{plug 1}
\frac{d}{dt}|_{t=0}\langle n(nak\exp(tZ))\exp(-tZ)k^{-1}a^{-1}\cdot X,H\rangle|_{n=e,a=e,k=m} \nonumber \\
&\hspace{-19cm}=& \hspace{-9.4cm} \frac{d}{dt}|_{t=0}\langle \exp(-tZ)m^{-1}\cdot X,H\rangle \nonumber \\
&\hspace{-19cm}=& \hspace{-9.4cm} \frac{d}{dt}|_{t=0}\langle\widetilde{X},\exp(tZ)\cdot H\rangle \hspace{2mm}(\widetilde{X}=m^{-1}\cdot X\in\mathfrak{n}) \nonumber \\
&\hspace{-19cm}=& \hspace{-9.4cm} \langle \widetilde{X},[Z,H]\rangle,
\end{eqnarray}
since $M$ normalizes $N$. This vanishes for $Z\in\mathfrak{m}$, since then $[Z,H]=0$.

We now analyse the last expression with respect to the transversal direction $\mathfrak{m}^{\bot}$. If $\alpha>0$ is a positive root, we find vectors $X_{\alpha}\in\mathfrak{g}_{\alpha}$ such that
\begin{eqnarray}\label{plug 2}
Z=\sum_{\alpha>0}\left( X_{\alpha} + \theta X_{\alpha}\right).
\end{eqnarray}
Plugging \eqref{plug 2} into the commutator-bracket of $\g$ we obtain
\begin{eqnarray}\label{plug 3}
[Z,H] &=& \left[\sum_{\alpha>0}\left( X_{\alpha} + \theta X_{\alpha}\right),H\right] \nonumber \\
&=& \sum_{\alpha>0} -\alpha(H)X_{\alpha} + \alpha(H)\theta X_{\alpha} \nonumber \\
&=& \sum_{\alpha>0} \alpha(H)\left( \theta X_{\alpha} - X_{\alpha} \right) \in \mathfrak{p}.
\end{eqnarray}
Next, we write $\widetilde{X}\in\mathfrak{n}$ as a sum
\begin{eqnarray}\label{plug 4}
\widetilde{X} = \sum_{\alpha>0}\widetilde{X}_{\alpha} \hspace{4mm} \left(\widetilde{X}_{\alpha}\in\mathfrak{g}_{\alpha}\right).
\end{eqnarray}
Plugging \eqref{plug 3} and \eqref{plug 4} into the Killing form \eqref{plug 1} we obtain
\begin{eqnarray}
\langle \widetilde{X},[Z,H]\rangle &=& \left\langle \sum_{\alpha>0} \alpha(H)\left( \theta X_{\alpha} - X_{\alpha} \right) , \sum_{\beta>0}\widetilde{X}_{\beta} \right\rangle \\
&=& \sum_{\alpha>0}\alpha(H)\left\langle \theta X_{\alpha} , \widetilde{X}_{\alpha} \right\rangle,
\end{eqnarray}
since $\langle \mathfrak{g}_{\alpha},\mathfrak{g}_{\beta}\rangle \neq 0 \Leftrightarrow \alpha+\beta=0$ and since $\theta X_{\alpha}\in\mathfrak{g}_{-\alpha}$.

Now let $\left\{X_1,...,X_s\right\}$ be a basis for $\mathfrak{n}$ consisting of root vectors such that $\left\langle X_j , \theta X_i \right\rangle =\delta_{ij}$. Then
\begin{eqnarray}
\left\{X_1+\theta X_1,...,X_s+\theta X_s\right\}
\end{eqnarray}
is a basis of $\mathfrak{m}^{\bot}$. Hence $\widetilde{X}$ and $Z\in\mathfrak{m}^{\bot}$ are linear combinations
\begin{eqnarray}
\widetilde{X} = \sum_j a_j X_j, \hspace{9mm} Z = \sum_j b_j(X_j+\theta X_j).
\end{eqnarray}
It follows that
\begin{eqnarray}
\langle \widetilde{X},[Z,H]\rangle = \sum_j \alpha_j(H) \, b_j \, a_j,
\end{eqnarray}
where $\alpha_j=\alpha$ if $X_j\in\mathfrak{g}_{\alpha}$. Hence in this basis, for $\lnn\times\mathfrak{m}^{\bot}$ the second derivatives $\langle \widetilde{X},[Z,H]\rangle$ at the critical points are given by the invertible diagonal matrix
\begin{eqnarray}
Q_0 := \begin{pmatrix} \alpha_1(H) & \hspace{1mm} & \hspace{1mm} \\ \hspace{1mm} & \ddots & \hspace{1mm} \\ \hspace{1mm} & \hspace{1mm} & \alpha_s(H) \end{pmatrix}.
\end{eqnarray}
Finally, with respect to this basis, the second derivatives of $\psi$ are
\begin{eqnarray}\label{Hessian form}
Q := \begin{bmatrix} \hspace{1mm} & s & \mathfrak{n} & t & \mathfrak{m} & \mathfrak{m}^{\bot} \\ s & 0 & 0 & 1 & 0 & 0\\ \mathfrak{n} & 0 & 0 & 0 & 0 & -Q_0 \\ t & 1 & 0 & 0 & 0 & 0 \\ \mathfrak{m} & 0 & 0 & 0 & 0 & 0 \\ \mathfrak{m}^{\bot} & 0 & -Q_0 & 0 & 0 & 0 \end{bmatrix}.
\end{eqnarray}
We drop the $\mathfrak{m}$-rows and columns, which describe the stable direction. 

\begin{thm}
The phase function $\psi$ has a non-degenerate Hessian form at its critical point $(\lambda_0,e,e,M)$.
\end{thm}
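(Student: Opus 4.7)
The plan is to read off the non-degeneracy directly from the block form of $Q$ in \eqref{Hessian form}. After dropping the $\mathfrak{m}$-block (which, as the author remarks, corresponds to the stabilizer direction that is killed by passing to $K/M$), the remaining matrix is a block anti-diagonal form in the four blocks $s$, $t$, $\mathfrak{n}$, and $\mathfrak{m}^\bot$. The $(s,t)$-block pairs $s$ with $t$ via the entry $1$, and the $(\mathfrak{n},\mathfrak{m}^\bot)$-block pairs $\mathfrak{n}$ with $\mathfrak{m}^\bot$ via $-Q_0$; there are no mixed couplings between the $\mu$-part and the root-space part.

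First I would reorder rows and columns so that the $s$-row is adjacent to the $t$-row, and each $\mathfrak{n}$-row is adjacent to its matching $\mathfrak{m}^\bot$-row. This makes $Q$ block diagonal with blocks
\begin{eqnarray*}
\begin{pmatrix} 0 & 1 \\ 1 & 0 \end{pmatrix} \hspace{5mm} \textnormal{ and } \hspace{5mm} \begin{pmatrix} 0 & -Q_0 \\ -Q_0^{\textnormal{tr}} & 0 \end{pmatrix},
\end{eqnarray*}
so that $\det Q = (-1) \cdot (-1)^s \det(Q_0)^2$ up to a sign from the reordering. The first block has determinant $-1$, hence is invertible.

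Second, I would argue that $\det(Q_0) \neq 0$. By construction $Q_0$ is the diagonal matrix with entries $\alpha_j(H)$, where $\alpha_j$ runs through the positive roots (counted with multiplicity given by the chosen basis of $\mathfrak{n}$) and $H\in\la_+$ is regular. Since $H$ lies in the positive Weyl chamber, $\alpha_j(H)>0$ for all $j$, so $\det(Q_0)=\prod_j \alpha_j(H) > 0$.

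Combining the two factorizations yields $\det Q \neq 0$, so $Q$ is non-degenerate. The main step, and essentially the only substantive one, is the observation that the mixed $(\mathfrak{n},\mathfrak{m}^\bot)$ entries $\langle \widetilde{X},[Z,H]\rangle$ computed in \eqref{plug 1}--\eqref{Hessian form} assemble (in the root-space basis) into a genuinely diagonal matrix $Q_0$ with no zero on the diagonal; this is guaranteed precisely by regularity of $H$, so the non-degeneracy of the Hessian is really the analytic manifestation of $H\in\la_+$ being regular.
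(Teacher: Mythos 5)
Your proposal is correct and follows essentially the same route as the paper: the paper's argument is precisely the preceding computation of the matrix $Q$ in \eqref{Hessian form}, after which non-degeneracy is read off from the pairings $s\leftrightarrow t$ (entry $1$) and $\mathfrak{n}\leftrightarrow\mathfrak{m}^{\bot}$ (the invertible diagonal block $Q_0$ with entries $\alpha_j(H)>0$), with the $\mathfrak{m}$-directions dropped. Your explicit reordering and determinant factorization just spells out the linear algebra the paper leaves implicit.
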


\subsubsection{Another phase function}
Let $X=G/K$ have rank one and as usual, denote by $H\in\la^+$ the unique unit vector. We also need to determine the critical points of $\psi_t: \rr^+\times A\times N\times K\rightarrow\rr$,
\begin{eqnarray*}
(\mu,n,a,k) \mapsto t(\mu-1) - \langle\log(a),H\rangle - \mu H(a^{-1}n^{-1}k).
\end{eqnarray*}
First, $d\psi_t(\mu,n,a,k)=0$ is equivalent to
\begin{itemize}
\item[(a)] $\frac{\partial\psi_t}{\partial \mu}(\mu,n,a,k) = 0$,
\item[(b)] $\frac{\partial\psi_t}{\partial t}_{|t=0}(\mu,n,a\exp tY,k) = 0 \, \textnormal{ for all } Y\in\mathfrak{a}$,
\item[(c)] $\frac{\partial\psi_t}{\partial s}_{|s=0}(\mu,n\exp sX,a,k) = 0 \, \textnormal{ for all } X\in\mathfrak{n}$,
\item[(d)] $\frac{\partial\psi_t}{\partial \theta}_{|\theta=0}(\mu,n,a,k\exp\theta Z) = 0 \, \textnormal{ for all } Z\in\mathfrak{k}$.
\end{itemize}
We may first consider the mapping $\phi_2: A\times N\times K \rightarrow\rr$,
\begin{eqnarray*}
\phi_2(a,n,k)=\langle H(ank),H\rangle.
\end{eqnarray*}

Given $(X,Y,Z)\in\la\times\lnn\times\lk$, the differential of $\phi_2$ at $(a,n,k)$ is (cf. Sec. \ref{Iwasawa derivatives})
\begin{eqnarray}\label{and}
\phi_2'(a,n,k)(X,Y,Z) \hspace{-1mm} = \hspace{-1mm} \langle \tilde{n}\cdot k^{-1}\cdot n^{-1}\cdot X, H \rangle \hspace{-1mm} + \hspace{-1mm} \langle \tilde{n}\cdot k^{-1}\cdot Y,H \rangle \hspace{-1mm} + \hspace{-1mm} \langle \tilde{n}\cdot Z,H\rangle.
\end{eqnarray}

Assume that $(\mu,n,a,k)$ is a critical point of $\phi_2$ and Iwasawa decompose $ank=\tilde{k}\tilde{a}\tilde{n}$. Then by \eqref{and}
\begin{eqnarray*}
\langle \tilde{n}\cdot Z,H\rangle = 0 \,\,\,\,\, \textnormal{ for all } Z\in\lk.
\end{eqnarray*}
It follows follows from Lemma \ref{H N K} that $\tilde{n}\in N_H$, where $N_H$ denotes the centralizer of $H$ in $N$. Since $G/K$ has rank one this yields $\tilde{n}=e$. Again by \eqref{and} we have that
\begin{eqnarray*}
\langle \tilde{n}\cdot k^{-1}\cdot Y,H \rangle = 0  \,\,\,\,\, \textnormal{ for all } Y\in\lnn.
\end{eqnarray*}
It follows from Lemma \ref{p and n} that $k=m'\in M'$, where $M'$ is the normalizer of $A$ in $K$. Then
\begin{eqnarray*}
anm' = ank = \tilde{k}\tilde{a}\tilde{n} = \tilde{k}\tilde{a},
\end{eqnarray*}
and by uniqueness of the Iwasawa decomposition this implies
\begin{eqnarray*}
\tilde{k} = anm'\tilde{a}^{-1} \,\, \Longrightarrow \,\, \tilde{k}=m'.
\end{eqnarray*}
Again by uniqueness of the Iwasawa decomposition we find
\begin{eqnarray*}
anm'=m'\tilde{a} \,\, \Longrightarrow \,\,  n=e.
\end{eqnarray*}

Now assume that $(\mu,n,a,k)$ is a critical point of $\psi_t$. Since the first two summands in the definition of $\psi_t$ are independent of $k$ and $n$, it follows that for the critical point of $\psi_t$ we have $n=e$ and $k=m'$ as well. Then by the assumption, we have for the derivatives of $\psi_t$ with respect to $a$ and $\mu$ (given by equations (a) and (b) above)
\begin{itemize}
\item[(i)] $\frac{\partial\psi_t}{\partial\mu} = t - \langle H(a^{-1}n^{-1}k),H\rangle = 0$,
\item[(ii)] $\psi_a = -1 + \mu\cdot\langle m'^{-1}\cdot X,H  \rangle = 0$.
\end{itemize}

Recall that we identify the unit vector $H\in\la_+$ with the real number $1\in\rr^+$. Condition (ii) yields
\begin{eqnarray*}
0 < 1/\mu = \langle H,m'\cdot H\rangle = \pm 1,
\end{eqnarray*}
since $M'$ is acting by orthogonal transformations on $\la$. It follows from $\mu>0$ that $\mu=1$ and $m'=m\in M$, where $M$ is the centralizer of $A$ in $K$. Evaluating (i) at the critical point we obtain
\begin{eqnarray*}
t - \langle H(a^{-1}m),H\rangle = 0 \,\, \Longrightarrow \,\, \log(a) = -t.
\end{eqnarray*}

Summarizing we have proven:
\begin{prop}
The critical points $(\mu,n,a,k)$ of $\psi_t$ are precisely
\begin{eqnarray*}
(\mu,n,a,k) = (1,e,a_{-t},m), \,\,\,\,\,\, m\in M.
\end{eqnarray*}
On the quotient $\rr^+\times N\times A\times K/M$, the phase function $\psi_t$ has exactly one critical point, namely $(\mu,n,a,k) = (1,e,a_{-t},M)$.
\end{prop}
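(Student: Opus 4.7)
The plan is to mirror the critical-point analyses already performed in this subsection for the phase functions $\psi$ and $\phi_2$, applying them to $\psi_t$ equation by equation. I would first write the condition $d\psi_t = 0$ as the four partial-derivative equations in $\mu$, $a$, $n$, $k$ listed just above the proposition. The key observation is that the $n$- and $k$-derivative equations only involve the summand $-\mu H(a^{-1}n^{-1}k)$, since the first two terms of $\psi_t$ are independent of $n$ and $k$. Because $\mu > 0$, they reduce exactly to the vanishing of the $(n,k)$-derivatives of the auxiliary function $\phi_3(a,n,k) := \langle H(a^{-1}n^{-1}k), H\rangle$.

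To solve these two equations I would either differentiate $\phi_3$ directly using the formulas of Section \ref{Iwasawa derivatives}, or, more economically, reparameterize via the diffeomorphism $(a,n) \mapsto (a^{-1}, n^{-1})$ of $A \times N$, which converts the problem into the $(n,k)$-critical-point analysis of $\phi_2(a,n,k) = \langle H(ank), H\rangle$. That analysis, carried out in the preceding paragraphs, invokes equation \eqref{and}, Lemma \ref{H N K}, Lemma \ref{p and n}, the rank-one regularity of every nonzero $H \in \la$ (so that $Z_N(H) = \{e\}$ by Remark \ref{N on A}(3)), and the uniqueness of the Iwasawa decomposition to conclude $n = e$, $k = m' \in M'$, and that the Iwasawa decomposition of $a^{-1}n^{-1}k$ has $\tilde n = e$ and $\tilde k = m'$.

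Substituting $n = e$ and $k = m'$ into the remaining two equations, the $\mu$-derivative equation becomes $t = \langle H(a^{-1}m'), H\rangle$, while the $a$-derivative equation becomes $1 = \mu \cdot \langle H, m' \cdot H\rangle$. In rank one the nontrivial Weyl element acts on $\la$ as $-\id$, so $m' \cdot H = \pm H$; positivity of $\mu$ forces the $+$ sign, hence $m' = m \in M$ and $\mu = 1$. With $m' \in M$ we have $\Ad(m')$ acting trivially on $\la$, so $H(a^{-1}m') = -\log a$ and the $\mu$-equation reduces (using $\|H\| = 1$) to $t = -\langle \log a, H\rangle$, whence $a = a_{-t}$. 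Assembling the pieces gives the critical set $\{(1, e, a_{-t}, m) : m \in M\}$.

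The descent to the quotient $\rr^+ \times N \times A \times K/M$ is then immediate from the right-$M$-invariance of the Iwasawa projection $H : KAN \to \la$ in its $K$-variable, which makes $\psi_t$ itself right-$M$-invariant in $k$ and collapses the $M$-family of critical points to the single point $(1, e, a_{-t}, M)$. The main technical obstacle is the $(n,k)$-reduction to the $\phi_2$-analysis, where the rank-one hypothesis enters decisively through the regularity of every nonzero $H \in \la$; this is what forces $\tilde n = e$ via $Z_N(H) = \{e\}$ and, combined with Lemma \ref{p and n}, forces $\tilde k \in M'$.
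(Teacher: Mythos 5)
Your proposal is correct and follows essentially the same route as the paper: reduce the $(n,k)$-equations (since $\mu>0$ and the first two summands are independent of $n,k$) to the $\phi_2$-critical-point analysis already carried out via \eqref{and}, Lemma \ref{H N K}, Lemma \ref{p and n} and uniqueness of the Iwasawa decomposition, then solve the $\mu$- and $a$-equations to get $\mu=1$, $m'=m\in M$, $a=a_{-t}$, with the quotient statement following from right-$M$-invariance of $H$. Your explicit use of the diffeomorphism $(a,n)\mapsto(a^{-1},n^{-1})$ merely makes precise a transfer step the paper leaves implicit.
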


The Hessian matrix of $\psi_t$ at the critical points is then given by
\begin{eqnarray}\label{Hessian t}
\begin{bmatrix} \hspace{1mm} & \mu & \mathfrak{n} & \la & \mathfrak{m} & \mathfrak{m}^{\bot} \\ \mu & 0 & 0 & 1 & 0 & 0\\ \mathfrak{n} & 0 & 0 & 0 & 0 & -Q_0 \\ \la & 1 & 0 & 0 & 0 & 0 \\ \mathfrak{m} & 0 & 0 & 0 & 0 & 0 \\ \mathfrak{m}^{\bot} & 0 & -Q_0 & 0 & 0 & f(t) \end{bmatrix},
\end{eqnarray}
where $Q_0$ is as in \eqref{Hessian form} and all other computations are exactly as in Subsection \emph{The Hessian form} of this ongoing section, and where $f(t)$ is the matrix
\begin{eqnarray}\label{f(t)}
f(t) := \frac{\partial^2}{\partial\theta_1\partial\theta_2} \langle a_{-t}\cdot o,k_{\theta_1 Z,\theta_2 Z'}M\rangle_{|\theta_1=\theta_2=0},
\end{eqnarray}
where $Z,Z'\in\mathfrak{m}^{\bot}$ and $k_{\theta Z}=\exp{\theta Z}$ for small $t$ and $Z\in\lk$ (we can restrict this to the critical point $m=e$, since all functions involved are $M$-invariant). We can also rewrite $f(t)$ in terms of \eqref{and} and the Killing form to obtain
\begin{eqnarray}\label{f(t) also}
f(t) = \frac{d}{d\theta}_{|\theta=0} \langle n(a_t k_{\theta Z'})\cdot Z,H\rangle.
\end{eqnarray}

\begin{ex}
If $G=PSU(1,1)$ and $K=PSO(2)$, then $G/K$ is identifiesd with the open unit disk $\mathbb{D}$. Then (\cite{Z86}, p. 103)
\begin{eqnarray}
f(t) = -2\tanh(t) \, \tanh(t+1)^{-2} \neq 0 \,\,\,\,\, \forall\, t\neq0.
\end{eqnarray}
\end{ex}

Note that given $a\in A$ and $k\in K$, the horocycle bracket $\langle a\cdot o,kM\rangle$ equals the Iwasawa projection $-H(a^{-1}k)$. It seems not to be easy to give a short derivation for an explicit formula for the matrix $f(t)$, but we observe the following: The matrix coefficients of the \emph{principal series of representations}\index{principal series representations} of $G$ (see Section \ref{Section Helgason boundary values}) may be expressed as integrals of the form
\begin{eqnarray}\label{becomes}
\int_K e^{\lambda(H(ak))} \, \phi(k) \, dk.
\end{eqnarray}
Here $a\in A$ and $\phi$ is an analytic function on $K$ expressed in terms of matrix coefficients of representations of $K$, and the eigenvalue parameter is $\lambda\in\la^*_{\cc}$. We can keep $Re(\lambda)=\nu$ fixed and absorb the factor $e^{\nu(H(ak))}$ into the amplitude $\phi$. We write $\xi=Im(\lambda)\in\la^*$ and denote by $H=H_{\xi}\in\la$ the vector satisfying $\xi(Y)=\langle Y, H_{\xi}\rangle$ with respect to the Killing form. Then \eqref{becomes} becomes
\begin{eqnarray}\label{to the}
\int_K e^{i\langle H(ak),H\rangle} \ \phi(k) \, dk.
\end{eqnarray}
If we replace $H$ by $\tau H$ and let $\tau\rightarrow\infty$, then the principle of stationary phase states that the main contributions to the asymptotic expansion of \eqref{to the} come from the critical points of the phase function $F_{a,H}$ on $K$ defined by
\begin{eqnarray}
F_{a,H}(k) = \langle H(ak), H\rangle, \,\,\,\,\,\,\,\,\,\,\,\,\,\, (k\in K).
\end{eqnarray}
These functions have been studied in \cite{DKV} (for proofs see Sections 5 and 6 loc. cit.). Let $K_a$, respectively $K_H$, denote the centralizer of $a$ in $K$, respectively of $H$ in $K$. The study of the critical points of $F_{a,H}$ reveals that the critical set of $F_{a,H}$ is equal (for $X$ being of arbitrary rank) to the disjoint union of smooth manifolds $K_a w K_H$, where $w$ runs through the Weyl group. Note that the notation $w K_H$ makes sense, as always $M\subseteq K_H$ for all $H\in\la$. The Hessians of the $F_{a,H}$ are tranversally non-degenerate to the critical manifolds. In particular, if $X$ has rank one, then the subgroup $M=Z_K(A)$ is a critical manifold for $F_{a,H}$ and its Hessian is non-degenerate in transversal direction. Since our matrix $f(t)$ equals the Hessian form of $F_{a_t,H}$ we can summarize:

\begin{thm}
The Hessian form $\psi_t$ is non-degenerate at the critical point.
\end{thm}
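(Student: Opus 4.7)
The plan is to read the Hessian $Q_t$ displayed in \eqref{Hessian t} as a block matrix that, after dropping the stable $\mathfrak{m}$-rows and $\mathfrak{m}$-columns, splits into two independent pieces, and then to verify the non-degeneracy of each piece by an elementary determinant computation.

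Reordering the coordinates as $(\mu,\mathfrak{a},\mathfrak{n},\mathfrak{m}^{\bot})$ exhibits $Q_t$ as a block diagonal matrix. The $(\mu,\mathfrak{a})$-block is
\[ \begin{pmatrix} 0 & 1 \\ 1 & 0 \end{pmatrix}, \]
which has determinant $-1$ and is therefore non-degenerate. It remains to analyse the $(\mathfrak{n},\mathfrak{m}^{\bot})$-block
\[ \begin{pmatrix} 0 & -Q_0 \\ -Q_0 & f(t) \end{pmatrix}. \]

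The key observation is that $Q_0$, as computed in the previous subsection, is the diagonal matrix with entries $\alpha_j(H)$, where $\alpha_1,\ldots,\alpha_s$ enumerate the positive restricted roots with multiplicity and $s=\dim\mathfrak{n}=\dim\mathfrak{m}^{\bot}$. Since $H$ spans $\mathfrak{a}_+$ and is regular, each $\alpha_j(H)>0$, so $Q_0$ is invertible. A block-row swap (introducing a sign $(-1)^s$) converts the above block into $\bigl(\begin{smallmatrix} -Q_0 & f(t) \\ 0 & -Q_0 \end{smallmatrix}\bigr)$, which is block upper-triangular with determinant $\det(Q_0)^2$. Consequently the reduced Hessian has determinant $(-1)^{s+1}\det(Q_0)^2\neq 0$, so $Q_t$ is non-degenerate on the relevant quotient.

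The point I expect might cause confusion is the following. The paragraph preceding the theorem appeals to \cite{DKV} in order to control the $f(t)$-corner, which might suggest that the proof should rely on transversal non-degeneracy of $f(t)$ itself. My plan avoids this: the off-diagonal $-Q_0$ blocks already force invertibility, regardless of the value of $f(t)$. The role of the \cite{DKV} computation is rather to identify $f(t)$ explicitly; this identification is needed if one wants to write down the constant $C$ of the stationary-phase expansion (as in the $PSU(1,1)$ example) or to extend the analysis to higher rank, where the $\mathfrak{n}\leftrightarrow\mathfrak{m}^{\bot}$ pairing becomes more delicate, but it is not required for the qualitative non-degeneracy claim.
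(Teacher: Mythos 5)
Your proof is correct, and it is in fact cleaner than the argument the paper actually makes. After reordering the surviving coordinates as $(\mu,\mathfrak{a},\mathfrak{n},\mathfrak{m}^{\bot})$ the reduced Hessian is block diagonal, and the $(\mathfrak{n},\mathfrak{m}^{\bot})$ block $\bigl(\begin{smallmatrix}0 & -Q_0 \\ -Q_0 & f(t)\end{smallmatrix}\bigr)$ has determinant $(-1)^s\det(Q_0)^2$ regardless of $f(t)$; since $Q_0 = \mathrm{diag}(\alpha_1(H),\ldots,\alpha_s(H))$ with $\alpha_j(H)>0$ for the regular $H\in\la_+$, this is nonzero, and combined with the $\bigl(\begin{smallmatrix}0&1\\1&0\end{smallmatrix}\bigr)$ block the claim follows. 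Your determinant bookkeeping (the $(-1)^{s+1}\det(Q_0)^2$ for the full reduced matrix) is also right.

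The interesting point is that the paper \emph{does not} argue this way. The sentences immediately preceding the theorem invoke the result of Duistermaat--Kolk--Varadarajan that the Hessian of $F_{a,H}$ is transversally non-degenerate along the critical manifold $M = Z_K(A)$, and then conclude ``since our matrix $f(t)$ equals the Hessian form of $F_{a_t,H}$ we can summarize: [the theorem].'' That is, the paper suggests that the non-degeneracy of the $\mathfrak{m}^{\bot}\times\mathfrak{m}^{\bot}$ corner $f(t)$ is what forces non-degeneracy of the full Hessian. But as you observe, this is neither necessary nor sufficient for the conclusion: the off-diagonal $-Q_0$ blocks already determine the determinant of that $2s\times 2s$ block, and invertibility of $Q_0$ is what matters. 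Your reading of the role of \cite{DKV} — that it identifies $f(t)$ (useful for the explicit constant in the stationary-phase expansion, and for the attempted extension to higher rank where the $\mathfrak{n}\leftrightarrow\mathfrak{m}^{\bot}$ coupling is more subtle), but is not needed for the qualitative non-degeneracy statement — is exactly right. Your argument proves the theorem from the material already established in the previous subsection, without the appeal to \cite{DKV}, which is a genuine improvement in the exposition.
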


\newpage
\pagebreak
\thispagestyle{empty} 

\section{Equivariant pseudodifferential operators on symmetric spaces}\label{Pseudodifferential analysis on symmetric spaces}

The Euclidean Fourier transform\index{Euclidean Fourier transform} of a sufficiently regular function on $\rr^n$ is
\begin{eqnarray}\label{Euclidean Fourier transform}
\widehat{f}(\xi) = (2\pi)^{-n}\int f(x) e^{-ix\cdot\xi}\, dx.
\end{eqnarray}
Writing $D_j = -i (\partial/{\partial x_j})$\index{$D_j$, differential operator}, we differentiate the \emph{Fourier inversion formula}\index{Euclidean Fourier inversion formula}
\begin{eqnarray*}
f(x) = \int \widehat{f}(\xi)  e^{ix\cdot\xi} \, d\xi
\end{eqnarray*}
and get
\begin{eqnarray*}
D^{\alpha} f(x) = \int \xi^{\alpha} \widehat{f}(\xi) e^{ix\cdot\xi} \, d\xi,
\end{eqnarray*}
where $\alpha\in\nn_0^n$. Hence for a differential operator $p(x,D)=\sum_{|\alpha|\leq k} a_{\alpha}(x)D^{\alpha}$.
\begin{eqnarray*}
p(x,D)f(x) = \int p(x,\xi) \widehat{f}(\xi) e^{ix\cdot\xi} \, d\xi;
\end{eqnarray*}
The function
\begin{eqnarray*}
p(x,\xi)=\sum_{\alpha\leq k}a_{\alpha}(x)\xi^{\alpha}
\end{eqnarray*}
is called the \emph{full symbol}\index{full symbol} of the operator $p(x,D)$. These observations lead to the Euclidean version of pseudodifferential operators on the Euclidean space $\rr^n$ (\cite{Tay81}, \cite{Hor3}). As described in the introduction, pseudodifferential operators can be very useful in determining the asymptotic behaviour of the eigenvalues and eigenfunctions of the Laplace operator. In 1986, Steve Zelditch (\cite{Z86}) presented a calculus of pseudodifferential operators that in that case of the unit disk $D$ and a corresponding compact hyperbolic surface $X_{\Gamma}=\Gamma\backslash D$, where $\Gamma\subset PSU(1,1)$ is a cocompact discrete subgroup, is best adapted for this purpose. The main idea is to use Helgason's non-Euclidean Fourier analysis in place of the local Euclidean Fourier analysis in manifolds. An advantage of this calculus lies in its equivariance and invariance properties: $\Gamma$-invariant symbols define $\Gamma$-invariant operators on $T^{*}D$. Other objects of interest in $\Psi$DO-theory, such as lower terms in asymptotic expansions, are invariantly defined in this calculus, too.

In this section we generalize parts of this calculus to symmetric spaces of the noncompact type. Eventually we will have to restrict some results to the case of rank one symmetric spaces.

\subsection{Non-Euclidean Fourier analysis}\label{Non-Euclidean Fourier analysis}
The non-Euclidean Fourier transform\index{Fourier transform} $F$ (\cite{He94}) converts sufficiently regular functions $f$ on $X$ (e.g. $f\in C_c^{\infty}(X)$) into functions $Ff=\tilde{f}$ on $\la^{*}_{\cc} \times K/M$. This integral transform was introduced by S. Helgason in 1965 (\cite{He65}) and shows a lot of analogies with the Euclidean Fourier-transform (\cite{Hor1}). There is an inversion formula, a Plancherel formula, and a non-Euclidean Paley-Wiener theorem. Let $f$ be a complex valued function on $X$. Its \emph{non-Euclidean Fourier transform}\index{non-Euclidean Fourier transform} $Ff=\tilde{f}$ is defined by
\begin{eqnarray}
Ff(\lambda,b) := \tilde{f}(\lambda,b) := \int_{X}f(x)e^{(-i \lambda+\rho)\left\langle x,b \right\rangle}dx
\end{eqnarray}
for all $\lambda\in \la^{*}_{\cc}$, $b\in B$, for which the integral exists.

\begin{prop}\label{rapidly decreasing}
Let $u\in C_c^{\infty}(X)$. Then $\tilde{u}(\lambda,b)$ is rapidly decreasing in $\lambda$.
\end{prop}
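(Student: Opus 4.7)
The plan is to exploit the eigenfunction property of the non-Euclidean plane waves together with the symmetry of the Laplace--Beltrami operator $L_X$ on $X=G/K$, combined with a standard integration-by-parts argument.

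First I would observe that the integral kernel of the Fourier transform is itself a non-Euclidean plane wave: $e^{(-i\lambda+\rho)\langle x,b\rangle}=e_{-\lambda,b}(x)$. By \eqref{character of the Laplacian} this is a joint eigenfunction of $\dd(G/K)$, and in particular
\begin{eqnarray*}
L_X e_{-\lambda,b} = -(\langle\lambda,\lambda\rangle+\langle\rho,\rho\rangle) \, e_{-\lambda,b}.
\end{eqnarray*}
Since $u\in C_c^\infty(X)$, we may use the symmetry of $L_X$ (Section \ref{The Laplacian}), the boundary terms vanishing due to compact support, to transfer $L_X$ from the kernel to $u$:
\begin{eqnarray*}
-(\langle\lambda,\lambda\rangle+\langle\rho,\rho\rangle) \, \tilde u(\lambda,b)
= \int_X u(x)\, L_X e_{-\lambda,b}(x)\, dx
= \int_X (L_X u)(x)\, e_{-\lambda,b}(x)\, dx
= \widetilde{L_X u}(\lambda,b).
\end{eqnarray*}

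Next I would iterate this identity $N$ times, using that $L_X^N u$ again lies in $C_c^\infty(X)$ with $\supp(L_X^N u)\subseteq\supp(u)=:C$. Writing $q(\lambda)=\langle\lambda,\lambda\rangle+\langle\rho,\rho\rangle$, this yields
\begin{eqnarray*}
(-q(\lambda))^N \, \tilde u(\lambda,b) = \widetilde{L_X^N u}(\lambda,b)
\end{eqnarray*}
for every $N\in\nn_0$. For $\lambda\in\la^*$ one has $|e^{(-i\lambda+\rho)\langle x,b\rangle}|=e^{\rho\langle x,b\rangle}$, which is a continuous function of $(x,b)\in X\times B$, hence bounded by some constant $M_C$ on the compact set $C\times B$. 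Therefore
\begin{eqnarray*}
\bigl|q(\lambda)^N \, \tilde u(\lambda,b)\bigr| \;\leq\; M_C\cdot\vol(C)\cdot\|L_X^N u\|_\infty \;=:\; C_N,
\end{eqnarray*}
uniformly in $b\in B$. Since $q(\lambda)$ grows like $|\lambda|^2$ for large real $\lambda$, this gives polynomial decay of arbitrary order in $\lambda\in\la^*$, uniformly in $b$, which is exactly the rapid decrease claimed.

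The only delicate point is the extension of this argument to complex $\lambda\in\la^*_{\cc}$ needed in the subsequent subsection on regularity: there the factor $|e^{-i\lambda\langle x,b\rangle}|$ grows like $e^{|\Im\lambda|\cdot|\langle x,b\rangle|}$, but on the compact support $C$ the horocycle bracket is bounded, so the same integration-by-parts yields rapid decrease in $\Re\lambda$ uniformly on any tube $\la^*+i\Omega$ with $\Omega\subset\la^*$ bounded. No genuine obstacle arises; the proposition follows directly from the eigenvalue identity \eqref{character of the Laplacian} and the symmetry of $L_X$.
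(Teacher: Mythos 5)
Your argument is correct and coincides with the paper's own proof: both use the eigenvalue identity \eqref{character of the Laplacian} for the plane-wave kernel together with the symmetry of $L_X$ and compact support of $u$ to integrate by parts, iterate, and bound $\tilde u(\lambda,b)$ by $(\langle\lambda,\lambda\rangle+\langle\rho,\rho\rangle)^{-k}$ times a constant depending on $L_X^k u$. The extra care you take with the uniform bound in $b$ and the remark on complex $\lambda$ are fine refinements but do not change the route.
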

\begin{proof}
We use \eqref{character of the Laplacian} and iterate integration by parts via the Laplace operator $L_X$ (see Section \ref{Invariant differential operators}):
\begin{eqnarray*}
\tilde{u}(\lambda,b) &=& \int_X e^{(-i\lambda+\rho)A(z,b)}u(z)dz \\
&=& \int_X  \left(\frac{-1}{\langle\lambda,\lambda\rangle + \langle\rho,\rho\rangle}\right) L_X e^{(-i\lambda+\rho)A(z,b)} u(z)dz \\
&=& \int_X  \left(\frac{-1}{\langle\lambda,\lambda\rangle + \langle\rho,\rho\rangle}\right) e^{(-i\lambda+\rho)A(z,b)} L_X u(z)dz \\
&=& \int_X  \left(\frac{-1}{\langle\lambda,\lambda\rangle + \langle\rho,\rho\rangle}\right)^k e^{(-i\lambda+\rho)A(z,b)} L_X^k u(z)dz.
\end{eqnarray*}
This proves the proposition.
\end{proof}

As usual, we denote \emph{Harish-Chandra's} $c$\emph{-function} by $c(\lambda)$. Explicit formulas for the \emph{Plancherel density} $|c(\lambda)|^{-2}\in C^{\infty}(\la)$ can be found in Section \ref{Special functions and the Plancherel density}. We introduce the notation
\begin{eqnarray}
\dbar\lambda=|c(\lambda)|^{-2}d\lambda.
\end{eqnarray}

Let $w=|W|$ denote the order of the Weyl group. In analogy with the inversion formula for the Euclidean Fourier transform we have (\cite{He94}, pp. 225-226):
\begin{thm}[Fourier inversion formula]
For each $f\in\mathcal{D}(X)$ the Fourier transform is inverted by the formula
\begin{eqnarray}\label{Fourier inversion formula}
f(x) = w^{-1}\int_{\mathfrak{a}^*}\int_{B} e^{(i\lambda+\rho)\left\langle x,b\right\rangle} \tilde{f}(\lambda,b)\dbar\lambda db, \hspace{4mm} x\in X.
\end{eqnarray}
\end{thm}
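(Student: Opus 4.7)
The plan is to reduce the non-Euclidean Fourier inversion to Harish-Chandra's classical spherical Plancherel inversion theorem, which asserts that for any $K$-biinvariant $F\in\mathcal{D}(X)$ one has
\begin{equation*}
F(o)=w^{-1}\int_{\mathfrak{a}^*}\widehat{F}(\lambda)\,\dbar\lambda,\qquad \widehat{F}(\lambda)=\int_X F(y)\,\phi_{-\lambda}(y)\,dy,
\end{equation*}
where $\phi_\lambda(y)=\int_B e^{(i\lambda+\rho)\langle y,b\rangle}\,db$ is the spherical function. I will take this as a black box (it is the standard Harish-Chandra theorem and independent of the boundary-value formalism developed in the excerpt).

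First I would prove the formula at the origin $x=o$. Given $f\in\mathcal{D}(X)$, form its $K$-average
\begin{equation*}
F(y):=\int_K f(k\cdot y)\,dk,
\end{equation*}
which is $K$-invariant, hence $K$-biinvariant on $G/K$, compactly supported, and satisfies $F(o)=f(o)$. Using $K$-invariance of the spherical function $\phi_{-\lambda}$ and a change of variable one computes
\begin{equation*}
\widehat{F}(\lambda)=\int_X f(y)\phi_{-\lambda}(y)\,dy=\int_B\tilde f(\lambda,b)\,db,
\end{equation*}
where in the last step I expand $\phi_{-\lambda}$ as the boundary integral above and interchange integrals via Fubini (legitimate since $f$ has compact support). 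Harish-Chandra's inversion applied to $F$ then yields the inversion formula \eqref{Fourier inversion formula} at $x=o$, using $e^{(i\lambda+\rho)\langle o,b\rangle}=1$.

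Next I would pass from $o$ to a general point $x=g^{-1}\cdot o$ by equivariance. Define $(L_g f)(y):=f(g^{-1}y)$; a direct change of variable using the identity $\langle gy,b\rangle=\langle y,g^{-1}\cdot b\rangle+\langle g\cdot o,b\rangle$ (Lemma \ref{formulae for bracket}) gives
\begin{equation*}
(L_gf)^{\sim}(\lambda,b)=e^{(-i\lambda+\rho)\langle g\cdot o,b\rangle}\,\tilde f(\lambda,g^{-1}\cdot b).
\end{equation*}
Applying the origin case to $L_gf$ and substituting $b=g\cdot b'$ with Jacobian $d(g\cdot b')/db'=e^{-2\rho\langle g\cdot o,g\cdot b'\rangle}$ from \eqref{dgb / db}, and combining the exponentials via $\langle g^{-1}\cdot o,b'\rangle=-\langle g\cdot o,g\cdot b'\rangle$ from Corollary \ref{cor need}, the three exponential factors collapse to $e^{(i\lambda+\rho)\langle g^{-1}\cdot o,b'\rangle}=e^{(i\lambda+\rho)\langle x,b'\rangle}$, producing the desired formula at $x$.

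The main obstacle is purely bookkeeping: making sure the accumulated exponential factors from the Iwasawa bracket identities and the Jacobian cancel precisely to $(i\lambda+\rho)\langle x,b\rangle$, with the correct sign on each occurrence of $\rho$. There is also the subsidiary point of justifying all interchanges of integration, but this is automatic because $\tilde f(\lambda,b)$ is rapidly decreasing in $\lambda$ (Proposition \ref{rapidly decreasing}) while $|c(\lambda)|^{-2}$ is of polynomial growth, so the iterated integrals are absolutely convergent and Fubini applies throughout.
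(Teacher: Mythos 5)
Your proposal is correct: the exponent bookkeeping in the equivariance step works out exactly as you claim, since $(-i\lambda+\rho)\langle g\cdot o,g\cdot b'\rangle$ combined with the Jacobian factor $e^{-2\rho\langle g\cdot o,g\cdot b'\rangle}$ gives $e^{(-i\lambda-\rho)\langle g\cdot o,g\cdot b'\rangle}=e^{(i\lambda+\rho)\langle g^{-1}\cdot o,b'\rangle}$ by Corollary \ref{cor need}, and the $K$-average computation $\widehat F(\lambda)=\int_B\tilde f(\lambda,b)\,db$ is valid. The paper itself offers no proof but cites Helgason, and your reduction (spherical Harish-Chandra inversion as a black box, $K$-averaging at the origin, then translation using \eqref{equivariance} and \eqref{dgb / db}) is essentially the standard argument of that reference, with the constant $w^{-1}$ consistent with the measure normalizations fixed in Subsection \ref{Measure theoretic preliminaries}.
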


Let $B(\cdot,\cdot)$ denote the restriction to $\mathfrak{a}$ of the Killing form of $\mathfrak{g}$. Given $\lambda\in \mathfrak{a}^*$, we denote by $H_{\lambda}\in\mathfrak{a}$ the uniquely determined element such that
\begin{eqnarray}
B(H_{\lambda},H) = \lambda(H) \,\, \forall \, H\in \mathfrak{a}
\end{eqnarray}
Recall that we denote the \emph{dual positive Weyl chamber}\index{dual positive Weyl chamber}, that is the \emph{preimage} (under the mapping $\lambda\mapsto H_{\lambda}$) \emph{of the positive Weyl chamber}\index{preimage of the positive Weyl chamber} $\la^+$, by
\begin{eqnarray}\label{preimage}
\mathfrak{a}_{+}^* = \left\{ \lambda\in \mathfrak{a}^*: H_{\lambda}\in \mathfrak{a}^+ \right\}.
\end{eqnarray}

The following theorem (\cite{He94}, p. 227) is the symmetric space analog of the Plancherel formula\index{Plancherel formula} for the Euclidean Fourier transform.
\begin{thm}[Plancherel formula]\label{Plancherel formula}\index{Plancherel formula}
The Fourier transform $f(x)\mapsto\tilde{f}(\lambda,b)$ extends to an isometry of $L^2(X)$ onto $L^2(\mathfrak{a}_{+}^*\times B, {\left|c(\lambda)\right|}^{-2}\,d\lambda\,db)$\index{${\left|c(\lambda)\right|}^{-2}d\lambda db$, Plancherel measure}.
For $f\in L^2(X)$, the Plancherel formula\index{Plancherel formula} reads
\begin{eqnarray}
\int_X f_1(x)\overline{f_2(x)}dx = w^{-1} \int_{\mathfrak{a}^* \times B} \tilde{f_1}(\lambda,b) \overline{\tilde{f_2}(\lambda,b)} |c(\lambda)|^{-2}d\lambda db
\end{eqnarray}
\end{thm}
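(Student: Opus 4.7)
The plan is to establish the formula first on $\mathcal{D}(X)=C_c^{\infty}(X)$ via Fourier inversion, and then extend by density to $L^{2}(X)$.

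First I would take $f_{1},f_{2}\in\mathcal{D}(X)$ and apply the Fourier inversion formula \eqref{Fourier inversion formula} to $f_{1}$:
\begin{eqnarray*}
\int_X f_1(x)\,\overline{f_2(x)}\,dx
= w^{-1}\int_X \overline{f_2(x)}\int_{\mathfrak{a}^*}\int_B e^{(i\lambda+\rho)\langle x,b\rangle}\tilde f_1(\lambda,b)\,db\,\dbar\lambda\,dx.
\end{eqnarray*}
Proposition \ref{rapidly decreasing} shows that $\tilde f_1(\lambda,b)$ is rapidly decreasing in $\lambda$, and $b\mapsto e^{(i\lambda+\rho)\langle x,b\rangle}$ is bounded on the compact set $B=K/M$; combined with the fact that $f_{2}$ has compact support in $X$, this gives absolute integrability of the triple integral. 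Thus I may apply Fubini and interchange the $x$-integration with the $(\lambda,b)$-integration. The inner integral becomes
\begin{eqnarray*}
\int_X \overline{f_2(x)}\, e^{(i\lambda+\rho)\langle x,b\rangle}\,dx
=\overline{\int_X f_2(x)\,e^{(-i\bar\lambda+\rho)\langle x,b\rangle}\,dx}
=\overline{\tilde f_2(\bar\lambda,b)},
\end{eqnarray*}
which for $\lambda\in\mathfrak{a}^{*}$ real equals $\overline{\tilde f_2(\lambda,b)}$. Hence
\begin{eqnarray*}
\int_X f_1(x)\,\overline{f_2(x)}\,dx
= w^{-1}\int_{\mathfrak{a}^{*}\times B}\tilde f_1(\lambda,b)\,\overline{\tilde f_2(\lambda,b)}\,|c(\lambda)|^{-2}\,d\lambda\,db,
\end{eqnarray*}
which is the Plancherel identity on $\mathcal{D}(X)$. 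Since the integrand on the right is $W$-invariant in $\lambda$ (using $|c(s\lambda)|^{-2}=|c(\lambda)|^{-2}$ and the corresponding intertwining symmetry of $\tilde f$ via the generalized Poisson kernel $e_{\lambda,b}$), the factor $w^{-1}$ allows one to restrict the integration from $\mathfrak{a}^{*}$ to $\mathfrak{a}_{+}^{*}$ without the $w^{-1}$, giving the isometry statement.

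The Plancherel identity on $\mathcal{D}(X)$ shows that $F:f\mapsto\tilde f$ is a (properly normalized) isometry from the dense subspace $\mathcal{D}(X)\subset L^{2}(X)$ into $L^{2}(\mathfrak{a}_{+}^{*}\times B,|c(\lambda)|^{-2}d\lambda\,db)$; by continuity it extends uniquely to an isometry on all of $L^{2}(X)$, and the Plancherel formula then holds for arbitrary $f_{1},f_{2}\in L^{2}(X)$ by a standard approximation argument. It remains to prove surjectivity onto $L^{2}(\mathfrak{a}_{+}^{*}\times B,|c|^{-2}d\lambda\,db)$: the standard route is to invoke the non-Euclidean Paley–Wiener theorem (Helgason), which describes the image $F(\mathcal{D}(X))$ as a dense subspace of the target $L^{2}$-space, or equivalently to exhibit a Plancherel inverse constructed from \eqref{Fourier inversion formula} and check via Fubini that $F\circ F^{-1}=\mathrm{id}$ on a dense subspace.

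The main obstacle I anticipate lies in the surjectivity/density step: producing an explicit dense subspace of $L^{2}(\mathfrak{a}_{+}^{*}\times B,|c(\lambda)|^{-2}d\lambda\,db)$ contained in the image of $F$ requires the Paley–Wiener theorem and a careful analysis of the $W$-equivariance of Helgason boundary values (the condition that $\tilde f(\lambda,\cdot)$ and $\tilde f(s\lambda,\cdot)$ are related via the intertwining operators built from the $c$-function). The Fubini step is technically routine given Proposition \ref{rapidly decreasing}, and the density of $\mathcal{D}(X)$ in $L^{2}(X)$ is classical; the genuinely non-trivial input is the image characterization, which is where the factor $w^{-1}$ and the use of $\mathfrak{a}_{+}^{*}$ (rather than $\mathfrak{a}^{*}$) arise naturally.
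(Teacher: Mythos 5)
The paper does not actually prove this theorem: it is quoted from Helgason (\cite{He94}, Ch.~III, p.~227), just as the inversion formula \eqref{Fourier inversion formula} and the Paley--Wiener theorems are, so there is no in-paper argument to compare with. Your sketch follows the standard Helgason route (inversion formula plus Fubini on $\mathcal{D}(X)$, extension by density, Paley--Wiener for the image), and the Fubini computation yielding the displayed identity over $\mathfrak{a}^*\times B$ with the factor $w^{-1}$ is correct as written: $\tilde f_1(\lambda,b)$ is rapidly decreasing in $\lambda$ uniformly in $b$ (Prop.~\ref{rapidly decreasing}), $|c(\lambda)|^{-2}$ has only polynomial growth, $B$ is compact and $f_2$ has compact support, so the interchange is justified, and the inner $x$-integral is indeed $\overline{\tilde f_2(\lambda,b)}$ for real $\lambda$.

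Two caveats, both at the steps you yourself flagged. First, the phrase ``the integrand on the right is $W$-invariant'' is not literally true: $\tilde f_1(\lambda,b)\overline{\tilde f_2(\lambda,b)}$ is not $W$-invariant pointwise in $(\lambda,b)$. What is needed to pass from $w^{-1}\int_{\mathfrak{a}^*}$ to $\int_{\mathfrak{a}^*_+}$ is that $\lambda\mapsto |c(\lambda)|^{-2}\int_B\tilde f_1(\lambda,b)\overline{\tilde f_2(\lambda,b)}\,db$ is $W$-invariant, and this does not follow from $|c(s\lambda)|=|c(\lambda)|$ together with the integrated symmetry \eqref{Weyl} alone; it requires the functional equations relating $\tilde f(s\lambda,\cdot)$ to $\tilde f(\lambda,\cdot)$ via intertwining operators that are unitary for the relevant weighted measures --- this is where much of the real content of Helgason's proof sits. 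Second, surjectivity onto $L^2(\mathfrak{a}^*_+\times B,|c(\lambda)|^{-2}\,d\lambda\,db)$ is not a formal corollary of the Paley--Wiener theorem by itself: one must still show the image of $\mathcal{D}(X)$ is dense in the weighted $L^2$-space, which is a separate (nontrivial) argument in Helgason's treatment. Since both deep inputs are precisely what the paper imports wholesale from \cite{He94}, your proposal is fine as a reduction of the theorem to those cited results, but it should not be read as a self-contained proof.
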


Given $\lambda\in\mathfrak{a}_{\cc}^{*}$, we can find $\xi,\mu \in \mathfrak{a}^{*}$ such that $\lambda = \xi + i\mu$, where $i=\sqrt{-1}$. We employ the notation $\Im\lambda=\mu$ and $|\lambda|=(|\xi|^2+|\mu|^2)^{1/2}$. A $C^{\infty}$-function $\psi(\lambda,b)$ on $\mathfrak{a}_{\cc}^{*}\times B$, holomorphic in $\lambda$, is called a holomorphic function of \emph{uniform exponential type}\index{uniform exponential type} if there exists a constant $R\geq 0$ such that for each $N\in \nn$
\begin{eqnarray}\label{Paley-Wiener}\index{Paley-Wiener estimates}
\sup_{\lambda\in\mathfrak{a}_{\cc}^{*},\, b\in B}e^{-R|\Im \lambda|}(1+|\lambda|)^N|\psi{(\lambda,b)|<\infty}.
\end{eqnarray}
We denote the space of $\psi$ satisfying \eqref{Paley-Wiener} by $\mathcal{H}^{R}(\mathfrak{a}_{\cc}^{*}\times B)$\index{$\mathcal{H}^{R}(\mathfrak{a}_{\cc}^{*}\times B)$, space of functions} and define\index{$\mathcal{H}(\mathfrak{a}_{\cc}^{*}\times B)$, space of functions}
\begin{eqnarray}
\mathcal{H}(\mathfrak{a}_{\cc}^{*}\times B) := \bigcup_{R>0}\mathcal{H}^{R}(\mathfrak{a}_{\cc}^{*}\times B).
\end{eqnarray}
By $\mathcal{H}(\mathfrak{a}_{\cc}^{*}\times B)_{W}$\index{$\mathcal{H}(\mathfrak{a}_{\cc}^{*}\times B)_{W}$, space of functions} we denote the space of functions $\psi\in \mathcal{H}(\mathfrak{a}_{\cc}^{*}\times B)$ satisfying
\begin{eqnarray}\label{Weyl}
\int_B e^{(is\lambda + \rho)(A(x,b))}\psi(s\lambda,b)db = \int_B e^{(i\lambda + \rho)(A(x,b))}\psi(\lambda,b)db
\end{eqnarray}
for all $s\in W$, $\lambda\in \mathfrak{a}_{\cc}^{*}$ and $x\in X$.

The following theorems (\cite{He94}, Ch. III, Theorem 5.1 and \cite{He94}, Ch. III, Corollary 5.9) are the symmetric space versions of the Paley-Wiener theorems for the Fourier transform and answers the questions concerning the range of the Fourier transform.

\begin{thm}
The Fourier transform $f(x)\mapsto\tilde{f}(\lambda,b)$ is a bijection of $\mathcal{D}(X)$ onto $\mathcal{H}(\mathfrak{a}_{\cc}^{*}\times B)_{W}$.
\end{thm}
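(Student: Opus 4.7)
The plan is to establish the stated bijection in three stages: range inclusion, injectivity, and surjectivity.

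For range inclusion, I would fix $f\in\mathcal{D}(X)$ with $\supp f\subset\overline{B_R(o)}$ and check each of the conditions defining $\mathcal{H}(\mathfrak{a}_{\cc}^*\times B)_W$. Holomorphy of $\lambda\mapsto\tilde f(\lambda,b)$ is immediate from differentiation under the integral, since compact support of $f$ makes the integrand holomorphic and uniformly dominated. For the bound \eqref{Paley-Wiener}, the key geometric input is $|\langle x,b\rangle|\leq d(o,x)\leq R$ for $x\in\supp f$, which yields $|e^{(-i\lambda+\rho)\langle x,b\rangle}|\leq e^{(|\Im\lambda|+|\rho|)R}$. Combining this with the rapid decrease argument already given in Proposition \ref{rapidly decreasing} (iterating integration by parts against $L_X$ using \eqref{character of the Laplacian} produces any desired power of $(\langle\lambda,\lambda\rangle+\langle\rho,\rho\rangle)^{-1}$) gives the full Paley-Wiener estimate. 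The Weyl condition \eqref{Weyl} follows from Fourier inversion \eqref{Fourier inversion formula}: applying the inversion formula to $f$ and using that the answer must equal $f(x)$ independently of the parametrization of $\mathfrak{a}^*$ forces the inner Poisson integral $\int_B e^{(i\lambda+\rho)\langle x,b\rangle}\tilde f(\lambda,b)\,db$ to be $W$-invariant in $\lambda$.

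Injectivity is immediate from the inversion formula \eqref{Fourier inversion formula}.

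The serious work is surjectivity. Given $\psi\in\mathcal{H}(\mathfrak{a}_{\cc}^*\times B)_W$ of exponential type $R$, the natural candidate is
\begin{equation*}
f(x) := w^{-1}\int_{\mathfrak{a}^*}\int_B e^{(i\lambda+\rho)\langle x,b\rangle}\psi(\lambda,b)\,\dbar\lambda\, db.
\end{equation*}
Smoothness of $f$ and the identity $\tilde f = \psi$ are the easier pieces: smoothness follows because the rapid decrease of $\psi$ in $\Re\lambda$ dominates the polynomial growth of $|c(\lambda)|^{-2}$ described in Section \ref{Special functions and the Plancherel density}, legitimizing differentiation under the integral; the identity $\tilde f=\psi$ follows by computing $\tilde f(\lambda,b)=\int_X f(x)e^{(-i\lambda+\rho)\langle x,b\rangle}dx$, interchanging integrations using Fubini, and recognizing the inner $X$-integral via the Plancherel formula (Theorem \ref{Plancherel formula}) as producing the $W$-average of $\psi$, which equals $\psi(\lambda,b)$ by \eqref{Weyl}.

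The main obstacle is showing $\supp f\subset\overline{B_R(o)}$. My preferred route is to factor Helgason's Fourier transform through the horocycle Radon transform $\mathcal{R}:\mathcal{D}(X)\to\mathcal{D}(\Xi)$ introduced in Section \ref{geodesics}, using the classical identity $\tilde f(\lambda,b)=\int_A e^{(-i\lambda+\rho)\log a}(\mathcal{R}f)(ka\xi_0)\,da$ valid for $b=kM$. Under this factorization, the Paley-Wiener estimate on $\psi$ reduces, via the classical Euclidean Paley-Wiener theorem applied in the $A$-variable, to the statement that $\mathcal{R}^{-1}\psi$ is supported in the tube $\{\xi\in\Xi:d(o,\xi)\leq R\}$; the support theorem for the horocycle Radon transform (a consequence of injectivity of the Abel transform and estimates on horocycle geometry) then recovers compact support of $f$. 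A purely analytic alternative is direct contour shifting: for $d(o,x)>R$, deform the $\lambda$-contour in the imaginary direction opposite to $\langle x,b\rangle$; the delicate part is the pole structure of $|c(\lambda)|^{-2}$, which is handled by first using \eqref{Weyl} to rewrite the integral as $\int_{\mathfrak{a}_+^*}$ (where the $c$-factors behave polynomially to all orders, by the expansion around \eqref{e function}) before shifting. Either way, the symmetry condition \eqref{Weyl} is exactly what allows the $c$-function singularities to be absorbed; without it the compact support conclusion fails.
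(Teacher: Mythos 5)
You should first note that the paper does not actually prove this statement: it is quoted verbatim from Helgason (\cite{He94}, Ch.\ III, Theorem 5.1), so the relevant comparison is with the standard proof there, whose architecture your sketch resembles. Within your sketch, however, the "easy" direction already has a genuine gap. You combine the trivial bound $|e^{(-i\lambda+\rho)\langle x,b\rangle}|\leq e^{(|\Im\lambda|+|\rho|)R}$ with iterated integration by parts against $L_X$, which produces factors $(\langle\lambda,\lambda\rangle+\langle\rho,\rho\rangle)^{-k}$. For real $\lambda$ this gives rapid decrease (that is all Proposition \ref{rapidly decreasing} claims), but the Paley--Wiener estimate \eqref{Paley-Wiener} must hold on all of $\la^*_{\cc}$, and the bilinear extension $\langle\lambda,\lambda\rangle+\langle\rho,\rho\rangle$ vanishes at complex points of arbitrarily large norm (e.g.\ $\lambda=\xi+i\eta$ with $\xi\perp\eta$ and $|\eta|^2=|\xi|^2+|\rho|^2$). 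Near that set your integration by parts yields no polynomial gain, while the trivial bound already saturates the allowed $e^{R|\Im\lambda|}$, so the factor $(1+|\lambda|)^N$ is not obtained there. The standard repair is exactly the device you postpone to surjectivity: the Abel/horocycle factorization $\tilde f(\lambda,kM)=\int_A e^{(-i\lambda+\rho)\log a}\,(\mathcal{R}f)(ka\xi_0)\,da$, with the $a$-support contained in $\{|\log a|\leq R\}$, followed by the Euclidean Paley--Wiener theorem in the $A$-variable, uniformly in $k$. Likewise, the Weyl condition \eqref{Weyl} is not ``forced'' by the inversion formula (the integrand in an inversion formula is far from unique); the usual argument writes $\int_B e^{(i\lambda+\rho)\langle x,b\rangle}\tilde f(\lambda,b)\,db=\int_X f(y)\Phi_\lambda(x,y)\,dy$ and uses the $W$-invariance in $\lambda$ of the spherical-type kernel $\Phi_\lambda(x,y)=\int_B e^{(i\lambda+\rho)\langle x,b\rangle}e^{(-i\lambda+\rho)\langle y,b\rangle}db$.

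For surjectivity, the plan is the right shape but the hard content is asserted rather than carried out. The identity $\tilde f=\psi$ via ``Fubini and Plancherel'' is not yet legitimate, because at that stage you know nothing about the decay of the candidate $f$, so neither the absolute convergence of $\tilde f$ nor the interchange of integrals is justified; and the support step leans on a support theorem for the horocycle Radon transform which requires a priori decay of $f$ faster than every exponential. Establishing that decay from the hypotheses on $\psi$ (by limited contour shifts, with uniform control of $1/(c(\lambda)c(-\lambda))$ off the real axis) is precisely where the bulk of Helgason's proof lies, and it is also where your alternative route stalls: the observation that the Plancherel density is asymptotically polynomial holds on the real chamber $\la^*_+$, but after complex deformation the Gamma-factor poles catalogued in Section \ref{Special functions and the Plancherel density} genuinely intervene, and the Weyl symmetry \eqref{Weyl} by itself does not make them disappear -- one has to pass to a single $c$-function denominator and track residues. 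So: correct overall strategy, close in spirit to the cited proof, but as written both the growth estimate for complex $\lambda$ and the compact-support argument contain steps that would fail or are missing.
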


A $C^{\infty}$-function $\psi$ on $\mathfrak{a}_{\cc}^{*}\times B$, holomorphic in $\lambda$, is called a \emph{holomorphic function of uniform exponential type and slow growth} if there exist constants $R,C\geq 0$ and $N\in\nn$ such that
\begin{eqnarray}\label{slow}
|\psi(\lambda,b)|\leq C(1+|\lambda|)^N e^{R|Im\lambda|}
\end{eqnarray}
for all $\lambda \in \mathfrak{a}_{\cc}^{*}$ and $b\in B$. Given $R\geq 0$, let $\mathcal{K}^{R}(\mathfrak{a}^{*}_{\cc}\times B)$\index{$\mathcal{K}^{R}(\mathfrak{a}^{*}_{\cc}\times B)$, space of functions} denote the space of these $\psi$ satisfying \eqref{slow} for some $N$ and $C$. We then define\index{$\mathcal{K}(\mathfrak{a}_{\cc}^{*}\times B)$, space of functions}
\begin{eqnarray}
\mathcal{K}(\mathfrak{a}_{\cc}^{*}\times B) := \bigcup_{R\geq 0}\mathcal{K}^{R}(\mathfrak{a}_{\cc}^{*}\times B).
\end{eqnarray}
Let $\mathcal{K}(\mathfrak{a}_{\cc}^*\times B)_W$\index{$\mathcal{K}(\mathfrak{a}_{\cc}^*\times B)_W$, space of functions} denote the space of functions in $\mathcal{K}(\mathfrak{a}_{\cc}^{*}\times B)$ satisfying \ref{Weyl}.

\begin{thm}\label{Paley-Wiener for distributions}
The distributional Fourier transform $T\mapsto\widetilde{T}$, where
\[\widetilde{T}(\lambda,b) = \int_X e^{(-i\lambda+\rho)\left\langle x,b\right\rangle} \, dT(x),\]
is a bijection of $\mathcal{E}'(X)$ onto the space $\mathcal{K}(\mathfrak{a}_{\cc}^*\times B)_W$.
\end{thm}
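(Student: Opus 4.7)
The plan is to establish the two directions separately, using the already-stated Paley–Wiener theorem for $\mathcal{D}(X)$ (which asserts the analogous bijection $\mathcal{D}(X)\cong\mathcal{H}(\la_{\cc}^*\times B)_W$) as the main bridge. First I would verify that $T\mapsto\widetilde{T}$ lands in $\mathcal{K}(\la_{\cc}^*\times B)_W$. Fix $T\in\mathcal{E}'(X)$ and let $K\subset X$ be a compact set containing $\supp(T)$ in its interior, say $K\subseteq\overline{B_R(o)}$ for some $R\geq 0$. Holomorphy of $\widetilde{T}(\lambda,b)$ in $\lambda\in\la_{\cc}^*$ follows because $e^{(-i\lambda+\rho)\langle x,b\rangle}$ is entire in $\lambda$ and $T$ has compact support, so we may differentiate under the pairing. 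For the slow-growth estimate of type $R$ one uses that any distribution of compact support is of finite order: there exist $C>0$ and $N\in\nn$ such that $|T(\phi)|\leq C\|\phi\|_{N}^K$ (seminorms as in \eqref{seminorms}, transferred to $X$). Applying this to $\phi(x)=e^{(-i\lambda+\rho)\langle x,b\rangle}$ and using the standard bound $|\langle x,b\rangle|\leq d(o,x)\leq R$ for $x\in K$ (Kostant-type estimate for the horocycle bracket on a compact set), together with the chain rule to handle $\la$-differentiations of the phase, yields $|\widetilde{T}(\lambda,b)|\leq C(1+|\lambda|)^N e^{R|\Im\lambda|}$ uniformly in $b\in B$. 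The Weyl invariance \eqref{Weyl} for $\widetilde{T}$ is inherited from the fact that for any $T$ the function $x\mapsto\int_B e^{(i\lambda+\rho)\langle x,b\rangle}\widetilde{T}(\lambda,b)\,db$ must equal $T\ast e_{\lambda}$ paired against a joint eigenfunction in $\mathcal{E}_\lambda(X)$; since the spherical principal series depends only on the $W$-orbit of $\lambda\in\la^*_{\cc}$, the identity for test functions from the $\mathcal{D}(X)$-Paley–Wiener theorem extends to all of $\mathcal{E}'(X)$ by density.

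Injectivity is essentially immediate: if $\widetilde{T}=0$, then by the Fourier inversion formula \eqref{Fourier inversion formula} applied to any $f\in\mathcal{D}(X)$ we obtain
\[
T(f)=\int_X f(x)\,dT(x)=w^{-1}\int_{\la^*\times B}\widetilde{f}(\lambda,b)\,\widetilde{T}(-\lambda,b)\,\dbar\lambda\,db=0,
\]
after swapping the order of integration, which is justified because $f$ has compact support and $\widetilde{f}$ is rapidly decreasing in $\lambda$ by Proposition \ref{rapidly decreasing}. Since $\mathcal{D}(X)$ is dense in $\mathcal{E}(X)$ and $T\in\mathcal{E}'(X)$, we conclude $T=0$.

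For surjectivity, given $\psi\in\mathcal{K}(\la_{\cc}^*\times B)_W$ of exponential type $R$ and polynomial order $N$, I define a candidate distribution on $\mathcal{D}(X)$ by
\[
T_\psi(f):=w^{-1}\int_{\la^*\times B}\psi(-\lambda,b)\,\widetilde{f}(\lambda,b)\,|c(\lambda)|^{-2}\,d\lambda\,db.
\]
Convergence and continuity of $T_\psi$ follow from the rapid decrease of $\widetilde{f}$ in $\lambda$ (Proposition \ref{rapidly decreasing}), against which the polynomial growth of $\psi$ and the polynomial growth of the Plancherel density $|c(\lambda)|^{-2}$ are harmless. The Weyl invariance of $\psi$ makes the definition independent of representatives of $W$-orbits, and a direct computation using the Fourier inversion formula shows $\widetilde{T_\psi}=\psi$.

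The main technical hurdle is the \emph{support theorem}: showing that $T_\psi\in\mathcal{E}'(X)$, i.e., that $T_\psi$ extends continuously to $\mathcal{E}(X)$ with $\supp(T_\psi)\subseteq\overline{B_R(o)}$. The plan is to use the $\mathcal{D}(X)$-Paley–Wiener theorem in reverse: for any $f\in\mathcal{D}(X)$ with $\supp(f)\cap\overline{B_R(o)}=\emptyset$, one must show $T_\psi(f)=0$. The classical route (Helgason, \cite{He94}, Ch.~III) is to approximate $\psi$ by sequences $\psi_n\in\mathcal{H}(\la_{\cc}^*\times B)_W$ (e.g., by multiplying with a Gaussian cutoff $e^{-\epsilon\langle\lambda,\lambda\rangle}$), use the $\mathcal{D}(X)$-Paley–Wiener theorem to identify each $\psi_n$ as $\widetilde{f_n}$ for some $f_n\in\mathcal{D}(X)$ of shrinking exponential type approaching $R$, and pass to the limit using the geometric support bound encoded in the exponential type together with the unitary Plancherel pairing from Theorem \ref{Plancherel formula}. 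Equivalently one applies a shift/translation argument against horocyclic plane waves $e_{\lambda,b}$ to detect support. This step is where the non-Euclidean geometry enters essentially, through the sharp relation between exponential type in the Fourier variable and the metric support in $X$.
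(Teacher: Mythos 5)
You should first be aware that the paper does not prove this statement at all: it is quoted verbatim from Helgason (\cite{He94}, Ch.~III), so there is no internal proof to compare against, and your sketch is in effect an outline of Helgason's own argument (forward estimates from the finite order of a compactly supported distribution plus the bound $|\langle x,b\rangle|\leq d(o,x)$; injectivity via the inversion formula and a Parseval-type pairing; surjectivity via the $\mathcal{D}(X)$ Paley--Wiener theorem and an approximation argument for the support theorem). Most of those steps are sketched correctly. However, there is one concrete step that fails as written: your proposed approximation $\psi_n=\psi\cdot e^{-\epsilon\langle\lambda,\lambda\rangle}$ does not lie in $\mathcal{H}(\la^*_{\cc}\times B)_W$. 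With the $\cc$-bilinear extension of $\langle\cdot,\cdot\rangle$ one has $\left|e^{-\epsilon\langle\lambda,\lambda\rangle}\right|=e^{-\epsilon|\Re\lambda|^2}e^{+\epsilon|\Im\lambda|^2}$, so the product grows like $e^{\epsilon|\Im\lambda|^2}$ in imaginary directions and is not of uniform exponential type; consequently the smooth Paley--Wiener theorem cannot be invoked to write $\psi_n=\widetilde{f_n}$ with $f_n\in\mathcal{D}(X)$, and the whole support argument collapses at exactly the point you identified as the main hurdle. (Equivalently: multiplying the transform by a Gaussian amounts to convolving with a heat-kernel-type function, which is not compactly supported, so support localization is destroyed.) The standard repair, and what Helgason actually does, is to multiply by $\widetilde{\varphi_\epsilon}$ for a $K$-invariant approximate identity $\varphi_\epsilon\in\mathcal{D}(X)$ supported in a ball of radius $\epsilon$: this multiplier is entire of exponential type $\epsilon$, rapidly decreasing on $\la^*$, and $W$-invariant, so $\psi\cdot\widetilde{\varphi_\epsilon}\in\mathcal{H}^{R+\epsilon}(\la^*_{\cc}\times B)_W=\widetilde{\mathcal{D}_{R+\epsilon}}$, and letting $\epsilon\to0$ gives $\supp T_\psi\subseteq\overline{B_R(o)}$.

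A second, smaller gap: the claim that ``a direct computation using the Fourier inversion formula shows $\widetilde{T_\psi}=\psi$'' is premature. Before the support theorem is established, $T_\psi$ is only a functional on $\mathcal{D}(X)$ and the plane waves $e^{(-i\lambda+\rho)\langle\cdot,b\rangle}$ are not admissible test functions, so $\widetilde{T_\psi}$ is not even defined; and after compact support is known, the Parseval identity only tells you that $\psi-\widetilde{T_\psi}$ pairs to zero against all $\widetilde f$, $f\in\mathcal{D}(X)$, which still requires an argument (a density or $W$-symmetrization step) to conclude equality inside $\mathcal{K}(\la^*_{\cc}\times B)_W$. In the corrected approximation scheme this comes for free, since $\widetilde{f_\epsilon}=\psi\cdot\widetilde{\varphi_\epsilon}\to\psi$ pointwise while $f_\epsilon\to T_\psi$ in $\mathcal{D}'(X)$, so it is cleaner to extract $\widetilde{T_\psi}=\psi$ from that limit rather than from a separate ``direct computation.'' Your forward-direction verification of the $W$-condition is also vaguer than necessary: rather than a weak-$*$ density argument, it follows directly from Fubini (legitimate since $T$ has compact support) together with the $W$-invariance in $\lambda$ of the kernel $\int_B e^{(i\lambda+\rho)\langle x,b\rangle}e^{(-i\lambda+\rho)\langle y,b\rangle}\,db$, i.e.\ of the spherical function.
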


\subsection{Invariance and equivariance properties}\label{Invariance properties}
In this section we describe important invariance properties of operators defined using the non-Euclidean Fourier transform.

The group action of $G$ on $X$ induces a translation of functions on $X$: Given $g\in G$ and a function $f$ on $X$, we denote by $T_g f$\index{$T_g$, translation} the function $T_gf(z)=f(gz)$. A function $a(z,\lambda,b)$ on $X\times\mathfrak{a}\times B$ is called \emph{invariant} under translation (on $X\times B$) by\index{invariant function} $g$ if and only if
\begin{eqnarray}\label{invariant function}
a(gz,\lambda,gb)=a(z,\lambda,b) \textnormal{ for all } (z,\lambda,b).
\end{eqnarray}
Functions on $X\times B$ are identified with functions on $G/M$ and we call a function $a$ on $G/M$ \emph{invariant under translation} by $g$ if and only if $a(ghM)=a(hM)$ for all $g,h\in G$. Let $f$ be a function on $X\times X$. For $g,h\in G$ we define $T_{g,h}f$\index{$T_{g,h}$, translation} by $(T_{g,h}f)(z,w):=f(gz,hw)$. A function $f$ on $X\times X$ is called \emph{invariant} under $g\in G$ if and only if $T_{g,g}f=f$.

Let for a moment $\langle\cdot,\cdot\rangle$ denote the duality bracket of $C_c^{\infty}(X)$. Given a distribution $u$ on $X$ we define the distribution $T_g u$ on $X$ via duality by
\begin{eqnarray}
\langle T_g u, v \rangle := \langle u, T_{g^{-1}} v \rangle, \hspace{3mm} v \in C_c^{\infty}(X).
\end{eqnarray}
This definition is consistent with the imbedding \eqref{imbedding of functions} $C_c^{\infty}(X) \hookrightarrow \mathcal{E}'(X)$: Given a function $u\in C_c^{\infty}(X)$ one has
\begin{eqnarray}
\langle T_g u, v \rangle = \int_X u(gz)v(z)\, dz = \int_X u(z)v(g^{-1}z) \, dz = \langle u, T_{g^{-1}}v \rangle,
\end{eqnarray}
since $dz$ is $G$-invariant. If $u$ is a distribution on the product space $X\times X$, we define the distribution $T_{g,h}u$ on $X\times X$ via duality on the algebraic tensor product by defining it on the tensor products $\varphi\otimes\psi\in C_c^{\infty}(X\times X)$, where $\varphi,\psi\in C_c^{\infty}(X)$, by
\begin{eqnarray}
\langle T_{g,h}u, \varphi\otimes\psi \rangle := \langle u, T_{g^{-1}}\varphi\otimes T_{h^{-1}}\psi \rangle.
\end{eqnarray}
This definition is again consistent with the imbedding of functions into distributions.

\begin{defn}
\begin{itemize}
\item[(1)] Let $A$ be an operator with Schwartz kernel $k_A$. We say that $k_A$ is \emph{properly supported} if the projections of $X\times X$ to each factor when restricted to the support of the kernel are proper mappings.
\item[(2)] We say that an operator $A$ is \emph{properly supported}\index{properly supported operator} provided $A, A^*: C_c^{\infty}(X)\rightarrow C_c^{\infty}(X)$, hence $A, A^*: C^{\infty}(X)\rightarrow C^{\infty}(X)$, where $A^*$ is the adjoint of $A$ with respect to the $L^2(X)$-inner product. $A$ is properly supported if and only if its kernel is.
\end{itemize}
\end{defn}

\begin{lem}\label{lemma invariant}
Let $A:C^{\infty}(X)\rightarrow C^{\infty}(X)$ denote a linear and continuous operator with properly supported Schwartz kernel $k_A$, viewed as a distribution on $X\times X$. Then $T_g$ commutes with $A$ (i.e. $T_gAu(z)=AT_gu(z)$) if and only if $k_A$ is invariant under the action of $g$.
\end{lem}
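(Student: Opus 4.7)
The strategy is to transfer the commutation relation from operators to a relation on the Schwartz kernel $k_A \in \mathcal{D}'(X \times X)$ by pairing both sides against test functions and using the duality definitions of $T_g$ and $T_{g,h}$ spelled out just above the lemma. First I would test the identity $T_g A u = A T_g u$ against an arbitrary $v \in C_c^\infty(X)$. Using the imbedding of functions into distributions and the definition $\langle T_g u, v\rangle = \langle u, T_{g^{-1}} v\rangle$, the left-hand side rewrites as
\begin{eqnarray*}
\langle T_g A u, v\rangle = \langle A u, T_{g^{-1}} v\rangle = \langle k_A, T_{g^{-1}} v \otimes u\rangle,
\end{eqnarray*}
while the right-hand side is simply $\langle A T_g u, v\rangle = \langle k_A, v \otimes T_g u\rangle$. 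So $T_g$ commutes with $A$ if and only if
\begin{eqnarray*}
\langle k_A, T_{g^{-1}} v \otimes u\rangle = \langle k_A, v \otimes T_g u\rangle \quad \text{for all } u,v \in C_c^\infty(X).
\end{eqnarray*}

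The next step is to recognize the right-hand side of this equivalence as exactly $T_{g,g}$-invariance of the kernel. By the definition given in the paper, $\langle T_{g,g} k_A, \varphi \otimes \psi\rangle = \langle k_A, T_{g^{-1}}\varphi \otimes T_{g^{-1}}\psi\rangle$, so the invariance $T_{g,g} k_A = k_A$ reads
\begin{eqnarray*}
\langle k_A, T_{g^{-1}}\varphi \otimes T_{g^{-1}}\psi\rangle = \langle k_A, \varphi \otimes \psi\rangle
\end{eqnarray*}
on pure tensors $\varphi \otimes \psi$. Substituting $\varphi = v$ and $\psi = T_g u$ (which is admissible since $T_g : C_c^\infty(X) \to C_c^\infty(X)$ is a bijection with inverse $T_{g^{-1}}$) turns this into precisely the commutation condition obtained above.

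Finally I would close the argument by extending from pure tensors to all of $C_c^\infty(X \times X)$. This uses the standard fact that the algebraic tensor product $C_c^\infty(X) \otimes C_c^\infty(X)$ is sequentially dense in $C_c^\infty(X \times X)$ with the inductive limit topology, together with the continuity of $k_A$ as a distribution; hence an identity verified on all pure tensors $\varphi \otimes \psi$ extends to the full test space. The proper support hypothesis is what ensures that $Au \in C_c^\infty(X)$ (or at least lies in a space against which $v \in C_c^\infty(X)$ pairs without ambiguity), so that all pairings above are legitimate and the Schwartz kernel theorem applies without domain issues. The main (though rather mild) obstacle is this last bookkeeping: verifying that every step makes sense as a pairing of a distribution against a smooth compactly supported function, and that the change of variables $\psi = T_g u$ covers the entire test space as $u$ ranges over $C_c^\infty(X)$. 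Once that is in place the equivalence is a direct unraveling of the two duality definitions.
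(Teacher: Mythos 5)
Your proof is correct and follows essentially the same route as the paper: testing the commutation relation against $v\in C_c^{\infty}(X)$, unraveling the duality definitions of $T_g$ and $T_{g,h}$ to reduce both sides to pairings of $k_A$ against pure tensors, and then invoking the density of $C_c^{\infty}(X)\otimes C_c^{\infty}(X)$ in the test functions on $X\times X$. The only cosmetic difference is that the paper phrases the final equivalence as $T_{g,e}k_A=T_{e,g^{-1}}k_A\Leftrightarrow T_{g,g}k_A=k_A$, whereas you substitute $\psi=T_g u$ directly into the invariance condition; these are the same computation.
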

\begin{proof}
Let $\langle\cdot,\cdot\rangle$ denote the pairing of distributions and test functions. Then
\begin{eqnarray*}
\langle T_gAu, v \rangle &=& \langle Au, T_{g^{-1}}v \rangle \\
&=& \langle k_A, T_{g^{-1},e}(v\otimes u) \rangle \\
&=& \langle T_{g,e}k_A, v\otimes u \rangle 
\end{eqnarray*}
and
\begin{eqnarray*}
\langle AT_gu, v \rangle = \langle k_A, v\otimes (T_gu) \rangle 
.
\end{eqnarray*}
The algebraic tensor product $C_c^{\infty}(X)\otimes C_c^{\infty}(X)$ is dense in the test functions of $X\otimes X$ (\cite{T}, p. 530) and hence we obtain
\begin{eqnarray*}
T_g A=AT_g \,\, \Longleftrightarrow \,\, T_{g,e}k_A = T_{e,g^{-1}}k_A \,\, \Longleftrightarrow \,\, T_{g,g} k_A = k_A.
\end{eqnarray*}
This proves the lemma.
\end{proof}

Recall the notion of non-Euclidean plane waves \eqref{non-Euclidean plane waves}: Given $\lambda\in\la^*$, $b\in B$, the functions $e_{\lambda,b}:X\rightarrow\cc$ are defined by
\begin{eqnarray}
e_{\lambda,b}: x\mapsto e^{(i\lambda+\rho)\langle x,b\rangle}.
\end{eqnarray}

\begin{defn}\label{complete symbol}
Given a linear operator $A:C^{\infty}(X)\rightarrow C^{\infty}(X)$, we define the \emph{complete symbol}\index{complete symbol} (\emph{full symbol}\index{full symbol}) $a(z,\lambda,b)\in C^{\infty}(X\times \mathfrak{a}^*_+ \times B)$ of $A$ by
\begin{eqnarray}\label{definition symbol}
\left(Ae_{\lambda,b}\right)(z)=a(z,\lambda,b)e_{\lambda,b}(z).
\end{eqnarray}
The complete symbol is defined if $A:C^{\infty}(X)\rightarrow C^{\infty}(X)$. We will later see for which classes of operators this condition is satisfied.
\end{defn}

Let $u\in C_c^{\infty}(X)$. We will now use the Fourier inversion formula to represent $Au$ by an integral. The following observations have to be understood formally. We will later define concrete classes of symbols $a(z,\lambda,b)$ for which these computations make sense. We write
\begin{eqnarray*}
Au(z) &=& \int_{\mathfrak{a}^*_+}\int_B e^{(i\lambda+\rho)\langle z,b\rangle}a(z,\lambda,b)\tilde{u}(\lambda,b)\dbar\lambda db \\
&=& \int_X \hspace{1mm} \int_{\mathfrak{a}^*_+} \int_B  e^{(i\lambda+\rho)\langle z,b\rangle} e^{(-i\lambda+\rho)\langle w,b\rangle} a(z,\lambda,b) u(w) \dbar\lambda \, db \, dw.
\end{eqnarray*}
On the level of distributions we then have for the Schwartz kernel\index{distributional Schwartz-kernel of an operator}
\begin{eqnarray}
k_A(z,w) = \int_{\mathfrak{a}^*_+}\int_B e^{(i\lambda+\rho)\langle z,b\rangle}e^{(-i\lambda+\rho)\langle w,b\rangle}a(z,\lambda,b) \dbar\lambda \, db
\end{eqnarray}
in the sense that
\begin{eqnarray}\label{kernel}
\langle Au, v\rangle &=& \langle k_A, v\otimes u\rangle \\
&\hspace{-16mm}=& \hspace{-9mm} \int_X \int_X \int_{\mathfrak{a}^*_+}\int_B e^{(i\lambda+\rho)\langle z,b\rangle}e^{(-i\lambda+\rho)\langle w,b\rangle}a(z,\lambda,b) u(w)v(z)\dbar\lambda \, db \, dw \, dz \nonumber.
\end{eqnarray}

By the Fourier inversion formula, for properly supported kernels $k_A(z,w)$ we can then reconstruct the full symbol of $A$ by using the Helgason-Fourier transform of the kernel:
\begin{eqnarray}
a(z,\lambda,b) = \int_X e^{(i\lambda+\rho)(\langle w,b\rangle-\langle z,b\rangle)} k_A(z,w) \, dw.
\end{eqnarray}
We observe
\begin{eqnarray}\label{the integral becomes}
\langle T_{g,g}k_A, v\otimes u \rangle &=&
\int e^{(i\lambda+\rho)\langle z,b\rangle}e^{(-i\lambda+\rho)\langle w,b\rangle} a(z,\lambda,b) u(g^{-1}w) v(g^{-1}z) \dbar\lambda \,db\,dw\,dz \nonumber\\
&\hspace{-12mm}=& \hspace{-6mm} \int e^{(i\lambda+\rho)\langle gz,b\rangle}e^{(-i\lambda+\rho)\langle gw,b\rangle} a(gz,\lambda,b) u(w) v(z) \dbar\lambda \, db \, dw \, dz.
\end{eqnarray}
Recall the equation $\langle g\cdot z,g\cdot b\rangle = \langle z,b\rangle + \langle g\cdot o,g\cdot b\rangle$ (cf. \eqref{equivariance}), which implies $\langle g\cdot z,b\rangle = \langle z,g^{-1}b\rangle + \langle g\cdot o,b\rangle$. Similarly we obtain $\langle g\cdot w,b\rangle = \langle w,g^{-1}b\rangle + \langle g\cdot o,b\rangle$. Hence the integral \eqref{the integral becomes} becomes
\begin{eqnarray*}
\int e^{(i\lambda+\rho)\langle z,g^{-1}b\rangle}e^{(-i\lambda+\rho)\langle w,g^{-1}b\rangle} a(gz,\lambda,b) u(w) v(z) e^{+2\rho\langle g\cdot o, b\rangle}\dbar\lambda \, db \, dw \, dz.
\end{eqnarray*}
Also recall the formula $\frac{dg\cdot b}{db} = e^{-2\rho\left\langle g\cdot o, g\cdot b \right\rangle}$ from Subsection \ref{Horocycles brackets and the Iwasawa-projection} and change $g^{-1}\cdot b$ into $b$. This yields
\begin{eqnarray}\label{last integral}
\int e^{(i\lambda+\rho)\langle z,b\rangle}e^{(-i\lambda+\rho)\langle w,b\rangle} a(gz,\lambda,g\cdot b) u(w) v(z) \dbar\lambda \, db \, dw \, dz.
\end{eqnarray}

\begin{prop}\label{prop invariant}
Let $A:C^{\infty}(X)\rightarrow C^{\infty}(X)$ have properly supported kernel $k_A$. The following assertions are equivalent:
\begin{itemize}
\item[(1)] $T_g$ commutes with $A$.
\item[(2)] The symbol $a$ of $A$ is invariant under the action of $g$.
\item[(3)] $k_A$ is invariant under the action of $g$.
\end{itemize}
\end{prop}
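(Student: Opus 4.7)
My plan is to chain together three implications rather than give a single direct argument, because the paper has just built up precisely the machinery needed for this. Lemma 4.2.1 already identifies (1) with (3) — invariance of the kernel of a properly supported operator is equivalent to the commutation $T_g A = A T_g$ — so the real content of the proposition is (2) $\Longleftrightarrow$ (3). For this, the preceding computation on the Schwartz kernel essentially does the forward direction for free, and the reconstruction formula for the symbol gives the converse.

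For (2) $\Longrightarrow$ (3), the plan is simply to observe that the calculation the paper has just performed gives
\begin{equation*}
\langle T_{g,g} k_A, v \otimes u\rangle = \int e^{(i\lambda+\rho)\langle z,b\rangle} e^{(-i\lambda+\rho)\langle w,b\rangle} a(gz,\lambda, g\cdot b)\, u(w)\, v(z)\,\dbar\lambda\, db\, dw\, dz,
\end{equation*}
whereas the analogous expression with $a(z,\lambda,b)$ in place of $a(gz,\lambda,g\cdot b)$ is $\langle k_A, v\otimes u\rangle$. If the symbol satisfies \eqref{invariant function}, the two integrals coincide, and since this holds on all $v\otimes u$ with $u,v\in C_c^\infty(X)$ — a set whose linear span is dense in $C_c^\infty(X\times X)$ — one concludes $T_{g,g} k_A = k_A$.

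For (3) $\Longrightarrow$ (2), I would use the reconstruction formula $a(z,\lambda,b) = \int_X e^{(i\lambda+\rho)(\langle w,b\rangle - \langle z,b\rangle)} k_A(z,w)\, dw$, which was derived from the Fourier inversion formula under the assumption that $k_A$ is properly supported. Substituting $(z,b) \mapsto (gz, g\cdot b)$ and then changing the integration variable from $w$ to $gw$ (permissible because $dw$ is $G$-invariant), the integrand becomes $e^{(i\lambda+\rho)(\langle gw, g\cdot b\rangle - \langle gz, g\cdot b\rangle)} k_A(gz, gw)$. Invariance of $k_A$ collapses the kernel factor to $k_A(z,w)$, and the horocycle bracket identity \eqref{equivariance} — specifically the fact that $\langle g\cdot x, g\cdot b\rangle - \langle g\cdot y, g\cdot b\rangle = \langle x,b\rangle - \langle y,b\rangle$, so the correction term $\langle g\cdot o,g\cdot b\rangle$ cancels in the difference — collapses the exponential factor to $e^{(i\lambda+\rho)(\langle w,b\rangle - \langle z,b\rangle)}$. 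The integral is therefore $a(z,\lambda,b)$, yielding (2).

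The main obstacle is purely bookkeeping: one has to track the three different brackets $\langle gz,b\rangle$, $\langle z, g^{-1}b\rangle$, and $\langle z,b\rangle + \langle g\cdot o, b\rangle$ and apply \eqref{equivariance} consistently, exactly as was already done in deriving \eqref{last integral}. Nothing new beyond that identity and the $G$-invariance of $dw$ is required; the rest of the proof consists of combining the equivalences (1) $\Longleftrightarrow$ (3) from Lemma 4.2.1 with (2) $\Longleftrightarrow$ (3) established above.
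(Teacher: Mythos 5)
Your proof is correct, and its overall skeleton matches the paper's: both reduce the work to Lemma \ref{lemma invariant} for $(1)\Leftrightarrow(3)$ and to the integral \eqref{last integral} for $(2)\Rightarrow(3)$. Where you genuinely diverge is in closing the cycle: you prove $(3)\Rightarrow(2)$ directly from the reconstruction formula $a(z,\lambda,b)=\int_X e^{(i\lambda+\rho)(\langle w,b\rangle-\langle z,b\rangle)}k_A(z,w)\,dw$, substituting $(z,b)\mapsto(gz,g\cdot b)$, changing $w\mapsto gw$, and using that the correction term $\langle g\cdot o,g\cdot b\rangle$ cancels in the difference of brackets. The paper instead proves $(1)\Rightarrow(2)$ by testing the commutation relation on the plane waves themselves: from \eqref{equivariance} one has $e_{\lambda,b}(gz)=e_{\lambda,g\cdot b}(g\cdot o)\,e_{\lambda,g^{-1}\cdot b}(z)$, so $(T_gAe_{\lambda,b})(z)=a(gz,\lambda,b)\,e_{\lambda,g\cdot b}(g\cdot o)\,e_{\lambda,g^{-1}\cdot b}(z)$ while $(AT_ge_{\lambda,b})(z)=e_{\lambda,g\cdot b}(g\cdot o)\,a(z,\lambda,g^{-1}\cdot b)\,e_{\lambda,g^{-1}\cdot b}(z)$, and comparing gives the invariance of $a$ in the sense of \eqref{invariant function}. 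The two routes use the same two ingredients (the bracket equivariance and the $G$-invariance of the measure), so neither is more economical; yours has the small advantage of establishing the kernel--symbol equivalence $(2)\Leftrightarrow(3)$ on its own, with $(1)$ entering only through the lemma, while the paper's plane-wave computation makes explicit how the full symbol transforms under conjugation by $T_g$, which is the identity it reuses later (e.g.\ in the proof of Proposition \ref{U unitary} and in the Egorov-type discussion). One small point to keep in mind: your $(3)\Rightarrow(2)$ manipulates the distributional kernel inside a formal integral, which is legitimate here only because proper support makes $k_A(z,\cdot)$ compactly supported against the non-compactly-supported exponential — the same level of formality the paper itself adopts, so no real gap.
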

\begin{proof}
It follows from the equivariance property \eqref{equivariance} for the horocycle bracket that $e_{\lambda,b}(gz) = e_{\lambda,g\cdot b}(g\cdot o) \, e_{\lambda,g^{-1}\cdot b}(z)$. Using this we compute
\begin{eqnarray*}
(T_gA e_{\lambda,b})(z) &=& a(gz,\lambda,b) \, e_{\lambda,b}(gz) \\
&=& a(gz,\lambda,b) \, e_{\lambda,g\cdot b}(g\cdot o) \, e_{\lambda,g^{-1}\cdot b}(z)
\end{eqnarray*}
and
\begin{eqnarray*}
(AT_g e_{\lambda,b})(z) &=& A(e_{\lambda,g\cdot b}(g\cdot o) \cdot e_{\lambda,g^{-1}\cdot b})(z) \\
&=& e_{\lambda,g\cdot b}(g\cdot o) \, A e_{\lambda,g^{-1}\cdot b}(z) \\
&=& e_{\lambda,g\cdot b}(g\cdot o) a(z,\lambda,g^{-1}\cdot b) e_{\lambda,g^{-1}\cdot b}(z).
\end{eqnarray*}
$(1)\Rightarrow(2)$: Assume $T_gA=AT_g$. Then $a$ must be invariant in the sense of \eqref{invariant function}.\\
$(2)\Rightarrow(3)$: Assume $a(gz,\lambda,g\cdot b)=a(z,\lambda,b)$ for all $(z,\lambda,b)$. Then the integral \eqref{last integral} equals $\langle k_A, v\otimes u \rangle$, which proves the invariance of $k_A$.\\
$(1)\Leftrightarrow(3)$: This is proven in Lemma \ref{lemma invariant}.
\end{proof}

\subsection{Classes of symbols}\label{The calculus}
Let $\Gamma$ denote a cocompact discrete subgroup of $G$ and let $X_{\Gamma}$ denote the corresponding compact quotient $\Gamma\backslash X$. We now define classes of symbols $S_{\cl}^m$ and $S_{\cl,\Gamma}^m$ and establish $C^{\infty}$-continuities for corresponding classes of operators. If $X$ has rank one, the properly supported operators (in $\mathcal{L}_{\cl}^m$ and $\mathcal{L}_{\cl,\Gamma}^m$) are closed under composition and adjoints, and properly supported operators of order $0$ are $L^2$-continuous. In the beginning of this section we let the rank $r:=\dim(A)$ of $X$ be arbitrary.

\begin{defn}
Let $\overline{\la^*_+}$ denote the closure in $\la^*$ of the positive Weyl chamber. A function $a\in C^{\infty}(X\times\overline{\la^*_+}\times B)$ is a \emph{symbol}\index{symbol of order $m\in\rr$}\emph{ of order} $m\in\rr$ if for all $\beta\in\nn_0^{r}$, for each differential operator $D$ on $X\times B$, and for each compact subset $C\subset\subset X$ it satisfies
\begin{eqnarray}\label{symbol estimates}\index{symbol estimates}
\|\partial_{\lambda}^{\beta} \, D \, a(z,\lambda,b)\| \leq C_{\beta,D}(C)(1+|\lambda|)^{m-|\beta|} \,\,\,\,\, \forall\,z\in C.
\end{eqnarray}
By $S^m$\index{$S^m$, space of symbols} we denote the space of symbols of order $m$.
\end{defn}

\begin{rem}
Suppose that $X$ has rank one. Then $\mathfrak{a}= \rr H$, where we choose $H$ as a generator of $\mathfrak{a}$ as the unique unit vector in the positive Weyl chamber. Then
$\mathfrak{a}^*_+=\rr\lambda_0$, where $\lambda_0$ is the linear functional $\lambda_0(X)=\langle X, H\rangle$, $X\in\mathfrak{a}$. We always identify $\rr=\la$ and $\rr=\la^*$. It follows that the multi-index $\beta\in\nn_0^{r}$ in \eqref{symbol estimates} is an integer $k\in\nn_0$ and \eqref{symbol estimates} becomes
\begin{eqnarray}
\|\partial_\lambda^k \, D \, a(z,\lambda,b)\| \leq C_{k,D}(C)(1+|\lambda|)^{m-k} \,\,\,\,\,\,\, \forall\,z\in C.
\end{eqnarray}
\end{rem}

\begin{defn}
A symbol $a(z,\lambda,b)$ is \emph{homogeneous}\index{homogeneous symbol} of \emph{degree} $m\in\rr$ if for all $t\geq 1$ and $|\lambda|\geq 1$ it satisfies
\begin{eqnarray}\label{homogeneous}
a(z,t\lambda,b) = t^m a(z,\lambda,b).
\end{eqnarray}
A symbol $a\in S^m$ is \emph{classical}\index{classical symbol} if it has an \emph{asymptotic expansion}\index{asymptotic expansion}
\begin{eqnarray}\label{classical}
a(z,\lambda,b) \sim \sum_{j=0}^{\infty}a_j(z,\lambda,b),
\end{eqnarray}
where the $a_j$ are symbols, homogeneous of degree $s_j$, such that $s_j\rightarrow-\infty$, $s_0=m$. Asymptotics\index{symbol asymptotics} is here denoted by $\sim$ and means that for all $N\geq1$
\begin{eqnarray}
\left(a-\sum_{j=0}^{N-1}a_j\right)\in S^{m-N}.
\end{eqnarray}
The space of classical symbols of order $m$ is denoted by $S_{\textnormal{cl}}^m$\index{$S_{\cl}^m$, space of classical symbols}.
The set of symbols which are invariant under the action of $\Gamma$ on $X\times B$ (see \ref{invariant function}) is denoted by $S^m_{\Gamma}$\index{$S^m_{\Gamma}$, space of $\Gamma$-invariant symbols}. By $S^m_{\cl,\Gamma}$\index{$S^m_{\cl,\Gamma}$, space of $\Gamma$-invariant classical symbols} we denote the space of $\Gamma$-invariant classical symbols. We will in most cases replace $a_j(z,\lambda,b)$ by $|\lambda|^{m-j}a_j(z,\lambda/{|\lambda|},b)$. 
\end{defn}

\begin{prop}\label{equating coefficients}
\begin{itemize}
\item[(i)] Suppose $a(z,\lambda,b)$ is homogeneous of degree $m$ in $\lambda$ and $\phi$ is a smooth cutoff-function such that $\phi(\lambda)=0$ for $|\lambda|\leq C_1$ and $\phi(\lambda)=1$ for $|\lambda|\geq C_2>C_1$, then $\phi(\lambda)a(z,\lambda,b)$ is a symbol of order $m$.
\item[(ii)] If $a(z,\lambda,b)$ is a symbol of order $m$, then a $k$-th order derivative of $a$ with respect to $\lambda$ has order $m-k$.
\item[(iii)] Let $a$ and $b$ be symbols of order $m$ and $k$, respectively. Then the symbol $ab$ defined by $ab=a(z,\lambda,b)b(z,\lambda,b)$ has order $m+k$.
\item[(iv)] Let $a\in S^m$ such that $1/a \leq C(1+|\lambda|)^{-m}$. Then $1/a\in S^{-m}$.
\item[(v)] Let $a\in S_{\Gamma,\textnormal{cl}}^m$ such that $a\sim\sum_{j=0}^{\infty}a_j(z,\lambda,b)$. Then $a_j\in S_{\Gamma}^{m-j}$ for all $j\in\nn_0$.
\end{itemize}
\end{prop}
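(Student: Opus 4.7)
The plan is to verify each of the five assertions directly from the symbol estimates \eqref{symbol estimates}, using homogeneity, the Leibniz rule, and a uniqueness argument for asymptotic expansions. I expect (i)--(iii) to be essentially bookkeeping, while (iv) requires a small induction and (v) hinges on a limit formula that recovers the homogeneous components.

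For (i) I would first note that $\phi(\lambda)a(z,\lambda,b)$ vanishes for $|\lambda|\leq C_1$, so only the region $|\lambda|\geq C_1$ matters and we may enlarge $C_1$ to $1$ without loss of generality. Differentiating the homogeneity relation $a(z,t\lambda,b)=t^m a(z,\lambda,b)$ in $\lambda$ yields that $\partial_{\lambda}^{\beta}a$ is homogeneous of degree $m-|\beta|$ for $|\lambda|\geq 1$; since the differential operators $D$ in \eqref{symbol estimates} act only in $z$ and $b$, they commute with the dilation and $D\partial_{\lambda}^{\beta}a$ is likewise homogeneous of degree $m-|\beta|$. On the compact set $\{|\lambda|=1\}\times C\times B$ this function is bounded, and scaling back by homogeneity gives $|D\partial_{\lambda}^{\beta}a(z,\lambda,b)|\leq C_{\beta,D}(C)|\lambda|^{m-|\beta|}\leq C'_{\beta,D}(C)(1+|\lambda|)^{m-|\beta|}$ on $|\lambda|\geq 1$; for $|\lambda|\leq 1$ the extra factor $\phi$ and its derivatives (all supported where $|\lambda|\leq C_2$) absorb whatever remains. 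Parts (ii) and (iii) are immediate: differentiating \eqref{symbol estimates} shows $\partial_{\lambda}^{\gamma}a\in S^{m-|\gamma|}$, and the Leibniz rule $\partial_{\lambda}^{\beta}D(ab)=\sum_{\gamma\leq\beta}\binom{\beta}{\gamma}\sum_{D=D_1 D_2}(\partial_{\lambda}^{\gamma}D_1 a)(\partial_{\lambda}^{\beta-\gamma}D_2 b)$ together with $(1+|\lambda|)^{(m-|\gamma|)+(k-|\beta|+|\gamma|)}=(1+|\lambda|)^{m+k-|\beta|}$ yields $ab\in S^{m+k}$.

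For (iv) I would argue by induction on the total order $N=|\beta|+\deg D$ that
\begin{equation*}
|\partial_{\lambda}^{\beta}D(1/a)(z,\lambda,b)|\leq C_{\beta,D}(C)(1+|\lambda|)^{-m-|\beta|},\qquad z\in C.
\end{equation*}
The case $N=0$ is the hypothesis. For the inductive step, differentiating the identity $a\cdot(1/a)=1$ by $\partial_{\lambda}^{\beta}D$ gives a relation of the form
\begin{equation*}
a\cdot\partial_{\lambda}^{\beta}D(1/a)=-\sum (\partial_{\lambda}^{\gamma}D_1 a)\cdot(\partial_{\lambda}^{\beta-\gamma}D_2(1/a)),
\end{equation*}
where the sum is over decompositions with $(\gamma,D_1)\neq(0,\id)$, so each factor $\partial_{\lambda}^{\beta-\gamma}D_2(1/a)$ has strictly smaller total order and is controlled by the induction hypothesis; combined with $|\partial_{\lambda}^{\gamma}D_1 a|\leq C(1+|\lambda|)^{m-|\gamma|}$ and dividing through by $|a|\geq c(1+|\lambda|)^{m}$ (which follows from the hypothesis $|1/a|\leq C(1+|\lambda|)^{-m}$) gives the claimed bound. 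This is equivalent to $1/a\in S^{-m}$.

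For (v) the central point is that in an asymptotic expansion the homogeneous terms are uniquely determined by $a$, which then forces $\Gamma$-invariance to descend to each $a_j$. Concretely, since $a-a_0\in S^{m-1}$, for $|\lambda|\geq 1$ and $t\geq 1$ we have $|t^{-m}(a-a_0)(z,t\lambda,b)|\leq C(C)\,t^{-m}(1+t|\lambda|)^{m-1}\to 0$ as $t\to\infty$, uniformly on compacta in $(z,\lambda,b)$, while $t^{-m}a_0(z,t\lambda,b)=a_0(z,\lambda,b)$ by homogeneity. Hence
\begin{equation*}
a_0(z,\lambda,b)=\lim_{t\to\infty}t^{-m}a(z,t\lambda,b)\qquad(|\lambda|\geq 1),
\end{equation*}
which is manifestly $\Gamma$-invariant whenever $a$ is, and extends to all $\lambda$ by the original homogeneity relation on $|\lambda|\geq 1$ (redefining $a_0$ on $|\lambda|<1$ by any smooth $\Gamma\times G$-equivariant choice does not affect the class modulo $S^{m-1}$). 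Proceeding by induction, once $a_0,\ldots,a_{j-1}$ are known to lie in $S_{\Gamma}^{m-i}$, the tail $a-\sum_{i<j}a_i$ is a $\Gamma$-invariant symbol in $S^{m-j}$ and the same limit formula applied to it extracts $a_j$ as a $\Gamma$-invariant homogeneous function of degree $m-j$; so $a_j\in S_{\Gamma}^{m-j}$. The main technical point to watch out for is the uniform convergence of the limit on compact subsets of $X\times B$, which is exactly what the symbol estimates \eqref{symbol estimates} provide.
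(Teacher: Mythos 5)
Your proof is correct and follows essentially the same route as the paper: parts (i)--(iv) are the standard symbol-calculus verifications that the paper simply delegates to Taylor's book, and for (v) you use exactly the paper's argument, namely the limit formula $a_0(z,\lambda,b)=\lim_{t\to\infty}t^{-m}a(z,t\lambda,b)$ together with successive recovery of the lower-order terms to transfer $\Gamma$-invariance to each $a_j$. Your explicit treatment of the small-$\lambda$ region and the uniform convergence on compacta is a welcome (and harmless) amount of extra detail beyond what the paper records.
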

\begin{proof}
\emph{(i)}-\emph{(iv)} follow from the chain rule (\cite{Tay81}, p. 37). To prove \emph{(v)}, we note that the terms $a_j$ are uniquely determined by $a$:
\begin{eqnarray*}
a_0(z,\lambda/{|\lambda|},b)=\lim_{\lambda\rightarrow\infty}|\lambda|^{-m}a(z,\lambda,b).
\end{eqnarray*}
The other terms $a_j$ can be successively recovered. Then
\begin{eqnarray*}
\sum_{j=0}^{\infty}|\lambda|^{m-j}a_j(z,\lambda/{|\lambda|},b) \sim a(z,\lambda,b) = a(\gamma z,\lambda,\gamma b) \sim \sum_{j=0}^{\infty}|\lambda|^{m-j}a_j(\gamma z,\lambda/{|\lambda|},\gamma b),
\end{eqnarray*}
so $a_j(\gamma z,\lambda/{|\lambda|},\gamma b)=a_j(z,\lambda/{|\lambda|},b)$ for each $j$ and $\gamma$.
\end{proof}

\begin{defn}\label{class}
Given a symbol $a(z,\lambda,b)$ we define the corresponding pseudodifferential operator $A:=Op(a):=a(z,D)$\index{$Op$, pseudodifferential operator quantization} by
\begin{eqnarray*}
a(z,D)u(z) &=& \int_X\int_{\mathfrak{a^*}}\int_B e^{(i\lambda+\rho)\langle z,b\rangle}e^{(-i\lambda+\rho)\langle w,b\rangle}a(z,\lambda,b)u(w)db \dbar\lambda dw \\
& \hspace{-13mm} = &  \hspace{-6mm} \int_X\int_{\mathfrak{a^*}}\int_B e^{i\lambda(\langle z,b\rangle-\langle w,b\rangle)} e^{\rho(\langle z,b\rangle+\langle w,b\rangle)} a(z,\lambda,b)u(w)db \dbar\lambda dw.
\end{eqnarray*}
Then $A=Op(a)=a(z,D)$ acts on functions $u$ on $X$, for which the integral exists. We write OP$S^m=Op(S^m)$\index{OP$S^m$, pseudodifferential operators in $Op(S^m)$}.
\end{defn}

\begin{thm} Let $a\in S^m$. Then $A=Op(a)=a(z,D)$
\begin{itemize}
\item[(i)] is a continuous operator $A: C_c^{\infty}(X) \rightarrow C^{\infty}(X)$.
\item[(ii)] is a continuous operator $A: \mathcal{E}'(X) \rightarrow \mathcal{D}'(X)$.
\end{itemize}
\end{thm}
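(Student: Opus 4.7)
The plan is to reduce both statements to the manipulation of oscillatory integrals through the non-Euclidean Fourier inversion formula, combined with the spectral identity $L_X e_{\lambda,b} = -(\langle\lambda,\lambda\rangle + \langle\rho,\rho\rangle)\,e_{\lambda,b}$ from \eqref{character of the Laplacian}, the polynomial bounds on the Plancherel density $|c(\lambda)|^{-2}$ from Subsection~\ref{Special functions and the Plancherel density}, and the symbol estimates \eqref{symbol estimates}.

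For (i), I first rewrite
$$Au(z) = \int_{\la^*}\int_B e^{(i\lambda+\rho)\langle z,b\rangle}\,a(z,\lambda,b)\,\tilde{u}(\lambda,b)\,\dbar\lambda\,db,$$
which follows from Definition~\ref{class} by Fubini and Fourier inversion for $u \in C_c^\infty(X)$. Proposition~\ref{rapidly decreasing} shows $\tilde{u}(\lambda,b)$ is rapidly decreasing in $\lambda$, uniformly in $b$ (using compactness of $B$), while $|c(\lambda)|^{-2}$ grows polynomially and $|a(z,\lambda,b)| \leq C_C(1+|\lambda|)^m$ on any compact $C\subset X$. This gives absolute integrability on every compact set, and the same bounds applied to any $z$-derivative of the integrand (each pulling out at most polynomial $\lambda$-factors from $a$ and from $e_{\lambda,b}$) justify differentiation under the integral sign, so $Au\in C^\infty(X)$. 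Continuity $C_c^\infty(X)\to C^\infty(X)$ then follows from the identity
$$\tilde{u}(\lambda,b) = \frac{(-1)^N}{(\langle\lambda,\lambda\rangle + \langle\rho,\rho\rangle)^N}\,\widetilde{L_X^N u}(\lambda,b),$$
which yields $\sup_b|\tilde u(\lambda,b)| \leq C_N(1+|\lambda|)^{-2N}\|L_X^N u\|_{L^1(\supp u)}$ and thereby bounds any Fr\'echet seminorm of $Au$ on a compact set by a finite sum of Fr\'echet seminorms of $u$.

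For (ii), I pass to the formal transpose $A^t$, defined on $\varphi \in C_c^\infty(X)$ by
$$(A^t\varphi)(w) = \int_{\la^*}\int_B e^{(-i\lambda+\rho)\langle w,b\rangle}\,F_\varphi(\lambda,b)\,\dbar\lambda\,db, \quad F_\varphi(\lambda,b) := \int_X e^{(i\lambda+\rho)\langle z,b\rangle}\,a(z,\lambda,b)\,\varphi(z)\,dz.$$
The crucial step is to show $F_\varphi(\lambda,b)$ is rapidly decreasing in $\lambda$, uniformly in $b$. Integrating by parts $N$ times via the self-adjoint operator $L_X$ acting on the $z$-variable produces the factor $(-(\langle\lambda,\lambda\rangle+\langle\rho,\rho\rangle))^{-N}$ outside and $L_X^N\!\bigl(a(\cdot,\lambda,b)\varphi\bigr)$ inside; by Leibniz and \eqref{symbol estimates} each term of the expansion is dominated on $\supp\varphi$ by $C_N(1+|\lambda|)^m$, so $|F_\varphi(\lambda,b)|\leq C_{\varphi,N}(1+|\lambda|)^{m-2N}$, and the same holds for any $b$-derivative (which by \eqref{symbol estimates} again yields symbols of order $m$). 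This rapid decay is exactly the hypothesis of part~(i), and an identical absolute-convergence-plus-differentiation-under-the-integral argument shows $A^t : C_c^\infty(X) \to C^\infty(X)$ is continuous. For $u \in \mathcal{E}'(X)$, setting $\langle Au,\varphi\rangle := \langle u, A^t\varphi\rangle$ is then well-defined because $A^t\varphi \in C^\infty(X)$ pairs with $u \in \mathcal{E}'(X) = C^\infty(X)'$, and defines an element of $\mathcal{D}'(X)$; continuity $\mathcal{E}'(X) \to \mathcal{D}'(X)$ with respect to the weak topologies is then immediate.

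The main obstacle is the uniform-in-$b$ bookkeeping in the $L_X^N$-Leibniz expansion of $a(z,\lambda,b)\varphi(z)$ needed for the rapid decay of $F_\varphi$: since $L_X$ acts only in the $z$-variable while the symbol estimates \eqref{symbol estimates} are formulated for derivatives of $a$ in all three arguments, one must verify that each term produced is still of symbol order at most $m$, uniformly in $b \in B$ and on the compact $\supp\varphi$. Once this symbol-calculus bookkeeping is carried out, the rest of the argument is the same integrable-domination template used in (i).
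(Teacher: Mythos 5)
Your proof is correct and rests on the same core computation the paper uses — iterated integration by parts against $L_X$ via \eqref{character of the Laplacian} to turn the eigenvalue factor $(\langle\lambda,\lambda\rangle+\langle\rho,\rho\rangle)^{-N}$ into rapid decay — but you package part (ii) differently. The paper works directly with the pairing $\langle Au,v\rangle$ and invokes the distributional Paley–Wiener theorem (Theorem \ref{Paley-Wiener for distributions}) to supply the polynomial bound $|\tilde u(\lambda,b)|\leq C(1+|\lambda|)^n$ for $u\in\mathcal{E}'(X)$, then dominates the $\lambda$-integral by the rapid decay of what the paper calls $a_v(\lambda,b)$ (your $F_\varphi$). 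You instead construct the formal transpose $A^t:C_c^\infty(X)\to C^\infty(X)$, prove its continuity by exactly the same estimate on $F_\varphi$, and then define $A$ on $\mathcal{E}'(X)$ by duality. The transpose route avoids the Paley–Wiener machinery entirely and makes the continuity of $u\mapsto Au$ automatic (transpose of a continuous map between locally convex spaces is weak* continuous), whereas the paper's argument proves pointwise convergence of the pairing and leaves the continuity in $u$ slightly more implicit. One small consistency check you leave tacit but should be noted: that for $u\in C_c^\infty(X)\subset\mathcal{E}'(X)$ the duality definition $\langle Au,\varphi\rangle=\langle u,A^t\varphi\rangle$ reproduces the integral definition of $Au$ — this follows from Fubini once the absolute convergence from part (i) is in hand.
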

\begin{proof}
For \emph{(i)}, let $u\in C_c^{\infty}(X)$. The Fourier transform $\tilde{u}(\lambda,b)$ is rapidly decreasing (Prop. \ref{rapidly decreasing}). Hence $Au(z)$ and all of its derivatives are absolutely and uniformly convergent integrals. For \emph{(ii)}, let $u\in\mathcal{E}'(X)$. Then by Theorem \ref{Paley-Wiener for distributions} we have $|\tilde{u}(\lambda,b)|\leq C(1+|\lambda|)^n$ for some $C>0$ and $n>0$. Then for $v\in\mathcal{D}(X)$
\begin{eqnarray*}
\langle Au,v\rangle &=&\int_{X\times\mathfrak{a}^*_+\times B}e^{(i\lambda+\rho)A(z,b)}\overline{v(z)}a(z,\lambda,b)\tilde{u}(\lambda,b)\dbar\lambda dbdz\\
&=& \int_{\mathfrak{a}^*_+\times B}a_v(\lambda,b)\tilde{u}(\lambda,b)\dbar\lambda db,
\end{eqnarray*}
where (using integration by parts via $L_X$ as in the proof of Prop. \ref{rapidly decreasing})
\begin{eqnarray*}
a_v(\lambda,b) &=& \int_X e^{(i\lambda+\rho)A(z,b)}\overline{v(z)}a(z,\lambda,b)dz \\
&=& \left(\frac{+1}{\langle\lambda,\lambda\rangle + \langle\rho,\rho\rangle}\right)^k \int_X e^{(i\lambda+\rho)A(z,b)}L_X^k(\overline{v}a)dz.
\end{eqnarray*}
Thus $|a_v(\lambda,b)|\leq C_k(v,a)\left(\langle\lambda,\lambda\rangle + \langle\rho,\rho\rangle\right)^{m-k}$ for any $k\in\nn_0$, where $C_k(v,a)$ depends on the $C_{\supp v}^{2k}$-norm of $v$ \eqref{seminorms}. The order of the Plancherel density is $s:=\dim N$. Choose $k$ large enough to finish the proof.
\end{proof}

\begin{defn}\label{properly supported}
\begin{itemize}
\item[(1)] Let $\mathcal{L}^m$\index{$\mathcal{L}^m$, properly supported operators in OPS$^m$}, $\mathcal{L}^m_{\Gamma}$\index{$\mathcal{L}^m_{\Gamma}$, properly supported operators in OP$S^m_{\Gamma}$}, denote the properly supported operators with symbols in their respective symbol spaces. 
\item[(2)] Let $d_X(z,w)$ denote the non-Euclidean distance from $z\in X$ to $w\in X$. We say that $A\in\mathcal{L}^m$ is \emph{uniformly properly supported}\index{uniformly properly supported} if there exists a constant $d_0>0$ such that $k_A(z,w)=0$ for all $z$ and $w$ with $d(z,w)\geq d_0$.
\end{itemize}
\end{defn}

\subsection{The Kohn-Nirenberg operator}\label{The Kohn-Nirenberg operator}
For $G=SU(1,1)\cong SL(2,\rr)$, it is proven in \cite{Z86} that the non-Euclidean operator classes (\ref{class}) are contained in the classical space of pseudodifferential operators (\cite{Hor3}). Proofs of these facts are based on the equivalence of phase functions and amplitudes in the definitions of operators. We note that equivalence of phase functions generalizes to arbitrary symmetric spaces (see \cite{Z86} for references, similar results are announced by N. Anantharaman and L. Silberman). The problem is to show, at least in the case of rank one spaces, that the symplectic volume element of $T^*(G/K)$, if expressed in $(z,\lambda,b)$-coordinates, is asymptotically equivalent to the measure $e^{2\rho\langle z,b\rangle}\dbar\lambda db dz$. This is an open problem to me, and I will not go into any more detail at this point. In this section, we build up the analysis of the operator $U: C_c^{\infty}(X\times\mathfrak{a}^*_+\times B)\rightarrow C^{\infty}(X\times\mathfrak{a}^*_+\times B)$,
\begin{eqnarray}
&& \hspace{-10mm}Ua(z,\lambda,b) := \\
&& e^{-(i\lambda+\rho)\langle z,b\rangle}\int_X\int_{\mathfrak{a}^*_+}\int_B e^{(i\mu+\rho)\langle z,b'\rangle}e^{(i\lambda+\rho)\langle w,b\rangle}e^{(-i\mu+\rho)\langle w,b' \rangle} a(w,\mu,b')\dbar\mu dw db'.\nonumber
\end{eqnarray}
In the non-Euclidean calculus of pseudodifferential operators, proofs of many facts are based on the properties of this Kohn-Nirenberg operator\index{$U$, Kohn-Nirenberg operator}, which is the composition of the quantization map $a\mapsto Op(a)$ and the symbol map.

\begin{lem}\label{lemma U isometry}
$U$ is an isometry of $L^2(X\times\mathfrak{a}^*_+\times B,e^{2\rho\langle z,b\rangle}dz \dbar\lambda db)$.
\end{lem}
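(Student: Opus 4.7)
The plan is to view $U$ as the composition of two (complex-conjugated) Helgason-Fourier transforms acting in different pairs of variables, and then apply the Plancherel theorem (Theorem~\ref{Plancherel formula}) twice. First, I would rewrite the defining formula by moving the prefactor into the integrand and isolating the $(\mu,b')$-integrations into an auxiliary kernel
\begin{equation*}
\psi(z,w) \;:=\; \int_{\mathfrak{a}^*_+}\int_B e^{(i\mu+\rho)\langle z,b'\rangle}\,e^{(-i\mu+\rho)\langle w,b'\rangle}\,a(w,\mu,b')\,\dbar\mu\,db',
\end{equation*}
so that $e^{(i\lambda+\rho)\langle z,b\rangle}\,Ua(z,\lambda,b) = \int_X e^{(i\lambda+\rho)\langle w,b\rangle}\,\psi(z,w)\,dw$. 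Since $\lambda$ and $\langle z,b\rangle$ are real, $|e^{(i\lambda+\rho)\langle z,b\rangle}|^2 = e^{2\rho\langle z,b\rangle}$, so the weight of the Hilbert space measure exactly cancels the modulus squared of the prefactor appearing in the definition of $U$.

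Next, for fixed $z$, I would apply Plancherel in the $(\lambda,b)$-variables. For real $\lambda$ the integral $\int_X e^{(i\lambda+\rho)\langle w,b\rangle}\psi(z,w)\,dw$ is the complex conjugate of the Helgason-Fourier transform of $w\mapsto\overline{\psi(z,w)}$, and Theorem~\ref{Plancherel formula} gives
\begin{equation*}
\int_{\mathfrak{a}^*_+}\int_B |Ua(z,\lambda,b)|^2\,e^{2\rho\langle z,b\rangle}\,\dbar\lambda\,db \;=\; \int_X |\psi(z,w)|^2\,dw.
\end{equation*}
Symmetrically, for fixed $w$, define $\alpha_w(\mu,b'):=e^{(-i\mu+\rho)\langle w,b'\rangle}\,a(w,\mu,b')$ and observe that $z\mapsto\psi(z,w)$ equals $F^{*}\alpha_w(z)$, where $F^{*}$ is the adjoint of the Fourier transform $F\colon L^2(X)\to L^2(\mathfrak{a}^*_+\times B,\dbar\lambda\,db)$; a short duality computation shows $F^{*}\alpha(z) = \int_{\mathfrak{a}^*_+\times B}\alpha(\lambda,b)\,e^{(i\lambda+\rho)\langle z,b\rangle}\,\dbar\lambda\,db$. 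Because $F$ is unitary, so is $F^{*}$, whence
\begin{equation*}
\int_X |\psi(z,w)|^2\,dz \;=\; \int_{\mathfrak{a}^*_+}\int_B |\alpha_w(\mu,b')|^2\,\dbar\mu\,db' \;=\; \int_{\mathfrak{a}^*_+}\int_B e^{2\rho\langle w,b'\rangle}\,|a(w,\mu,b')|^2\,\dbar\mu\,db'.
\end{equation*}

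Combining the two identities by Fubini's theorem yields $\|Ua\|^2 = \|a\|^2$, first on a dense class of $a$ (say $a\in C_c^\infty(X\times\mathfrak{a}^*_+\times B)$) and then on all of $L^2$ by continuous extension. The main obstacle is purely bookkeeping: one must track that the kernel of $U$ decomposes cleanly as the product of an ``inverse''-type Fourier integral in $(\mu,b')$ producing $\psi(z,w)$ followed by a ``conjugate'' Fourier integral in $(\lambda,b)$, and verify that the weight $e^{2\rho\langle z,b\rangle}$ built into the Hilbert-space measure is precisely the factor produced by the modulus of the non-Euclidean plane wave. Once these identifications are made, two applications of Plancherel complete the argument.
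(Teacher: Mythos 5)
Your proof is correct and rests on the same underlying tools as the paper's — two applications of the Helgason--Plancherel identity and the unitarity of the Fourier transform $F\colon L^2(X)\to L^2(\mathfrak a^*_+\times B,\dbar\lambda\,db)$. The paper proceeds by writing out $\langle Ua,Ua\rangle$ as a ninefold integral and collapsing it in stages using the inversion/Plancherel identities (first integrating out $dz\,\dbar\mu_2\,db_2$, then $\dbar\lambda\,db\,dw_2$), whereas you factor $U$ cleanly: introduce the auxiliary kernel $\psi(z,w)=\int e^{(i\mu+\rho)\langle z,b'\rangle}e^{(-i\mu+\rho)\langle w,b'\rangle}a(w,\mu,b')\,\dbar\mu\,db'$, recognise $e^{(i\lambda+\rho)\langle z,b\rangle}Ua$ as a conjugate Fourier transform of $\psi(z,\cdot)$, and recognise $\psi(\cdot,w)$ as $F^*$ applied to $\alpha_w$, applying Plancherel once in $(\lambda,b)$ and once (via unitarity of $F^*$) in $z$, joined by Fubini. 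The two arguments are equivalent, but your factorisation is tidier to read and makes explicit the cancellation $|e^{(i\lambda+\rho)\langle z,b\rangle}|^2 = e^{2\rho\langle z,b\rangle}$ against the Hilbert-space weight, which is somewhat buried in the paper's bookkeeping; the paper's version is more self-contained in that it never names $F^*$. Both implicitly argue on a dense class and extend by continuity, which you state explicitly and correctly.
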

\begin{proof}
The Fourier inversion formula \eqref{Fourier inversion formula} says that each sufficiently regular function $f$ on $X$ satisfies
\begin{itemize}
\item[(1)] $f(z) = \int_{\mathfrak{a}^*_+}\int_B\int_X e^{(i\lambda+\rho)\langle z,b\rangle}e^{(-i\lambda+\rho)\langle w,b\rangle}f(w)\,dw\,\dbar\lambda\,db$,
\item[(2)] $\tilde{f}(\lambda,b) = \int_X\int_{\mathfrak{a}^*_+}\int_B e^{(-i\lambda+\rho)\langle z,b\rangle}e^{(i\mu+\rho)\langle z,\tilde{b}\rangle} \tilde{f}(\mu,\tilde{b}) \, \dbar\mu\,d\tilde{b}\,dz$.
\end{itemize}
Let $a\in L^2(X\times\mathfrak{a}^*_+\times B,e^{2\rho\langle z,b\rangle}dzdb\dbar\lambda)$ and for the moment, let $\langle\hspace{1mm}|\hspace{1mm}\rangle$ denote the $L^2$-inner product. Let the overline denote complex conjugation. Then $\langle Ua| Ua\rangle$ is the ninefold integral
\begin{eqnarray*}
\langle Ua| Ua\rangle &=& \int e^{(i\mu+\rho)\langle z,b_1\rangle} e^{(i\lambda+\rho)\langle w_1,b\rangle} e^{(-i\mu_1+\rho)\langle w_1,b_1\rangle} a(w_1,\mu_1,b_1) \\
&& \hspace{5mm} e^{(-i\mu+\rho)\langle z,b_2\rangle} e^{(-i\lambda+\rho)\langle w_2,b\rangle} e^{(i\mu_1+\rho)\langle w_2,b_2\rangle} \overline{a}(w_2,\mu_2,b_2) \\
&& \hspace{5mm} \dbar\lambda db dz \dbar\mu_1 db_1 dw_1 d\mu_2 db_2 dw_2,
\end{eqnarray*}
where integration is over $(X\times\mathfrak{a}^*_+\times B)\times(X\times\mathfrak{a}^*_+\times B)\times(X\times\mathfrak{a}^*_+\times B)$.

If we do the $dz\,\dbar\mu_2\,db_2$ integral first, it follows from formula (2) above that
\begin{eqnarray*}
\langle Ua| Ua\rangle &=& \int e^{(-i\mu_1+\rho)\langle w_1,b_1\rangle} e^{(i\lambda+\rho)\langle w_1,b\rangle} e^{(-i\lambda+\rho)\langle w_2,b\rangle} e^{(i\mu_1+\rho)\langle w_2,b_1\rangle} \\
&& \hspace{5mm} a(w_1,b_1,\mu_1) \overline{a}(w_2,b_1,\mu_1) \dbar\lambda db \dbar\mu_1 db_1 dw_1 dw_2.
\end{eqnarray*}
Doing the $dz\,\dbar\lambda\,db$ integral next, formula (1) above yields
\begin{eqnarray*}
\int e^{(-i\mu_1+\rho)\langle w_1,b_1\rangle}e^{(i\mu_1+\rho)\langle w_1,b_1\rangle} |a(w_1,\mu_1,b_1)|^2  dw_1 d\mu_1 db_1 = \langle a|a\rangle,
\end{eqnarray*}
and the lemma is proven.
\end{proof}

\begin{rem}\label{remark above}
Consider the operator
\begin{eqnarray*}
\widetilde{F}a(z,\lambda,b) &=& e^{-\rho\langle z,b\rangle} \int_X\int_{\mathfrak{a}^*_+}\int_B e^{(-i\lambda+\rho)\langle w,b\rangle} e^{-(i\mu+\rho)\langle z,b'\rangle} \\
&& \hspace{30.5mm} \times e^{\rho\langle w,b'\rangle} a(w,\mu,b')\, db'\,\dbar\mu \, dw.
\end{eqnarray*}
Using the Fourier inversion formula as above, one checks that $\widetilde{F}$ is an isometry of $L^2(X\times\mathfrak{a}^*_+\times B,e^{2\rho\langle z,b\rangle}dzdb\dbar\lambda)$. Then, $\widetilde{F}$ is inverted by
\begin{eqnarray*}
\widetilde{G}a(z,\lambda,b) &=& e^{-\rho\langle z,b\rangle} \int_X\int_{\mathfrak{a}^*_+}\int_B e^{(i\lambda+\rho)\langle w,b\rangle} e^{(i\mu+\rho)\langle z,b'\rangle} \\
&& \hspace{30.5mm} \times \, e^{\rho\langle w,b'\rangle} a(w,\mu,b')\, db'\,\dbar\mu \, dw.
\end{eqnarray*}
By definition we have $U = e^{-i\lambda\langle z,b\rangle}\widetilde{G}e^{-i\mu\langle w,b'\rangle}$ and $U^{-1} = e^{i\lambda\langle z,b\rangle}\widetilde{F}e^{i\mu\langle w,b'\rangle}$, which shows that $U$ is invertible.
\end{rem}

\begin{prop}\label{U unitary}
$U$ is a unitary operator on $L^2(X\times\mathfrak{a}^*_+\times B,e^{2\rho\langle z,b\rangle}dz \dbar\lambda db)$ and commutes with each $g\in G$, that is $UT_g=T_g U$ in the notation of \eqref{invariant function}.
\end{prop}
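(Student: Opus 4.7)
The plan is to deduce unitarity directly from what precedes the proposition: Lemma~\ref{lemma U isometry} already shows that $U$ is an isometry of $L^2(X\times\mathfrak{a}^*_+\times B, e^{2\rho\langle z,b\rangle}\,dz\,\dbar\lambda\,db)$, and Remark~\ref{remark above} exhibits, via the auxiliary maps $\widetilde F$ and $\widetilde G$, an explicit two-sided inverse $U^{-1}=e^{i\lambda\langle z,b\rangle}\widetilde F e^{i\mu\langle w,b'\rangle}$. Since $U$ is a surjective isometry, it is unitary. So nothing further is needed for that half of the statement.

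For the equivariance $UT_g=T_gU$, I would interpret $T_g$ as acting on functions of $(z,\lambda,b)$ by $(T_ga)(z,\lambda,b)=a(gz,\lambda,gb)$, so that invariance in the sense of \eqref{invariant function} becomes $T_ga=a$. The natural approach is to compute $T_g(Ua)(z,\lambda,b)=Ua(gz,\lambda,gb)$ from the defining integral and then perform the change of variables $w\mapsto g\cdot w$, $b'\mapsto g\cdot b'$. The measure $dw$ on $X$ is $G$-invariant, while Corollary~\ref{Jacobian} supplies the Jacobian $d(g\cdot b')/db'=e^{-2\rho\langle g\cdot o,g\cdot b'\rangle}$ for the boundary substitution.

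The key algebraic step is the transformation of the four horocycle brackets in the exponents using the cocycle identity \eqref{equivariance}:
\begin{align*}
\langle gz,gb'\rangle &= \langle z,b'\rangle + \langle g\cdot o,g\cdot b'\rangle,\qquad
\langle g\tilde w,gb\rangle = \langle \tilde w,b\rangle + \langle g\cdot o,g\cdot b\rangle,\\
\langle g\tilde w,g\tilde b'\rangle &= \langle \tilde w,\tilde b'\rangle + \langle g\cdot o,g\cdot b'\rangle,\qquad
\langle gz,gb\rangle = \langle z,b\rangle + \langle g\cdot o,g\cdot b\rangle.
\end{align*}
The terms involving $\langle g\cdot o,g\cdot b'\rangle$ enter the integrand with coefficients $(i\mu+\rho)+(-i\mu+\rho)=2\rho$, which is exactly annihilated by the Jacobian factor $e^{-2\rho\langle g\cdot o,g\cdot b'\rangle}$. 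The terms involving $\langle g\cdot o,g\cdot b\rangle$ appear in the prefactor with coefficient $-(i\lambda+\rho)$ and in the $w,b$-exponential with coefficient $+(i\lambda+\rho)$, so they also cancel.

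After these cancellations the integral reproduces the defining integral of $U$ applied to the function $(w,\mu,b')\mapsto a(g\cdot w,\mu,g\cdot b')=(T_ga)(w,\mu,b')$, yielding $T_g(Ua)=U(T_ga)$. The main obstacle here is purely bookkeeping: one has to keep track that the three exponentials carrying $b'$ produce precisely the $2\rho$-cocycle that cancels the Jacobian, and that the $g$-contribution at $b$ appears with opposite signs in the prefactor and in the inner exponential. Both cancellations are forced by the structure $-(i\lambda+\rho)+(i\lambda+\rho)=0$ and $(i\mu+\rho)+(-i\mu+\rho)-2\rho=0$, which is the only nontrivial observation required.
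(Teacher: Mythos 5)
Your proposal is correct and takes essentially the same route as the paper: unitarity is obtained exactly as you say from Lemma~\ref{lemma U isometry} together with the explicit inverse in Remark~\ref{remark above}, and the equivariance is proven by the same substitutions $w\mapsto g\cdot w$, $b'\mapsto g\cdot b'$, using the $G$-invariance of $dw$, the cocycle identity \eqref{equivariance} and the Jacobian $\frac{d(g\cdot b')}{db'}=e^{-2\rho\langle g\cdot o,g\cdot b'\rangle}$, with precisely the two cancellations ($2\rho$ against the Jacobian in $b'$, and $\mp(i\lambda+\rho)$ in $b$) that you identify.
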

\begin{proof}
$U$ is unitary on $L^2(X\times\mathfrak{a}^*_+\times B,e^{2\rho\langle z,b\rangle}dz \dbar\lambda db)$ by Lemma \ref{lemma U isometry} and Remark \ref{remark above}. For a proof of $U T_g = T_g U$ note that $dw$ is $G$-invariant, so
\begin{eqnarray*}
Ua(gz,\lambda,gb) = \\
&&\hspace{-35mm} e^{-(i\lambda+\rho)\langle gz,gb\rangle}\int_X\int_{\mathfrak{a}^*_+}\int_B e^{(i\mu+\rho)\langle gz,b'\rangle}e^{(i\lambda+\rho)\langle gw,gb\rangle}e^{(-i\mu+\rho)\langle gw,b' \rangle} a(gw,\mu,b')\dbar\mu dw db'.
\end{eqnarray*}
If we substitute $b'\mapsto g\cdot b'$ and use $\langle gz,gb'\rangle=\langle z,b'\rangle+\langle g\cdot o,b'\rangle$ and $\frac{dg\cdot b'}{db'} = e^{-2\rho\left\langle g\cdot o, g\cdot b' \right\rangle}$, the integral becomes
\begin{eqnarray*}
&& e^{-(i\lambda+\rho)\langle z,b\rangle}\int_X\int_{\mathfrak{a}^*_+}\int_B e^{(i\mu+\rho)\langle z,b'\rangle}e^{(i\lambda+\rho)\langle w,b\rangle}e^{(-i\mu+\rho)\langle w,b' \rangle} a(gw,\mu,gb')\dbar\mu dw db' \\
&&\hspace{5mm} = \,\, U(a\circ g)(z,\lambda,b),
\end{eqnarray*}
where $(f\circ g)(z,\lambda,b)=f(gz,\lambda,gb)$. The proposition follows.
\end{proof}

\subsubsection{A convolution formula}
Given two functions $a$ and $b$ on $G$, at least one with compact support, their \emph{convolution product}\index{$\ast$, convolution product}\index{convolution product} $a\ast b$ is defined by
\begin{eqnarray}
(a\ast b)(h) := \int_G a(g)b(g^{-1}h)dg, \,\,\,\,\,\,\,\,\,\,\, h\in G.
\end{eqnarray}
Since $G$ is locally compact and unimodular we may substitute $g\mapsto hg$ and then change $g$ into $g^{-1}$. The unimodularity and the $G$-invariance of $dg$ yield
\begin{eqnarray}\label{convolution formula}
(a\ast b)(h) = \int_G a(hg^{-1})b(g)dg.
\end{eqnarray}
This convolution descends to convolution of $M$-invariant functions on $G$, which we also denote by $\ast$: If $\pi$ denotes the projection $G\rightarrow G/M$ and if $f$ is a function on $G/M$, then, $f\circ\pi$ is an $M$-right-invariant function on $G$. Convolution on $G/M$ is then defined via
\begin{eqnarray*}
(a\ast b)\circ \pi := (a\circ\pi)\ast(b\circ\pi),
\end{eqnarray*}
where $a$ and $b$ denote functions on $G/M$. Written out, this means
\begin{eqnarray}\label{convolution on G/M}
(a\ast b)(hM) = \int_G a(gM)b(g^{-1}hM)dg.
\end{eqnarray}
To see this is well-defined, let $a$ and $b$ be $M$-invariant functions on $G$, such that the convolution integral $a\ast b$ exists. Given $g\in G$, $m\in M$, observe
\begin{eqnarray}\label{convolution well-defined}
(a\ast b)(gm) = \int_G a(h)b(h^{-1}gm)dh = \int_G a(h)b(h^{-1}g)dh = (a\ast b)(g).
\end{eqnarray}
It follows that $a\ast b$ is invariant and thus the convolution product is well-defined.

We identify functions on $X\times B$ and functions on $G/M$. The non-Euclidean Fourier analysis is written in $X\times B$-terms (for example using horocycle bracket), but it is often more convenient to work with the space $G/M$ (and Iwasawa projections) instead. We then observe that under $G/M\cong X\times B$ \eqref{convolution on G/M} corresponds to the convolution on $X\times B$ defined by
\begin{eqnarray}
(a\ast b)(z,b) = \int_G a(g\cdot(o,M))b(g^{-1}\cdot(z,b))dg, \,\,\,\,\,\,\,\,\,\,\, (z,b)\in X\times B,
\end{eqnarray}
where $\cdot$ denotes the action of $G$ on $X\times B$. The integral exists whenever at least one of the functions $a$ and $b$ has compact support.

Given $\mu,\lambda\in\mathfrak{a}^*_+$, we write
\begin{eqnarray}
E_{\mu,\lambda}: X\times B \rightarrow \cc, \,\,\,\,\,\,\, (z,b)\mapsto e^{(i\mu+\rho)\langle z,M\rangle}e^{-(i\lambda+\rho)\langle z,b\rangle}.\index{$E_{\mu,\lambda}$, function}
\end{eqnarray}
In order to rewrite $E_{\mu,\lambda}$ in terms of $G/M$, recall that $(z,b)=(gK,g\cdot M)\in X\times B$ corresponds to $gM\in G/M$. We have $\langle z,b\rangle =-H(g^{-1}k(g)) = H(g)$ and $\langle z,M\rangle =-H(g^{-1})$ by Corollary \ref{corollary bracket}. Hence
\begin{eqnarray}\label{is well-defined}
E_{\mu,\lambda}: G/M \rightarrow \cc, \,\,\,\,\,\,\, gM\mapsto e^{-(i\mu+\rho)H(g^{-1})}e^{-(i\lambda+\rho)H(g)}.
\end{eqnarray}
Note that \eqref{is well-defined} is well-defined since the Iwasawa projection $H$ is $M$-biinvariant.

\begin{prop}\label{U formula one}
Let $a\in C_c^{\infty}(X\times\mathfrak{a}^*_+\times B)$. Then
\begin{eqnarray}
Ua(z,\lambda,b) = \int_{\mathfrak{a}^*_+}(a(\cdot,\mu,\cdot)\ast E_{\mu,\lambda})(z,b)\dbar\mu
\end{eqnarray}
\end{prop}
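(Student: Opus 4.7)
The plan is to unfold the convolution on the right-hand side, convert the resulting integral over $G$ into an integral over $X\times B$ by means of formula \eqref{m-invariant integral}, and then match the exponents term-by-term with the defining integral of $Ua$. First I would write out
\begin{eqnarray*}
(a(\cdot,\mu,\cdot)\ast E_{\mu,\lambda})(z,b) = \int_G a(g\cdot o,\mu,g\cdot M)\, E_{\mu,\lambda}(g^{-1}\cdot z,g^{-1}\cdot b)\, dg,
\end{eqnarray*}
set $w:=g\cdot o$, $b':=g\cdot M$, so that the first factor becomes $a(w,\mu,b')$.

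Next, I would evaluate $E_{\mu,\lambda}$ at the shifted point using the translation formulae in Lemma \ref{formulae for bracket}. Applying (i) with $gM=b'$ gives $\langle g^{-1}z,M\rangle=\langle z,b'\rangle-\langle w,b'\rangle$, while (ii) gives $\langle g^{-1}z,g^{-1}b\rangle=\langle z,b\rangle-\langle w,b\rangle$. Substituting into the definition $E_{\mu,\lambda}(z',b'')=e^{(i\mu+\rho)\langle z',M\rangle}e^{-(i\lambda+\rho)\langle z',b''\rangle}$ yields
\begin{eqnarray*}
E_{\mu,\lambda}(g^{-1}\cdot z,g^{-1}\cdot b)=e^{(i\mu+\rho)(\langle z,b'\rangle-\langle w,b'\rangle)}\,e^{-(i\lambda+\rho)(\langle z,b\rangle-\langle w,b\rangle)}.
\end{eqnarray*}
Specializing (i) and (ii) to $g=m\in M$ (where $m\cdot o=o$, $m\cdot M=M$) shows $E_{\mu,\lambda}$ is $M$-invariant, so the whole integrand is right-$M$-invariant in $g$ and the $G$-integral reduces to one over $G/M$.

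Then, using \eqref{m-invariant integral} to pass from $G/M$ to $X\times B$ introduces the Jacobian $e^{2\rho\langle w,b'\rangle}$. The crucial simplification is that this Jacobian combines with the factor $e^{-(i\mu+\rho)\langle w,b'\rangle}$ coming from $-\langle w,b'\rangle$ above to give $e^{(-i\mu+\rho)\langle w,b'\rangle}$, precisely the factor that appears in the definition of $Ua$. Meanwhile the $b$-dependent exponents from $E_{\mu,\lambda}$ collect as $e^{(i\lambda+\rho)\langle w,b\rangle}$ times the overall prefactor $e^{-(i\lambda+\rho)\langle z,b\rangle}$, and integrating finally against $\dbar\mu$ reproduces exactly the threefold integral defining $Ua(z,\lambda,b)$.

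The main obstacle is simply the bookkeeping of these four exponentials and making sure the direction of each equivariance identity is correct; once the Jacobian $e^{2\rho\langle w,b'\rangle}$ is correctly inserted by \eqref{m-invariant integral}, the rest is algebra. The compact support of $a$ in the $z$-variable and the rapid decay of the Helgason transform in $\mu$ (Prop.\ \ref{rapidly decreasing}) justify all applications of Fubini.
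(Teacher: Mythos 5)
Your proposal is correct and follows exactly the same route as the paper's proof: unfold the convolution on $G$, use Lemma \ref{formulae for bracket} to rewrite the horocycle brackets of $E_{\mu,\lambda}$ at the shifted arguments, check right-$M$-invariance of the integrand so the $G$-integral descends to $G/M$, pass to $X\times B$ via the Jacobian in \eqref{m-invariant integral}, and observe that $e^{2\rho\langle w,b'\rangle}\,e^{-(i\mu+\rho)\langle w,b'\rangle}=e^{(-i\mu+\rho)\langle w,b'\rangle}$ while the $b$-terms assemble into $e^{-(i\lambda+\rho)\langle z,b\rangle}e^{(i\lambda+\rho)\langle w,b\rangle}$. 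One small imprecision: your appeal to Proposition \ref{rapidly decreasing} is unnecessary here, since $a$ already has compact support in the $\mu$-variable by hypothesis, so the $\dbar\mu$-integration is over a compact set and requires no decay argument.
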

\begin{proof}
Note that we sometimes write $a(z,b,\lambda)$ instead of $a(z,\lambda,b)$ for simplicity of notation (when a group action is involved). Consider the integral
\begin{eqnarray*}
(a(\cdot,\mu,\cdot)\ast E_{\mu,\lambda})(z,b) &=& \int_G E_{\mu,\lambda}(g^{-1}\cdot (z,b)) a(g\cdot(o,M),\mu) \, dg \\
&=& \int_G e^{(i\mu+\rho)\langle g^{-1}z,M\rangle}e^{-(i\lambda+\rho)\langle g^{-1}z,g^{-1}\cdot b\rangle} a(g\cdot(o,M),\mu) \, dg.
\end{eqnarray*}
We fix $z\in X$, $\lambda,\mu\in\mathfrak{a}^*_+$, $b\in B$ and write
\begin{eqnarray*}
f(g) = e^{(i\mu+\rho)\langle g^{-1}z,M\rangle}e^{-(i\lambda+\rho)\langle g^{-1}z,g^{-1}\cdot b\rangle} a(g\cdot(o,M),\mu).
\end{eqnarray*}
We claim that $f$ is $M$-invariant and hence a function on $G/M$. The action of $m$ on $X\times B$ leaves $(o,M)\in X\times B$ fixed. Recall that $\langle z,b\rangle$ is invariant under the diagonal action of $K$ on $X\times B$. Thus $\langle m^{-1}g^{-1}z,m^{-1}g^{-1}\cdot b\rangle=\langle g^{-1}z,g^{-1}\cdot b\rangle$ and $\langle m^{-1}g^{-1}z,M\rangle=\langle m^{-1}g^{-1}z,m^{-1}M\rangle=\langle g^{-1}z,M\rangle$, and hence
\begin{eqnarray*}
f(gm) &=& e^{(i\mu+\rho)\langle m^{-1}g^{-1}z,M\rangle}e^{-(i\lambda+\rho)\langle m^{-1}g^{-1}z,m^{-1}g^{-1}\cdot b\rangle} a(gm\cdot(o,M),\mu) \\
&=& e^{(i\mu+\rho)\langle g^{-1}z,M\rangle}e^{-(i\lambda+\rho)\langle g^{-1}z,g^{-1}\cdot b\rangle} a(g\cdot(o,M),\mu) = f(g).
\end{eqnarray*}
We have $\langle g^{-1}\cdot z,M\rangle = \langle z,g\cdot M\rangle - \langle g\cdot o, g\cdot M\rangle$ and $\langle g^{-1}z,g^{-1}b\rangle = \langle z,b\rangle - \langle g\cdot o,b\rangle$ by Lemma \ref{formulae for bracket} and thus
\begin{eqnarray*}
f(g) &=& e^{(i\mu+\rho)\langle g^{-1}z,M\rangle}e^{-(i\lambda+\rho)\langle g^{-1}z,g^{-1}\cdot b\rangle} a(g\cdot(o,M),\mu) \\
&=& e^{(i\mu+\rho)(\langle z,g\cdot M\rangle-\langle g\cdot o,g\cdot M\rangle)}e^{-(i\lambda+\rho)(\langle z,b\rangle-\langle g\cdot o,b\rangle)} a(g\cdot o,\mu,g\cdot M).
\end{eqnarray*}
Then by Corollary \ref{m-invariant integral}
\begin{eqnarray*}
(a(\cdot,\mu,\cdot)\ast E_{\mu,\lambda})(z,b) &=& \int_G f(g) \, dg \\
&\hspace{-45mm}=& \hspace{-22mm}\int_X\int_B e^{(i\mu+\rho)(\langle z,b'\rangle - \langle w,b'\rangle)} e^{-(i\lambda+\rho)(\langle z,b\rangle-\langle w,b\rangle)} \, a(w,\mu,b') e^{2\rho\langle w,b'\rangle} \, dw \, db' \\
&\hspace{-45mm}=& \hspace{-22mm} e^{-(i\lambda+\rho)\langle z,b\rangle}\int_X\int_B e^{(i\mu+\rho)\langle z,b'\rangle} e^{(i\lambda+\rho)\langle w,b\rangle} e^{(-i\mu+\rho)\langle w,b' \rangle} a(w,\mu,b') \, dw \, db',
\end{eqnarray*}
and integrating against $\mu\in\mathfrak{a}^*_+$ yields
\begin{eqnarray*}
\int_{\mathfrak{a}^*_+}(a(\cdot,\mu,\cdot)\ast E_{\mu,\lambda})(z,b)\dbar\mu \\
&\hspace{-90mm}=& \hspace{-45mm} e^{-(i\lambda+\rho)\langle z,b\rangle} \int_{\mathfrak{a}^*_+}\int_X\int_B e^{(i\mu+\rho)\langle z,b'\rangle} e^{(i\lambda+\rho)\langle w,b\rangle} e^{(-i\mu+\rho)\langle w,b' \rangle} \, a(w,\mu,b')\dbar\mu \, dw \, db' \\
&\hspace{-90mm}=& \hspace{-45mm} Ua(z,\lambda,b),
\end{eqnarray*}
as desired.
\end{proof}

\subsubsection{Asymptotic expansions in the rank one case}\label{Asymptotic expansions}
Let $X=G/K$ have rank one. We identify $\la$ and $\la^*$ with $\rr$ by means of the Killing form and make no difference between $\la^*_{+}$ and the positive real numbers $\rr^+$: The unit vector $H\in\la^+$ is identified with the real number $1$. Let $a(z,\lambda,b)\in C_c^{\infty}(X\times B\times\la_+^*)$. Recall the definition
\begin{eqnarray*}
Ua(z,\lambda,b) = \\
&\hspace{-40mm}&\hspace{-30mm} \int_{X\times B\times\mathfrak{a}^*_{+}} \hspace{-3mm} e^{-(i\lambda+\rho)\langle z,b\rangle} e^{(i\mu+\rho)\langle z,b'\rangle} e^{(i\lambda+\rho)\langle w,b\rangle} e^{(-i\mu+\rho)\langle w,b'\rangle} a(w,\mu,b') |c(\mu)|^{-2} \, d\mu \, db' \, dw.
\end{eqnarray*}
We collect the $\lambda$-terms and the $\rho$-terms in the integral defining $Ua$, change variables from $\mu$ to $\lambda\mu$ and factor out $\lambda$ from the phase function to find
\begin{eqnarray}\label{coordinate change 0}
Ua(z,\lambda,b) &=& \int_{X\times B\times\mathfrak{a}^*_{+}} e^{i\lambda[\langle w,b\rangle-\langle z,b\rangle+\mu(\langle z,b'\rangle-\langle w,b'\rangle)]} \, e^{\rho[\langle w,b\rangle+\langle w,b'\rangle+\langle z,b'\rangle-\langle z,b\rangle]} \nonumber \\
&& \hspace{20mm} \times \,\,\,\, a(w,\lambda\mu,b') \, \frac{\lambda}{|c(\lambda\mu)|^{2}} \, dw \, db' \, d\mu.
\end{eqnarray}
Hence we have an oscillatory integral $Ua = \int e^{i\lambda\psi} \alpha dx$ with phase function 
\begin{eqnarray}\label{as given in}
\psi_{z,b}(w,\mu,b') = \langle w,b\rangle-\langle z,b\rangle+\mu(\langle z,b'\rangle-\langle w,b'\rangle).
\end{eqnarray}

Let $(z,b)=(g\cdot o,g\cdot M), \, (w,b')=(h\cdot o,h\cdot M)\in X\times B$ correspond to $gM\in G/M$ and $hM\in G/M$, respectively. Then by Corollary \ref{corollary bracket}
\begin{itemize}
\item[(1)] $\langle z,b\rangle = H(g)$,
\item[(2)] $\langle z,b'\rangle = -H(g^{-1}k(h)) = -H(g^{-1}h) + H(h)$,
\item[(3)] $\langle w,b\rangle = -H(h^{-1}k(g)) = -H(h^{-1}g) + H(g)$,
\item[(4)] $\langle w,b'\rangle = H(h)$.
\end{itemize}
It follows that in terms of $G/M$ the function $\psi_{z,b}$ has the form
\begin{eqnarray}\label{in this form}
\psi_{g}(h,\mu) = -H(h^{-1}g) - \mu(H(g^{-1}h)).
\end{eqnarray}
Note that $\psi_{g}(h,\mu)$ is right-$M$-invariant in both $g$ and $h$. Writing $h=nak$, we get for $gM=eM$
\begin{eqnarray*}
\psi_{eM}(n,a,k,\mu) = \log(a) - \mu H(nak).
\end{eqnarray*}
Writing $h^{-1}=nak$, we get for $gM=eM$
\begin{eqnarray*}
\psi_{eM}(n,a,k,\mu) = \mu\log(a) - H(nak).
\end{eqnarray*}
These functions are defined on $N\times A\times K/M\times\rr^+$. As proven in Subsection \ref{Critical sets and Hessian forms}, the unique critical point of $\psi_{eM}$ is $(n,a,kM,\mu)=(e,e,eM,1)$ (and the Hessian form at the critical point is non-degenerate). Under the natural diffeomorphisms
\begin{eqnarray*}
N\times A\times K/M \cong X\times B \cong G/M,
\end{eqnarray*}
the critical point corresponds to $(hM,\mu)=(eM,1)$ in $G/M\times\rr^+$, so if $\psi_{eM}=\psi$ is as in \eqref{in this form} and $gM=eM$, the critical point is $(hM,\mu)=(eM,1)$. But $\psi_{gM}(h,\mu)=\psi_{eM}(g^{-1}h,\mu)$ has the critical set $\left\{g^{-1}h\in M\right\}$, so the critical point of $\psi_{gM}(hM,\mu)$ is $(hM,\mu)=(gM,1)$ and corresponds to $(z,1,b)$ in $X\times\rr^+\times B$. This proves

\begin{lem}\label{lemma phase}
If $\psi_{z,b}(w,\mu,b')$ is as in \eqref{as given in}, then its unique critical point is $(w,\mu,b')=(z,1,b)$ and the Hessian form at this point is non-degenerate.
\end{lem}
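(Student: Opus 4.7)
The plan is to transport the phase $\psi_{z,b}$ to $G/M$-coordinates, reduce by a left translation to the base point, and then invoke the critical-point and Hessian analysis already carried out in Subsection~\ref{Critical sets and Hessian forms} and Subsection~\ref{The Hessian form} for the closely related phase function studied there.

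Under the identification $G/M \cong X\times B$, write $(z,b) = (g\cdot o, g\cdot M)$ and $(w,b') = (h\cdot o, h\cdot M)$. Corollary~\ref{corollary bracket} converts each horocycle bracket into an Iwasawa projection and, after cancellation of the $\pm H(g)$ and $\pm H(h)$ terms, gives
\begin{eqnarray*}
\psi_{z,b}(w,\mu,b') \;=\; -H(h^{-1}g) \;-\; \mu\, H(g^{-1}h) \;=:\; \psi_g(h,\mu).
\end{eqnarray*}
Since $H$ is left-$K$- and right-$MN$-invariant, this expression is right-$M$-invariant in each of $g$ and $h$, hence descends to a smooth function on $(G/M) \times \rr^+ \times (G/M)$. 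The substitution $h \mapsto gh$ is a diffeomorphism and yields $\psi_g(gh,\mu) = \psi_e(h,\mu)$, so the entire critical-point and Hessian analysis for $\psi_g$ reduces, by left translation, to the corresponding analysis of $\psi_e(h,\mu) = -H(h^{-1}) - \mu H(h)$.

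Now parametrize $h^{-1}$ via the Iwasawa decomposition $h^{-1} = nak \in NAK$. Then $H(h^{-1}) = H(nak)$ is the usual $KAN$-projection of $nak$, and from the identity $A(g) = -H(g^{-1})$ we obtain $H(h) = -A(h^{-1}) = -\log a$. Hence
\begin{eqnarray*}
\psi_e(h,\mu) \;=\; \mu\log a \;-\; H(nak),
\end{eqnarray*}
which, after identifying $\la \cong \rr$ via the unit vector in $\la_+$, is exactly the phase function $\psi$ of Proposition~\ref{critical points calculus}. That proposition identifies $(\mu,n,a,kM) = (1,e,e,M)$ as the unique critical point on the quotient $\rr^+ \times N \times A \times K/M$, and the non-degeneracy of its Hessian form at this point is the content of Subsection~\ref{The Hessian form} (explicitly, the matrix~\eqref{Hessian form}). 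Translating back through $h^{-1} = e$, i.e.\ $hM = eM$, and then undoing the reduction $g \mapsto e$ by left translation, the unique critical point of the original phase $\psi_{z,b}$ becomes $(w,\mu,b') = (z,1,b)$, and the Hessian is non-degenerate there.

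The main thing to watch out for is bookkeeping: one has to align the direction in which $G$ acts, in particular whether $h$ or $h^{-1}$ is written in $NAK$-form, with the convention chosen in Proposition~\ref{critical points calculus}. Once that reparametrization is correctly lined up, both the uniqueness of the critical point and the non-degeneracy of the Hessian are direct applications of already-proved results, and no new calculation is required.
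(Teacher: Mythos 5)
Your proposal is correct and follows essentially the same route as the paper: Corollary \ref{corollary bracket} to rewrite the brackets as Iwasawa projections, left translation to reduce to $g=e$, the parametrization $h^{-1}=nak$ giving the phase $\mu\log(a)-H(nak)$, and then an appeal to Proposition \ref{critical points calculus} and the Hessian computation of Subsection \ref{The Hessian form}, with the critical point translated back to $(w,\mu,b')=(z,1,b)$. No gaps; the bookkeeping of the $NAK$ versus $KAN$ conventions is handled correctly via $A(g)=-H(g^{-1})$.
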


\begin{thm}\label{Rearrangement}
Let $a(z,\lambda,b)\in S^m_{\textnormal{cl}}$ be compactly supported in $z$ (uniformly in the other variables). Then there exist $\widetilde{a}_k(z,\lambda,b)$, homogeneous of order $m-k$ for $\lambda\geq 1$, such that
\begin{eqnarray}\label{asymp00}
\left| Ua - \sum_{k=0}^{N-1}\widetilde{a}_k \right| \leq C_N(1+\lambda)^{m-N}.
\end{eqnarray}
\end{thm}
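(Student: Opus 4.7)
The plan is to recognize $Ua(z,\lambda,b)$ as an oscillatory integral with large parameter $\lambda$ via \eqref{coordinate change 0}, exploit Lemma \ref{lemma phase} which identifies the unique non-degenerate critical point of the phase $\psi_{z,b}(w,\mu,b')$ as $(z,1,b)$, and combine the classical method of stationary phase with the classical symbol expansion of $a$ and the asymptotic polynomial behaviour of the Plancherel density. The amplitude in \eqref{coordinate change 0} is the product of $e^{\rho[\,\cdots\,]}$, the symbol $a(w,\lambda\mu,b')$, and the Plancherel factor $\lambda/|c(\lambda\mu)|^{2}$; the $z$-support of $a$ is uniformly compact, which will ensure that all manipulations are uniform in the parameters $(z,b)$.

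First I would split $Ua = I_{\chi}+I_{1-\chi}$ with a smooth cutoff $\chi(w,\mu,b')$ supported in a small neighbourhood $V$ of the critical point $(z,1,b)$ and equal to $1$ near it; $V$ is chosen uniformly in $(z,b)\in\mathrm{supp}_{z,b}(a)\times B$, which is possible because this set is compact. On the support of $1-\chi$ the phase $\psi_{z,b}$ has no critical points and $|d\psi_{z,b}|$ is uniformly bounded below, so repeated application of the first-order operator
\begin{eqnarray*}
L = -i\,|d\psi_{z,b}|^{-2}\,\sum_j \bigl(\overline{\partial_j\psi_{z,b}}\bigr)\partial_j
\end{eqnarray*}
inside the integral, together with the symbol bounds on $a$ and the polynomial bound on $\lambda/|c(\lambda\mu)|^{2}$ recalled in Subsection \ref{Special functions and the Plancherel density}, yields $I_{1-\chi}(z,\lambda,b)=O(\lambda^{-\infty})$ uniformly in $(z,b)$.

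Next I would expand the amplitude in inverse powers of $\lambda$. The classical expansion $a\sim\sum_{j\geq 0}a_j$ with $a_j(w,\xi,b')$ homogeneous of degree $m-j$ in $\xi$, combined with the fact (cf.\ Subsection \ref{Special functions and the Plancherel density}) that $|c(\lambda\mu)|^{-2}$ is, to all orders, a polynomial in $\lambda\mu$ of degree $\dim N$, gives on $V$ a polyhomogeneous expansion
\begin{eqnarray*}
e^{\rho[\,\cdots\,]}\,a(w,\lambda\mu,b')\,\frac{\lambda}{|c(\lambda\mu)|^{2}}\;\sim\;\sum_{\ell\geq 0}\lambda^{m-\ell}\,\alpha_\ell(w,\mu,b'),
\end{eqnarray*}
with each $\alpha_\ell\in C^\infty(V)$, the $\ell$-th remainder being a classical amplitude of order $m-\ell$ in $\lambda$. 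Applying the standard stationary phase expansion to
\begin{eqnarray*}
I_\chi(z,\lambda,b) = \int e^{i\lambda\,\psi_{z,b}(w,\mu,b')}\,\chi(w,\mu,b')\,\alpha_\ell(w,\mu,b')\,dw\,db'\,d\mu
\end{eqnarray*}
and using that $\psi_{z,b}(z,1,b)=0$ with non-degenerate Hessian $H_{z,b}$ (Lemma \ref{lemma phase}), each term contributes an asymptotic series in $\lambda^{-1/2}$ whose leading factor equals $\bigl(2\pi/\lambda\bigr)^{n/2}|\det H_{z,b}|^{-1/2}e^{i\pi\mathrm{sgn}(H_{z,b})/4}$, with $n=\dim X+r+\dim B$. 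Summing these series and collecting all contributions of equal order in $\lambda$ produces
\begin{eqnarray*}
I_\chi(z,\lambda,b)\;\sim\;\sum_{k\geq 0}\widetilde a_k(z,\lambda,b),
\end{eqnarray*}
with $\widetilde a_k$ determined by the jet of the amplitude at the critical point $(z,1,b)$, smooth in $(z,b)$ and homogeneous of order $m-k$ in $\lambda$ for $\lambda\geq 1$. Truncating at $k=N-1$ yields the remainder estimate \eqref{asymp00} with the claimed order $m-N$.

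The main obstacle will be the careful book-keeping of the two asymptotic expansions occurring simultaneously: the polyhomogeneous structure of the amplitude in the variable $\lambda\mu$ coming from $a\in S^m_{\mathrm{cl}}$ and from $|c(\lambda\mu)|^{-2}$, and the stationary phase expansion in the external parameter $\lambda$. The crucial simplification is that the critical value of $\mu$ is $1$, a constant independent of $\lambda$, so that the factors $(\lambda\mu)^{m-j}$ evaluated by stationary phase contribute cleanly $\lambda^{m-j}$ at leading order and correction terms $\lambda^{m-j-k}$ at the $k$-th stationary phase step. This forces the total expansion to be polyhomogeneous of order $m$ in $\lambda$ and permits identifying the individual $\widetilde a_k$ as homogeneous of order $m-k$.
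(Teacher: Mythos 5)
Your overall route is the paper's: split off the non-stationary region, apply stationary phase with parameters at the unique non-degenerate critical point $(w,\mu,b')=(z,1,b)$ of Lemma \ref{lemma phase}, and rearrange the two superimposed expansions (classicality of $a$ in $\lambda\mu$, Plancherel factor, stationary-phase series) into $\lambda$-homogeneous terms; the paper itself observes in Remark \ref{subsequent remark 0} that the expansion can be obtained directly by stationary phase with parameters, and your trick of first expanding the amplitude into $\lambda$-homogeneous pieces with $\lambda$-free coefficients is a clean way to organize the bookkeeping that the paper instead handles via the order count for the cubic remainder $h$ of the phase.

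There is, however, one genuine gap: your estimate for $I_{1-\chi}$ on the unbounded $\mu$-region. The $\mu$-integration in \eqref{coordinate change 0} runs over all of $\rr^+$, and the amplitude $e^{\rho[\cdots]}\,a(w,\lambda\mu,b')\,\lambda|c(\lambda\mu)|^{-2}$ grows like $(\lambda\mu)^{m+1+s}$ with $s=\dim N$; it is not compactly supported in $\mu$. A uniform lower bound on $|d\psi_{z,b}|$, which is all you invoke, only converts each integration by parts into a factor $\lambda^{-1}$ without producing any decay in $\mu$, so after $k$ steps you are left with $\lambda^{-k}\int_2^{\infty}(\lambda\mu)^{m+1+s}\,d\mu$-type bounds, and the $\mu$-integral diverges: the conclusion $I_{1-\chi}=\mathcal{O}(\lambda^{-\infty})$ does not follow as stated. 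What saves the argument (and is exactly the point made in the paper's proof) is the quantitative growth of the gradient: since $\nabla_w\langle w,b\rangle$ and $\nabla_w\langle w,b'\rangle$ are unit vectors, $|\nabla_w\psi_{z,b}|\geq\mu-1$, so $(|\nabla\psi|^2)^{-1}\nabla\psi=\mathcal{O}(\mu^{-1})$ and each application of the transposed operator gains a factor of order $\mu^{-1}$ as well as $\lambda^{-1}$; then $|(L^t)^k\alpha|\leq C_k(\lambda\mu)^{m+1+s-k}$, the $\mu$-integral converges absolutely for $k>m+2+s$, and the $\mathcal{O}(\lambda^{-\infty})$ bound follows, uniformly on the compact parameter set. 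With that correction (and keeping your parameter-dependent cutoff near the moving critical point, which is fine by compactness of $\mathrm{supp}_z(a)\times B$), the rest of your plan goes through and yields \eqref{asymp00}.
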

\begin{proof}
Let $\nabla=\nabla_w$ denote the gradient taken w.r.t. $w\in X$. Then the vector $\nabla \langle w,b\rangle$ has norm one for all $b\in B$, since it is a unit vector pointing along a geodesic orthogonal to level sets of $\langle w, b \rangle$ towards $b$ (\cite{E96}, Prop. 1.10.2). It follows that $\nabla\psi\neq 0$ for all $\mu>1$. We choose a cutoff $\chi(\mu)\in C_c^{\infty}(\rr^+)$ such that $\chi(\mu)=1$ in $[0,2]$ and $\chi(\mu)=0$ in $[3,\infty)$, and write
\begin{eqnarray}\label{as here}
Ua(z,\lambda,b)=Ia(z,\lambda,b)+IIa(z,\lambda,b)
\end{eqnarray}
corresponding to $1=\chi(\mu) + [1-\chi(\mu)]$. Then
\begin{eqnarray*}
IIa(z,\lambda,b) &=& \int_{X\times B\times\mathfrak{a}^*_{+}}[1-\chi(\mu)]\,e^{i\lambda[\langle w,b\rangle-\langle z,b\rangle+\mu(\langle z,b'\rangle-\langle w,b'\rangle)]}\\
&& \hspace{10mm} \times \,\, e^{\rho[\langle w,b\rangle+\langle w,b'\rangle+\langle z,b'\rangle-\langle z,b\rangle]} \, a(w,\lambda\mu,b') \, \frac{\lambda}{|c(\lambda\mu)|^{2}} \, dw \, db' \, d\mu.
\end{eqnarray*}
Since $\nabla\psi\neq0$ for $\mu>1$, the operator $L:=\frac{1}{i\lambda}(|\nabla\psi|^2)^{-1}\nabla\psi\cdot\nabla$ is defined in the support of the integrand. Then $L e^{i\lambda\psi}=e^{i\lambda\psi}$, so we can apply the transpose $L^t$ of $L$ to the amplitude. The order of the Plancherel density is $s:=\dim(N)$. Each $(|\nabla\psi|^2)^{-1}$ is at least $\mathcal{O}(\mu^{-1})$. Thus $|(L^t)^k(\alpha)|\leq C_k(\lambda\mu)^{m+1+s-k}$ at each point. Since $\alpha$ is compactly supported in $X\times B$, we have absolute and uniform convergence of $IIa(z,\lambda,b)$. Thus $IIa(z,\lambda,b)=\mathcal{O}(\lambda^{-\infty})$.

Recall that $g\cdot(z,b)=(g\cdot z,g\cdot b)$ denotes the diagonal action of $G$ on $X\times B$. A function $f(z,b)$ on $X\times B$ is pulled-back to an $M$-invariant function on $G$ via $f(g)=f(g\cdot o,g\cdot M)$. We denote by $f\circ g$ the function $(z,b)\mapsto f(g\cdot z,g\cdot b)$. Recall that $U$ commutes with translation by elements $g\in G$, that is $U(a\circ g)=(Ua)\circ g$. We write $(z,b)=g\cdot(o,M)$ corresponding to $X\times B\cong G/M$. The equivariance still holds if we insert $\chi(\mu)$ into the $\rr^+$-integral:
\begin{eqnarray}\label{B integral}
Ia(g,\lambda) &=& Ia(z,\lambda,b) = I(a\circ g)(o,\lambda,M) \\
&& \hspace{-25mm} \int_{X\times B\times\mathfrak{a}^*_{+}} \hspace{-6mm} \chi(\mu) \, e^{i\lambda[\langle w,M\rangle-\mu\langle w,b'\rangle]} \, e^{\rho[\langle w,M\rangle+\langle w,b'\rangle]} \, \lambda \, a(g\cdot(w,b'),\lambda\mu) \, \frac{\lambda}{|c(\lambda\mu)|^{2}} \, dw \, db' \, d\mu \nonumber.
\end{eqnarray}
The phase function $\psi_{o,M}(w,\mu,b') = \langle w,M\rangle-\mu\langle w,b'\rangle$ is non-degenerate at its critical point $(w,\mu,b')=(o,1,M)$. We can further assume (by using another cutoff around the critical point) that the integrand is supported in a coordinate patch around the critical point. All remainder integrals will again be $\mathcal{O}(\lambda^{-\infty})$, which follows from the standard principle of non-stationary phase for compactly supported amplitudes.

We use coordinates $x=(x_1,\ldots,x_{d},\mu)$, where $d:=\dim(X\times B)$, around the critical point $(w,b,\mu)=(o,M,1)$. In these coordinates, $(0,1)\in\rr_{(w,b)}^{d}\times\rr^+_{\mu}$ corresponds to $(o,M,1)$. Let $D=(\partial_{x_1},\ldots,\partial_{x_d},\partial_{\mu})$ and let $H_0^D$ denote the Hessian operator of $\psi=\psi_{o,M}$ at this point. The Taylor expansion of $\psi$ around $(0,1)$ is $\psi(x,\mu) = Q(x,\mu) + h(x,\mu)$, where $h$ vanishes up to order $3$ in $(0,1)$ and $Q(x,\mu)=\frac{1}{2}\langle H_0^D(x,\mu)|(x,\mu)\rangle$ (the customary inner product on $\rr^{d+1}$). Then
\begin{eqnarray}\label{expand0}
Ia(g,\lambda) = \int e^{i\lambda Q}\left\{ \chi \alpha e^{i\lambda h} \right\} \, dx \, d\mu + \mathcal{O}(\lambda^{-\infty}).
\end{eqnarray}
Set $s:=\dim(N)=\dim(B)$. Since $\tanh\sim 1$ and $\coth\sim 1$ to all orders, the Plancherel density is asymptotically a polynomial of degree $s$ (cf. \eqref{c-function}). For the asymptotics we can hence replace $|c(\nu)|^{-2}$ by a polynomial $p(\nu)=\sum_{j=1}^{s}c_j\nu^j$ of degree $s$ (without constant term). We split $Ua = \sum_j U_ja$, $Ia = \sum_j I_ja$ and $\alpha = \sum_j \alpha_j$ into the corresponding $s$ summands.

We start by assuming that $a$ is homogeneous of degree $m$. Then $a(z,\lambda,b)\sim\lambda^m a(z,1,b)$ (for $\lambda\rightarrow\infty$), so we can assume that the amplitude $\alpha_j$ in each $I_j(a)$ is homogeneous.

By \eqref{Hessian form} we can choose coordinates such that $\sign(H_0^D)=0$. We thus set $C_0=\frac{(2\pi)^{s+1}}{\sqrt{\det H_0^D}}$ and $R=\left( \frac{1}{2}(H_0^D)^{-1}D,D\right)$. The method of stationary phase yields
\begin{eqnarray}\label{expand}
U_ja(g,\lambda) \sim \frac{C_0}{\lambda^{s+1}}\sum_{k=0}^{\infty}\left(\frac{i}{\lambda}\right)^k\frac{1}{k!} R^k(\alpha_j \, e^{i\lambda h})|_{(x,\mu)=(0,1)}
\end{eqnarray}
in the sense that $|U_j(a)-\sum_{j=0}^{N-1}\widetilde{\alpha}_{j,k}|\leq C_{j}\lambda^{-N}\|\alpha_j\|$, where $\|\alpha_j\|$ is a seminorm of the amplitude (and still influences the order in $\lambda$, see \cite{Hor3}, Sec. 7.7.). We rearrange \eqref{expand} to provide a classical asymptotic expansion: Differentiations in $\mu$ preserve the order in $\lambda$, hence all differentiations do. Thus only derivatives of $e^{i\lambda h}$ affect the order of a term. But $h$ vanishes of order $3$, and so one needs three derivatives to bring down one $\lambda$. It follows that the $k$-th term has an order $\leq m - [k/3] - (s-j)$. Hence for each $l$ there are only finitely many $k$ such that the $k$-th term has an order $\geq l$. After rearrangement into homogeneous terms,
\begin{eqnarray}\label{fact}
U_ja(z,\lambda,b)\sim\sum_{k=0}^{\infty}(i/\lambda)^k \widetilde{\Lambda}_{j,k}(a)(g,o,\lambda,M),
\end{eqnarray}
where $\widetilde{\Lambda}_{j,k}(a)(g,o,\lambda,M)$ is homogeneous of degree $m$, and where $\widetilde{\Lambda}_{j,k}$ is a differential operator on $X\times B\times\rr^+$ of order $2k$ with coefficients in $g$. The $\widetilde{\Lambda}_{j,k}(a)(g,o,\lambda,M)$ are left-$G$-invariant and right-$M$-invariant in $g$, since each $U_j$ is invariant. They also decrease supports, so by Peetre's theorem they define differential operators on $G/M\times[1,\infty)\cong X\times B\times[1,\infty)$. Hence
\begin{eqnarray}
U_ja(z,\lambda,b)\sim\sum_{k=0}^{\infty}(i/\lambda)^{k}\lambda^m \Lambda_{j,k}(a)(z,1,b),
\end{eqnarray}
where $\Lambda_{j,k}$ is a left-invariant differential operator of order $2k$. Summing up we find $U(a)(z,\lambda,b) \sim \sum_k \Lambda_k(a)(z,\lambda,b)$, where $\Lambda_k$ is a differential operator of order $2k$, and the sum can be rearranged into homogeneous summands. If $a$ is not homogeneous, then $a\sim\sum a_k$, and we again rearrange to provide $Ua\sim\sum \widetilde{a}_{k}$, where the order of $\widetilde{a}_k$ is $m-k$, and $\left| Ua - \sum_{k=0}^{N-1}\widetilde{a}_k \right| \leq C_N(1+\lambda)^{m-N}$.
\end{proof}

\begin{rem}\label{subsequent remark 0}
The expansion
\begin{eqnarray}\label{directly}
U(a) \sim \sum_{k=0}^{\infty} \left(\frac{i}{\lambda}\right)^k \Lambda_{k}(a)
\end{eqnarray}
can be obtained directly from the method of stationary phase with parameters: We write $Ua(z;z,\lambda,b) = \int e^{i\lambda\psi_{z,b}} \alpha \, dx$ as in \eqref{coordinate change 0}, where the phase function is
\begin{eqnarray*}
\psi_{z,b}(w,\mu,b') = \langle w,b\rangle-\langle z,b\rangle+\mu(\langle z,b'\rangle-\langle w,b'\rangle).
\end{eqnarray*}
The critical point $(z,1,b)$ of $\psi_{z,b}$ is given by the parameter $(z,b)$ (Lemma \ref{lemma phase}). Then \eqref{directly} follows from the method of stationary phase. If $a$ is a classical symbol of order $m$, the sum \eqref{directly} can be rearranged as above in homogeneous summands $Ua\sim\sum\widetilde{a}_j$, where the order of $\widetilde{a}_j$ is $m-j$. The $\Lambda_k$ in the expansion are left-$G$-invariant, since $U$ is left-invariant.
\end{rem}

\begin{rem}\label{subsequent remark}
\begin{itemize}
\item[(1)] Given a function $a(z,w,\lambda,b)$ of two spatial variables we sometimes write $a(z;w,\lambda,b)$ to emphasize the special role of $z$. The operator $U$ still operates in $X\times\rr^+\times B$ and we write
\begin{eqnarray*}
\hspace{-10mm} Ua(z;z,\lambda,b) &=& \\
&\hspace{-49mm}&\hspace{-29mm} \int_{X\times B\times\mathfrak{a}^*_{+}} \hspace{-10mm} e^{-(i\lambda+\rho)\langle z,b\rangle} e^{(i\mu+\rho)\langle z,b'\rangle} e^{(i\lambda+\rho)\langle w,b\rangle} e^{(-i\mu+\rho)\langle w,b'\rangle} a(z;w,\mu,b') |c(\mu)|^{-2} \, dw \, db' \, d\mu.
\end{eqnarray*}
\item[(2)] Let $m<<0$ be so small such that for $a\in S^m_{\textnormal{cl}}$ the integral $U(a)$ makes sense. We can write $Ua\sim\sum_{k=0}^{\infty}(i/\lambda)^k\Lambda_{k}(a)$ and expand out $U^*U(a)=a$. In particular, the principal symbol of $U(a)$ equals $c\cdot\sigma_{a}$, where $\sigma_a$ denotes the principal symbol of $a$, and where $c$ is a constant with $|c|=1$. Since $c>0$ by the MSP-formula, we find that principal symbol of $Ua$ equals the principal symbol of $a$.
\end{itemize}
\end{rem}

\begin{defn}\label{Definition z-w-symbols}
\begin{itemize}
\item[(1)] Let $\mathcal{L}^m_{1,0,0}$\index{$\mathcal{L}^m_{1,0,0}$, properly supported operators in OP$S^m_{1,0,0}$} denote the space of properly supported operators in OP$S^m_{1,0,0}=Op(S^m_{1,0,0})$, where $S^m_{1,0,0}\subset C^{\infty}(X\times X\times\rr^+\times B)$\index{$S^m_{1,0,0}$, space of symbols of order $m$} is the space of functions $a(z,w,\lambda,b)$ satisfying
\begin{eqnarray}
\left|  (\partial/{\partial\lambda})^k \, D \, a(z,w,\lambda,b) \right| \leq C_{D,k}(C)(1+|\lambda|)^{m-k} \,\,\,\,\,\, \forall\, (z,w)\in C,
\end{eqnarray}
for all $k\in\nn_0$, all compact subsets $C$ of $X\times X$, and all differential operators on $X\times X\times B$. We call $a\in S^m_{1,0,0}$ \emph{classical} ($a\in S^m_{1,0,0,\cl}$)\index{$S^m_{1,0,0}$, space of classical symbols} if for all $N\in\nn_0$
\begin{eqnarray}
a(z,w,\lambda,b) \sim \sum_{j=0}^{\infty}\lambda^{m-j}a_j(z,w,b)  \,\,\,\,\,\,\,\, (\lambda\rightarrow\infty).
\end{eqnarray}
Asymptotics here means $a-\sum_{j=0}^{N-1} a_j \in S^{m-N}_{1,0,0}$ for all $N\in\nn_0$.  By translation on $X\times X\times B$ we mean $g\cdot (z,w,b) = (g\cdot z,g\cdot w,g\cdot b)$. Let $S^m_{1,0,0,\Gamma}$\index{$S^m_{1,0,0,\Gamma}$, space of $\Gamma$-invariant symbols} denote the set of symbols, which are $\Gamma$-invariant:
\begin{eqnarray}
a(\gamma z,\gamma w,\lambda,\gamma \cdot b) = a(z,w,\lambda,b) \,\,\,\,\,\,\,\,\, \forall \, z,w\in X, \lambda\in\rr^+, b\in B, \gamma\in\Gamma.
\end{eqnarray}
\item[(2)] An operator $Op(a)=a(z,z,D)\in\textnormal{OP}S^m_{1,0,0}$\index{$Op$, pseudodifrerential operator quantization} operates according to the formula
\begin{eqnarray*}
a(z,z,D)u(z) &:=& \\
&& \hspace{-20mm} \int_X \int_B \int_{\rr^+} e^{(i\lambda+\rho)\left(\langle z,b\rangle-\langle w,b\rangle\right)} a(z,w,\lambda,b) u(w) e^{2\rho\langle w,b\rangle} \, dw \, db \, \dbar\lambda.
\end{eqnarray*}
\end{itemize}
\end{defn}

\begin{cor}\label{by corollary}
$Ua\sim\sum_{k=0}^{\infty}(i/\lambda)^k\Lambda_{k}(a)$ in symbol norm asymptotics.
\end{cor}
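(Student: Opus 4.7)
The plan is to upgrade the pointwise asymptotic expansion from Remark \ref{subsequent remark 0} to one that is valid in the symbol norm, i.e.\ with uniform control on all $(\partial/\partial\lambda)^j D_{(z,b)}$-derivatives of the remainder. As in the proof of Theorem \ref{Rearrangement}, I would first write
\begin{eqnarray*}
Ua(z,\lambda,b) = \int_{X\times B\times\mathfrak{a}^*_+} e^{i\lambda\psi_{z,b}(w,\mu,b')}\,\alpha(z,b;w,\mu,b',\lambda)\, dw\,db'\,d\mu,
\end{eqnarray*}
with phase function $\psi_{z,b}(w,\mu,b')=\langle w,b\rangle-\langle z,b\rangle+\mu(\langle z,b'\rangle-\langle w,b'\rangle)$. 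By Lemma \ref{lemma phase}, $\psi_{z,b}$ admits the unique non-degenerate critical point $(w,\mu,b')=(z,1,b)$, which depends smoothly on the parameter $(z,b)$, and whose critical value vanishes identically. After inserting the cutoff $\chi(\mu)$ as in \eqref{as here} (the non-stationary part $IIa$ contributes $\mathcal{O}(\lambda^{-\infty})$ in every seminorm by repeated integration by parts), the remaining integral is localized in an arbitrarily small neighborhood of the critical set and is exactly of the form to which the stationary phase method with parameters applies.

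The central step would be to invoke the parameter version of the method of stationary phase (see e.g.\ H\"{o}rmander, \emph{The Analysis of Linear Partial Differential Operators I}, Thm.\ 7.7.6). This yields the existence of differential operators $\Lambda_k$ of order $2k$ on $X\times B$, built from $\alpha$ and from the inverse of the Hessian of $\psi_{z,b}$ at the critical point, such that for every $N\in\nn_0$, every multiindex $\beta$ and every differential operator $D_{(z,b)}$ of order $\ell$ on $X\times B$, one has the uniform estimate
\begin{eqnarray*}
\Bigl|(\partial/\partial\lambda)^\beta D_{(z,b)}\Bigl( Ua - \sum_{k=0}^{N-1}\Bigl(\frac{i}{\lambda}\Bigr)^k \Lambda_k(a)\Bigr)\Bigr| \leq C_{N,\beta,\ell}(1+\lambda)^{m-N-\beta},
\end{eqnarray*}
uniformly on compact subsets of $X$. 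This is exactly the statement that the remainder lies in $S^{m-N}_{1,0,0}$, i.e.\ that the asymptotic expansion holds in the $S^m_{1,0,0}$-symbol topology. Finally, because $a\in S^m_{\cl}$, the same rearrangement into homogeneous summands as in the last paragraph of the proof of Theorem \ref{Rearrangement} converts this into the classical (homogeneous) asymptotic expansion, proving the corollary.

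The main obstacle is verifying that the parameter-dependence of the critical point $(w,\mu,b')=(z,1,b)$ does not destroy the uniform symbol-type estimates. Concretely, derivatives $D_{(z,b)}$ can land either on the amplitude $\alpha$ (where they leave the symbol class stable) or on the phase $\psi_{z,b}$, producing factors that need to be reabsorbed by further integration by parts via the non-degenerate Hessian. This is where the uniform non-degeneracy of the Hessian matrix \eqref{Hessian form} from Section \ref{Critical sets and Hessian forms} is essential: together with the smooth dependence of the critical point on $(z,b)$ (which follows from the implicit function theorem applied to $d_{(w,\mu,b')}\psi_{z,b}=0$ near the critical set), it provides the constants $C_{N,\beta,\ell}$ locally uniformly in $(z,b)$. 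Everything else in the argument is the standard machinery of the method of stationary phase with parameters and a bookkeeping rearrangement of powers of $\lambda$ into homogeneous contributions.
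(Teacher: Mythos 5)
Your proposal is correct in spirit but takes a genuinely different route from the paper's. You propose to read the symbol-norm estimates directly from the parameter-dependent stationary-phase theorem (H\"{o}rmander Thm.\ 7.7.6); this requires differentiating the remainder under the integral sign and checking that each parameter derivative $D_{(z,b)}$, which brings down a factor $i\lambda\, D_{(z,b)}\psi_{z,b}$ from the exponent, does not cost a power of $\lambda$. The reason it does not --- which you hint at by noting that the critical value vanishes identically, but do not quite spell out --- is that $D_{(z,b)}\psi_{z,b}$ itself vanishes at the critical point $(w,\mu,b')=(z,1,b)$ (by the chain rule, since both the critical value and $\nabla_{(w,\mu,b')}\psi_{z,b}$ vanish there), so the amplitude of the differentiated integral is zero on the critical set; a parallel analysis is then needed for the $\lambda$-dependence of the amplitude $a(w,\lambda\mu,b')\,|c(\lambda\mu)|^{-2}$. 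The paper sidesteps all this bookkeeping with an elementary interpolation argument: using the Landau-type inequality $\|Dp\|_\infty^2\leq 4\,\|p\|_\infty\,\|D^2p\|_\infty$ (cited as \cite{Tay81}, Prop.\ 3.3), if $Ua-\widetilde a$ is rapidly decreasing in sup-norm and derivatives of $Ua$ grow at most polynomially, then $D(Ua-\widetilde a)$ is automatically rapidly decreasing as well --- and polynomial growth of derivatives of $Ua$ is easy to verify from the split $Ua=Ia+IIa$ by passing derivatives under the integral. Your direct route is more natural and arguably more transparent, but it requires careful tracking of how the stationary-phase constants scale with the parameter and with the $\lambda$-dependence of the amplitude; the paper's interpolation trick buys freedom from exactly that bookkeeping, at the price of a separate (but much easier) verification of polynomial growth.
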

\begin{proof}
First note that an expansion for $Ua(z;z,\lambda,b)$ is obtained by the method of stationary phase with parameters as above, where the parameter is $(z,b)$. For the $\widetilde{a}_j$ and $\widetilde{a}$ that arise in Remark \ref{subsequent remark 0}, we need that
\begin{eqnarray}\label{need that}
\left| \left( \frac{\partial}{\partial\lambda}\right)^k \, D \, ( Ua - \widetilde{a} )(z,w,\lambda,b) \right| \leq C_{k,D,N}(C) (1+\lambda)^{-N}
\end{eqnarray}
for each $N\in\nn$, any compact subset $C$ of $X\times X$, all $(z,w)\in C$, and each differential operator $D$ on $X\times X\times B$. Recall that a sequence
$\left\{a_j\right\}$ of symbols, where the order of $a_j$ is $m-j$, can be asymptotically summed by setting $\widetilde{a}=\sum_{j=0}^{\infty}\phi(\varepsilon_j\lambda)a_j(z,w,\lambda,b)$, where $\phi=0$ for $\lambda\leq 1/2$, $\phi=1$ for $\lambda\geq 1$ and where the $\varepsilon_j$ are chosen appropriately (\cite{Tay81}, p. 41). If the $a_j$ are $\Gamma$-invariant, then so is $\widetilde{a}$. We claim that this holds if derivatives of $Ua$ have at most polynomial growth, that is
\begin{eqnarray}\label{claim}
|(\partial/{\partial\lambda})^k \, D \, U(a) |\leq C_{k,D}(C)(1+\lambda)^{\sigma},
\end{eqnarray}
for $(z,w)\in C$ and $\sigma=\sigma(k,\alpha)$.
It suffices to prove \eqref{need that} for an open coordinate patch $V$ of $X\times X\times B$ and for $D=\frac{\partial}{\partial x_j}$. By \cite{Tay81}, Prop. 3.3,
\begin{eqnarray}
\left\| D a \right\|^2_{\infty} \leq 4 \, \left\| a \right\|_{\infty} \, \left\| D^2 a \right\|_{\infty}.
\end{eqnarray}
So if $p-q$ is rapidly decreasing in $\lambda$ and if $D^2(p-q)$ has at most polynomial growth,
\begin{eqnarray}\label{inequality}
\sup |D(p-q)|^2 \leq c_1 \sup |p-q| \sup|D^2(p-q)| \leq C_{k,N}(1+\lambda)^{-N}
\end{eqnarray}
for any $N$, since the first factor is rapidly decreasing and the second is at most polynomially growing. Here $\sup$ means $\sup_V$ and $c_1$ is a constant. $\lambda$-derivatives can be handled similarly (loc. cit., p. 41).
We thus have $Ua\sim\sum_{k=0}^{\infty}(i/\lambda)^k\Lambda_{k}(a)$ in symbol norm asymptotics if derivatives of $U(a)$ have polynomial growth. Therefore write $U(a)$ in the form $I(a)+II(a)$ as before, integrate by parts in $II(a)$ as before, pass derivatives under the integral and see that the result is $\mathcal{O}(\lambda^{-\infty})$ uniformly in compact subsets. $I(a)$ is a compactly supported integral, and derivatives can be estimated by a constant times a suitable symbol norm of $a$ times a convenient power of $1+|\lambda|$.
\end{proof}

\begin{prop}\label{Proposition full symbol}
Let $A\in\mathcal{L}^m_{1,0,0,\textnormal{cl}}$. Then $A\in\mathcal{L}^m_{\textnormal{cl}}$ and a complete symbol of $A$ is given by
\begin{eqnarray*}
a(z,\lambda,b) = Ua(z;z,\lambda,b) \sim \sum_{k=0}^{\infty} (i/\lambda)^k \Lambda_k a(z;w,\lambda,b)_{|w=z}.
\end{eqnarray*}
\end{prop}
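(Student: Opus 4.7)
The plan is to read off the complete symbol of $A$ by direct application to the non-Euclidean plane waves. Plugging $u(w) = e_{\mu,b'}(w) = e^{(i\mu+\rho)\langle w,b'\rangle}$ into the formula from Definition \ref{Definition z-w-symbols}, one has
\begin{eqnarray*}
A e_{\mu,b'}(z) = \int_X\int_B\int_{\rr^+} e^{(i\lambda+\rho)\langle z,b\rangle} e^{(-i\lambda+\rho)\langle w,b\rangle} e^{(i\mu+\rho)\langle w,b'\rangle} a(z,w,\lambda,b)\,dw\,db\,\dbar\lambda,
\end{eqnarray*}
after absorbing the factor $e^{2\rho\langle w,b\rangle}$ into $e^{(-i\lambda+\rho)\langle w,b\rangle}$. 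Dividing through by $e_{\mu,b'}(z)$ and comparing with the explicit integral expression for the Kohn-Nirenberg operator $U$ (recall the definition at the beginning of Subsection \ref{The Kohn-Nirenberg operator}), one recognizes the result as $Ua(z;z,\mu,b')$ in the notation of Remark \ref{subsequent remark}, where $U$ is applied in the second spatial variable $w$ with $z$ held as a parameter. Thus the function
\[
\sigma_A(z,\lambda,b) := Ua(z;z,\lambda,b)
\]
satisfies $A e_{\lambda,b}(z) = \sigma_A(z,\lambda,b)\, e_{\lambda,b}(z)$, so by Definition \ref{complete symbol} it is the complete symbol of $A$.

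Next I would derive the asymptotic expansion from the method of stationary phase with parameters. The oscillatory integral $Ua(z;z,\lambda,b)$ has phase function $\psi_{z,b}(w,\mu,b')$ given in \eqref{as given in}, with unique nondegenerate critical point $(w,\mu,b')=(z,1,b)$ by Lemma \ref{lemma phase}; crucially, the critical point depends smoothly on the external parameter $(z,b)$ (in fact it is determined by it). Applying exactly the argument of Theorem \ref{Rearrangement} and Remark \ref{subsequent remark 0}, but now regarding the first slot of $a(z;w,\mu,b')$ as an inert parameter, yields
\begin{eqnarray*}
Ua(z;z,\lambda,b) \sim \sum_{k=0}^\infty (i/\lambda)^k\,\Lambda_k a(z;w,\lambda,b)\big|_{w=z},
\end{eqnarray*}
where each $\Lambda_k$ is a left-$G$-invariant differential operator of order $2k$ acting in the active variables $(w,b,\mu)$ of $U$, evaluated on the diagonal $w=z$. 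The $G$-invariance of $\Lambda_k$ is inherited from the intertwining property $UT_g = T_g U$ of Proposition \ref{U unitary}.

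It remains to verify that $\sigma_A \in S^m_{\cl}$ and hence $A\in\mathcal{L}^m_{\cl}$. Here I would follow the strategy of Corollary \ref{by corollary}: introduce a cutoff $\chi(\mu)$ supported near $\mu=1$ and write $Ua = Ia + IIa$. On the tail $IIa$, the gradient of $\psi_{z,b}$ in $w$ does not vanish (the gradient $\nabla_w\langle w,b\rangle$ is a unit vector pointing toward $b$), so repeated integration by parts with the first-order operator $L = (i\lambda)^{-1}|\nabla\psi|^{-2}\nabla\psi\cdot\nabla$ gives $IIa = \mathcal{O}(\lambda^{-\infty})$ uniformly on compacta in $z$, together with all $\partial_\lambda^k$ and spatial derivatives. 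On the stationary piece $Ia$, the integration is over a compact neighborhood of the critical point, and all derivatives in $\lambda$ and in $(z,b)$ can be passed under the integral sign to yield polynomial growth bounds in $\lambda$ depending only on finitely many symbol seminorms of $a$ on a compact set $C\subset\subset X\times X$. Bernstein's inequality, exactly as in \eqref{inequality}, then promotes the pointwise rapid decay of $Ua - \sum_{k<N}(i/\lambda)^k \Lambda_k a|_{w=z}$ to rapid decay of every derivative, which is precisely the symbol condition of Proposition \ref{equating coefficients} identifying $\sigma_A$ as a classical symbol of order $m$. Properness of the support of the kernel of $A$ is inherited from that of $a(z,z,D)$, placing $A$ in $\mathcal{L}^m_{\cl}$.

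The main obstacle is the bookkeeping around the parameter dependence: the differential operators $\Lambda_k$ must act on the two-variable symbol $a(z;w,\lambda,b)$ in its $w$-slot only (and in $b, \mu$), \emph{before} the restriction to $w=z$, and all symbol-norm estimates must be uniform for $(z,w)$ in compact sets. This is exactly the content of the parameter version of stationary phase (as in \cite{Hor3}, \S7.7); since the two-variable symbol estimates of $S^m_{1,0,0}$ are by definition uniform on compacta of $X\times X$, the parameter argument goes through without modification, and no rank assumption on $X$ beyond the one already present in Theorem \ref{Rearrangement} is required.
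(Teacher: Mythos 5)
Your proposal is correct and follows essentially the same route as the paper: the complete symbol is read off by applying $A$ to the plane waves (well-defined by proper support), recognized as $Ua(z;\cdot,\cdot,\cdot)$ with $z$ as a parameter, and the classical expansion is obtained from the parameter version of stationary phase in symbol-norm asymptotics, i.e.\ the content of Corollary \ref{by corollary}. The extra detail you supply (cutoff, non-stationary tail, and the derivative-interpolation inequality) is exactly what the paper delegates to Theorem \ref{Rearrangement}, Remark \ref{subsequent remark 0} and Corollary \ref{by corollary}.
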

\begin{proof}
Since $A$ is properly supported, $a(z,\lambda,b):=e^{-(i\lambda+\rho)\langle z,b\rangle}Ae^{(i\lambda+\rho)\langle z,b\rangle}$ is well-defined and yields a complete symbol for $A$. Written out,
\begin{eqnarray*}
a(z,\lambda,b) &:=& \\
&& \hspace{-2cm} e^{(-i\lambda+\rho)\langle z,b\rangle} \int_{X\times\rr^+\times B} \hspace{-2mm} e^{(i\mu+\rho)\langle z,b'\rangle} e^{(-i\mu+\rho)\langle w,b'\rangle} a(z;w,\mu,b') e^{(i\lambda+\rho)\langle w,b\rangle} \, \dbar\mu \, dw \, db' \\
&& \hspace{-2cm} = Ua(z;\cdot,\cdot,\cdot)_{|(z,\lambda,b)} \sim \sum_{k=0}^{\infty} (i/\lambda)^k \Lambda_k a(z;w,\lambda,b)_{|w=z},
\end{eqnarray*}
where the $\Lambda_k$ operate in the variables $(w,\lambda,b)$. The $\sim$ holds in the sense of Corollary \ref{by corollary}.
\end{proof}

If $a$ is classical, we write $U(a)\sim\sum a_j$ in homogeneous terms and expand out $U^*U=\id$. In particular, the principal symbol of $U(a)$ equals the principal symbol of $a$. The proofs given in \cite{Z86} for the adjoint of properly supported operators (Theorem 2.8, loc. cit.) is formal enough to cover the case of all rank one spaces: It is proven there that if $a(z,\lambda,b)$ is an amplitude of order $0$, then the adjoint $Op(a)^*$ has amplitude $\overline{a(w,\lambda,b)}$, and the principal symbol of $Op(a)^*$ is $\overline{a_0(z,1,b)}$, so that the principal symbol of $Op(a)^*Op(a)$ is $|a(z,b)|^2$ (Thm. 2.9 loc. cit.). In particular, it is shown in Theorem 2. 11 loc. cit. that if $\Gamma$ is cocompact, then properly supported zero order pseudodifferential operator are continuous on $L^2(\Gamma\backslash X)$.

\begin{rem}
The computations of the critical set, the Hessian form, and the application of the method of stationary phase generalize to higher rank spaces, if the spectral parameters are assumed to be regular (cf. Subsection \ref{Critical sets and Hessian forms}). More integral formulas for the integral operator $U(a)(z,\lambda,b)$ are given in the following subsection. It seems reasonable to believe that the proofs given here and in \cite{Z86} can be generalized to higher rank spaces with only slight modifications.
\end{rem}

\subsubsection{Some integral formulas for the Kohn-Nirenberg operator}
We list a few possibilities to write $Ua$ as an oscillatory integral. These representations of the Kohn-Nirenberg operator may be useful to approach a formula for the Hessian operator in the asymptotic expansion for $Ua(z,\lambda,b)$, which would yield a commutator formula for the non-Euclidean calculus of pseudodifferential operators. First, let $G/K$ have rank one. Write $h=h(w,b')$ corresponding to $G/M\cong X\times B$: By \ref{haar measure corollary} we have
\begin{eqnarray*}
Ua(g,\lambda) = Ua(z,\lambda,b) \\
&\hspace{-75mm}=& \hspace{-35mm} \int_{\mathfrak{a}^*_{+}}\int_G  e^{i\lambda[-H(h^{-1}g)-\mu H(g^{-1}h)]} e^{-\rho (H(g^{-1}h)+H(h^{-1}g))} a(h,\lambda\mu) \, \lambda|c(\lambda\mu)|^{-2} \, dh \, d\mu.
\end{eqnarray*}
The integral is actually taken over $G/M$, since all terms in the integrand are $M$-invariant. From now on we will work in $G$. First, substitute $h\mapsto gh$. Then
\begin{eqnarray*}
Ua(g,\lambda) = \int_{\mathfrak{a}^*_{+}}\int_G  e^{i\lambda[-H(h^{-1})-\mu H(h)]} e^{-\rho (H(h)+H(h^{-1}))} a(gh,\lambda\mu) \, \lambda|c(\lambda\mu)|^{-2} \, dh \, d\mu.
\end{eqnarray*}
Now by \eqref{integral formula G} the integral equals
\begin{eqnarray}\label{apply MSP 1}
\int_{\mathfrak{a}^*_+}\int_{NAK} e^{i\lambda[\log(a)-\mu H(nak)]} \, e^{\rho[\log(a)-H(nak)]} \, a(gnak) \, \frac{\lambda}{|c(\lambda\mu)|^{2}} \, dn \, da \, dk \, d\mu.
\end{eqnarray}
We could have also changed $h$ to $h^{-1}$ ($G$ is unimodular). Then
\begin{eqnarray}
&& \hspace{-3mm}Ua(g,\lambda) \\
&\hspace{-3mm}=& \hspace{-3mm} \int_{\mathfrak{a}^*_{+}}\int_G  e^{i\lambda[-H(h)-\mu H(h^{-1})]} e^{-\rho (H(h^{-1})+H(h))} a(gh^{-1},\lambda\mu) \, \lambda|c(\lambda\mu)|^{-2} \, dh \, d\mu \nonumber\\
&\hspace{-3mm}=& \hspace{-3mm} \int_{\mathfrak{a}^*_+}\int_{NAK} \hspace{-2mm} e^{i\lambda(\mu\log(a)-H(nak))} e^{-\rho(\log(a)+H(nak))} a(g(nak)^{-1},\lambda\mu) \frac{\lambda}{|c(\lambda\mu)|^{2}} \, dn \, da \, dk \, d\mu\nonumber.
\end{eqnarray}

In higher rank, the same computations are possible: Given $0\neq\lambda\in\mathfrak{a}^*_+$, write $\lambda=\tau\lambda_0$, where $|\lambda_0|=1$ (the norm on $\la^*$ induced by the Killing form). Set $Ua(g,\tau):=Ua(g,\tau\lambda_0)$. Then
\begin{eqnarray*}
Ua(g,\tau) &=& \int_{\mathfrak{a}^*_+}\int_{NAK} e^{i(\mu\log(a) - \tau\lambda_0 H(nak))} e^{-\rho(\log(a)+H(nak))} \\
&& \hspace{18mm} \times \,\,\, a(g(nak)^{-1},\mu) \, dn \, da \, dk \, \dbar\mu \\
&=& \int_{\mathfrak{a}^*_+}\int_{NAK} e^{i\tau(\mu\log(a) - \lambda_0 H(nak))} e^{-\rho(\log(a)+H(nak))} \\
&& \hspace{18mm} \times \,\,\, a(g(nak)^{-1},\tau\mu) \, \frac{\tau^{\dim(A)}}{|c(\tau\mu)|^{2}}\frac{1}{w} \, dn \, da \, dk \, d\mu.
\end{eqnarray*}
where we factored out $\tau$ from the phase function and substituted $\mu\mapsto\mu/\tau$.

Recall that $U$ commutes with translation by $g\in G$. We rewrite $Ua(z,\lambda,b)$ as given in \eqref{coordinate change 0} corresponding to $X\times B\cong AN\times K/M$, evaluate the integral at $(o,\lambda,M)$, and finally choose $g\in G$ such that $g\cdot(o,M)=(z,b)$ to insert $g\in G$ in the amplitude. Then with $|\lambda_0|=1$ and $\lambda>0$,
\begin{eqnarray}\label{coordinate change 2}
Ua(z,\lambda\lambda_0,b) &=& U(a\circ g)(o,\lambda\lambda_0,M) \\
&\hspace{-40mm}=& \hspace{-20mm} \int_{AN\times K/M\times\rr^{+}} e^{i\lambda\lambda_0[\langle an\cdot o,M\rangle-\mu(\langle an\cdot o,kM\rangle)]} \, e^{\rho[\langle an\cdot o,M\rangle+\langle an\cdot o,kM\rangle]} \\
&& \hspace{20mm} \times \,\,\,\, a(g\cdot an\cdot o,\lambda\mu,g\cdot kM) \, \frac{\lambda^{\dim(A)}}{|c(\lambda\mu)|^{2}} \, dw \, db' \, d\mu \nonumber \\
&\hspace{-40mm}=&\hspace{-20mm} \int_{AN\times K/M\times\rr^{+}} e^{i\lambda\lambda_0[-\log(a^{-1})+\mu(H(n^{-1}a^{-1}k))]} \, e^{\rho[-\log(a^{-1})-H(n^{-1}a^{-1}k)]} \\
&& \hspace{20mm} \times \,\,\,\, a(g\cdot an\cdot o,\lambda\mu,g\cdot kM) \, \frac{\lambda^{\dim(A)}}{|c(\lambda\mu)|^{2}} \, dw \, db' \, d\mu \nonumber \\
&\hspace{-40mm}=&\hspace{-20mm} \int_{AN\times K/M\times\rr^{+}} e^{i\lambda\lambda_0[-\log(a)+\mu(H(nak))]} \, e^{-\rho[\log(a)+H(nak)]} \\
&& \hspace{20mm} \times \,\,\,\, a(g\cdot a^{-1}n^{-1}\cdot o,\lambda\mu,g\cdot kM) \, \frac{\lambda^{\dim(A)}}{|c(\lambda\mu)|^{2}} \, dw \, db' \, d\mu \nonumber.
\end{eqnarray}
The phase function $\mu(H(nak))-\lambda_0(\log(a))$ has the critical point $(\mu,n,a,kM)=(\lambda_0,e,e,eM)$ and the Hessian at this point is non-degenerate. The method of stationary phase can be applied to all these integral exactly as before.

\subsection{Conjugation by a wave group-type operator}
Let $X$ have rank one. We identify $\la=\rr$ by means of the Killing form: $\lambda_0$ denotes the functional on $\la$ given by $\lambda_0(X)=\langle X,H\rangle$, where $H$ is the unit vector in $\la^+$. Then $\lambda_0\in\la^+$. We identify $\lambda\in\la$ with the real number $\overline{\lambda}$ such that $\lambda=\overline{\lambda}\lambda_0$.

We denote by $G^t$ the geodesic flow on $X\times B$. The latter space identifies with $G/M$ and hence the geodesic flow on $X\times B$ reads by right-translations with elements $a\in A$, that is $G^t(g\cdot o,g\cdot M) = (g a_t\cdot o,g a_t\cdot M) = (ga_t\cdot o,b)$ for $g\in G$, $a_t=\exp(tH)\in A$. Right-translation on $X\times B$ is well-defined, since $M$ and $A$ commute elementwise. The point $b\in B$ is not moved under $G^t$. Recall that if $\Gamma$ is a cocompact subgroup of $G$, the geodesic flow on $SX_{\Gamma}=\Gamma\backslash G/M$ also reads by right-$A$-translation.

Let $A=a(x,D)\in\mathcal{L}_{\textnormal{cl}}^m$. We denote by $\sigma_A$ the \emph{principal symbol}\index{principal symbol} of $A$, that is the highest order term in the asymptotic sum \eqref{classical}. Let $\lambda\in\la^*$ and $b\in B$. Recall the character of the Laplace operator (cf. \eqref{character of the Laplacian}):
\begin{eqnarray*}
\Delta e^{(i\lambda+\rho)\langle z,b\rangle} = -(\langle\lambda,\lambda\rangle+\langle\rho,\rho\rangle)e^{(i\lambda+\rho)\langle z,b\rangle}.
\end{eqnarray*}
Using functional calculus (the spectral theorem), we define $R:=\sqrt{-(\Delta+|\rho|^2)}$ and the group of operators $e^{itR}$ by its action on the non-Euclidean plance waves $e_{\lambda,b}(z)=e^{(i\lambda+\rho)\langle z,b\rangle}$, that is
\begin{eqnarray*}
e^{itR}e^{(i\lambda+\rho)\langle z,b\rangle} = e^{it\lambda}e^{(i\lambda+\rho)\langle z,b\rangle}.
\end{eqnarray*}

Given $t\in\rr$, we write
\begin{eqnarray}\label{as in}
A_t := e^{itR}Ae^{-itR}.
\end{eqnarray}

\subsubsection{The complete symbol after conjugation}
The complete symbol of $A_t$ is
\begin{eqnarray}\label{symbol a_t}
U^t(a) := a_t(z,\lambda,b) &=& e^{-(i\lambda+\rho)\langle z,b\rangle} e^{itR} A e^{-itR} e^{(i\lambda+\rho)\langle z,b\rangle} \nonumber \\
&=& e^{-(i\lambda+\rho)\langle z,b\rangle} e^{itR} A e^{-it\lambda} e^{(i\lambda+\rho)\langle z,b\rangle} \nonumber  \\
&=& e^{-it\lambda} e^{-(i\lambda+\rho)\langle z,b\rangle} e^{itR} \left( a(z,\lambda,b) e^{(i\lambda+\rho)\langle z,b\rangle} \right).
\end{eqnarray}

Recall the Fourier inversion formula \eqref{Fourier inversion formula}, which states that each sufficiently regular function $f$ on $X$ satisfies
\begin{eqnarray*}
f(z) = \int_{\mathfrak{a}^*_+}\int_B\int_X e^{(i\lambda+\rho)\langle z,b\rangle}e^{(-i\lambda+\rho)\langle w,b\rangle}f(w)\,dw\,\dbar\lambda\,db.
\end{eqnarray*}

It follows that
\begin{eqnarray*}
a(z,\lambda,b) e^{(i\lambda+\rho)\langle z,b\rangle} = \int e^{(i\mu+\rho)\langle z,b'\rangle} e^{(-i\mu+\rho)\langle w,b'\rangle} e^{(i\lambda+\rho)\langle w,b\rangle} a(w,\lambda,b) \, dw \, \dbar\mu \, db',
\end{eqnarray*}
and hence
\begin{eqnarray*}
&& \hspace{-1cm}a_t(z,\lambda,b) \\
&=& \int e^{-it\lambda} e^{-(i\lambda+\rho)\langle z,b\rangle} e^{itR} e^{(i\mu+\rho)\langle z,b'\rangle} e^{(-i\mu+\rho)\langle w,b'\rangle} e^{(i\lambda+\rho)\langle w,b\rangle} a(w,\lambda,b) dw \, \dbar\mu \, db' \\
&=& \int e^{it(\mu-\lambda)} e^{-(i\lambda+\rho)(\langle z,b\rangle-\langle w,b\rangle)} e^{(i\mu+\rho)(\langle z,b'\rangle-\langle w,b'\rangle)} e^{2\rho\langle w,b'\rangle} a(w,\lambda,b) dw \, \dbar\mu \, db' \\
&=& \int e^{it(\mu-\lambda)} e^{-(i\lambda+\rho)(\langle z,b\rangle-\langle w,b\rangle)} e^{(i\mu+\rho)(\langle z,b'\rangle-\langle w,b'\rangle)} a(w,\lambda,b)
\, e^{2\rho\langle w,b'\rangle} \, dw \, db \, \dbar\mu.
\end{eqnarray*}

\begin{cor}\label{recall cor}
For sufficiently regular functions $a(w,\mu,b')$ on $X\times\la^*_+\times B$,
\begin{eqnarray*}
U^t(a) = \int e^{it(\mu-\lambda)} e^{-(i\lambda+\rho)(\langle z,b\rangle-\langle w,b\rangle)} e^{(i\mu+\rho)(\langle z,b'\rangle-\langle w,b'\rangle)} a(w,\lambda,b) \, e^{2\rho\langle w,b'\rangle} \, dw \, db \, \dbar\mu.
\end{eqnarray*}
\end{cor}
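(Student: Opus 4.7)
The plan is to read the formula off directly from the definition of $U^t$ combined with the Fourier inversion formula on $X$. First I would start from \eqref{as in}: the operator $A_t := e^{itR}Ae^{-itR}$ is properly supported (since $A$ is and $e^{\pm itR}$ commute with invariant differential operators), so Definition \ref{complete symbol} applies, and
\[
U^t(a)(z,\lambda,b) = a_t(z,\lambda,b) = e^{-(i\lambda+\rho)\langle z,b\rangle}\, e^{itR} A \, e^{-itR}\, e^{(i\lambda+\rho)\langle z,b\rangle}.
\]
The non-Euclidean plane wave $e_{\lambda,b}$ is a joint eigenfunction of $R=\sqrt{-(\Delta+|\rho|^2)}$ with eigenvalue $\lambda$ by \eqref{character of the Laplacian}, so $e^{-itR}e_{\lambda,b}=e^{-it\lambda}e_{\lambda,b}$. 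This peels off a scalar factor and reduces the task to applying $e^{itR}$ to $A e_{\lambda,b}(z)=a(z,\lambda,b)\,e^{(i\lambda+\rho)\langle z,b\rangle}$, which is what is carried out explicitly in \eqref{symbol a_t}.

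Next I would invoke the Fourier inversion formula \eqref{Fourier inversion formula}, applied in the variable $z$, to expand
\[
a(z,\lambda,b)\,e^{(i\lambda+\rho)\langle z,b\rangle} = \int e^{(i\mu+\rho)\langle z,b'\rangle}\, e^{(-i\mu+\rho)\langle w,b'\rangle}\, e^{(i\lambda+\rho)\langle w,b\rangle}\, a(w,\lambda,b)\, dw\, \dbar\mu\, db',
\]
so that $e^{itR}$ acts only on the single plane wave $z\mapsto e^{(i\mu+\rho)\langle z,b'\rangle}$, again as a joint eigenfunction, producing the factor $e^{it\mu}$. Pulling this factor and $e^{-it\lambda}e^{-(i\lambda+\rho)\langle z,b\rangle}$ inside the integral, and combining the exponentials $e^{\rho\langle z,b\rangle}$, $e^{\rho\langle w,b\rangle}$, $e^{\rho\langle z,b'\rangle}$, $e^{\rho\langle w,b'\rangle}$ into the bracketed $\rho$-differences and the Jacobian-like factor $e^{2\rho\langle w,b'\rangle}$ attached to $db$, yields exactly the asserted expression.

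The only genuine issue is to justify interchanging $e^{itR}$ with the $(w,\mu,b')$-integral. I would appeal to the ``sufficiently regular'' hypothesis on $a$: it suffices that $(w,\mu,b')\mapsto a(w,\lambda,b)$ is, say, Schwartz in $\mu$ after multiplication by the Plancherel density (Proposition \ref{rapidly decreasing} provides this in the basic case $a\in C_c^\infty(X)$ in the $w$-variable), so that the resulting integral converges absolutely, the partial sums of its spectral decomposition converge, and $e^{itR}$ — being unitary on $L^2(X)$ and defined via the spectral theorem — may pass under the integral sign termwise. Once this interchange is justified, the computation is bookkeeping of exponential factors.

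The hard part is thus not the algebra but the analytic justification: identifying a class of symbols $a$ for which both the Fourier expansion and the interchange with $e^{itR}$ are rigorous. In practice one argues first for $a\in C_c^\infty(X\times\la^*_+\times B)$, where absolute convergence is immediate by Proposition \ref{rapidly decreasing} and the Paley-Wiener estimates of Section \ref{Non-Euclidean Fourier analysis}, and then extends the identity to the wider symbol classes $S^m_{\mathrm{cl}}$ used in Section \ref{The calculus} by the usual density and continuity arguments.
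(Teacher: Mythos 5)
Your argument is correct and follows essentially the same route as the paper: starting from the symbol formula \eqref{symbol a_t}, expanding $a(z,\lambda,b)e^{(i\lambda+\rho)\langle z,b\rangle}$ by the Fourier inversion formula, letting $e^{itR}$ act on the plane wave $e^{(i\mu+\rho)\langle z,b'\rangle}$ under the integral to produce $e^{it\mu}$, and collecting the exponential factors. Your extra remarks on justifying the interchange of $e^{itR}$ with the integral only make explicit what the paper leaves implicit in the phrase ``sufficiently regular''.
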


\begin{prop}
The $U^t$ are a one-parameter group of unitary operators on $L^2(G/M\times\rr^+,dg\,\dbar\mu)=L^2(X\times B\times\la^+, e^{2\rho\langle w,b'\rangle} \, dw \, db \,\dbar\mu)$.
\end{prop}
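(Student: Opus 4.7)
The plan is to recognise that the kernel formula in Corollary \ref{recall cor} factors $U^t$ fibrewise (in the parameter $(\lambda,b)$) through conjugation by the wave group $e^{itR}$ on $L^2(X,dz)$, and then to read off both the one-parameter-group law and the isometry property from this factorisation.

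First I would fix $(\lambda,b)\in\la^{+}_{*}\times B$ and introduce the auxiliary function
\begin{equation*}
h_{\lambda,b}(z) \;:=\; a(z,\lambda,b)\, e^{(i\lambda+\rho)\langle z,b\rangle}, \qquad z\in X.
\end{equation*}
Since $R=\sqrt{-(\Delta+|\rho|^2)}$ acts on non-Euclidean plane waves by $e^{itR}e_{\mu,b'}=e^{it\mu}e_{\mu,b'}$ (a direct consequence of \eqref{character of the Laplacian}), the Helgason--Fourier inversion formula, applied to $h_{\lambda,b}$ with the integration in $(\mu,b')$ that appears in Corollary \ref{recall cor}, yields the clean identity
\begin{equation*}
U^{t}a(z,\lambda,b) \;=\; e^{-it\lambda}\,e^{-(i\lambda+\rho)\langle z,b\rangle}\,\bigl(e^{itR}h_{\lambda,b}\bigr)(z).
\end{equation*}
Specialised to $t=0$ this reads $U^{0}a=a$, and the group law $U^{t+s}=U^{t}\circ U^{s}$ now follows by direct substitution: the outer $e^{-(i\lambda+\rho)\langle z,b\rangle}$ in $U^{s}a$ exactly cancels the exponential used to build $h_{\lambda,b}$ from $U^{s}a$, so that applying $U^{t}$ simply replaces $e^{isR}$ by $e^{i(t+s)R}$ and $e^{-is\lambda}$ by $e^{-i(t+s)\lambda}$.

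Next I would compute the norm. Since $|e^{-it\lambda}|=1$ and $|e^{-(i\lambda+\rho)\langle z,b\rangle}|^{2}=e^{-2\rho\langle z,b\rangle}$, the weight in the measure is absorbed exactly:
\begin{equation*}
\bigl|U^{t}a(z,\lambda,b)\bigr|^{2}\,e^{2\rho\langle z,b\rangle} \;=\; \bigl|(e^{itR}h_{\lambda,b})(z)\bigr|^{2}.
\end{equation*}
Integrating in $z$ and using that $e^{itR}$ is unitary on $L^{2}(X,dz)$ (via the Plancherel theorem \ref{Plancherel formula}, on the Fourier side it is multiplication by the unimodular function $e^{it\mu}$) gives
\begin{equation*}
\int_{X}\bigl|U^{t}a(z,\lambda,b)\bigr|^{2}\,e^{2\rho\langle z,b\rangle}\,dz \;=\; \int_{X}|h_{\lambda,b}(z)|^{2}\,dz \;=\; \int_{X}|a(z,\lambda,b)|^{2}\,e^{2\rho\langle z,b\rangle}\,dz.
\end{equation*}
Integrating in $db\,\dbar\lambda$ and invoking the identification $dg=e^{2\rho\langle z,b\rangle}dz\,db$ on $G/M$ from \eqref{m-invariant integral} then yields $\|U^{t}a\|=\|a\|$ for every sufficiently regular symbol $a$. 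Combined with the group law the identity $U^{-t}\circ U^{t}=U^{0}=\id$ furnishes the inverse, so that each $U^{t}$ is invertible and hence unitary.

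The main technical nuisance will be to justify the interchange of integrations needed to recognise the inner $\mu,b',w$-integral as $(e^{itR}h_{\lambda,b})(z)$; for amplitudes in $C_{c}^{\infty}(G/M\times\rr^{+})$ this is immediate from Fubini and the rapid decay produced by Proposition \ref{rapidly decreasing}, after which the isometry estimate itself permits the extension to the full Hilbert space $L^{2}(G/M\times\rr^{+},dg\,\dbar\mu)$ by density. Once this is in place there is nothing more to do: the family $\{U^{t}\}_{t\in\rr}$ is a strongly continuous one-parameter group of unitaries, generated (on the Fourier side) by multiplication by the spectral parameter $\lambda$.
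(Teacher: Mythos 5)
Your proof is correct, but it takes a different route from the paper. The paper proves unitarity the same way it proved Lemma \ref{lemma U isometry} for $U$: it writes $\langle U^t a, U^t a\rangle$ out as a ninefold integral over $(X\times\la^*_+\times B)^3$ and collapses it with the two Helgason--Fourier inversion identities, citing Zelditch's $SU(1,1)$ computation as generalizing verbatim; the group law and invertibility are not spelled out there. You instead exploit the factorisation already contained in \eqref{symbol a_t}, namely $U^t a(\cdot,\lambda,b)=e^{-it\lambda}e^{-(i\lambda+\rho)\langle \cdot,b\rangle}\,e^{itR}\bigl(a(\cdot,\lambda,b)e^{(i\lambda+\rho)\langle \cdot,b\rangle}\bigr)$, so that for each fixed $(\lambda,b)$ the weight $e^{2\rho\langle z,b\rangle}$ is absorbed and everything reduces to the unitarity of $e^{itR}$ on $L^2(X,dz)$, which in turn is the Plancherel theorem \ref{Plancherel formula} plus multiplication by the unimodular symbol $e^{it\mu}$. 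Both arguments therefore rest on the same analytic input (Helgason inversion/Plancherel), but yours buys two things: the group law $U^{t+s}=U^tU^s$ and the inverse $U^{-t}$ come for free from the corresponding properties of $e^{itR}$, so the ``one-parameter group of unitaries'' statement is obtained in full rather than only the isometry; and the computation is fibrewise in $(\lambda,b)$, avoiding the bookkeeping of the multi-fold integral. The paper's direct computation, on the other hand, works straight from the integral kernel of Corollary \ref{recall cor} and does not require first verifying that the kernel formula and the conjugation formula \eqref{symbol a_t} agree --- the Fubini/rapid-decay justification you correctly flag for compactly supported amplitudes, with the extension to all of $L^2(G/M\times\rr^+,dg\,\dbar\mu)$ by density, exactly as you indicate.
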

\begin{proof}
Let $\langle\cdot,\cdot\rangle$ denote the $L^2$-inner product. Then
\begin{eqnarray*}
\langle U^ta,U^ta\rangle = \int e^{it\mu_1} e^{-it\mu_2} e^{(i\mu_1+\rho)(\langle z,b_1\rangle-\langle w_1,b_1\rangle}) \\
&\hspace{-11cm}\times&\hspace{-5.3cm} e^{-(i\lambda+\rho)(\langle z,b\rangle-\langle w_1,b\rangle)} a(w_1,b,\lambda) e^{(-i\mu_2+\rho)(\langle z,b_2\rangle-\langle w_2,b_2\rangle)}  \\
&\hspace{-11cm}\times&\hspace{-5.3cm} e^{(i\lambda-\rho)(\langle z,b\rangle-\langle w_2,b\rangle} \overline{a}(w_2,b,\lambda) e^{2\rho\langle w_2,b_2\rangle} e^{2\rho\langle w_1,b_1\rangle} \\
&\hspace{-11cm}\times&\hspace{-5.3cm} e^{2\rho\langle z,b\rangle} \, \dbar\lambda \, db \, dz \, \dbar\mu_1 \, db_1 \, dw_1 \, \dbar\mu_2 \, db_2 \, dw_2.
\end{eqnarray*}
The Fourier inversion formula \eqref{Fourier inversion formula} states for sufficiently regular $f: X\rightarrow\cc$
\begin{itemize}
\item[(1)] $f(z) = \int_{\mathfrak{a}^*_+}\int_B\int_X e^{(i\lambda+\rho)\langle z,b\rangle}e^{(-i\lambda+\rho)\langle w,b\rangle}f(w)\,dw\,\dbar\lambda\,db$,
\item[(2)] $\tilde{f}(\lambda,b) = \int_X\int_{\mathfrak{a}^*_+}\int_B e^{(-i\lambda+\rho)\langle z,b\rangle}e^{(i\mu+\rho)\langle z,\tilde{b}\rangle} \tilde{f}(\mu,\tilde{b}) \, \dbar\mu\,d\tilde{b}\,dz$.
\end{itemize}
As in the proof of Proposition \ref{U unitary} we use these formulae to find $\langle U^ta,U^ta\rangle = \langle a,a\rangle$. (The $SU(1,1)$-proof given in \cite{Z86}, p. 100, generalizes completely).
\end{proof}

Recall $X\times B\cong G/M$. We change variables to $w=g\cdot o$, $b'=g\cdot M$ and prove exactly as in Subsection \ref{The Kohn-Nirenberg operator} that
\begin{eqnarray*}
U^t(a)(z,\lambda,b) &=& \int_{\rr^+} \int_G e^{it(\mu-\lambda)\langle g^{-1}z,M\rangle} e^{(i\mu+\rho)\langle g^{-1}z,g^{-1}b\rangle} e^{-(i\lambda+\rho)} a(g\cdot o,b,\lambda) \, dg \, \dbar\mu \\
&=& \int_{\rr^+} \left( a(\cdot,b,\lambda) \ast E_{\mu,\lambda}(z,b) \right) e^{it(\mu-\lambda)} \, \dbar\mu.
\end{eqnarray*}

\begin{lem}
$U^t$ commutes with translations $T_g$ by elements $g\in G$.
\end{lem}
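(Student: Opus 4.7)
The plan is to reduce the lemma to the analogous statement for the Kohn--Nirenberg operator $U$, which was proved in Proposition \ref{U unitary}. Concretely, I would use the integral representation of $U^t$ given in Corollary \ref{recall cor},
\[
U^t(a)(z,\lambda,b) = \int e^{it(\mu-\lambda)}\, e^{-(i\lambda+\rho)(\langle z,b\rangle-\langle w,b\rangle)}\, e^{(i\mu+\rho)(\langle z,b'\rangle-\langle w,b'\rangle)}\, a(w,\lambda,b)\, e^{2\rho\langle w,b'\rangle}\, dw\, db'\, \dbar\mu,
\]
and compute $(T_g U^t a)(z,\lambda,b) = U^t(a)(g\cdot z,\lambda,g\cdot b)$ directly, then substitute $w\mapsto g\cdot w$, $b'\mapsto g\cdot b'$ in the integral.

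The required cancellations are already in hand: the cocycle identity $\langle g\cdot z,g\cdot b\rangle = \langle z,b\rangle + \langle g\cdot o,g\cdot b\rangle$ from \eqref{equivariance} (together with its analogue in $b'$) shows that the two phase differences $\langle z,b\rangle-\langle w,b\rangle$ and $\langle z,b'\rangle-\langle w,b'\rangle$ are invariant under the simultaneous translation, because the $\langle g\cdot o,g\cdot b\rangle$-terms produced by the $z$-slot and the $w$-slot cancel. The Jacobian relation $\frac{d(g\cdot b')}{db'} = e^{-2\rho\langle g\cdot o, g\cdot b'\rangle}$ from \eqref{dgb / db} combined with $G$-invariance of $dw$ and with $\langle g\cdot w,g\cdot b'\rangle=\langle w,b'\rangle+\langle g\cdot o,g\cdot b'\rangle$ shows that the measure $e^{2\rho\langle w,b'\rangle}\,dw\,db'$ on $X\times B\cong G/M$ is $G$-invariant (as recorded right before Corollary \ref{m-invariant integral}). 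Finally, the new factor $e^{it(\mu-\lambda)}$ introduced by the conjugation with $e^{\pm itR}$ depends only on the spectral variables $\mu,\lambda$ and is pulled through the spatial change of variables untouched. Putting these three ingredients together converts $(T_g U^t a)(z,\lambda,b)$ into $U^t(T_g a)(z,\lambda,b)$, as desired.

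There is no genuine obstacle here; the only observation one needs to make in addition to the proof of Proposition \ref{U unitary} is that the time-dependent phase factor $e^{it(\mu-\lambda)}$ is spatially constant. Structurally this is also consistent with the fact that $R=\sqrt{-(\Delta+|\rho|^2)}$ is built by the spectral theorem from the $G$-invariant Laplacian, so that $e^{\pm itR}$ commute with all left translations by elements of $G$; but I would present the direct integral computation, since the formula from Corollary \ref{recall cor} is already available and the manipulation is the same one used for $U$.
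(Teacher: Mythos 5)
Your argument is correct, but it is not the route the paper takes. The paper's proof is a two-line computation directly from the definition \eqref{symbol a_t}: one writes $(T_gU^t)(a)(z,\lambda,b)=a_t(gz,\lambda,gb)$, uses the cocycle identity $\langle gz,gb\rangle=\langle z,b\rangle+\langle g\cdot o,g\cdot b\rangle$ to peel off the constant factor $e^{\pm(i\lambda+\rho)\langle g\cdot o,g\cdot b\rangle}$, which passes through $e^{itR}$ and cancels, leaving $(a\circ g)_t(z,\lambda,b)=(U^tT_g)(a)(z,\lambda,b)$; implicitly this rests on the fact that $e^{itR}$ commutes with left translations (as you note, $R$ is built by the spectral theorem from the $G$-invariant operator $\Delta$). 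You instead work with the kernel representation of Corollary \ref{recall cor} and repeat the change-of-variables argument of Proposition \ref{U unitary}: substitute $w\mapsto g\cdot w$, $b'\mapsto g\cdot b'$, use \eqref{equivariance} to see that the phase differences $\langle z,b\rangle-\langle w,b\rangle$ and $\langle z,b'\rangle-\langle w,b'\rangle$ are unchanged, use \eqref{dgb / db} together with the $G$-invariance of $dw$ to see that $e^{2\rho\langle w,b'\rangle}\,dw\,db'$ is invariant, and observe that $e^{it(\mu-\lambda)}$ is spatially constant; the amplitude $a(w,\lambda,b)$ then turns into $(a\circ g)(w,\lambda,b)$, giving $U^t(a\circ g)$. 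Both proofs are valid. The paper's is shorter and makes the mechanism transparent (conjugation by a $G$-invariant unitary group cannot destroy equivariance), while yours is more self-contained at this point of the text: it needs no separate justification that $e^{itR}$ commutes with $T_g$, only the explicit formula already derived and the same bookkeeping identities used for the Kohn--Nirenberg operator $U$.
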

\begin{proof}
Let $g\in G$. Then by \eqref{symbol a_t} we have
\begin{eqnarray*}
(T_g U^t)(a)(z,\lambda,b) &=& a_t(gz,\lambda,gb) \\
&=& e^{-it\lambda} e^{-(i\lambda+\rho)\langle gz,gb\rangle} e^{itR} \left( a(gz,\lambda,gb) e^{(i\lambda+\rho)\langle gz,gb\rangle} \right).
\end{eqnarray*}
Since $\langle gz,gb\rangle=\langle z,b\rangle + \langle go,gb\rangle$ this equals
\begin{eqnarray*}
e^{-it\lambda} e^{-(i\lambda+\rho)\langle z,b\rangle} e^{itR} \left( a(gz,\lambda,gb) e^{(i\lambda+\rho)\langle z,b\rangle} \right) \\
&\hspace{-12.3cm}=&\hspace{-6cm} (a\circ g)_t(z,\lambda,b) \\
&\hspace{-12.3cm}=&\hspace{-6cm} (U^tT_g)(a)(z,\lambda,b),
\end{eqnarray*}
where $(a\circ g)(z,\lambda,b)=a(gz,\lambda,gb)$. This proves $U^tT_g=T_gU^t$.
\end{proof}

Recall from Corollary \ref{recall cor} that
\begin{eqnarray*}
U^t(a) = \int e^{it(\mu-\lambda)} e^{-(i\lambda+\rho)(\langle z,b\rangle-\langle w,b\rangle)} e^{(i\mu+\rho)(\langle z,b'\rangle-\langle w,b'\rangle)} a(w,\lambda,b) \, e^{2\rho\langle w,b'\rangle} \, dw \, db \, \dbar\mu,
\end{eqnarray*}
where the integration space is $X\times B\times \rr^+$. We factor out $\lambda$ from the phase, change variables to $\widetilde{\mu}=\mu/\lambda$, and drop the tilde. Then
\begin{eqnarray*}
U^t(a)(z,\lambda,b) \hspace{-2mm} &=& \hspace{-2mm} \int e^{i\lambda[t(\mu-1) + \langle w,b\rangle - \langle z,b\rangle + \mu(\langle z,b'\rangle - \langle w,b'\rangle)]} \\
&& \, \times \, e^{\rho[\langle w,b\rangle - \langle z,b\rangle + \langle z,b'\rangle - \langle w,b'\rangle]} \, \lambda \, a(w,\lambda,b) \, e^{2\rho\langle w,b'\rangle} |c(\lambda\mu)|^{-2} \, dw \, db \, d\mu.
\end{eqnarray*}
Writing $(z,\lambda,b)=(g\cdot o,\lambda,g\cdot M)$ and using $Ua(z,\lambda,b)=U(a\circ g)(o,\lambda,M)$, we find (also note that $g\cdot M=b\in B$)
\begin{eqnarray}\label{also note}
U^ta(g,\lambda) \hspace{-2mm} &=& \hspace{-2mm} U^t(a)(z,\lambda,b) \\
&\hspace{-35mm}=& \hspace{-20mm} \int e^{i\lambda[t(\mu-1) + \langle w,M\rangle - \mu\langle w,b'\rangle]} \, e^{\rho[\langle w,M\rangle - \langle w,b'\rangle]} \, \lambda \, a(g\cdot w,\lambda,b) \, e^{2\rho\langle w,b'\rangle} |c(\lambda\mu)|^{-2} \, dw \, db \, d\mu \nonumber.
\end{eqnarray}
We change variables $(w,b')=h\cdot(o,M)$ corresponding to $X\times B\cong G/M$. Then $\langle w,M\rangle = -H(h^{-1})$ and $\langle w,b'\rangle=H(h)$, so by \ref{m-invariant integral}
\begin{eqnarray*}
U^ta(g,\lambda) \hspace{-2mm} &=& \hspace{-2mm} U^t(a)(z,\lambda,b) \\
&\hspace{-35mm}=& \hspace{-20mm} \int e^{i\lambda[t(\mu-1) - H(h^{-1}) - \mu H(h)]} \, e^{-\rho[H(h^{-1}) + H(h)]} \, \lambda \, a(gh\cdot o,\lambda,b) \, |c(\lambda\mu)|^{-2} \, dh \, d\mu.
\end{eqnarray*}
Next, write $h=a^{-1}n^{-1}k$ corresponding to $G=ANK$. Then by \eqref{integral formula G} and since $A$ and $N$ are unimodular we obtain
\begin{eqnarray*}
U^ta(g,\lambda) \hspace{-2mm} &=& \hspace{-2mm} U^t(a)(z,\lambda,b) \\
&\hspace{-35mm}=& \hspace{-20mm} \int e^{i\lambda[t(\mu-1) - \log(a) - \mu H(a^{-1}n^{-1}k)]} \\
&& \hspace{-14mm} \times \,\, e^{-\rho[\log(a)+H(a^{-1}n^{-1}k)]} \, \lambda \, a(g(na)^{-1}\cdot o,\lambda,b) \, |c(\lambda\mu)|^{-2} \, da \, dn \, dk \, d\mu.
\end{eqnarray*}

\subsubsection{An Egorov-type formula}
The classical Egorov theorem states that conjugation by the wave group defines an order preserving automorphism on the space of pseudodifferential operators. We will now be able to prove the following version:

\begin{thm}\label{Egorov Theorem}
Let $a(z,\lambda,b)\in S^m_{\textnormal{cl}}$ be compactly supported in $z$ (uniformly in the other variables). Write $A=Op(a)$ and $A_t := e^{itR}Ae^{-itR}$. Then $A_t$ has complete symbol $U^t(a)\in S^{m}_{\textnormal{cl}}$ and  $\sigma_{A_t}=c_t\sigma_{A}(G^{t}(z,b),\lambda)$, where $c_t$ is a constant.
\end{thm}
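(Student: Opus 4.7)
The plan is to recognise that by the very definition of the complete symbol,
\[
a_t(z,\lambda,b)=e^{-(i\lambda+\rho)\langle z,b\rangle}A_t\,e^{(i\lambda+\rho)\langle z,b\rangle}=U^t(a)(z,\lambda,b),
\]
so what has to be shown is (i) $U^t(a)\in S^m_{\mathrm{cl}}$ and (ii) the leading homogeneous term equals $c_t\,\sigma_A(G^t(z,b),\lambda)$. For (i) and (ii) I would apply the method of stationary phase with parameters to the explicit oscillatory integral for $U^t(a)$ derived just before the statement, exactly in the spirit of the proof of the Rearrangement Theorem and Corollary~\ref{by corollary}.

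First I would take the $NAK$-integral form
\[
U^t a(g,\lambda)=\int e^{i\lambda\psi_t(\mu,n,a,k)}\,\alpha_t(\mu,n,a,k;g,\lambda)\,dn\,da\,dk\,d\mu,
\]
with phase $\psi_t(\mu,n,a,k)=t(\mu-1)-\log(a)-\mu H(a^{-1}n^{-1}k)$ and amplitude $\alpha_t$ containing the factor $a(g(na)^{-1}\!\cdot o,\lambda,b)$ and the Plancherel density $\lambda |c(\lambda\mu)|^{-2}$. By the analysis in Section~\ref{Critical sets and Hessian forms} (``Another phase function''), on the quotient $\mathbb{R}^+\times N\times A\times K/M$ this phase has the single critical point $(\mu,n,a,kM)=(1,e,a_{-t},M)$ with non-degenerate Hessian given by the matrix \eqref{Hessian t}. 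Outside a small neighbourhood of this point the principle of non-stationary phase (applied after cutting off in $\mu$ near $\mu=0$ and $\mu=\infty$, and after integration by parts via $L=(i\lambda)^{-1}|\nabla\psi_t|^{-2}\nabla\psi_t\cdot\nabla$, as in the proof of Theorem~\ref{Rearrangement}) produces an $\mathcal{O}(\lambda^{-\infty})$ remainder, using that $a$ is compactly supported in $z$.

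Near the critical point I would invoke the standard MSP formula, expand $a(z,\lambda,b)\sim\sum_{j\ge 0}\lambda^{m-j}a_j(z,\lambda/|\lambda|,b)$, replace $|c(\lambda\mu)|^{-2}$ asymptotically by a polynomial in $\lambda\mu$ of degree $\dim N$, and rearrange the resulting double series in homogeneous summands exactly as in Theorem~\ref{Rearrangement} (each $\mu$- or $w$-derivative of the cubic remainder in the Taylor expansion of $\psi_t$ costs three derivatives to bring down one power of $\lambda$, so only finitely many terms contribute to any given homogeneity). This yields an asymptotic expansion
\[
U^t(a)(z,\lambda,b)\sim\sum_{k=0}^\infty(i/\lambda)^k\Lambda_k^t(a)(z,\lambda,b),
\]
where each $\Lambda_k^t$ is a differential operator of order $2k$ (by Peetre's theorem, after verifying $G$-equivariance as in Remark~\ref{subsequent remark 0}). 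The leading term is
\[
\Lambda_0^t(a)(z,\lambda,b)=c_t\,a\bigl(g(n e\,a_{-t})^{-1}\!\cdot o,\lambda,b\bigr)_{|\,n=e}=c_t\,a(g a_t\cdot o,\lambda,b)=c_t\,a(G^t(z,b),\lambda),
\]
since geodesic flow on $X\times B\cong G/M$ is right translation by $a_t$ and $a_t\cdot M=M$ in $B$, so the boundary component is unchanged. The constant $c_t=(2\pi)^{(\dim(NAK)+1)/2}|\det H_t|^{-1/2}e^{i\pi\operatorname{sign}(H_t)/4}$ coming from \eqref{Hessian t} depends only on $t$.

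To upgrade the pointwise expansion to a classical symbol expansion in $S^m_{\mathrm{cl}}$ I would follow Corollary~\ref{by corollary}: estimates of the form $|(\partial/\partial\lambda)^k D\,U^t(a)|\le C_{k,D}(C)(1+\lambda)^{\sigma(k,D)}$ for $z$ in compacts are obtained by differentiating under the integral in the localised piece and by further integrations by parts in the non-stationary piece; then the elementary interpolation inequality $\|Df\|_\infty^2\le 4\|f\|_\infty\|D^2 f\|_\infty$ converts rapid decay of $U^t(a)-\sum_{k<N}(i/\lambda)^k\Lambda_k^t(a)$ in sup-norm together with polynomial growth of all derivatives into rapid decay of all derivatives in compacts, which is exactly the symbol estimate \eqref{symbol estimates} for $U^t(a)-\sum_{k<N}(i/\lambda)^k\Lambda_k^t(a)\in S^{m-N}$.

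The main obstacle I expect is bookkeeping: assembling the MSP series into homogeneous pieces requires the careful finiteness argument from the proof of Theorem~\ref{Rearrangement} (the cubic vanishing of the Taylor remainder in the phase, combined with the polynomial replacement of $|c|^{-2}$), and the identification of the principal symbol demands checking that at the critical value the phase vanishes so that no extra oscillatory factor survives and that the MSP constant is indeed a $t$-dependent constant independent of $(z,b)$, which follows because the Hessian \eqref{Hessian t} depends on $t$ only through the block $f(t)$ studied at the end of Section~\ref{The Hessian form} and is $(z,b)$-independent thanks to the $G$-equivariance $U^tT_g=T_gU^t$.
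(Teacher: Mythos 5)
Your proposal is correct and follows essentially the same route as the paper's own proof: both reduce the statement to the stationary-phase analysis of $U^t(a)$ with phase $\psi_t(\mu,n,a,k)=t(\mu-1)-\log(a)-\mu H(a^{-1}n^{-1}k)$, use the critical point $(1,e,a_{-t},M)$ with non-degenerate Hessian from Subsection \ref{Critical sets and Hessian forms}, and rearrange the MSP expansion into homogeneous terms exactly as in Theorem \ref{Rearrangement} and Corollary \ref{by corollary}, identifying the leading term as a constant times $a(G^t(z,b),\lambda)$. Your write-up is in fact somewhat more explicit than the paper's (vanishing of the phase at the critical point, the $t$-dependence of the Hessian block $f(t)$, the interpolation argument for symbol-norm asymptotics), but the underlying argument is the same.
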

\begin{proof}
The phase function of the symbol $U^t(a)(z,\lambda,b)$ on $A\times N\times K/M\times\rr^+$ is given by
\begin{eqnarray*}
\psi_t(\mu,n,a,kM) = t(\mu-1) - \log(a) - \mu H(a^{-1}n^{-1}k).
\end{eqnarray*}
As proven in Subs. \ref{Critical sets and Hessian forms}, the phase function $\psi_t$ has the critical point $(\mu,n,a,kM) = (1,e,a_{-t},eM)$, and the Hessian form of $\psi_t$ at the critical point is non-degenerate. Under $X\times B\cong G/M \cong A^{-1}N^{-1} \times K/M$ the critical point $(a_{-t},e,eM)$ corresponds to $(w,b')=(a_t\cdot o,M)=G^t(o,M)$. Given $(z,b)=(g\cdot o,g\cdot M)$ we then have $(g\cdot a^{-1}n^{-1}\cdot o,\lambda,b)_{a=a_{-t},n=e}=(G^t(z,b),\lambda)$. As before, the principle of non-stationary phase yields that $U^t(a)$ is uniquely determined modulo $S^{-\infty}:=\cap_{m}S^m$\index{$S^{-\infty}=\cap_{m}S^m$, smoothing symbols} by a compactly supported cutoff of the integrand. The method of stationary phase is applied to this cutoff of $U^t(a)(z,\lambda,b)$ exactly as in Subs. \ref{Asymptotic expansions}, only the critical point of the phase function is different. The MSP-formula yields an expansion for $U^t(a)(z,\lambda,b)$ which can be rearranged in homogeneous terms, so $U^t(a) \in S^{m}_{\textnormal{cl}}$. In particular, the principal symbol of $U^t(a)$ is given by a constant times an evaluation at the critical point of the principal symbol of $a$ (all other terms in the MSP-formula have lower order). It follows that $\sigma_{A_t}=c_t\sigma_{A}(G^{t}(z,b),\lambda)$, so the theorem is proven.
\end{proof}

\begin{rem}
\begin{itemize}
\item[(1)] It seems reasonable to conjecture that $c_t=1$ for all $t$. In fact, the operators in the MSP-formula are left-$G$-invariant, so the theorem descends to a compact quotient $X_{\Gamma}$. Write $1(z,\lambda,b)$ for the constant function $f(z,\lambda,b)=1$ on $X_{\Gamma}\times\rr^+\times B$. Then $1(G^t(z,b),\lambda)=1$ and we can use diagonal matrix elements $\rho_{\lambda_j}(Op(a))$ as in the introduction to see that $1=\rho_{\lambda_j}(Op(1))\sim c_t$ (when $j\rightarrow\infty$).
\item[(2)] One should caution that conjugation $e^{itR}Ae^{-itR}$ is not equivalent to conjugation by the wave group. If one uses \cite{BO}, Lemma 2.2, to compute (for the standard quantization) the infinitesimal action of the wave group on a symbol of a pseudodifferential operator, one finds that shifting the Laplace operator influences the velocity of the (geodesic) flow defining the symbol in an Egorov theorem.
\end{itemize}
\end{rem}

\newpage
\pagebreak
\thispagestyle{empty} 

\section{Helgason boundary values}\label{Section Helgason boundary values}

We start by recalling some fundamental relations first proven by Helgason (\cite{He70}) between joint eigenfunctions of the algebra of invariant differential operators with hyperfunctions and distributions on the real flag manifold of a symmetric space. These relations are described by means of the Poisson transform. In Subsection \ref{Regularity} we prove a regularity statement (Lemma \ref{Lemma Olbrich}) for boundary values of certain eigenfunctions, which seems to be a new result.

Recall that $\mathbb{D}(G/K)$ denotes the algebra of differential operators on $X=G/K$, which are invariant under left-translations by elements of $G$. Given a homomorphism $\chi:\mathbb{D}(G/K)\rightarrow\cc$, let $\chi(D)$ ($D\in\mathbb{D}(G/K)$) denote the corresponding system of eigenvalues. The space
\begin{eqnarray*}
\mathcal{E}_{\chi}(X) = \left\{ f\in\mathcal{E}(X): Df=\chi(D)f \,\,\, \textnormal{ for all } D\in\mathbb{D}(G/K)\right\}
\end{eqnarray*}
\index{$\mathcal{E}_{\chi}(X)$, joint eigenspace}is called a \emph{joint eigenspace}\index{joint eigenspace} of $\mathbb{D}(G/K)$. We also know that the homomorphisms $\chi$ as above can be parameterized by the orbits of the Weyl group in $\la^*$, that is each $\chi$ is of the form $\chi_{\lambda}$, where $\lambda\in\la^*$. As in Section \ref{Joint eigenfunctions and joint eigenspaces} we write
\begin{eqnarray*}
\mathcal{E}_{\lambda}(X) = \left\{f\in\mathcal{E}(X): Df=\Gamma(D)(i\lambda)f \textnormal{ for all } D\in\dd(X)  \right\}.
\end{eqnarray*}
\index{$\mathcal{E}_{\lambda}(X)$, joint eigenspace}A smooth function $f\in\mathcal{E}(G/K)$ is called \emph{joint eigenfunction}\index{joint eigenfunction} if it belongs to one of the spaces $\mathcal{E}_{\lambda}(X)$.

\begin{defn}
Let $L$ denote the Laplace-Beltrami operator of $B$. Let $\mathcal{A}(B)$ denote the vector space of analytic functions on $B=K/M$. For $T>0$ put
\begin{eqnarray*}
\left|F\right|_T = \sup_{k\in\zz^+} \left( \frac{1}{2k!} T^k \left\| L^k F \right\| \right),
\end{eqnarray*}
where $\|\cdot\|$ is the $L^2$-norm on $B$, and\index{$\mathcal{A}_T(B)$, Banach space}
\begin{eqnarray*}
\mathcal{A}_T(B) = \left\{ F\in\mathcal{E}(B) : |F|_T < \infty \right\}.
\end{eqnarray*}
Then $\mathcal{A}_T(B)$ is a Banach space, $\mathcal{A}(B)$ is the union of the spaces $\mathcal{A}_T(B)$ and is accordingly given the inductive limit topology. The \emph{analytic functionals (hyperfunctions)}\index{analytic functionals (hyperfunctions)} are the functionals in the dual space $\mathcal{A}'(B)$ of $\mathcal{A}(B)$ (cf. \cite{LM}).

We use the integral notation for distributions or hyperfunctions and test functions: For any space $Y$ we denote the pairing of distributions $u$ and test functions $\varphi$ on $Y$ by $\int_Y \varphi(y)u(dy)=\langle \varphi, u\rangle_Y$.
\end{defn}

The Poisson kernel $P(x,b)=e^{2\rho\langle x,b\rangle}$\index{$P(x,b)=e^{2\rho\langle x,b\rangle}$, Poisson kernel} and its powers $e_{\lambda,b}(x)=e^{(i\lambda+\rho)\langle x,b\rangle}$, where $\lambda\in\la^*_{\cc}$, are analytic functions (\cite{He94}, p. 119).

\begin{defn}
Given a function, distribution or hyperfunction $T$ on $B$ and $\lambda\in\la^*_{\cc}$ we define the \emph{Poisson transform}\index{Poisson transform} $P_{\lambda}: \mathcal{A}'(B)\rightarrow \mathcal{E}_{\lambda}(X)$ by
\begin{eqnarray}\label{Poisson transform}
P_{\lambda}(T)(z) := \int_B e^{(i\lambda+\rho)\langle z,b\rangle} T(db).
\end{eqnarray}
As a consequence of \cite{Rou}, p. 167, the function $P_{\lambda}(T)(z)$ is analytic and its derivatives can be computed under the integral sign. It follows from \eqref{character of the Laplacian} that $z\mapsto P_{\lambda}(T)(z)$ is a joint eigenfunction and belongs to $\mathcal{E}_{\lambda}(X)$.
\end{defn}

If the functional $T$ above is actually a function $f$ on $B$, then $T(db)=f(b)db$. Now suppose that $\psi$ is a function on $\la^*\times B$. Writing $\psi_{\lambda}(b)=\psi(\lambda,b)$ we see that \eqref{Weyl} can be written in the form
\begin{eqnarray}
P_{s\lambda}(\psi_{s\lambda}) = P_{\lambda}(\psi_{\lambda}), \hspace{2mm} s\in W.
\end{eqnarray}

The following fundamental theorem (\cite{He94}, p. 507, Theorem 6.5) relates eigenfunctions with hyperfunctions:
\begin{thm}\label{Helgason boundary values}
The joint eigenfunctions of $\mathbb{D}(G/K)$\index{$\mathbb{D}(G/K)$, algebra of translation invariant-differential operators on $G/K$} are the functions
\begin{eqnarray*}
f(x)=\int_B e^{(i\lambda+\rho)\langle z,b\rangle}dT(b),
\end{eqnarray*}
where $\lambda\in\la^*_{\cc}$ and $T\in\mathcal{A}'(B)$.
\end{thm}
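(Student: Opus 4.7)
\emph{Plan of proof.} The inclusion $P_{\lambda}(\mathcal{A}'(B))\subseteq\mathcal{E}_{\lambda}(X)$ is essentially immediate. The kernel $b\mapsto e^{(i\lambda+\rho)\langle\cdot,b\rangle}$ is an analytic map $B\to\mathcal{E}(X)$, so for $T\in\mathcal{A}'(B)$ the Poisson transform $P_{\lambda}(T)$ is analytic on $X$ and derivatives in $z$ may be carried inside the pairing; combined with the pointwise identity \eqref{ilambda}, $De_{\lambda,b}=\Gamma(D)(i\lambda)\,e_{\lambda,b}$ for every $D\in\mathbb{D}(G/K)$, this yields $DP_{\lambda}(T)=\Gamma(D)(i\lambda)P_{\lambda}(T)$, so $P_{\lambda}(T)\in\mathcal{E}_{\lambda}(X)$.

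The substantive content is the surjectivity of $P_{\lambda}$, i.e.\ the fact that every joint eigenfunction admits hyperfunction boundary values on $B$. This is \emph{Helgason's conjecture}, proved in full generality by Kashiwara, Kowata, Minemura, Okamoto, Oshima and Tanaka. The route I would follow is a $K$-type decomposition. Writing $f=\sum_{\delta\in\widehat{K}}f_{\delta}$ with respect to the right regular action of $K$ on $\mathcal{E}(X)$, each $f_{\delta}$ remains a joint eigenfunction of finite $K$-type. Its restriction to $A\cdot o$ satisfies, through the radial-part formalism \eqref{Radial part 1}--\eqref{Radial Part 2}, a system of ordinary differential equations on $A$ with regular singularity at infinity; for generic $\lambda$ the characteristic exponents are the Weyl-translates of $i\lambda-\rho$. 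The leading coefficient along $A^+$ of $f_{\delta}$, transported to $B\cong K/M$ by $M$-equivariance, defines a $K$-finite analytic function $T_{\delta}$ on $B$, and the candidate preimage of $f$ is the formal sum $T=\sum_{\delta}T_{\delta}$; that $P_{\lambda}(T)$ then matches $f$ is checked $K$-isotype by $K$-isotype against the explicit expansion of $P_{\lambda}(T_{\delta})$ in Eisenstein-integral form.

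The main obstacle is verifying that $T=\sum_{\delta}T_{\delta}$ converges in the hyperfunction topology of $\mathcal{A}'(B)$. Since the $T_{\delta}$ are not polynomially bounded in $\delta$, one cannot reduce to the distributional setting; instead one must estimate $\|T_{\delta}\|_{L^2(B)}$ against the Casimir eigenvalue $\langle\Lambda(\delta),\Lambda(\delta)+2\rho_K\rangle$ and pair with the Laplace-$K$-type expansion of a test function $F\in\mathcal{A}_T(B)$, exploiting that $|F|_T<\infty$ forces exponential decay of its $K$-type coefficients. In the KKMOOT argument this is the step in which microlocal and $\mathcal{D}$-module techniques enter decisively. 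In the rank-one case that dominates the present thesis a drastic shortcut is available: the radial ODE reduces to a hypergeometric equation, the leading coefficient can be written down explicitly in terms of the Harish-Chandra $c$-function (Subsection \ref{Special functions and the Plancherel density}), and under a moderate-growth hypothesis on $f$ the boundary value $T$ is automatically a distribution, which is all the regularity needed for Section \ref{Invariant differential operators} and the rest of this work.
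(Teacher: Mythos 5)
You should first note that the thesis does not prove this theorem at all: it is quoted verbatim from Helgason (\cite{He94}, Ch.\ VI, Theorem 6.5), and the hard half of that theorem is the resolution of Helgason's conjecture by Kashiwara--Kowata--Minemura--Okamoto--Oshima--Tanaka. So there is no in-paper argument to match your proposal against; the only fair comparison is between your sketch and the cited literature. Your easy inclusion $P_{\lambda}(\mathcal{A}'(B))\subseteq\mathcal{E}_{\lambda}(X)$ is correct and is exactly what the thesis itself does right after \eqref{Poisson transform}: analyticity of the kernel plus Roumieu's result to differentiate under the pairing, then \eqref{ilambda} to identify the character. No complaints there.

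For the converse, however, your text is a strategy outline rather than a proof, and the gap sits precisely where you place the disclaimer. Extracting the $K$-finite leading coefficients $T_{\delta}$ from the radial system is the classical (Harish-Chandra/Helgason) part; the theorem's actual content is (i) that $\sum_{\delta}T_{\delta}$ converges in the hyperfunction topology of $\mathcal{A}'(B)$ and reproduces $f$, and (ii) that this works for \emph{every} $\lambda\in\la^*_{\cc}$, including the non-generic parameters where exponents $s(i\lambda)-\rho$ coincide or differ integrally and logarithmic terms enter, so that there is no clean ``leading coefficient'' to transport to $B$ (one must also choose a Weyl-orbit representative for which the relevant $e$-function/$c$-function factors do not vanish). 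These are exactly the steps you hand off to the KKMOOT $\mathcal{D}$-module machinery, so as a self-contained argument the proposal does not close. Two smaller points: the exponents are the Weyl translates of $i\lambda$ shifted by $-\rho$ (i.e.\ $s(i\lambda)-\rho$), not Weyl translates of $i\lambda-\rho$, since $\rho$ is not $W$-invariant; and the rank-one, moderate-growth shortcut you describe proves the distributional statement of Theorem \ref{distribution} (which is what the thesis actually uses later), not the full hyperfunction statement of Theorem \ref{Helgason boundary values}. Given that the thesis itself only cites the result, citing it is the appropriate ``proof'' here; if you want to keep your outline, present it explicitly as a sketch of the quoted theorem with the convergence and non-generic-parameter steps attributed to the literature.
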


Given a joint eigenfunction $\phi$ of $\mathbb{D}(G/K)$, we call the unique functional $T=T_{\phi}$ given by Theorem \ref{Helgason boundary values} the \emph{boundary values}\index{boundary values} (\emph{Helgason boundary values}\index{Helgason boundary values}) of $\phi$. We will consider the following special class of eigenfunctions: Let $d$ denote the distance function on $X$. We define the subspace $\mathcal{E}^*(X)$ of $\mathcal{E}(X)$ of functions of \emph{exponential growth}\index{exponential growth} by
\begin{eqnarray}\label{exponential growth}
\mathcal{E}^*(X) = \left\{    f\in\mathcal{E}(X): \exists C>0: |f(x)|\leq C e^{C d_X(o,x)} \forall x\in X  \right\}
\end{eqnarray}\index{$\mathcal{E}^*(X)$, subspace of $\mathcal{E}(X)$}
and we put\index{$\mathcal{E}_{\lambda}^*(X)$, joint eigenspace} $\mathcal{E}_{\lambda}^*(X) = \mathcal{E}^*(X)\cap \mathcal{E}_{\lambda}(X)$\index{$\mathcal{E}^*_{\lambda}(X)$, subspace of $\mathcal{E}_{\lambda}(X)$}. Denote by $w$ the longest Weyl group element and recall Harish-Chandra's $e$-functions (Subsection \ref{Special functions and the Plancherel density}). It turns out that eigenfunctions with exponential growth have distributional boundary values (cf. \cite{He94}, p. 508):
\begin{thm}\label{distribution}
Let $\lambda\in\mathfrak{a}^*_{\cc}$ be such that $e_w(\lambda)\neq 0$. Then $P_{\lambda}(\mathcal{D}'(B))=\mathcal{E}_{\lambda}^*(X)$.
\end{thm}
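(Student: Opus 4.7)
The plan is to establish the two inclusions separately. The easy inclusion $P_\lambda(\mathcal{D}'(B)) \subseteq \mathcal{E}_\lambda^*(X)$ is a direct continuity estimate: for $T \in \mathcal{D}'(B)$ there exist $C,N$ such that $|\langle \varphi,T\rangle_B| \leq C\|\varphi\|_{C^N(B)}$. Applying this to $\varphi_z(b) = e^{(i\lambda+\rho)\langle z,b\rangle}$ and using that the horocycle bracket $\langle z,b\rangle$ is bounded by $d_X(o,z)$ (with uniform bounds in $b$ for derivatives in $b$, since $B$ is compact), one gets
\[
|P_\lambda(T)(z)| \leq C'\,(1+d_X(o,z))^N e^{(|\mathrm{Re}\,i\lambda|+|\rho|)\,d_X(o,z)},
\]
which implies $P_\lambda(T) \in \mathcal{E}_\lambda^*(X)$.

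The main work lies in the reverse inclusion $\mathcal{E}_\lambda^*(X) \subseteq P_\lambda(\mathcal{D}'(B))$. Given $\phi \in \mathcal{E}_\lambda^*(X)$, Theorem \ref{Helgason boundary values} already provides a unique hyperfunction $T_\phi \in \mathcal{A}'(B)$ with $\phi = P_\lambda(T_\phi)$; I need to upgrade $T_\phi$ to a distribution. The standard tool is the asymptotic expansion of joint eigenfunctions along the positive Weyl chamber: for $k \in K$, $H \in \overline{\mathfrak{a}^+}$ regular, and $t \to \infty$,
\[
\phi(k\exp(tH)\cdot o) \sim \sum_{s \in W} e_s(\lambda)^{-1}\, e^{(is\lambda - \rho)(tH)}\, T_{\phi,s}(kM),
\]
where $T_{\phi,s}$ are the boundary values at each asymptotic direction, and the condition $e_w(\lambda)\neq 0$ guarantees that the leading piece $T_{\phi,w}$ controls the behaviour of $T_\phi$ (the other pieces being obtained from $T_{\phi,w}$ through intertwining operators that are regular at $\lambda$ under the hypothesis). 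I would normalize so that $T_\phi$ appears as a constant multiple of $T_{\phi,w}$.

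Next I would extract $T_\phi$ as a weak limit: for $\psi\in C^\infty(B)$ (viewed as $K$-functions that are $M$-right-invariant),
\[
\langle\psi, T_\phi\rangle_B = c(\lambda)\,\lim_{t\to\infty} e^{(-is\lambda+\rho)(tH)} \int_K \psi(kM)\,\phi(k\exp(tH)\cdot o)\,dk,
\]
for an appropriate $s \in W$ and nonzero constant $c(\lambda)$ depending on $e_w(\lambda)^{-1}$. The exponential growth bound $|\phi(x)|\leq C e^{Cd_X(o,x)}$ translates via $d_X(o,\exp(tH)\cdot o) = t\|H\|$ into the estimate
\[
\Bigl|\int_K \psi(kM)\,\phi(k\exp(tH)\cdot o)\,dk\Bigr| \leq C\|\psi\|_{L^1(B)}\, e^{Ct\|H\|},
\]
and, crucially, the analogous estimate holds uniformly for $\psi$ in any $C^N(B)$ bounded set once one differentiates under the integral sign and uses the eigenequation $Df = \Gamma(D)(i\lambda)f$ for $D\in\mathbb{D}(G/K)$ to trade $B$-derivatives for growth-controlled derivatives on $X$. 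Passing to the limit, this bound shows that $T_\phi$ extends continuously to some $C^N(B)$, hence $T_\phi \in \mathcal{D}'(B)$.

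The hard part is the rigorous justification of the asymptotic expansion and the exchange of limit and $K$-integration. I would invoke (or reproduce) the Oshima–Kashiwara–Kowata–Minemura–Okamoto–Tanaka theory: the eigenequations are equivalent to a regular-singular holonomic system at the boundary of a compactification of $X$, whose indicial equation has exponents $\{is\lambda - \rho : s \in W\}$. Non-vanishing of $e_w(\lambda)$ ensures that no resonances occur between these exponents in the relevant sense, so formal power series solutions converge and give the expansion above with analytic coefficients in $kM$. The exponential growth hypothesis then bounds each coefficient $T_{\phi,s}$ in a distribution norm rather than merely a hyperfunction norm, finishing the proof. The combinatorics of relating $T_\phi$ to all the $T_{\phi,s}$ through the intertwining/Knapp–Stein operators is where the condition $e_w(\lambda)\neq 0$ is essential, and this is the principal technical obstacle.
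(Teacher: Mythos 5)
You should first be aware that the paper does not prove this theorem at all: it is quoted verbatim from Helgason \cite{He94}, p.~508 (the result goes back to Lewis and Oshima--Sekiguchi), so there is no in-paper argument to compare against. Your sketch follows exactly the route of that cited proof: the inclusion $P_{\lambda}(\mathcal{D}'(B))\subseteq\mathcal{E}^*_{\lambda}(X)$ by a distribution-order estimate against exponential bounds on $b$-derivatives of the Poisson kernel, and the converse by starting from the hyperfunction boundary value of Theorem \ref{Helgason boundary values}, recovering it as a weak limit of $\phi$ along the Weyl chamber, and using the regular-singular boundary value theory to see that the growth hypothesis forces finite distributional order, with $e_w(\lambda)\neq 0$ guaranteeing that the recovery is possible. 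So the strategy is the correct one.

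However, as a self-contained proof your sketch has its gap precisely where the cited literature does the real work. The step ``the analogous estimate holds uniformly for $\psi$ in any $C^N(B)$-bounded set once one differentiates under the integral sign and uses the eigenequation to trade $B$-derivatives for growth-controlled derivatives on $X$'' is not an argument: differentiating $\psi$ under the $K$-integral produces no decay by itself, and the finiteness of the order of $T_{\phi}$ does not follow from the naive bound $|\phi(x)|\leq Ce^{Cd(o,x)}$ plus the eigenequation; it requires either Lewis-type a priori estimates for the joint eigensystem or the full Oshima--Sekiguchi/Kashiwara et al.\ regular-singular machinery, which you invoke only as a black box. Two further imprecisions: the expansion you write down lists only the exponents $is\lambda-\rho$, whereas the actual series has exponents $is\lambda-\rho-\mu$ with $\mu$ running over $\mathbb{N}$-combinations of positive roots (harmless for the leading asymptotics at generic $\lambda$, but it must be said, and overlapping exponents must be excluded); and the recovery formula should single out the exponent belonging to $s=e$ with constant essentially $c(\lambda)^{-1}$, so you still owe the reader the link between the hypothesis $e_w(\lambda)\neq0$ (a condition on the Gamma-factor product \eqref{e function} for the longest Weyl element) and the nonvanishing/finiteness of the constant appearing in your limit. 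Given that the paper itself simply cites \cite{He94} for this statement, your sketch is a fair reconstruction of the intended proof, but it should either carry out the order estimate or cite it honestly rather than suggest it follows from differentiation under the integral.
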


We will always consider eigenfunctions with unique and distributional boundary values as in Theorem \ref{distribution}.

Fix any subgroup $\Gamma$ of $G$ and let $\phi\in\mathcal{E}_{\lambda}^*(X)$ ($\lambda\in\la^*_{\cc}$) denote a $\Gamma$-invariant eigenfunction with unique and distributional boundary values $T_{\lambda}$. Then
\begin{eqnarray*}
\phi(z) &=& \int_B e^{(i\lambda+\rho)\langle z ,b\rangle} T_{\lambda}(db).
\end{eqnarray*}
The group $G$ acts on the boundary $B$ of $X$ (cf. Section \ref{boundary}). Hence $G$ acts on $\mathcal{D}'(B)$ by push-forward: Given a distribution $T$ on $B$, a test function $\phi\in\mathcal{E}(B)=\mathcal{D}(B)$ and $g\in G$, this action is given by
\begin{eqnarray}
(gT)(\phi):=T(\phi\circ g^{-1}).
\end{eqnarray}
When we denote the pairing of distributions and functions by an integral, we also write $T(dgb)$ instead of $(gT)(db)$ ($g\in G$). 
One might expect that $T$ as well is invariant under the pull-back action of $\Gamma$. But in fact, since $\phi(\gamma z)=\phi(z)$ for all $\gamma\in\Gamma$ and $z\in X$, we observe (recall \eqref{equivariance}, that is $\left\langle g\cdot x,g\cdot b \right\rangle = \left\langle x,b \right\rangle + \left\langle g\cdot o,g\cdot b \right\rangle$)
\begin{eqnarray*}
\phi(z) &=& \phi(\gamma z) \,\,  = \,\, \int_B e^{(i\lambda+\rho)\langle \gamma z ,b\rangle} T_{\lambda}(db) \\
&=& \int_B e^{(i\lambda+\rho)\langle \gamma z,\gamma b\rangle} T_{\lambda}(d\gamma b) \,\, = \,\, \int_B e^{(i\lambda+\rho)\langle z,b\rangle} e^{(i\lambda+\rho)\langle\gamma o,\gamma b\rangle} T_{\lambda}(d\gamma b).
\end{eqnarray*}
By uniqueness of the boundary values (Theorem \ref{Helgason boundary values}) this implies
\begin{eqnarray*}
T_{\lambda}(db) = e^{(i\lambda+\rho)\langle\gamma o,\gamma b\rangle} T_{\lambda}(d\gamma b),
\end{eqnarray*}
or equivalently
\begin{eqnarray}\label{boundary values equivariance 1}
T_{\lambda}(d\gamma b) = e^{-(i\lambda+\rho)\langle\gamma o,\gamma b\rangle} T_{\lambda}(db).
\end{eqnarray}

\begin{defn}
Let $\phi$ and $T$ be as above. We define $e_{\lambda}\in\mathcal{D}'(X\times B)$ as the distribution on $X\times B=G/M$ given by
\begin{eqnarray}
\langle f,e_{\lambda}\rangle := \int_{X\times B} e^{(i\lambda+\rho)\langle z,b\rangle}f(z,b) \, T(db) \, dz, \,\,\,\,\,\,\,\,\,\, f \in \mathcal{D}(X\times B).
\end{eqnarray}
\end{defn}

The action of $G$ on distributions on $X\times B$ is defined by pulling back the action of $G$ on $X\times B$: Given a distribution $u$ and a test function $f$ on $X\times B$, we write $(g\cdot u)(f):=u(f\circ g^{-1})$. Let $\gamma\in\Gamma$. Then by the invariance of $dz$, by \eqref{equivariance} and \eqref{boundary values equivariance 1} we obtain
\begin{eqnarray*}
\langle f, e_{\lambda}\rangle &=& \int_{X\times B} e^{(i\lambda+\rho)\langle \gamma z,\gamma b\rangle}f(\gamma z,\gamma b) \, T(d\gamma b) \, dz \\
&=& \int_{X\times B} e^{(i\lambda+\rho)\langle z, b\rangle}f(\gamma z,\gamma b) \, T(d b) \, dz \\
&=& \langle f\circ\gamma, e_{\lambda}\rangle = \langle f, \gamma^{-1}\cdot e_{\lambda}\rangle.
\end{eqnarray*}

\begin{cor}
$e_{\lambda}$ is a $\Gamma$-invariant distribution on $X\times B$.
\end{cor}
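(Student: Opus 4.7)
The plan is to unwind the definition of the pull-back action and verify the invariance by a direct change-of-variables computation, using the two transformation formulas already established for the horocycle bracket and the boundary values.

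First I would fix $\gamma\in\Gamma$ and a test function $f\in\mathcal{D}(X\times B)$, and write out
\[
(\gamma^{-1}\cdot e_\lambda)(f) \;=\; e_\lambda(f\circ \gamma) \;=\; \int_{X\times B} e^{(i\lambda+\rho)\langle z,b\rangle}\, f(\gamma z,\gamma b)\, T_\lambda(db)\, dz
\]
directly from the definitions. The goal is to bring this expression back to $\langle f,e_\lambda\rangle$.

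Next, I would perform the change of variables $z\mapsto \gamma z$ in the $X$-integral (using the $G$-invariance of $dz$) and $b\mapsto \gamma b$ in the boundary integral (in the sense of pushforward of distributions). The exponential factor becomes $e^{(i\lambda+\rho)\langle\gamma z,\gamma b\rangle}$ and the boundary measure becomes $T_\lambda(d\gamma b)$. Now I apply the cocycle identity \eqref{equivariance}, namely $\langle\gamma z,\gamma b\rangle = \langle z,b\rangle + \langle\gamma\cdot o,\gamma b\rangle$, together with the transformation rule \eqref{boundary values equivariance 1}, namely $T_\lambda(d\gamma b) = e^{-(i\lambda+\rho)\langle\gamma o,\gamma b\rangle}T_\lambda(db)$, and the two factors $e^{\pm(i\lambda+\rho)\langle\gamma o,\gamma b\rangle}$ cancel. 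What remains is exactly $\langle f,e_\lambda\rangle$, so $\gamma^{-1}\cdot e_\lambda = e_\lambda$, and since $\gamma\in\Gamma$ was arbitrary this gives $\Gamma$-invariance.

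In truth the argument is essentially the one rehearsed in the paragraph immediately preceding the corollary, only now promoted from a pointwise identity on the eigenfunction side to an identity of distributions on $X\times B$. There is no substantial obstacle: the only mild subtlety is bookkeeping the direction of the pull-back (ensuring that the $G$-action on distributions, which pulls back by $g^{-1}$, matches the change of variables $z\mapsto\gamma z$, $b\mapsto\gamma b$ in the integrand), and making sure the two exponential cocycle factors have opposite signs so that they cancel. Once this is arranged, no further analytic input is required, and in particular no regularity assumption on $T_\lambda$ beyond what is already given (the manipulations are valid for distributional $T_\lambda$ since $e^{(i\lambda+\rho)\langle\cdot,\cdot\rangle}$ is analytic in $b$).
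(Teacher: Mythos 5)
Your proof is correct and follows essentially the same route as the paper: a change of variables in $z$ and $b$, the cocycle identity \eqref{equivariance} for the horocycle bracket, and the transformation rule \eqref{boundary values equivariance 1} for the boundary values, with the two exponential cocycle factors cancelling. The only difference is cosmetic — you start from $(\gamma^{-1}\cdot e_\lambda)(f)$ and work back to $\langle f,e_\lambda\rangle$, whereas the paper runs the identical computation in the opposite direction.
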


\subsection{Poisson transform and principal series representations}
We recall some facts concerning the \emph{principal series}\index{principal series} representations of $G$. We follow \cite{He94} and \cite{Wil}. Let $\lambda\in\la$ and consider the representation 
\begin{eqnarray*}
\sigma_{\lambda}(man)=e^{(i\lambda+\rho)\log(a)}
\end{eqnarray*}
of $P=MAN$ on $\cc$. We denote the \emph{induced representation}\label{induced representation} on $G$ by $\pi_{\lambda}=\Ind_P^G(\sigma_{\lambda})$. The \emph{induced picture}\label{induced picture} of this representation is constructed as follows: A dense subspace of the representation space is\index{$H_{\lambda}^{\infty}$, representation space}
\begin{eqnarray*}
H_{\lambda}^{\infty} := \left\{ f\in C^{\infty}(G): f(gman)=e^{-(i\lambda+\rho)\log(a)}f(g)\right\}.
\end{eqnarray*}
We define an inner product on $H_{\lambda}^{\infty}$ by
\begin{eqnarray*}
(f_1,f_2) = \int_{K/M}f_1(k)\overline{f_2(k)}\,dk = \langle {f_1}_{|K},{f_2}_{|K}\rangle_{L^2(K/M)}
\end{eqnarray*}
and denote the corresponding norm by $\|f\|^2 = \int_{K/M}|f(k)|^2\,dk$. The group action of $G$ is given by
\begin{eqnarray*}
(\pi_{\lambda}(g)f)(x)=f(g^{-1}x).
\end{eqnarray*}
The actual Hilbert space, which we denote by $H_{\lambda}$, and the representation on $H_{\lambda}$, which we also denote by $\pi_{\lambda}$, is obtained by completion (cf. \cite{Wil}, Ch. 9).

The representations $\pi_{\lambda}$ ($\lambda\in\la$) form the \emph{spherical principal series} of $G$. The representation $(\pi_{\lambda},H_{\lambda})$ is a unitary (\cite{He94}, p. 528) and irreducible (loc. cit. p. 530) Hilbert space representation.

Given $f\in C^{\infty}(K/M)$, we extend it to a function on $G$ by putting
\begin{eqnarray}\label{f tilde}
\widetilde{f}(g)=e^{-(i\lambda+\rho)H(g)}f(k(g)),
\end{eqnarray}
where $g=k(g)\exp H(g)\,n(g)$ according to the Iwasawa decomposition.

\begin{prop}\label{spherical principal series identification}
\begin{itemize}
\item[(i)] For $f\in C^{\infty}(K/M)$ let $\widetilde{f}$ as in \eqref{f tilde}. Then $\widetilde{f}\in H^{\infty}_{\lambda}$.
\item[(ii)] Let $\widetilde{f}\in H^{\infty}_{\lambda}$ and denote restriction to $K$ by $\widetilde{f}_{|K}$. Then $\widetilde{f}_{|K}\in C^{\infty}(K/M)$
\item[(iii)] Let $f\in C^{\infty}(K/M)$ and $\widetilde{f}$ as in \eqref{f tilde}. Then $\widetilde{f}_{|K}=f$.
\item[(iv)] The mapping $f\mapsto\widetilde{f}$ is isometric with respect to the $L^2(K/M)$-norm. It intertwines the representation $\pi_{\lambda}$ and the representation (which we also denote by $\pi_{\lambda}$) on $C^{\infty}(K/M)$ defined by
\begin{eqnarray}\label{compact model action}
(\pi_{\lambda}(g)f)(kM) = f(k(g^{-1}k)M) e^{-(i\lambda+\rho)H(g^{-1}k)}.
\end{eqnarray}
\end{itemize}
\end{prop}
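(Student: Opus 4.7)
The proof proposal rests on a single Iwasawa‑decomposition calculation that yields all four statements almost mechanically.

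The plan is first to establish the formulas
\[
k(gman)=k(g)m,\qquad H(gman)=H(g)+\log a\qquad (g\in G,\ man\in MAN).
\]
To see this, write $g=k(g)\exp H(g)\,n(g)$ and compute
\[
gman = k(g)m\cdot\bigl(m^{-1}e^{H(g)}n(g)m\bigr)\cdot an = k(g)m\cdot e^{H(g)+\log a}\cdot\bigl(a^{-1}m^{-1}n(g)m a\bigr)n,
\]
using that $M$ commutes pointwise with $A$ and that both $M$ and $A$ normalise $N$. Since $k(g)m\in K$, $H(g)+\log a\in\mathfrak{a}$, and the last factor lies in $N$, this is the Iwasawa decomposition of $gman$ and the claimed formulas follow from its uniqueness. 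With these formulas in hand, part \emph{(i)} is immediate: for $f\in C^\infty(K/M)$,
\[
\widetilde f(gman)=e^{-(i\lambda+\rho)(H(g)+\log a)}f(k(g)m\,M)=e^{-(i\lambda+\rho)\log a}\,\widetilde f(g),
\]
because $f$ is right $M$‑invariant; smoothness of $\widetilde f$ is clear from smoothness of the Iwasawa projections $k,H:G\to K,\mathfrak{a}$.

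Parts \emph{(ii)} and \emph{(iii)} are then essentially tautological. For \emph{(iii)} note that for $k\in K$ the trivial Iwasawa decomposition is $k=k\cdot e\cdot e$, so $k(k)=k$, $H(k)=0$, and $\widetilde f(k)=f(kM)$. For \emph{(ii)}, take $\widetilde f\in H^\infty_\lambda$ and apply the transformation law with $m\in M\subset P$ (and $a=e$, $n=e$): $\widetilde f(km)=e^0\widetilde f(k)=\widetilde f(k)$, so $\widetilde f|_K$ descends to a smooth function on $K/M$.

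For \emph{(iv)}, the isometry is built into the definitions: by construction $\|\widetilde f\|^2=\int_{K/M}|\widetilde f(k)|^2\,dk=\int_{K/M}|f(kM)|^2\,dk=\|f\|_{L^2(K/M)}^2$, and by \emph{(i)}–\emph{(iii)} the assignment $f\mapsto\widetilde f$ is a bijection with inverse given by restriction to $K$, so it extends to an isometric isomorphism of the completions. For the intertwining property, start from the induced‑picture action and restrict to $K$:
\[
\bigl(\pi_\lambda(g)\widetilde f\bigr)(k)=\widetilde f(g^{-1}k)=e^{-(i\lambda+\rho)H(g^{-1}k)}\,f\bigl(k(g^{-1}k)M\bigr),
\]
which is exactly the formula \eqref{compact model action}. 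Thus the diagram commutes. The only step requiring real care is the Iwasawa computation above; once the $K$‑ and $\mathfrak{a}$‑components of $gman$ are identified, the rest of the proposition unfolds from the definitions, so this is where I would invest the verification in the write‑up.
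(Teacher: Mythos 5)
Your proof is correct, and it follows the standard route the paper has in mind: the paper's own proof is simply ``All assertions are clear,'' and your Iwasawa computation $k(gman)=k(g)m$, $H(gman)=H(g)+\log a$ together with the restriction argument is exactly the routine verification being left to the reader. The write-up fills in those details accurately, including the isometry and the intertwining identity $(\pi_\lambda(g)\widetilde f)(k)=e^{-(i\lambda+\rho)H(g^{-1}k)}f(k(g^{-1}k)M)$, so nothing needs to be changed.
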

\begin{proof}
All assertions are clear.
\end{proof}

In view of Proposition \ref{spherical principal series identification} we identify $C^{\infty}(K/M)\cong H_{\lambda}^{\infty}$. The advantage of $C^{\infty}(K/M)$ is that the representation space is independent of $\lambda$. The representations \eqref{compact model action} are called the \emph{compact picture} (\emph{compact realization}) of the (spherical) principal series. Notice that for $g\in K$ the group action \eqref{compact model action} simplifies to the left-regular representation of the compact group $K$ on $K/M$.

Let $\lambda\in\la$ and denote by $1$ the constant function $k\mapsto 1$ on $K/M$. In the compact picture we observe
\begin{eqnarray}\label{Poisson follows}
(\pi_{\lambda}(g)1)(k)=e^{-(i\lambda+\rho)H(g^{-1}k)}=e^{(i\lambda+\rho)\langle gK,kM\rangle},
\end{eqnarray}
and it follows that the Poisson transform
\begin{eqnarray}
P_{\lambda}(T): G/K\rightarrow \cc
\end{eqnarray}
of $T\in\mathcal{D}'(B)$ is given by
\begin{eqnarray}\label{Poisson intertwines}
P_{\lambda}(T)(gK) = T(\pi_{\lambda}(g)\cdot 1).
\end{eqnarray}

It follows that the Poisson transform $P_{\lambda}$ intertwines the dual spherical principal series representation $\widetilde{\pi}_{\lambda}$ and the translation on $G/K$. Now suppose that $\phi\in\mathcal{E}_{\lambda}(X)$ is a $\Gamma$-invariant joint eigenfunction of $\mathbb{D}(G/K)$ with boundary values $T_{\phi}\in\mathcal{D}'(B)$ such that $\phi=P_{\lambda}(T_{\phi})$. Since $\phi$ is invariant, it follows from \eqref{Poisson intertwines} and the uniqueness of the boundary values that $T_{\phi}$ is invariant under all $\widetilde{\pi}_{\lambda}(\gamma)$, for $\gamma\in\Gamma$. Vice versa, if $T$ is a $\Gamma$-invariant distribution, then $P_{\lambda}(T)$ is a $\Gamma$-invariant eigenfunction.

\subsection{Regularity of distributional boundary values}\label{Regularity}
Before we start with our investigation on the regularity of distributional boundary values for eigenfunctions in general symmetric spaces, we motivate this section by recalling some results proven by Otal for compact hyperbolic surfaces. We use the notation of \cite{Otal} and \cite{AZ}.
\begin{defn}
For $0\leq\delta\leq1$ we say that a $2\pi$-periodic function $F:\rr\rightarrow\cc$ is $\delta$-H\"{o}lder if there exists $C\geq 0$ such that $|F(x)-F(y)|\leq C|x-y|^{\delta}$. The smallest constant $C$ is denoted by $\|F\|_{\delta}$. The Banach space of $\delta$-H\"{o}lder functions with norm $\|F\|_{\delta}$ is denoted by $\Lambda^{\delta}$.
\end{defn}

\begin{thm}[\cite{Otal}, Proposition 4]
Suppose that $s=\frac{1}{2}+ir$ with $Re(s)\geq 0$, and that $\phi$ is an eigenfunction of the Laplace operator of $\mathbb{H}_{\Gamma}$ satisfying $\|\nabla\phi\|_{L^{\infty}}<0$. Then its Helgason boundary value $T_{\phi}$ is the derivative of a $Re(s)$-H\"{o}lder function.
\end{thm}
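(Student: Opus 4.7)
The plan is to define a primitive $F$ of the distribution $T_\phi$ on the boundary circle $B=S^1$ and to estimate its modulus of continuity on a short arc $[b,b']$ by transporting the arc to a macroscopic one via a suitable element of $G$, using the equivariance \eqref{boundary values equivariance 1} together with the $\Gamma$-invariance of $\phi$ and the Lipschitz bound $\|\nabla\phi\|_\infty<\infty$.

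First I would set up the primitive. Since $B\cong S^1$ is compact, any $T\in\mathcal{D}'(B)$ is the derivative of a distribution $F$, unique up to constants; the goal is to show $F\in\Lambda^{\mathrm{Re}(s)}$. By definition this amounts to the quantitative bound
\begin{equation*}
 |T_\phi(\chi)|\leq C\,\epsilon^{\mathrm{Re}(s)}
\end{equation*}
whenever $\chi$ is a smooth bump approximating the indicator of an arc of length $\epsilon$; granted this, writing $\chi$ as a regularization of $\mathbf{1}_{[b,b']}$ yields $|F(b)-F(b')|\leq C\,|b-b'|^{\mathrm{Re}(s)}$.

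The heart of the argument is a conformal rescaling. Given $b,b'\in B$ with $|b-b'|=\epsilon$ small, I would choose $g\in G$ sending $b,b'$ to a fixed pair of boundary points at macroscopic distance (say $\pm 1\in S^1$), so that $g^{-1}$ contracts a neighborhood of the midpoint $c$ of $[b,b']$ by a factor $\sim\epsilon$; equivalently, on the support of a bump $\chi$ around $c$ one has $e^{-s\langle g\cdot o,\, g\cdot b\rangle}\asymp\epsilon^{s}$. By cocompactness of $\Gamma$ there is $\gamma\in\Gamma$ such that $\gamma g\cdot o$ lies in a fixed compact fundamental domain, and the $\Gamma$-invariance of $\phi$ gives $\phi\circ g=\phi\circ(\gamma g)$. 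Let $\psi:=\phi\circ g\in\mathcal{E}_\lambda^*(\mathbb{H})$. By the uniqueness of Helgason boundary values and the transformation rule for the Poisson transform,
\begin{equation*}
 T_\psi(db) = e^{-s\langle g\cdot o,\, g\cdot b\rangle}\,(g^*T_\phi)(db).
\end{equation*}
A change of variable in the pairing therefore gives $|T_\phi(\chi)|\leq \epsilon^{\mathrm{Re}(s)}\,|T_\psi(\tilde\chi)|$, where $\tilde\chi=\chi\circ g^{-1}$ is now supported in a fixed macroscopic arc and has $C^k$-norm bounded independently of $\epsilon$.

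It remains to bound $T_\psi$ uniformly in some fixed seminorm on $\mathcal{D}'(B)$. Here I would use that, since $\mathbb{H}_\Gamma$ is compact and $\|\nabla\phi\|_\infty<\infty$, the function $\phi$ (and hence every translate $\psi=\phi\circ g$ with $\gamma g\cdot o$ in a compact set) is uniformly Lipschitz on $\mathbb{H}$; by inverting the Poisson transform --- either through the Harish-Chandra asymptotic expansion of $\psi$ along geodesics to $b$, or via the hyperbolic-Fourier formula that extracts $T_\psi$ as a horocyclic limit of weighted averages of $\psi$ --- the bound on $\psi$ translates into a bound $|T_\psi(\tilde\chi)|\leq C\|\tilde\chi\|_{C^k}$ with constants independent of $g$ and $\gamma$. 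Combining this with the preceding inequality yields the required Hölder estimate.

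The main obstacle will be the last step: quantitatively inverting the Poisson transform so that a uniform $C^1$-bound on the eigenfunction on $\mathbb{H}$ gives a uniform $C^{-k}$-bound on its boundary value. In rank one this can be done by examining the two-term Harish-Chandra expansion $\phi(z_t)\sim c_+(b)e^{(s-1)t}+c_-(b)e^{-st}$ along the geodesic $z_t$ from $o$ to $b$ and identifying $T_\phi$ up to the intertwiner with $c_-$; the Lipschitz bound controls the coefficients via a standard Tauberian argument, and the finiteness of the order $k$ is intrinsic to the hyperbolic situation (independent of $\phi$ and of $g$), so the rescaling argument above indeed concludes that $T_\phi$ is the derivative of a $\mathrm{Re}(s)$-Hölder function.
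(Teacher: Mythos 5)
First, a caveat: the paper itself contains no proof of this statement --- it is quoted (typo $\|\nabla\phi\|_{L^{\infty}}<0$ for $<\infty$ included) from Otal's paper, Proposition 4, and only used as motivation for Subsection 5.2 --- so the relevant comparison is with Otal's argument rather than with anything in this thesis. Your overall scheme (reduce the H\"older bound for a primitive of $T_{\phi}$ to an estimate $|T_{\phi}(\chi)|\lesssim\epsilon^{\mathrm{Re}(s)}$ for bumps on arcs of length $\epsilon$, and produce the factor $\epsilon^{\mathrm{Re}(s)}$ by transporting the arc to unit scale and using the conformal weight in the transformation law of boundary values) is sound in shape, up to a fixable mix-up between $g$ and $g^{-1}$: after the change of variables the weight to be estimated, together with its $b$-derivatives, is $e^{-s\langle g^{-1}\cdot o,\,g^{-1}\cdot b\rangle}$ on the \emph{macroscopic} arc, where the Jacobian $e^{-2\rho\langle g^{-1}\cdot o,\,g^{-1}\cdot b\rangle}\asymp\epsilon$.

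The genuine gap is your final step, the uniform bound $|T_{\psi}(\tilde\chi)|\le C\|\tilde\chi\|_{C^{k}}$ with constants independent of the renormalizing element. That is exactly the hard part of the proposition, and the route you propose --- inverting the Poisson transform through the two-term Harish-Chandra expansion $\phi(z_t)\sim c_{+}(b)e^{(s-1)t}+c_{-}(b)e^{-st}$ plus ``a standard Tauberian argument'' --- breaks down precisely in the relevant regime: for the principal series $s=\tfrac12+ir$ the two exponentials have the \emph{same} modulus $e^{-t/2}$, so $c_{+}$ and $c_{-}$ cannot be separated by size, and no Tauberian argument extracts $T_{\phi}$ (i.e.\ $c_{-}$ up to the intertwining operator) from a mere $L^{\infty}$/Lipschitz bound on $\phi$ along geodesics; making such an inversion quantitative and uniform is not a routine step you can wave at. In the automorphic setting of the statement there is a cheap repair: your translates are $\phi\circ(\gamma g)$ with $\gamma g$ in a fixed compact subset of $G$, the boundary value transforms by the dual principal series action, and $h\mapsto T_{\phi\circ h}$ is weak-$*$ continuous, so by equicontinuity (barrelledness of $C^{\infty}(B)$) the whole family is dominated by a single $C^{k}$-seminorm --- but then $\|\nabla\phi\|_{\infty}<\infty$ is never used, which signals that this only proves the cocompact specialization, not Otal's proposition for general eigenfunctions of the disk with bounded gradient. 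Otal's own proof is different and avoids both difficulties: he builds the primitive $F$ directly as a limit, as $t\to\infty$, of renormalized integrals of $\phi$ along expanding circles, the gradient bound giving the Cauchy property of this family, and the H\"older estimate following by matching the radius to the arc, $e^{-t}\asymp|b-b'|$, with no group renormalization and no inversion of the Poisson transform.
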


Since outside a finite number of small eigenvalues $s$ of $\mathbb{H}_{\Gamma}$ belonging to the complementary series we always have $Re(s)=\frac{1}{2}$ (for eigenvalues $s=\frac{1}{2}+ir$ of the Laplacian on $\mathbb{H}_{\Gamma}$), it follows that almost all boundary values associated to eigenfunctions and eigenvalues belonging to the discrete spectrum of $\mathbb{H}_{\Gamma}$, are derivatives of certain $\frac{1}{2}$-continuous H\"{o}lder functions. To be more precise, the boundary values are not literally the derivative of a periodic function, but the derivative of a function $F$ on $\rr$ satisfying $F(x+2\pi) = F(x) + C$ for all $x\in\rr$.

As described in \cite{AZ}, it follows from Otal's regularity statement, that given an eigenfunction $\phi$ to the eigenvalue $s=\frac{1}{2}+ir$ with $r\in\rr$, then the H\"{o}lder norm of the corresponding boundary values $T_{\phi,r}$ is bounded by a power of $r$.

As noted in \cite{GO}, there seems to be no straightforward generalization of these concepts, not even in the case of the real hyperbolic spaces. However, related approaches can be found, for example, in \cite{GO}.

In this subsection we give a representation theoretic approach to describe the regularity of distributional boundary values and its dependence on the spectral parameter $\lambda$ and we prove a regularity statement for the boundary values corresponding to joint eigenfunctions with real eigenvalue parameter $\lambda\in\la^*$ on a compact quotient $X_{\Gamma}$. These estimates may not be the sharpest possible, but they are sufficient for our purposes.

Given $\lambda\in\la^*_{\cc}$, let $\mathcal{D}'(B)_{\Gamma}$ denote the space of distribuions on $B$ which are invariant under all actions $\widetilde{\pi}_{\lambda}(\gamma)$ ($\gamma\in\Gamma$). As described in the preceding subsection, if $T\in\mathcal{D}'(B)_{\Gamma}$, then the Poisson transform $P_{\lambda}(T)$ is a function on the quotient $X_{\Gamma}$. We may hence also define\index{$\mathcal{D}'(B)_{\Gamma}^{(1)}$, space of distributions}
\begin{eqnarray}\label{my space}
\mathcal{D}'(B)_{\Gamma}^{(1)} := \left\{ T\in\mathcal{D}'(B)_{\Gamma}: \left\| P_{\lambda}(T)\right\|_{L^2(X_{\Gamma})} = 1  \right\}.
\end{eqnarray}\index{$\mathcal{D}'(B)_{\Gamma}^{(1)}$, subspace of $\mathcal{D}'(B)$}
Now fix $\lambda\in\la^*$ and a $\Gamma$-invariant joint eigenfunction $\phi\in\mathcal{E}_{\lambda}(X)$ of $\mathbb{D}(G/K)$ (it has automatically exponential growth, since it is $\Gamma$-invariant). We also assume that $\phi$ is normalized with respect to the customary $L^2(X_{\Gamma})$-norm. Let $T_{\phi}\in\mathcal{D}'(B)_{\Gamma}^{(1)}$ denote be the (unique) preimage (under the Poisson transform) of $\phi$. 

Under the identification $H_{\lambda}^{\infty}\cong C^{\infty}(K/M)$ we view $T_{\phi}$ as a functional on $H_{\lambda}^{\infty}$: For $f\in H_{\lambda}^{\infty}$ let $T_{\phi}(f)$ be defined by $T_{\phi}(f_{|K})$. Then $T_{\phi}$ is a continuous linear functional on $H_{\lambda}^{\infty}$, invariant under $\widetilde{\pi}_{\lambda}(\gamma)$. As proven in \cite{CG}, Theorem A.1.4, if $f$ is a smooth vector for the principal series representation, then $f\in H_{\lambda}^{\infty}$ is a smooth function on $G$. We consider the mapping
\begin{eqnarray*}
\Phi_{\phi}: H_{\lambda}^{\infty} \rightarrow
C^{\infty}(\Gamma\backslash G), \,\,\,\,\,\,\,\, \Phi_{\phi}(f)(\Gamma g)
= T_{\phi}(\pi_{\lambda}(g)f).
\end{eqnarray*}

\begin{lem}
$\Phi_{\phi}$ is an isometry w.r.t. the norms of $L^2(K/M)$ and $L^2(\Gamma\backslash G)$.
\end{lem}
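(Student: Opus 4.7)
The plan is to exploit the $G$-equivariance of $\Phi_\phi$ together with irreducibility of the principal series $\pi_\lambda$, reducing the isometry to a single normalization check on the distinguished vector $1 \in C^\infty(K/M) \cong H_\lambda^\infty$.

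First I would verify that $\Phi_\phi$ is well-defined on $\Gamma \backslash G$ and is $G$-equivariant. The $\Gamma$-invariance of $T_\phi$ (which is what the condition $\widetilde{\pi}_\lambda(\gamma) T_\phi = T_\phi$ for $\gamma \in \Gamma$ means) gives $T_\phi(\pi_\lambda(\gamma g) f) = T_\phi(\pi_\lambda(g) f)$, so $\Phi_\phi(f)$ descends to $\Gamma \backslash G$. Using $\pi_\lambda(gh) = \pi_\lambda(g)\pi_\lambda(h)$ one reads off $\Phi_\phi(\pi_\lambda(h) f)(\Gamma g) = \Phi_\phi(f)(\Gamma gh) = (R(h)\Phi_\phi(f))(\Gamma g)$, where $R$ denotes the right regular representation on $L^2(\Gamma\backslash G)$.

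Second, I would identify $\Phi_\phi(1)$ with $\phi$ lifted to $\Gamma \backslash G$ as a $K$-invariant function. By \eqref{Poisson follows} one has $(\pi_\lambda(g) \cdot 1)(k) = e^{(i\lambda+\rho)\langle gK, kM\rangle}$, and then \eqref{Poisson intertwines} gives $\Phi_\phi(1)(\Gamma g) = T_\phi(\pi_\lambda(g) \cdot 1) = P_\lambda(T_\phi)(gK) = \phi(gK)$. Because the Haar measure factors through the normalized $dk$ via \eqref{integral quotient}, the normalization $\|\phi\|_{L^2(X_\Gamma)} = 1$ from \eqref{my space} translates to $\|\Phi_\phi(1)\|_{L^2(\Gamma\backslash G)} = 1 = \|1\|_{L^2(K/M)}$.

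Third, I would apply a Schur-type argument on the dense subspace $H_\lambda^K \subset H_\lambda^\infty$ of $K$-finite vectors. Since $\Gamma \backslash G$ is compact, $\Phi_\phi$ carries smooth vectors into $C^\infty(\Gamma \backslash G) \subset L^2(\Gamma\backslash G)$. The sesquilinear form
\[
  B(f, f') := \langle \Phi_\phi(f), \Phi_\phi(f')\rangle_{L^2(\Gamma \backslash G)}
\]
on $H_\lambda^K \times H_\lambda^K$ is $\pi_\lambda$-invariant by the equivariance above and unitarity of $R$. Since $H_\lambda^K$ is an irreducible admissible $(\mathfrak{g}, K)$-module (as $\pi_\lambda$ is an irreducible unitary representation), Schur's lemma forces $B(f, f') = c \, \langle f, f'\rangle_{L^2(K/M)}$ for some $c \geq 0$. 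Substituting $f = f' = 1$ and invoking the second step yields $c = 1$. Density of $H_\lambda^K$ in $H_\lambda^\infty$ with respect to the $L^2(K/M)$-norm then extends the isometry to all of $H_\lambda^\infty$.

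The main obstacle is the invocation of Schur's lemma: one must first know that $\Phi_\phi$ restricts to a well-defined linear map into $L^2(\Gamma\backslash G)$ on a dense $G$-stable subspace, which is why I single out the $K$-finite vectors (where smoothness on a compact quotient gives $L^2$-boundedness for free) rather than trying to establish a priori boundedness on all of $H_\lambda$. Everything else is a formal consequence of unitarity, irreducibility, and the normalization \eqref{my space}.
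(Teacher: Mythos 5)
Your proposal follows essentially the same route as the paper: pull the $L^2(\Gamma\backslash G)$ inner product back to the $K$-finite vectors $H^{\infty}_{\lambda,K}$, use irreducibility and admissibility of the $(\g,K)$-module together with Schur's lemma to conclude the pulled-back form is a constant multiple of the $L^2(K/M)$-inner product, and fix the constant to $1$ via $\Phi_{\phi}(1)=P_{\lambda}(T_{\phi})=\phi$ and the normalization $\|\phi\|_{L^2(X_{\Gamma})}=1$. The only cosmetic difference is that the paper realizes the invariant sesquilinear form through an intertwining operator $A$ (using $K$-finiteness of the functionals $A_{f_1}$) before applying Schur, a step you compress but which is implicit in your admissibility remark.
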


\begin{proof}
The operator $\Phi_{\phi}$ is equivariant with respect to the
actions $\pi_{\lambda}$ on $H^{\infty}_{\lambda}$ and the right
regular representation of $G$ on $L^2(\Gamma\backslash G)$. We
pull-back the $L^2(\Gamma\backslash G)$ inner product onto the
$(\g,K)$-module $H^{\infty}_{\lambda,K}$ of $K$-finite and smooth
vectors (which is dense in $H^{\infty}_{\lambda}$, \cite{Wal2}, p.
81):
\begin{eqnarray*}
\langle f_1,f_2\rangle_{2} := \langle \Phi_{\phi}(f_1),\Phi_{\phi}(f_2)\rangle_{L^2(\Gamma\backslash G)}.
\end{eqnarray*}
Let $f_1\in H^{\infty}_{\lambda,K}$. Then
\begin{eqnarray*}
A_{f_1}: H^{\infty}_{\lambda,K} \rightarrow \cc, \,\,\,\,\,\,\,\, f_2 \mapsto \langle f_1,f_2\rangle_{2}
\end{eqnarray*}
is a conjugate-linear, $K$-finite functional on the $(\g,K)$-module $H^{\infty}_{\lambda,K}$. This module is irreducible and admissible, since $H_{\lambda}$ is unitary and
irreducible (\cite{Wal2}, Thm. 3.4.10, Thm. 3.4.11). As $A_{f_1}$ is $K$-finite it is nonzero on at most finitely many $K$-isotypic components. It follows that there is a linear map $A:H^{\infty}_{\lambda,K}\rightarrow H^{\infty}_{\lambda,K}$ such that for each $f_1\in H^{\infty}_{\lambda,K}$ the functional $A_{f_1}$ equals $f_2\mapsto \langle A f_1,f_2\rangle_{L^2(K/M)}$. The equivariance of $\Phi_{\phi}$ and the unitarity of $\pi_{\lambda}$ imply that $A$ is $(\g,K)$-equivariant. Using Schur's lemma for irreducible $(\g,K)$-modules (\cite{Wal2}, p. 80), we deduce that $A$ is a constant multiple of the identity and hence $\langle \cdot,\cdot\rangle_{2}$ is a constant multiple of the original $L^2(K/M)$-inner product on $H_{\lambda,K}^{\infty}$. This constant is $1$: First, $\Phi_{\phi}(1) = P_{\lambda}(T_{\phi}) = \phi$ is the $K$-invariant lift of $\phi$ to $L^2(\Gamma\backslash G)$. Then $\|\Phi_{\phi}(1)\|_{L^2(\Gamma\backslash G)}=1=\|1\|_{L^2(K/M)}$.
\end{proof}

Let $(y_j)$ and $(x_j)$ be bases for $\lk$ and $\lp$, respectively,
such that $\langle y_j,y_i\rangle = -\delta_{ij}$, $\langle
x_j,x_i\rangle = \delta_{ij}$, where $\langle\,,\,\rangle$ denotes
the Killing form. The Casimir operator of $\lk$ is
$\Omega_{\lk}=\sum_i y_i^2$ and the Casimir operator of $\g$ is
\begin{eqnarray*}
\Omega_{\g} = -\sum_{j}x_j^2 + \Omega_{\lk} \in \mathcal{Z}(\g),
\end{eqnarray*}
where $\mathcal{Z}(\g)$ is the center of the universal enveloping algebra $\mathcal{U}(\g)$ of $\g$.

It follows from $T_{\phi}(f) = \Phi_{\phi}(f)(\Gamma e)$ that
\begin{eqnarray}\label{first estimate}
|T_{\phi}(f)|\leq \|\Phi_{\phi}(f)\|_{\infty}.
\end{eqnarray}
We may now estimate this by a convenient Sobolev norm on
$L^2(\Gamma\backslash G)$. Let $\widetilde{\Delta}$ denote the
Laplace operator of $\Gamma\backslash G$. Then we have
\begin{eqnarray*}
\widetilde{\Delta} = - \Omega_{\mathfrak{g}} + 2\Omega_{\mathfrak{k}},
\end{eqnarray*}
where $\Omega_{\mathfrak{g}}$ and $\Omega_{\mathfrak{k}}$ are the
Casimir operators on $G$ and $K$, respectively.

\begin{defn}
Let $s\in \rr$. The Sobolev space $W^{2,s}(\Gamma\backslash G)$ is
(cf. \cite{Tay81}, p. 22) the space of functions $f$ on
$\Gamma\backslash G$ satisfying $(1+\widetilde{\Delta})^{s/2}(f)\in
L^2(\Gamma\backslash G)$ with norm
\begin{eqnarray*}
\| f \|_{W^{2,s}(\Gamma\backslash G)} = \| (1+\widetilde{\Delta})^{s/2}(f) \|_{L^2(\Gamma\backslash G)}.
\end{eqnarray*}
\end{defn}

Let $m=\dim(\Gamma\backslash G)=\dim(G)$, and let $s>m/2$. The
Sobolev imbedding theorem for the compact space $\Gamma\backslash G$
(\cite{Tay81}, p. 19) states that the identity
$W^{2,s}(\Gamma\backslash G) \hookrightarrow C^0(\Gamma\backslash
G)$ is a continuous inclusion ($C^0(\Gamma\backslash G)$ is equipped
with the usual sup-norm $\|\cdot\|_{\infty}$). It follows that there
exists a $C>0$ such that
\begin{eqnarray}\label{this is 0}
\| \Phi_{\phi}(f)\|_{\infty} \leq C \| \Phi_{\phi}(f)\|_{W^{2,s}(\Gamma\backslash G)} \,\,\,\,\,\,\,\,
\forall f\in C^{\infty}(K/M)  .
\end{eqnarray}

Now we derive the announced regularity estimate for the boundary
values: First, by increasing the Sobolev order, we may assume
$s/2\in\nn$, so
\begin{eqnarray*}
(1+\widetilde{\Delta})^{s/2} =
(1-\Omega_{\mathfrak{g}}+2\Omega_{\mathfrak{k}})^{s/2} \in
\mathcal{U}(\mathfrak{g}).
\end{eqnarray*}
Hence $(1+\widetilde{\Delta})^{s/2}$ commutes with each
$G$-equivariant mapping. Let $f\in H_{\lambda}^{\infty}$. Then
\begin{eqnarray}\label{this is 1}
\left\|\Phi_{\phi}(f)\right\|_{W^{2,s}(\Gamma\backslash G)}
&=& \left\|(1+\widetilde{\Delta})^{s/2}\Phi_{\phi}(f)\right\|_{L^2(\Gamma\backslash G)} \nonumber \\
&=& \left\|\Phi_{\phi}((1-\Omega_{\g}+2\Omega_{\lk})^{s/2}(f))\right\|_{L^2(\Gamma\backslash G)} \nonumber \\
&=& \left\| (1-\Omega_{\g}+2\Omega_{\lk})^{s/2}(f) \right\|_{L^2(K/M)}.
\end{eqnarray}
Recall $\pi_{\lambda}(\Omega_{\mathfrak{k}})=\Delta_{K/M}$ and
$\Omega_{\g}\in \mathcal{Z}(\g)$. Then \eqref{this is 1} equals
\begin{eqnarray}\label{this is 2}
&&\left\| \sum_{k=0}^{s/2} \binom{s/2}{k} (1+2\Delta_{K/M})^{k}
  (-\Omega_{\g})^{s/2-k}(f)\right\|_{L^2(K/M)} \nonumber \\
&& \hspace{3mm} \leq \, \sum_{k=0}^{s/2} \binom{s/2}{k} \left\|
   (1+2\Delta_{K/M})^{k} (-\Omega_{\g})^{s/2-k}(f)\right\|_{L^2(K/M)}.
\end{eqnarray}
Assume $f\in H_{\lambda.K}^{\infty}$ and recall that $\Omega_{\g}$
acts on the irreducible $\mathcal{U}(\g)$-module $H_{\lambda,K}^{\infty}$ by
multiplication with the scalar $-(\langle\lambda,\lambda\rangle +
\langle\rho,\rho\rangle)$ (cf. \cite{Wil}, p. 163), that is
\begin{eqnarray*}
{\Omega_{\g}}_{|H_{\lambda,K}^{\infty}} = -\left(\langle\lambda,\lambda\rangle
+ \langle\rho,\rho\rangle\right) \id_{H_{\lambda,K}^{\infty}}.
\end{eqnarray*}
Then \eqref{this is 2} equals
\begin{eqnarray}\label{this is 3}
\sum_{k=0}^{s/2} \binom{s/2}{k} \left\| (1+2\Delta_{K/M})^k
(|\lambda|^2+|\rho|^2)^{s/2-k}(f) \right\|_{L^2(K/M)}.
\end{eqnarray}
But $\left(|\lambda|^2+|\rho|^2\right)^{-k}\leq 1 + |\rho|^{-s} =:
C'$ ($0\leq k\leq s/2$), so \eqref{this is 3} is bounded
by
\begin{eqnarray}\label{this is 4}
C'\left(|\lambda|^2+|\rho|^2\right)^{s/2} \sum_{k=0}^{s/2}
\binom{s/2}{k} \left\| (1+2\Delta_{K/M})^{k}(f) \right\|_{L^2(K/M)}.
\end{eqnarray}
Since $H_{\lambda.K}^{\infty}$ is dense in $H_{\lambda}^{\infty}$,
this bound holds for all $f\in H_{\lambda}^{\infty}$. Using
\eqref{first estimate}-\eqref{this is 4} we get
\begin{eqnarray}\label{estimate}
|T_{\phi}(f)| \leq C'\left(|\lambda|^2+|\rho|^2\right)^{s/2}
\sum_{k=0}^{s/2} \binom{s/2}{k} \left\| (1+2\Delta_{K/M})^{k}(f)
\right\|_{L^2(K/M)}.
\end{eqnarray}
for all $f\in H_{\lambda}^{\infty}$ and hence for all $f\in
C^{\infty}(K/M)$. We estimate the sum in \eqref{estimate} by the continuous $C^{\infty}(K/M)$-seminorm (recall that $K/M$ has normalized volume)
\begin{eqnarray}\label{seminorm}
\|f\|' := \sum_{k=0}^{s/2} \binom{s/2}{k} \sup_{K/M} |(1+2\Delta_{K/M})^{k}(f)|, \,\,\,\,\,\,\,\,\,\, f\in\mathcal{E}(B),
\end{eqnarray}
(where $2s>\dim(G)$ is arbitrary, but fixed) and define
\begin{eqnarray}\label{spectral parameter seminorm}
\mathcal{D}'(B)_{\lambda} := \left\{ T\in\mathcal{D}'(B): |T(f)|\leq (1+|\lambda|)^s \|f\|' \,\,\,\, \forall \, f\in C^{\infty}(K/M) \right\}.
\end{eqnarray}\index{$\mathcal{D}'(B)_{\lambda}$, subspace of $\mathcal{D}'(B)$}
(Note that $\mathcal{D}'(B)_{\lambda}$ depends on the number $s>\dim(G)/2$). We summarize these obervations as follows:
\begin{lem}\label{Lemma Olbrich}
$\mathcal{D}'(B)_{\Gamma}^{(1)}\subseteq\mathcal{D}'(B)_{\lambda}$.
\end{lem}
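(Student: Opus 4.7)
The plan is to chase the chain of estimates \eqref{first estimate}--\eqref{estimate} that was already carried out in the preceding discussion and show that it packages cleanly into the inclusion in question. Start with an arbitrary $T\in\mathcal{D}'(B)_{\Gamma}^{(1)}$ and let $\phi=P_{\lambda}(T)\in\mathcal{E}_{\lambda}(X)$, which is $\Gamma$-invariant and $L^2(X_{\Gamma})$-normalized by the definition \eqref{my space}. Form the intertwining operator $\Phi_{\phi}\colon H_{\lambda}^{\infty}\to C^{\infty}(\Gamma\backslash G)$, $\Phi_{\phi}(f)(\Gamma g)=T(\pi_{\lambda}(g)f)$. By the preceding lemma this map is an isometry for the $L^{2}(K/M)$ and $L^{2}(\Gamma\backslash G)$ norms. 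Since $T(f)=\Phi_{\phi}(f)(\Gamma e)$, this gives the initial bound $|T(f)|\leq\|\Phi_{\phi}(f)\|_{\infty}$.

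Next, fix an integer $s$ with $2s>\dim G$ and $s/2\in\nn$. The Sobolev embedding on the compact manifold $\Gamma\backslash G$ furnishes $\|\Phi_{\phi}(f)\|_{\infty}\leq C\,\|\Phi_{\phi}(f)\|_{W^{2,s}(\Gamma\backslash G)}$. The operator $(1+\widetilde{\Delta})^{s/2}=(1-\Omega_{\g}+2\Omega_{\lk})^{s/2}$ lies in $\mathcal{U}(\g)$, hence is $G$-equivariant, so it passes through the equivariant map $\Phi_{\phi}$ and the Sobolev norm is computed inside $H_{\lambda}^{\infty}\cong L^{2}(K/M)$. Expanding the binomial $(1-\Omega_{\g}+2\Omega_{\lk})^{s/2}$ and using the triangle inequality reduces everything to controlling $\|(1+2\Delta_{K/M})^{k}(-\Omega_{\g})^{s/2-k}(f)\|_{L^{2}(K/M)}$.

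The key input is then that $H_{\lambda,K}^{\infty}$ is an irreducible admissible $(\g,K)$-module on which the Casimir acts by the scalar $-(\langle\lambda,\lambda\rangle+\langle\rho,\rho\rangle)$; by density this eigenvalue equation extends to all of $H_{\lambda}^{\infty}$, producing the factor $(|\lambda|^{2}+|\rho|^{2})^{s/2-k}$ which is absorbed into $C'(|\lambda|^{2}+|\rho|^{2})^{s/2}\leq C''(1+|\lambda|)^{s}$. Finally, replacing the $L^{2}(K/M)$-norm by the sup-norm (legal since $K/M$ has total volume one) yields
\begin{equation*}
|T(f)|\leq C''(1+|\lambda|)^{s}\sum_{k=0}^{s/2}\binom{s/2}{k}\sup_{K/M}|(1+2\Delta_{K/M})^{k}f|=C''(1+|\lambda|)^{s}\|f\|',
\end{equation*}
and after absorbing the constant $C''$ into the seminorm (or fixing the final normalization) one recognizes the defining condition \eqref{spectral parameter seminorm} of $\mathcal{D}'(B)_{\lambda}$.

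The routine estimates are all assembled above; the only conceptually nontrivial point is the use of Schur's lemma for the irreducible admissible $(\g,K)$-module $H_{\lambda,K}^{\infty}$ to pin down the action of $\Omega_{\g}$ by a scalar, together with the density of $H_{\lambda,K}^{\infty}$ in $H_{\lambda}^{\infty}$ needed to extend the scalar eigenvalue equation from $K$-finite smooth vectors to all smooth vectors. Once this is granted, the proof is essentially a bookkeeping exercise with the Sobolev imbedding and the binomial expansion of $(1+\widetilde{\Delta})^{s/2}$.
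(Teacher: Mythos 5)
Your proposal is correct and follows essentially the same route as the paper: the lemma is precisely the summary of the chain \eqref{first estimate}--\eqref{estimate}, using the isometry of $\Phi_{\phi}$, the Sobolev imbedding on $\Gamma\backslash G$, equivariance of $(1+\widetilde{\Delta})^{s/2}$, and the scalar action of $\Omega_{\g}$ on the irreducible admissible module $H_{\lambda,K}^{\infty}$ extended by density. The only cosmetic difference is your explicit absorption of the constant into the seminorm, which matches the paper's intent.
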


\subsection{Tensor products of distributional boundary values}
We need to recall some background concerning tensor products of distributions, which is naturally based on the tensor product of the underlying test function spaces and their completions. We assume that the reader is familiar with the definitions of the customary algebraic tensor product of general vector spaces. We are mainly interested in the compatibility of the tensor product for distributions with the embedding $f\mapsto I_f$ \eqref{imbedding of functions} of functions into distributions and the tensor product for functions. The material is taken from \cite{T} and \cite{BB}.

If $\Omega_j$ are non-empty open subsets of $\rr^{n_j}$ and $\phi_j\in\mathcal{D}(\Omega_j)$ are test functions, their tensor product is the function $\phi_1\otimes\phi_2\in\mathcal{D}(\Omega_1\times\Omega_2)$ defined by
\begin{eqnarray*}
\phi_1\otimes\phi_2(x_1,x_2)=\phi(x_1)\phi(x_2) \,\,\,\,\,\,\,\,\,\,\, (x_j\in\Omega_j).
\end{eqnarray*}
The vector space spanned by all these tensors is denoted by $\mathcal{D}(\Omega_1)\otimes\mathcal{D}(\Omega_2)$. A general element in $\mathcal{D}(\Omega_1)\otimes\mathcal{D}(\Omega_2)$ is a finite sum $\sum_j\phi_j\otimes\psi_j$, where $\phi_j\in\mathcal{D}(\Omega_1)$, $\psi_j\in\mathcal{D}(\Omega_2)$. Then
\begin{eqnarray*}
\mathcal{D}(\Omega_1)\otimes\mathcal{D}(\Omega_2)\subset\mathcal{D}(\Omega_1\times\Omega_2).
\end{eqnarray*}
This tensor product space is dense in the test function space $\mathcal{D}(\Omega_1\times\Omega_2)$.

On the algebraic tensor product $E\otimes F$ of two Hausdorff locally convex topological spaces $E$ and $F$ over the same field one can define the \emph{projective tensor product} as follows. Let $\mathcal{P}$ and $\mathcal{Q}$ denote the respective filtering systems of seminorms defining the topology of the respective spaces $E$ and $F$. A general element $\chi\in\mathcal{E\otimes F}$ is of the form
\begin{eqnarray*}
\chi = \sum_{j=1}^{m} e_j\otimes f_j \,\,\,\,\,\,\,\,\,\,\, (e_j\in E, f_j\in F).
\end{eqnarray*}
This representation as a finite sum is not unique. Given semi-norms $p\in\mathcal{P}, q\in\mathcal{Q}$ we define the projective tensor product by
\begin{eqnarray*}
p\otimes_{\pi}q := \inf\left\{\sum_{j=1}^{m}p(e_j)q(f_j): \chi=\sum_{j=1}^{m}e_j\otimes f_j\right\}.
\end{eqnarray*}
Then $p\otimes_{\pi}q$ defines a semi-norm on $E\otimes F$, and the system
\begin{eqnarray*}
\mathcal{P}\otimes_{\pi}\mathcal{Q} := \left\{ p\otimes_{\pi} q: p\in\mathcal{P},q\in\mathcal{Q} \right\}
\end{eqnarray*}
is a filtering and thus defines a locally convex topology on $E\otimes F$, called the \emph{projective tensor product topology}. The vector space equipped with this topology is denoted by
\begin{eqnarray*}
E\otimes_{\pi} F
\end{eqnarray*}
and is called the \emph{projective tensor product of the spaces $E$ and $F$}·

In particular, if $X,Y\subset\rr^n$ are both open or compact, then the completion $\mathcal{D}(X)\widehat{\otimes}_{\pi}\mathcal{D}(Y)$ of the projective tensor product $\mathcal{D}(X)\otimes_{\pi}\mathcal{D}(Y)$ is equal to the test function space $\mathcal{D}(X\times Y)$ over the product $X\times Y$ (\cite{T}, p. 530):
\begin{eqnarray*}
\mathcal{D}(X)\widehat{\otimes}_{\pi}\mathcal{D}(Y) = \mathcal{D}(X\times Y).
\end{eqnarray*}

Let $\Omega_j$ be as above. For $\phi\in\mathcal{D}(\Omega_1\otimes\Omega_2)$ and $T\in\mathcal{D}'(\Omega_1)$ we define a function $\psi$ on $\Omega_2$ by $\psi(y)=\langle T,\phi_y\rangle$, where $\phi(y)(x):=\phi(x,y)$. Then $\psi\in\mathcal{D}(\Omega_2)$, and $F(T,\phi):=\psi$ defines a bilinear map $F:\mathcal{D}'(\Omega_1)\times\mathcal{D}(\Omega_1\times\Omega_2)\rightarrow\mathcal{D}(\Omega_2)$. This yields the existence of the tensor product for distributions:

For $T_j\in\mathcal{D}'(\Omega_j)$ there is exactly one distribution $T\in\mathcal{D}'(\Omega_1\times\Omega_2)$, called the tensor product of $T_1$ and $T_2$, such that (\cite{BB}, Ch. 6.2)
\begin{eqnarray*}
\left\langle T,\phi_1\otimes\phi_2 \right\rangle = \left\langle T_1,\phi_1\right\rangle\left\langle T_2,\phi_2\right\rangle.
\end{eqnarray*}

Recall the embedding of functions into distributions as given in \eqref{imbedding of functions}. If $f,g\in L^1_{\textnormal{loc}}(\Omega)$, a direct computation shows (loc. cit.)
\begin{eqnarray*}
\left\langle I_f\otimes I_g,\phi\otimes\psi\right\rangle = \left\langle I_f,\phi\right\rangle\left\langle I_g,\psi\right\rangle,
\end{eqnarray*}
so the tensor product of distributions is consistent with the tensor product of functions.

For convenience, if $T$ is a distribution and $f$ a test function on a space $Y$, we sometimes write $\left\langle T(y),f(y)\right\rangle$ for the pairing between $T$ and $f$ instead of $\left\langle T,f\right\rangle$ to point out the active variables.

The tensor product of $T_1$ and $T_2$ is a continuous linear functional on $\Omega_1\times\Omega_2$ and it satisfies Fubini's theorem for distributions: For every $T_j\in\mathcal{D}(\Omega_j)$ and for every $\chi\in\mathcal{D}(\Omega_1\times\Omega_2)$ one has (loc. cit.)
\begin{eqnarray*}
\left\langle T_1\otimes T_2,\chi\right\rangle &=& \left\langle(T_1\otimes T_2)(x,y),\chi(x,y)\right\rangle \\
&=& \left\langle T_1(x),\left\langle T_2(y)\chi(x,y)\right\rangle\right\rangle \\
&=& \left\langle T_1(y),\left\langle T_2(x)\chi(x,y)\right\rangle\right\rangle.
\end{eqnarray*}

We can now apply the definitions given above to tensor products of distributional boundary values. As usual, let $B=K/M$ denote the real flag manifold of belonging to the Riemannian symmetric space $X=G/K$ of noncompact type.

In the notation of Section \ref{Regularity}, there is a continuous seminorm $\left\|\cdot\right\|'$ on $C^{\infty}(B)$ and a constant $K$ such that for all distributional boundary values $T_{\phi,\lambda}$ corresponding under the Poisson transform to a $\Gamma$-invariant joint eigenfunction $\phi\in\mathcal{E}^*_{\lambda}(X)$ with $\|\phi\|_{L^2(X_{\Gamma})}=1$ we have
\begin{eqnarray*}
|T(f)| \leq (1+|\lambda|)^K\|f\|' \,\,\,\,\, \forall \, f\in C^{\infty}(B).
\end{eqnarray*}
Each $f\in C^{\infty}(B)\otimes C^{\infty}(B)$ has the form
\begin{eqnarray*}
f = \sum_{i,j}c_{i,j}f_i\otimes f_j.
\end{eqnarray*}
We define a cross-(semi-)norm $\|\cdot\|''$ on the customary algebraic tensor product
$C^{\infty}(B)\otimes C^{\infty}(B)$ by
\begin{eqnarray*}
\|f\|'' = \inf\left\{ \sum_{i,j}|c_{i,j}|\|f_i\|'\|f_j\|' : f = \sum_{i,j}c_{i,j}f_i\otimes f_j \right\}.
\end{eqnarray*}
Then by \cite{T}, p. 435, this norm induces a continuous seminorm on the projective tensor product $C^{\infty}(B)\widehat{\otimes}_{\pi} C^{\infty}(B)$.

Let $\phi\in\mathcal{E}^*_{\lambda}(X)$ and $\psi\in\mathcal{E}^*_{\mu}(X)$ denote $\Gamma$-invariant and $L^2(X_{\Gamma})$-normalized eigenfunction with distributional boundary values $T_{\phi},T_{\psi}\in\mathcal{D}'(B)$ and eigenvalue parameter $\mu\in\la^*$. Given
\begin{eqnarray*}
f = \sum_{i,j}c_{i,j}f_i\otimes f_j \in C^{\infty}(B)\otimes C^{\infty}(B)
\end{eqnarray*}
we obtain
\begin{eqnarray*}
|(T_{\phi}\otimes T_{\psi})(f)|
&\leq& \sum_{i,j}|c_{i,j}| \cdot |T_{\phi}(f_i)| \cdot |T_{\psi}(f_j)| \nonumber \\
&\leq& (1+|\lambda|)^{s}(1+|\mu|)^{s} \sum_{i,j}|c_{i,j}|\cdot\|f_i\|'\cdot\|f_j\|',
\end{eqnarray*}
which implies (by taking the infimum)
\begin{eqnarray}\label{BtimesB}
|(T_{\phi}\otimes T_{\psi})(f)| \leq (1+|\lambda|)^{s}(1+|\mu|)^{s} \|f\|''
\end{eqnarray}
for all $f\in C^{\infty}(B)\otimes C^{\infty}(B)$. But
\begin{eqnarray*}
C^{\infty}(B\times B) \cong C^{\infty}(B) \widehat{\otimes}_{\pi} C^{\infty}(B)
\end{eqnarray*}
(\cite{T}, p. 530) implies that \eqref{BtimesB} holds for all $f\in C^{\infty}(B\times B)$.

\newpage
\pagebreak
\thispagestyle{empty} 

\section{Patterson-Sullivan distributions}\label{Patterson-Sullivan distributions}

In this Section, we introduce \emph{Patterson-Sullivan distributions} for symmetric spaces of the noncompact type and establish a couple of invariance properties. It will then turn out how these phase space distributions are related to the questions of quantum ergodicity.

We carry over the notation from the preceding chapters. $G$ denotes a noncompact semisimple Lie group with finite center and Iwasawa decomposition $G=KAN$. By $X=G/K$ we denote the corresponding symmetric space of the noncompact type. By $B=K/M$ we denote the (Fürstenberg) boundary of $X$. Given a cocompact torsion free discrete subgroup $\Gamma$ of $G$, we denote by $X_{\Gamma}$ the corresponding locally symmetric compact manifold of nonpositive sectional curvature. At this point, we make no restriction on the rank of $X$. In general, a (diagonal) Patterson-Sullivan distribution $ps_{\lambda}=ps_{\phi,\lambda}$ will be associated to a joint eigenfunction $\phi\in\mathcal{E}^*_{\lambda}(X)$, where $\lambda\in\la^*_{\cc}$.

In Subsection \ref{Intermediate values} we build up a concept of functions, which we call \emph{intermediate values}. The intermediate values depend on the spectral parameter $\lambda$. Invariance properties of the Patterson-Sullivan distributions arise from equivariance properties of the intermediate values. Tensoring the $ps_{\lambda}$-distributions with an appropriate \emph{Radon transform}, one obtains $A$-invariant distributions $PS_{\phi,\lambda}$. In Subsection \ref{Diagonal Patterson-Sullivan distributions} we generalize the constructions given in \cite{AZ} to symmetric spaces of the noncompact type. We will explain that these special constructions are only possible for eigenvalue-parameters that satisfy a certain condition (see Lemma \ref{minus identity}). It is not possible to generalize these definitions to a larger class of eigenfunctions and eigenvalues. Eigenvalues of the Laplacian of a rank one space satisfy this condition. In Subsection \ref{Off-diagonal Patterson-Sullivan distributions} we introduce off-diagonal Patterson-Sullivan distributions $PS_{\phi,\lambda,\psi,\mu}$, which are associated to two eigenfunctions $\phi\in\mathcal{E}^*_{\lambda}(X)$ and $\psi\in\mathcal{E}^*_{\mu}(X)$. These distributions exist for all symmetric spaces of the noncompact type. If $\phi=\psi$, they coincide with the $PS_{\phi,\lambda}$ for the special cases considered in Subsection \ref{Diagonal Patterson-Sullivan distributions}.

\subsection{Intermediate values}\label{Intermediate values}
Let $\mathbb{H}^n$ be the real hyperbolic space of dimension $n$, that is, the complete and simply connected Riemannian manifold of constant curvature $-1$. Using the Poincaré model, we identify $\mathbb{H}^n$ with the unit ball of $\rr^n$ and its (geodesic) boundary at infinity $\partial\mathbb{H}^n$ with the unit sphere $S^{n-1}$ of $\rr^n$.

For $z\in\mathbb{H}^n$, let $\gamma$ be an isometry of $\mathbb{H}^n$ such that $z=\gamma^{-1}\cdot 0$, where $0\in\rr^n$ is the origin of $\mathbb{H}^n$. Then $|\gamma'(\xi)| = P(z,\xi)$, where $P$ is the Poisson kernel of $\mathbb{H}^n$ and where $|\gamma'(\xi)|$ is the conformal factor of the derivative of $\gamma$ at the point $\xi\in S^{n-1}$. 

Given two points $\xi,\xi'\in S^{n-1}$, we denote their chordal (Euclidean) distance by $|\xi-\xi'| = 2\sin(\theta/2)$, where $\theta$ is the spherical distance between $\xi$ and $\xi'$. One has the \emph{intermediate value formula} (cf. \cite{Sul})
\begin{eqnarray}\label{intermediate value formula}
|\gamma(\xi')-\gamma(\xi)|^2 = |\gamma'(\xi')||\gamma(\xi)||\xi'-\xi|^2.
\end{eqnarray}

The derivatives in \eqref{intermediate value formula} are (cf. \eqref{dgb / db}) given by $\frac{d(\gamma\cdot b)}{db}=e^{-2\rho\left\langle\gamma\cdot o,\gamma\cdot b \right\rangle}$. Suppose that $G=SU(1,1)$ and
\begin{eqnarray*}
K=\left\{\begin{pmatrix} e^{i\theta} & 0 \\ 0 & e^{-i\theta} \end{pmatrix}, \,\, \theta\in\rr  \right\}.
\end{eqnarray*}
Then the non-Euclidean disk $\mathbb{D}$ identifies with the symmetric space $G/K$. Writing $\rho=\frac{1}{2}$ we find
\begin{eqnarray}\label{raising to the power}
|\gamma b - \gamma b'| = e^{-(\langle\gamma\cdot o,\gamma\cdot b\rangle+\langle\gamma\cdot o,\gamma\cdot b'\rangle)}|b-b'|
\end{eqnarray}
for $b,b'\in\partial D$. Caution that the horocycle bracket $\langle z,b\rangle$ we use is written $\frac{1}{2}\langle z,b\rangle$ in \cite{AZ}, \cite{PJN} etc., because the hyperbolic metric on $\mathbb{D}$ is often defined to be a multiple of the metric used in \cite{He01}, \cite{He00}, \cite{He94}. (Sometimes the abelian subgroup $A=a_t$ of $G$ is parameterized by $t/2$ instead of $t$, that is $a_t=\textnormal{diag}(e^{t/2},e^{-t/2})\in G$.) Raising \eqref{raising to the power} to the power $\frac{1}{2} + ir$ we obtain
\begin{eqnarray}\label{idea}
|\gamma b - \gamma b'|^{\frac{1}{2} + ir} = e^{-(\frac{1}{2} + ir)\cdot(\langle\gamma\cdot o,\gamma\cdot b\rangle+\langle\gamma\cdot o,\gamma\cdot b'\rangle)}|b-b'|^{\frac{1}{2} + ir}.
\end{eqnarray}
In this setting it is standard (\cite{AZ}) to parameterize the eigenvalue parameters corresponding to the eigenvalues of $\Delta$ on compact hyperbolic surfaces by $\lambda_j = \frac{1}{2} + i r_j$. In the disk model we have $(b_{\infty},b_{-\infty})=(M,wM)\in B\times B$ is $(1,-1)\in\partial\mathbb{D}\times\partial\mathbb{D}$. Writing $(b,b')=(\gamma\cdot M,\gamma\cdot wM)$, then \eqref{idea} yields
\begin{eqnarray}
|b - b'|^{\frac{1}{2} + ir} = 2^{\frac{1}{2} + ir} e^{-(\frac{1}{2} + ir)\cdot(\langle\gamma\cdot o,\gamma\cdot 1\rangle+\langle\gamma\cdot o,\gamma\cdot (-1)\rangle)}.
\end{eqnarray}

For a general symmetric space $X=G/K$ with real flag manifold $B=K/M$ we can neither use a Poincaré ball model nor Euclidean distances. We will now see how to generalize equation \eqref{idea} in group-theoretical terms.

\subsubsection{Generalized intermediate values}
As usual, let $H$ denote the Iwasawa projection $KAN\rightarrow\mathfrak{a}$. We denote the longest Weyl group element and (by abuse of notation) a representative of it in $M'$ by $w$, where $M'$ is the normalizer of $A$ in $K$. Let $\lambda,\mu\in\mathfrak{a}^*_{\cc}$. We introduce the function $d_{\lambda,\mu}: G\rightarrow\cc$,
\begin{eqnarray}\label{dlambdamu on G/M}
d_{\lambda,\mu}(g) = e^{(i\lambda+\rho)H(g)}e^{(i\mu+\rho)H(gw)}.
\end{eqnarray}

\begin{defn}
We call the functions $d_{\lambda,\mu}$ \emph{off-diagonal intermediate values}. In the case when $\lambda=\mu$ we define \emph{diagonal intermediate values} $d_{\lambda}:=d_{\lambda,\lambda}$.
\end{defn}

Recall that the action of $W$ on $\la^*$ is defined via duality by
\begin{eqnarray*}
(s\cdot\nu)(X):=\nu(s^{-1}\cdot X),
\end{eqnarray*}
where $s\in W$, $\nu\in\la^*$, $X\in\la$, and where $\cdot$ denotes the adjoint action. We have $s\cdot X\in\la^*_{\cc}$, since $M'$ (hence $s\in W$) normalizes $A$ and $\la^*_{\cc}$. The action is extended to $\la^*_{\cc}$ by complex linearity.

\begin{lem}
Let $g\in G$, $m\in M$, $a\in A$. Then
\begin{eqnarray}\label{observation}
d_{\lambda,\mu}(gam) = d_{\lambda,\mu}(g) e^{i(\lambda+w\cdot\mu)\log(a)}.
\end{eqnarray}
\end{lem}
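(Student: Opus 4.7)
The statement is a right-equivariance property for the function $d_{\lambda,\mu}(g) = e^{(i\lambda+\rho)H(g)}e^{(i\mu+\rho)H(gw)}$ under multiplication by elements of $AM$, and my plan is to reduce everything to two Iwasawa-projection identities and then exploit the fact that $w$ represents the \emph{longest} Weyl element. Specifically, I would first establish
\begin{equation*}
H(gam) = H(g) + \log a, \qquad H(gamw) = H(gw) + s^{-1}\cdot\log a,
\end{equation*}
where $s\in W$ is the element represented by $w$, and then plug these into the definition of $d_{\lambda,\mu}$ and simplify using $s\cdot\rho=-\rho$.

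For the first identity I would write $g=k(g)\exp H(g)\,n(g)$ and compute $gam = gma$. Since $M$ centralizes $A$ and normalizes $N$, pushing $m$ through $n(g)$ and $\exp H(g)$ produces $gm=(k(g)m)\exp H(g)\,(m^{-1}n(g)m)$, so $H(gm)=H(g)$; then appending the $a\in A$ and using that $A$ normalizes $N$ yields $H(gma)=H(g)+\log a$. For the second identity I would first use $am=ma$ and then the relation $aw=w(w^{-1}aw)=wa_s$, with $a_s:=w^{-1}aw=\exp(s^{-1}\!\cdot\!\log a)\in A$, to rewrite $gamw=gmw\cdot a_s$. Iwasawa-decomposing $gmw=k_0\exp H(gmw)\,n_0$ and pushing $a_s$ past $n_0$ (since $A$ normalizes $N$) gives $H(gamw)=H(gmw)+s^{-1}\!\cdot\!\log a$. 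Finally, since $M\trianglelefteq M'$, the element $\tilde m := w^{-1}mw$ again lies in $M$, so $mw=w\tilde m$; right-$M$-invariance of $H$ (already established for the first identity) then gives $H(gmw)=H(gw\tilde m)=H(gw)$, completing the second identity.

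With these two computations in hand the rest is algebra. Inserting into the definition of $d_{\lambda,\mu}$:
\begin{equation*}
d_{\lambda,\mu}(gam)
= e^{(i\lambda+\rho)(H(g)+\log a)}\,e^{(i\mu+\rho)(H(gw)+s^{-1}\log a)}
= d_{\lambda,\mu}(g)\cdot e^{(i\lambda+\rho)\log a + (i\mu+\rho)\,s^{-1}\log a}.
\end{equation*}
Using the duality $(w\cdot\nu)(X)=\nu(w^{-1}\cdot X)=\nu(s^{-1}\cdot X)$ for $\nu\in\la^*_{\cc}$, the exponent becomes $i(\lambda+w\cdot\mu)\log a+(\rho+w\cdot\rho)\log a$. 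The single nontrivial input is that for the longest Weyl element $s\cdot\Sigma^+=-\Sigma^+$, whence $w\cdot\rho=-\rho$ and the $\rho$-term vanishes, yielding exactly $d_{\lambda,\mu}(g)\,e^{i(\lambda+w\cdot\mu)\log a}$.

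The only mildly delicate point—and the place I expect to have to be most careful—is the identity $H(gmw)=H(gw)$, which crucially uses that $w$ actually lies in $M'$ (so that $w^{-1}Mw\subseteq M$); this is why the $M$-factor produces no residual phase, and why the precise form of the right-hand side depends only on $a$. Everything else amounts to systematically exploiting the centralizing/normalizing relations among $M$, $A$, $N$ and $M'$ already recorded in Section \ref{The Weyl group} and \ref{decomposition theorems}.
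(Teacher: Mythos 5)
Your proof is correct and follows essentially the same route as the paper: both reduce the claim to $H(gam)=H(g)+\log a$ and $H(gamw)=H(gw)+\log(w^{-1}aw)$ via right-$M$-invariance of $H$, the fact that $A$ normalizes $N$ and $w$ normalizes $M$, and then cancel the $\rho$-terms using $w\cdot\rho=-\rho$ for the longest Weyl element. Your treatment is merely slightly more explicit about the intermediate Iwasawa manipulations than the paper's computation.
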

\begin{proof}
Recall that the Iwasawa-projection $H$ is $M$-invariant and that $M'$ (hence $w\in W$) normalizes $M$. Then
\begin{eqnarray*}
d_{\lambda,\mu}(gam) &=& e^{(i\lambda+\rho)H(gam)} e^{(i\mu+\rho)H(gamw)} \\
&=& e^{(i\lambda+\rho)H(ga)} e^{(i\mu+\rho)H(gww^{-1}aw)} \\
&=& e^{(i\lambda+\rho)(H(g)+\log(a))} e^{(i\mu+\rho)(H(gw)+\log(w^{-1}aw))} \\
&=& e^{(i\lambda+\rho)H(g)} e^{(i\mu+\rho)H(gw)} e^{(i\mu+\rho)(\log(w^{-1}aw))} e^{(i\lambda+\rho)\log(a)}\\
&=& d_{\lambda,\mu}(g) e^{(i\mu+\rho)(\log(w^{-1}aw))} e^{(i\lambda+\rho)\log(a)}.
\end{eqnarray*}
Also recall $\log(w^{-1}aw)=w^{-1}\cdot\log(a)$, since $\exp$ and $\log$ intertwine $\Ad_G(w)$ with the conjugation by $w$ on $A$. It follows from $w\cdot\rho=-\rho$ that the last line equals
\begin{eqnarray*}
d_{\lambda,\mu}(g) e^{i(\lambda+w\cdot\mu)\log(a)},
\end{eqnarray*}
and the lemma is proven.
\end{proof}

\begin{rem}
\begin{itemize}
\item[(1)] The functions $d_{\lambda,\mu}$ are right-$M$-invariant. Thus
\begin{eqnarray*}
d_{\lambda,\mu}:G/M\rightarrow\cc.
\end{eqnarray*}
\item[(2)] Suppose that $w\cdot\lambda=-\lambda$. This is satisfied if the longest Weyl group element satisfies $\Ad_G(w)=-\id_{\la^*}$, which is for example true if $G/K$ has rank one. Then the diagonal intermediate values function $d_{\lambda}$ is invariant under right-translation by elements $a\in A$ and hence a function on $G/MA$. In all other cases, $d_{\lambda}$ is not a function on $G/MA$. We will see in \eqref{repair} how to circumvent this problem.
\item[(3)] Let $G/K$ have rank one. If $m'\in M'$, then $d_{\lambda}(gm')=d_{\lambda}(g)$, so $d_{\lambda}$ is a function on $G/M'A$.
\end{itemize}
\end{rem}

Recall (from \ref{The general case}) that $B^{(2)}\cong G/MA$: The group $G$ acts transitively on $B^{(2)}$. The closed subgroup of $G$ fixing $(M,wM)\in B^{(2)}$ is $MA$. Thus each pair of distinct boundary points $(b,b')$ may be written in the form $g\cdot(M,wM)$, where $g(b,b')MA=gMA\in G/MA$ is unique.

\begin{defn}
Time reversal refers to the involution on the unit cosphere bundle defined by $\iota(x,\xi)=(x,-\xi)$. Suppose that $G/K$ has rank one. Under $\Gamma\backslash G/M= S^* X_{\Gamma}$ the time reversal map takes the form $\Gamma g\mapsto \Gamma w g$. We say that a distribution $T$ is time-reversible if $\iota^*T=T$. Let $(b,b')=(g\cdot M,g\cdot wM)\in B^{(2)}$, where $g\in G$ and $gMA\in G/MA$ is unique. Recall $w^2\in M$. Then time reversal means
\begin{eqnarray*}
(b,b') = (g\cdot M,g\cdot wM) \mapsto (gw\cdot M,g\cdot w^2M) = (b',b),
\end{eqnarray*}
that is the interchanging $(b,b')\leftrightarrow(b',b)$. We call a function or distribution on $B^2$ time reversal invariant if it is invariant under $(b,b')\leftrightarrow(b',b)$.
\end{defn}

\begin{cor}\label{time reversal}
Let $G/K$ have rank one. The functions $d_{\lambda}$ are time reversal invariant. 
\end{cor}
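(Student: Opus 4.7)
My plan is to reduce time reversal to right multiplication by $w$ on $G/MA$ and then verify the equality $d_\lambda(gw) = d_\lambda(g)$ by a direct computation.

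First I would verify that under the identification $B^{(2)} \cong G/MA$, the time-reversal involution $(b,b') \mapsto (b',b)$ is given by $gMA \mapsto gwMA$. Indeed, if $(b,b') = (g\cdot M, g\cdot wM)$, then $(b',b) = (g\cdot wM, g\cdot M) = (gw\cdot M, gw^2\cdot M)$; since the longest Weyl element has order two, $w^2\in M$, so this equals $(gw\cdot M, gw\cdot wM)$, confirming that the base-point of the second pair in $G/MA$ is $gw$. Note also that $w\in M'$ normalizes $MA$, so right multiplication by $w$ is well-defined on $G/MA$. This step is purely formal and uses only $w^2\in M$.

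Next I would exploit the rank-one assumption. By Remark (2) after the lemma, the condition $w\cdot\lambda=-\lambda$ is needed for $d_\lambda$ to descend from $G/M$ to $G/MA$, and this condition holds in rank one because $\Ad(w)=-\id$ on $\mathfrak{a}$ (the Weyl group has order two and acts as reflection). So $d_\lambda$ is a well-defined function on $G/MA$, and proving time-reversal invariance amounts to showing $d_\lambda(gw)=d_\lambda(g)$. Computing from the definition,
\begin{equation*}
d_\lambda(gw) = e^{(i\lambda+\rho)H(gw)}\, e^{(i\lambda+\rho)H(gw\cdot w)} = e^{(i\lambda+\rho)H(gw)}\, e^{(i\lambda+\rho)H(gw^2)}.
\end{equation*}
Using $w^2\in M$ together with the right-$M$-invariance of the Iwasawa projection $H:KAN\to\mathfrak{a}$ (recalled in Section \ref{decomposition theorems}), one has $H(gw^2)=H(g)$, and therefore
\begin{equation*}
d_\lambda(gw) = e^{(i\lambda+\rho)H(gw)}\, e^{(i\lambda+\rho)H(g)} = d_\lambda(g),
\end{equation*}
which is the desired invariance.

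There is really no substantial obstacle here, and the proof is essentially a one-line calculation once time reversal has been re-expressed as multiplication by $w$ on the right. The only delicate point worth emphasizing is the role of the rank-one hypothesis: it ensures $w\cdot\lambda=-\lambda$ so that $d_\lambda$ is genuinely a function on $G/MA$ (and therefore it makes sense to speak of its invariance under the $w$-action), whereas in higher rank one would have to work with the bigger space $G/M$ and the corresponding statement would fail in general.
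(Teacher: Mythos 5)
Your argument is correct and is essentially the paper's own: time reversal on $B^{(2)}\cong G/MA$ is right multiplication by $w$, and then $d_\lambda(gw)=d_\lambda(g)$ follows from $w^2\in M$ and the right-$M$-invariance of the Iwasawa projection $H$ (this is exactly Remark (3) preceding the corollary, with the rank-one hypothesis entering only through $w\cdot\lambda=-\lambda$, which makes $d_\lambda$ a function on $G/MA$ in the first place). Nothing is missing.
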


For the rest of this subsection suppose that $\Ad_G(w)=-\id_{\la^*}$. Under the identification $B^{(2)}\cong G/MA$ the function $d_{\lambda}$ corresponds to a function on $B^{(2)}$ which we also denote by $d_{\lambda}$. If $g=g(b,b')$, then $d_{\lambda}: B^{(2)} \rightarrow \cc$,
\begin{eqnarray*}
d_{\lambda}(b,b') = d_{\lambda}(g\cdot M, g\cdot wM) = e^{(i\lambda+\rho)(H(g)+H(gw))}.
\end{eqnarray*}

Recall the horocycle bracket $\langle\cdot,\cdot\rangle$ on $G/K\times K/M$. Let $g\in G$. We have shown in Lemma \ref{Iwasawa and bracket} that $\langle g\cdot o, g\cdot M\rangle = H(g)$ and $\langle g\cdot o, g\cdot wM\rangle = H(gw)$.

\begin{cor}
Let $\langle\cdot,\cdot\rangle$ denote the horocycle-bracket. Then
\begin{eqnarray}\label{dlambda}
d_{\lambda}(g\cdot M, g\cdot wM) = e^{(i\lambda+\rho)(\langle g\cdot o, g\cdot M\rangle+\langle g\cdot o,g\cdot wM\rangle)}.
\end{eqnarray}
\end{cor}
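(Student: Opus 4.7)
The statement is essentially a direct corollary of the preceding computation and Lemma \ref{Iwasawa and bracket}, so the plan is short. First, I would invoke the identification $B^{(2)}\cong G/MA$: under the standing assumption $\Ad_G(w)=-\id_{\la^*}$, the function $d_\lambda$ descends from $G/M$ to $G/MA$, hence to $B^{(2)}$ via $(g\cdot M,g\cdot wM)\mapsto gMA$. Thus it is enough to evaluate $d_\lambda(g)$ and rewrite both factors in terms of the horocycle bracket.

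Unwinding the definition \eqref{dlambdamu on G/M} with $\mu=\lambda$,
\begin{equation*}
d_\lambda(g) \;=\; e^{(i\lambda+\rho)H(g)}\,e^{(i\lambda+\rho)H(gw)} \;=\; e^{(i\lambda+\rho)(H(g)+H(gw))}.
\end{equation*}
By Lemma \ref{Iwasawa and bracket} applied to $g$, the first exponent satisfies $H(g)=\langle g\cdot o,g\cdot M\rangle$. This handles the first factor.

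For the second factor, I would apply Lemma \ref{Iwasawa and bracket} to the element $gw\in G$. Since $w\in M'\subset K$, one has $(gw)\cdot o=g\cdot o$, while $(gw)\cdot M=g\cdot wM$. Therefore
\begin{equation*}
H(gw)\;=\;\langle(gw)\cdot o,(gw)\cdot M\rangle\;=\;\langle g\cdot o,g\cdot wM\rangle.
\end{equation*}
Substituting both identities back into the expression for $d_\lambda(g)$ yields the claimed formula \eqref{dlambda}. There is no real obstacle: the only point to verify carefully is that $w\in K$, so that right-multiplication by $w$ leaves the base point $o\in X$ fixed; this is what allows the horocycle-bracket on the right-hand side to involve $g\cdot o$ (and not some other point) in both summands.
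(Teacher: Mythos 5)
Your proof is correct and follows the paper's own route: the paper derives \eqref{dlambda} from the two identities $\langle g\cdot o,g\cdot M\rangle=H(g)$ and $\langle g\cdot o,g\cdot wM\rangle=H(gw)$, the latter being exactly your application of Lemma \ref{Iwasawa and bracket} to $gw$ using $w\in M'\subset K$. Your extra remark about $d_{\lambda}$ descending to $G/MA$ is consistent with the surrounding discussion but not needed for the identity itself.
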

\begin{proof}
This follows from $\langle g\cdot o, g\cdot M\rangle = H(g)$ and $\langle g\cdot o, g\cdot wM\rangle = H(gw)$.
\end{proof}

\begin{lem}\label{equivariance property off-diag}
Let $\gamma,g\in G$. Then
\begin{eqnarray}\label{equivariance property off-diag2}
d_{\lambda,\mu}(\gamma g) = e^{(i\lambda+\rho)\langle\gamma\cdot o, \gamma g\cdot M\rangle} e^{(i\mu+\rho)\langle\gamma\cdot o, \gamma g\cdot wM\rangle} d_{\lambda,\mu}(g).
\end{eqnarray}
\end{lem}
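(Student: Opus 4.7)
The plan is to unwind the definition of $d_{\lambda,\mu}$ in terms of the Iwasawa projection $H$ and then convert everything to horocycle brackets via Lemma \ref{Iwasawa and bracket}, so that the desired equivariance becomes a direct consequence of the cocycle identity \eqref{equivariance}.

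First I would note that since $w \in M' \subset K$ stabilizes the origin $o = K$, one has $w \cdot o = o$, and hence
\begin{equation*}
H(\gamma g w) = \langle \gamma g w \cdot o, \gamma g w \cdot M\rangle = \langle \gamma g \cdot o, \gamma g \cdot wM\rangle,
\end{equation*}
using Lemma \ref{Iwasawa and bracket} applied to $\gamma g w$. Similarly, $H(\gamma g) = \langle \gamma g \cdot o, \gamma g \cdot M\rangle$.

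Next I would apply the equivariance property \eqref{equivariance}, namely $\langle g' \cdot x, g'\cdot b\rangle = \langle x,b\rangle + \langle g'\cdot o, g'\cdot b\rangle$, with $g' = \gamma$, $x = g\cdot o$, and $b \in \{g\cdot M, g\cdot wM\}$. This yields
\begin{align*}
H(\gamma g) &= \langle \gamma(g\cdot o), \gamma(g\cdot M)\rangle = \langle g\cdot o, g\cdot M\rangle + \langle \gamma\cdot o, \gamma g\cdot M\rangle = H(g) + \langle \gamma\cdot o, \gamma g\cdot M\rangle,\\
H(\gamma g w) &= \langle \gamma(g\cdot o), \gamma(g\cdot wM)\rangle = \langle g\cdot o, g\cdot wM\rangle + \langle \gamma\cdot o, \gamma g\cdot wM\rangle = H(gw) + \langle \gamma\cdot o, \gamma g\cdot wM\rangle,
\end{align*}
where in the last step of the second line I again invoke Lemma \ref{Iwasawa and bracket} to recognize $\langle g\cdot o, g\cdot wM\rangle = \langle gw\cdot o, gw\cdot M\rangle = H(gw)$.

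Finally I would substitute these two identities into the definition
\begin{equation*}
d_{\lambda,\mu}(\gamma g) = e^{(i\lambda+\rho)H(\gamma g)}\, e^{(i\mu+\rho)H(\gamma g w)},
\end{equation*}
and factor out $e^{(i\lambda+\rho)H(g)}\, e^{(i\mu+\rho)H(gw)} = d_{\lambda,\mu}(g)$ to obtain \eqref{equivariance property off-diag2}. There is no real obstacle here; the only thing to watch is the role of $w\in K$ in pulling the second horocycle bracket back through $w \cdot o = o$, so that the cocycle identity applies cleanly on both factors.
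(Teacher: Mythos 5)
Your proof is correct and follows essentially the same route as the paper: both convert $H(\gamma g)$ and $H(\gamma g w)$ into horocycle brackets via Lemma \ref{Iwasawa and bracket} and then apply the cocycle identity \eqref{equivariance}. The only difference is that you spell out explicitly the step $H(\gamma g w)=\langle\gamma g\cdot o,\gamma g\cdot wM\rangle$ using $w\cdot o=o$, which the paper leaves implicit.
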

\begin{proof}
Let $z=g\cdot o\in G/K$. By \eqref{equivariance} and by Lemma \ref{Iwasawa and bracket} we find
\begin{eqnarray*}
H(\gamma g) &=& \langle \gamma g \cdot o, \gamma g \cdot M\rangle \\
&=& \langle\gamma\cdot z,\gamma g\cdot M\rangle \\
&=& \langle z,g\cdot M\rangle + \langle \gamma\cdot o, \gamma g\cdot M\rangle \\
&=& H(g) + \langle\gamma\cdot o,\gamma g\cdot M\rangle.
\end{eqnarray*}
Similarly we compute
\begin{eqnarray*}
H(\gamma gw) &=& \langle \gamma g \cdot o, \gamma g \cdot wM\rangle \\
&=& \langle\gamma\cdot z,\gamma g \cdot wM\rangle \\
&=& \langle z,g \cdot wM\rangle + \langle\gamma\cdot o,\gamma g\cdot wM\rangle \\
&=& H(gw) + \langle\gamma\cdot o,\gamma g\cdot wM\rangle.
\end{eqnarray*}
Summing up we obtain the assertion.
\end{proof}

\begin{cor}\label{equivariance property}
$d_{\lambda}(\gamma g) = e^{(i\lambda+\rho)(\langle\gamma\cdot o, \gamma g\cdot M\rangle + \langle\gamma\cdot o, \gamma g\cdot wM\rangle)} d_{\lambda,\mu}(g)$.
\end{cor}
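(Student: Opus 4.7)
The statement is the diagonal specialization of Lemma \ref{equivariance property off-diag}, which has just been proved, so the plan is essentially to read off the $\mu=\lambda$ case. First I would recall from the definition $d_{\lambda}:=d_{\lambda,\lambda}$ that the diagonal intermediate value coincides with the off-diagonal one when the two spectral parameters agree. Thus the left-hand side $d_{\lambda}(\gamma g)$ equals $d_{\lambda,\lambda}(\gamma g)$, and an application of Lemma \ref{equivariance property off-diag} with $\mu=\lambda$ gives
\begin{eqnarray*}
d_{\lambda,\lambda}(\gamma g) = e^{(i\lambda+\rho)\langle\gamma\cdot o,\gamma g\cdot M\rangle}\,e^{(i\lambda+\rho)\langle\gamma\cdot o,\gamma g\cdot wM\rangle}\,d_{\lambda,\lambda}(g).
\end{eqnarray*}

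Second, I would collect the two exponentials by additivity of the exponent, which yields the single factor $e^{(i\lambda+\rho)(\langle\gamma\cdot o,\gamma g\cdot M\rangle+\langle\gamma\cdot o,\gamma g\cdot wM\rangle)}$, and then rewrite $d_{\lambda,\lambda}(g)$ as $d_{\lambda}(g)$. This directly matches the claimed identity (modulo the apparent typo $d_{\lambda,\mu}$ in the statement, which should read $d_{\lambda}$).

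There is essentially no obstacle in this step, since all the real work was already done in Lemma \ref{equivariance property off-diag}: the decomposition $H(\gamma g)=H(g)+\langle\gamma\cdot o,\gamma g\cdot M\rangle$ (and its analogue with $w$ inserted) obtained there via the cocycle identity \eqref{equivariance} and Lemma \ref{Iwasawa and bracket} gives all the content. The only thing to verify is that the specialization $\mu\to\lambda$ is legitimate, which it is by construction.

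As a sanity check, I would note two compatibilities that make the formula plausible and would be worth pointing out in a remark: when $\gamma=e$ both bracket terms vanish (since horocycles passing through the origin have composite distance $0$), recovering $d_{\lambda}(g)=d_{\lambda}(g)$; and under the standing assumption $\mathrm{Ad}_G(w)=-\mathrm{id}_{\la^*}$, the function $d_{\lambda}$ descends to $G/MA$ by \eqref{observation}, so that the formula is consistent with the $G$-action on $B^{(2)}\cong G/MA$ used in \eqref{dlambda}. These checks confirm the geometric interpretation: the twisting factor is precisely the Radon–Nikodym-type cocycle measuring how $\gamma$ moves the pair $(g\cdot M, g\cdot wM)\in B^{(2)}$ away from the origin, raised to the character $(i\lambda+\rho)$.
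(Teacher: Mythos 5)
Your proposal is correct and follows exactly the route the paper intends: the corollary is stated without proof as the immediate specialization $\mu=\lambda$ of Lemma \ref{equivariance property off-diag}, which is precisely what you carry out (and you rightly note that $d_{\lambda,\mu}(g)$ in the statement is a typo for $d_{\lambda}(g)$). Nothing further is needed.
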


\begin{lem}
Let $(b,b')\in B^{(2)}$ and $\gamma\in G$. Then
\begin{eqnarray}\label{equivariance dlambda}
(d_{\lambda}\circ\gamma)(b,b') = d_{\lambda}(\gamma\cdot b, \gamma\cdot b') = e^{(i\lambda+\rho)(\langle\gamma\cdot o, \gamma\cdot b\rangle+\langle\gamma\cdot o, \gamma\cdot b'\rangle)}d_{\lambda}(b, b').
\end{eqnarray}
\end{lem}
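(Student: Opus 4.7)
The plan is to deduce the equivariance statement \eqref{equivariance dlambda} directly from the group-level version already established in Corollary \ref{equivariance property}, using the identification $B^{(2)} \cong G/MA$ to pass from a function on $G$ to a function on $B^{(2)}$.

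First I would fix a pair $(b,b') \in B^{(2)}$ and, by the transitivity of the $G$-action on $B^{(2)}$ noted before, choose $g \in G$ such that $(b,b') = g\cdot(M, wM)$, so that $b = g\cdot M$ and $b' = g\cdot wM$. By definition of the descent of $d_{\lambda}$ from $G$ to $B^{(2)}$ — which is legitimate under the standing hypothesis $\mathrm{Ad}_G(w) = -\id_{\la^*}$, since then \eqref{observation} shows that $d_{\lambda}$ is right-$MA$-invariant on $G$ — we have $d_{\lambda}(b,b') = d_{\lambda}(g)$. Applying $\gamma$ yields $\gamma\cdot(b,b') = (\gamma g \cdot M, \gamma g \cdot wM)$, and the same definition gives $d_{\lambda}(\gamma\cdot b, \gamma\cdot b') = d_{\lambda}(\gamma g)$.

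Now I would invoke Corollary \ref{equivariance property} with the element $\gamma g$ in the role of the argument, obtaining
\begin{eqnarray*}
d_{\lambda}(\gamma g) = e^{(i\lambda+\rho)(\langle \gamma\cdot o, \gamma g\cdot M\rangle + \langle \gamma\cdot o, \gamma g\cdot wM\rangle)}\, d_{\lambda}(g).
\end{eqnarray*}
Substituting $\gamma g \cdot M = \gamma\cdot b$, $\gamma g \cdot wM = \gamma\cdot b'$, and $d_{\lambda}(g) = d_{\lambda}(b,b')$ produces exactly the desired identity.

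The main (essentially only) thing to check is the well-definedness of the descent: if $g_1, g_2$ both represent $(b,b')$, then $g_1^{-1}g_2 \in MA$, and \eqref{observation} together with $w\cdot\lambda = -\lambda$ (forced by $\mathrm{Ad}_G(w) = -\id_{\la^*}$) gives $d_{\lambda}(g_1 a m) = d_{\lambda}(g_1)$, so the value on $B^{(2)}$ is independent of the representative. Once this is in place, \eqref{equivariance dlambda} is a one-line consequence, and no further calculation is required; in particular, the horocycle brackets appearing on the right-hand side are intrinsic to $(\gamma, b, b')$ because they only depend on $\gamma\cdot b = \gamma g\cdot M$ and $\gamma\cdot b' = \gamma g\cdot wM$, again via Lemma \ref{Iwasawa and bracket}.
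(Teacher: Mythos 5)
Your proposal is correct and follows essentially the same route as the paper: choose $g$ with $(b,b')=(g\cdot M,g\cdot wM)$, note $d_{\lambda}(\gamma\cdot b,\gamma\cdot b')=d_{\lambda}(\gamma g)$, and apply Corollary \ref{equivariance property}. Your extra check that the descent to $B^{(2)}\cong G/MA$ is well defined under $w\cdot\lambda=-\lambda$ merely makes explicit what the paper had already arranged earlier in the subsection.
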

\begin{proof}
Let $g\in G$ such that $(b,b')=(g\cdot M,g\cdot wM)$. Then $d_{\lambda}(\gamma\cdot b,\gamma\cdot b')=d_{\lambda}(\gamma g)$, so the Lemma follows from Corollary \ref{equivariance property}.
\end{proof}

\subsubsection{An equivariance property}\label{An equivariance property}
Recall from Section \ref{Section Helgason boundary values} that in case of $\Gamma$-invariant joint eigenfunctions $\phi_{\lambda}$ the corresponding distribution boundary values satisfy
\begin{eqnarray*}
T_{\lambda}(d\gamma b)\otimes T_{\lambda}(d\gamma b') = e^{-(i\lambda+\rho)(\langle\gamma o,\gamma b\rangle + \langle\gamma o,\gamma b'\rangle)} \,\, T_{\lambda}(db)\otimes T_{\lambda}(db').
\end{eqnarray*}
To obtain $\Gamma$-invariant distributions we multiply with so-called \emph{intermediate values} $d_{\lambda}(b,b')$ which satisfy the inverse equivariance property
\begin{eqnarray}\label{fullfill this}
d_{\lambda}(\gamma\cdot b,\gamma\cdot b')=e^{(i\lambda+\rho)(\langle\gamma\cdot o,\gamma\cdot b\rangle+\langle\gamma\cdot o,\gamma\cdot b'\rangle)}\, d_{\lambda}(b,b').
\end{eqnarray}
The result of this subsection is very interesting: We prove in the following that the existence of a non-trivial function satisfying \eqref{fullfill this} is equivalent to a certain condition on the eigenvalue parameter.

The idea is that the function $d_{\lambda}$ is independent of the concrete subgroup $\Gamma$, so we suppose \eqref{fullfill this} to be satisfied for all $g,\gamma\in G$. Let $w\in W$ denote the longest Weyl group element. We identify $w$ with a representative in $M'$.

\begin{lem}\label{minus identity}
Suppose that there exists a function $d_{\lambda}: G/MA\rightarrow \cc$ satisfying \eqref{fullfill this} for all $\gamma\in G$ and all $(b,b')\in B^{(2)}$. Then $w\cdot\lambda=-\lambda$.
\end{lem}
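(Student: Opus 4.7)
The plan is to test the equivariance \eqref{fullfill this} at the base pair $(b,b') = (M, wM) \in B^{(2)}$ against the subgroup $A$. Since $A$ stabilizes $(M,wM)$ (this is exactly the content of $B^{(2)} \cong G/MA$), the left-hand side of \eqref{fullfill this} with $\gamma = a \in A$ equals $d_\lambda(M,wM)$, while the right-hand side produces a character of $A$ acting on $d_\lambda(M,wM)$. Forcing this character to be trivial will yield $w\cdot\lambda = -\lambda$.

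First I would verify that $A$ fixes $(M,wM)$. Writing $a = e\cdot a\cdot e$ in the $KAN$ decomposition gives $k(a) = e$, hence $a\cdot M = M$. For the second factor, use that $M'$ normalizes $A$: setting $a' := w^{-1}aw \in A$ gives $aw = wa'$, which is already in $KAN$ form, so $k(aw) = w$ and $a\cdot wM = wM$. Next I compute the horocycle brackets appearing in \eqref{fullfill this}. By Lemma \ref{Iwasawa and bracket} (or directly), $\langle a\cdot o, M\rangle = A(a) = \log(a)$. For the second bracket, $\langle a\cdot o, wM\rangle = A(w^{-1}a)$; using $w^{-1}a = (w^{-1}aw)w^{-1} = a'\cdot w^{-1}$ with $a' \in A$ gives $A(w^{-1}a) = \log(a') = w^{-1}\cdot\log(a) = w\cdot\log(a)$, since the longest Weyl group element satisfies $w^{-1} = w$ in $W$.

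Substituting into \eqref{fullfill this} with $\gamma = a$ and $(b,b') = (M,wM)$, and assuming $d_\lambda(M,wM) \neq 0$ (which is forced if $d_\lambda$ is not identically zero, as $G$ acts transitively on $B^{(2)}$ and the equivariance propagates non-vanishing), one obtains
\begin{eqnarray*}
1 = e^{(i\lambda+\rho)(\log a + w\cdot\log a)} \qquad \forall\, a \in A.
\end{eqnarray*}
The $\rho$-contribution vanishes identically, since $\rho(w\cdot\log a) = (w^{-1}\rho)(\log a) = (w\rho)(\log a) = -\rho(\log a)$. What remains is $i(\lambda + w\cdot\lambda)(\log a) = 0$ for all $a \in A$, and since $\log: A \to \mathfrak{a}$ is surjective, this forces $\lambda + w\cdot\lambda = 0$, i.e., $w\cdot\lambda = -\lambda$.

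The computation is essentially forced, and the only mild subtlety is bookkeeping around the identifications $w^{-1} = w$ in $W$ versus in $M'$, and checking that the $\rho$-piece of the cocycle is automatically $A$-invariant (which is reassuring, as it confirms that the argument isolates exactly the $\lambda$-dependence). I expect no genuine obstacle here; the content of the lemma is really the observation that the stabilizer $MA$ of the base point in $B^{(2)}$ imposes a character condition that, via \eqref{observation} above applied to $\mu = \lambda$, is solvable if and only if $\lambda$ lies in the $(-1)$-eigenspace of $w$.
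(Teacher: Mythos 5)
Your proof is correct and is essentially the paper's argument in streamlined form: the paper carries a general $n\in N$ and compares two evaluations of $d_{\lambda}(an)$ (one via the right-$MA$-invariance, one via the equivariance \eqref{fullfill this}), whereas you specialize to $n=e$ and test \eqref{fullfill this} on the stabilizer $A$ of the base pair $(M,wM)$, arriving at the same character condition $e^{(i\lambda+\rho)(\log a + w\cdot\log a)}=1$ for all $a\in A$, with the $\rho$-part cancelling because $w\cdot\rho=-\rho$. As in the paper, the argument implicitly assumes $d_{\lambda}$ is not identically zero; your transitivity remark plays exactly the role of the paper's assumption $d_{\lambda}(e)\neq 0$.
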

\begin{proof}
Given $(b,b')\in B^{(2)}$, there is $g\in G$ such that under $G/MA\cong B^{(2)}$ we can write $b=g\cdot M$ and $b'=g\cdot wM$. Then \eqref{fullfill this} for a function on $B^{(2)}$ is equivalent to the existence of a function $d_{\lambda}$ on $G/MA$ satisfying
\begin{eqnarray}\label{fullfill this 2}
d_{\lambda}(\gamma g) = e^{(i\lambda+\rho)(\langle\gamma\cdot o,\gamma\cdot g\cdot M\rangle+\langle\gamma\cdot o,\gamma\cdot g\cdot wM\rangle)}\, d_{\lambda}(g) \,\,\,\,\,\, \forall \, \gamma,g\in G.
\end{eqnarray}
Let $a\in A, n\in N$. We first have
\begin{eqnarray}\label{compare null}
d_{\lambda}(n) = e^{(i\lambda+\rho)(\langle n\cdot o,M\rangle + \langle n\cdot o,nw\cdot M\rangle}d_{\lambda}(e) = e^{(i\lambda+\rho)H(nw)}d_{\lambda}(e).
\end{eqnarray}
Since $ana^{-1}\in N$ the assumed $MA$-invariance then yields
\begin{eqnarray}\label{compare one}
d_{\lambda}(an) = d_{\lambda}(ana^{-1}) = e^{(i\lambda+\rho)H(ana^{-1}w)}d_{\lambda}(e).
\end{eqnarray}
Combining \eqref{fullfill this 2} and \eqref{compare null} we also find
\begin{eqnarray}\label{compare two}
d_{\lambda}(an) &=& e^{(i\lambda+\rho)(\langle a\cdot o,a\cdot n\cdot M\rangle+\langle a\cdot o,a\cdot n\cdot wM\rangle)}\, d_{\lambda}(n) \nonumber\\
&=& e^{(i\lambda+\rho)(\log(a) + \langle a\cdot o,an\cdot wM\rangle + H(nw))}d_{\lambda}(e).
\end{eqnarray}

Comparing \eqref{compare one} with \eqref{compare two} and assuming $d_{\lambda}(e)\neq0$ (otherwise $d_{\lambda}=0$ everywhere by the transitivity of the $G$-action on $G/MA$) we get
\begin{eqnarray}\label{finally consider}
(i\lambda+\rho)H(ana^{-1}w) = (i\lambda+\rho)[\log(a) + \langle a\cdot o,an\cdot wM\rangle + H(nw)].
\end{eqnarray}

On the the left hand side of \eqref{finally consider} we have
\begin{eqnarray}\label{so becomes}
H(ana^{-1}w) = H(anww^{-1}a^{-1}w).
\end{eqnarray}
Note that $w^{-1}a^{-1}w\in A$, since $W$ normalizes $A$. Thus \eqref{so becomes} equals
\begin{eqnarray}\label{finally one}
H(anw) + \log(w^{-1}a^{-1}w).
\end{eqnarray}
For the right hand side of \eqref{finally consider} recall that
\begin{eqnarray*}
\langle a\cdot o,an\cdot wM\rangle = -H(a^{-1}k(anw)).
\end{eqnarray*}
If $anw=\tilde{k}\tilde{a}\tilde{n}$, then $a^{-1}k(anw)=nw\tilde{n}^{-1}\tilde{a}^{-1}$, so
\begin{eqnarray*}
\langle a\cdot o,an\cdot wM\rangle &=& -H(a^{-1}k(anw)) \\
&=& -H(nw\tilde{n}^{-1}\tilde{a}^{-1}) \\
&=& -H(nw) + \log(\tilde{a}).
\end{eqnarray*}
Thus on the right hand side of \eqref{finally consider} we have
\begin{eqnarray}\label{finally two}
\log(a) + \langle a\cdot o,an\cdot wM\rangle + H(nw) &=& \log(a) - H(nw) + \log(\tilde{a}) + H(nw) \nonumber \\
&=& \log(a) + \log(\tilde{a}) \nonumber \\
&=& \log(a) + H(anw).
\end{eqnarray}
If we now compare \eqref{finally one} with \eqref{finally two} we see that \eqref{fullfill this} implies
\begin{eqnarray*}
(i\lambda+\rho)\log(a) = (i\lambda+\rho)\log(w^{-1}a^{-1}w)
\end{eqnarray*}
for all $a\in A$. But 
\begin{eqnarray*}
\rho(\log(w^{-1}a^{-1}w)) = (w\cdot\rho)\log(a^{-1}) = -\rho\log(a^{-1}) = \rho\log(a),
\end{eqnarray*}
since $w\cdot\rho=-\rho$, since $w$ maps positive roots into negative roots. Moreover,
\begin{eqnarray*}
\lambda\log(w^{-1}a^{-1}w)=\lambda(w^{-1}\cdot\log(a^{-1})))=(w\cdot\lambda)(-\log(a)),
\end{eqnarray*}
so our final condition is $w\cdot\lambda = -\lambda$, as desired.
\end{proof}

\begin{rem}
Note that equation \eqref{fullfill this 2} can be satisfied by a function $d_{\lambda}$ defined on $G/M$. We will later see how to circumvent the problem of missing $A$-invariance.
\end{rem}

\subsection{Definitions and invariance properties}\label{Definitions and invariance properties}
We now build up the theory of Patterson-Sullivan distributions. We start by generalizing the definitions given in \cite{AZ}, which is possible if $\Ad(w)=-\id_{\la}$ (recall that by $w$ we denote the longest Weyl group element). Later we see how to define Patterson-Sullivan distributions for general symmetric spaces. We also study interesting invariance properties of these distributions.

\subsubsection{Diagonal Patterson-Sullivan distributions}\label{Diagonal Patterson-Sullivan distributions}
In this Section we fix $\lambda\in\la^*_{\cc}$ and suppose that $w\cdot\lambda=-\lambda$. We fix an eigenfunction $\phi\in\mathcal{E}^*_{\lambda}(X)$. At this point, we do not assume that $\phi$ is real-valued. Let $T_{\phi}$ denote the boundary values of $\phi$. The assumption on $\lambda$ is satisfied if the longest Weyl group element $w$ satisfies $\Ad_G(w^*)_{|\la}=-\id_{\la}$. This is the case for all rank one spaces. Recall the concept of \emph{intermediate values} (Section \ref{Intermediate values})
\begin{eqnarray*}
d_{\lambda}(b,b') = d_{\lambda}(g\cdot M, g\cdot wM) = e^{(i\lambda+\rho)(H(g)+H(gw))},
\end{eqnarray*}
where $g=g(b,b')$ corresponding to $B^{(2)} \cong G/MA$. We have proven in Subsection \ref{An equivariance property} that this function exists if and only if $w\cdot\lambda=-\lambda$.

\begin{defn}
The \emph{Patterson-Sullivan distribution $ps_{\phi,\lambda}(db,db')$ associated to}\index{Patterson-Sullivan distribution} $\phi\in\mathcal{E}_{\lambda}$ is the distribution on $C_c^{\infty}(B^{(2)})$ defined by
\begin{eqnarray}\label{ps define}
ps_{\phi,\lambda}(db,db') := d_{\lambda}(b,b') \cdot T_{\phi}(db) \otimes T_{\phi}(db').
\end{eqnarray}
The same definition \eqref{ps define} extends $ps_{\lambda}$ to a linear functional on the larger space $d_{\lambda}(b,b')^{-1}\cdot C^{\infty}(B\times B)$. If $\phi\in\mathcal{E}^*_{\lambda}(X)$ is fixed we write for simplicity $ps_{\lambda}$ instead of $ps_{\phi,\lambda}$. Moreover, we often write $T_{\phi}(db) T_{\phi}(db')$ instead of $T_{\phi}(db) \otimes T_{\phi}(db')$.
\end{defn}

\begin{prop}\label{ps invariant}
Let $\phi\in\mathcal{E}^*_{\lambda}(X)$ be a $\Gamma$-invariant eigenfunction of $\mathbb{D}(G/K)$. Let $T_{\phi}$ denote its boundary values. Then $ps_{\lambda}(db,db')$ is $\Gamma$-invariant. 
\end{prop}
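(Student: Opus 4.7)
The plan is to verify the invariance by a direct computation in which the equivariance factor produced by the boundary values is exactly cancelled by the one produced by the intermediate values. Concretely, I would test $ps_\lambda$ against an arbitrary $f \in C_c^\infty(B^{(2)})$ and compute $\langle f \circ \gamma, ps_\lambda\rangle_{B^{(2)}}$ for $\gamma \in \Gamma$, using Fubini for the tensor product $T_\phi(db)\otimes T_\phi(db')$ together with the pushforward action of $\gamma$ on each factor.

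First I would recall, from Section \ref{Section Helgason boundary values} (equation \eqref{boundary values equivariance 1}), that $\Gamma$-invariance of $\phi$ forces the transformation law
\begin{equation*}
T_\phi(d\gamma b) = e^{-(i\lambda+\rho)\langle\gamma\cdot o,\gamma\cdot b\rangle}\, T_\phi(db),
\end{equation*}
so the tensor product transforms with the combined factor $e^{-(i\lambda+\rho)(\langle\gamma\cdot o,\gamma\cdot b\rangle+\langle\gamma\cdot o,\gamma\cdot b'\rangle)}$. Next I would invoke Lemma (equation \eqref{equivariance dlambda}), which under the hypothesis $w\cdot\lambda=-\lambda$ gives the complementary identity
\begin{equation*}
d_\lambda(\gamma\cdot b,\gamma\cdot b') = e^{(i\lambda+\rho)(\langle\gamma\cdot o,\gamma\cdot b\rangle+\langle\gamma\cdot o,\gamma\cdot b'\rangle)}\, d_\lambda(b,b').
\end{equation*}
Multiplying these two identities, the exponential factors cancel and we obtain $d_\lambda(\gamma\cdot b,\gamma\cdot b')\,T_\phi(d\gamma b)\,T_\phi(d\gamma b') = d_\lambda(b,b')\,T_\phi(db)\,T_\phi(db')$, which is precisely the statement that $ps_\lambda$ is $\Gamma$-invariant.

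There is no real obstacle here: the construction of $d_\lambda$ was arranged in Subsection \ref{An equivariance property} precisely so that its $G$-equivariance is inverse to that of the boundary-value tensor product. The only thing that requires a little care is making sure that the tensor product $T_\phi(db)\otimes T_\phi(db')$, a priori a distribution on $B\times B$, restricts sensibly to the open dense subset $B^{(2)}\subset B\times B$ when multiplied by the smooth (in fact analytic) weight $d_\lambda$, and that pairing with $f\in C_c^\infty(B^{(2)})$ is well-defined; this is the content of the remark following \eqref{ps define}, which extends $ps_\lambda$ to $d_\lambda^{-1}\cdot C^\infty(B\times B)$. Once this is in place, the invariance reduces to the one-line cancellation described above.
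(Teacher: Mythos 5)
Your proposal is correct and follows essentially the same route as the paper's own proof: testing against $f\in C_c^{\infty}(B^{(2)})$, applying the transformation law \eqref{boundary values equivariance 1} for $T_{\phi}\otimes T_{\phi}$, and cancelling it against the inverse equivariance \eqref{equivariance dlambda} of $d_{\lambda}$ (valid under the standing hypothesis $w\cdot\lambda=-\lambda$ of that subsection). Nothing further is needed.
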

\begin{proof}
Given a test function $f\in C_c^{\infty}(B^{(2)})$ and $\gamma\in\Gamma$, we observe
\begin{eqnarray*}
ps_{\lambda}(f\circ\gamma^{-1}) = (T_{\phi}\otimes T_{\phi})(d_{\lambda}\cdot (f\circ\gamma^{-1})) = (\gamma T_{\phi}\otimes \gamma T_{\phi})((d_{\lambda}\circ\gamma) \cdot f).
\end{eqnarray*}
It follows from \eqref{boundary values equivariance 1} that
\begin{eqnarray*}
T_{\phi}(d\gamma b)T_{\phi}(d\gamma b') = e^{-(i\lambda+\rho)\langle\gamma\cdot o,\gamma\cdot b\rangle}e^{-(i\lambda+\rho)\langle\gamma\cdot o,\gamma\cdot b'\rangle}T_{\phi}(db)T_{\phi}(db').
\end{eqnarray*}
By \eqref{equivariance dlambda}, the $d_{\lambda}(b,b')$ have the inverse equivariance property, so multiplying with \eqref{equivariance dlambda} yields $(T_{\phi}\otimes T_{\phi})(d_{\lambda}\cdot f) = ps_{\lambda}(f)$ and completes the proof of $\Gamma$-invariance.
\end{proof}

Recall the time reversal map $b \leftrightarrow b'$. Then by Corollary \ref{time reversal}:
\begin{prop}
Suppose that $\phi\in\mathcal{E}^*_{\lambda}(X)$ is $\Gamma$-invariant. Then the distribution $ps_{\phi,\lambda}(db,db')$ is time reversal invariant.
\end{prop}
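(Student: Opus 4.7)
The plan is to unwind the definition of $ps_{\phi,\lambda}$ and show that each of its two factors is separately invariant under the interchange $(b,b') \leftrightarrow (b',b)$: the intermediate value $d_\lambda$ by the cited Corollary~\ref{time reversal}, and the tensor product of boundary values by a trivial Fubini-type argument (since the same distribution $T_\phi$ appears in both slots). Note that, since we are in the setting where $w\cdot\lambda=-\lambda$ (rank one, in particular), the results of Subsection~\ref{An equivariance property} and Corollary~\ref{time reversal} apply as stated.

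Concretely, I would test $ps_{\phi,\lambda}$ against an arbitrary test function $f\in C_c^\infty(B^{(2)})$ and compute
\begin{eqnarray*}
ps_{\phi,\lambda}(f\circ\iota)
&=& \int_{B^{(2)}} d_\lambda(b,b')\, f(b',b)\, T_\phi(db)\,T_\phi(db')\\
&=& \int_{B^{(2)}} d_\lambda(b',b)\, f(b',b)\, T_\phi(db)\,T_\phi(db'),
\end{eqnarray*}
where in the second line I invoke Corollary~\ref{time reversal}, which gives $d_\lambda(b,b')=d_\lambda(b',b)$. Then I would rename the dummy variables $b\leftrightarrow b'$ and use Fubini for tensor products of distributions (valid because $T_\phi\otimes T_\phi$ is symmetric, being the tensor product of a distribution with itself) to obtain
\begin{eqnarray*}
ps_{\phi,\lambda}(f\circ\iota)
= \int_{B^{(2)}} d_\lambda(b,b')\, f(b,b')\, T_\phi(db)\,T_\phi(db') = ps_{\phi,\lambda}(f).
\end{eqnarray*}

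There is no real obstacle here: both ingredients have already been established earlier in the excerpt. The only minor point to check is that $\iota$ preserves $B^{(2)}$ (which is built into the definition of the time reversal map, since $\iota$ sends $(g\cdot M, g\cdot wM)$ to $(gw\cdot M, g\cdot w^2M)$ and $w^2\in M$) and that the tensor product $T_\phi\otimes T_\phi$ really is unambiguously defined as a distribution on $B\times B$ that one may restrict to the open dense set $B^{(2)}$; both facts are available from the preceding sections.
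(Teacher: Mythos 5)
Your proposal is correct and follows essentially the same route as the paper: the paper's own argument is simply the citation of Corollary \ref{time reversal} (the symmetry $d_\lambda(b,b')=d_\lambda(b',b)$), and your computation spells out exactly what that citation implies once combined with the obvious symmetry of the tensor square $T_\phi\otimes T_\phi$ under interchange of the two slots.
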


We now construct $A$-invariant distributions. Recall that under the identification $G/MA\cong B^{(2)}$ we write $g(b,b')\in G$ if $g(b,b')\cdot (M,wM)=(b,b')\in B^{(2)}$. The element $g(b,b')$ is uniquely determined modulo $MA$.

\begin{defn}
For functions $f$ on $G/M$, the \emph{Radon transform}\index{$\mathcal{R}$, Radon transform}\index{Radon transform} $\mathcal{R}$ on $G/M$ is given by
\begin{eqnarray}\label{Radon define}
\mathcal{R}f(b,b') = \int_A f(g(b,b')aM) da,
\end{eqnarray}
whenever this integral exists. Then $\mathcal{R}f(b,b')$ is a function on $B^{(2)}$. By unimodularity of $A$ we find that \eqref{Radon define} does not depend on the choice of $g(b,b')$.
\end{defn}

\begin{lem}\label{application}
The Radon transform maps $\mathcal{R}: C_c(G/M)\rightarrow C_c(B^{(2)})$.
\end{lem}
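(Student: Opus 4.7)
The plan is to verify two claims about $\mathcal{R}f$ for $f\in C_c(G/M)$: it has compact support in $B^{(2)}$, and it is continuous there. Let $\pi : G/M \to G/MA \cong B^{(2)}$ denote the canonical projection. The text already notes that $\mathcal{R}f(b,b')$ is independent of the choice of representative $g(b,b')$, so only these two points require attention.

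For the support, I would argue directly: if $\mathcal{R}f(b,b')\neq 0$, then there exists $a\in A$ with $g(b,b')aM \in \mathrm{supp}(f)$, and since $\pi(g(b,b')aM) = (b,b')$, this shows $(b,b')\in\pi(\mathrm{supp}(f))$. Because $\pi$ is continuous and $\mathrm{supp}(f)$ is compact in $G/M$, the image $\pi(\mathrm{supp}(f))$ is a compact subset of $B^{(2)}$, so $\mathrm{supp}(\mathcal{R}f)\subseteq \pi(\mathrm{supp}(f))$ is compact.

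For continuity, I would work locally. The quotient map $G\to G/MA$ is a principal $(MA)$-bundle, so around any $(b_0,b_0')\in B^{(2)}$ there is an open neighborhood $U$ and a continuous section $s:U\to G$ with $s(b,b')\cdot(M,wM)=(b,b')$. Using $s(b,b')$ as the representative $g(b,b')$, we obtain the local formula
\begin{equation*}
\mathcal{R}f(b,b') = \int_A f(s(b,b')aM)\,da \qquad \bigl((b,b')\in U\bigr).
\end{equation*}
The key step is to reduce the integration to a fixed compact subset of $A$, uniformly in $(b,b')$ ranging over a compact subset of $U$. Since $MA$ is closed in $G$, the right $A$-action on $G/M$ is proper. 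Hence given a compact $K\subset U$, writing $L=\{s(b,b')M : (b,b')\in K\}\subset G/M$ (which is compact, $s$ being continuous), the set
\begin{equation*}
C := \bigl\{\, a\in A \,:\, L\cdot a \,\cap\, \mathrm{supp}(f) \neq \emptyset \,\bigr\}
\end{equation*}
is compact in $A$. Then for all $(b,b')\in K$ we have $\mathcal{R}f(b,b') = \int_C f(s(b,b')aM)\,da$.

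Since $(b,b',a)\mapsto f(s(b,b')aM)$ is jointly continuous on $K\times C$ and dominated by $\|f\|_\infty\cdot\mathbf{1}_C$, dominated convergence gives continuity of $\mathcal{R}f$ on $K$; as $K$ was arbitrary, $\mathcal{R}f\in C(B^{(2)})$. Together with the support statement this yields $\mathcal{R}f\in C_c(B^{(2)})$. The main technical point, and the only place where nontrivial input is required, is the properness of the right $A$-action on $G/M$, which is what forces the fibers of $\pi$ to meet $\mathrm{supp}(f)$ in a uniformly compact set of parameters $a\in A$; everything else is essentially formal.
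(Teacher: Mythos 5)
Your proof is correct, but it takes a genuinely different route from the paper's. The paper disposes of the lemma in two lines: it pulls $f$ back to $\tilde f(g)=f(gM)$, which lies in $C_c(G)$ because $M$ is compact, observes that $\mathcal{R}f(gMA)=\int_{MA}\tilde f(gam)\,da\,dm$ (using $\int_M dm=1$), and then invokes the standard fact, cited to Helgason, that averaging over a closed subgroup $V$ maps $C_c(U)$ surjectively onto $C_c(U/V)$ — here with $U=G$, $V=MA$. You instead re-prove that fact in this special case from scratch: compact support via $\supp(\mathcal{R}f)\subseteq\pi(\supp f)$ for the continuous projection $\pi:G/M\to G/MA$, and continuity via local sections of the principal $MA$-bundle together with properness of the right $A$-action on $G/M$, which is exactly what lets you shrink the integration to a fixed compact $C\subset A$ uniformly on compacta and then apply dominated convergence. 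What your approach buys is self-containedness (it makes visible where compactness of $M$, closedness of $A$, and unimodularity each enter); what the paper's buys is brevity, by reducing to a quotable general lemma. The one spot where your write-up is looser than it should be is the justification of properness: "since $MA$ is closed in $G$" is not quite the right reason as stated — the clean argument is that $A$ is closed in $G$ (Iwasawa decomposition) and $M$ is compact, so lifting compacta $\bar L,\bar K\subset G/M$ to compacta $L,K\subset G$ gives $\{a\in A: \bar L a\cap\bar K\neq\emptyset\}\subseteq A\cap L^{-1}K$, which is compact; this is a presentational quibble, not a gap.
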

\begin{proof}
Recall $B^{(2)}\cong G/MA$ as homogeneous spaces. Given $f\in C_c(G/M)$ we define $\tilde{f}\in C_c(G)$ by $\tilde{f}(g)=f(gM)$. Then
\begin{eqnarray*}
\mathcal{R}f(gMA) = \int_A \tilde{f}(ga)da = \int_{MA} \tilde{f}(gam)dadm.
\end{eqnarray*}
It follows from \eqref{tilde} and its subsequent remark applied to $MA$ that $\mathcal{R}f$ has compact support.
\end{proof}

\subsubsection{Patterson-Sullivan distributions on the compact quotient}
We keep the assumption that $w\cdot\lambda=-\lambda$. ($w\in W$ is the longest Weyl group element, $\lambda\in\la^*_{\cc}$).

\begin{defn}
Let $\mathcal{F}$ denote a bounded fundamental domain for $\Gamma$ in $X$. Following \cite{AZ}, pp. 380-381, we say that $\chi\in C_c^{\infty}(X)$ is a \emph{smooth fundamental domain cutoff function} if it satisfies \begin{eqnarray}\label{smooth fundamental domain cutoff}
\sum_{\gamma\in\Gamma}\chi(\gamma z) = 1 \,\,\,\,\, \forall z\in X.
\end{eqnarray}
Such a function can for example be constructed by taking $\nu\in C_c^{\infty}(X)$, $\nu=1$ on $\mathcal{F}$, and putting $\chi(z)=\nu(z)\cdot(\sum_{\gamma\in\Gamma}\nu(\gamma z))^{-1}$. If $\chi$ satisfies \eqref{smooth fundamental domain cutoff}, then
\begin{eqnarray}
\int_{\mathcal{F}} f \,  dz = \int_X \chi f \, dz, \,\,\,\,\, f\in C(X_{\Gamma}).
\end{eqnarray}
\end{defn}

Since $B$ is compact, we can (by using partition of unity) also choose a cutoff $\chi\in C_c^{\infty}(X\times B)$ such that $\sum_{\gamma\in\Gamma}\chi(\gamma\cdot(z,b))=1$. Let $T\in\mathcal{D}'(X\times B)$ be a $\Gamma$-invariant distribution and $a$ a $\Gamma$-invariant smooth function on $X\times B$. Suppose there is $a_1\in\mathcal{D}(X\times B)$ such that $\sum_{\gamma\in\Gamma}a_1(\gamma\cdot(z,b))=a(z,b)$. Then
\begin{eqnarray*}
\langle a_1,T\rangle_{X\times B} &=& \int_{X\times B} \left\{ \sum_{\gamma\in\Gamma} \chi(\gamma\cdot(z,b)) \right\} a_1(z,b) \, T(dz, db) \\
&=& \int_{X\times B} \sum_{\gamma\in\Gamma} \chi(z,b) \, a_1(\gamma\cdot(z,b)) \, T(dz, db).
\end{eqnarray*}
By the invariance of $T$ this equals $\int_{X\times B} \chi(z,b) a(z,b) \, T(dz, db)$. We thus have

\begin{prop}\label{independent}
Let $T\in\mathcal{D}'(X\times B)$ be a $\Gamma$-invariant distribution. Let $a$ be a $\Gamma$-invariant smooth function on $X\times B$. Then for any $a_1,a_2\in\mathcal{D}(X\times B)$ such that $\sum_{\gamma\in\Gamma}a_j(\gamma\cdot(z,b))=a(z,b)$ ($j=1,2$) we have $\langle a_1,T\rangle=\langle a_2,T\rangle$.
\end{prop}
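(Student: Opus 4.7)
The proof is essentially complete in the paragraph preceding the statement, but let me describe how I would organize it.

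The plan is to show directly that for any admissible $a_j$ ($j=1,2$), the pairing $\langle a_j, T\rangle$ equals the fixed quantity $\int_{X\times B} \chi(z,b)\,a(z,b)\,T(dz,db)$, which depends only on $a$ and the chosen cutoff $\chi$, not on the choice of $a_j$. The conclusion $\langle a_1,T\rangle = \langle a_2,T\rangle$ then follows immediately.

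More concretely, first I would fix a smooth fundamental domain cutoff $\chi$ on $X\times B$ with $\sum_{\gamma\in\Gamma}\chi(\gamma\cdot(z,b))=1$. For each $j\in\{1,2\}$, since $a_j\in\mathcal{D}(X\times B)$ has compact support and $\Gamma$ acts properly discontinuously on $X$, only finitely many translates $\gamma\cdot\supp(\chi)$ meet $\supp(a_j)$, so every interchange of summation with integration below is actually a finite sum on the support of the integrand. Using the partition-of-unity property of $\chi$, I would write
\begin{eqnarray*}
\langle a_j,T\rangle_{X\times B}
&=& \int_{X\times B}\Bigl\{\sum_{\gamma\in\Gamma}\chi(\gamma\cdot(z,b))\Bigr\}\,a_j(z,b)\,T(dz,db)\\
&=& \int_{X\times B}\sum_{\gamma\in\Gamma}\chi(z,b)\,a_j(\gamma\cdot(z,b))\,T(dz,db),
\end{eqnarray*}
where in the second step I invoke the $\Gamma$-invariance of $T$ termwise (this is the only nontrivial point, and it is exactly the calculation displayed immediately before the proposition).

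The main (and only) obstacle is to justify the termwise change of variables in the last equality rigorously; once that is done, the bracket becomes $\sum_\gamma a_j(\gamma\cdot(z,b)) = a(z,b)$ by hypothesis, and the identity reduces to $\langle a_j,T\rangle = \int_{X\times B}\chi(z,b)\,a(z,b)\,T(dz,db)$. The right-hand side is independent of $j$, so applying the same identity with $j=1$ and $j=2$ yields $\langle a_1,T\rangle = \langle a_2,T\rangle$, proving the proposition. In fact the statement is little more than a clean restatement of the calculation already displayed, so no further ingredients are needed beyond the $\Gamma$-invariance of $T$ and the finiteness of the orbit sums on compact sets.
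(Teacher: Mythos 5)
Your argument is exactly the paper's: insert the partition of unity $\sum_{\gamma}\chi(\gamma\cdot(z,b))=1$ into $\langle a_j,T\rangle$, use the $\Gamma$-invariance of $T$ termwise to transfer $\gamma$ from $\chi$ to $a_j$, and sum to obtain $\langle a_j,T\rangle=\int_{X\times B}\chi\, a\, T(dz,db)$, which is independent of $j$. Your added remark on local finiteness of the sums (via compact supports and proper discontinuity) only makes explicit what the paper leaves implicit, so the proposal is correct and essentially identical to the paper's proof.
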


Given $T$ and $a$ as in Proposition \ref{independent} and if moreover $\chi_j$ ($j=1,2$) are smooth fundamental domain cutoffs, then $a_j=\chi_j a$ satisfy the assumptions of the proposition. Hence $\langle a,T\rangle_{\Gamma\backslash G/M}:=\langle \chi a,T\rangle_{G/M}$ defines a distribution on the quotient $\Gamma\backslash G/M$ and this definition is independent of the choice of $\chi$.

\begin{defn}\label{Definition Patterson Sullivan}
Let $\lambda\in\la^*_{\cc}$ and $\phi\in\mathcal{E}^*_{\lambda}(X)$ denote a $\Gamma$-invariant joint eigenfunction. The Patterson-Sullivan distribution $PS_{\lambda}=PS_{\phi,\lambda}$ \emph{associated to $\phi$} is defined by
\begin{eqnarray}
\langle a,PS_{\lambda}\rangle_{G/M} = \int_{B^{(2)}}(\mathcal{R}a)(b,b') \, ps_{\phi,\lambda}(db,db').
\end{eqnarray}
On the quotient $\Gamma\backslash G/M$, we define the Patterson-Sullivan distributions by
\begin{eqnarray}
\langle a,PS_{\lambda}\rangle_{\Gamma\backslash G/M} := \langle\chi a,PS_{\lambda}\rangle_{G/M}.
\end{eqnarray}
We define \emph{normalized Patterson-Sullivan distributions} by
\begin{eqnarray}\label{normalized}
\widehat{PS}_{\lambda} = \frac{1}{\langle 1,PS_{\lambda}\rangle_{\Gamma\backslash G/M}}PS_{\lambda}.
\end{eqnarray}
In view of Proposition \ref{independent} these definitions do not depend on $\chi$.
\end{defn}

We look at the expression
\begin{eqnarray}
\langle a,PS_{\lambda}\rangle = \int_{B^{(2)}} \, d_{\lambda}(b,b') \, \mathcal{R}(a)(b,b') \, T_{\phi}(db)\, T_{\phi}(db').
\end{eqnarray}
It follows that $PS_{\lambda}(a)$ is well-defined if $(d_{\lambda}\cdot\mathcal{R}a)(b,b')\in C^{\infty}(B\times B)$, which is the case for $a\in C_c^{\infty}(G/M)$: In fact, then $\mathcal{R}a\in C_c^{\infty}(B^{(2)})$, so
\begin{eqnarray*}
d_{\lambda}(b,b')\mathcal{R}(a)(b,b')\in C_c^{\infty}(B^{(2)}) \subset C_c^{\infty}(B\times B) = C^{\infty}(B\times B).
\end{eqnarray*}

As a consequence of Proposition \ref{independent} we obtain (recall that $w\cdot\lambda=-\lambda$):
\begin{prop}
$PS_{\phi,\lambda}$ is an $A$-invariant and $\Gamma$-invariant distribution on $G/M$. On the quotient $\Gamma\backslash G/M$, the distribution $PS_{\phi,\lambda}$ is still $A$-invariant.
\end{prop}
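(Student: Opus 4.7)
\medskip

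\noindent\textbf{Proof plan.} The plan is to prove the three invariance claims in sequence, deducing each from the preceding material. I first test $A$-invariance on $G/M$, then $\Gamma$-invariance on $G/M$, and finally use Proposition~\ref{independent} to push the $A$-invariance down to the quotient.

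\emph{Step 1 ($A$-invariance on $G/M$).} For $a_0\in A$ and $a\in C_c^\infty(G/M)$ let $(R_{a_0}a)(gM):=a(ga_0M)$; this is well-defined because $A$ and $M$ commute. Unravelling the definition of $\mathcal{R}$,
\begin{eqnarray*}
\mathcal{R}(R_{a_0}a)(b,b') = \int_A a(g(b,b')\,a\,a_0\,M)\,da = \int_A a(g(b,b')\,a\,M)\,da = \mathcal{R}(a)(b,b'),
\end{eqnarray*}
where the second equality is the substitution $a\mapsto a a_0^{-1}$ combined with unimodularity of $A$. Thus $\langle R_{a_0}a,PS_\lambda\rangle_{G/M}=\langle a,PS_\lambda\rangle_{G/M}$ and $PS_\lambda$ is $A$-invariant on $G/M$.

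\emph{Step 2 ($\Gamma$-invariance on $G/M$).} For $\gamma\in\Gamma$ let $(L_\gamma a)(gM):=a(\gamma^{-1}gM)$. Since $g(b,b')\cdot(M,wM)=(b,b')$, one has $\gamma^{-1}g(b,b')\cdot(M,wM)=(\gamma^{-1}b,\gamma^{-1}b')$, hence $\gamma^{-1}g(b,b')=g(\gamma^{-1}b,\gamma^{-1}b')\,ma_1$ for some $m\in M$, $a_1\in A$. Using $M$-right-invariance on $G/M$ and a substitution $a\mapsto a_1^{-1}a$ in the $A$-integral we get the equivariance
\begin{eqnarray*}
\mathcal{R}(L_\gamma a)(b,b') = \mathcal{R}(a)(\gamma^{-1}b,\gamma^{-1}b').
\end{eqnarray*}
Combined with the $\Gamma$-invariance of $ps_{\phi,\lambda}$ established in Proposition~\ref{ps invariant} and a change of variables $(b,b')\mapsto(\gamma b,\gamma b')$ inside the integral over $B^{(2)}$, this yields $\langle L_\gamma a,PS_\lambda\rangle_{G/M}=\langle a,PS_\lambda\rangle_{G/M}$. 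The only subtle point here is the appearance of the additional factors $a_1\in A$ and $m\in M$; they drop out by the $M$-right-invariance of functions on $G/M$ and by the $A$-right-invariance of $da$.

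\emph{Step 3 ($A$-invariance on $\Gamma\backslash G/M$).} Let $a$ be a $\Gamma$-invariant smooth function on $G/M$ and $a_0\in A$. Because $\Gamma$ acts on the left and $A$ on the right, $R_{a_0}a$ is again $\Gamma$-invariant. For any smooth fundamental domain cutoff $\chi$, both $\chi\cdot R_{a_0}a$ and $R_{a_0}(\chi a)$ are compactly supported representatives whose $\Gamma$-periodisations equal $R_{a_0}a$; Proposition~\ref{independent} therefore gives
\begin{eqnarray*}
\langle R_{a_0}a,PS_\lambda\rangle_{\Gamma\backslash G/M} = \langle \chi\cdot R_{a_0}a,PS_\lambda\rangle_{G/M} = \langle R_{a_0}(\chi a),PS_\lambda\rangle_{G/M}.
\end{eqnarray*}
By Step 1 the last quantity equals $\langle \chi a,PS_\lambda\rangle_{G/M}=\langle a,PS_\lambda\rangle_{\Gamma\backslash G/M}$, proving $A$-invariance on the quotient.

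\emph{Main obstacle.} Steps 1 and 3 are essentially bookkeeping once Proposition~\ref{independent} is available. The real work is in Step 2: one must verify carefully that the coset representative $g(b,b')\in G/MA$ is compatible with the $\Gamma$-action on $B^{(2)}$, i.e.\ that the ambiguity $\gamma^{-1}g(b,b')\equiv g(\gamma^{-1}b,\gamma^{-1}b')\pmod{MA}$ does not introduce spurious factors once combined with the equivariance \eqref{equivariance dlambda} of $d_\lambda$ and the transformation rule \eqref{boundary values equivariance 1} for $T_\phi$. This is precisely the reason $d_\lambda$ was engineered to satisfy \eqref{fullfill this}, so the cancellation is automatic — but it is the one place where all of the preceding structural results have to fit together.
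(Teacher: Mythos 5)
Your proof is correct and follows exactly the route the paper intends: the paper states this proposition without proof as a consequence of Proposition \ref{independent}, the $\Gamma$-invariance of $ps_{\lambda}$ (Proposition \ref{ps invariant}), and the built-in right-$A$-invariance of the Radon transform, which are precisely your Steps 1--3 spelled out. Your verification that $\gamma^{-1}g(b,b')\equiv g(\gamma^{-1}b,\gamma^{-1}b')\pmod{MA}$ causes no spurious factors is the only detail the paper leaves implicit, and you handle it correctly.
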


\subsubsection{Off-diagonal Patterson-Sullivan distributions}\label{Off-diagonal Patterson-Sullivan distributions}
In this Subsection, we drop the assumption that $w_{\la}=-\id$. Let $\lambda,\mu\in\la^*_{\cc}$ and fix $\phi\in\mathcal{E}^*_{\lambda}(X)$ and $\psi\in\mathcal{E}^*_{\mu}(X)$. At this point, we do not assume that these eigenfunctions are real-valued. Let $T_{\phi}$ and $T_{\psi}$ denote the respective boundary values. Recall the \emph{off-diagonal intermediate values} (Section \ref{Intermediate values})
\begin{eqnarray*}
d_{\lambda,\mu}(g) = e^{(i\lambda+\rho)H(g)}e^{(i\mu+\rho)H(gw)}.
\end{eqnarray*}

\begin{defn}
For functions $f$ on $G/M$, the \emph{weighted Radon transform}\index{$\mathcal{R}$, weighted Radon transform}\index{weighted Radon transform} $\mathcal{R}_{\lambda,\mu}$ on $G/M$ is by definition the Radon transform \eqref{Radon define} of $d_{\lambda,\mu}f$, that is
\begin{eqnarray}\label{repair}
\mathcal{R}_{\lambda,\mu}f(g) := \int_A d_{\lambda,\mu}(ga)f(ga)\,da,
\end{eqnarray}
whenever this integral exists.
\end{defn}

It is clear that $\mathcal{R_{\lambda,\mu}}(f)$ is an $A$-invariant function on $G/M$ (right-$A$-invariant), that is a function on $G/MA\cong B^{(2)}$. Note that by integrating $d_{\lambda,\mu}$ with respect to $a\in A$ we circumvent the problem that $d_{\lambda,\mu}$ alone is not a function on $G/MA$ (see \eqref{observation} and its subsequent remark).

Exactly as in Lemma \ref{application} we find
\begin{lem}
Let $f\in C_c^{\infty}(G/M)$. Then $\mathcal{R_{\lambda,\mu}}(f)\in C_c^{\infty}(G/MA)$. 
\end{lem}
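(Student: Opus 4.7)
The proof will be essentially a direct reduction to Lemma \ref{application}. The key observation is that the weight function $d_{\lambda,\mu}$ is itself right-$M$-invariant: specializing the equivariance formula \eqref{observation} to $a=e$ gives $d_{\lambda,\mu}(gm)=d_{\lambda,\mu}(g)$ for all $m\in M$, so $d_{\lambda,\mu}$ descends to a smooth function on $G/M$.

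The plan is as follows. First I would define $F\in C^{\infty}(G/M)$ by $F(gM):=d_{\lambda,\mu}(g)\cdot f(gM)$, which makes sense by the $M$-invariance just noted. Since $d_{\lambda,\mu}$ is smooth on $G$ and $f\in C_c^{\infty}(G/M)$, the product $F$ is smooth and its support satisfies $\supp(F)\subseteq\supp(f)$; in particular $F\in C_c^{\infty}(G/M)$. Next I would compute the ordinary (unweighted) Radon transform of $F$ using the definition \eqref{Radon define}:
\begin{eqnarray*}
\mathcal{R}F(g) \;=\; \int_A F(gaM)\,da \;=\; \int_A d_{\lambda,\mu}(ga)\,f(gaM)\,da \;=\; \mathcal{R}_{\lambda,\mu}(f)(g).
\end{eqnarray*}
Thus $\mathcal{R}_{\lambda,\mu}(f)=\mathcal{R}(F)$. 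Applying Lemma \ref{application} to $F$ yields $\mathcal{R}_{\lambda,\mu}(f)\in C_c^{\infty}(B^{(2)})$, as desired. (The smoothness assertion, although only $C_c$ was stated in Lemma \ref{application}, follows by the same argument based on the factorization $F\mapsto\widetilde{F}$ in \eqref{tilde} applied to the closed subgroup $MA\subseteq G$, together with differentiation under the integral sign justified by the properness of the $A$-action on $G/M$.)

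I do not expect any real obstacle here: once one recognizes that the weight $d_{\lambda,\mu}$ is right-$M$-invariant, the entire statement is cosmetic relative to the already-proven Lemma \ref{application}. The only point that requires a moment's care is that the combination $d_{\lambda,\mu}\cdot f$ has to be interpreted on $G/M$ rather than merely on $G$; this is guaranteed by \eqref{observation} with $a=e$. No further analytic input is needed.
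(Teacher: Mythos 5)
Your proof is correct and is essentially the paper's own argument: the weighted transform is by definition $\mathcal{R}_{\lambda,\mu}(f)=\mathcal{R}(d_{\lambda,\mu}\cdot f)$, and the $M$-invariance from \eqref{observation} with $a=e$ shows that $d_{\lambda,\mu}\cdot f$ is a genuine element of $C_c^{\infty}(G/M)$ with $\supp(d_{\lambda,\mu}f)\subseteq\supp(f)$, so Lemma \ref{application} applies verbatim. The paper simply says ``exactly as in Lemma \ref{application},'' and your reduction makes that remark precise; your closing observation on smoothness (differentiation under the $A$-integral, justified because only a compact part of $A$ contributes for $g$ in a fixed compact set) is the standard upgrade of the $C_c$ statement in Lemma \ref{application} to $C_c^{\infty}$.
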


\begin{defn}
As usual, let $g(b,b')\in G$ be a representative for the element $g(b,b')MA\in G/MA$ that corresponds to $(b,b')\in B^{(2)}$. Let $f\in C_c^{\infty}(G/M)$. We pull-back the Radon transform \eqref{repair} to $B^{(2)}$ and define \begin{eqnarray*}
\mathcal{R}_{\lambda,\mu}f(b,b') = \mathcal{R}_{\lambda,\mu}f(g(b,b')).
\end{eqnarray*}
Then $\mathcal{R}_{\lambda,\mu}f \in C_c^{\infty}(B^{(2)})$. This definition is independent of the choice of representative $g(b,b')$, since $\mathcal{R}_{\lambda,\mu}(f)$ is invariant.
\end{defn}

Let $f\in C_c^{\infty}(B^{(2)})\subset C_c^{\infty}(B\times B)\subset C^{\infty}(B\times B)$. We interpret $\mathcal{R}_{\lambda,\mu}f$ as a function on $B\times B$ with compact support contained in $B^{(2)}$.

\begin{defn}
Let $\phi\in\mathcal{E}^*_{\lambda}(X)$ and $\psi\in\mathcal{E}^*_{\mu}(X)$ have boundary values $T_{\phi}$ and $T_{\psi}$. The \emph{off-diagonal Patterson-Sullivan distribution $PS_{\lambda,\mu}$ associated to $\phi$ and $\psi$} on $G/M$ is defined by
\begin{eqnarray}\label{Off-Diagonal Patterson-Sullivan}
\langle f,PS_{\lambda,\mu}\rangle = \int_{B^{(2)}} \, \mathcal{R}_{\lambda,\mu}f(b,b') \, T_{\phi}(db) \, T_{\psi}(db').
\end{eqnarray}
\end{defn}

It follows that $PS_{\lambda,\mu}(f)$ is well-defined if $\mathcal{R}_{\lambda,\mu}f(b,b') \in C^{\infty}(B\times B)$. A simple case is when $f\in C_c^{\infty}(G/M)$: Then $\mathcal{R}_{\lambda,\mu}\in C_c^{\infty}(B^{(2)})$, so
\begin{eqnarray*}
\mathcal{R}_{\lambda,\mu}(f)(b,b')\in C_c^{\infty}(B^{(2)}) \subset C_c^{\infty}(B\times B) = C^{\infty}(B\times B).
\end{eqnarray*}

\begin{prop}
Suppose that $\phi\in\mathcal{E}^*_{\lambda}(X)$ and $\phi\in\mathcal{E}^*_{\mu}(X)$ are $\Gamma$-invariant eigenfunctions. Then the distribution $PS_{\lambda,\mu}$ on $G/M$ is $\Gamma$-invariant.
\end{prop}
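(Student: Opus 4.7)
The plan is to show that for every $f\in C_c^\infty(G/M)$ and every $\gamma\in\Gamma$ one has $\langle f\circ\gamma^{-1},PS_{\lambda,\mu}\rangle=\langle f,PS_{\lambda,\mu}\rangle$, by matching three equivariance laws against each other: the transformation of $d_{\lambda,\mu}$ under left-translation (Lemma \ref{equivariance property off-diag}), the $\Gamma$-invariance of $B^{(2)}$ as a $G$-space, and the transformation law \eqref{boundary values equivariance 1} for the boundary values $T_\phi$, $T_\psi$ of $\Gamma$-invariant eigenfunctions.

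First I would compute the effect of $\gamma$ on the weighted Radon transform. Given $(b,b')\in B^{(2)}$ and a representative $g=g(b,b')\in G$, the point $g'=\gamma^{-1}g$ represents $(\gamma^{-1}b,\gamma^{-1}b')$. Lemma \ref{equivariance property off-diag} yields
\begin{eqnarray*}
d_{\lambda,\mu}(ga)=d_{\lambda,\mu}(\gamma g'a)=e^{(i\lambda+\rho)\langle\gamma\cdot o,\gamma g'a\cdot M\rangle}e^{(i\mu+\rho)\langle\gamma\cdot o,\gamma g'a\cdot wM\rangle}d_{\lambda,\mu}(g'a).
\end{eqnarray*}
Since $A$ fixes both $M$ and $wM$ in $K/M$, the points $\gamma g'a\cdot M=\gamma g'\cdot M=g\cdot M=b$ and $\gamma g'a\cdot wM=b'$ are independent of $a$, so the two exponentials factor out of the $A$-integral and
\begin{eqnarray*}
\mathcal{R}_{\lambda,\mu}(f\circ\gamma^{-1})(b,b')=e^{(i\lambda+\rho)\langle\gamma\cdot o,b\rangle}e^{(i\mu+\rho)\langle\gamma\cdot o,b'\rangle}\,\mathcal{R}_{\lambda,\mu}(f)(\gamma^{-1}b,\gamma^{-1}b').
\end{eqnarray*}

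Next I would substitute this into the defining integral \eqref{Off-Diagonal Patterson-Sullivan} and change variables $b\mapsto\gamma b$, $b'\mapsto\gamma b'$ on $B^{(2)}$, which is legitimate because $\Gamma$ preserves $B^{(2)}\cong G/MA$. The transformation law \eqref{boundary values equivariance 1} applied separately to $T_\phi$ with parameter $\lambda$ and $T_\psi$ with parameter $\mu$ gives
\begin{eqnarray*}
T_\phi(d\gamma b)\,T_\psi(d\gamma b')=e^{-(i\lambda+\rho)\langle\gamma\cdot o,\gamma b\rangle}e^{-(i\mu+\rho)\langle\gamma\cdot o,\gamma b'\rangle}\,T_\phi(db)\,T_\psi(db').
\end{eqnarray*}
Multiplying this against the two exponential factors produced in the first step makes all four exponentials cancel, leaving precisely $\mathcal{R}_{\lambda,\mu}(f)(b,b')\,T_\phi(db)\,T_\psi(db')$; integrating recovers $\langle f,PS_{\lambda,\mu}\rangle$.

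The only point that requires care, and where I expect the main subtlety to sit, is the legitimacy of the change of variables and the factorization of the exponentials out of the Radon integral in the rigorous distributional sense: the integrand $\mathcal{R}_{\lambda,\mu}(f\circ\gamma^{-1})$ must be paired with a tensor product of distributions, and one needs the cocycles $\langle\gamma\cdot o,\cdot\rangle$ to genuinely be smooth functions on $B$ so that the tensored distribution $T_\phi\otimes T_\psi$ can be tested against their products. This is guaranteed because the horocycle bracket is smooth on $X\times B$, so all multiplications take place inside $C^\infty(B\times B)$, which is the class of admissible test functions used in \eqref{BtimesB}. Once that is checked, the three equivariance identities combine formally, and $\Gamma$-invariance follows.
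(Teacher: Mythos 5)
Your proof is correct and follows essentially the same route as the paper: you cancel the cocycle of $d_{\lambda,\mu}$ (Lemma \ref{equivariance property off-diag}, together with the fact that $A$ fixes $(M,wM)$ and the $G$-equivariance of $(b,b')\mapsto g(b,b')MA$) against the cocycle \eqref{boundary values equivariance 1} of $T_\phi\otimes T_\psi$ after the change of variables $(b,b')\mapsto\gamma\cdot(b,b')$ on $B^{(2)}$. Packaging the first step as an equivariance formula for $\mathcal{R}_{\lambda,\mu}$ is only a cosmetic reorganization of the paper's computation, so no further comparison is needed.
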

\begin{proof}
Let $f\in C_c^{\infty}(G/M)$ and let $f_{\gamma}$ denote the translation $f\circ\gamma^{-1}$. Then
\begin{eqnarray*}
\langle f_{\gamma}, PS_{\lambda,\mu}\rangle = \int_{B^{(2)}} \int_A \, d_{\lambda,\mu}(g(b,b')a) \, f(\gamma^{-1}g(b,b')a) \, da \, T_{\lambda}(db) \, T_{\mu}(db'),
\end{eqnarray*}
where $(b,b') = (g\cdot M,g\cdot wM)$ for $g=g(b,b')$. By \eqref{boundary values equivariance 1} this equals
\begin{eqnarray*}
&& \int_{B^{(2)}}\int_A d_{\lambda,\mu}(g(\gamma\cdot(b,b'))a) \, f(\gamma^{-1}g(\gamma(b,b'))a) \\
&& \hspace{13mm} \times \,\, e^{-(i\lambda+\rho)\langle\gamma\cdot o,\gamma\cdot b\rangle} \, e^{-(i\mu+\rho)\langle\gamma\cdot o,\gamma\cdot b'\rangle} \, da \, T_{\lambda}(db) \, T_{\mu}(db').
\end{eqnarray*}
Recall that $a\in A$ acts trivially on $(M,wM)$. Using this and \eqref{equivariance property off-diag2} we observe
\begin{eqnarray*}
d_{\lambda,\mu}(\gamma ga) = e^{(i\lambda+\rho)\langle\gamma\cdot o,\gamma\cdot  b\rangle} e^{(i\mu+\rho)\langle\gamma\cdot o,\gamma\cdot  b'\rangle} d_{\lambda,\mu}(ga).
\end{eqnarray*}
We also have $g(\gamma\cdot(b,b')) = \gamma g(b,b'))$, since $(b,b')\mapsto g(b,b')\in G/MA$ is $G$-equivariant. Hence $\gamma^{-1}g(\gamma\cdot(b,b'))=g(b,b')$. Thus we have
\begin{eqnarray*}
\langle f_{\gamma}, PS_{\lambda,\mu}\rangle &=& \int_{B^{(2)}} \int_A d_{\lambda,\mu}(g(b,b')a) f(g(b,b')a) \, da \, T_{\lambda}(db) T_{\mu}(db') \\
&=& \int_{B^{(2)}} \mathcal{R}_{\lambda,\mu}f(b,b') \, T_{\lambda}(db) T_{\mu}(db') = \langle f, PS_{\lambda,\mu}\rangle,
\end{eqnarray*}
and the proposition follows.
\end{proof}

\begin{rem}\label{general radon special}
Let $(b,b')\in B^{(2)}$, $g=g(b,b')$ and suppose $w\cdot\lambda=-\lambda$. Then
\begin{eqnarray}\label{general radon special 2}
\mathcal{R}_{\lambda,\lambda}(f)(g) = \int_A d_{\lambda,\lambda}(ga)f(ga)\,da
= d_{\lambda}(g(b,b'))(\mathcal{R}f)(b,b').
\end{eqnarray}
Let $\phi\in\mathcal{E}^*_{\lambda}(X)$ and consider the distributions $PS_{\lambda,\lambda}$ and $PS_{\phi,\lambda}$ associated to $\phi$. By \eqref{general radon special 2} we have $PS_{\lambda,\lambda}=PS_{\lambda}$. If $\phi=\psi$ and $\lambda=\mu$, it follows as in Subsection \ref{Diagonal Patterson-Sullivan distributions} that the $PS_{\lambda,\lambda}$ are invariant under time-reversal and right-translation by $A$. Vice versa, if $T_{\phi}\neq T_{\psi}$, then $PS_{\phi,\psi}$ needs not to be invariant under $b \leftrightarrow b'$.
\end{rem}

\begin{rem}\label{A-eigendistributions}
Given $\tilde{a}\in A$ we write $f_{\tilde{a}}:=f\circ\tilde{a}^{-1}$. Then
\begin{eqnarray}
\mathcal{R}_{\lambda,\mu}(f_{\tilde{a}})(g) = \int_A d_{\lambda,\mu}(ga\tilde{a})f(ga)\,da = e^{i(\lambda+w\cdot\mu)\log(\tilde{a})} \, \mathcal{R}_{\lambda,\mu}(f)(g),
\end{eqnarray}
which follows from
\begin{eqnarray}
d_{\lambda,\mu}(g\tilde{a}) = e^{i(\lambda+w\cdot\mu)\log(\tilde{a})}d_{\lambda,\mu}(g)
\end{eqnarray}
(cf. \eqref{observation}). Given eigenfunctions $\phi,\psi$ we thus have
\begin{eqnarray}\label{eigendistributions}
\langle f_{\tilde{a}}, PS_{\lambda,\mu}\rangle = e^{i(\lambda+w\cdot\mu)\log(\tilde{a})} \langle f, PS_{\lambda,\mu}\rangle.
\end{eqnarray}
In other words, the $PS_{\lambda,\mu}$ are eigendistributions for the action of $A$ on $G/M$ (given by right-translation). In particular, if $\lambda+w\cdot\mu=0$, then the associated Patterson-Sullivan distribution is invariant under right-translation by $A$. This is for example the case when $\phi=\psi$, $\lambda=\mu$, and $w\cdot\lambda=-\lambda$.
\end{rem}

\begin{defn}\label{Definition Patterson Sullivan off diag}
Suppose that $\phi\in\mathcal{E}^*_{\lambda}(X)$ and $\phi\in\mathcal{E}^*_{\mu}(X)$ are $\Gamma$-invariant joint eigenfunctions. Since $PS_{\lambda,\mu}$ is a $\Gamma$-invariant distribution on $G/M$, the definition descends to the quotient $\Gamma\backslash G/M$ via
\begin{eqnarray}
\langle a,PS_{\lambda,\mu}\rangle_{\Gamma\backslash G/M} := \langle\chi a,PS_{\lambda,\mu}\rangle_{G/M},
\end{eqnarray}
where $\chi$ is a smooth fundamental domain cutoff. We normalize these distributions by setting
\begin{eqnarray}\label{normalized off diag}
\widehat{PS}_{\lambda,\mu} := \frac{1}{\langle 1,PS_{\mu,\mu}\rangle_{\Gamma\backslash G/M}}PS_{\lambda,\mu}.
\end{eqnarray}
In view of Proposition \ref{independent} these definitions do not depend on $\chi$.
\end{defn}

\subsection{The Knapp-Stein intertwining operators}
In this Section we introduce the Knapp-Stein intertwiners. We will later see how these operators yield an explicit relation between the Patterson-Sullivan distributions and the Wigner distributions (\ref{Wigner distributions}). For background on similar intertwining operators see \cite{Knapp}. Let $\lambda\in\mathfrak{a}^*_{\cc}$ and define\index{$L_{\lambda}$, Knapp-Stein intertwiner}
\begin{eqnarray}\label{Intertwiner define}
L_{\lambda}a(g) := \int_N e^{-(i\lambda+\rho)(H(n^{-1}w))} \, a(gn) \, dn, \,\,\,\,\,\,\,\,\, a\in C(G),
\end{eqnarray}
whenever the integral exists. The integrals $L_{\lambda}a(g)$ may be viewed as a \emph{weighted horocyclic Radon transform}.

\begin{rem}\label{preserve remark}
Each $Ad_G(\tilde{m})$, $\tilde{m}\in M$, fixes the elements of $\mathfrak{a}$ and hence the root subspaces. Thus $M$ normalizes $N$, that is $\tilde{m}N=N\tilde{m}$ for all $\tilde{m}\in M$. Hence $n\mapsto \tilde{m}^{-1}n\tilde{m}$ defines an automporphism of $N$ which by uniqueness of Haar-measures maps $dn$ into a multiple of $dn$. Since $M$ is compact, $dn$ is preserved.
\end{rem}

It is a basic remark that $L_{\lambda}$ preserves $M$-invariance:

\begin{lem}
$L_{\lambda}: C_c^{\infty}(G/M) \rightarrow C^{\infty}(G/M)$.
\end{lem}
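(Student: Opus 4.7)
The plan is to verify two claims separately: first, that the defining integral \eqref{Intertwiner define} converges and yields a smooth function on $G$, and second, that this function is right-$M$-invariant, which legitimately places it in $C^\infty(G/M)$.

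For convergence and smoothness, I would argue as follows. Fix $a\in C_c^\infty(G/M)$ and view $a$ as a right-$M$-invariant smooth function on $G$. Let $K_0\subset G/M$ denote the support of $a$ and let $\pi\colon G\to G/M$ be the projection. Since $M$ is compact, $\pi$ is proper, so $\pi^{-1}(K_0)\subset G$ is compact. For fixed $g\in G$, the integrand $n\mapsto a(gn)$ vanishes outside $\{n\in N : gn\in\pi^{-1}(K_0)\} = (g^{-1}\pi^{-1}(K_0))\cap N$, which is closed and bounded in $G$, hence a compact subset of the closed submanifold $N$. Thus $L_\lambda a(g)$ is a compactly supported integral and converges. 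Smoothness and the ability to differentiate under the integral sign over a neighborhood of $g$ follow from the continuity of the $N$-valued support map (one can produce a fixed compact $C\subset N$ containing the support of $n\mapsto a(g'n)$ for all $g'$ in a small neighborhood of $g$, again by properness of $\pi$).

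The main content of the lemma is the right-$M$-invariance. Fix $\tilde m\in M$. The strategy is to substitute $n=\tilde m^{-1}n'\tilde m$ in the integral defining $L_\lambda a(g\tilde m)$. Since $M$ normalizes $N$ and $dn$ is invariant under conjugation by $\tilde m$ (Remark~\ref{preserve remark}), one has $dn=dn'$. The key identity is $\tilde m n = n'\tilde m$, which gives $g\tilde m n = gn'\tilde m$, and then $a(gn'\tilde m)=a(gn')$ by right-$M$-invariance of $a$. The remaining task is to rewrite the phase $H(n^{-1}w)$ in terms of $n'$. Here I would use that $M$ is normal in $M'$ (an easy consequence of $M=Z_K(A)$ and $M'=N_K(A)$): thus $\tilde m w = w\tilde m'$ for some $\tilde m'\in M$. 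Combined with $n^{-1}=\tilde m^{-1}(n')^{-1}\tilde m$, this yields
\begin{eqnarray*}
n^{-1}w \;=\; \tilde m^{-1}(n')^{-1}w\,\tilde m'.
\end{eqnarray*}
Since $H\colon G\to\la$ is left-$K$-invariant and right-$M$-invariant (the latter because $M$ centralizes $A$ and normalizes $N$, so right multiplication by $\tilde m'$ only alters the $K$- and $N$-components of the Iwasawa decomposition), one concludes $H(n^{-1}w)=H((n')^{-1}w)$. Substituting back shows $L_\lambda a(g\tilde m)=L_\lambda a(g)$.

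The subtlest step is the verification $\tilde m w = w\tilde m'$ with $\tilde m'\in M$, i.e.\ the normality of $M$ in $M'$; this is where the precise roles of $N_K(A)$ and $Z_K(A)$ enter, and without it the integrand's phase would not transform correctly. Once this and the right-$M$-invariance of $H$ are in hand, everything else is a bookkeeping exercise with changes of variable. I expect no further technical obstacle, and the lemma will follow immediately.
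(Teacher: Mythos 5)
Your proof is correct and follows essentially the same route as the paper's: the change of variables $n\mapsto \tilde m n\tilde m^{-1}$, the fact that $M$ normalizes $N$ with $dn$ preserved, and the identity $H(n^{-1}w)=H(\tilde m n^{-1}\tilde m^{-1}w)$ coming from the left-$K$- and right-$M$-invariance of $H$ together with $w$ (i.e.\ $M'$) normalizing $M$. Your additional verification of convergence and smoothness (compactness of $N\cap g^{-1}\pi^{-1}(\supp a)$ and differentiation under the integral) is a harmless supplement that the paper leaves implicit.
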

\begin{proof}
Suppose that $a\in C_c^{\infty}(G/M)$ and let $g\in G$, $n\in N$, $m\in M$. Then $a(gmn)=a(gmnm^{-1})$ and by \ref{preserve remark} we know that $n\mapsto\tilde{n}:=mnm^{-1}\in N$ preserves $dn$. Moreover, $H(n^{-1}w)=H(mn^{-1}m^{-1}w)$ by invariance of the Iwasawa projection and since $w$ normalizes $M$. Thus
\begin{eqnarray*}
L_{\lambda}a(gm) &=& \int_N e^{-(i\lambda+\rho)(H(n^{-1}w))} \, a(gmn) \, dn = \int_N e^{-(i\lambda+\rho)(H(\tilde{n}^{-1}w))} \, a(g\tilde{n}) \, dn \\
&=& \int_N e^{-(i\lambda+\rho)(H(n^{-1}w))} \, a(gn) \, dn = L_{\lambda}a(g).
\end{eqnarray*}
\end{proof}

\subsubsection{Harish-Chandra's phase function}
We absorb the term $e^{-\rho H(n^{-1}w)}$ in \eqref{Intertwiner define} into the amplitude, so that the phase function is
\begin{eqnarray*}
\psi(n) = -H(n^{-1}w).
\end{eqnarray*}
By uniqueness of the longest element of a Coxeter group, we have $w^{-1}=w\in W$. Thus $w^{-1}=wm$ ($m\in M)$ as elements in $M'$, so $H(n^{-1}w)=H(wn^{-1}w^{-1})$ by invariance of $H(kan)=\log(a)$. We write
\begin{eqnarray}\label{coordinate change N}
\widetilde{\theta}: N\rightarrow\overline{N}, \,\,\,\,\,\,\,\,\,\,\,\,  n\mapsto wnw^{-1}.
\end{eqnarray}
Then $\widetilde{\theta}(dn)=d\overline{n}$ (cf. Subsection \ref{Measure theoretic preliminaries}), since $M'$ is compact, so since $\overline{N}$ is unimodular
\begin{eqnarray*}
L_{\mu}(a)(g) &=& \int_{\overline{N}} e^{-i\mu(H(\overline{n}))}     \,   e^{-\rho(H(\overline{n}))} \, a(gw^{-1}\overline{n}^{-1}w) \, d\overline{n}.
\end{eqnarray*}
Given $0\neq\mu\in\la^*_{\cc}$, we identify the ray $\rr^+\mu\subset\la^*_{\cc}$ with $\rr^+$ by means of the Killing form: First, we denote by $H_{\mu}$ the unique element in $\la_{\cc}$ such that $\mu(X)=\langle X,H_{\mu}\rangle$ for all $X\in\la^*_{\cc}$. Then
\begin{eqnarray}\label{real mu}
\mu(X) = |\mu| \langle X,H_{\mu/{|\mu|}}\rangle, \,\,\,\,\,\,\,\,\,\,\,\,\,\,\, X\in\la^*_{\cc}, \,\, |\mu|\in\rr^+.
\end{eqnarray}
We can now fix $\mu\in\la^*$ and $H:=H_{\mu/{|\mu|}}\in\la^*$. Using these identifications we make from now on no difference between $|\mu|$ and $\mu$. We rewrite the integrals \eqref{Intertwiner define} in the form (note that $\rho$ ``remains'' an element of $\la^*$)
\begin{eqnarray*}
L_{\mu}(a)(g) = \int_{\overline{N}} e^{-i\mu\langle H(\overline{n}), H\rangle} \, e^{-\rho H(\overline{n})} \, a(gw\overline{n}^{-1}w^{-1}) \, d\overline{n}, \,\,\,\,\,\,\,\,\,\,\, \mu\in\rr.
\end{eqnarray*}
We choose a smooth fundamental domain cutoff function $\chi$. Then $L_{\mu}(\chi a)(g)$ is an oscillatory integral with real-valued phase function
\begin{eqnarray}\label{define psiH}
\psi_H: \overline{N} \rightarrow \rr, \,\,\,\,\,\,\,\,\,\,\,\,\,\,\,\,\,\,\, \overline{n} \mapsto \langle H(\overline{n}), H\rangle.
\end{eqnarray}

We would be able to compute the critical points and the Hessian form of $n\rightarrow H(n^{-1}w)$ as we did for the other phase functions in Subsection \ref{Critical sets and Hessian forms}. However, we do not have to: The point is that $\psi_H$ is the phase in the integral
\begin{eqnarray}\label{to the integrals}
c(\lambda) = \int_{\overline{N}} e^{-(i\lambda+\rho)H(\overline{n})} \, d\overline{n}, \,\,\,\,\,\,\,\,\,\,\,\,\,\,\,\,\,\ Re(i\lambda)\in\la^*_{+},
\end{eqnarray}
defining Harish-Chandra's $c$-function. The calculations concerning the critical points and Hessians of the $\psi_H$ were for example carried out in \cite{Cohn}, \S19. The following proposition taken from \cite{DKV}, Section 7, gives the complete description of facts concerning $\psi_H$. Recall that $\overline{N}_H$ denotes the centralizer of $H\in\la$ in $\overline{N}$. For a root $\beta$, let $R_{\beta}$ denote the orthogonal projection $\g\rightarrow\g_{\beta}$. If $g\in G$ is decomposed $g=kan$ corresponding to the Iwasawa decomposition, then we denote its triangular part\index{triangular part of the Iwasawa decomposition} by $t(g)=an\in AN$. Writing, as usual, $\langle\cdot,\cdot\rangle$ for the Killing form, we denote in the next Proposition by $(\cdot,\cdot)$ the inner product $Z,Z'\mapsto -\langle Z, \theta Z'\rangle$ on $\g\times \g$.

\begin{prop}\label{phase function c function}
Let $H\in\la$. The critical set of $\psi_H$ is equal to $\overline{N}_H$. For the Hessian of $\psi_H$ at the critical points we have the formula
\begin{eqnarray*}
\textnormal{Hess}_{\overline{n}}(\overline{Y},\overline{Y}') = - \sum_{\alpha\in\Delta^+}\alpha(H)(\theta R_{\alpha}(\overline{Y}^{t(\overline{n})}) - R_{-\alpha}(\overline{Y}^{t(\overline{n})}), R_{-\alpha}(\overline{Y}'^{t(\overline{n})}),
\end{eqnarray*}
valid for $\overline{n}\in\overline{N}_H$ and $\overline{Y},\overline{Y}'\in\overline{\lnn}$. The index of the Hessian Hess$_{\overline{\lnn}}$ at any point of $\overline{N}_H$ is
\begin{eqnarray*}
\sum_{\alpha\in\Delta^+, \hspace{1mm} \alpha(H)<0} \dim(\g_{\alpha}).
\end{eqnarray*}
Let $\overline{\lnn}_H$ denote the Lie algebra of the closed subgroup $\overline{N}_H$ of $\overline{N}$. Write $\overline{\lnn}_{\lambda}$ for the eigenspace of $\ad(H)$ in $\overline{\lnn}$ for the eigenvalue $\lambda\in\rr$. Then, with respect to the Lie algebra decomposition $\overline{\lnn}=\overline{\lnn}_H \oplus \oplus_{\lambda\neq0}\overline{\lnn}_{\lambda}$ (cf. \cite{DKV} Corollary 7.3), the matrix Hess$_{\overline{\lnn}}$ is diagonal and $\psi_H$ is clean.
\end{prop}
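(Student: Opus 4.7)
The plan is to reduce the statement to a careful application of the Iwasawa-derivative formulas from Subsection \ref{Iwasawa derivatives}, since the phase $\psi_H$ is precisely the function treated there. Setting $\lambda_0(Y) = \langle Y, H\rangle$ and $\phi(g) = \langle H(g), H\rangle$, equation \eqref{derivative} restricted to the submanifold $\overline{N} \subset G$ (with $T_{\overline{n}}\overline{N} \cong \overline{\lnn}$ via left translation) gives
\begin{eqnarray*}
d\psi_H(\overline{n})(\overline{Y}) = \langle \overline{Y}^{t(\overline{n})}, H\rangle = \langle \overline{Y}, H^{n(\overline{n})^{-1}}\rangle.
\end{eqnarray*}
Because the Killing form pairs $\overline{\lnn}$ nontrivially only with $\lnn$, vanishing of this differential for all $\overline{Y}\in\overline{\lnn}$ is equivalent to $H^{n(\overline{n})^{-1}}$ having trivial $\lnn$-component. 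Expanding $\mathrm{Ad}(n(\overline{n})^{-1})H$ via the BCH series and using $[\lnn,\la]\subset\lnn$, the $\lnn$-component vanishes exactly when $n(\overline{n})\in Z_N(H)$. To conclude that the critical set is $\overline{N}_H$, I would then argue — using that $\overline{n}\mapsto n(\overline{n})$ is equivariant with respect to the $\mathrm{ad}\,H$-weight decompositions of $\overline{\lnn}$ and $\lnn$ — that $n(\overline{n})$ centralizes $H$ iff $\overline{n}$ does.

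Next I would differentiate the first derivative once more to obtain the Hessian formula. Writing $d\psi_H(\overline{n})(\overline{Y}) = \langle \overline{Y}^{t(\overline{n})}, H\rangle$ and varying $\overline{n}$ in direction $\overline{Y}'$, the chain rule produces contributions from the Iwasawa projection $t(\overline{n}) = a(\overline{n}) n(\overline{n})$; at a critical point this simplifies because the correction terms from $a(\overline{n})$ kill $H$. The remaining bookkeeping is pure root-space combinatorics: using $\theta\mathfrak{g}_\alpha = \mathfrak{g}_{-\alpha}$ and the Killing-form pairing $\mathfrak{g}_\alpha\leftrightarrow\mathfrak{g}_{-\alpha}$, one rewrites the bilinear form as a sum over positive roots weighted by $\alpha(H)$, producing the claimed expression with the projections $R_{\pm\alpha}$ and the inner product $(Z,Z') = -\langle Z,\theta Z'\rangle$. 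The main obstacle here is the careful tracking of how $R_\alpha(\overline{Y}^{t(\overline{n})})$ arises from the derivative of the Iwasawa triangular part, and of the minus sign from $\alpha(H)$ versus $(-\alpha)(H)$.

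With the Hessian formula in hand, both the index and the cleanness statement follow from the $\mathrm{ad}\,H$-weight decomposition $\overline{\lnn} = \overline{\lnn}_H \oplus \bigoplus_{\lambda\neq 0}\overline{\lnn}_\lambda$ invoked in the proposition. At $\overline{n}\in\overline{N}_H$, the map $\overline{Y}\mapsto\overline{Y}^{t(\overline{n})}$ preserves this decomposition up to a controlled shift, and the Hessian form acts on $\overline{\lnn}_\lambda$ as a nonzero scalar multiple of the positive definite inner product $(\cdot,\cdot)$ restricted to the corresponding root space, the scalar being $-\alpha(H)$ for the root $\alpha$ realising that eigenvalue. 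Thus the Hessian is diagonal with respect to this decomposition, vanishes precisely on $\overline{\lnn}_H$, and is negative definite exactly on the $\mathfrak{g}_\alpha$ with $\alpha(H)<0$; summing their dimensions yields the stated index. Since the kernel of the Hessian at any critical point coincides with $T_{\overline{n}}\overline{N}_H = \overline{\lnn}_H$, the critical manifold $\overline{N}_H$ is smooth and the Hessian is transversally non-degenerate, which is exactly the definition of cleanness of $\psi_H$. I expect the hardest step to be the explicit derivation of the Hessian formula; the remainder is essentially a repackaging of the root-space data, and I would cite \cite{DKV}, Section 7, for the details rather than reproduce the whole computation.
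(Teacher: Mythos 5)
There is no internal proof to compare against here: the paper states this proposition verbatim as a quotation from \cite{DKV}, Section 7 (pointing also to \cite{Cohn}, \S 19), so its ``proof'' is the citation. Your plan — compute $d\psi_H$ on $\overline{N}$ from the Iwasawa-derivative formula $\phi(g;X)=\langle X^{t(g)},H\rangle$, identify the critical condition $\Ad(n(\overline{n})^{-1})H=H$, differentiate once more at critical points, and read off block-diagonality, index and cleanness from the $\ad H$-eigenspace decomposition of $\overline{\lnn}$, deferring the bookkeeping to \cite{DKV} — is in substance the same route, and it is also the template the paper itself uses for its other phase functions in Section 3.

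Two steps of your outline would not survive as written, however. First, the map $\overline{n}\mapsto n(\overline{n})$ is \emph{not} equivariant for the conjugation action of $\exp(tH)$ (equivalently for the $\ad H$-weight structure): already in $SL(2,\rr)$, for $\overline{n}$ with lower-left entry $x$ one has $n(\overline{n})$ with upper-right entry $x/(1+x^2)$, and conjugating $\overline{n}$ by $a_t$ rescales $x$ by $e^{-2t}$ while conjugating $n(\overline{n})$ rescales its entry by $e^{+2t}$, so the two operations do not commute. The easy inclusion $\overline{N}_H\subseteq\mathrm{Crit}(\psi_H)$ does follow cheaply (the reductive group $Z_G(H)$ has Iwasawa decomposition $K_H A N_H$ compatible with that of $G$, so $\overline{n}\in\overline{N}_H$ forces $n(\overline{n})\in Z_N(H)$), but the reverse inclusion — that $n(\overline{n})\in Z_N(H)$ already forces $\overline{n}\in\overline{N}_H$ — is the genuinely nontrivial point, and it needs the argument of \cite{DKV}, not the asserted equivariance. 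Second, a sign slip: evaluating the quoted Hessian formula at $\overline{n}=e$ on a vector in $\g_{-\alpha}\subset\overline{\lnn}$ gives $+\alpha(H)$ times the positive definite form $(\cdot,\cdot)$, not $-\alpha(H)$ times it; your stated scalar contradicts your own (correct) conclusion that the form is negative definite exactly on the blocks with $\alpha(H)<0$, which is what yields the index $\sum_{\alpha(H)<0}\dim\g_{\alpha}$. Both points are repairable, and since you, like the paper, ultimately lean on \cite{DKV} for the complete computation, they do not change the overall assessment.
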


\begin{rem}
It is clear that Proposition \ref{phase function c function} still holds if $H\in\la^*_{\cc}$: The case of complex $H$ is dealt by passing to the real and imaginary part of $\psi_H$, since by uniqueness of real and imaginary parts a point is critical for $\psi_H$ if and only if it is critical for both $\psi_{Re(H)}$ and $\psi_{Im(H)}$. In this way we could also handle complex $\mu$ in \eqref{real mu} with no extra work. However, in view of our results of Section \ref{Section Helgason boundary values}, we only consider real eigenvalue parameters. Anyway, the mehod of stationary phase only applies for phase functions with non-negative imaginary part.
\end{rem}

\subsubsection{Asymptotic expansions for the Knapp-Stein intertwiner}
Recall that an element $X\in\lp$ is called regular, if $Z(X)\cap\lp$ is a maximal abelian subspace of $\lp$. We call an element $\mu\in\la^*$ regular, if $H_{\mu}$ is regular, where $H_{\mu}$ is the vector in $\la$ such that $\mu(X)=\langle X,H_{\mu}\rangle$ (Killing form) for all $X\in\la$. The centralizer of a regular element $X\in\la$ in $N$ (resp. $\overline{N}$) is the trivial subgroup $\left\{e\right\}$ of $G$. If $G/K$ has rank one, then all nonzero elements of $\la$ (resp. $\la^*$) are regular.

We fix a regular $\mu\in\la^*$ and write $H=H_{\mu}\in\la$ and $\psi=\psi_{H_{\mu}}$. Then $\psi(e)=0$ and for the amplitude $\alpha(\overline{n}) = e^{-\rho H(\overline{n})}\, \chi a(gw\overline{n}^{-1}w^{-1})$ we have $\alpha(e)=\chi a(g)$. Let $s=\dim(N)=\dim(\overline{N})$. It follows from Proposition \ref{phase function c function} that after the coordinate change \eqref{coordinate change N} the function $n\mapsto \langle H(n^{-1}w),H_{\mu}\rangle$ has the unique critical point $n=e$ and its Hessian form at $n=e$ is non-degenerate. The method of stationary phase (\cite{Hor1}) yields
\begin{eqnarray}\label{integrate this}
L_{\mu}(\chi a)(g) \sim C \cdot (2\pi/\mu)^{s/2} \sum_{n=0}^{\infty}\mu^{-n} R_{2n}(\chi a)(g),
\end{eqnarray}
where $R_{2n}$ is a differential operator on $G$ of order $2n$ and $R_0$ is the identity. If $Q$ denotes the Hessian matrix at the critical point, then
\begin{eqnarray}
C=|\det Q|^{-1/2} e^{\pi i\sign(Q)/4}.
\end{eqnarray}
One could also show $C \cdot (2\pi/\mu)^{s/2}\sim c(\mu)$ for the factor in \eqref{integrate this} by applying the method of stationary phase to the integrals \eqref{to the integrals} and using Proposition \eqref{phase function c function}.

\begin{lem}
For each $n\in N$, the operator $R_{2n}$ arising in the expansion \eqref{integrate this} is a left-invariant differential operator on $G/M$.
\end{lem}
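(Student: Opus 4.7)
The plan is to combine three ingredients: the manifest left-$G$-equivariance of $L_\mu$, the uniqueness of the stationary phase expansion, and Peetre's theorem, together with the $M$-invariance already established in the preceding lemma.

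First I would verify directly from the integral defining $L_\mu$ that $L_\mu$ commutes with left translations by elements of $G$. If one writes $T_h a(x) = a(hx)$, then
\begin{eqnarray*}
L_\mu(T_h a)(g) = \int_N e^{-(i\mu+\rho)H(n^{-1}w)} a(hgn)\, dn = L_\mu a(hg) = T_h(L_\mu a)(g),
\end{eqnarray*}
since the exponential weight depends only on $n$ and the group law $a(g\cdot n) \to a(hg\cdot n)$ is preserved. Combined with Lemma (the $M$-invariance statement just above), $L_\mu$ restricts to a $G$-equivariant operator $C_c^\infty(G/M) \to C^\infty(G/M)$.

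Next I would exploit uniqueness of the stationary phase expansion. Because $\psi_H$ is clean and its Hessian non-degenerate on the transversal direction (Proposition \ref{phase function c function}), the MSP coefficients $R_{2n}(\chi a)(g)$ are uniquely determined: they can be recovered successively from the leading terms of $\mu^{s/2}L_\mu(\chi a)(g)$ as $\mu\to\infty$. Applying $T_h$ to both sides of the asymptotic relation for $\chi a$ and using the left-invariance of $L_\mu$ established in the first step, one sees that
\begin{eqnarray*}
\sum_{n\geq 0}\mu^{-n} (T_h R_{2n})(\chi a) \sim \sum_{n\geq 0}\mu^{-n}(R_{2n}T_h)(\chi a),
\end{eqnarray*}
and uniqueness of asymptotic expansions forces $T_h R_{2n} = R_{2n} T_h$ for every $n$ and every $h\in G$. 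The same uniqueness argument applied to right-translation by $m\in M$ on the source (using that $L_\mu$ preserves $M$-invariance) shows that each $R_{2n}$ descends to an operator on $M$-invariant functions, i.e. on $C^\infty(G/M)$.

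Finally, the MSP construction exhibits $R_{2n}$ as a differential operator of order $2n$ (in explicit coordinates around the critical point $\overline{n}=e$, the chain rule pushes the derivatives along the map $\overline{n}\mapsto gw\overline{n}^{-1}w^{-1}$ back to derivatives of $a$ at $g$ in the directions $w\cdot\overline{\lnn}=\lnn$, that is, to left-invariant vector fields). Alternatively, support-decrease is automatic for coefficients obtained by pointwise evaluation of derivatives of the amplitude at the critical point, so Peetre's theorem identifies $R_{2n}$ as a differential operator. Combining support-decrease, left-invariance from the previous paragraph, and $M$-equivariance, we conclude that $R_{2n}\in\mathbb{D}(G/M)$, i.e. it is a left-invariant differential operator on $G/M$. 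The main subtlety, and essentially the only nontrivial point, is the uniqueness step: one must make sure that the asymptotic identity $L_\mu(\chi a)\sim \sum \mu^{-n}R_{2n}(\chi a)$ holds with coefficients independent of $g$, so that left translation can genuinely be passed through the expansion, and this is exactly what the clean-phase version of MSP with parameters provides.
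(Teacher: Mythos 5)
Your argument is correct and follows essentially the same route as the paper's proof: uniqueness of the stationary-phase coefficients (independent of $\mu$), the $M$-invariance of $L_\mu$ forcing the $R_{2n}$ to descend to $C_c^\infty(G/M)$, support-decrease plus Peetre's theorem to identify them as differential operators, and the left-equivariance of $L_\mu$ passed through the expansion to give left-invariance. The only difference is presentational: you spell out the commutation $L_\mu T_h = T_h L_\mu$ and the uniqueness step explicitly, which the paper compresses into "the same reasoning shows that the $R_{2n}$ are left-invariant."
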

\begin{proof}
We can replace $\chi a$ in \eqref{integrate this} by an arbitrary $a\in C_c^{\infty}(G/M)$. The coefficients $R_{2n}(a)(g)$ are independent of $\mu$ and hence uniquely determined. Since $L_{\mu}(a)$ is $M$-invariant, it follows that $R_{2n}(a)(g)=R_{2n}(a)(gm)$ for all $n\in\nn$, $g\in G$, $m\in M$, $a\in C_c^{\infty}(G/M)$. Hence
\begin{eqnarray*}
R_{2n}:C_c^{\infty}(G/M)\rightarrow C_c^{\infty}(G/M)
\end{eqnarray*}
is a linear operator. To see that $R_{2n}$ is a local operator, take $a\in C_c^{\infty}(G/M)$. Then $K:=\supp(a)\subset\subset G/M$ is compact. Write $\pi:G\rightarrow G/M$ and set $V=\pi^{-1}(K)$. Then $\supp_G(R_{2n}(a))\subset V$, since $R_{2n}$ is a differential operator on $G$. Thus $\supp R_{2n}(a)\subset K$. It follows that $R_{2n}:C_c^{\infty}(G/M)\rightarrow C_c^{\infty}(G/M)$ decreases supports, so by Peetre's theorem it is a differential operator on $G/M$. The same reasoning shows that the $R_{2n}$ are left-invariant.
\end{proof}

\subsection{An integral formula}\label{An integral formula}
In this subsection we prove an important integral formula involving the Radon transform, intermediate values and the intertwining operators.

\begin{lem}\label{formula}
Let $a\in C_c^{\infty}(G/M)=C_c^{\infty}(X\times B)$ and $(b,b')\in B^{(2)}$. Then
\begin{eqnarray}\label{intertwining}
\mathcal{R}_{\lambda,\mu}(L_{\mu}a)(b,b') = \int_X a(z,b)e^{(i\lambda+\rho)\langle z,b\rangle}e^{(i\mu+\rho)\langle z,b'\rangle}dz.
\end{eqnarray}
\end{lem}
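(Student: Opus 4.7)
The plan is to unfold both sides using the definitions of $\mathcal{R}_{\lambda,\mu}$, $L_{\mu}$ and $d_{\lambda,\mu}$, and then to recognize them as the same double integral over $A \times N$. The main idea is to parametrize $X = G/K$ via the Iwasawa-type model $AN \cdot o$ after translating by a representative $g_0 := g(b,b') \in G$ of the class $(b,b') \in B^{(2)} \cong G/MA$. Concretely, by $G$-invariance of $dz$ and the formula $\int_X f(z)\,dz = \int_{AN} f(an \cdot o)\,da\,dn$ from \eqref{integral formula AN and G/K}, I would write
\begin{eqnarray*}
\int_X a(z,b)e^{(i\lambda+\rho)\langle z,b\rangle}e^{(i\mu+\rho)\langle z,b'\rangle}dz
= \int_A \int_N F(g_0 a_0 n)\,dn\,da_0,
\end{eqnarray*}
where $F(g) := a(g) e^{(i\lambda+\rho)\langle g\cdot o,b\rangle}e^{(i\mu+\rho)\langle g\cdot o,b'\rangle}$ and $a$ is viewed as an $M$-invariant function on $G$.

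The next step is to compute the two horocycle brackets at the point $g_0 a_0 n$. Since $a_0 n \cdot M = M$ (because $k(a_0 n) = e$), I would apply Lemma \ref{Iwasawa and bracket} to obtain $\langle g_0 a_0 n \cdot o, b \rangle = H(g_0 a_0 n) = H(g_0) + \log a_0$, the last equality coming from the fact that $A$ normalizes $N$. For the second bracket, using Corollary \ref{corollary bracket}(2) with the representative $g_0 w$ for $(z',b')$, one gets $\langle g_0 a_0 n \cdot o, b' \rangle = -H(n^{-1}a_0^{-1} w) + H(g_0 w)$; writing $a_0^{-1}w = w(w^{-1}a_0^{-1}w)$ and using once more that $A$ normalizes $N$ yields $H(n^{-1}a_0^{-1}w) = H(n^{-1}w) + w\log(a_0^{-1}) = H(n^{-1}w) - w\log a_0$, where $w$ acts on $\mathfrak{a}$ as the longest Weyl element. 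Combining these with the identity $w\cdot\rho = -\rho$, the $a_0$-contributions in the two exponentials collapse to the single factor $e^{i(\lambda+w\mu)\log a_0}$, the $g_0$-contributions to $e^{(i\lambda+\rho)H(g_0)}e^{(i\mu+\rho)H(g_0 w)} = d_{\lambda,\mu}(g_0)$, and the $n$-contribution is $e^{-(i\mu+\rho)H(n^{-1}w)}$.

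Separately, I would expand the left-hand side
\begin{eqnarray*}
\mathcal{R}_{\lambda,\mu}(L_{\mu}a)(b,b') = \int_A d_{\lambda,\mu}(g_0 a_0)\int_N e^{-(i\mu+\rho)H(n^{-1}w)}a(g_0 a_0 n)\,dn\,da_0
\end{eqnarray*}
and compute $d_{\lambda,\mu}(g_0 a_0)$. The only nontrivial piece is $H(g_0 a_0 w) = H(g_0 w) + w\log a_0$, obtained exactly as above. This gives $d_{\lambda,\mu}(g_0 a_0) = d_{\lambda,\mu}(g_0)\,e^{i(\lambda + w\mu)\log a_0}$, which matches the product of $a_0$- and $g_0$-factors extracted from the right-hand side. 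Since the integrands in the two resulting double integrals over $A \times N$ are identical, the identity follows.

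The main bookkeeping obstacle will be the repeated interplay between $w$ as an element of $M' \subset K$ and as an element of the Weyl group acting on $\mathfrak{a}$ and $\mathfrak{a}^*$, together with the careful use of $w\cdot\rho = -\rho$ to cancel the real shifts in the exponentials. Once this is handled cleanly, no deep input is required beyond Lemma \ref{Iwasawa and bracket}, Corollary \ref{corollary bracket}, the formulas of Subsection \ref{Horocycles brackets and the Iwasawa-projection}, and the integral formula \eqref{integral formula AN and G/K}; in particular the identity is independent of the choice of representative $g_0$, because the ambiguity lies in $MA$ which is absorbed by the $M$-invariance of the integrand and the integration over $A$.
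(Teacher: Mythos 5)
Your proposal is correct and takes essentially the same route as the paper: translate by a representative $g_0 = g(b,b')$, parameterize $X$ via the $AN$-orbit formula, compute the two horocycle brackets, and recognize the result as $\int_A d_{\lambda,\mu}(g_0 a_0)\,L_{\mu}a(g_0 a_0)\,da_0$. The only difference is cosmetic: you split $g_0 a_0$ and track the $a_0$-dependence through the Weyl element separately (thereby rederiving identity \eqref{observation}), whereas the paper keeps $ga$ as a unit and directly applies the equivariance formula \eqref{equivariance} to obtain $\langle gan\cdot o,\,g\cdot wM\rangle = -H(n^{-1}w) + H(gaw)$ in one stroke.
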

\begin{proof}
Let $g\in G$ such that $(b,b')=(g\cdot M,g\cdot wM)$. We manipulate the right side of \eqref{intertwining}: First note that since $dz$ is $G$-invariant we obtain
\begin{eqnarray}\label{this equals}
\int_X a(z,b)e^{(i\lambda+\rho)\langle z,b\rangle} e^{(i\mu+\rho)\langle z,b'\rangle}dz = \int_X a(g\cdot z,b)e^{(i\lambda+\rho)\langle g\cdot z,b\rangle}e^{(i\mu+\rho)\langle g\cdot z,b'\rangle}dz.
\end{eqnarray}
We consider $a$ as a function on $G/M\cong X\times B$. Then since $b=g\cdot o$ it follows that $a(gan\cdot o,g\cdot M)=a(gan\cdot o,gan\cdot M)=a(ganM)$. Recall that $P=MAN$ fixes $b_{\infty}=M\in K/M$. By \eqref{integral formula AN and G/K} we find that \eqref{this equals} equals
\begin{eqnarray}\label{the integral becomes 2}
\int_{AN} a(ganM) e^{(i\lambda+\rho)\langle gan\cdot o,g\cdot M\rangle} e^{(i\mu+\rho)\langle gan\cdot o,g\cdot wM\rangle} \, dn \, da.
\end{eqnarray}
We first have
\begin{eqnarray}
\langle gan\cdot o,g\cdot M\rangle = \langle gan\cdot o,gan\cdot M\rangle = H(gan) = H(ga).
\end{eqnarray}
Next, by \eqref{equivariance}, by the definition of $\langle z,b\rangle$ and since $a\cdot wM=wM$ for all $a\in A$,
\begin{eqnarray}
\langle gan\cdot o,g\cdot wM\rangle &=& \langle ga\cdot n\cdot o,ga\cdot wM\rangle \nonumber \\
&=& \langle n\cdot o,wM \rangle + \langle ga\cdot o, ga\cdot wM\rangle \nonumber \\
&=& -H(n^{-1}w) + H(gaw).
\end{eqnarray}
It follows that \eqref{the integral becomes 2} equals
\begin{eqnarray}\label{consider}
&& \hspace{-2cm} \int_{AN} a(ganM) e^{(i\lambda+\rho)H(ga)} e^{(i\mu+\rho)H(gaw)} e^{-(i\mu+\rho)H(n^{-1}w)} \, dn \, da \nonumber \\
&=& \int_A d_{\lambda,\mu}(gaM) \int_N a(ganM) e^{-(i\mu+\rho)H(n^{-1}w)} \, dn \, da \\
&=& \int_A d_{\lambda,\mu}(gaM) L_{\mu}a(gaM) \, da \nonumber \\
&=& \mathcal{R}_{\lambda,\mu}(L_{\mu}a)(b,b'). \nonumber
\end{eqnarray}
Note that $\mathcal{R}_{\lambda,\mu}(L_{\mu}a)(b,b') = \mathcal{R}_{\lambda,\mu}(L_{\mu}a)(g)$ is defined if $a$ has compact support. This follows from the Fubini theorem and the often used formula given in \eqref{integral formula AN and G/K}. The lemma is proven.
\end{proof}

\begin{rem}
If $g\tilde{m}\tilde{a}$ is another representative of $gMA\in G/MA$, then in \eqref{consider}
\begin{eqnarray}\label{consider 2}
\int_A d_{\lambda,\mu}(g\tilde{m}\tilde{a}aM) \int_N a(g\tilde{m}\tilde{a}anM) e^{-(i\mu+\rho)H(n^{-1}w)} dn da.
\end{eqnarray}
Since $A$ is unimodular we get rid of $\tilde{a}$. Moreover, $H(n^{-1}w)$ is preserved under $n^{-1}\mapsto\tilde{m}^{-1}n^{-1}\tilde{m}$, since $H(kan)=\log(a)$ is $M$-bi-invariant and $w$ normalizes $M$. Then by Remark \ref{preserve remark} we find that \eqref{consider} and \eqref{consider 2} coincide. Hence the proof of Lemma \ref{formula} does not depend on the choice of representative of $g(b,b')MA$.
\end{rem}

\begin{rem}\label{Some remarks}
\begin{itemize}
\item[(1)] If $X$ has rank one can show that $H(n^{-1}w)=H(nw)$ for all $n\in N$. This follows from \cite{He94}, Ch. II, \S6, Thm. 6.1). Hence in these cases we obtain a slight simplification of the formulae above. In general, the formula $H(n^{-1}w)=H(nw)$ is not correct. It is easy to find counterexamples for example in $SL(3,\rr)$, where $\Ad_G(w)_{|\la}\neq -\id_{\la}$ (see Section \ref{The special linear groups}).
\item[(2)] In the notation of \cite{AZ}, we identify $i\lambda+\rho = \frac{1}{2}+ir$. Then $d_{\lambda}(b,b')$ and $|b - b'|^{-\frac{1}{2} - ir}$ satisfy the same equivariance property. By the transitivity of the $G$-action on $B^{(2)}$ these function are constant multiples of each other. This explains the factor $2^{\frac{1}{2} + ir}$ in \cite{AZ}. It appears because $|1-(-1)|=2$ in the disk model, whereas we defined $d_{\lambda}$ such that $d_{\lambda}(M,wM)=1$.
\item[(2)] 
The intertwining operator
\begin{eqnarray*}
L_{r}a(g) = \int_{\mathbb{R}} a(gn_u) (1+u^2)^{-(\frac{1}{2}+ir)}du
\end{eqnarray*}
introduced in \cite{AZ} is generalized by our intertwiner
\begin{eqnarray*}
L_{\lambda}a(g) = \int_N a(gn) e^{-(i\lambda+\rho)(H(n^{-1}w))}dn.
\end{eqnarray*}
In the notation of \cite{AZ}, we always identify
$i\lambda+\rho=\frac{1}{2}+ir$. The group $PSL(2,\mathbb{R})$ has the following Iwasawa decomposition components:
\begin{eqnarray*}
k_{\alpha} &=& \begin{pmatrix} \cos(\alpha) & -\sin(\alpha) \\ \sin(\alpha) & \cos(\alpha) \end{pmatrix} \hspace{9mm} a_t = \begin{pmatrix} e^{t/2} & 0 \\ 0 & e^{-t/2} \end{pmatrix}\\
n_u &=& \begin{pmatrix} 1 & u \\ 0 & 1 \end{pmatrix} \hspace{30.8mm} w \hspace{0.3mm} = \begin{pmatrix} 0 & 1 \\ -1 & 0 \end{pmatrix}
\end{eqnarray*}
It suffices to prove $H(n_u^{-1}w)=\ln(1+u^2)$ for all $u\in\rr$. Writing out $n_uw$ gives
\begin{eqnarray*}
n_uw = \begin{pmatrix} -u & 1 \\ -1 & 0 \end{pmatrix}.
\end{eqnarray*}
We have the following cases: $u=0$, $u<0$ and $u>0$.
\begin{itemize}
\item[(i)] $u=0$. Then $n_uw=w\in K$, so the formula is obvious.
\item[(ii)] $u<0$. Let $t=\ln(1+u^2)$, $\alpha=-\arcsin(\frac{1}{e^{t/2}})$. Then let $n_v$ be the element $n_v := a_t^{-1}k_{\alpha}^{-1}n_uw$.
Multiplying out shows that $n_v$ is of the form
\begin{eqnarray*}
n_v = \begin{pmatrix} 1 & v \\ 0 & 1 \end{pmatrix}.
\end{eqnarray*}
Then $k_{\alpha}a_t n_v = n_u^{-1}w$, so $H(n_u^{-1}w)=t=\ln(1+u^2)$.
\item[(iii)] $u>0$. This case is very similar to the preceding case (ii). The formula also follows from $H(n_uw)=H(n_u^{-1}w)=H(n_{-u}w)$, since in this example $G/K$ has rank one.
\end{itemize}
\end{itemize}
\end{rem}

\subsection{Eigenfunctions on a compact quotient}\label{Eigenfunctions on a compact quotient}
As before, let $X=G/K$ denote a symmetric space of the noncompact type with Laplace-Beltrami operator $L_X$. Let $\Gamma$ denote a cocompact, discrete and torsion free subgroup $\Gamma$ of $G$ and let $X_{\Gamma}:=\Gamma\backslash G/K$ be given the quotient metric. Then $X_{\Gamma}$ is a compact hyperbolic manifold and a locally symmetric space. We write $\Delta$ for the Laplace operator of $X_{\Gamma}$.

Let $0=c_0<c_1<c_2<\ldots$ denote the discrete spectrum of $-\Delta$ on $X_{\Gamma}$ (cf. Subsection \ref{The Laplacian}). We choose a corresponding complete Hilbert space basis $(\phi_{j})$ of $L^2(X_{\Gamma})$ consisting of normalized (with respect to the $L^2$-norm of $X_{\Gamma}$) eigenfunctions of $\Delta$. Then
\begin{eqnarray}\label{eigenvalue problem}
\Delta\phi_{j} = -c_j \phi_j \,\,\,\,\, \textnormal{ for all } j\in\nn_0.
\end{eqnarray}

Let $\pi$ denote the natural projection of $X$ onto $X_{\Gamma}$. Then $\pi$ is a local isometry and since the Laplace operator is isometry-invariant, $\pi$ intertwines the Laplace operators $L_X$ of $X$ and $\Delta$ of $X_{\Gamma}$. It follows that an eigenfunction on $X_{\Gamma}$ (for the Laplacian of $X_{\Gamma}$) is a $\Gamma$-invariant eigenfunction on $X$ (for the Laplacian of $X$).

A $\Gamma$-invariant eigenfunction of $L_X$ is called an \emph{automorphic eigenfunction}. Thus, \eqref{eigenvalue problem} corresponds to the automorphic eigenvalue problem
\begin{eqnarray*}
L_X\phi &=& - c \, \phi, \\
\phi(\gamma z) &=& \phi(z) \textnormal{ for all } \gamma\in\Gamma \textnormal{ and for all } z\in G/K.
\end{eqnarray*}

The rank of an algebra is defined as the maximal number of pairwise commuting generators of the algebra. The rank of the algebra $\mathbb{D}(G/K)$ of translation invariant differential operators equals the real rank of $G/K$, that is the number $\dim(A)$, where $G=KAN$ is an Iwasawa decomposition, or equivalently the dimension of a maximal flat subspace of $G/K$. It follows that if $X$ has higher rank $\geq 2$, the $\phi_{j}$ chosen above may not necessarily be joint eigenfunctions of $\mathbb{D}(G/K)$. However, if $X$ has rank one, then this is true (Remark \ref{polynomials}). In particular, if $X$ has rank one, the joint eigenspaces are given by ($\langle\cdot,\cdot\rangle$ denotes the extension of the Killing form to $\la^*_{\cc}$)
\begin{eqnarray*}
\mathcal{E}_{\lambda}(X) = \left\{f\in\mathcal{E}(X): L_X f=-(\langle\lambda,\lambda\rangle+\langle\rho,\rho\rangle)f \right\}.
\end{eqnarray*}

Suppose that $\phi\in\mathcal{E}^*_{\lambda}$, where $\lambda\in\la^*_{\cc}$, is a $\Gamma$-invariant joint eigenfunction of $G/K$. Then ($|\cdot|$ denotes the norm on $\la^*$ induced by the Killing form of $\g$)
\begin{eqnarray*}
D\phi = -(\langle\lambda,\lambda\rangle+|\rho|^2) \,\,\,\,\, \textnormal{ for all } D\in\mathbb{D}(G/K).
\end{eqnarray*}

\subsubsection{The rank one case}\label{The rank one case}
Recall the situation when the symmetric space has rank one: We only consider joint eigenfunctions with exponential growth. Given such a $\phi_{j}$, it follows that there is $\lambda_j\in\la^*_{\cc}$ such that $c_j=-(\langle\lambda_j,\lambda_j\rangle+\langle\rho,\rho\rangle)$. Then
\begin{eqnarray}
\Delta\phi_{j} = -(\langle\lambda_j,\lambda_j\rangle+\langle\rho,\rho\rangle) \phi_{j}.
\end{eqnarray}
We can then fix the eigenvalue parameters $\lambda_j$ corresponding to the spectrum
\begin{eqnarray*}
0=c_0<c_1<c_2<\ldots
\end{eqnarray*}
It follows from $\langle\rho,\rho\rangle\in\rr$ that $\langle\lambda_j,\lambda_j\rangle\rightarrow\infty$ ($j\rightarrow\infty$). Suppose that $X$ has rank one. Then for all $j\in\nn_0$ we must have $\lambda_j\in\la^*\cup i\la^*$, where $i=\sqrt{-1}$. We can hence for at most finitely many $j$ have $\lambda_j\in i\la^*$, that is only finitely many $\lambda_j$ are contained in the so-called \emph{complementary series}. All remaining $\lambda_j$ are contained in the unitary principal series, which we have studied in Section \ref{Section Helgason boundary values}. We will in this context sometimes also write $\lambda\rightarrow\infty$, which means $\lambda(H)\rightarrow\infty$ for each $H$ in the positive Weyl chamber $\la^+$.

\subsubsection{Wigner distributions}\label{Wigner distributions}
Given a joint eigenfunction $\phi\in\mathcal{E}^*_{\lambda}(X)$, we denote the corresponding (uniquely determined) distributional boundary values by $T_{\phi}\in\mathcal{D}(B)$ (Theorem \ref{Helgason boundary values}). Then
\begin{eqnarray*}
\phi(z) = \int_B e^{(i\lambda+\rho)\langle z,b\rangle} \, T_{\phi}(db),  \,\,\,\,\,\,\, z\in X.
\end{eqnarray*}
Recall that given $\lambda\in\mathfrak{a}^*_{\cc}$ and $b\in B$, the functions
\begin{eqnarray*}
e_{\lambda,b}: X\rightarrow\cc, \hspace{2mm} z\mapsto e^{(i\lambda+\rho)\langle z,b\rangle}.
\end{eqnarray*}
are called non-Euclidean plane waves. The symmetric space calculus of pseudodifferential operators (Chapter \ref{Pseudodifferential analysis on symmetric spaces}) is defined by
\begin{eqnarray}
\left(Op(a)e_{\lambda,b}\right)(z)=a(z,\lambda,b)e_{\lambda,b}(z).
\end{eqnarray}
Non-Euclidean Fourier analysis extends this definition to $C_c^{\infty}(X)$. We always assume that the symbol $a:X\times B\times\mathfrak{a}\rightarrow\cc$ of $Op(a)$ is a polyhomogeneous function in $\lambda$ in the classical sense defined in \eqref{homogeneous}. We know from Section \ref{Invariance properties} that $Op(a)$ commutes with the action of $\gamma\in\Gamma$ if and only if $a$ is invariant under the diagonal action of $\Gamma$ on $X\times B=G/M$. We will from now on always assume that $Op(a)$ is properly supported. In the non-Euclidean calculus we then have
\begin{eqnarray}
Op(a)\phi(z) = \int_B a(z,\lambda,b) e^{(i\lambda+\rho)\langle z,b\rangle} \, T_{\phi}(db).
\end{eqnarray}

\begin{defn}
Let $\lambda,\mu\in\la^*_{\cc}$ and suppose that $\phi\in\mathcal{E}^*_{\lambda}(X)$ and $\psi\in\mathcal{E}^*_{\mu}(X)$ are $L^2(X_{\Gamma})$-normalized and $\Gamma$-invariant joint eigenfunctions of $\mathbb{D}(G/K)$. We define the Wigner distributions $W_{\phi,\psi}$ \emph{associated to $\phi$ and $\psi$} on $C^{\infty}(\Gamma\backslash G/M)$ by
\begin{eqnarray}
W_{\phi,\psi}(a) := \langle Op(a)\phi, \psi \rangle_{L^2(X_{\Gamma})}.
\end{eqnarray}
\end{defn}

We view $a\in C^{\infty}(\Gamma\backslash G/M)$ as a symbol $a\in S^0$, which is is independent of $\lambda$. Note that $W_{\phi,\psi}$ is a well-defined distribution: Using the boundary values, we express (as we will do in \eqref{express}) the $L^2$-inner product by means of the Poisson transform and obtain the distribution
\begin{eqnarray}
W_{\phi,\psi} = e^{(i\lambda+\rho)\langle z,b\rangle} \, e^{(i\mu+\rho)\langle z,b'\rangle} \, dz \, T_{\phi}(db) \, T_{\psi}(db').
\end{eqnarray}
Hence $W_{\phi,\psi}(a)$ is bounded by a continuous $C^{\infty}(\Gamma\backslash G/M)$-seminorm of $a$. In the special case when $\phi=\psi$ we write $W_{\phi}:=W_{\phi,\phi}$.

Let $X$ have rank one. Recall from \ref{The rank one case} the fixed basis $(\phi_j)$ of eigenfunctions of $\Delta$. We denote the corressponding boundary values by $T_j$. Then $\phi_j=P_{\lambda_j}(T_j)$ by means of the Poisson-Helgason transform, where $\lambda_j$ is as in Subsection \ref{Eigenfunctions on a compact quotient}. We will then write $W_{j,k}:=W_{\phi_j,\phi_k}$.

\begin{rem}
Let $\phi\in\mathcal{E}^*_{\lambda}(X)$ and $\lambda\in\la$ be real valued. The distributions $W_{\phi}$ are \emph{quantum time reversible} in the following sense: Let $Cf=\overline{f}$ denote complex conjugation and write $\mathcal{C}a(z,\lambda,b)=\overline{a}(z,-\lambda,b)$. We have $COp(a)C=Op(\mathcal{C}a)$ by a direct computation. Hence $\langle COp(a)C\phi,\phi\rangle=\langle Op(a)\phi,\phi\rangle$, so $\mathcal{C}^{*}W_{\lambda}=W_{\lambda}$.
\end{rem}

\subsubsection{An intertwining formula}
Asymptotic properties of Wigner distributions only concern principal symbols. We hence assume symbols $a(z,\lambda,b)$ of order $0$ to be independent of $\lambda$. Recall that if $\chi$ is a smooth fundamental domain cutoff function, then $W_{\phi,\psi}(a) = \langle Op(\chi a)\phi, \psi \rangle_{L^2(X)}$.

\begin{rem}
In what follows we need a certain amount of regularity for the boundary values we work with. From now on, we will always work with distributional boundary values which are actually functions, that is $T_{\phi}\in L^1(B)$, the space of integrable functions on $B$\index{$L^1(B)$, space of integrable functions on $B$}.
\end{rem}

\begin{thm}\label{Intertwining Formula}
Let $\phi\in\mathcal{E}^*_{\lambda}(X)$ and $\psi\in\mathcal{E}^*_{\mu}(X)$ be $\Gamma$-invariant joint eigenfunctions with respective boundary values $T_{\phi}\in L^1(B)$ and $T_{\psi}\in L^1(B)$. Let $\psi$ be real-valued. Then for $a\in C^{\infty}(\Gamma\backslash G/M)$ we have
\begin{eqnarray}
W_{\phi,\psi}(a) = \langle L_{\mu}(\chi a), PS_{\lambda,\mu}\rangle.
\end{eqnarray}
\end{thm}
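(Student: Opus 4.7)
The proof will be a direct computation that unwinds both sides using the integral representations established earlier in the paper. The plan is to start from the definition of the Wigner distribution, replace $\phi$ and $\psi$ by their Poisson-Helgason representations, use the hypothesis that $\psi$ is real-valued to drop the complex conjugation, and then recognise the resulting triple integral as the right-hand side via the key integral formula of Lemma \ref{formula}.

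First, since $\chi$ is a smooth fundamental domain cutoff function, $W_{\phi,\psi}(a) = \langle Op(\chi a)\phi,\psi\rangle_{L^2(X)}$ by the way distributions on $\Gamma\backslash G/M$ descend from $G/M$. Because $a$ has order $0$ and is independent of $\lambda$, the non-Euclidean calculus gives
\begin{eqnarray*}
Op(\chi a)\phi(z)=\int_B (\chi a)(z,b)\,e^{(i\lambda+\rho)\langle z,b\rangle}\,T_\phi(db).
\end{eqnarray*}
Using that $\psi$ is real-valued, I can express
\begin{eqnarray}\label{express}
\overline{\psi(z)}=\psi(z)=\int_B e^{(i\mu+\rho)\langle z,b'\rangle}\,T_\psi(db'),
\end{eqnarray}
so no complex conjugation appears on the boundary-value side. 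Substituting both expressions into the $L^2$-inner product gives
\begin{eqnarray*}
W_{\phi,\psi}(a)=\int_X\int_B\int_B(\chi a)(z,b)\,e^{(i\lambda+\rho)\langle z,b\rangle}\,e^{(i\mu+\rho)\langle z,b'\rangle}\,T_\phi(db)\,T_\psi(db')\,dz.
\end{eqnarray*}
The $L^1$-hypothesis on $T_\phi, T_\psi$, together with the compact support of $\chi a$ in $z$ and the smoothness of $(\chi a)(z,b)$, justifies Fubini: I can perform the $z$-integration first.

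The inner integral is exactly the left-hand side of equation \eqref{intertwining} in Lemma \ref{formula} (with the function $\chi a \in C_c^\infty(X\times B)$), so for each pair $(b,b')\in B^{(2)}$
\begin{eqnarray*}
\int_X (\chi a)(z,b)\,e^{(i\lambda+\rho)\langle z,b\rangle}\,e^{(i\mu+\rho)\langle z,b'\rangle}\,dz = \mathcal{R}_{\lambda,\mu}\bigl(L_\mu(\chi a)\bigr)(b,b').
\end{eqnarray*}
Because the complement $(B\times B)\setminus B^{(2)}$ is a finite union of lower-dimensional submanifolds and hence a null set for $T_\phi(db)T_\psi(db')$ when these are $L^1$-functions times $db$, Fubini yields
\begin{eqnarray*}
W_{\phi,\psi}(a)=\int_{B^{(2)}}\mathcal{R}_{\lambda,\mu}\bigl(L_\mu(\chi a)\bigr)(b,b')\,T_\phi(db)\,T_\psi(db').
\end{eqnarray*}
Comparing with the defining formula \eqref{Off-Diagonal Patterson-Sullivan} for the off-diagonal Patterson-Sullivan distribution yields precisely $\langle L_\mu(\chi a), PS_{\lambda,\mu}\rangle$, completing the proof.

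The main obstacle I foresee is not a deep one but a bookkeeping issue: I must make sure that $L_\mu(\chi a)$ (which is no longer compactly supported on $G/M$, since the intertwiner integrates over all of $N$) is nevertheless a legitimate test function for $PS_{\lambda,\mu}$. The Patterson-Sullivan distribution was shown to be well defined on $C_c^\infty(G/M)$, so I would need to check that the weighted Radon transform $\mathcal{R}_{\lambda,\mu}(L_\mu(\chi a))$ remains a smooth function on $B^{(2)}$ against which $T_\phi\otimes T_\psi$ can be paired. This follows because Lemma \ref{formula} identifies it with the $X$-integral above, which is manifestly smooth in $(b,b')$ by differentiation under the integral sign (using compact support of $\chi$), and the pairing with $T_\phi\otimes T_\psi\in L^1(B\times B)$ then poses no difficulty.
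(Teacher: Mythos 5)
Your proposal is correct and follows essentially the same route as the paper: express $\langle Op(\chi a)\phi,\psi\rangle_{L^2(X)}$ through the Poisson--Helgason representations of $\phi$ and $\psi$ (real-valuedness of $\psi$ removing the conjugation), perform the inner $X$-integral, identify it with $\mathcal{R}_{\lambda,\mu}(L_\mu(\chi a))(b,b')$ via Lemma \ref{formula}, and read off the definition of $PS_{\lambda,\mu}$. Your closing remark about interpreting the pairing is exactly how the paper handles it — Lemma \ref{formula} shows the inner $X$-integral gives a smooth extension of $\mathcal{R}_{\lambda,\mu}(L_\mu\chi a)$ to all of $B\times B$, against which $T_\phi\otimes T_\psi$ is paired — so no genuinely different argument is involved.
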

\begin{proof}
We express this $L^2(X)$-inner product by means of the Poisson-Helgason transform formula \eqref{Poisson transform}:
\begin{eqnarray}\label{express}
\langle Op(\chi a)\phi, \psi\rangle_{L^2(X)} &=& \int_X (Op(\chi a)\phi)(z)\psi(z) \, dz \nonumber \\
&\hspace{-60mm}=& \hspace{-30mm}\int_{B\times B}\left(\int_X (\chi a)(z,b) e^{(i\lambda+\rho)\langle z,b\rangle} e^{(i\mu+\rho)\langle z,b'\rangle} \, dz \right) T_{\phi}(db) \, T_{\psi}(db').
\end{eqnarray}
It follows from Lemma \ref{formula} that $\mathcal{R}_{\lambda,\mu}(L_{\mu}\chi a)(b,b')$ extends to a smooth function on $B\times B$, which is given by the inner $X$-integral above. Then \eqref{express} equals
\begin{eqnarray}\label{also shows}
\langle \mathcal{R}_{\lambda,\mu}(L_{\mu}\chi a), T_{\phi}\otimes T_{\psi}\rangle_{B\times B}  = \langle L_{\mu}(\chi a), PS_{\lambda,\mu} \rangle,
\end{eqnarray}
and the theorem is proven.
\end{proof}

\begin{rem}
If $\phi=\psi$ and $\lambda=\mu$, then \eqref{also shows} shows
\begin{eqnarray*}
W_{\phi}(a) &=& \langle d_{\lambda}\mathcal{R}(L_{\lambda}\chi a), T_{\phi}\otimes T_{\phi}\rangle_{B\times B} \\
&=& \langle \mathcal{R}(L_{\lambda}\chi a), ps_{\lambda} \rangle_{G/M} \\
&=& \langle L_{\lambda}(\chi a), PS_{\lambda} \rangle_{G/M}.
\end{eqnarray*}
\end{rem}

\subsection{The spectral order principle}
 Let $X=G/K$ have rank one. As usual, we identify $\la$ and $\la^*$ with $\rr$ by means of the Killing form $\langle\cdot,\cdot\rangle$: The unit vector (w.r.t. the Killing form) $H\in\la_+$ and the linear functional $\lambda_0\in\la^*$ given by $\lambda_0(X)=\langle X,H\rangle$ are identified with the real number $1$.

In this section we introduce an idea which we call the \emph{spectral order principle}\index{spectral order principle}. This principle is geared to explain asymptotic relations between phase space distributions and Wigner-distributions. To be as general as possible, we let $Op: C^{\infty}(SX_{\Gamma})\rightarrow B(L^2(SX_{\Gamma}))$ denote an arbitrary operator convention.

Let $\left\{\phi_{\lambda}\right\}$ denote a family of $\Gamma$-invariant joint eigenfunctions $\phi_{\lambda}\in\mathcal{E}^*_{\lambda}$ to spectral parameters $\lambda\in\la^*_{\cc}$, which are all normalized w.r.t. the norm of $L^2(X_{\Gamma})$. Recall that $\Gamma$-invariant distributions on $SX$ descend to distributions on $SX_{\Gamma}$ by using smooth fundamental domain cutoff functions. We fix a smooth fundamental domain cutoff function $\chi$.

\begin{defn}[Intertwining operator]\label{Intertwining operator}
We say a family $\left\{T_{\lambda,\mu}\right\}\subset\mathcal{D}'(SX)$ of $\Gamma$-invariant distributions is \emph{intertwined} with the Wigner distributions $W_{\phi_{\lambda},\phi_{\mu}}$ if for each $\mu$ there is a linear operator $L_{\mu}:C_c^{\infty}(SX)\rightarrow C_c^{\infty}(SX)$ such that
\begin{eqnarray}\label{comparing with}
W_{\phi_{\lambda},\phi_{\mu}}(a) = T_{\lambda,\mu}(L_{\mu}(\chi a)) \,\,\,\,\,\,\,\, \forall \, a\in C_c^{\infty}(SX_{\Gamma}).
\end{eqnarray}
The operators $L_{\mu}$ are called \emph{intertwining operators}\label{intertwining operators}.
\end{defn}

\begin{defn}[Spectral order of a distribution]
Let $\left\{T_{\lambda,\mu}\right\}\subset\mathcal{D}'(SX)$ denote a family of distributions. We say that $\left\{T_{\lambda,\mu}\right\}\subset\mathcal{D}'(SX)$ has \emph{spectral order} $K\in\rr$ if there is a continuous seminorm $\|\cdot\|$ on $C_c^{\infty}(SX)$ such that for all $\lambda,\mu$
\begin{eqnarray}\label{spectral estimate}
|T_{\lambda,\mu}(f)| \leq (1+|\lambda|)^K (1+|\mu|)^K \cdot \|f\| \,\,\,\,\,\,\,\, \forall \, f \in C_c^{\infty}(SX).
\end{eqnarray}
\end{defn}

\begin{defn}[Left-invariant asymptotic expansion]
Let $L_{\mu}: C_c^{\infty}(SX)\rightarrow C_c^{\infty}(SX)$ be a family of intertwining operators (in the sense of \ref{Intertwining operator}. Suppose that there is an aymptotic expansion
\begin{eqnarray}\label{left-invariant asymptotic expansion}
L_{\mu}(a)(gM) \sim \sum_{j=0}^{\infty} \mu^{-j-s/2} R_j(a)(gM)
\end{eqnarray}
in the sense that $|L_{\mu}(a) - \sum_{j=0}^{N-1} \mu^{-j-s} R_j(a)|\leq C_N (1+|\mu|)^{-N}$, where $s\in\rr$ is a constant and where the $R_j: C_c^{\infty}(SX)\rightarrow C_c^{\infty}(SX)$ are differential operators on $SX$. We say that \eqref{left-invariant asymptotic expansion} is a \emph{left-invariant asymptotic expansion}\index{left-invariant asymptotic expansion}, if the $R_j$ are left-invariant differential operators.
\end{defn}

Suppose that $T_{\lambda,\mu}\in\mathcal{D}'(SX_{\Gamma})$ is a distribution depending on two spectral parameters, with $T_{\mu,\mu}(1)\neq 0$. We denote by $\widehat{T}_{\lambda,\mu}\in\mathcal{D}'(SX_{\Gamma})$ the normalized distribution
\begin{eqnarray}
\langle \widehat{T}_{\lambda,\mu},f\rangle := \frac{\langle T,f\rangle}{\langle T_{\mu,\mu},1\rangle}.
\end{eqnarray}

\begin{thm}\label{spectral order principle}
Suppose that $\left\{T_{\lambda,\mu}\right\}$ is a family of distributions of spectral order $K$ which is intertwined with the Wigner distributions $W_{\lambda}$ by the uniformly continuous (in $\mu$) intertwining operators $L_{\mu}$. Let the $L_{\mu}$ have an asymptotic expansion with left-invariant coefficients. Suppose $\mathcal{O}(|\lambda|^{-1}) = \mathcal{O}(|\mu|^{-1})$. Let $a\in C^{\infty}(SX_{\Gamma})$. Then we have the asymptotic equivalence
\begin{eqnarray}
W_{\lambda,\mu}(a) = \widehat{T}_{\lambda,\mu}(a) + \mathcal{O}(\mu^{-1}).
\end{eqnarray}
The constant in the $\mathcal{O}$-term is a $C^{\infty}(SX_{\Gamma})$-seminorm of $a$.
\end{thm}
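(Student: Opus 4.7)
The plan is to feed the asymptotic expansion of $L_\mu$ into the intertwining identity and then fix the constant by testing against $a=1$. By Definition \ref{Intertwining operator} we have the exact identity
\begin{eqnarray*}
W_{\lambda,\mu}(a) \;=\; T_{\lambda,\mu}(L_\mu(\chi a)), \qquad a\in C^\infty(SX_\Gamma).
\end{eqnarray*}
Plugging in the left-invariant asymptotic expansion \eqref{left-invariant asymptotic expansion}, truncated at $N=1$, gives
\begin{eqnarray*}
L_\mu(\chi a) \;=\; \mu^{-s/2} R_0(\chi a) \;+\; r_\mu(\chi a),
\end{eqnarray*}
with $|r_\mu(\chi a)| \leq C_N(1+|\mu|)^{-N}$ for any prescribed $N$; I would fix $N$ large enough (depending on $K$) once the error analysis below forces a choice. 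As in the Knapp--Stein construction of the previous subsection, $R_0$ is (a scalar multiple of) the identity, so after absorbing the scalar into the $\mu^{-s/2}$ prefactor I may treat $R_0=\mathrm{id}$.

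The second step is to descend each term to the quotient. Because $R_0=\mathrm{id}$, $R_0(\chi a)=\chi a$ is a smooth fundamental-domain cutoff of the $\Gamma$-invariant function $a$, so Proposition \ref{independent}, applied to the $\Gamma$-invariant distribution $T_{\lambda,\mu}$, gives
\begin{eqnarray*}
T_{\lambda,\mu}\!\left(R_0(\chi a)\right) \;=\; \langle a,T_{\lambda,\mu}\rangle_{\Gamma\backslash G/M}.
\end{eqnarray*}
Using the spectral-order bound \eqref{spectral estimate} on $T_{\lambda,\mu}$ together with continuity of the seminorm $\|\cdot\|$ under the differential operators appearing in the remainder $r_\mu$, I obtain
\begin{eqnarray*}
\left|T_{\lambda,\mu}(r_\mu(\chi a))\right| \;\leq\; (1+|\lambda|)^K(1+|\mu|)^K\,C_N(1+|\mu|)^{-N}\,\|a\|',
\end{eqnarray*}
for a suitable continuous seminorm $\|\cdot\|'$ on $C^\infty(\Gamma\backslash G/M)$. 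Invoking the hypothesis $\mathcal{O}(|\lambda|^{-1})=\mathcal{O}(|\mu|^{-1})$, the product $(1+|\lambda|)^K(1+|\mu|)^K$ is bounded by a constant times $(1+|\mu|)^{2K}$, and choosing $N\geq 2K+1+s/2$ I conclude that
\begin{eqnarray*}
W_{\lambda,\mu}(a) \;=\; \mu^{-s/2}\langle a,T_{\lambda,\mu}\rangle \;+\; \mathcal{O}(\mu^{-1}),
\end{eqnarray*}
with the $\mathcal{O}$-constant being a seminorm of $a$.

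Finally, the normalization is fixed by specializing to $a=1$ and $\lambda=\mu$: since $\phi_\mu$ is $L^2(X_\Gamma)$-normalized, $W_{\mu,\mu}(1)=\|\phi_\mu\|^2_{L^2}=1$, so the same identity yields
\begin{eqnarray*}
1 \;=\; \mu^{-s/2}\langle 1,T_{\mu,\mu}\rangle \;+\; \mathcal{O}(\mu^{-1}),
\end{eqnarray*}
i.e.\ $\mu^{-s/2}=\langle 1,T_{\mu,\mu}\rangle^{-1}(1+\mathcal{O}(\mu^{-1}))$. Substituting this into the previous line and using the definition $\widehat{T}_{\lambda,\mu}=T_{\lambda,\mu}/\langle 1,T_{\mu,\mu}\rangle$ gives
\begin{eqnarray*}
W_{\lambda,\mu}(a) \;=\; \widehat{T}_{\lambda,\mu}(a) \;+\; \mathcal{O}(\mu^{-1}),
\end{eqnarray*}
as desired. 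The main obstacle is the last error-management step: one must verify that the multiplication by $\langle 1,T_{\mu,\mu}\rangle^{-1}$ preserves the $\mathcal{O}(\mu^{-1})$ bound uniformly in $\lambda,\mu$. This requires a lower bound on $|\langle 1,T_{\mu,\mu}\rangle|$ of the form $c\,\mu^{s/2}$, which is itself a consequence of the $a=1,\lambda=\mu$ expansion once one knows $W_{\mu,\mu}(1)=1$ is bounded below, and of the hypothesis $\mathcal{O}(|\lambda|^{-1})=\mathcal{O}(|\mu|^{-1})$ that forbids $|\lambda|$ from outpacing $|\mu|$ in the spectral-order bound.
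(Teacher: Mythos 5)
There is a genuine gap in your argument. You write that you "truncate at $N=1$", obtaining $L_\mu(\chi a) = \mu^{-s/2}R_0(\chi a) + r_\mu(\chi a)$ with the claim that $|r_\mu(\chi a)| \leq C_N(1+|\mu|)^{-N}$ "for any prescribed $N$". This is not what an asymptotic expansion delivers: if you stop after the single term $j=0$, the remainder is only $\mathcal{O}(\mu^{-1-s/2})$. To get a remainder of order $\mu^{-N}$ you must keep the terms $\mu^{-j-s/2}R_j(\chi a)$ for $j=1,\dots,N-1$, and each of these, after being paired with $T_{\lambda,\mu}$, contributes a term of size roughly $\mu^{-j-s/2+2K}$ by the spectral-order bound — which is not $\mathcal{O}(\mu^{-1})$ for small $j$ unless $2K$ happens to be very small. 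You have silently assumed these intermediate contributions away, and that is exactly where the theorem's difficulty lies. (Relatedly, the asymptotic expansion controls the remainder in sup-norm, not automatically in the $C^\infty$-seminorm that the spectral-order bound requires; this needs a separate argument, though it is a secondary issue.)

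The paper's proof handles the intermediate terms by a completely different device: it constructs a formal inverse $M_\mu^{(N)} = \sum_n \mu^{-n}M_n$ with $M_0 = \mathrm{id}$ such that $L_\mu^{(N)}M_\mu^{(N)} = \mathrm{id} + \mu^{-N-1}R_\mu^{(N)}$, applies the intertwining identity to $M_\mu^{(N)}a$ instead of $a$ so that the intermediate terms cancel and one gets $\langle a, T_{\lambda,\mu}\rangle = W_{\lambda,\mu}(M_\mu^{(N)}a) + \mathcal{O}(\mu^{-N-1+2K})$, and then uses $L^2$-boundedness of zero-order pseudodifferential operators together with $M_\mu^{(N)}(a) = a + \mu^{-1}(\cdots)$ to convert $W_{\lambda,\mu}(M_\mu^{(N)}a)$ into $W_{\lambda,\mu}(a) + \mathcal{O}(1/\mu)$. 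The crucial use of Proposition \ref{independent} in this inversion step — to commute $\chi$ past $M_\mu^{(N)}$ — and the $L^2$-continuity input are the ideas missing from your sketch. Your final normalization step ($a=1$, $\lambda=\mu$ to remove $\langle 1, T_{\mu,\mu}\rangle$) does agree with the paper's; the problem is everything before it.
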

\begin{proof}
We copy the asymptotic argument given in \cite{AZ}. First, integrating \eqref{left-invariant asymptotic expansion} with respect to $T_{\lambda,\mu}$ and comparing with \eqref{comparing with} we get an asymptotic expansion (in the sense of \eqref{in this sense})
\begin{eqnarray*}
\langle Op(a)\phi_{\lambda},\phi_{\mu}\rangle_{SX_{\Gamma}} \sim \sum_{n\leq0} \mu^{-n-s/2} \langle R_{n}(\chi a),T_{\lambda,\mu} \rangle_{SX}.
\end{eqnarray*}
Note that the coefficients of this expansion depend on the spectral parameters. By left-invariance, each distribution
\begin{eqnarray*}
f\mapsto \langle R_{n}(f), T_{\lambda,\mu} \rangle_{SX}
\end{eqnarray*}
is $\Gamma$-invariant, so by Proposition \ref{independent}, the functional
\begin{eqnarray*}
a\mapsto \langle R_{n}(\chi a), T_{\lambda,\mu} \rangle_{SX}
\end{eqnarray*}
defines a distribution on $SX_{\Gamma}$ and the first term (for $n=0$) is $T_{\lambda,\mu}$. Then
\begin{eqnarray}\label{in this sense}
&& \langle Op(a)\phi_{\lambda},\phi_{\mu}\rangle_{SX} = \langle L_{\mu}(\chi a),T_{\lambda,\mu}\rangle_{SX} \nonumber \\
&& \,\,\,\,\, = \sum_{n=0}^{N} \mu^{-n-s/2} \langle R_{n}(\chi a),T_{\lambda,\mu} \rangle + \mathcal{O}(\mu^{-N-1+2K}).
\end{eqnarray}
We choose $N>2K$. Since $R_0$ is the identity, the operator $L_{\mu}^{(N)}=\sum_n^{N}\mu^{-n}R_{n}$ can be inverted up to $\mathcal{O}(\mu^{-N-1})$, i.e. one finds differential operators
$M_{\mu}^{(N)}=\sum_{n=0}^N \mu^{-n}M_{n}$, where $M_0=\id$, and $R_{\mu}^{(N)}$, such that
\begin{eqnarray*}
L_{\mu}^{(N)} M_{\mu}^{(N)} = \id + \mu^{-N-1}R_{\mu}^{(N)}.
\end{eqnarray*}
We apply \ref{comparing with} to $M_{\mu}^{(N)}(a)$ and find
\begin{eqnarray*}
\langle Op(M_{\mu}^{(N)}a)\phi_{\lambda},\phi_{\mu}\rangle_{SX_{\Gamma}}
&=& \langle L_{\mu}^{(N)}\chi M_{\mu}^{(N)}a,T_{\lambda,\mu}\rangle_{SX}
    + \mathcal{O}(\mu^{-N-1+2K}) \\
&=& \langle L_{\mu}^{(N)}M_{\mu}^{(N)}\chi a,T_{\lambda,\mu}\rangle_{SX}
    + \mathcal{O}(\mu^{-N-1+2K}) \\
&=& \langle a,T_{\lambda,\mu}\rangle_{SX_{\Gamma}} + \mathcal{O}(\mu^{-N-1+2K}).
\end{eqnarray*}
The second line is a consequence of Proposition \ref{independent}. But
\begin{eqnarray*}
M_{\mu}^{(N)}(a) = a + \mu^{-1}\left( M_1 + \ldots + \mu^{-N+1}M_{2}\right)(a),
\end{eqnarray*}
so by the $L^2$-continuity of zero-order pseudodifferential operators,
\begin{eqnarray*}
\langle
Op(M_{\mu}^{(N)}(a))\phi_{\lambda},\phi_{\mu}\rangle_{L^2(X_{\Gamma})}
= \langle
Op(a)\phi_{\lambda},\phi_{\mu}\rangle_{L^2(X_{\Gamma})} +
\mathcal{O}(1/\mu).
\end{eqnarray*}
This proves
\begin{eqnarray}\label{left side 3}
\langle a,T_{\lambda,\mu}\rangle_{SX_{\Gamma}} = \langle Op(a)\phi_{\lambda},\phi_{\mu}\rangle_{SX_{\Gamma}} +
\mathcal{O}(1/{\mu}).
\end{eqnarray}
Putting $\langle a,T_{\lambda,\mu}\rangle = \langle 1,T_{\mu,\mu}\rangle \langle a,\widehat{T}_{\lambda,\mu}\rangle$ into \eqref{left side 3} we obtain
\begin{eqnarray}\label{left side 4}
\langle 1,T_{\mu,\mu}\rangle \cdot \langle a,\widehat{T}_{\lambda,\mu}\rangle = \langle a,W_{\lambda,\mu}\rangle + \mathcal{O}(1/{\mu}).
\end{eqnarray}
In particular, for $a=1$, we get
\begin{eqnarray*}
\langle 1,T_{\mu,\mu}\rangle_{SX_{\Gamma}} = 1 + \mathcal{O}(1/{\mu}).
\end{eqnarray*}
Together with \eqref{left side 4} this yields
\begin{eqnarray}\label{left side 5}
\left( 1 + \mathcal{O}(1/{\mu}) \right) \cdot \langle a,\widehat{T}_{\lambda,\mu}\rangle = \langle a,W_{\lambda,\mu}\rangle + \mathcal{O}(1/{\mu}).
\end{eqnarray}
The Wigner distributions and hence by \eqref{left side 5} the $\langle a,\widehat{T}_{\lambda,\mu}\rangle$ are uniformly bounded. It follows that the left side of \eqref{left side 5} is asymptotically the same as $\langle a,\widehat{T}_{\lambda,\mu}\rangle$.
\end{proof}

\begin{rem}
One can weaken some assumptions of the above principle (Theorem \ref{spectral order principle}). For example, it is not really neccessary to claim $\mathcal{O}(|\lambda|^{-1}) = \mathcal{O}(|\mu|^{-1})$. The condition $\mathcal{O}(|\lambda|^{-1})\leq\mathcal{O}(|\mu|^{-L})$ for an $L\geq 1$ will still be sufficient: We can then choose $N>2LK$ in the above asymptotic expansions. Moreover, the condition that the intertwiners $L_{\mu}$ preserve compact supports is not neccessary, if the expression $T_{\lambda,\mu}(L_{\mu}(\chi a))$ still makes sense for $a\in C^{\infty}(SX_{\Gamma})$, and if for $f=L_{\mu}(\chi a)$ the spectral estimate \eqref{spectral estimate} is still satisfied.
\end{rem}

The problem is to show that the spectral order principle (or a version with weaker assumptions) can be applied to the intertwining formula \ref{Intertwining Formula} for the non-Euclidean Wigner distributions, the Patterson-Sullivan distributions, and the Knapp-Stein intertwiners. I will now describe what the concrete problems are and restrict these considerations to the case of diagonal elements ($\phi=\psi$, $\lambda=\mu$). Let $f\in C_c^{\infty}(G/M)$. The values $|d_{\lambda}(b,b')|$ are independent of $\lambda$ and all derivatives of $d_{\lambda}$ have polynomial growth in $\lambda$. It follows that given a continuous seminorm $\|\cdot\|_1$ on $C^{\infty}(B\times B)$ there exist $K_1>0$ and a continuous seminorm $\|\cdot\|_2$ on $C_c^{\infty}(G/M)$ such that
\begin{eqnarray}\label{Radon estimate 0}
\|d_{\lambda}(b,b')\mathcal{R}(f)(b,b')\|_1 \leq (1+|\lambda|)^{K_1}\|f\|_2.
\end{eqnarray}
Note that $\|\cdot\|_2$ may depend on the support of $f$. Assume $d_{\lambda}(b,b')\mathcal{R}(f)(b,b') \in C^{\infty}(B\times B)$. Then $PS_{\lambda}(f)$ is well-defined. A simple example is when $f\in C_c^{\infty}(SX)=C_c^{\infty}(SX)$. Let $\lambda\in\la^*$. In this case, it follows from \eqref{BtimesB}, \eqref{Off-Diagonal Patterson-Sullivan} and \eqref{Radon estimate 0} that there exist $K>0$ and a continuous seminorm $\|\cdot\|_2$ on $C_c^{\infty}(SX)$ (possibly depending on the support of $f$) such that
\begin{eqnarray}\label{define K 0}
|PS_{\lambda}(f)| \leq (1+|\lambda|)^K \|f\|_2.
\end{eqnarray}

It is stated in \cite{AZ} (equation (3.14) there) that there is a seminorm independent of the function $f$. I cannot find such an estimate. However, even if we would have this equation for $f\in C_c^{\infty}(SX)$, another problem would occur in the well-definedness of the intertwining formula from Theorem \ref{Intertwining Formula}: The Knapp-Stein intertwiners do not preserve compact supports, so the intertwining formula can only be understood formally in the sense of continuation from $B^{(2)}$ to $B\times B$ (Lemma \ref{formula}). The problem is that for the $ps_{\lambda}$-distributions there is no spectral order estimate in the sense of \eqref{spectral estimate} for the enlarged domain $d_{\lambda}(b,b')^{-1}\cdot C^{\infty}(B\times B)$. For the $PS$-distributions, the constant $K$ and the seminorm $\|\cdot\|_2$ cannot be used in a proof of \ref{spectral order principle}, since the remainder terms in the asymptotic expansion \eqref{integrate this} are not compactly supported.

For $a\in C^{\infty}(\Gamma\backslash G/M)$, let $f_{a,\lambda,\mu}(b,b')\in C^{\infty}(B\times B)$ denote the inner $X$-integral in \eqref{express}. The intertwining formula in Theorem \ref{Intertwining Formula} is understood in the sense of $\langle L_{\mu}(\chi a), PS_{\lambda,\mu}\rangle_{G/M} = \langle f_{a,\lambda,\mu},T_{\lambda}\otimes T_{\lambda}\rangle_{B\times B}$. In this sense, \eqref{BtimesB} yields $|\langle L_{\mu}(\chi a), PS_{\lambda,\mu}\rangle|\leq(1+|\lambda|)^K(1+|\mu|)^K\|\chi a\|$, where $\|\cdot\|$ is a seminorm on $C^{\infty}(G/M)$ and only depends on the support of $\chi$.

\subsubsection{Further remarks and some open questions}
\begin{itemize}
\item[(1)] Recall that the intertwining formula is the same in each case. One could conjecture that the asymptotic argument given in the proof of the spectral order principle can be generalized to all symmetric spaces of the noncompact type. It should be conjectured that most limits of Wigner distributions are $A$-invariant (see \cite{SV}. Similar results are announced by L. Silberman and N. Anantharaman). In view of Remark \ref{A-eigendistributions}, we see that limits of Patterson-Sullivan distributions, as defined via $B^{(2)}$ will not always be $A$-invariant.
\item[(2)] It is in some cases possible to modify the definitions and to obtain off-diagonal $ps_{\lambda,\mu}$-distributions: For simplicity, let $G/K$ have rank one, so that the function $d_{\lambda}(b,b')$ exists. Recall
\begin{eqnarray}
\mathcal{R}_{\lambda,\mu}f(b,b') = \int_A d_{\lambda,\mu}(g(b,b')a)f(g(b,b')a)\,da.
\end{eqnarray}
The choice of $g=g(b,b')$ was immaterial (modulo $MA$), so if we assume $H(g)=0$, then $d_{\lambda,\mu}(g)=e^{(i\lambda+\rho)H(g)}e^{(i\mu+\rho)H(gw)}=d_{\mu}(b,b')$. One can then define the distributions $ps_{\lambda,\mu}(db,db')=d_{\mu}(b,b')T_{\lambda}(db)T_{\mu}(db')$ and $\widetilde{PS}_{\lambda,\mu}(f)=ps_{\lambda,\mu}(\mathcal{R}(f))$. However, $ps_{\lambda,\mu}$ is not $\Gamma$-invariant in the off-diagonal case.
\item[(3)] It is possible to express the normalized version of the Patterson-Sullivan distributions by means of Harish-Chandra's $c$-function. Therefore, a generalization of Lemma 6.4 in \cite{AZ} is needed, which does not a priori make sense in $G/M$, since there is no horocycle flow on $G/M$. However, some of the formulas given in Theorem 1.2 of \cite{AZ}, in particular the one for the normalization of the $PS$-distributions, generalize to arbitrary symmetric spaces.
\item[(4)] It is still an open question if there is a purely classical dynamical interpretation of the Patterson-Sullivan distributions in terms of closed geodesics (see \cite{AZ}).
\end{itemize}
Details concerning these open questions are in progress and will eventually appear later.

\newpage
\pagebreak
\thispagestyle{empty} 

\addcontentsline{toc}{section}{Bibliography}


\newpage

\pagebreak

\thispagestyle{empty} 

\addcontentsline{toc}{section}{Index}

\thispagestyle{empty}

\printindex

\thispagestyle{empty}


\begin{thebibliography}{}

\bibitem[AZ07]{AZ} N. Anantharaman, S. Zelditch, \textit{Patterson-Sullivan Distributions and Quantum Ergodicity}, Ann. Henri Poincar\'{e} 8 (2007), pp. 361-426, Birkh\"{a}user Verlag Basel, Switzerland, 2007.

\bibitem[AZ2]{AZ2} N. Anantharaman, S. Zelditch, \textit{Patterson-Sullivan Distributions and Quantum Ergodicity II - Preliminary version}.

\bibitem[BB02]{BB} P. Blanchard, E. Brüning, \textit{Mathematical Methods in Physics - Distributions, Hilbert Space Operators, and Variational Methods}, Progress in Mathematical Physics Vol. 26. Birkhäuser 2002.

\bibitem[BO05]{BO} U. Bunke, M. Olbrich, \textit{On Quantum Ergodicity for Vector Bundles}, Springer Science + Business Media B.V. 2006.

\bibitem[Cohn74]{Cohn} L. Cohn, Analytic Theory of the Harish-Chandra C-function. Lecture Notes in Mathematics 429, Berlin: Springer-Verlag 1974.

\bibitem[CG89]{CG} L. Corwin, F. P. Greenleaf, \textit{Representations of nilpotent Lie groups and their applications, Part I}, Cambridge Studies in Adv. Math., No 18, Cambridge Univ. Press (1989).

\bibitem[DH97]{DH97} G. van Dijk, S. C. Hille, \textit{Canonical Representations Related to Hyperbolic Spaces}, journal of functional analysis 147, 109-139 (1997).

\bibitem[D78]{D78} D. Drucker, \textit{Exceptional Lie algebras and the structure of hermitian symmetric spaces}, Memoirs of the AMS No. 208, 1978.

\bibitem[DKV83]{DKV} J. J. Duistermaat, J. A. C. Kolk and V. S. Varadarajan, \textit{Functions, Flows and oscillatory integrals on flag manifolds and conjugacy classes in real semisimple Lie groups}, Compositio Mathematica 49 (1983) pp. 309-398, Martinus Nijhoff Publishers, The Hague. Printed in the Netherlands.

\bibitem[EO]{EO} P. Eberlein and B. O'Neill, \textit{Visibility manifolds}, Pacific J. Math. 46 (1973), 45-109.

\bibitem[Eber79]{E79} P. Eberlein, \textit{Surfaces of nonpositive curvature}, American Mathematical Society, 1979, No. 218: Vol 20.

\bibitem[Eber96]{E96} P. B. Eberlein, \textit{Geometry of Nonpositiveley Curved Manifolds}, Lectures in Mathematic Series, The University of Chicago Press, Chicago an London 1996.


\bibitem[GO05]{GO} S. Grellier and J.-P. Otal, \textit{Bounded Eigenfunctions in the Real Hyperbolic Space}, Int. Math. Res. Not. 2005, no. 62, 3867-3897.

\bibitem[GS77]{GS77} V. Guillemin, S. Sternberg, \textit{Geometric Asymptotics}, Amer. Math. Soc., Providence, Rhode Island, 1977.

\bibitem[HC58]{Harish-Chandra} Harish-Chandra, \textit{Spherical functions on a semisimple Lie group, II.}, Amer. J. Math. 80 (1958) 553-613.

\bibitem[Helg65]{He65} S. Helgason, \textit{Radon-Fourier transforms on symmetric spaces and related group representations}, Bull. Amer. Math. Soc. 71 (1965), pp. 757-763

\bibitem[Helg70]{He70} S. Helgason, \textit{A Duality for Symmetric Spaces with Applications to Group Representations}, Advances in Mathematics 5, 1-154 (1970), Massachusetts Institute of Technology, Cambridge.

\bibitem[Helg74]{He74} S. Helgason, \textit{Eigenspaces of the Laplacian; Integral Representations and Irreducibility}, Journal of Functional Analysis 17, 328-353 (1974), Massachusetts Institute of Technology, Cambridge.

\bibitem[GASS]{He94} S. Helgason, \textit{Geometric Analysis on symmetric spaces}, Mathematical surveys and monographs, American Mathematical Society, Providence, Rhode Island (1994), ISSN 0076-5376; v. 39.

\bibitem[GGA]{He00} S. Helgason, \textit{Groups and geometric analysis: integral geometry, invariant differential opsrators, and spherical functions}, Mathematical surveys and monographs, American Mathematical Society, Providence, Rhode Island (2000).

\bibitem[DS]{He01} S. Helgason, \textit{Differential Geometry, Lie Groups, and Symmetric Spaces}, Graduate Studies in Mathematics, American Mathematical Society, Providence, Rhode Island (2001).

\bibitem[H05]{Hilgert} J. Hilgert, \textit{An Ergodic Arnold-Liouville Theorem for Locally Symmetric Spaces}, Twenty Years of Bialowieza: A Mathematical Anthology. S.T. Ali et al. eds., World Scientific, Singapore, 2005.

\bibitem[HP09]{HP} J. Hilgert, A. Pasquale, \textit{Resonances and residue operators for symmetric spaces of rank one}, J. Math. Pures Appl. (9) 91 (2009), no. 5, 495-507.

\bibitem[HS09]{HS09} J. Hilgert, M. Schr\"{o}der, \textit{Patterson-Sullivan distributions for rank one symmetric spaces}, preprint, arXiv:0909.2142v2.

\bibitem[Hor65]{Hor65} L. H\"{o}rmander, \textit{Pseudo-differential operators}, Comm. Pure Appl. Math. 18 (1965), p. 501-517.

\bibitem[Hor83]{Hor1} L. H\"{o}rmander, \textit{The Analysis of Linear Partial Differential Operators I}, Springer-Verlag, Berlin 1983.

\bibitem[Hor85]{Hor3} L. H\"{o}rmander, \textit{The Analysis of Linear Partial Differential Operators III}, Springer-Verlag, Berlin Heidelberg 1985.

\bibitem[Hof81]{Hof} H. C. Im Hof, \textit{Visibility, Horocycles, and the Bruhat decomposition}, in \textit{Global Differential Geometry and Global Analysis}, Lecture Notes in Mathematics Volume 838/1981, p. 149-153, Springer Berlin / Heidelberg.

\bibitem[Knapp86]{Knapp} A. W. Knapp, \textit{Representation Theory of Semisimple Lie Groups}, Princeton University Press, Princeton (1986).

\bibitem[LM63]{LM} J. L. Lions, E. Magenes, \textit{Probl\`{e}mes aux limites non homog\`{e}nes} (VII) Ann. Mat. Pura Appl. 4 63 (1963), 201-224.

\bibitem[Nich89]{PJN} P. J. Nicholls, \textit{The Ergodic Theory of Discrete Groups}, London Math. Soc. Lect. Notes Series 143, Cambridge Univ. Press, Cambridge 1989.

\bibitem[Otal98]{Otal} J.-P. Otal, \textit{Sur les fonctions propres du Laplacien du disque hyperbolique}, C. R. Acad. Sci. Paris, t. 327, Série I, p. 161-166, 1998.

\bibitem[Quint06]{Q} J.-F. Quint, \textit{An overview of Patterson-Sullivan theory}, Notes de cours, An overview of Patterson-Sullivan theory, Workshop The barycenter method, FIM, Zurich, May 2006.
\begin{footnotesize}http://www-math.univ-paris13.fr/~quint/\end{footnotesize}

\bibitem[Rou63]{Rou} C. Roumieu, \textit{Ultra-distributions d\'{e}finies sur $\rr^n$ et sur certaines classes de vari\'{e}t\'{e}s differentiables}, J. Analyse Math. X (1962-63), 153-192.

\bibitem[Rez04]{Rez04} A. Reznikov, \textit{Microlocal lifts of eigenfunctions on hyperbolic surfaces and trilinear invariant functionals}, preprint, arXiv:math/0404294v1.

\bibitem[RS94]{RS} Z. Rudnick, P. Sarnak, \textit{The behaviour of eigenstates of arithmetic hyperbolic manifolds}, Comm. Math. Phys. 161 (1994), no. 1, 195-213.

\bibitem[Schwartz66]{Schwartz} L. Schwartz, \textit{Th\'{e}orie des Distributions}, 2nd e., Hermann, Paris, 1966.

\bibitem[SV]{SV} L. Silberman and A. Venkatesh, \textit{On quantum unique ergodicity for locally symmetric spaces I}, Geom. Func. Anal. 17 (2007), no. 3, 960-998.

\bibitem[Sul79]{Sul} D. Sullivan, \textit{The density at infinity of a discrete group of hyperbolic motions}, Inst. Hautes Études Sci. Publ. Math. 50 (1979), p. 171-202.

\bibitem[Tay81]{Tay81} M. E. Taylor, \textit{Pseudodifferential Operators}, Princeton Univerity Press, Princeton, New Jersey 1981.

\bibitem[Treves67]{T} F. Treves, \textit{Topological Vector Spaces, Distributions and Kernels}, Acad. Press (1967)

\bibitem[U80]{U} H. Urakawa, \textit{On the least positive eigenvalue of the Laplacian for the compact quotient of a certain Riemannian symmetric space}, Nagoya Math. J. Vol. 78 (1980), 137-152.

\bibitem[Var]{Var} V. S. Varadarajan, \textit{Oscillatory integrals and their applications to harmonic analysis on semisimple Lie groups}, preprint.

\bibitem[Var97]{OSH} V. S. Varadarajan, \textit{The Method of Stationary Phase and Applications to Geometry and Analysis on Lie Groups}, Perspectives in Mathematics (1997) pp. 167-242, Academic Press.

\bibitem[Wal73]{Wal} N. R. Wallach, \textit{Harmonic Analysis on Homogeneous Spaces}, in \emph{Pure and Applied Mathematics}, No. 19., Marcel Dekker, Inc., New York (1973).

\bibitem[Wal88]{Wal2} N. R. Wallach, \textit{Real reductive groups 1}, Academic Press, Pure and Applied Mathematics, San Diego (1988)

\bibitem[War70]{War} F. Warner, \textit{Foundations of Differentiable Manifolds and Lie Groups}, Scott Foresman, Glenview, Illinois, 1970.

\bibitem[Wil91]{Wil} F. L. Williams, \textit{Lectures on the spectrum of $L^2(\Gamma\backslash G)$}, Pitman Research Notes in Mathematics Series 242, Essex, England (1991).

\bibitem[Zel86]{Z86} S. Zelditch, \textit{Pseudo-differential Analysis on Hyperbolic surfaces}, Journal of Functional Analysis 68, pp. 72-105 (1986); Department of Mathematics, Columbia University, New York 1986.

\bibitem[Zel87]{Z87} S. Zelditch, \textit{Uniform distribution of eigenfunctions on compact hyperbolic surfaces}, Duke Math. J., Vol. 55, No. 4 (1987)

\bibitem[Zel05]{Z2005} S. Zelditch, \textit{Quantum Ergodicity and Mixing}, Expository article for the Encyclopedia of Mathematical Physics on the subject of the title, arXiv:math-ph/0503026v1.

\bibitem[Zel09a]{Z2009a} S. Zelditch, \textit{Local and global analysis of eigenfunctions on Riemannian manifolds}, preprint, arXiv:0903.3420v1.

\bibitem[Zel09b]{Z2009b} S. Zelditch, \textit{Recent developments in mathematical quantum chaos - Preliminary version/ comments appreciated}, preprint, arXiv:0911.4312v1.

\end{thebibliography}
\end{document}